\newtheorem{theorem}{Theorem}[section]
\newtheorem{lemma}[theorem]{Lemma}
\newtheorem{proposition}[theorem]{Proposition}
\newtheorem{corollary}[theorem]{Corollary}
\theoremstyle{definition}
\newtheorem{definition}[theorem]{Definition}
\newtheorem{remark}[theorem]{Remark}
\newtheorem{axiom}[theorem]{Axiom}
\newtheorem{assumption}[theorem]{Assumption}
\newtheorem{question}[theorem]{Question}
\begin{document}
\title{Donaldson-Thomas Theory For Calabi-Yau 4-Folds}
\author{Yalong Cao}
\address{The Institute of Mathematical Sciences and Department of Mathematics, The Chinese University of Hong Kong, Shatin, Hong Kong}
\email{ylcao@math.cuhk.edu.hk}

\author{Naichung Conan Leung}
\address{The Institute of Mathematical Sciences and Department of Mathematics, The Chinese University of Hong Kong, Shatin, Hong Kong}
\email{leung@math.cuhk.edu.hk}

\maketitle

\begin{abstract}
{Let $X$ be a compact complex Calabi-Yau 4-fold.
Under certain assumptions, we define Donaldson-Thomas type deformation invariants ($DT_{4}$ invariants) by studying moduli spaces of solutions to the Donaldson-Thomas equations on $X$. We also study sheaves counting problems on local Calabi-Yau 4-folds. We relate
$DT_{4}$ invariants of $K_{Y}$ to the Donaldson-Thomas invariants of the associated Fano 3-fold $Y$. When the Calabi-Yau 4-fold is toric, we adapt the virtual localization formula to define the corresponding equivariant $DT_{4}$ invariants. We also discuss the non-commutative version of $DT_{4}$ invariants for quivers with relations. Finally, we compute $DT_{4}$ invariants for certain Calabi-Yau 4-folds when moduli spaces are smooth and find a $DT_{4}/GW$ correspondence for $X$. Examples of wall-crossing phenomenon in $DT_{4}$ theory are also given.}
\end{abstract}

\tableofcontents
\newpage
\textbf{Notations and conventions}. Throughout this paper, unless specified otherwise, $(X,\mathcal{O}_{X}(1))$ will be
a polarized compact complex Calabi-Yau 4-fold \cite{yau} equipped with a Ricci-flat K\"ahler metric $g$, a K\"ahler form $\omega$ and a holomorphic four-form $\Omega$ such that $\Omega\wedge\overline{\Omega}=dvol$ and $c_{1}(\mathcal{O}_{X}(1))=[\omega]$, where $dvol$ is the volume form of $g$. \\

We denote $(E,h)$ to be a complex vector bundle with a Hermitian metric over $X$ and $G$ to be the structure group of $E$ with center $C(G)$. We will restrict to the case when $G=U(r)$, where $r$ is the rank of $E$. \\

We denote $\mathcal{A}$ to be the space of all $L_{k}^{2}$ (Sobolev norm) unitary connections on $E$ and $\mathcal{G}$ to be the group of $L_{k+1}^{2}$ unitary gauge transformation, where $k$ is a large enough positive integer. $\Omega^{0,i}(X,EndE)_{k}$ is denote to be the completion of $\Omega^{0,i}(X,EndE)$ by $L_{k}^{2}$ norm. \\

We denote the space of irreducible $L_{k}^{2}$ unitary connections by
\begin{equation}\mathcal{A}^{*}=\{A\in \mathcal{A} \textrm{ } | \textrm{ } \Gamma_{A}=C(G)\}, \nonumber \end{equation}
where $\Gamma_{A}=\{u\in\mathcal{G} \textrm{ } | \textrm{ } u(A)=A \}$ is the isotropic group at $A$.
$\mathcal{A}^{*}$ is a dense open subset of $\mathcal{A}$ \cite{dk}.
Let $\mathcal{G}^{0}=\mathcal{G}/C(G)$ be the reduced gauge group. We know the action $\mathcal{G}^{0}$ on $\mathcal{A}^{*} $ is free and define $\mathcal{B}^{*}\triangleq\mathcal{A}^{*}/\mathcal{G}^{0}$, which is a Banach manifold \cite{d}, \cite{fu}. \\

We denote $\mathcal{M}_{c}(X,\mathcal{O}_{X}(1))$ or simply $\mathcal{M}_{c}$ to be the Gieseker moduli space of $\mathcal{O}_{X}(1)$-stable sheaves with given Chern character $c$.
We always assume $\mathcal{M}_{c}$ is compact, i.e. $\mathcal{M}_{c}=\overline{\mathcal{M}}_{c}$ ($\overline{\mathcal{M}}_{c}$ is the Gieseker moduli space of semi-stable sheaves) which is satisfied under the coprime condition on the degree and rank of coherent sheaves \cite{hl}. \\

We take $\mathcal{M}_{c}^{bdl}$ to be the analytic open subspace of $\mathcal{M}_{c}$ consisting of slope-stable holomorphic bundles which is possibly empty. We will not distinguish $\mathcal{M}_{c}^{bdl}$ with the moduli space of holomorphic Hermitian-Yang-Mills connections by Donaldson-Uhlenbeck-Yau's theorem \cite{d2}, \cite{UY}. \\

In this paper, when we say $\mathcal{M}_{c}$ is smooth, we always mean it in the strong sense, namely all Kuranishi maps are zero.

\section{Introduction}
In this paper, we study Donaldson-Thomas theory for Calabi-Yau 4-folds.
Originally, Floer studied Chern-Simons theory for closed oriented three-manifolds and
defined the instanton Floer homology generalizing the Casson invariant for moduli spaces of flat connections. For closed oriented four-manifolds, Donaldson defined polynomial invariants by studying moduli spaces of anti-self-dual connections on $SU(2)$ bundles over four-manifolds \cite{d},\cite{dk}.
Obviously, flat connections are anti-self-dual connections. The converse is also true if $ch_{2}(E)=0$.

Over complex-oriented manifolds, i.e. Calabi-Yau manifolds \cite{yau}, flat bundles are replaced by holomorphic bundles.
Thomas \cite{th} studied complex Chern-Simons gauge theory on Calabi-Yau 3-folds and defined the so-called
Donaldson-Thomas invariants for moduli spaces of stable holomorphic bundles (more generally, for Gieseker moduli spaces of stable sheaves). It was later generalized to semi-stable sheaves by Joyce and Song \cite{js}. A motivic version was proposed by Kontsevich and Soibelman \cite{ks},

As a complex analogue of Donaldson theory, we study the complex anti-self-dual equation on Calabi-Yau 4-folds which was written down by Donaldson and Thomas in \cite{dt}. Holomorphic Hermitian-Yang-Mills connections are complex anti-self-dual connections. The converse is also true if $ch_{2}(E)\in H^{2,2}(X)$. By Donaldson-Uhlenbeck-Yau's theorem \cite{d2}, \cite{UY}, the moduli space of holomorphic Hermitian-Yang-Mills connections is the moduli space of slope-stable bundles which has the natural Gieseker compactification. However, Gieseker moduli spaces on $CY_{4}$'s generally do not have perfect obstruction theory \cite{lt1}, \cite{bf} and do not obviously carry virtual fundamental classes. One of our main aims in this paper is to modify the obstruction theory of Gieseker moduli spaces, construct virtual fundamental classes and define the corresponding invariants under certain assumptions. \\

We start with a compact Calabi-Yau 4-fold $(X,g,\omega,\Omega)$ and define
\begin{equation}*_{4}: \Omega^{0,2}(X)\rightarrow \Omega^{0,2}(X),
\nonumber \end{equation}
\begin{equation}\alpha\wedge *_{4}\beta=(\alpha,\beta)_{g}\overline{\Omega}.
\nonumber \end{equation}
Coupled with bundle $(E,h)$, it is extended to
\begin{equation}*_{4}: \Omega^{0,2}(X,EndE)\rightarrow \Omega^{0,2}(X,EndE)
\nonumber \end{equation}
with $*_{4}^{2}=1$ \cite{dt}. Then we use this $*_{4}$ operator define the (anti-) self-dual subspace of $\Omega^{0,2}(X,EndE)$. In fact, $*_{4}$ also splits the corresponding harmonic subspace into self-dual and anti-self-dual parts.

The $DT_{4}$ equations are defined to be
\begin{equation} \left\{ \begin{array}{l}
  F^{0,2}_{+}=0 \\ F\wedge\omega^{3}=0 ,    
\end{array}\right.
\nonumber \end{equation}
where the first equation is $F^{0,2}+*_{4}F^{0,2}=0$ and we assume $c_{1}(E)=0$ for simplicity in the moment map equation $ F\wedge\omega^{3}=0$.

We denote $\mathcal{M}^{DT_{4}}(X,g,[\omega],c,h)$ or simply $\mathcal{M}^{DT_{4}}_{c}$ to be the space of gauge equivalence classes of solutions to the $DT_{4}$ equations on a complex vector bundle $E$ with Chern character $ch(E)=c$. \\

To define Donaldson type invariants using $\mathcal{M}^{DT_{4}}_{c}$, we need
\begin{center}(1) compactness, \quad (2) orientation,  \quad (3) transversality.  \end{center}
\textbf{Compactness}. To compactify $\mathcal{M}_{c}^{DT_{4}}$, we start with its local Kuranishi structure.
\begin{theorem}\label{mo mDT4}(Theorem \ref{Kuranishi str of cpx ASD thm}) We assume $\mathcal{M}_{c}^{bdl}\neq\emptyset$, then \\
(1) $\mathcal{M}_{c}^{bdl}\cong\mathcal{M}_{c}^{DT_{4}}$ as sets. \\
(2) We fix $d_{A}\in\mathcal{M}_{c}^{DT_{4}}$, then there exists a Kuranishi map $\tilde{\tilde{\kappa}}$ of $\mathcal{M}_{c}^{bdl}$ at $\overline{\partial}_{A}$ (the (0,1) part of $d_{A}$) such that $\tilde{\tilde{\kappa}}_{+}$ is a Kuranishi map of $\mathcal{M}_{c}^{DT_{4}}$ at $d_{A}$, where
\begin{equation} \xymatrix@1{
\tilde{\tilde{\kappa}}_{+}=\pi_{+}(\tilde{\tilde{\kappa}}): H^{0,1}(X,EndE) \ar[r]^{\quad \quad \quad \tilde{\tilde{\kappa}}}
& H^{0,2}(X,EndE)\ar[r]^{\pi_{+}} & H^{0,2}_{+}(X,EndE) }  \nonumber \end{equation}
and $\pi_{+}$ is projection to the self-dual forms. \\
(3) The closed imbedding between analytic spaces possibly with non-reduced structures $\mathcal{M}_{c}^{bdl}\hookrightarrow \mathcal{M}_{c}^{DT_{4}}$
is also a homeomorphism between topological spaces.
\end{theorem}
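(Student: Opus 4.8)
The plan is to establish (1), (2), (3) in order, with (2) carrying the analytic content; throughout I will use the hypothesis $\mathcal{M}_{c}^{bdl}\neq\emptyset$ together with the Donaldson--Uhlenbeck--Yau correspondence \cite{d2},\cite{UY}, which identifies $\mathcal{M}_{c}^{bdl}$ with the moduli of irreducible Hermitian--Yang--Mills connections. For (1), an HYM connection $d_{A}$ has $F_{A}^{0,2}=0$ and $\Lambda_{\omega}F_{A}=0$, hence $F_{A}^{0,2}+{*_{4}}F_{A}^{0,2}=0$ and $F_{A}\wedge\omega^{3}=0$, so it solves the $DT_{4}$ equations; conversely, given a solution $d_{A}$ I first want to upgrade $F_{A}^{0,2}+{*_{4}}F_{A}^{0,2}=0$ to $F_{A}^{0,2}=0$. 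Here I would use that ${*_{4}}$ is the restriction to $\Omega^{0,2}(X,EndE)$ of the conjugate-linear Hodge-type star of the twisted Dolbeault complex $(\Omega^{0,\bullet}(X,EndE),\overline{\partial}_{A})$ induced by $\overline{\Omega}$, so that $\overline{\partial}_{A}^{*}F_{A}^{0,2}=-\overline{*}\big(\overline{\partial}_{A}({*_{4}}F_{A}^{0,2})\big)$; combined with the Bianchi identity $\overline{\partial}_{A}F_{A}^{0,2}=0$ and the equation ${*_{4}}F_{A}^{0,2}=-F_{A}^{0,2}$ this gives $\overline{\partial}_{A}^{*}F_{A}^{0,2}=0$, i.e.\ $F_{A}^{0,2}$ is $\overline{\partial}_{A}$-harmonic. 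A characteristic-number computation along the lines of \cite{dt} --- pairing $\operatorname{tr}(F_{A}\wedge F_{A})$ with $\omega^{2}$, using $F_{A}\wedge\omega^{3}=0$ to control the $(1,1)$-part and the algebra of ${*_{4}}$ and $\overline{\Omega}$ to control the $(0,2)$-part --- then forces $F_{A}^{0,2}=0$. Consequently $\overline{\partial}_{A}$ is integrable and $\Lambda_{\omega}F_{A}=0$, so $d_{A}$ is HYM; irreducibility is equivalent to slope-stability, which yields the asserted bijection of underlying sets, realised point by point by the inclusion $\mathcal{M}_{c}^{bdl}\hookrightarrow\mathcal{M}_{c}^{DT_{4}}$. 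The delicate point here is keeping the signs in the characteristic-number identity straight and seeing precisely where $F_{A}\wedge\omega^{3}=0$ enters.

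For (2), I would build the usual local Kuranishi model of $\mathcal{M}_{c}^{bdl}$ at $\overline{\partial}_{A}$ from the elliptic complex $(\Omega^{0,\bullet}(X,EndE)_{k},\overline{\partial}_{A})$: using harmonic slices and the Green operator $G$ of the $\overline{\partial}_{A}$-Laplacian, the standard power-series / implicit-function argument produces, for $a\in H^{0,1}(X,EndE)$ small, a unique correction of the form $a+\overline{\partial}_{A}^{*}G(\,\cdot\,)$ making the non-harmonic component of the $(0,2)$-curvature vanish, and one sets $\tilde{\tilde{\kappa}}(a)$ equal to the harmonic component of that curvature, so $\mathcal{M}_{c}^{bdl}$ is locally $\tilde{\tilde{\kappa}}^{-1}(0)\subset H^{0,1}(X,EndE)$. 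I would then show that $\mathcal{M}_{c}^{DT_{4}}$ near $d_{A}$ is locally cut out by $F^{0,2}_{+}=0$ alone modulo the complexified gauge group: the equation $F\wedge\omega^{3}=0$ is the moment map for the $\mathcal{G}$-action, and near the stable, hence irreducible, connection $d_{A}$ the Kempf--Ness / Hitchin--Kobayashi slice argument --- invertibility, transverse to the orbit, of the Laplace-type operator $\xi\mapsto i\Lambda_{\omega}\overline{\partial}_{A}\partial_{A}\xi$ --- identifies $\{F\wedge\omega^{3}=0,\ F^{0,2}_{+}=0\}/\mathcal{G}$ locally with $\{F^{0,2}_{+}=0\}/\mathcal{G}^{\mathbb{C}}$, so the real moment-map equation contributes no further obstruction. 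Running the same finite-dimensional reduction for $F^{0,2}_{+}=0$, and observing that the Green-operator correction only alters the non-harmonic part of the curvature while $\pi_{+}$ acts on the harmonic part, the local model of $\mathcal{M}_{c}^{DT_{4}}$ at $d_{A}$ becomes $(\pi_{+}\circ\tilde{\tilde{\kappa}})^{-1}(0)=\tilde{\tilde{\kappa}}_{+}^{-1}(0)\subset H^{0,1}(X,EndE)$, which is exactly the assertion. I expect this reduction --- verifying that the real moment-map equation is unobstructed near the stable locus and that it is consistent with the harmonic-slice reduction used to define $\tilde{\tilde{\kappa}}$ --- to be the main obstacle.

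For (3), granting (2), near each point both moduli spaces are analytic subspaces of the same finite-dimensional $H^{0,1}(X,EndE)$, defined respectively by $\tilde{\tilde{\kappa}}=0$ and by $\tilde{\tilde{\kappa}}_{+}=\pi_{+}\circ\tilde{\tilde{\kappa}}=0$. Since every component of $\tilde{\tilde{\kappa}}_{+}$ is a linear combination of components of $\tilde{\tilde{\kappa}}$, the ideal generated by $\tilde{\tilde{\kappa}}_{+}$ is contained in the one generated by $\tilde{\tilde{\kappa}}$, so $\tilde{\tilde{\kappa}}^{-1}(0)\hookrightarrow\tilde{\tilde{\kappa}}_{+}^{-1}(0)$ is a closed imbedding of (possibly non-reduced) local models; these patch, compatibly with the set-level bijection of (1), to a global closed imbedding $\mathcal{M}_{c}^{bdl}\hookrightarrow\mathcal{M}_{c}^{DT_{4}}$. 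It is a homeomorphism of underlying topological spaces because, by (1), the two spaces have the same points and, locally, both carry the subspace topology induced from $H^{0,1}(X,EndE)$, which therefore coincide. This last part is essentially formal once (1) and (2) are in hand.
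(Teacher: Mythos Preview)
There is a genuine gap in part (2), and a smaller misdirection in part (1).

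For (1): the harmonicity detour is unnecessary, and the characteristic-number identity you need is the one paired with the holomorphic volume form $\Omega$, not with $\omega^{2}$. Concretely (this is Lewis's lemma in the paper),
\[
-8\pi^{2}\int ch_{2}(E)\wedge\Omega=\int \mathrm{Tr}(F^{0,2}\wedge F^{0,2})\wedge\Omega=\|F^{0,2}_{+}\|^{2}-\|F^{0,2}_{-}\|^{2}+i(\text{real}),
\]
and the crucial input from $\mathcal{M}_{c}^{bdl}\neq\emptyset$ is that the existence of \emph{some} holomorphic structure in class $c$ forces $ch_{2}(E)\in H^{2,2}(X)$, so the left side vanishes. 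The moment-map equation $F\wedge\omega^{3}=0$ plays no role here.

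For (2), the claim ``the Green-operator correction only alters the non-harmonic part of the curvature while $\pi_{+}$ acts on the harmonic part'' is where the argument breaks. Writing $F^{0,2}_{+}=0$ in the Hodge decomposition, the infinite-dimensional piece one must solve by the implicit function theorem is
\[
\overline{\partial}_{A}a''+P_{\overline{\partial}_{A}}(a''\wedge a'')+*_{4}P_{\overline{\partial}_{A}^{*}}(a''\wedge a'')=0,
\]
which differs from the corresponding equation for $F^{0,2}=0$ by the $*_{4}$-term. Solving it produces a \emph{different} parametrisation $a''=q^{-1}(\alpha)$ and hence a different map $\tilde{\tilde{\kappa}}(\alpha)=\mathbb{H}^{0,2}\big(q^{-1}(\alpha)\wedge q^{-1}(\alpha)\big)$; if instead you reuse the standard slice $g^{-1}(H^{0,1})$, then on that slice $F^{0,2}=\kappa(\alpha)+P_{\overline{\partial}_{A}^{*}}(a''\wedge a'')$ still has a non-harmonic piece, so $F^{0,2}_{+}$ is \emph{not} simply $\pi_{+}\circ\kappa$. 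The actual content of the theorem is therefore that this modified $\tilde{\tilde{\kappa}}$ is \emph{also} a Kuranishi map for $\mathcal{M}_{c}^{bdl}$, i.e.\ that $\tilde{\tilde{\kappa}}^{-1}(0)\cong\kappa^{-1}(0)$ as analytic spaces. The paper devotes most of Section~3 to this: it introduces the auxiliary space $F\subset Q_{A}\times\Omega^{0,2}$, shows via a Bianchi-type estimate that $\mathbb{H}^{0,2}(\theta)=0$ on $F$ forces $\theta=0$, and then invokes Miyajima's lemma to conclude that the full curvature factors analytically through its harmonic projection, so that $Q_{A}\cap P^{-1}(0)=\mathcal{M}_{A}$ as analytic spaces. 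None of this is visible in your outline. Separately, the Kempf--Ness step as you state it is problematic: $F^{0,2}_{+}=0$ is not $\mathcal{G}^{\mathbb{C}}$-invariant because $*_{4}$ depends on the Hermitian metric on $E$, so one cannot pass to a complex-gauge quotient; the paper instead works in Coulomb gauge $d_{A}^{*}a=0$ and rewrites it as the condition (\ref{equation 5}).

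Your argument for (3) is fine once (1) and (2) are in place.
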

In general, we want to obtain a compactification of $\mathcal{M}_{c}^{DT_{4}}$, denoted to be $\overline{\mathcal{M}}_{c}^{DT_{4}}$, such that the homeomorphism $\mathcal{M}_{c}^{bdl}\cong\mathcal{M}_{c}^{DT_{4}}$ can be extended to $\mathcal{M}_{c}\cong\overline{\mathcal{M}}_{c}^{DT_{4}}$ while the local analytic structure of $\overline{\mathcal{M}}_{c}^{DT_{4}}$ is given by $\kappa_{+}^{-1}(0)$, where
\begin{equation}\kappa_{+}=\pi_{+}(\kappa): Ext^{1}(\mathcal{F},\mathcal{F})\rightarrow  Ext^{2}_{+}(\mathcal{F},\mathcal{F}), \nonumber \end{equation}
$\kappa$ is a Kuranishi map of $\mathcal{M}_{c}$ at $\mathcal{F}$ and $Ext^{2}_{+}(\mathcal{F},\mathcal{F})$ is a half-dimensional real subspace
of $Ext^{2}(\mathcal{F},\mathcal{F})$ on which the Serre duality quadratic form is real and positive definite.

Although beginning with connections on bundles, we notice that $\mathcal{M}_{c}$ may not contain any locally free sheaf (like the moduli space of ideal sheaves of points) and the above gluing approach to define an analytic space $\overline{\mathcal{M}}_{c}^{DT_{4}}$ still makes sense.
We then call $\overline{\mathcal{M}}_{c}^{DT_{4}}$ the generalized $DT_{4}$ moduli space (Definition \ref{generalized DT4}).
The name comes from the fact that it may not parameterize any locally free sheaf in general while the $DT_{4}$ moduli space consists of connections on bundles only.

It is then obvious that if $\mathcal{M}_{c}=\mathcal{M}_{c}^{bdl}\neq\emptyset$, $\overline{\mathcal{M}}_{c}^{DT_{4}}$ exists and $\overline{\mathcal{M}}_{c}^{DT_{4}}=\mathcal{M}_{c}^{DT_{4}}$.
We have the following less obvious gluing results.
\begin{proposition}\label{condition of vir gene DT4}(Proposition \ref{gene DT4 if Mc smooth}, \ref{ob=v+v*}) ${}$ \\
If (i) $\mathcal{M}_{c}$ is smooth or (ii) for any closed point $\mathcal{F}\in \mathcal{M}_{c}$, there exists a complex vector space $V_{\mathcal{F}}$ and a linear isometry
\begin{equation}(Ext^{2}(\mathcal{F},\mathcal{F}),Q_{Serre})\cong (T^{*}V_{\mathcal{F}},Q_{std})  \nonumber \end{equation}
such that the image of a Kuranishi map $\kappa$ of $\mathcal{M}_{c}$ at $\mathcal{F}$ satisfies
\begin{equation}\emph{Image}(\kappa)\subseteq V_{\mathcal{F}},  \nonumber \end{equation}
then the generalized $DT_{4}$ moduli space exists and $\overline{\mathcal{M}}^{DT_{4}}_{c}\cong\mathcal{M}_{c}$ as real analytic spaces.
$Q_{Serre}$ is the Serre duality pairing and $Q_{std}$ is the standard pairing between $V_{\mathcal{F}}$ and $V_{\mathcal{F}}^{*}$.
\end{proposition}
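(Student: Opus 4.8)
The plan is to compare, chart by chart, the proposed local model $\kappa_{+}^{-1}(0)$ of $\overline{\mathcal{M}}_{c}^{DT_{4}}$ with the genuine Kuranishi model $\kappa^{-1}(0)$ of $\mathcal{M}_{c}$, and to show that under hypothesis (i) or (ii) these coincide as real analytic spaces. Existence of the glued object $\overline{\mathcal{M}}_{c}^{DT_{4}}$ then follows because the transition data that already makes $\mathcal{M}_{c}$ into an analytic space transports through these chart identifications, and the tautological closed embedding $\mathcal{M}_{c}\hookrightarrow\overline{\mathcal{M}}_{c}^{DT_{4}}$ — which exists in general since $\kappa=0$ forces $\kappa_{+}=\pi_{+}(\kappa)=0$, so that the ideal generated by the components of $\kappa_{+}$ is contained in that generated by the components of $\kappa$ — becomes an isomorphism.

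Case (i) is immediate: if $\mathcal{M}_{c}$ is smooth, then at every $\mathcal{F}\in\mathcal{M}_{c}$ a Kuranishi map $\kappa$ can be taken identically zero, so $\kappa_{+}=0$ and $\kappa_{+}^{-1}(0)=\mathrm{Ext}^{1}(\mathcal{F},\mathcal{F})=\kappa^{-1}(0)$. The choice of self-dual subspace $\mathrm{Ext}^{2}_{+}(\mathcal{F},\mathcal{F})$ plays no role, as $\pi_{+}$ is only ever applied to $0$, so there is nothing to reconcile across overlaps and $\overline{\mathcal{M}}_{c}^{DT_{4}}$ is $\mathcal{M}_{c}$ with its underlying (smooth) real analytic structure. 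This is Proposition \ref{gene DT4 if Mc smooth}.

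For case (ii), fix at each $\mathcal{F}$ the given linear isometry $(\mathrm{Ext}^{2}(\mathcal{F},\mathcal{F}),Q_{Serre})\cong(T^{*}V_{\mathcal{F}},Q_{std})=(V_{\mathcal{F}}\oplus V_{\mathcal{F}}^{*},Q_{std})$ with $\mathrm{Image}(\kappa)\subseteq V_{\mathcal{F}}$, choose a Hermitian inner product $h$ on $V_{\mathcal{F}}$, and set $\mathrm{Ext}^{2}_{+}(\mathcal{F},\mathcal{F}):=\{(v,h(-,v)):v\in V_{\mathcal{F}}\}$. I would then check the three routine points: (a) this is a totally real subspace of real dimension $\dim_{\mathbb{C}}\mathrm{Ext}^{2}(\mathcal{F},\mathcal{F})$; (b) on it $Q_{Serre}=Q_{std}$ restricts to $(v,v')\mapsto 2\,\mathrm{Re}\,h(v,v')$, which is real-valued and positive definite, so it is an admissible choice of self-dual subspace; (c) with respect to $T^{*}V_{\mathcal{F}}=\mathrm{Ext}^{2}_{+}\oplus i\,\mathrm{Ext}^{2}_{+}$ the projection $\pi_{+}$ restricts on $V_{\mathcal{F}}$ to the real-linear isomorphism $v\mapsto(\tfrac{1}{2}v,h(-,\tfrac{1}{2}v))$ onto $\mathrm{Ext}^{2}_{+}$. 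Granting this, since $\mathrm{Image}(\kappa)\subseteq V_{\mathcal{F}}$ we get $\kappa_{+}=(\pi_{+}|_{V_{\mathcal{F}}})\circ\kappa$, i.e. $\kappa$ followed by a fixed real-linear isomorphism; hence the ideals generated by the components of $\kappa_{+}$ and of $\kappa$ agree, and $\kappa_{+}^{-1}(0)=\kappa^{-1}(0)$ as real analytic subspaces of $\mathrm{Ext}^{1}(\mathcal{F},\mathcal{F})$ — only equality of underlying real analytic structures is claimed, since $\pi_{+}|_{V_{\mathcal{F}}}$ is not $\mathbb{C}$-linear. This uses Proposition \ref{ob=v+v*} for the decomposition $\mathrm{Ext}^{2}=V\oplus V^{*}$.

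The one genuinely delicate point is the globalization: the data $V_{\mathcal{F}}$, the metric $h$, hence $\mathrm{Ext}^{2}_{+}(\mathcal{F},\mathcal{F})$, are chosen pointwise, and one must argue that the space obtained by gluing the charts $\kappa_{+}^{-1}(0)$ is well defined and independent of these choices. The resolution is that in both cases each chart $\kappa_{+}^{-1}(0)$ carries a canonical identification with the corresponding Kuranishi chart $\kappa^{-1}(0)$ of $\mathcal{M}_{c}$ — the identity in case (i), the isomorphism induced by the fixed linear map $\pi_{+}|_{V_{\mathcal{F}}}$ in case (ii) — under which the $\kappa_{+}$-zero locus goes to the $\kappa$-zero locus; transporting the already cocycle-compatible transition maps of $\mathcal{M}_{c}$ through these identifications furnishes transition maps for the charts $\kappa_{+}^{-1}(0)$, proving simultaneously that $\overline{\mathcal{M}}_{c}^{DT_{4}}$ exists and that the closed embedding $\mathcal{M}_{c}\hookrightarrow\overline{\mathcal{M}}_{c}^{DT_{4}}$ is a local, hence global, isomorphism of real analytic spaces. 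As in Theorem \ref{mo mDT4}, one records that this embedding is a homeomorphism on underlying topological spaces, which pins the point set down on the nose.
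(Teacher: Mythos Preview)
Your proof is correct and follows essentially the same route as the paper. The only cosmetic difference is that in case~(ii) you build $\mathrm{Ext}^{2}_{+}$ explicitly as the graph $\{(v,h(-,v))\}$, while the paper defines an operator $*_{4}$ on $V_{\mathcal{F}}\oplus V_{\mathcal{F}}^{*}$ by $Q_{Serre}(\alpha,*_{4}\beta)=h(\alpha,\beta)$ and observes $*_{4}(V_{\mathcal{F}})\subseteq V_{\mathcal{F}}^{*}$; unwinding gives exactly your $\pi_{+}(v,0)=(\tfrac{1}{2}v,h(-,\tfrac{1}{2}v))$, and your explicit remark that the ideals generated by the components of $\kappa$ and $\kappa_{+}$ coincide (hence equality as real analytic spaces, not just sets) makes precise what the paper leaves implicit.
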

\begin{remark}
Under any one of the above assumptions, we will no longer need to assume $\mathcal{M}_{c}$ contains any stable bundle.
\end{remark}
${}$ \\
\textbf{Orientation}.
The orientability issue for $\mathcal{M}^{DT_{4}}_{c}$ is concerning the determinant line bundle $\mathcal{L}$ (a real line bundle) of the index bundle of twisted Dirac operators.
We recall that if $\mathcal{M}_{c}^{bdl}\neq\emptyset$,
then $\mathcal{M}_{c}^{DT_{4}}\cong\mathcal{M}_{c}^{bdl}$ as topological spaces (Theorem \ref{mo mDT4}). In this case,
\begin{equation}\mathcal{L}|_{E}\cong\big(\wedge^{top}Ext^{2}_{+}(E,E)\big)^{-1}\otimes \wedge^{top}Ext^{1}(E,E), \nonumber \end{equation}
where $Ext^{2}_{+}(E,E)$ is the self-dual subspace of $Ext^{2}(E,E)$.

While on $\mathcal{M}_{c}^{bdl}$, its determinant line bundle $\mathcal{L}_{\mathbb{C}}$ exists as a complex line bundle such that
\begin{equation}\mathcal{L}_{\mathbb{C}}|_{E}\cong\big(\wedge^{top}Ext^{even}(E,E)\big)^{-1}\otimes \wedge^{top}Ext^{odd}(E,E).
\nonumber \end{equation}
By Serre duality, $\mathcal{L}_{\mathbb{C}}$ is endowed with a non-degenerate quadratic form $Q_{Serre}$. By Remark \ref{remark1}, one can check
that $\mathcal{L}$ is a real form of $(\mathcal{L}_{\mathbb{C}},Q_{Serre})$, i.e.
\begin{equation}\mathcal{L}_{\mathbb{C}}\cong\mathcal{L}\otimes\mathbb{C}. \nonumber \end{equation}
By the theory of quadratic bundles (complex vector bundles with non-degenerate quadratic forms) \cite{eg}, $\mathcal{L}$ is orientable if and only if
the structure group of $(\mathcal{L}_{\mathbb{C}},Q_{Serre})$ can be reduced to $SO(1,\mathbb{C})$.
Meanwhile, the choice of a reduction corresponds exactly to a choice of an orientation of $\mathcal{L}$ on $\mathcal{M}_{c}^{DT_{4}}$.
\begin{definition}\label{nat cpx ori}
A reduction of the structure group of $(\mathcal{L}_{\mathbb{C}},Q_{Serre})$ to $SO(1,\mathbb{C})$
is called a choice of an orientation of $(\mathcal{L}_{\mathbb{C}},Q_{Serre})$.
If the corresponding real line bundle $\mathcal{L}$ is the determinant line bundle of some complex vector bundle,
$(\mathcal{L}_{\mathbb{C}},Q_{Serre})$ is said to have a natural complex orientation, denoted by $o(\mathcal{O})$.
\end{definition}
\begin{remark}
By the above discussion, a choice of an orientation of $(\mathcal{L}_{\mathbb{C}},Q_{Serre})$ is equivalent to a choice of an orientation of $\mathcal{L}$ on $\mathcal{M}_{c}^{DT_{4}}$. Note that $(\mathcal{L}_{\mathbb{C}},Q_{Serre})$ has the advantage over $\mathcal{L}$ by being well defined also on $\mathcal{M}_{c}$ whereas $\mathcal{L}$ can be defined even when $c\notin\bigoplus_{k}\textrm{ }H^{k,k}(X)$. \\
\end{remark}
To pick a coherent choice of orientations for all components of the moduli space, as in Donaldson theory \cite{dk}, we need to extend the index bundle and its determinant line bundle to some big connected space such that $\mathcal{M}_{c}^{DT_{4}}$ (or $\mathcal{M}_{c}$) embeds inside with induced index (or determinant line) bundle.
We know $\mathcal{M}_{c}^{DT_{4}}\hookrightarrow \mathcal{B}^{*}$, where the determinant line bundle $\mathcal{L}$ extends naturally.

For $\mathcal{M}_{c}$ with $Hol(X)=SU(4)$, by Joyce-Song's work \cite{js} on Seidel-Thomas twists \cite{st}, we can identify it with some component(s) of $\mathcal{M}_{si}$, a coarse moduli space of simple holomorphic bundles with fixed Chern classes, where the corresponding quadratic bundle $(\mathcal{L}_{\mathbb{C}},Q_{Serre})$ is identified. By choosing a Hermitian metric, we can further imbed $\mathcal{M}_{si}$ into the space $\mathcal{B}^{*}$ of gauge equivalence classes of irreducible unitary connections, where the determinant line bundle $\mathcal{L}$ of twisted Dirac operators mentioned before is defined. Note that, one choice of an orientation of $\mathcal{L}$ gives an orientation of $(\mathcal{L}_{\mathbb{C}},Q_{Serre})$ on $\mathcal{M}_{si}$.

By \cite{dk}, $\mathcal{B}^{*}$ is connected, there are only two orientations for any orientable bundle. We assume from now on the determinant line bundle $\mathcal{L}$ on $\mathcal{B}^{*}$ is orientable.
\begin{definition}\label{ori data}
An \textit{orientation data} of $DT_{4}$ theory, denoted by $o(\mathcal{L})$, is a choice
of an orientation of $(\mathcal{L}_{\mathbb{C}},Q_{Serre})$ on $\mathcal{M}_{c}$ (Definition \ref{nat cpx ori}) which is induced from an orientation of $\mathcal{L}$ on $\mathcal{B}^{*}$ via Seidel-Thomas twists.
\end{definition}
\begin{remark} ${}$ \\
1. The orientation data may involve choices of Seidel-Thomas twists for $\mathcal{M}_{c}$. \\
2. Making a choice of orientation on $\mathcal{L}$ from the ambient space $\mathcal{B}^{*}$ is for the purpose of deformation invariance of the theory. If we have some natural orientation of $(\mathcal{L}_{\mathbb{C}},Q_{Serre})$ on $\mathcal{M}_{c}$, such as $(\mathcal{L}_{\mathbb{C}},Q_{Serre})$ has a natural complex orientation \footnote{We will see there are many such examples.} (Definition \ref{nat cpx ori}), then we will just use that natural orientation without referring to $\mathcal{B}^{*}$.
\end{remark}
The existence of the orientation data is partially given by the following theorem.
\begin{theorem}(Theorem \ref{orientablity theorem}) ${}$ \\
For any compact Calabi-Yau 4-fold $X$ such that $H_{odd}(X,\mathbb{Z})=0$, and $U(r)$ bundle $E\rightarrow X$, the determinant line bundle $\mathcal{L}$ of the index bundle of twisted Dirac operators over the space $\mathcal{B}^{*}$ is trivial.
\end{theorem}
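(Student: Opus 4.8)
The plan is to prove that the real line bundle $\mathcal{L}\to\mathcal{B}^{*}$ is orientable; since a real line bundle over a connected space is trivial iff it is orientable and $\mathcal{B}^{*}$ is connected \cite{dk}, this is exactly the stated triviality. Equivalently I will show $w_{1}(\mathcal{L})=0$ in $H^{1}(\mathcal{B}^{*};\mathbb{Z}/2)$. Because $\mathcal{A}^{*}$ is contractible and $\mathcal{G}^{0}$ acts freely on it, $\mathcal{B}^{*}$ is a classifying space for $\mathcal{G}^{0}$, so $\pi_{1}(\mathcal{B}^{*})\cong\pi_{0}(\mathcal{G}^{0})\cong\pi_{0}(\mathcal{G})$ (here $C(G)=U(1)$ is connected), and it suffices to show that the monodromy of $\mathcal{L}$ along any loop $\gamma$ representing a class of $\pi_{0}(\mathcal{G})$ is $+1$. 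First I record the topological consequence of the hypothesis: by Poincar\'e duality $\mathrm{Tors}\,H_{2k}(X;\mathbb{Z})\cong\mathrm{Tors}\,H_{7-2k}(X;\mathbb{Z})=0$, so $H_{\mathrm{even}}(X;\mathbb{Z})$ is torsion free, and then universal coefficients give $H^{\mathrm{odd}}(X;\mathbb{Z})=0$ and $H^{\mathrm{odd}}(X;\mathbb{Z}/2)=0$.

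Next, represent $\gamma$ by a path $(A_{t})_{t\in[0,1]}$ in $\mathcal{A}^{*}$ with $A_{1}=u\cdot A_{0}$ for a unitary gauge transformation $u$. The monodromy of $\mathcal{L}$ along $\gamma$ is $(-1)^{\mathrm{SF}}$, where $\mathrm{SF}\in\mathbb{Z}/2$ is the mod-$2$ spectral flow of the associated family of twisted Dirac operators. By the usual suspension (mapping torus) argument, $\mathrm{SF}$ equals the $\mathbb{Z}/2$-valued index of the corresponding real linear elliptic operator $\widetilde{D}$ on the closed spin $9$-manifold $X\times S^{1}$ (here $X$ is spin, being K\"ahler with $c_{1}=0$), coupled to the real bundle $\mathfrak{u}(\widetilde{E})$, where $\widetilde{E}\to X\times S^{1}$ is obtained by clutching $E$ around the $S^{1}$-factor by $u$; such a $\mathbb{Z}/2$-index exists since $9\equiv 1\pmod 8$. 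By the Atiyah--Singer $\mathbb{Z}/2$-index theorem, $\mathrm{SF}$ is obtained by pairing the fundamental class $[X\times S^{1}]_{\mathbb{Z}/2}$ with a characteristic class built from $T(X\times S^{1})=\pi_{X}^{*}TX\oplus\underline{\mathbb{R}}$ and from $\mathfrak{u}(\widetilde{E})$.

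I claim these characteristic classes are all pulled back from $X$. For $T(X\times S^{1})$ this is immediate. For $\mathfrak{u}(\widetilde{E})$: by the K\"unneth theorem $c_{i}(\widetilde{E})$ lies in $H^{2i}(X;\mathbb{Z})\oplus\bigl(H^{2i-1}(X;\mathbb{Z})\otimes H^{1}(S^{1};\mathbb{Z})\bigr)$, and the second summand vanishes because $H^{\mathrm{odd}}(X;\mathbb{Z})=0$; restricting to $X\times\{\mathrm{pt}\}$ identifies the first summand with $c_{i}(E)$, so $c_{i}(\widetilde{E})=\pi_{X}^{*}c_{i}(E)$. The bundle $\mathfrak{u}(\widetilde{E})$ is oriented (the adjoint action of $U(r)$ lands in $SO(r^{2})$), so its odd Stiefel--Whitney classes are $\mathrm{Sq}^{1}$ of the even ones, and the even ones are polynomials in the $c_{i}(\widetilde{E})\bmod 2$; hence $w(\mathfrak{u}(\widetilde{E}))=\pi_{X}^{*}w(\mathfrak{u}(E))$. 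Therefore the class appearing in the $\mathbb{Z}/2$-index formula lies in $\pi_{X}^{*}H^{*}(X;\mathbb{Z}/2)$, which has no degree-$9$ component since $\dim X=8$ (so $H^{9}(X;\mathbb{Z}/2)=0$); its pairing with $[X\times S^{1}]_{\mathbb{Z}/2}$ is thus zero. Hence $\mathrm{SF}=0$ for every loop, $w_{1}(\mathcal{L})=0$, and $\mathcal{L}$ is trivial.

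The main obstacle is the identification in the second paragraph: carefully expressing the monodromy of the determinant line of this (real, non-self-adjoint, $DT_{4}$-type) family as a mod-$2$ spectral flow, passing to the mapping torus, and invoking the $\mathbb{Z}/2$-index theorem with the correct reality structure and the correct spin structure on the $S^{1}$-factor — the gauge-theoretic input ($H_{\mathrm{odd}}(X;\mathbb{Z})=0$ and the structure of $\pi_{0}(\mathcal{G})$) then enters only through the elementary cohomological facts above. An alternative, closer in spirit to \cite{dk}, is to first use excision and stabilization to reduce the computation of $w_{1}(\mathcal{L})$ to standard model bundles before running the index computation.
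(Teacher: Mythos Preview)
Your route is genuinely different from the paper's, and the paper's is considerably shorter. The paper does not compute $w_{1}(\mathcal{L})$ at all: after Donaldson's excision/stabilization to $SU(N)$ with $N\gg 0$, it simply proves that $\pi_{1}(\mathcal{B}^{*})\cong\pi_{0}(\mathcal{G})$ vanishes. Using $B\mathcal{G}\simeq \mathrm{Map}_{P}(X,BSU)$ and the Federer spectral sequence $E_{2}^{p,q}=H^{p}(X;\pi_{p+q}(BSU))\Rightarrow\pi_{q}(\mathrm{Map}_{P}(X,BSU))$, Bott periodicity makes the only nonzero coefficient groups on the line $q=1$ land at odd $p$, and these vanish by $H^{\mathrm{odd}}(X;\mathbb{Z})=0$. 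So $\pi_{1}(\mathcal{B}^{*})=0$ and \emph{every} real line bundle on $\mathcal{B}^{*}$ is trivial, with no index theory needed. Your closing ``alternative, closer in spirit to \cite{dk}'' is in fact exactly what the paper does.

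Your spectral-flow approach could in principle work, but there is a genuine gap beyond the one you flag. The sentence ``By the Atiyah--Singer $\mathbb{Z}/2$-index theorem, $\mathrm{SF}$ is obtained by pairing $[X\times S^{1}]_{\mathbb{Z}/2}$ with a characteristic class\ldots'' is not a correct statement of that theorem: the mod-$2$ index in dimension $\equiv 1\pmod 8$ is a $KO$-theoretic pushforward, and there is no cohomological formula for it in general. Your subsequent Stiefel--Whitney computation (which is fine: $H^{*}(BU(r);\mathbb{Z}/2)$ is generated by the $\bar c_{i}$, and $w_{2k+1}=Sq^{1}w_{2k}$ for oriented bundles) therefore does not settle the question. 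To repair this you would need $[\mathfrak{u}(\widetilde{E})]=\pi_{X}^{*}[\mathfrak{u}(E)]$ in $KO^{0}(X\times S^{1})$, i.e.\ that the clutching class in $KO^{-1}(X)$ vanishes; but controlling $KO^{-1}(X)$ from $H^{\mathrm{odd}}(X;\mathbb{Z})=0$ alone is not automatic (the $KO$-AHSS has $\mathbb{Z}/2$ coefficient groups in even degrees). In effect, making your argument rigorous pushes you back to showing the gauge transformation is stably null-homotopic, which is precisely the paper's computation.
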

\begin{theorem}\label{cor on ori data}(Corollary \ref{existence of ori data}) ${}$ \\
Let $X$ be a compact Calabi-Yau 4-fold with $Hol(X)=SU(4)$ and $H_{odd}(X,\mathbb{Z})=0$, $\mathcal{M}_{c}$ be a Gieseker moduli space of stable sheaves. Then the structure group of $(\mathcal{L}_{\mathbb{C}},Q_{Serre})$ can be reduced to $SO(1,\mathbb{C})$. Furthermore, the orientation data of $DT_{4}$ theory (Definition \ref{ori data}) exists in this case.
\end{theorem}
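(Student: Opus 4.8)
The plan is to reduce the statement to the orientability theorem on $\mathcal{B}^{*}$ (Theorem~\ref{orientablity theorem}), transporting that triviality to $\mathcal{M}_{c}$ along the Seidel--Thomas twist identification recalled above.

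\textbf{Step 1: transport the quadratic bundle into $\mathcal{B}^{*}$.} Since $Hol(X)=SU(4)$, I would invoke the Joyce--Song mechanism \cite{js}: a suitable composition of Seidel--Thomas twists \cite{st} sends each connected component of $\mathcal{M}_{c}$, as a moduli problem, to a union of components of a coarse moduli space $\mathcal{M}_{si}$ of simple holomorphic bundles with fixed Chern classes. Because these twists are autoequivalences of $D^{b}(X)$, they preserve the $\mathrm{Ext}$-complex and the Serre-duality pairing, so they identify $(\mathcal{L}_{\mathbb{C}},Q_{Serre})$ on $\mathcal{M}_{c}$ with the corresponding quadratic bundle on $\mathcal{M}_{si}$ \emph{as quadratic bundles}. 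Choosing Hermitian metrics on those bundles gives a continuous embedding $\mathcal{M}_{si}\hookrightarrow\mathcal{B}^{*}$ under which $(\mathcal{L}_{\mathbb{C}},Q_{Serre})$ is the restriction of the quadratic bundle built from the index of the twisted Dirac operator, whose real form is the determinant line bundle $\mathcal{L}$ on $\mathcal{B}^{*}$; here one uses $\mathcal{L}_{\mathbb{C}}\cong\mathcal{L}\otimes\mathbb{C}$ from the discussion preceding Definition~\ref{nat cpx ori}.

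\textbf{Step 2: reduce the structure group.} For a rank-one quadratic bundle, the theory of \cite{eg} identifies the obstruction to reducing the structure group from $O(1,\mathbb{C})$ to $SO(1,\mathbb{C})$ with $w_{1}$ of the real form, i.e.\ with the failure of $\mathcal{L}$ to be orientable. By Theorem~\ref{orientablity theorem} (using $H_{odd}(X,\mathbb{Z})=0$), $\mathcal{L}$ is trivial on $\mathcal{B}^{*}$, hence $w_{1}(\mathcal{L})=0$ there; restricting along the embedding of Step 1 and using the quadratic-bundle identification, $w_{1}$ of the real form of $(\mathcal{L}_{\mathbb{C}},Q_{Serre})|_{\mathcal{M}_{c}}$ vanishes, so its structure group reduces to $SO(1,\mathbb{C})$.

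\textbf{Step 3: produce the orientation data.} Since $\mathcal{B}^{*}$ is connected \cite{dk} and $\mathcal{L}$ is orientable on it, there are exactly two orientations of $\mathcal{L}$ over $\mathcal{B}^{*}$; fix one. Restricting it along $\mathcal{M}_{c}\hookrightarrow\mathcal{B}^{*}$ through the Seidel--Thomas identification yields an orientation of $(\mathcal{L}_{\mathbb{C}},Q_{Serre})$ on $\mathcal{M}_{c}$, which is precisely an orientation data in the sense of Definition~\ref{ori data}. The step I expect to require the most care is the compatibility assertion in Step 1: that the Seidel--Thomas twist identifies the Serre-duality quadratic bundles (not merely their underlying line bundles), and does so compatibly with both the embedding into $\mathcal{B}^{*}$ and with the real structure $\mathcal{L}\otimes\mathbb{C}\cong\mathcal{L}_{\mathbb{C}}$. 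A secondary point is verifying that $Hol(X)=SU(4)$ is exactly the hypothesis making the twisted objects genuine simple bundles on every component, so that the embedding into $\mathcal{B}^{*}$ is defined throughout; granting these, everything else is formal.
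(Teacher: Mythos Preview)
Your proposal is correct and follows essentially the same approach as the paper: the paper's proof is a single sentence reducing to Theorem~\ref{orientablity theorem} via the Joyce--Song application of Seidel--Thomas twists, which is precisely your Step~1 followed by Step~2, with Step~3 being the immediate consequence. The compatibility you flag as needing care (that the twist identifies the quadratic bundles, not just the underlying line bundles) is exactly the content of Corollary~\ref{st preserves L} in the paper.
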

${}$ \\
\textbf{Transversality}.
Finally, we come to the transversality issue, i.e. making sense of the fundamental class of the moduli space despite it may contain many components of different dimensions.
Firstly, we show that when $\mathcal{M}^{DT_{4}}_{c}$ is compact, its virtual fundamental class exists.
\begin{theorem}(Theorem \ref{main theorem}) \\
We assume $\overline{\mathcal{M}}_{c}=\mathcal{M}_{c}^{bdl}\neq\emptyset$ and there exists an orientation data $o(\mathcal{L})$,
then $\mathcal{M}^{DT_{4}}_{c}$ is compact and its virtual fundamental class exists as a cycle,
\begin{equation}[\mathcal{M}^{DT_{4}}_{c}]^{vir}\in H_{r}(\mathcal{B}^{*},\mathbb{Z}), \nonumber \end{equation}
where $r=2-\chi(E,E)$ is the real virtual dimension and $\chi(E,E)=\sum_{i}(-1)^{i}h^{i}(X,EndE)$
\footnote{By the Hirzebruch-Riemann-Roch theorem, $r$ depends only on $c$ and the topology of $X$.}.

Furthermore, if the above assumptions are satisfied by a continuous family of Calabi-Yau 4-folds $X_{t}$ parameterized by $t\in [0,1]$, then the virtual cycle in $H_{r}(\mathcal{B}^{*},\mathbb{Z})$ is independent of $t$.
\end{theorem}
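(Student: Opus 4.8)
The plan is to promote the local Kuranishi description of Theorem \ref{mo mDT4} to a global finite-dimensional model for $\mathcal{M}_c^{DT_4}$ inside $\mathcal{B}^*$ and then run the standard virtual-neighbourhood construction, orienting the output by the orientation data $o(\mathcal{L})$. \emph{Compactness} comes first and is essentially formal: since $\overline{\mathcal{M}}_c=\mathcal{M}_c^{bdl}\neq\emptyset$, the Gieseker moduli space $\mathcal{M}_c$ coincides with $\mathcal{M}_c^{bdl}$, so every point is a slope-stable bundle, hence simple, hence its Hermitian--Yang--Mills connection is irreducible, and therefore $\mathcal{M}_c^{DT_4}\hookrightarrow\mathcal{B}^*$. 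By Theorem \ref{mo mDT4}(1),(3), $\mathcal{M}_c^{DT_4}$ is homeomorphic to $\mathcal{M}_c^{bdl}=\mathcal{M}_c$, which is compact by our standing assumption; hence $\mathcal{M}_c^{DT_4}$ is a compact subspace of $\mathcal{B}^*$.

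\emph{Construction of the cycle.} By Theorem \ref{mo mDT4}(2), near each $d_A\in\mathcal{M}_c^{DT_4}$ the moduli space is the zero locus of a Kuranishi map $\tilde{\tilde\kappa}_{A,+}\colon H^{0,1}(X,EndE)\to H^{0,2}_+(X,EndE)$; equivalently, along $\mathcal{M}_c^{DT_4}$ the $DT_4$ equations cut it out inside $\mathcal{B}^*$ as the zero set of a Fredholm section with kernel $H^{0,1}$ and cokernel the real, half-dimensional space $H^{0,2}_+(X,EndE)=Ext^2_+(E,E)$. Using compactness, cover $\mathcal{M}_c^{DT_4}$ by finitely many such charts and, by the standard finite-dimensional reduction used in gauge-theoretic virtual-class constructions (add a finite-dimensional space of sections to kill the cokernels, then patch), obtain a finite-dimensional manifold $N$ with $\mathcal{M}_c^{DT_4}\subseteq N\subseteq\mathcal{B}^*$ as a compact subset, a finite-rank \emph{real} vector bundle $\mathrm{Ob}\to N$ restricting to $Ext^2_+(E,E)$ along $\mathcal{M}_c^{DT_4}$, and a smooth section $s$ of $\mathrm{Ob}$ with $s^{-1}(0)=\mathcal{M}_c^{DT_4}$, where $\dim_{\mathbb R}N-\mathrm{rk}\,\mathrm{Ob}$ equals the index $r=2-\chi(E,E)$ of the $DT_4$ deformation complex (a Serre-duality/Riemann--Roch computation). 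Since $\mathcal{L}|_E\cong(\wedge^{top}Ext^2_+(E,E))^{-1}\otimes\wedge^{top}Ext^1(E,E)$ and $\wedge^{top}Ext^1$ is canonically oriented, the orientation data $o(\mathcal{L})$ (Definition \ref{ori data}) orients $\mathrm{Ob}$ along $\mathcal{M}_c^{DT_4}$, hence, after orienting $N$, globally. Now perturb $s$ to a section $s'$ transverse to the zero section and $C^0$-close to $s$: the vanishing locus $Z=(s')^{-1}(0)$ is a compact oriented smooth $r$-manifold, and we define
\[[\mathcal{M}_c^{DT_4}]^{vir}:=\ \text{image of }[Z]\ \text{under}\ H_r(Z,\mathbb Z)\to H_r(\mathcal{B}^*,\mathbb Z),\]
equivalently the push-forward of $e(\mathrm{Ob})\cap[N]$. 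Two transverse perturbations are joined by a generic homotopy whose zero locus is an oriented cobordism inside $N$, and independence of the chart covering and of the stabilisation is the usual compatibility argument for virtual neighbourhoods; hence the cycle is well defined.

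\emph{Deformation invariance.} Let $\{X_t\}_{t\in[0,1]}$ be a smooth family with fixed underlying smooth $8$-manifold and smooth bundle $E$, so that $\mathcal{B}^*$ does not change, and suppose the hypotheses hold at every $t$. Then the parametrised moduli space $\bigsqcup_{t\in[0,1]}\mathcal{M}^{DT_4}_{c,t}\times\{t\}$ is compact in $\mathcal{B}^*\times[0,1]$, the uniform compactness coming from $\overline{\mathcal{M}}_{c,t}=\mathcal{M}^{bdl}_{c,t}$ for all $t$. Carrying out the construction above in families produces a finite-dimensional $\widetilde N\subseteq\mathcal{B}^*\times[0,1]$ with an oriented real bundle $\widetilde{\mathrm{Ob}}$ and a section restricting over $t=0$ and $t=1$ to the data of the previous paragraph; a transverse perturbation with the prescribed boundary behaviour gives a compact oriented $(r+1)$-manifold with boundary $Z_1\sqcup(-Z_0)$, whose image in $\mathcal{B}^*$ is a cobordism. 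Therefore $[\mathcal{M}^{DT_4}_{c,0}]^{vir}=[\mathcal{M}^{DT_4}_{c,1}]^{vir}$ in $H_r(\mathcal{B}^*,\mathbb Z)$.

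The main difficulty lies in the second step: assembling the local Kuranishi charts of Theorem \ref{mo mDT4} into a single finite-dimensional virtual neighbourhood carrying a globally defined real obstruction bundle whose determinant line is coherently identified with $\mathcal{L}$ --- the delicate compatibility of transition maps and stabilisations that one always meets in Kuranishi-type constructions --- and, for the family statement, producing a genuinely relative such structure together with the required uniform compactness. By contrast, the orientation bookkeeping is entirely taken care of by the assumed existence of $o(\mathcal{L})$, and the compactness of $\mathcal{M}_c^{DT_4}$ itself is immediate from Theorem \ref{mo mDT4}.
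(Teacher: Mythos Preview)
Your argument is essentially correct but takes a different route from the paper. You assemble a finite-dimensional virtual neighbourhood by patching the local Kuranishi charts of Theorem \ref{mo mDT4}; the paper instead works globally from the start with the Fredholm Banach bundle
\[
\mathcal{E}=\mathcal{A}^{*}\times_{\mathcal{G}^{0}}\bigl(\Omega^{0}(X,g_{E})_{k-1}\oplus\Omega^{0,2}_{+}(X,EndE)_{k-1}\bigr)\longrightarrow\mathcal{B}^{*},
\]
with global section $s=(\wedge F,F^{0,2}_{+})$. After checking directly that $s$ is Fredholm (computing $\ker(ds)|_{A}\cong H^{0,1}(X,EndE)$ and $\mathrm{coker}(ds)|_{A}\cong H^{0}(X,g_{E})\oplus H^{0,2}_{+}(X,EndE)$), the paper simply invokes Brussee's Euler class for oriented Fredholm Banach bundles (Proposition~14 of \cite{brussee}) to obtain $[\mathcal{M}^{DT_{4}}_{c}]^{vir}$, and deformation invariance is proved by writing down explicit one-parameter families of sections of $\mathcal{E}$ (for varying $\Omega$, $h$, or complex structure $J_{t}$) and again applying \cite{brussee}. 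The advantage of the paper's approach is precisely that it sidesteps the ``delicate compatibility of transition maps and stabilisations'' you flag as the main difficulty: because $s$ and $\mathcal{E}$ are globally defined, no gluing of local models is ever needed, and the orientation data $o(\mathcal{L})$ orients the index bundle globally on $\mathcal{B}^{*}$. Your approach, by contrast, is closer in spirit to the Li--Tian/Fukaya--Ono virtual neighbourhood machinery and would in principle be more portable to settings where a global Banach-bundle description is unavailable, but here the global structure is present and makes the argument substantially shorter.
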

We also define virtual fundamental classes of $\overline{\mathcal{M}}^{DT_{4}}_{c}$'s (see Definition \ref{virtual cycle when Mc smooth}, \ref{virtual cycle when ob=v+v*}) for the above two gluable cases where $\overline{\mathcal{M}}^{DT_{4}}_{c}\cong\mathcal{M}_{c}$
as real analytic spaces. \\
${}$ \\
\textbf{Axioms of $DT_{4}$ invariants}.
Similar to the case of Donaldson theory for four-manifolds \cite{dk}, we can use $\mu$-maps to cut down degrees of virtual fundamental classes and define the corresponding $DT_{4}$ invariants (Definition \ref{DT4 inv of bundles}, \ref{DT4 inv of sheaves}).

At the moment, we only define $DT_{4}$ invariants in several cases under different assumptions, i.e. under any one of the following assumptions and the assumption on the existence of the orientation data $o(\mathcal{L})$ (which is partially solved in Corollary \ref{cor on ori data} and cases (iii) and (iii') are always naturally oriented), we can define $DT_{4}$ invariants,  \\
(i) if the Gieseker moduli space consists of slope-stable bundles only, i.e. $\overline{\mathcal{M}}_{c}=\mathcal{M}_{c}^{bdl}\neq\emptyset$, or \\
(ii) if the Gieseker moduli space is smooth and consists of stable sheaves only, i.e. $\overline{\mathcal{M}}_{c}=\mathcal{M}_{c}$ is smooth, or  \\
(iii) if the Gieseker moduli space of compactly supported sheaves $\overline{\mathcal{M}}_{c}(K_{Y},\pi^{*}\mathcal{O}_{Y}(1))$ consists of slope-stable sheaves only, where $(Y,\mathcal{O}_{Y}(1))$ is a polarized Fano 3-fold, or more generally, \\
(iii') if $\overline{\mathcal{M}}_{c}=\mathcal{M}_{c}$ and there exists a perfect obstruction theory \cite{bf}
\begin{equation}\phi: \quad \mathcal{V}^{\bullet}\rightarrow \mathbb{L}^{\bullet}_{\mathcal{M}_{c}},  \nonumber \end{equation}
such that
\begin{equation}H^{0}(\mathcal{V}^{\bullet})|_{\{\mathcal{F}\}}\cong Ext^{1}(\mathcal{F},\mathcal{F}), \nonumber \end{equation}
\begin{equation}H^{-1}(\mathcal{V}^{\bullet})|_{\{\mathcal{F}\}}\oplus
H^{-1}(\mathcal{V}^{\bullet})|_{\{\mathcal{F}\}}^{*}\cong Ext^{2}(\mathcal{F},\mathcal{F}),  \nonumber \end{equation}
and $H^{-1}(\mathcal{V}^{\bullet})|_{\{\mathcal{F}\}}$ is a maximal isotropic subspace of $Ext^{2}(\mathcal{F},\mathcal{F})$ with respect
to the Serre duality pairing. \\

To make all these cases consistent, we propose several axioms that $DT_{4}$ invariants should satisfy.
Axioms $(6)$-$(8)$ are showed in the paper and axioms $(1)$-$(5)$ are verified when we have definitions of virtual fundamental classes of (generalized) $DT_{4}$ moduli spaces.
\begin{axiom}Given a triple $(X,\mathcal{O}_{X}(1),c)$ and an auxiliary choice of an orientation data $o(\mathcal{L})$, where $(X,\mathcal{O}_{X}(1))$ is a polarized Calabi-Yau 4-fold,
$c\in H^{even}_{c}(X,\mathbb{Q})$ is a (compactly supported) cohomology class,
the $DT_{4}$ invariant (i.e. Donaldson-Thomas 4-fold invariant) of this quadruple, denoted by $DT_{4}(X,\mathcal{O}_{X}(1),c,o(\mathcal{L}))$ is a map
\begin{equation}DT_{4}(X,\mathcal{O}_{X}(1),c,o(\mathcal{L})): Sym^{*}\big(H_{*}(X,\mathbb{Z})\otimes \mathbb{Z}[x_{1},x_{2},...]\big)
\rightarrow \mathbb{Z}, \nonumber \end{equation}
($Sym$ means graded symmetric with respect to the parity of the degree of $H_{*}(X)$ ) such that:  \\
$\textbf{(1)}$ \textbf{Orientation reversed}
\begin{equation}DT_{4}(X,\mathcal{O}_{X}(1),c,o(\mathcal{L}))=-DT_{4}(X,\mathcal{O}_{X}(1),c,-o(\mathcal{L})),  \nonumber \end{equation}
where $-o(\mathcal{L})$ denotes the opposite orientation of $o(\mathcal{L})$. \\
$\textbf{(2)}$ \textbf{Deformation invariance}
\begin{equation}DT_{4}(X_{0},\mathcal{O}_{X_{0}}(1),c,o(\mathcal{L}_{0}))=DT_{4}(X_{1},\mathcal{O}_{X_{1}}(1),c,o(\mathcal{L}_{1})),  \nonumber \end{equation}
where $(X_{t},\mathcal{O}_{X_{t}}(1))$ is a continuous family of complex structures and $o(\mathcal{L}_{t})$ is an orientation data on the family determinant line bundle with $t\in [0,1]$. \\
$\textbf{(3)}$ \textbf{Vanishing for certain virtual dimensions  }
\begin{equation}DT_{4}(X,\mathcal{O}_{X}(1),c,o(\mathcal{L}))=0,  \nonumber \end{equation}
if $X$ is compact and any one of the following two conditions is satisfied, \\
(i) $\chi(\mathcal{F},\mathcal{F})>2$, or
(ii) $\chi(\mathcal{F},\mathcal{F})$ is odd and $H^{odd}(X,\mathbb{Z})=0$.
$\chi(\mathcal{F},\mathcal{F})$ is the holomorphic Euler characteristic uniquely determined by $c$ and the topology of $X$.  \\
$\textbf{(4)}$ \textbf{Vanishing for certain choices of $c$  }
\begin{equation}DT_{4}(X,\mathcal{O}_{X}(1),c,o(\mathcal{L}))=0,  \nonumber \end{equation}
if $X$ is compact and any one of the following two conditions is satisfied, \\
(i) $c|_{H^{4}(X,\mathbb{Q})}$ has no component in $H^{0,4}(X)$ and $c\notin \bigoplus_{i=0}^{4}H^{i,i}(X)$, or \\
(ii) $c\in \bigoplus_{i=0}^{4}H^{i,i}(X)$ and $\exists\textrm{ } \varphi\in H^{1}(X,TX)$ such that $\varphi\lrcorner\textrm{ } \big(c|_{H^{2,2}(X,\mathbb{Q})}\big)\neq0$ (Proposition \ref{vanishing for some c}). \\
$\textbf{(5)}$ \textbf{Vanishing for compact hyper-K\"ahler manifolds  }
\begin{equation}DT_{4}(X,\mathcal{O}_{X}(1),c,o(\mathcal{L}))=0, \footnote{Orientation data is not defined for hyper-K\"ahler manifolds. The statement is: with respect to any orientation of $(\mathcal{L}_{\mathbb{C}},Q_{Serre})$, the invariant is zero.}  \nonumber \end{equation}
when $X$ is compact hyper-K\"ahler (\ref{nu+}).  \\
$\textbf{(6)}$ \textbf{$DT_{4}/DT_{3}$ correspondence}
\begin{equation}DT_{4}(X,\pi^{*}\mathcal{O}_{Y}(1),c,o(\mathcal{O}))=DT_{3}(Y,\mathcal{O}_{Y}(1),c'), \nonumber \end{equation}
if $c=(0,c|_{H_{c}^{2}(X)}\neq 0,c|_{H_{c}^{4}(X)},c|_{H_{c}^{6}(X)},c|_{H_{c}^{8}(X)})$ and the Gieseker moduli space of compactly supported sheaves $\overline{\mathcal{M}}_{c}(X,\pi^{*}\mathcal{O}_{Y}(1))$ consists of slope-stable sheaves, where $\pi: X=K_{Y}\rightarrow Y$ is projection and $(Y,\mathcal{O}_{Y}(1))$ is a polarized compact Fano 3-fold.

In this setup, sheaves in $\overline{\mathcal{M}}_{c}(X,\pi^{*}\mathcal{O}_{Y}(1))$ are of type $\iota_{*}(\mathcal{F})$ where $\iota: Y\rightarrow K_{Y}$ is the zero section and $c'=ch(\mathcal{F})\in H^{even}(Y)$ is uniquely determined by $c$. $o(\mathcal{O})$ is the natural complex orientation of $(\mathcal{L}_{\mathbb{C}},Q_{Serre})$ over $\mathcal{M}_{c}$.

$DT_{3}(Y,\mathcal{O}_{Y}(1),c')$ is the $DT_{3}$ invariant of $(Y,\mathcal{O}_{Y}(1))$ with certain insertion fields (
see Theorem \ref{compact supp DT4}). \\
$\textbf{(7)}$ \textbf{Normalization 1}
\begin{equation}DT_{4}(X,\mathcal{O}_{X}(1),c,o(\mathcal{L}))=DT_{4}^{\mu_{1}}(X,\mathcal{O}_{X}(1),c,o(\mathcal{L})), \nonumber \end{equation}
if $X$ is compact and $\overline{\mathcal{M}}_{c}=\mathcal{M}_{c}^{bdl}\neq\emptyset$.

$DT_{4}^{\mu_{1}}(X,\mathcal{O}_{X}(1),c,o(\mathcal{L}))$ is defined using the virtual fundamental class of $\mathcal{M}_{c}^{DT_{4}}$ and the corresponding $\mu$-map (\ref{mu map for bundles}). \\
$\textbf{(8)}$ \textbf{Normalization 2}
\begin{equation}DT_{4}(X,\mathcal{O}_{X}(1),c,o(\mathcal{L}))=DT_{4}^{\mu_{2}}(X,\mathcal{O}_{X}(1),c,o(\mathcal{L})), \nonumber \end{equation}
if $\overline{\mathcal{M}}_{c}=\mathcal{M}_{c}\neq\emptyset$ is smooth or satisfies the condition in Definition \ref{virtual cycle when ob=v+v*}.

$DT_{4}^{\mu_{2}}(X,\mathcal{O}_{X}(1),c,o(\mathcal{L}))$ is defined using the virtual fundamental class of $\overline{\mathcal{M}}_{c}^{DT_{4}}$ and the corresponding $\mu$-map (\ref{u2 map}).
\end{axiom}
In normalization axioms, the construction depends on the existence of virtual fundamental classes and $\mu$-map descendent
fields as mentioned above. Throughout the paper, we will often only mention $DT_{4}$ virtual cycles (virtual fundamental classes of generalized $DT_{4}$ moduli spaces) instead of using
the corresponding $DT_{4}$ invariants for convenience purposes. \\
${}$ \\
\textbf{Computational examples}.
Li-Qin \cite{lq} had provided examples when $\overline{\mathcal{M}}_{c}=\mathcal{M}_{c}^{bdl}\neq\emptyset$. By studying them,
we have the following wall-crossing phenomenon in $DT_{4}$ theory.
\begin{theorem}(Theorem \ref{liqin eg}) ${}$ \\
Let $X$ be a generic smooth hyperplane section in $\mathbb{P}^{1}\times\mathbb{P}^{4}$ of bi-degree $(2,5)$.
Let
\begin{equation}cl=[1+(-1,1)|_{X}]\cdot[1+(\epsilon_{1}+1,\epsilon_{2}-1)|_{X}],
\nonumber
\end{equation}
\begin{equation}k=(1+\epsilon_{1})\left(\begin{array}{l}6-\epsilon_{2} \\ \quad 4\end{array}\right), \quad \epsilon_{1},\epsilon_{2}=0,1.
\nonumber\end{equation}
We denote $\overline{\mathcal{M}}_{c}(L_{r})$ to be the moduli space of Gieseker $L_{r}$-semi-stable rank two
torsion-free sheaves with Chern character $c$ (which can be easily read from the total Chern class $cl$), where $L_{r}=\mathcal{O}_{\mathbb{P}^{1}\times\mathbb{P}^{4}}(1,r)|_{X}$. \\
$(1)$ If
\begin{equation}\frac{15(2-\epsilon_{2})}{6+5\epsilon_{1}+2\epsilon_{2}}<r<\frac{15(2-\epsilon_{2})}{\epsilon_{1}(1+2\epsilon_{2})},
\nonumber\end{equation}
then $\overline{\mathcal{M}}_{c}^{DT_{4}}$ exists and $\overline{\mathcal{M}}_{c}^{DT_{4}}\cong\overline{\mathcal{M}}_{c}(L_{r})\cong\mathbb{P}^{k}$, $[\overline{\mathcal{M}}_{c}^{DT_{4}}]^{vir}=[\mathbb{P}^{k}]$. \\
${}$ \\
$(2)$ If
\begin{equation} 0<r<\frac{15(2-\epsilon_{2})}{6+5\epsilon_{1}+2\epsilon_{2}},\nonumber\end{equation}
then $\overline{\mathcal{M}}_{c}^{DT_{4}}=\emptyset $ and $[\overline{\mathcal{M}}_{c}^{DT_{4}}]^{vir}=0$.
\end{theorem}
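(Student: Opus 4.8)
\emph{Proof proposal.} The plan is to deduce the statement from Li-Qin's variation-of-polarization analysis \cite{lq} together with the gluing result Proposition \ref{condition of vir gene DT4} and the virtual-cycle constructions of the previous sections; the only genuinely new input is a handful of cohomology computations on $X$.

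First I would set $L_1=\mathcal{O}_X(-1,1)$ and $L_2=\mathcal{O}_X(\epsilon_1+1,\epsilon_2-1)$, so that $cl$ is the total Chern class of $L_1\oplus L_2$, and compute the relevant $L_r$-slopes using $[X]=2H_1+5H_2$ in $\mathbb{P}^1\times\mathbb{P}^4$ (with $H_1^2=0$): this gives $\mu_{L_r}(L_1)=15r^2-3r^3$ and $\mu_{L_r}(L_2)=(5\epsilon_1+2\epsilon_2+3)r^3+15(\epsilon_2-1)r^2$, so that $\mu_{L_r}(L_1)<\mu_{L_r}(L_2)$ exactly when $r>\frac{15(2-\epsilon_2)}{6+5\epsilon_1+2\epsilon_2}$, which is both the lower wall and the $\mu_{L_r}$-stability condition for a non-split extension $0\to L_1\to\mathcal{F}\to L_2\to 0$. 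For $r$ strictly inside the first chamber, Li-Qin's wall-crossing theorem identifies $\overline{\mathcal{M}}_c(L_r)$ with $\mathbb{P}(Ext^1_X(L_2,L_1))=\mathbb{P}(H^1(X,\mathcal{O}_X(-\epsilon_1-2,2-\epsilon_2)))$, every such extension being automatically locally free and $L_r$-stable (the upper wall is where a further sub-line-bundle would destabilize, vacuous for $\epsilon_1=0$). The Koszul (restriction) sequence for $X\subset\mathbb{P}^1\times\mathbb{P}^4$ together with Bott vanishing then gives $H^i(X,\mathcal{O}_X(-\epsilon_1-2,2-\epsilon_2))=0$ for $i\neq 1$ and $h^1=(\epsilon_1+1)\binom{6-\epsilon_2}{4}=k+1$, hence $\overline{\mathcal{M}}_c(L_r)\cong\mathbb{P}^k$.

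Next, applying $Hom_X(-,\mathcal{F})$ and $Hom_X(\mathcal{F},-)$ to the extension sequence and using $Ext^\bullet_X(L_i,L_j)=H^\bullet(X,L_i^\vee\otimes L_j)$ — the needed groups being computed exactly as above, and with the additional vanishing $Ext^1_X(L_1,L_2)=H^1(X,\mathcal{O}_X(\epsilon_1+2,\epsilon_2-2))=0$ — I would show $Ext^0(\mathcal{F},\mathcal{F})=\mathbb{C}$, $\dim Ext^1(\mathcal{F},\mathcal{F})=k$ and $Ext^2(\mathcal{F},\mathcal{F})=0$ at every point, consistently with $\chi(\mathcal{F},\mathcal{F})=2\chi(\mathcal{O}_X)-2(k+1)=2-2k$ by Hirzebruch-Riemann-Roch (so the real virtual dimension $2-\chi$ equals $2k=\dim_{\mathbb{R}}\mathbb{P}^k$). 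Thus all Kuranishi maps vanish, $\mathcal{M}_c=\overline{\mathcal{M}}_c=\mathcal{M}_c^{bdl}\neq\emptyset$ is smooth of dimension $k$, and the universal extension gives an \'etale bijection $\mathbb{P}^k\to\mathcal{M}_c$, so $\mathcal{M}_c\cong\mathbb{P}^k$ as schemes. By Proposition \ref{condition of vir gene DT4}(i) (equivalently, since $\mathcal{M}_c=\mathcal{M}_c^{bdl}$, by Theorem \ref{mo mDT4} and Theorem \ref{main theorem}) the generalized $DT_4$ moduli space exists with $\overline{\mathcal{M}}_c^{DT_4}\cong\overline{\mathcal{M}}_c(L_r)\cong\mathbb{P}^k$ as real analytic spaces. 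Since $Ext^2(\mathcal{F},\mathcal{F})=0$ throughout, the obstruction vanishes, $\mathcal{L}$ over $\mathcal{M}_c$ is the determinant of the complex index bundle and carries its natural complex orientation $o(\mathcal{O})$ (one may also note $H^{odd}(X,\mathbb{Z})=0$ and $Hol(X)=SU(4)$ by the Lefschetz hyperplane theorem and invoke Corollary \ref{cor on ori data}), so the virtual cycle of Definition \ref{virtual cycle when Mc smooth} is the ordinary fundamental class: $[\overline{\mathcal{M}}_c^{DT_4}]^{vir}=[\mathbb{P}^k]$. This proves $(1)$. For $(2)$, when $0<r<\frac{15(2-\epsilon_2)}{6+5\epsilon_1+2\epsilon_2}$ the slope comparison gives $\mu_{L_r}(L_1)>\mu_{L_r}(\mathcal{F})$, so the extensions of $L_1$ by $L_2$ are $L_r$-unstable; since $Ext^1_X(L_1,L_2)=0$ the only reversed extension is the unstable $L_1\oplus L_2$, and Li-Qin's analysis \cite{lq} excludes any other $L_r$-semistable sheaf with Chern character $c$, whence $\overline{\mathcal{M}}_c(L_r)=\emptyset$, so $\overline{\mathcal{M}}_c^{DT_4}=\emptyset$ and $[\overline{\mathcal{M}}_c^{DT_4}]^{vir}=0$.

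The main obstacle is not any single calculation but the faithful extraction from \cite{lq} of both wall locations and, above all, the assertion that inside the first chamber there is no $L_r$-semistable sheaf with Chern character $c$ other than the extensions of $L_1$ by $L_2$ (and none at all below the first wall); this rests on the Bogomolov-type bound on which classes $2c_1(\mathcal{F}')-c_1(\mathcal{F})$ can define a wall for rank-two sheaves of this discriminant, which is exactly the content of Li-Qin's work. Granting that, the cohomology computations, the vanishing of $Ext^2$, smoothness, the existence of $\overline{\mathcal{M}}_c^{DT_4}$ and the identification of its virtual class are all routine.
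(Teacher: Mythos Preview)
Your proposal is correct and follows essentially the same route as the paper: invoke Li--Qin's result (their Theorem~5.7) to identify $\overline{\mathcal{M}}_c(L_r)$ with $\mathbb{P}^k$ in the first chamber and with $\emptyset$ below the wall, then verify $\dim Ext^1(E,E)=k$ and $Ext^2(E,E)=0$ so that the moduli space is smooth with vanishing obstruction bundle, whence $\overline{\mathcal{M}}_c^{DT_4}\cong\overline{\mathcal{M}}_c(L_r)$ and the virtual class is the ordinary fundamental class. The paper's own argument is terser --- it quotes Li--Qin's Lemma~5.2 rather than running the extension long exact sequence and Koszul computation explicitly --- but the logic is identical; your Riemann--Roch cross-check $\chi(\mathcal{F},\mathcal{F})=2-2k$ matches their tabulated values in all four cases.
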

We also study the $DT_{4}/GW$ correspondence for compact Calabi-Yau 4-folds in some specific cases.
\begin{theorem}(Theorem \ref{DT=GW}, \ref{DT=GW2}) ${}$ \\
Let $X$ be a compact Calabi-Yau 4-fold. We assume $\mathcal{M}_{c}$ with given Chern character $c=(1,0,0,-PD(\beta),-1)$
is smooth and consists of ideal sheaves of smooth connected genus zero imbedded curves only, then $(\mathcal{L}_{\mathbb{C}},Q_{Serre})$
on $\mathcal{M}_{c}$ has a natural complex orientation $o(\mathcal{O})$.
Assume the $GW$ moduli space is isomorphic to the Gieseker moduli space, i.e.
$\overline{\mathcal{M}}_{0,0}(X,\beta)\cong \mathcal{M}_{c}$, then $\overline{\mathcal{M}}_{c}^{DT_{4}}$ exists and
$\overline{\mathcal{M}}_{c}^{DT_{4}}\cong \overline{\mathcal{M}}_{0,0}(X,\beta)$. Furthermore,

$(1)$ if $Hol(X)=SU(4)$, then $[\overline{\mathcal{M}}_{c}^{DT_{4}}]^{vir}=[\overline{\mathcal{M}}_{0,0}(X,\beta)]^{vir}$.

$(2)$ if $Hol(X)=Sp(2)$, i.e. irreducible hyper-K\"ahler, then $[\overline{\mathcal{M}}_{c}^{DT_{4}}]^{vir}=0$. \\
Furthermore, $[\overline{\mathcal{M}}_{c}^{DT_{4}}]^{vir}_{hyper-red}=[\overline{\mathcal{M}}_{0,0}(X,\beta)]^{vir}_{red}$ (see theorem \ref{DT=GW2} for its meaning).
\end{theorem}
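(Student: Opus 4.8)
The plan is to reduce the $DT_4/GW$ statement to three ingredients: (a) the gluing criterion of Proposition \ref{condition of vir gene DT4}, which guarantees that $\overline{\mathcal{M}}_c^{DT_4}$ exists and is isomorphic to $\mathcal{M}_c$ as a real analytic space once $\mathcal{M}_c$ is smooth; (b) an explicit identification of the obstruction bundle $Ext^2_+(\mathcal{F},\mathcal{F})$ over $\mathcal{M}_c$ with a known bundle on the Gromov-Witten side; and (c) a comparison of orientations. Since $\mathcal{M}_c$ is assumed smooth, all Kuranishi maps vanish, so case (i) of Proposition \ref{condition of vir gene DT4} applies immediately and yields $\overline{\mathcal{M}}_c^{DT_4}\cong\mathcal{M}_c\cong\overline{\mathcal{M}}_{0,0}(X,\beta)$ as real analytic spaces. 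The $DT_4$ virtual class is then the Euler class of the real half-dimensional bundle $Ext^2_+(\mathcal{F},\mathcal{F})$ (cf. Definition \ref{virtual cycle when Mc smooth}), so everything comes down to identifying this bundle and its orientation.

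First I would compute, for $\mathcal{F}=\mathcal{I}_C$ the ideal sheaf of a smooth connected genus zero curve $C\subset X$ with $[C]=\beta$, the groups $Ext^i(\mathcal{I}_C,\mathcal{I}_C)$ in terms of the normal bundle $N_{C/X}$. Using the exact triangle $\mathcal{I}_C\to\mathcal{O}_X\to\mathcal{O}_C$ and the Calabi-Yau condition $K_X\cong\mathcal{O}_X$, the deformation-obstruction theory of $\mathcal{I}_C$ is governed by $H^\bullet(C,N_{C/X})$: one gets $Ext^1(\mathcal{I}_C,\mathcal{I}_C)_0\cong H^0(C,N_{C/X})$ and $Ext^2(\mathcal{I}_C,\mathcal{I}_C)_0\cong H^1(C,N_{C/X})\oplus H^0(C,N_{C/X})^*$, with the second summand being the Serre dual of the first under $Q_{Serre}$ — exactly the maximal isotropic splitting of (iii') in the axioms. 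Hence the self-dual part $Ext^2_+$ can be taken to be $H^1(C,N_{C/X})$, which is precisely the obstruction bundle $Ob\to\overline{\mathcal{M}}_{0,0}(X,\beta)$ appearing in the genus zero Gromov-Witten virtual class of a Calabi-Yau 4-fold (where the GW virtual dimension is zero and $[\overline{\mathcal{M}}_{0,0}(X,\beta)]^{vir}=e(Ob)\cap[\overline{\mathcal{M}}_{0,0}(X,\beta)]$ by smoothness). This matches $H^0(C,N_{C/X})$ as the tangent space on both sides and gives $[\overline{\mathcal{M}}_c^{DT_4}]^{vir}=\pm\,e(H^1(N_{C/X}))\cap[\mathcal{M}_c]=\pm[\overline{\mathcal{M}}_{0,0}(X,\beta)]^{vir}$.

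It then remains to pin down the sign, i.e. to show that the natural complex orientation $o(\mathcal{O})$ coincides with the orientation of $Ob$ used on the GW side. For this I would use that $Ext^2_+\cong H^1(N_{C/X})$ is genuinely a complex bundle (not merely a real one), so $\mathcal{L}\cong(\wedge^{top}Ext^2_+)^{-1}\otimes\wedge^{top}Ext^1$ is the determinant of a complex vector bundle and carries its canonical complex orientation in the sense of Definition \ref{nat cpx ori}; this is exactly $o(\mathcal{O})$, and the GW obstruction bundle carries the same complex orientation, so the two virtual classes agree on the nose in part (1). For part (2), when $Hol(X)=Sp(2)$ the holomorphic symplectic form gives $N_{C/X}$ a symplectic-type structure forcing $H^1(C,N_{C/X})$ to contain a trivial summand (the pairing with the symplectic form produces a nowhere-vanishing section, or more precisely the obstruction bundle splits off a trivial factor coming from the cosection induced by the holomorphic $2$-form), so $e(Ob)=0$ and $[\overline{\mathcal{M}}_c^{DT_4}]^{vir}=0$; the reduced statement then follows by removing that trivial summand and defining $[\,\cdot\,]^{vir}_{hyper\text{-}red}$ as the Euler class of the reduced obstruction bundle, which equals the reduced GW class by the same dictionary. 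The main obstacle I anticipate is the orientation comparison in part (1): one must check that the complex orientation on $Ext^2_+$ arising from the identification with $H^1(N_{C/X})$ is compatible with the orientation induced from $\mathcal{B}^*$ via Seidel-Thomas twists (or invoke Remark 2 after Definition \ref{ori data} to simply declare the natural complex orientation is used), and that no extra sign is introduced in the passage through Serre duality and the isotropic splitting.
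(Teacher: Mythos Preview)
Your overall strategy is exactly the paper's: use smoothness of $\mathcal{M}_c$ to invoke Proposition~\ref{gene DT4 if Mc smooth}, identify $Ext^2(I_C,I_C)$ with a bundle carrying a maximal isotropic subbundle equal to the GW obstruction bundle, and then apply Definition~\ref{virtual cycle when Mc smooth} together with Lemma~\ref{ASD equivalent to max isotropic}. The paper carries this out via Lemmas~\ref{DT GW 1} and~\ref{DT GW 2} (first passing from $Ext^*(I_C,I_C)$ to $Ext^*(\mathcal{O}_C,\mathcal{O}_C)$, then using the local-to-global spectral sequence for $\mathcal{O}_C$).

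However, your key computation of $Ext^2$ is wrong. You claim
\[
Ext^2(I_C,I_C)_0\;\cong\;H^1(C,N_{C/X})\oplus H^0(C,N_{C/X})^{*},
\]
but the correct identification (Lemma~\ref{DT GW 2}) is
\[
Ext^2(I_C,I_C)\;\cong\;H^1(C,N_{C/X})\oplus H^0(C,\wedge^2 N_{C/X})\;\cong\;H^1(C,N_{C/X})\oplus H^1(C,N_{C/X})^{*},
\]
the last isomorphism coming from the perfect pairing $\wedge^2 N_{C/X}\otimes N_{C/X}\to\wedge^3 N_{C/X}\cong\Omega_C$. Your version is even dimensionally inconsistent: for a smooth rational curve in a $CY_4$ one has $\chi(N_{C/X})=1$, so $h^0(N)\neq h^1(N)$ in general, and $h^1(N)+h^0(N)\neq 2h^1(N)$. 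Consequently your sentence ``the second summand being the Serre dual of the first under $Q_{Serre}$'' does not describe the splitting you wrote down; the Serre pairing on $Ext^2$ matches $H^1(N)$ with $H^1(N)^*$, not with $H^0(N)^*$. Relatedly, the GW virtual dimension on a $CY_4$ in genus zero is $1$ (complex), not $0$, so the GW virtual class sits in $H_2$ and is $e(H^1(N))\cap[\overline{\mathcal{M}}_{0,0}(X,\beta)]$ by smoothness --- which matches the $DT_4$ side once you use the correct isotropic summand $H^1(N)$.

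For the $Sp(2)$ case you have the right idea, but note that the paper's argument is a bit more structured than ``splits off a trivial factor'': one first picks up the extra trace summand $H^2(X,\mathcal{O}_X)$ in $Ext^2(I_C,I_C)$ (Lemma after~\ref{DT=GW}), and then uses the holomorphic symplectic form to produce the surjection $H^1(C,\iota^*TX)\twoheadrightarrow H^1(C,\Omega_C)$ in~(\ref{hyper cosection}); the hyper-reduced obstruction space is by definition $H^1(C,N_{C/X}^*)$, which is then identified with the reduced GW obstruction space of Kiem--Li.
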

When $X=Tot(\mathcal{O}_{\mathbb{P}^{2}}(-1)\oplus\mathcal{O}_{\mathbb{P}^{2}}(-2))$ is the total space of a rank two bundle over $\mathbb{P}^{2}$,
we consider counting torsion sheaves scheme theoretically supported on $\mathbb{P}^{2}$
(i.e. they are of type $\iota_{*}\mathcal{F}$, where $\iota:\mathbb{P}^{2}\rightarrow X$ is the zero section).
We denote
\begin{equation}\mathcal{M}_{c}^{\mathbb{P}^{2}_{cpn}}=\{\iota_{*}\mathcal{F}\textrm{ } |\textrm{ } \mathcal{F}\in \mathcal{M}_{c}(\mathbb{P}^{2})  \}
\cong\mathcal{M}_{c}(\mathbb{P}^{2}) \nonumber \end{equation}
to be the component(s) of a moduli of sheaves on $X$ which can be identified with the Gieseker moduli space $\mathcal{M}_{c}(\mathbb{P}^{2})$ for $\mathbb{P}^{2}$ with Chern character $c\in H^{even}(\mathbb{P}^{2})$ (we assume every Gieseker semi-stable sheaf is slope stable).
As $\mathcal{M}_{c}(\mathbb{P}^{2})$ is smooth, the obstruction sheaf of $\mathcal{M}_{c}^{\mathbb{P}^{2}_{cpn}}$ is a vector bundle endowed with
a non-degenerate quadratic form (Serre duality pairing).
The $DT_{4}$ virtual cycle $[\mathcal{M}_{c}^{\mathbb{P}^{2}_{cpn}}]^{vir}$ of $\mathcal{M}_{c}^{\mathbb{P}^{2}_{cpn}}$
is defined to be the Poincar\'{e} dual of the Euler class of a self-dual obstruction bundle.
\begin{proposition}(see Proposition \ref{rk 2 bundle over S} for a more general result) ${}$ \\
The $DT_{4}$ virtual cycle $[\mathcal{M}_{c}^{\mathbb{P}^{2}_{cpn}}]^{vir}$ satisfies \\
(i) $[\mathcal{M}_{c}^{\mathbb{P}^{2}_{cpn}}]^{vir}=0$, if $c|_{H^{0}(\mathbb{P}^{2})}\geq2$;  \\
(ii) $[\mathcal{M}_{c}^{\mathbb{P}^{2}_{cpn}}]^{vir}\in H_{0}(\mathcal{M}_{c}^{\mathbb{P}^{2}_{cpn}})$, if $c|_{H^{0}(\mathbb{P}^{2})}=1$.
Furthermore, with respect to the natural complex orientation, we have
\begin{equation}\sum_{n\geq0}[\mathcal{M}_{(1,0,-n)}^{\mathbb{P}^{2}_{cpn}}]^{vir}q^{n}=\prod_{k\geq1}(\frac{1}{1-q^{k}}). \nonumber \end{equation}
\end{proposition}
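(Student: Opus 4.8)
The plan is to make the $DT_{4}$ obstruction bundle on $\mathcal{M}_{c}^{\mathbb{P}^{2}_{cpn}}$ completely explicit, then reduce parts (i) and (ii) to a Hirzebruch--Riemann--Roch rank comparison, and finally to evaluate the remaining top Chern numbers by torus localization. The zero section $\iota:\mathbb{P}^{2}\hookrightarrow X$ is a regular embedding with normal bundle $N=\mathcal{O}(-1)\oplus\mathcal{O}(-2)$ and $\wedge^{2}N=\mathcal{O}(-3)=K_{\mathbb{P}^{2}}$, and for the zero section of a bundle one has $L\iota^{*}\iota_{*}\mathcal{F}\simeq\bigoplus_{j}\mathcal{F}\otimes\wedge^{j}N^{\vee}[j]$, whence $Ext^{k}_{X}(\iota_{*}\mathcal{F},\iota_{*}\mathcal{F})\cong\bigoplus_{i+j=k}Ext^{i}_{\mathbb{P}^{2}}(\mathcal{F},\mathcal{F}\otimes\wedge^{j}N)$, compatibly with Yoneda products and the wedge grading on $N$. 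For $\mathcal{F}$ Gieseker stable on $\mathbb{P}^{2}$ one has $Ext^{2}_{\mathbb{P}^{2}}(\mathcal{F},\mathcal{F})\cong Hom_{\mathbb{P}^{2}}(\mathcal{F},\mathcal{F}\otimes K_{\mathbb{P}^{2}})^{*}=0$ and $Hom_{\mathbb{P}^{2}}(\mathcal{F},\mathcal{F}\otimes\mathcal{O}(-a))=0$ for $a=1,2,3$, since twisting a stable sheaf by a negative line bundle strictly lowers the slope. Hence $Ext^{1}_{X}(\iota_{*}\mathcal{F},\iota_{*}\mathcal{F})\cong Ext^{1}_{\mathbb{P}^{2}}(\mathcal{F},\mathcal{F})$ --- so $\mathcal{M}_{c}^{\mathbb{P}^{2}_{cpn}}$ really is smooth and isomorphic to $\mathcal{M}_{c}(\mathbb{P}^{2})$ --- while $Ext^{2}_{X}(\iota_{*}\mathcal{F},\iota_{*}\mathcal{F})\cong Ext^{1}_{\mathbb{P}^{2}}(\mathcal{F},\mathcal{F}(-1))\oplus Ext^{1}_{\mathbb{P}^{2}}(\mathcal{F},\mathcal{F}(-2))$; the wedge map $N\otimes N\to\wedge^{2}N$ annihilates $\mathcal{O}(-1)^{\otimes2}$ and $\mathcal{O}(-2)^{\otimes2}$, so the two summands are isotropic for $Q_{Serre}$ and are exchanged (up to dualizing) by Serre duality on $\mathbb{P}^{2}$. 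Thus the obstruction sheaf is the self-dual bundle $\mathcal{E}\oplus\mathcal{E}^{*}$ with maximal isotropic subbundle $\mathcal{E}=\mathcal{E}xt^{1}_{\pi}(\mathbb{F},\mathbb{F}(-1))$, the relative $Ext$ for the projection $\pi$ from $\mathbb{P}^{2}\times\mathcal{M}_{c}(\mathbb{P}^{2})$; it is a genuine vector bundle by cohomology and base change, since $\mathcal{M}_{c}(\mathbb{P}^{2})$ is smooth and the neighbouring $Ext$'s vanish. By Definition \ref{virtual cycle when ob=v+v*}, and since $\mathbb{P}^{2}$ (hence $\mathrm{Hilb}^{n}\mathbb{P}^{2}$) has torsion-free cohomology so that the natural complex orientation is available, $[\mathcal{M}_{c}^{\mathbb{P}^{2}_{cpn}}]^{vir}=\mathrm{PD}\, e(\mathcal{E})$.

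Next I would do the rank comparison. By base change $\mathrm{rk}\,\mathcal{E}=-\chi_{\mathbb{P}^{2}}(\mathcal{F},\mathcal{F}(-1))$ and $\dim_{\mathbb{C}}\mathcal{M}_{c}(\mathbb{P}^{2})=\mathrm{ext}^{1}_{\mathbb{P}^{2}}(\mathcal{F},\mathcal{F})=1-\chi_{\mathbb{P}^{2}}(\mathcal{F},\mathcal{F})$. A one-line Riemann--Roch computation on $\mathbb{P}^{2}$, using $c_{1}(\mathcal{O}(-1))\cdot c_{1}(\mathcal{O}(-2))=2$ and $c_{1}(\mathcal{O}(-1))+c_{1}(\mathcal{O}(-2))=K_{\mathbb{P}^{2}}$, gives $\chi_{\mathbb{P}^{2}}(\mathcal{F},\mathcal{F})-\chi_{\mathbb{P}^{2}}(\mathcal{F},\mathcal{F}(-1))=r^{2}$, where $r=c|_{H^{0}(\mathbb{P}^{2})}=\mathrm{rk}\,\mathcal{F}$; hence $\mathrm{rk}\,\mathcal{E}-\dim_{\mathbb{C}}\mathcal{M}_{c}(\mathbb{P}^{2})=r^{2}-1$. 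If $r\ge2$ this is positive, so $e(\mathcal{E})$ sits above the top degree and therefore vanishes, giving $[\mathcal{M}_{c}^{\mathbb{P}^{2}_{cpn}}]^{vir}=0$ and proving (i). If $r=1$ then $\mathrm{rk}\,\mathcal{E}=\dim_{\mathbb{C}}\mathcal{M}_{c}(\mathbb{P}^{2})$, so $e(\mathcal{E})\in H^{\mathrm{top}}$ and $[\mathcal{M}_{c}^{\mathbb{P}^{2}_{cpn}}]^{vir}\in H_{0}$, which is the first half of (ii).

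Finally the generating series. For $c=(1,0,-n)$ we have $\mathcal{M}_{c}(\mathbb{P}^{2})=\mathrm{Hilb}^{n}(\mathbb{P}^{2})$, $\mathcal{F}=\mathcal{I}_{Z}$, $\mathrm{rk}\,\mathcal{E}_{n}=2n=\dim\mathrm{Hilb}^{n}(\mathbb{P}^{2})$, and $[\mathcal{M}_{(1,0,-n)}^{\mathbb{P}^{2}_{cpn}}]^{vir}=\int_{\mathrm{Hilb}^{n}\mathbb{P}^{2}}e(\mathcal{E}_{n})$ with $\mathcal{E}_{n}=\mathcal{E}xt^{1}_{\pi}(\mathcal{I}_{\mathcal{Z}},\mathcal{I}_{\mathcal{Z}}(-1))$; the claim to prove is that this integer equals the number $p(n)$ of partitions of $n$. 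I would compute it by $T=(\mathbb{C}^{*})^{2}$-localization: the torus acting on $\mathbb{P}^{2}$ has three fixed points, the fixed locus of $\mathrm{Hilb}^{n}\mathbb{P}^{2}$ is the finite set of points indexed by triples of partitions $(\lambda^{(0)},\lambda^{(1)},\lambda^{(2)})$ with $\sum_{i}|\lambda^{(i)}|=n$, and Bott's formula writes $\int e(\mathcal{E}_{n})$ as a sum over these triples of $\prod_{i}e_{T}(\mathcal{E}_{n}|_{\lambda^{(i)}})/e_{T}(T_{\lambda^{(i)}}\mathrm{Hilb})$, each factor a ratio of products of $2|\lambda^{(i)}|$ torus weights recorded by the arm/leg statistics of $\lambda^{(i)}$ together with the linearization weight of $\mathcal{O}(-1)$ at the corresponding fixed point. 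One then rewrites the generating series of these sums; the mechanism I expect is that, because $\mathcal{O}(-1)$ and $\mathcal{O}(-2)$ are acyclic on $\mathbb{P}^{2}$, the three-vertex sum collapses --- the acyclicity forcing two of the three vertices to cancel against the edge factors --- to a single copy of the $\mathrm{Hilb}(\mathbb{C}^{2})$ rank-one vertex in which every fixed partition contributes exactly $1$, so the total is $\sum_{n}p(n)q^{n}=\prod_{k\ge1}(1-q^{k})^{-1}$. The natural complex orientation only fixes the overall sign, which is $+$ because $p(n)>0$.

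The main obstacle is this last step: pinning down the equivariant weights of $Ext^{1}_{\mathbb{P}^{2}}(\mathcal{I}_{Z},\mathcal{I}_{Z}(-1))$ at monomial ideals --- which entangle the deformation weights of $\mathrm{Hilb}$ with the $\mathcal{O}(-1)$-linearization --- and then executing the cancellation across the three affine charts of $\mathbb{P}^{2}$ that collapses the triple sum to the single-partition series. An alternative that avoids localization is to write $[\mathcal{E}_{n}]=[\mathcal{O}(-1)^{[n]}]+[(\mathcal{O}(-2)^{[n]})^{\vee}]-[R\pi_{*}R\mathcal{H}om(\mathcal{O}_{\mathcal{Z}},\mathcal{O}_{\mathcal{Z}}(-1))]$ in $K$-theory (the acyclicity of $\mathcal{O}(-1),\mathcal{O}(-2)$ kills the other terms) and to integrate the top Chern class using the known tautological intersection theory of $\mathrm{Hilb}^{n}\mathbb{P}^{2}$; for $n=1$ this route gives $[\mathcal{E}_{1}]=[T_{\mathbb{P}^{2}}(-1)]$ and $\int_{\mathbb{P}^{2}}e(\mathcal{E}_{1})=\int_{\mathbb{P}^{2}}c_{2}(T_{\mathbb{P}^{2}}(-1))=1=p(1)$, but identifying the diagonal term $R\pi_{*}R\mathcal{H}om(\mathcal{O}_{\mathcal{Z}},\mathcal{O}_{\mathcal{Z}}(-1))$ in closed form for all $n$ is then the crux.
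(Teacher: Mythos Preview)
Your setup and the rank comparison proving (i) and the first half of (ii) are correct and essentially identical to the paper's argument: the paper computes the real virtual dimension as $2(1-r^{2})$ from Riemann--Roch, which is your $\mathrm{rk}\,\mathcal{E}-\dim_{\mathbb{C}}\mathcal{M}_{c}(\mathbb{P}^{2})=r^{2}-1$ in complex terms.

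For the generating series, however, the paper takes a much shorter route than your proposed localization. It simply observes that for $c=(1,0,-n)$ the maximal isotropic subbundle $\mathcal{E}_{n}$ with fibre $Ext^{1}_{\mathbb{P}^{2}}(\mathcal{I}_{Z},\mathcal{I}_{Z}(-1))$ \emph{is} the twisted tangent bundle $T(\mathrm{Hilb}^{n}(\mathbb{P}^{2}),\mathcal{O}(-1))$ of Carlsson--Okounkov, and then cites their formula
\[
\sum_{n\ge 0} e\big(T(\mathrm{Hilb}^{n}(S),L)\big)\,q^{n}=\prod_{k\ge 1}\Big(\frac{1}{1-q^{k}}\Big)^{e(TS\otimes L)}.
\]
For $S=\mathbb{P}^{2}$ and $L=\mathcal{O}(-1)$ one computes $c_{2}(T\mathbb{P}^{2}\otimes\mathcal{O}(-1))=3H^{2}-3H^{2}+H^{2}=H^{2}$, so the exponent is $1$ and the product is exactly $\prod_{k\ge 1}(1-q^{k})^{-1}$. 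Your localization sketch is a reasonable alternative (and is in spirit how Carlsson--Okounkov proceed, via vertex operators), but as you acknowledge the three-vertex cancellation is not carried out; recognizing $\mathcal{E}_{n}$ as the Carlsson--Okounkov bundle sidesteps that entirely.
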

Lastly, for ideal sheaves of one point, we have
\begin{proposition}(Proposition \ref{moduli of one point}) ${}$ \\
If $Hol(X)=SU(4)$ and $c=(1,0,0,0,-1)$, then $\overline{\mathcal{M}}_{c}^{DT_{4}}$ exists and $\overline{\mathcal{M}}_{c}^{DT_{4}}\cong X$, $[\overline{\mathcal{M}}_{c}^{DT_{4}}]^{vir}=\pm PD(c_{3}(X))$.
\end{proposition}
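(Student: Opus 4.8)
The plan is to identify $\mathcal{M}_{c}$ with $X$, realise it as a strongly smooth moduli space, compute its obstruction bundle together with the Serre pairing, and then take the Euler class of the induced self-dual real subbundle. A Gieseker-stable sheaf on $X$ with $ch=(1,0,0,0,-1)$ is necessarily torsion-free of rank one with trivial determinant, hence an ideal sheaf $I_{p}$ of a length-one subscheme (a reduced point); conversely every such $I_{p}$ is stable, and these sheaves sweep out a family isomorphic to $\mathrm{Hilb}^{1}(X)\cong X$, which is smooth and compact of complex dimension $4$. As $\mathcal{M}_{c}$ is smooth in the strong sense, case (i) of Proposition \ref{condition of vir gene DT4} applies (and, by the subsequent remark, we need no stable bundle), so $\overline{\mathcal{M}}^{DT_{4}}_{c}$ exists with $\overline{\mathcal{M}}^{DT_{4}}_{c}\cong\mathcal{M}_{c}\cong X$ as real analytic spaces, and by Definition \ref{virtual cycle when Mc smooth} its virtual cycle is $PD\big(e(Ob_{+})\big)$, where $(Ob,Q_{Serre})$ is the obstruction bundle with its Serre-duality quadratic form and $Ob_{+}\subset Ob$ is the self-dual real subbundle, of real rank $\mathrm{rk}_{\mathbb{C}}Ob$.

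Next I would identify $(Ob,Q_{Serre})$. Since $Hol(X)=SU(4)$ we have $H^{i}(X,\mathcal{O}_{X})=0$ for $i=1,2,3$, so for $\mathcal{F}=I_{p}$ the trace-free and full $\mathrm{Ext}$ groups agree in degrees $1$ and $2$; from the exact triangle $I_{p}\to\mathcal{O}_{X}\to\mathcal{O}_{p}$ and $\mathrm{Ext}^{\ast}(\mathcal{O}_{p},\mathcal{O}_{p})\cong\wedge^{\ast}T_{p}X$ a standard calculation gives $\dim\mathrm{Ext}^{\ast}(I_{p},I_{p})=(1,4,6,4,1)$. To globalise, take the universal ideal sheaf $I_{\Delta}$ on $X\times X$ and the relative $\mathrm{Ext}$ sheaves $\mathcal{E}xt^{\ast}_{\pi}$ along the second projection $\pi$; using $R\pi_{\ast}\iota_{\Delta\ast}=\mathrm{id}$, Grothendieck duality on the $CY_{4}$ (so $\mathrm{RHom}(\mathcal{O}_{\Delta},\mathcal{O}_{X\times X})=\iota_{\Delta\ast}\mathcal{O}_{X}[-4]$), and $\mathrm{RHom}(\mathcal{O}_{\Delta},\mathcal{O}_{\Delta})=\sum_{i}(-1)^{i}\iota_{\Delta\ast}(\wedge^{i}TX)$ in $K$-theory, one obtains
\begin{equation}\sum_{i}(-1)^{i}\big[\mathcal{E}xt^{i}_{\pi}(I_{\Delta},I_{\Delta})_{0}\big]=-[TX]+[\wedge^{2}TX]-[T^{\ast}X]\in K(X).\nonumber\end{equation}
Since $\mathcal{E}xt^{1}_{\pi}=T\mathcal{M}_{c}=TX$ and $\mathcal{E}xt^{3}_{\pi}=T^{\ast}X$ (relative Serre duality), this forces $Ob=\mathcal{E}xt^{2}_{\pi}(I_{\Delta},I_{\Delta})_{0}\cong\wedge^{2}TX$ as a rank-$6$ bundle. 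Finally, because Serre duality is natural, the structure group of $(Ob,Q_{Serre})$ reduces to the image of $SL(4,\mathbb{C})$ acting on $\mathbb{C}^{6}=\wedge^{2}\mathbb{C}^{4}$; the space of $SL(4,\mathbb{C})$-invariant symmetric forms on $\wedge^{2}\mathbb{C}^{4}$ being one-dimensional, $(Ob,Q_{Serre})\cong(\wedge^{2}TX,Q_{\wedge})$ as quadratic bundles, where $Q_{\wedge}$ is the pairing $\wedge^{2}TX\otimes\wedge^{2}TX\to\wedge^{4}TX\cong\mathcal{O}_{X}$.

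The last step is a splitting-principle computation. Write $TX=\bigoplus_{k=1}^{4}L_{k}$ with $a_{k}=c_{1}(L_{k})$, so $\sum_{k}a_{k}=c_{1}(X)=0$; then $\wedge^{2}TX=P\oplus P^{\vee}$ with $P=(L_{1}L_{2})\oplus(L_{1}L_{3})\oplus(L_{1}L_{4})$ maximal isotropic for $Q_{\wedge}$, its annihilator being $P^{\vee}=(L_{3}L_{4})\oplus(L_{2}L_{4})\oplus(L_{2}L_{3})$ (here $\det TX=\mathcal{O}_{X}$ is used). An $SO(6,\mathbb{C})$-bundle of the shape $P\oplus P^{\vee}$ has self-dual real form $P_{\mathbb{R}}$, with $e(P_{\mathbb{R}})=\pm c_{3}(P)$ (the sign depending on the orientation), whence, using $\sum_{k}a_{k}=0$,
\begin{equation}e(Ob_{+})=\pm c_{3}(P)=\pm(a_{1}+a_{2})(a_{1}+a_{3})(a_{1}+a_{4})=\pm e_{3}(a_{1},a_{2},a_{3},a_{4})=\pm c_{3}(TX).\nonumber\end{equation}
Hence $[\overline{\mathcal{M}}^{DT_{4}}_{c}]^{vir}=PD\big(e(Ob_{+})\big)=\pm PD(c_{3}(X))$ (consistent with the real virtual dimension $r=2-\chi(I_{p},I_{p})=2$), the sign being fixed by the natural complex orientation of $(\mathcal{L}_{\mathbb{C}},Q_{Serre})$, which is available here since $\mathcal{L}=(\det_{\mathbb{R}}Ob_{+})^{-1}\otimes\det_{\mathbb{R}}(TX)$ is canonically trivial.

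The step I expect to be the real work is the identification of $(Ob,Q_{Serre})$: the $K$-theoretic identity fixes only the underlying bundle, and one must still match the Serre-duality form with the wedge pairing before $Ob_{+}$ is determined. I would establish this either via the Hochschild-Kostant-Rosenberg/Atiyah-class description of $\mathrm{RHom}_{\pi}(I_{\Delta},I_{\Delta})_{0}$, under which the Yoneda product becomes the wedge of polyvector fields and the Serre trace the Calabi-Yau trace, or via the invariance argument sketched above; granting this, the computation of $e(Ob_{+})$ is routine.
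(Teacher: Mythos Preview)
Your proposal is correct and reaches the same conclusion as the paper, but the final Euler-class computation takes a different route. Both you and the paper identify $(Ob,Q_{Serre})$ with $(\wedge^{2}TX,Q_{\wedge})$; the paper then exploits the exceptional isomorphism $SU(4)\cong Spin(6)$ at the classifying-space level, using the square formula $e(E_{SO(6;\mathbb{R})})^{2}=-c_{6}(E_{SO(6;\mathbb{C})})$ together with the lift $X\to BSU(4)$ to extract $e(Ob_{+})=\pm c_{3}(X)$ directly from universal characteristic classes. You instead pass to a formal splitting $TX=\bigoplus L_{k}$, write $\wedge^{2}TX=P\oplus P^{\vee}$ with $P$ maximal isotropic, invoke the Edidin--Graham formula $e(Ob_{+})=\pm c_{3}(P)$ (Lemma~\ref{ASD equivalent to max isotropic}), and verify the symmetric-function identity $(a_{1}+a_{2})(a_{1}+a_{3})(a_{1}+a_{4})=e_{3}(a_{1},\dots,a_{4})$ under $\sum a_{k}=0$. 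Your approach is more hands-on and makes the role of $c_{1}(X)=0$ visible in the algebra; the paper's is slicker but relies on the low-dimensional coincidence $Spin(6)\cong SU(4)$. One small caution: your $K$-theory identity only pins down $[Ob]\in K(X)$, not the bundle isomorphism $Ob\cong\wedge^{2}TX$; you correctly flag that the HKR/Atiyah-class description (or the invariance argument) is needed to upgrade this and to match the quadratic forms --- the paper simply asserts this identification, so you are in fact being more careful here.
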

${}$ \\
\textbf{Equivariant $DT_{4}$ invariants}.
We also study the equivariant $DT_{4}$ theory for ideal sheaves of surfaces, curves and points on toric $CY_{4}$. We adapt the virtual localization formula due to Graber and Pandharipande \cite{gp} to our case and define the corresponding equivariant $DT_{4}$ invariants. Because torus fixed points of corresponding Gieseker moduli spaces are discrete, we do not need $\overline{\mathcal{M}}_{c}^{DT_{4}}$ to define invariants. We get the definition without any constraint on moduli spaces. \\

By studying the simplest example for $\mathbb{C}^{4}$, we get
\begin{proposition}(Proposition \ref{equi for C4}) ${}$ \\
Let $X=\mathbb{C}^{4}$, for some choice of toric orientation data (Definition \ref{toric ori data}), we have
\begin{equation}Z_{DT_{4}}\big(X,1 \textrm{ } | (1,1)\big)_{0}=\frac{\sigma_{1}\sigma_{2}-\sigma_{3}}{\sigma_{1}\sigma_{3}}, \nonumber \end{equation}
where $\sigma_{i}$ is the $i$-th elementary symmetric polynomial of variables $\lambda_{1},\lambda_{2},\lambda_{3}$ and
$Z_{DT_{4}}\big(X,1 \textrm{ } | (1,1)\big)_{0}$ is the equivariant $DT_{4}$ invariant (Definition \ref{def of equi DT4}) of $\mathbb{C}^{4}$ for
ideal sheaves of one point without any insertion field.
\end{proposition}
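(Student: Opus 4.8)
The plan is to reduce the computation to a single torus-fixed point and apply the equivariant (square-root) localization formula of Definition \ref{def of equi DT4}. The moduli space of ideal sheaves of one point on $\mathbb{C}^{4}$ is the Hilbert scheme $\textrm{Hilb}^{1}(\mathbb{C}^{4})\cong\mathbb{C}^{4}$, and since the Calabi-Yau torus $T=\{(\lambda_{1},\ldots,\lambda_{4}):\sum_{i}\lambda_{i}=0\}$ acts on $\mathbb{C}^{4}$ with the origin as its only fixed point, there is exactly one $T$-fixed ideal sheaf, namely $I_{0}=(z_{1},z_{2},z_{3},z_{4})$. As the fixed locus is an isolated reduced point, no compactified moduli space $\overline{\mathcal{M}}_{c}^{DT_{4}}$ is needed: by the adaptation of the Graber-Pandharipande formula, the invariant is the single fixed-point contribution
\begin{equation}Z_{DT_{4}}\big(\mathbb{C}^{4},1\textrm{ }|(1,1)\big)_{0}=\frac{e\big(Ext^{2}_{+}(I_{0},I_{0})\big)}{e\big(Ext^{1}(I_{0},I_{0})\big)}, \nonumber\end{equation}
where $e$ is the equivariant Euler class and $Ext^{2}_{+}$ is the maximal isotropic (Lagrangian) subspace of $Ext^{2}(I_{0},I_{0})$ prescribed by the toric orientation data (Definition \ref{toric ori data}); the half-Euler class replaces the naive $1/e(T^{vir})$ because the $DT_{4}$ virtual tangent at $I_{0}$ is $Ext^{1}(I_{0},I_{0})-Ext^{2}_{+}(I_{0},I_{0})$.

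Next I would compute the $T$-characters of the $Ext$-groups at $I_{0}$ from the Koszul resolution of $\mathcal{O}_{0}$ on $\mathbb{C}^{4}$, equivalently from $[I_{0}]=[\mathcal{O}]-[\mathcal{O}_{0}]$ in equivariant $K$-theory together with the trace formula $\chi(F,G)=\overline{\textrm{ch}(F)}\cdot\textrm{ch}(G)\cdot\prod_{i=1}^{4}(1-t_{i}^{-1})$. A short manipulation gives the character $[T_{0}\mathbb{C}^{4}]-[\wedge^{2}T_{0}\mathbb{C}^{4}]+[\wedge^{3}T_{0}\mathbb{C}^{4}]$ of $\chi(\mathcal{O},\mathcal{O})-\chi(I_{0},I_{0})$, whence the individual groups
\begin{equation}Ext^{1}(I_{0},I_{0})\cong T_{0}\mathbb{C}^{4},\quad Ext^{2}(I_{0},I_{0})\cong\wedge^{2}T_{0}\mathbb{C}^{4},\quad Ext^{3}(I_{0},I_{0})\cong\wedge^{3}T_{0}\mathbb{C}^{4}, \nonumber\end{equation}
with weights $\{\lambda_{i}\}$, $\{\lambda_{i}+\lambda_{j}\}_{i<j}$ and $\{\lambda_{i}+\lambda_{j}+\lambda_{k}\}_{i<j<k}$ respectively. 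The Calabi-Yau relation $\sum_{i}\lambda_{i}=0$ forces the six weights of $Ext^{2}$ to split into the three Serre-dual pairs $\{\lambda_{i}+\lambda_{j},\,-(\lambda_{i}+\lambda_{j})\}$, confirming that $(Ext^{2},Q_{Serre})$ is a quadratic space; I then take the toric orientation data to be the isotropic subspace $Ext^{2}_{+}=\langle\lambda_{1}+\lambda_{4},\lambda_{2}+\lambda_{4},\lambda_{3}+\lambda_{4}\rangle$, one weight from each pair.

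Finally I would substitute $\lambda_{4}=-(\lambda_{1}+\lambda_{2}+\lambda_{3})=-\sigma_{1}$ and evaluate the two Euler classes in terms of $\sigma_{i}=e_{i}(\lambda_{1},\lambda_{2},\lambda_{3})$. The denominator is $e(Ext^{1})=\lambda_{1}\lambda_{2}\lambda_{3}\lambda_{4}=-\sigma_{1}\sigma_{3}$, while the relations $\lambda_{i}+\lambda_{4}=-(\lambda_{j}+\lambda_{k})$ for $\{i,j,k\}=\{1,2,3\}$ together with the classical identity $(\lambda_{1}+\lambda_{2})(\lambda_{2}+\lambda_{3})(\lambda_{1}+\lambda_{3})=\sigma_{1}\sigma_{2}-\sigma_{3}$ give $e(Ext^{2}_{+})=(\lambda_{1}+\lambda_{4})(\lambda_{2}+\lambda_{4})(\lambda_{3}+\lambda_{4})=-(\sigma_{1}\sigma_{2}-\sigma_{3})$. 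Dividing yields $\frac{-(\sigma_{1}\sigma_{2}-\sigma_{3})}{-\sigma_{1}\sigma_{3}}=\frac{\sigma_{1}\sigma_{2}-\sigma_{3}}{\sigma_{1}\sigma_{3}}$, as claimed; the other admissible isotropic subspaces differ only by the overall sign, which is exactly why the statement fixes "some choice" of toric orientation data.

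The routine parts are the Koszul/$K$-theory character computation and the symmetric-function identity. The main obstacle is conceptual rather than computational: justifying that the equivariant $DT_{4}$ invariant is genuinely computed by the half-Euler class $e(Ext^{2}_{+})/e(Ext^{1})$ at an isolated fixed point. This requires (a) checking that the chosen $Ext^{2}_{+}$ is a $T$-invariant maximal isotropic subspace on which $Q_{Serre}$ vanishes, so that the square-root Euler class is defined as an honest polynomial rather than the imaginary naive square root of $e(Ext^{2})=-\big(e(Ext^{2}_{+})\big)^{2}$, and (b) verifying that this choice extends to a consistent $T$-equivariant orientation data in the sense of Definition \ref{toric ori data}, thereby pinning down the overall sign. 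Once the localization formula of Definition \ref{def of equi DT4} is in force, the remaining steps are immediate.
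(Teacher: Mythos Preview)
Your proposal is correct and follows essentially the same route as the paper: both identify the unique $T$-fixed ideal $I_{0}$, compute $Ext^{1}(I_{0},I_{0})\cong T_{0}\mathbb{C}^{4}$ and $Ext^{2}(I_{0},I_{0})\cong\wedge^{2}T_{0}\mathbb{C}^{4}$ with the obvious weights, and then evaluate the quotient in Definition~\ref{def of equi DT4}. The only cosmetic difference is that the paper computes the full $e_{T}(Ext^{2})=\prod_{i<j}(\lambda_{i}+\lambda_{j})$ and extracts a square root via Remark~\ref{remark on localization}, obtaining $(\lambda_{1}+\lambda_{2})(\lambda_{1}+\lambda_{3})(\lambda_{2}+\lambda_{3})$, whereas you pick the complementary maximal isotropic $\langle\lambda_{i}+\lambda_{4}\rangle_{i=1,2,3}$ and invoke Lemma~\ref{ASD equivalent to max isotropic} (Edidin--Graham) to get its Euler class directly; the two answers differ by the expected sign absorbed in the choice of toric orientation data.
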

${}$ \\
\textbf{Non-commutative $DT_{4}$ invariants}.
In the non-commutative world, where sheaves on manifolds are replaced by representations of algebras, we also have a definition of Donaldson-Thomas type theory for four dimensional Calabi-Yau algebras. We define the $NCDT_{4}$ invariants and compute some examples. The detail is left to Section 9. \\

${}$ \\
\textbf{Comparisons with Borisov-Joyce's work}.
A related work was done by Borisov and Joyce \cite{bj}. They used local 'Darboux charts' in the sense of Brav, Bussi and Joyce \cite{bbj}
(based on Pantev-T\"{o}en-Vaqui\'{e}-Vezzosi's theory of shifted symplectic geometry \cite{ptvv}),
the machinery of homotopical algebra and $C^{\infty}$-algebraic geometry to obtain a compact derived $C^{\infty}$-scheme with
the same underlying topological structure as the Gieseker moduli space of stable sheaves. In our language,
their results proved the existence of generalized $DT_{4}$ moduli spaces in general ($C^{\infty}$-scheme version).
Furthermore, they defined the virtual fundamental class of the derived $C^{\infty}$-scheme.

In the appendix, we will first give another proof of BBJ's Darboux theorem (the analytic version) for Gieseker moduli spaces of stable
sheaves using gauge theory and Seidel-Thomas twists \cite{js}, \cite{st}. We then introduce a weaker condition on their local 'Darboux charts'
to include local models induced from $DT_{4}$ equations (i.e. the map $\tilde{\tilde{\kappa}}$ in Theorem \ref{mo mDT4}).
It turns out that the weaker condition is already sufficient for the gluing requirement in Borisov and Joyce's work \cite{bj}
which then indicates the equivalence of their approach to virtual fundamental classes and our $DT_{4}$ virtual cycles.  \\
${}$ \\
\textbf{Content of the paper }: In section 2, we study the $*_{4}$ operator which is needed in the definition of $DT_{4}$ equations.
In section 3, we study local analytic structures of $DT_{4}$ moduli spaces. In section 4, we compactify $DT_{4}$ moduli spaces
under certain assumptions. Under the gluing assumption \ref{assumption on gluing}, which is verified in some cases, we define the
generalized $DT_{4}$ moduli space. In section 5, we construct $DT_{4}$ virtual cycles, study actions of global monodromy on them and
also give some vanishing results. In section 6, we study $DT_{4}$ invariants for compactly supported sheaves on local Calabi-Yau 4-folds.
In section 7, we compute some $DT_{4}$ invariants when $\mathcal{M}_{c}$'s are smooth. In section 8, we define equivariant $DT_{4}$ invariants
on toric $CY_{4}$. In section 9, we discuss a definition of non-commutative $DT_{4}$ invariants for $CY_{4}$ algebras.
In the last section, we list some useful facts as
appendix. In particular, we prove the existence of the orientation data of $DT_{4}$ theory in a large number of cases.
We also give another proof of 'Darboux theorem' (in the sense of Brav, Bussi and Joyce \cite{bbj} Corollary 5.20)
in the case when $\mathbb{K}=\mathbb{C}$ and $\mathcal{M}$ is the Gieseker moduli space of stable sheaves using gauge theory and
Seidel-Thomas twists \cite{js}, \cite{st}. \\
${}$ \\
\textbf{Acknowledgement}: The first author would like to express his deep gratitude to Huai-Liang Chang, Zheng Hua, Wei-Ping Li,
Dennis Sullivan and Jiu-Kang Yu for many useful discussions and encouragements. We are grateful to Dennis Borisov and Dominic Joyce
for their interests in this work and informing us their related work and thoughts. Special thanks to Dennis Borisov for
explaining his joint work with Joyce \cite{bj}. We also hope to express our deep gratitude to Simon Donaldson for
many useful discussions and inviting the first author to visit him at the Simons Center for Geometry and Physics on May 2014.
The work of the second author was substantially supported by a grant from the Research Grants Council of the
Hong Kong Special Administrative Region, China (Project No. CUHK401411).

\section{The $*_{4}$ operator}

\subsection{The $*_{4}$ operator for bundles}
In this section, we introduce the $*_{4}$ operator on the space of bundle valued differential forms which
is the key to the definition of $DT_{4}$ equations and the construction of $DT_{4}$ moduli spaces.
We define
\begin{equation}*_{4}: \Omega^{0,k}(X)\rightarrow\Omega^{0,4-k}(X),\nonumber \end{equation}
\begin{equation} \alpha\wedge*_{4}\alpha=|\alpha|^{2}\overline{\Omega},\nonumber \end{equation}
which satisfies $*_{4}^{2}=1$ when $k=2$. Note that $*_{4}$ is a complex anti-linear map.

Given any Hermitian metric on $E$, we can extend $*_{4}$ to $\Omega^{0,k}(X,EndE)$. Using $*_{4}^{2}=1$, we have the following orthogonal decomposition,
\begin{equation}\Omega^{0,2}(X,EndE)=\Omega^{0,2}_{+}(X,EndE)\oplus\Omega^{0,2}_{-}(X,EndE), \nonumber\end{equation}
given by $\alpha=\frac{1}{2}(\alpha+*_{4}\alpha)+\frac{1}{2}(\alpha-*_{4}\alpha)$, where $\Omega^{0,2}_{\pm}(X,EndE)=\{\alpha\in\Omega^{0,2}(X,EndE)\mid *_{4}\alpha=\pm\alpha\}$.
The following lemma shows that $*_{4}$ descends to corresponding harmonic subspaces.
\begin{lemma}\label{lem descendence of star}
$\overline{\partial}_{E}^{*}=\overline{\partial}_{E}^{*_{4}}$ on $\Omega^{0,k}(X,EndE)$, where $\overline{\partial}_{E}$
is a holomorphic $(0,1)$ connection and  $\overline{\partial}_{E}^{*_{4}}\triangleq-*_{4}\overline{\partial}_{E}*_{4}$.
\end{lemma}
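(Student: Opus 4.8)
The plan is to verify the identity $\overline{\partial}_E^{*} = -*_4 \overline{\partial}_E *_4$ on each $\Omega^{0,k}(X,\mathrm{End}E)$ by comparing the two operators through the defining adjointness property of $\overline{\partial}_E^{*}$, using the fact that $*_4$ is a fiberwise isometry (up to conjugation) that encodes the $L^2$ inner product via the relation $\alpha \wedge *_4 \beta = (\alpha,\beta)_g \overline{\Omega}$. First I would recall that $\overline{\partial}_E^{*}$ is characterized by $\int_X (\overline{\partial}_E \phi, \psi)_g \, dvol = \int_X (\phi, \overline{\partial}_E^{*}\psi)_g \, dvol$ for all $\phi \in \Omega^{0,k-1}$ and $\psi \in \Omega^{0,k}$. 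The strategy is to rewrite the pointwise inner product $(\cdot,\cdot)_g\, dvol$ in terms of the wedge pairing against $*_4$: since $dvol = \Omega \wedge \overline{\Omega}$ and $\alpha \wedge *_4 \beta = (\alpha,\beta)_g \overline{\Omega}$, one expresses $(\phi,\psi)_g\, dvol$ as (a constant multiple of) $\Omega \wedge \phi \wedge *_4 \psi$, pairing the antiholomorphic $(0,\bullet)$ forms against $\Omega$ to fill out a top form.

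The key steps, in order, are: (1) fix conventions so that for $\phi \in \Omega^{0,k-1}(X,\mathrm{End}E)$ and $\psi \in \Omega^{0,k}(X,\mathrm{End}E)$ we have, using the Hermitian metric $h$ on $E$ to pair the $\mathrm{End}E$ factors and taking a trace, $\mathrm{tr}\,(\overline{\partial}_E \phi \wedge *_4 \bar\psi)$-type expressions integrating to the relevant $L^2$ products; (2) apply Stokes' theorem to $\int_X d\big(\Omega \wedge \mathrm{tr}(\phi \wedge *_4 \bar\psi)\big) = 0$, noting $d\Omega = 0$ and that $\partial$ of the $(0,\bullet)$-type expression contributes nothing to the top degree so effectively $d = \overline{\partial}$ acts; (3) use the Leibniz rule for $\overline{\partial}_E$ together with the compatibility of $\overline{\partial}_E$ with the metric pairing on $\mathrm{End}E$ to split the derivative across the wedge, getting $\overline{\partial}_E\phi$ on one side and $\overline{\partial}_E(*_4\bar\psi) = *_4(\overline{\partial}_E^{*_4}\psi)$-type term (up to sign and conjugation) on the other; (4) read off that the operator pairing against $\phi$ via the wedge-with-$*_4$ is exactly $-*_4 \overline{\partial}_E *_4$, hence equals $\overline{\partial}_E^{*}$ by uniqueness of the formal adjoint.

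The main obstacle I expect is bookkeeping of signs and complex conjugations: $*_4$ is complex anti-linear, so the wedge pairing $\alpha \wedge *_4 \beta$ is conjugate-linear in $\beta$, and one must track how this interacts with the Hermitian (rather than symmetric) pairing on $\mathrm{End}E$ and with the Koszul signs coming from moving $\Omega$ (a $(4,0)$-form) past $(0,k)$-forms and from the degree shift between $k-1$ and $k$. A secondary technical point is justifying that in the Stokes argument only the $\overline{\partial}$-part survives: one checks degree reasons ($\Omega \wedge (\text{anything of type }(p,q)\text{ with }p>0)$ vanishes against the $(4,\ge 1)$ slots already occupied), so that $d(\Omega \wedge \phi \wedge *_4\bar\psi)$ reduces to the $\overline{\partial}_E$-terms. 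Once the sign conventions are pinned down consistently with the definition $\overline{\partial}_E^{*_4} = -*_4 \overline{\partial}_E *_4$ given in the statement, the identity follows formally; I would present the computation for general $k$ since the argument is uniform, rather than case-splitting on $k=1,2,3$.
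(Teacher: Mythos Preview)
Your approach is correct but takes a different route from the paper. The paper's proof is a single line: it observes that the ordinary Hodge star on $(0,k)$-forms factors as $* = \Omega \wedge *_4$, and since $\Omega$ is a norm-one parallel $(4,0)$-form (hence $\overline{\partial}\Omega = 0$ and wedging with $\Omega$ is an isometry onto its image), the known formula $\overline{\partial}_E^{*} = -*\overline{\partial}_E*$ immediately collapses to $-*_4\overline{\partial}_E *_4$. In other words, the paper reduces the claim to the standard Hodge-theoretic expression for the adjoint and then substitutes.

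Your argument instead rederives the adjoint relation from scratch via Stokes' theorem applied to $\int_X \Omega \wedge \mathrm{tr}(\phi \wedge *_4\psi)$, using the defining identity $\alpha \wedge *_4\beta = (\alpha,\beta)_g\,\overline{\Omega}$ and $dvol = \Omega \wedge \overline{\Omega}$. This is entirely valid and is in fact the computation underlying the paper's one-liner once unpacked; its advantage is that it is self-contained and makes the role of $d\Omega=0$ and the bidegree cancellations explicit, while the paper's version is quicker but presupposes the reader is comfortable with the standard $*$-formula for $\overline{\partial}^*$. The sign and conjugation bookkeeping you flag is real but routine, and your degree-count justification for why only $\overline{\partial}$ survives in the Stokes step is exactly right.
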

\begin{proof}
This follows directly from $*=\Omega\wedge*_{4}$ and $\Omega$ being a norm one parallel form.
\end{proof}
\begin{corollary}\label{cor star4 cut coho into ASD}
The $*_{4}$ operator splits the space of harmonic forms $H^{0,2}(X,EndE)$ into
\begin{equation}H^{0,2}(X,EndE)=H^{0,2}_{+}(X,EndE)\oplus H^{0,2}_{-}(X,EndE)   \nonumber\end{equation}
according to $(\pm1)$ eigenvalues.
\end{corollary}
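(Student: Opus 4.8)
The plan is to deduce the splitting of $H^{0,2}(X,EndE)$ directly from Lemma \ref{lem descendence of star}, which identifies the two natural $\overline{\partial}_E$-Laplacian and the $*_4$-twisted Laplacian. First I would recall that a form $\alpha \in \Omega^{0,2}(X,EndE)$ is $\overline{\partial}_E$-harmonic precisely when $\overline{\partial}_E \alpha = 0$ and $\overline{\partial}_E^* \alpha = 0$. The key input is that $*_4$ preserves the harmonic condition: if $\alpha$ is harmonic then so is $*_4\alpha$. To see this, note $*_4\alpha \in \Omega^{0,2}(X,EndE)$ since $*_4$ maps $\Omega^{0,2}$ to $\Omega^{0,2}$ here (as $4-2=2$); then $\overline{\partial}_E^{*_4}(*_4\alpha) = -*_4 \overline{\partial}_E *_4 *_4\alpha = -*_4 \overline{\partial}_E \alpha = 0$ using $*_4^2 = 1$ on $\Omega^{0,2}$, and by Lemma \ref{lem descendence of star} this says $\overline{\partial}_E^*(*_4\alpha) = 0$; similarly $\overline{\partial}_E^{*_4}\alpha = \overline{\partial}_E^*\alpha = 0$ means $*_4 \overline{\partial}_E *_4 \alpha = 0$, hence $\overline{\partial}_E *_4 \alpha = 0$ applying $*_4$ once more. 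So $*_4$ restricts to a complex anti-linear involution of the finite-dimensional space $H^{0,2}(X,EndE)$.

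Next I would extract the eigenspace decomposition. Since $*_4$ is an anti-linear involution with $*_4^2 = 1$, the same elementary decomposition used above at the level of $\Omega^{0,2}(X,EndE)$ applies verbatim: any harmonic $\alpha$ writes as $\alpha = \tfrac12(\alpha + *_4\alpha) + \tfrac12(\alpha - *_4\alpha)$, with each summand harmonic by the previous step and lying in the $(\pm1)$-eigenspace $H^{0,2}_{\pm}(X,EndE) = \{\alpha \in H^{0,2}(X,EndE) \mid *_4\alpha = \pm\alpha\}$. The sum is direct because $*_4\alpha = \alpha$ and $*_4\alpha = -\alpha$ force $\alpha = 0$. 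This is exactly the asserted splitting, compatible with the decomposition of $\Omega^{0,2}(X,EndE)$ already recorded, so $H^{0,2}_{\pm}(X,EndE) = H^{0,2}(X,EndE) \cap \Omega^{0,2}_{\pm}(X,EndE)$.

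The only subtle point — and the one I would state carefully rather than the routine algebra — is the well-definedness of "harmonic subspace" in this anti-linear setting, i.e. that $*_4$ genuinely maps harmonic forms to harmonic forms; this is where Lemma \ref{lem descendence of star} does all the work, converting the $*_4$-conjugated $\overline{\partial}_E$-equations back into the standard harmonic equations. Everything else is formal consequence of $*_4^2 = 1$ together with the finite-dimensionality of $H^{0,2}(X,EndE)$ (Hodge theory on the compact Kähler manifold $X$). I expect no real obstacle; the proof is essentially one line once Lemma \ref{lem descendence of star} is invoked, which is presumably why the author states it as an immediate corollary.
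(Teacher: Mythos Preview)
Your proposal is correct and is exactly the argument the paper has in mind: the corollary is stated without proof, as an immediate consequence of Lemma~\ref{lem descendence of star}, and you have simply unpacked that implication (that $*_4$ preserves the harmonic subspace because $\overline{\partial}_E^{*_4}=\overline{\partial}_E^*$, after which the $(\pm1)$-eigenspace decomposition is formal from $*_4^2=1$). The paper later refers back to ``a similar argument as in Corollary~\ref{cor star4 cut coho into ASD}'' when proving Corollary~\ref{cutting for sheaves}, confirming that this is the intended reasoning.
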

\begin{remark}\label{remark1}
It is obvious that $\sqrt{-1}H^{0,2}_{+}=H^{0,2}_{-}$. Thus they have the same dimension.
\end{remark}

\subsection{The $*_{4}$ operator for general coherent sheaves}
The construction of the $*_{4}$ operator on $H^{0,2}(X,EndE)=Ext^{2}(E,E)$ has a generalization to $Ext^{2}(\mathcal{F},\mathcal{F})$ for any coherent sheaf $\mathcal{F}$ on $X$: Since $X$ is smooth and projective, we can resolve $\mathcal{F}$ by a complex of holomorphic vector bundles
$(E^{\bullet},\delta)\rightarrow\mathcal{F}\rightarrow 0$.
We consider the following double complex \cite{hl},
\begin{equation}(\Omega^{0,q}(X,\mathcal{H}om^{p}(E^{\bullet},E^{\bullet})),\overline{\partial},\delta). \nonumber\end{equation}
On the total complex $(C^{*},D)$, where $C^{n}=\bigoplus_{l\in\mathbb{Z}} \bigoplus_{p+q=n}\Omega^{0,q}(X,\mathcal{H}om(E^{l},E^{l+p}))$
and $D=\overline{\partial}+(-1)^{q}\delta$, there exists two natural filtrations which induce two spectral sequences converging to $Ext^{*}(\mathcal{F},\mathcal{F})$.

We fix Hermitian metrics on $\{E^{l}\}$ and similarly define the $*_{4}$ operator
\begin{equation}*_{4}: \Omega^{0,q}(X,\mathcal{H}om(E^{l},E^{l+p}))\rightarrow \Omega^{0,4-q}(X,\mathcal{H}om(E^{l+p},E^{l}))\nonumber \end{equation} as above. We then $\mathbb{R}$-linearly extend $*_{4}$ to $C^{*}$.
\begin{lemma}\label{double cpx is elliptic}
$(C^{*},D)$ is an elliptic complex.
\end{lemma}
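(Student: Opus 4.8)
The plan is to reduce the claim to a pointwise symbol computation, exactly as one does for the Dolbeault complex twisted by a bundle. First I would observe that the total complex $(C^{*}, D)$ is, up to the auxiliary grading by $l$ and $p$, built out of the ordinary Dolbeault complexes $\Omega^{0,\bullet}(X,\mathcal{H}om(E^{l},E^{l+p}))$ glued along the (algebraic, zeroth-order) differential $\delta$. Since ellipticity of a complex is a statement purely about the principal symbol sequence over each cotangent direction, and the symbol of $D = \overline{\partial} + (-1)^{q}\delta$ equals the symbol of $\overline{\partial}$ (because $\delta$ contributes no derivatives), the symbol complex of $(C^{*},D)$ at a nonzero covector $\xi \in T_{x}^{*}X$ is just the direct sum over $l$, in each slot, of the Koszul-type symbol complexes of $\overline{\partial}$ on $\mathcal{H}om(E^{l},E^{l+p})_{x}$, reorganized by total degree. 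Concretely, I would write out that the degree-$n$ symbol space is $\bigoplus_{p+q=n}\bigoplus_{l}\Lambda^{0,q}T_{x}^{*}X\otimes\mathrm{Hom}(E^{l}_{x},E^{l+p}_{x})$ and that $\sigma(D)(\xi)$ acts by wedging with $\xi^{0,1}$.

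Next I would invoke the well-known exactness of the Dolbeault symbol complex: for any nonzero $\xi$, the complex $\big(\Lambda^{0,\bullet}T_{x}^{*}X, \xi^{0,1}\wedge -\big)$ is exact (it is a Koszul complex on the single nonzero vector $\xi^{0,1}$), hence so is its tensor product with any fixed finite-dimensional vector space, and hence so is any finite direct sum of such. Because taking total degree in a double complex whose vertical differential is zero-order does not affect the symbol, the symbol complex of $(C^{*},D)$ is a direct sum of shifted copies of exact Dolbeault symbol complexes, therefore exact. That is precisely the statement that $(C^{*},D)$ is elliptic. I would also remark that $(C^{*},D)$ has finite length since $E^{\bullet}$ is a finite resolution and $\Lambda^{0,q}$ vanishes for $q>4$, so there is no issue with the complex being infinite.

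The one point requiring a little care — and the closest thing to an obstacle — is bookkeeping: making sure the reindexing by the total degree $n = p+q$ (with $p$ ranging over the amplitude of the resolution $E^{\bullet}$ and $q$ over $0,\dots,4$) genuinely exhibits the symbol complex as a direct sum of Dolbeault symbol complexes and does not secretly mix in $\delta$ at symbol level. Since $\delta$ is $\mathcal{O}_{X}$-linear its symbol is identically zero, so this is automatic, but I would state it explicitly to avoid the impression that the sign $(-1)^{q}$ or the interaction of the two filtrations matters here. I would also note in passing that this ellipticity is exactly what underlies the two spectral sequences converging to $Ext^{*}(\mathcal{F},\mathcal{F})$ mentioned above, and what makes the Hodge-theoretic and $*_{4}$-based arguments in the sequel legitimate (finite-dimensional harmonic spaces, orthogonal decompositions). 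A short, self-contained way to phrase the whole proof: the symbol of $D$ equals the symbol of $\overline{\partial}$, the $\overline{\partial}$-symbol complex is exact by the Koszul resolution property, and exactness is preserved under the direct sums and degree shifts occurring in the total complex; hence $(C^{*},D)$ is elliptic.
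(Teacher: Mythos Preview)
Your argument is correct. The core observation---that $\delta$ is a zeroth-order (purely algebraic) operator and therefore does not contribute to the principal symbol of $D$---is exactly the same as the paper's, but you package it differently. The paper argues via the Laplacian: since $\Delta_{D}=\Delta_{\overline{\partial}}+\text{(lower order terms)}$, the ellipticity of $\Delta_{\overline{\partial}}$ forces ellipticity of $\Delta_{D}$, hence of the complex. You instead verify exactness of the symbol sequence directly, decomposing it as a direct sum over $(l,p)$ of Koszul complexes $(\Lambda^{0,\bullet}T_{x}^{*}X\otimes\mathrm{Hom}(E^{l}_{x},E^{l+p}_{x}),\ \xi^{0,1}\wedge-)$. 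Your route is slightly more explicit about the structure of the symbol complex and avoids invoking the equivalence between ellipticity of a complex and ellipticity of its Laplacians; the paper's route is terser and closer to the standard reference \cite{wells} it cites. Both are standard and entirely equivalent here.
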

\begin{proof}
$D$ is a $C^{\infty}$-differential operator.
The twisted Laplacian $\Delta_{D}=D^{*}D+DD^{*}$ has an expansion $\Delta_{D}=\Delta_{\overline{\partial}}$+ 1st order terms of
$\overline{\partial}$ + 0 order terms, which shows $D$ is elliptic \cite{wells}.
\end{proof}
\begin{corollary} \label{cutting for sheaves}
The above $*_{4}$ operator on $C^{*}$ descends to the $D$-harmonic subspace of $C^{*}$. Furthermore, it splits $H^{2}(C^{*},D)\cong
Ext^{2}(\mathcal{F},\mathcal{F})$ into
\begin{equation}Ext^{2}(\mathcal{F},\mathcal{F})=Ext^{2}_{+}(\mathcal{F},\mathcal{F})\oplus Ext^{2}_{-}(\mathcal{F},\mathcal{F})
\nonumber \end{equation}
according to $(\pm1)$ eigenvalues.
\end{corollary}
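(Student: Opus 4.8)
\textbf{Proof proposal for Corollary \ref{cutting for sheaves}.}

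The plan is to bootstrap from the analogous statement for bundles, namely Lemma \ref{lem descendence of star} and Corollary \ref{cor star4 cut coho into ASD}, after establishing that the relevant formal structures (the Hodge star identity, parallelism of $\Omega$, and ellipticity) persist on the total complex $(C^{*},D)$. First I would record the basic algebraic identities for the $*_{4}$ operator on the bicomplex $\Omega^{0,q}(X,\mathcal{H}om(E^{l},E^{l+p}))$: since on each summand $*_{4}$ is defined by exactly the formula used for bundles, we still have the relation $* = \Omega\wedge *_{4}$ with $\Omega$ a parallel norm-one form, hence the pointwise identity $*_4^2 = 1$ in total degree $2$, and more importantly the formal-adjoint relation $\overline{\partial}^{*} = -*_{4}\overline{\partial}*_{4}$ on each bundle factor exactly as in Lemma \ref{lem descendence of star}. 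The new ingredient is the interaction of $*_4$ with the differential $\delta$ coming from the resolution $(E^{\bullet},\delta)\to\mathcal{F}$; here I would check that, with the chosen Hermitian metrics on the $E^l$, the Hilbert-space adjoint $\delta^{*}$ satisfies the same conjugation identity $\delta^{*} = \pm *_{4}\delta *_{4}$ (signs depending on the parity conventions built into $D = \overline{\partial} + (-1)^q\delta$), because $\delta$ is a bundle map to which the same $*$–$*_4$ bookkeeping applies fiberwise. Combining these, $D^{*} = -*_{4}D*_{4}$ up to the sign convention, i.e. $*_4$ conjugates $D$ into (a sign times) $D^{*}$.

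Next I would use Lemma \ref{double cpx is elliptic}, which tells us $(C^{*},D)$ is elliptic, so Hodge theory applies: $H^{*}(C^{*},D)$ is represented by the finite-dimensional space of $D$-harmonic elements $\mathcal{H}^{*} = \ker\Delta_D = \ker D \cap \ker D^{*}$, and this is canonically $Ext^{*}(\mathcal{F},\mathcal{F})$ via the convergence of the spectral sequences mentioned in the text. From $D^{*} = -*_{4}D*_{4}$ (and $*_4^2=1$ in the total degree $2$ piece) it follows formally that $*_4$ commutes with $\Delta_D$ on the degree-$2$ part, hence preserves the harmonic subspace $\mathcal{H}^{2}\cong Ext^{2}(\mathcal{F},\mathcal{F})$. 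Since $*_4$ is an $\mathbb{R}$-linear involution on that finite-dimensional space, it decomposes it into its $(\pm1)$-eigenspaces $Ext^{2}_{\pm}(\mathcal{F},\mathcal{F})$, which is the assertion. This step is essentially the same argument that gives Corollary \ref{cor star4 cut coho into ASD} from Lemma \ref{lem descendence of star}, just transported to the total complex.

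The main obstacle I anticipate is purely bookkeeping: getting the signs right so that $*_4$ really does conjugate the full differential $D = \overline{\partial} + (-1)^q\delta$ into $\pm D^{*}$, because $*_4$ is complex \emph{anti}-linear and shifts the $(0,q)$-degree by $q\mapsto 4-q$ as well as swapping $\mathcal{H}om(E^l,E^{l+p})$ with $\mathcal{H}om(E^{l+p},E^l)$, so the interaction with the $(-1)^q$ twist and with $\delta$ (which raises $p$) must be tracked carefully across all four degree labels. One should verify that the sign $(-1)^q$ in $D$ is precisely what makes the conjugation identity hold simultaneously for the $\overline{\partial}$ and $\delta$ pieces; this is the point where Lemma \ref{lem descendence of star} is used for the $\overline{\partial}$ part, and an entirely parallel (metric-level, not parallel-form-level) computation handles $\delta$. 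Once the conjugation identity $*_4 D *_4 = \pm D^{*}$ is in hand, the commutation with $\Delta_D$ and the eigenspace splitting are immediate, and independence of the Hermitian choices follows from the canonical identification $\mathcal{H}^2\cong Ext^2(\mathcal{F},\mathcal{F})$ together with the fact that any two choices of $*_4$-data are homotopic through such data, so the $(\pm1)$-eigenspace dimensions are deformation-invariant.
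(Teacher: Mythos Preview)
Your proposal is correct and follows essentially the same route as the paper: establish $D^{*}=-*_{4}D*_{4}$ (the paper compresses this to ``as in Lemma~\ref{lem descendence of star}'', whereas you spell out the $\overline{\partial}$ and $\delta$ pieces and worry about the $(-1)^{q}$ sign separately), then invoke ellipticity (Lemma~\ref{double cpx is elliptic}) and argue exactly as in Corollary~\ref{cor star4 cut coho into ASD}. Your closing remark about metric-independence of the eigenspace \emph{dimensions} is correct but extraneous to the corollary; note that the paper immediately remarks afterward that the subspaces $Ext^{2}_{\pm}(\mathcal{F},\mathcal{F})$ themselves do depend on the chosen Hermitian metrics on $E^{\bullet}$.
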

\begin{proof}
The operator $D^{*_{4}}\triangleq-*_{4}D*_{4}$ equals to $D^{*}$ as in Lemma \ref{lem descendence of star}. By a similar argument as in Corollary \ref{cor star4 cut coho into ASD}, we are done.
\end{proof}
\begin{remark} The subspaces $Ext^{2}_{\pm}(\mathcal{F},\mathcal{F})$ depend on the choice of metrics on $E^{\bullet}$.
\end{remark}

\section{Local Kuranishi structures of $DT_{4}$ moduli spaces}
To study gauge theory on a general K\"{a}hler manifold $X$, we consider the moduli space of poly-stable holomorphic structures on $E$.
By the renowned theorem of Donaldson-Uhlenbeck-Yau \cite{d2}, \cite{UY}, this space equals to the moduli space of holomorphic Hermitian-Yang-Mills connections on $E$, i.e. the space of gauge equivalence classes of solutions to
\begin{equation} \left\{ \begin{array}{l} \label{complex ASD equation}
  F^{0,2}=0 \\ F\wedge\omega^{3}=0 ,
\end{array}\right. \nonumber \end{equation}
where we have assumed $c_{1}(E)=0$ in the moment map equation $F\wedge\omega^{3}=0$ for simplicity.
However, the holomorphic HYM equations are overdetermined when $dim_{\mathbb{C}}(X)\geq3$. \\
${}$ \\
\textbf{The $DT_{4}$ moduli space}.
When $X$ is a $CY_{4}$, Donaldson and Thomas \cite{dt} used the calibrated form $\Omega$ to cut down the number of equations
in the holomorphic HYM equations to obtain an elliptic system, i.e. Donaldson and Thomas' complex ASD equations ($DT_{4}$ equations for short),
\begin{equation} \left\{ \begin{array}{l} \label{complex ASD equation}          
  F^{0,2}_{+}=0 \\ F\wedge\omega^{3}=0 ,    
\end{array}\right.\end{equation}
where the first equation is $F^{0,2}+*_{4}F^{0,2}=0$.
\begin{remark} ${}$ \\
1. The group of unitary gauge transformations preserves the above equations.  \\
2. From the viewpoint of deformation-obstruction theory, we will see that this defines a perfect-obstruction theory, which leads to the construction of a virtual fundamental class for its moduli.
\end{remark}
Then the definition of $DT_{4}$ moduli spaces follows from the above $DT_{4}$ equations.
\begin{definition}
The $DT_{4}$ moduli space $\mathcal{M}^{DT_{4}}_{c}$ is the space of gauge equivalence classes of solutions to
equations (\ref{complex ASD equation}).
\end{definition}
Note that $\mathcal{M}^{DT_{4}}_{c}\hookrightarrow \mathcal{B}^{*}=\mathcal{A^{*}}/\mathcal{G}^{0}$ embeds
as the zero loci of a section $s=(\wedge F,F^{0,2}_{+})$ of a Banach bundle $\mathcal{E}$ over $\mathcal{B}^{*}$,
where $\mathcal{E}=\mathcal{A^{*}}\times_{\mathcal{G}^{0}}(\Omega^{0}(X,g_{E})_{k-1}\oplus\Omega^{0,2}_{+}(X,EndE)_{k-1})$.
This gives the $DT_{4}$ moduli space a natural real analytic structure.
\begin{remark} The Banach manifold $\mathcal{A}^{*}/\mathcal{G}^{0}$ involves a choice of a large integer $k $ in the $L_{k}^{2}$ Sobolev norm completion. By the ellipticity of $DT_{4}$ equations and Proposition 4.2.16 \cite{dk}, $DT_{4}$ moduli spaces are independent of the choice of $k$. Thus we omit $k$ in the notation.
\end{remark}
By the definition of $\mathcal{M}^{DT_{4}}_{c}$, we have an obvious inclusion between two sets
\begin{equation}\mathcal{M}_{c}^{bdl}\rightarrow \mathcal{M}^{DT_{4}}_{c}. \nonumber \end{equation}
If $\mathcal{M}_{c}^{bdl}\neq\emptyset$, by Lemma \ref{C2 condition}, the inclusion is a bijection.   \\
${}$ \\
\textbf{Local structures of $DT_{4}$ moduli spaces}. Even though we have identified $\mathcal{M}^{DT_{4}}_{c}$ and $\mathcal{M}_{c}^{bdl}$ as sets,
they could have different possibly non-reduced analytic structures. We will show there exists a closed imbedding $\mathcal{M}_{c}^{bdl}\hookrightarrow \mathcal{M}^{DT_{4}}_{c}$ between these two real analytic spaces. \\

We start with an unitary connection $d_{A}\in \mathcal{M}^{DT_{4}}_{c}$, denote its $(0,1)$ part by $\overline{\partial}_{A}$ (we call $\overline{\partial}_{A}$ a $(0,1)$ connection). We have $F^{0,2}_{A}=0$ under the assumption $\mathcal{M}_{c}^{bdl}\neq\emptyset$.

By the Hodge decomposition theorem, we have
\begin{equation}\Omega^{0,2}(X,EndE)_{k-1}=H^{0,2}(X,EndE)\oplus \overline{\partial}_{A}\Omega^{0,1}(X,EndE)_{k}
\oplus\overline{\partial}_{A}^{*}\Omega^{0,3}(X,EndE)_{k},  \nonumber \end{equation}
\begin{equation} I=\mathbb{H}^{0,2}+P_{\overline{\partial}_{A}}+ P_{\overline{\partial}_{A}^{*}}, \nonumber \end{equation}
where the RHS consists of projections to the corresponding components.

Meanwhile,
\begin{equation}*_{4}:\overline{\partial}_{A}\Omega^{0,1}(X,EndE)_{k}\cong\overline{\partial}_{A}^{*}\Omega^{0,3}(X,EndE)_{k}
\nonumber\end{equation}
and
\begin{equation}*_{4}: H^{0,2}(X,EndE)\cong H^{0,2}(X,EndE)  \nonumber\end{equation}
induce
\begin{equation}H^{0,2}(X,EndE)=H^{0,2}_{+}(X,EndE)\oplus H^{0,2}_{-}(X,EndE)  \nonumber\end{equation}
and
\begin{equation}F^{0,2}_{+}(d_{A}+a)=0 \Leftrightarrow a''\triangleq (a)^{0,1}\textrm{}\emph{satisfies the following} \nonumber\end{equation}
\begin{equation}\label{equation 3}\overline{\partial}_{A}a''+P_{\overline{\partial}_{A}}(a''\wedge a'')
+*_{4}P_{\overline{\partial}_{A}^{*}}(a''\wedge a'')=0 ,   \end{equation}
\begin{equation}\label{equation 4}\pi_{+}\circ\mathbb{H}^{0,2}(a''\wedge a'')=0. \end{equation}
Here $a\in\Omega^{1}(X,g_{E})_{k}$ and $a''\triangleq (a)^{0,1}\in\Omega^{0,1}(X,EndE)_{k}$ is the (0,1) part of $a$.  \\

Using the gauge fixing $d_{A}^{*}a=0$, a neighbourhood of $d_{A}\in\mathcal{M}^{DT_{4}}_{c}$ can be described as
\begin{equation} \{a\in\Omega^{1}(X,g_{E})_{k} \big{|}\textrm{ } \| a\|_{k} < \epsilon, \textrm{ } d_{A}^{*}a=0 ,
\textrm{ } d_{A}+a  \textrm{ }  \emph{satisfies}  \textrm{ } (\ref{complex ASD equation}) \}, \nonumber\end{equation}
where $\epsilon$ is a small positive number.
We introduce a new space $\mathcal{M}_{A}^{+}$ to help establish relations of local structures
between $\mathcal{M}_{c}^{DT_{4}}$ and $\mathcal{M}_{c}^{bdl}$.
\begin{equation}  \mathcal{M}_{A}^{+}\triangleq\{a''\in\Omega^{0,1}(X,EndE)_{k} \textrm{ }\big{|}\textrm{ }\| a''\|_{k} < \epsilon'',\textrm{ }
a'' \textrm{ } \emph{satisfies} \textrm{ } (\ref{equation 3}), \textrm{} (\ref{equation 4}), \textrm{} (\ref{equation 5})  \}
\nonumber  \end{equation}
where (\ref{equation 5}) is defined to be
\begin{equation}\label{equation 5}\overline{\partial}_{A}^{*}a''-\frac{i}{2}\wedge(a'\wedge a''+a''\wedge a')=0 ,  \end{equation}
$\epsilon''$ is a small positive number and $a'\triangleq(a)^{1,0}$ is the $(1,0)$ part of $a$. \\

In fact, $\mathcal{M}_{A}^{+}$ is locally isomorphic to a neighbourhood of $d_{A}\in\mathcal{M}^{DT_{4}}_{c}$.
\begin{lemma}\label{identify SA MA}
The map which takes unitary connections to their $(0,1)$ parts
\begin{equation}d_{A}+a\longmapsto \overline{\partial}_{A}+a'' \nonumber \end{equation}
induces a local isomorphism near the origin
\begin{equation}\{a\in\Omega^{1}(X,g_{E})_{k} \big{|}\textrm{ } \|a\|_{k} < \epsilon, \textrm{ } d_{A}^{*}a=0 , \textrm{ } d_{A}+a  \textrm{ }
\emph{satisfies} \textrm{ }
(\ref{complex ASD equation}) \}\cong \mathcal{M}_{A}^{+}.  \nonumber \end{equation}
\end{lemma}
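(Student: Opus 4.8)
The plan is to show that the map $d_A + a \mapsto \overline\partial_A + a''$ is a bijection from the gauge-fixed slice of $DT_4$ solutions onto $\mathcal{M}_A^+$, with inverse given by recovering the $(1,0)$ part $a'$ from $a'' = (a)^{0,1}$ via $a' = -(a'')^{*}$ (using that $a$ is anti-Hermitian, $a \in \Omega^1(X,g_E)$), and then that both maps are analytic. First I would check that the map is well defined: starting from $d_A + a$ satisfying (\ref{complex ASD equation}) and the gauge fixing $d_A^* a = 0$, I need to verify that $a''$ satisfies (\ref{equation 3}), (\ref{equation 4}), and the new equation (\ref{equation 5}). Equations (\ref{equation 3}) and (\ref{equation 4}) were already derived above as the restatement of $F^{0,2}_+(d_A+a) = 0$ after Hodge-decomposing $\Omega^{0,2}$ and applying $*_4$, so the content there is just to record that derivation. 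Equation (\ref{equation 5}) should come from combining the gauge-fixing condition $d_A^* a = 0$ with the moment-map equation $F(d_A+a)\wedge \omega^3 = 0$ (i.e. $\wedge F = 0$): writing $d_A^* a = \overline\partial_A^* a'' + \partial_A^* a'$, replacing $a' = -(a'')^*$, and feeding in the $(1,1)$ part of the curvature identity $\wedge F(d_A+a) = \wedge F_A + \wedge(\partial_A a' + \overline\partial_A a'') + \wedge(a\wedge a)^{1,1}$ — the point being that on the nose $d_A^*a=0$ together with $\wedge F = 0$ is equivalent to (\ref{equation 5}) once one uses the Kähler identities relating $\partial_A^*$, $\overline\partial_A$ and contraction with $\omega$.

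Next I would construct the inverse. Given $a'' \in \Omega^{0,1}(X,\mathrm{End}E)_k$ small and satisfying (\ref{equation 3})–(\ref{equation 5}), set $a' := -(a'')^*$ so that $a := a' + a'' \in \Omega^1(X,g_E)_k$, and set $d_A + a$. I then need to run the previous paragraph in reverse: (\ref{equation 3}) and (\ref{equation 4}) give back $F^{0,2}_+(d_A+a)=0$, equation (\ref{equation 5}) unwinds to $d_A^* a = 0$ and $\wedge F(d_A+a)=0$, hence $d_A+a$ lies in the gauge-fixed slice of $\mathcal{M}^{DT_4}_c$. This is just the same set of algebraic manipulations read backwards, so the only care needed is bookkeeping of which real/complex linear combinations correspond to which equation. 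Both the forward map and its inverse are visibly given by continuous (indeed real-analytic) polynomial expressions in the Sobolev spaces — bounded multilinear operations and the bounded projection $a'' \mapsto -(a'')^*$ — and shrinking $\epsilon$, $\epsilon''$ as needed, they are mutually inverse on a neighbourhood of the origin, which gives the claimed local isomorphism of real analytic spaces.

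The main obstacle I expect is the precise derivation of equation (\ref{equation 5}) — i.e. verifying that the single real condition "$d_A^* a = 0$ and $\wedge F = 0$" repackages exactly into the stated complex equation $\overline\partial_A^* a'' - \tfrac{i}{2}\wedge(a'\wedge a'' + a''\wedge a') = 0$. This requires carefully applying the Kähler identity $\wedge = -i[\partial_A^*, \cdot\,]$-type relations to convert the $\omega$-contraction of the $(1,1)$-curvature into $\overline\partial_A^*$ of the $(0,1)$-form, keeping track of the cross term $\partial_A a'' + \overline\partial_A a'$ and the quadratic term, and checking that the $(1,0)$-antiholomorphic piece carries no independent information once $a' = -(a'')^*$. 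Everything else — well-definedness of the forward map, analyticity of both maps, and the mutual-inverse property for small $\epsilon, \epsilon''$ — is routine once (\ref{equation 5}) is pinned down. I would also remark that the size parameters $\epsilon$ and $\epsilon''$ need only be comparable up to a universal constant coming from the operator norm of $a'' \mapsto a''^*$ and the multiplication bounds, so no delicate estimate is involved.
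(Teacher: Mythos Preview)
Your proposal is correct and follows essentially the same route as the paper. The paper's proof is terser: it first identifies the ambient spaces $\{a: d_A^*a=0,\ \wedge F=0,\ \text{eq.~}(\ref{equation 3})\}$ and $Q_A=\{a'': \text{eqs.~}(\ref{equation 3}),(\ref{equation 5})\}$ under $a\leftrightarrow a''$ (citing the implicit function theorem), and then observes that eq.~(\ref{equation 4}), being $F^{0,2}_+=0$, is preserved by this map. But the substantive content is exactly what you isolate as the ``main obstacle'': under $a'=-(a'')^*$, the two real conditions $d_A^*a=0$ and $\wedge F=0$ are equivalent to the single complex equation~(\ref{equation 5}), via the K\"ahler identities $[\wedge,\partial_A]=i\overline\partial_A^*$ and $[\wedge,\overline\partial_A]=-i\partial_A^*$. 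Concretely, $\wedge F = i\overline\partial_A^*a'' - i\partial_A^*a' + \wedge(a'\wedge a''+a''\wedge a')$ and $d_A^*a = \overline\partial_A^*a'' + \partial_A^*a'$; taking anti-Hermitian and Hermitian parts of~(\ref{equation 5}) recovers these two real equations. The paper does not spell this out, so your explicit plan is in fact more complete on this point.
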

\begin{proof}
Locally, we consider an ambient space of the $DT_{4}$ moduli space
\begin{equation} \{a\in\Omega^{1}(X,g_{E})_{k} \big{|}\textrm{ } \|a\|_{k} < \epsilon, \textrm{ } d_{A}^{*}a=0 ,
\textrm{ } \wedge(d_{A}a+a\wedge a)=0, \textrm{ }  a''  \textrm{ }  \emph{satisfies}  \textrm{ } (\ref{equation 3}) \}. \nonumber\end{equation}
By the isomorphism sending unitary connections to $(0,1)$ connections,
we identify it with an open subset of $Q_{A}$ by implicit function theorem ($\epsilon \ll 1$), where
\begin{equation}   Q_{A}=\{a''\in\Omega^{0,1}(X,EndE)_{k} \textrm{ }\big{|}\textrm{ }\|a''\|_{k} < \epsilon'',\textrm{ }
a'' \textrm{ } \emph{satisfies}  \textrm{ } (\ref{equation 3}), \textrm{} (\ref{equation 5})  \}. \nonumber  \end{equation}
Because $F^{0,2}_{+}=0$ is preserved by the map $d_{A}+a\longmapsto \overline{\partial}_{A}+a'' $,
we get isomorphic analytic subspaces after adding it to both of the above two ambient spaces.
\end{proof}
The ambient space $Q_{A}$ of $\mathcal{M}_{A}^{+}$ can be locally identified with $H^{0,1}(X,EndE)$.
\begin{lemma}\label{QA iso to H1}
We denote
\begin{equation}Q_{A}=\{a''\in\Omega^{0,1}(X,EndE)_{k} \textrm{ }\big{|}\textrm{ }\|a''\|_{k} < \epsilon'',\textrm{ }
a'' \textrm{ } \emph{satisfies}  \textrm{ } (\ref{equation 3}), \textrm{} (\ref{equation 5})  \}, \nonumber  \end{equation}
then the harmonic projection map
\begin{equation}\mathbb{H}^{0,1}: Q_{A}\rightarrow H^{0,1}(X,EndE)   \nonumber \end{equation}
is a local analytic isomorphism if $\epsilon''$ is small.
\end{lemma}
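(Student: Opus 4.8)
The plan is to identify $Q_A$ with an open neighbourhood of the origin in $H^{0,1}(X, \mathrm{End}E)$ via the harmonic projection, using the implicit function theorem in Banach spaces. First I would observe that equation (\ref{equation 5}), namely $\overline{\partial}_A^* a'' = \frac{i}{2}\wedge(a' \wedge a'' + a'' \wedge a')$, together with equation (\ref{equation 3}), can be repackaged as a single fixed-point equation. Write $a'' = \mathbb{H}^{0,1}(a'') + (\text{exact part}) + (\text{coexact part})$ via the Hodge decomposition $\Omega^{0,1}(X,\mathrm{End}E)_k = H^{0,1} \oplus \overline{\partial}_A \Omega^{0,0}_{k+1} \oplus \overline{\partial}_A^* \Omega^{0,2}_{k+1}$. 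The key point is that the two constraints (\ref{equation 3}) and (\ref{equation 5}) pin down the non-harmonic parts of $a''$ in terms of quadratic-and-higher expressions in $a''$ itself: (\ref{equation 3}) controls (schematically) the $\overline{\partial}_A^*$-coexact component since it involves $\overline{\partial}_A a''$, while (\ref{equation 5}) — which is the gauge-fixing condition descended to the $(0,1)$ part — controls the $\overline{\partial}_A$-exact component since $\overline{\partial}_A^* a''$ detects that piece.

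Concretely, I would consider the map
\begin{equation}
\Phi: Q_A \longrightarrow H^{0,1}(X,\mathrm{End}E), \qquad \Phi(a'') = \mathbb{H}^{0,1}(a''),
\nonumber
\end{equation}
and show its derivative at the origin is the identity on $H^{0,1}$. Since at $a'' = 0$ both constraints are linear (equation (\ref{equation 3}) becomes $\overline{\partial}_A a'' = 0$ and equation (\ref{equation 5}) becomes $\overline{\partial}_A^* a'' = 0$), the tangent space $T_0 Q_A$ is exactly $\ker \overline{\partial}_A \cap \ker \overline{\partial}_A^* = H^{0,1}(X,\mathrm{End}E)$, and $d\Phi_0 = \mathrm{id}$. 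Then the inverse function theorem for Banach manifolds gives that $\Phi$ is a local analytic isomorphism onto a neighbourhood of the origin, provided $\epsilon''$ is chosen small enough. To make this rigorous I would set up the ambient unconstrained Banach space $\Omega^{0,1}(X,\mathrm{End}E)_k$, define the "constraint map"
\begin{equation}
G: \Omega^{0,1}_k \longrightarrow \Omega^{0,2}_{k-1,\,\overline{\partial}_A^*\text{-coexact}} \oplus \Omega^{0,0}_{k-1}, \quad a'' \longmapsto \big(P_{\overline{\partial}_A^*}(\text{LHS of }(\ref{equation 3})),\ \overline{\partial}_A^* a'' - \tfrac{i}{2}\wedge(\cdots)\big),
\nonumber
\end{equation}
note that $Q_A = G^{-1}(0) \cap \{\|a''\|_k < \epsilon''\}$, check that $dG_0$ is surjective with kernel $H^{0,1}$ (using ellipticity of the relevant operators and the Hodge decomposition, so the relevant operators are Fredholm), and conclude $Q_A$ is a smooth Banach submanifold near $0$ with tangent space $H^{0,1}$. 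Restricting $\mathbb{H}^{0,1}$ to this submanifold then has invertible differential at $0$.

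The main obstacle I expect is the bookkeeping of the quadratic terms and verifying that the non-harmonic components of $a''$ are genuinely \emph{determined} (not merely constrained) by (\ref{equation 3}) and (\ref{equation 5}) — i.e. that $dG_0$ is an isomorphism from the non-harmonic part of $\Omega^{0,1}_k$ onto the target. This requires identifying the leading linear operator in each constraint: for (\ref{equation 5}) it is $\overline{\partial}_A^*$ restricted to $\overline{\partial}_A \Omega^{0,0}_{k+1}$, which is an isomorphism onto (the coexact-free part of) $\Omega^{0,0}_{k-1}$ modulo the constant/harmonic kernel — here one needs the connection to be irreducible so $H^{0,0}(X,\mathrm{End}E) = \mathbb{C}\cdot\mathrm{id}$ is handled, or rather one works with $g_E$ trace-free endomorphisms; and for (\ref{equation 3}) the leading term $P_{\overline{\partial}_A}\overline{\partial}_A a''$ recovers the $\overline{\partial}_A$-exact part... wait — I need to be careful: $\overline{\partial}_A a''$ for $a''$ coexact lands in the image of $\overline{\partial}_A$ composed appropriately. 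The cleanest route is to project (\ref{equation 3}) onto harmonic/exact/coexact pieces: the coexact projection of (\ref{equation 3}) is $\overline{\partial}_A a'' + (\text{quadratic})= 0$ on the coexact part, and since $\overline{\partial}_A: \overline{\partial}_A^*\Omega^{0,2}_{k+1} \to \overline{\partial}_A\overline{\partial}_A^*\Omega^{0,2}_{k+1}$ is injective with closed range (by ellipticity of $\Delta_{\overline{\partial}_A}$), the implicit function theorem solves for the coexact part of $a''$ in terms of the harmonic part and lower-order data. Handling the interaction between $a'$ (the $(1,0)$ part, which is the $h$-adjoint of $-a''$ since $a \in \Omega^1(X, g_E)$) and the $(0,1)$ data in the quadratic terms of (\ref{equation 5}) is where the estimates need the most care, but these are standard elliptic estimates once the linear structure is pinned down. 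With all this in place the conclusion follows, and I would note that this lemma is precisely what feeds into the proof of Theorem \ref{mo mDT4}.
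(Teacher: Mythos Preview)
Your approach is essentially the paper's: both use the implicit function theorem after observing that the linearisations of (\ref{equation 3}) and (\ref{equation 5}) at the origin are $\overline{\partial}_A a''=0$ and $\overline{\partial}_A^* a''=0$, so that the non-harmonic parts of $a''$ are determined by quadratic data. The paper packages this into a single map
\[
q(a'') = \Big(\mathbb{H}(a''),\ \overline{\partial}_A^* a'' - \tfrac{i}{2}\wedge(a'\wedge a'' + a''\wedge a'),\ \overline{\partial}_A^*\big(\text{LHS of (\ref{equation 3})}\big)\Big)
\]
from $\Omega^{0,1}_k$ to $H^{0,1}\oplus \overline{\partial}_A^*\Omega^{0,1}_k \oplus \overline{\partial}_A^*\Omega^{0,2}_{k-1}$, shows $dq_0(v) = (\mathbb{H} v,\, \overline{\partial}_A^* v,\, \overline{\partial}_A^*\overline{\partial}_A v)$ has explicit inverse $(u_0,u_1,u_2)\mapsto u_0 + G\overline{\partial}_A u_1 + G u_2$ via the Green operator, and concludes.

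Two concrete slips in your write-up. First, your constraint map takes $P_{\overline{\partial}_A^*}(\text{LHS of (\ref{equation 3})})$, but every term on the left of (\ref{equation 3}) already lies in the image of $\overline{\partial}_A$ (for the third term use $*_4\overline{\partial}_A^* = \pm\overline{\partial}_A *_4$), so the coexact projection is identically zero and you lose the constraint entirely; the paper instead applies $\overline{\partial}_A^*$, which is injective on the image of $\overline{\partial}_A$. You drift toward this in your last paragraph but never pin it down. Second, targeting all of $\Omega^{0,0}_{k-1}$ for the gauge-fixing component makes $dG_0$ fail surjectivity by the cokernel $H^0(X,End E)$; the paper targets only $\overline{\partial}_A^*\Omega^{0,1}_k$ and spends a paragraph checking well-definedness, namely that $\wedge\mathbb{H}(a'\wedge a'' + a''\wedge a')\in H^0(End_0 E)=0$ via the K\"ahler identity $[\wedge,\overline{\partial}_A]=\pm\partial_A^*$ and simpleness of $(E,\overline{\partial}_A)$. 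This is precisely the step you flagged as needing irreducibility but did not carry out.
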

\begin{proof}
We define
\begin{equation}q: \Omega^{0,1}(EndE)_{k}\rightarrow H^{0,1}(EndE)\oplus {\overline{\partial}_{A}^{*}\Omega^{0,1}(EndE)}_{k}\oplus
{\overline{\partial}_{A}^{*}\Omega^{0,2}(EndE)}_{k-1},  \nonumber \end{equation}
\begin{equation}q(a'')=\bigg(\mathbb{H}(a''),\overline{\partial}_{A}^{*}a''-\frac{i}{2}\wedge(a'\wedge a''+a''\wedge a'),
\overline{\partial}_{A}^{*}\big(\overline{\partial}_{A}a''+P_{\overline{\partial}_{A}}(a''\wedge a'')
+*_{4}P_{\overline{\partial}_{A}^{*}}(a''\wedge a'')\big)\bigg).  \nonumber \end{equation}
We show $q$ is well-defined: Firstly, we have
\begin{eqnarray*}\wedge(\varphi)&=&\wedge(\mathbb{H}(\varphi))+\wedge(\overline{\partial}_{A}\overline{\partial}_{A}^{*}G\varphi)+
\wedge(\overline{\partial}_{A}^{*}\overline{\partial}_{A}G\varphi)  \\
&=& \wedge(\mathbb{H}(\varphi))+ \overline{\partial}_{A}\wedge(\overline{\partial}_{A}^{*}G\varphi)+
\overline{\partial}_{A}^{*}(\wedge\overline{\partial}_{A}G\varphi) \\
&=& \wedge(\mathbb{H}(\varphi))+0+\overline{\partial}_{A}^{*}(\wedge\overline{\partial}_{A}G\varphi),
\nonumber\end{eqnarray*}
where $\varphi=a'\wedge a''+a''\wedge a'\in\Omega^{1,1}(X,End_{0}E)$.

Meanwhile, we have
\begin{equation} \overline{\partial}_{A}\wedge\mathbb{H}(\varphi)=\wedge\overline{\partial}_{A}\mathbb{H}(\varphi)\pm
\overline{\partial}_{A}^{*}\mathbb{H}(\varphi)=0,  \quad \overline{\partial}_{A}^{*}\wedge\mathbb{H}(\varphi)=0. \nonumber\end{equation}
Thus $\wedge\mathbb{H}(\varphi)\in H^{0}(End_{0}E)=0$ by the simpleness of
$(E,\overline{\partial}_{A})$.
Hence $q$ is well-defined.

We take the differentiation of $q$ at $0$,
\begin{equation}dq_{0}(v)=(\mathbb{H}(v),\overline{\partial}_{A}^{*}v,\overline{\partial}_{A}^{*}\overline{\partial}_{A}v ), \nonumber\end{equation}
which is a diffeomorphism whose inverse is given by
\begin{equation}dq_{0}^{-1}(u_{0},u_{1},u_{2})=u_{0}+G\overline{\partial}_{A}u_{1}+Gu_{2}. \nonumber \end{equation}
By the implicit function theorem, $q$ is a local analytic isomorphism near the origin.
\end{proof}
By Lemma \ref{identify SA MA}, $\mathcal{M}_{c}^{DT_{4}}$ is locally identified with $\mathcal{M}_{A}^{+}$.
To compare $\mathcal{M}_{A}^{+}$ and $\mathcal{M}_{c}^{bdl}$, we define,
\begin{equation}\label{M_A}\mathcal{M}_{A}=\{a''\in\Omega^{0,1}(X,EndE)_{k} \textrm{ }\big{|}\textrm{ }\| a''\|_{k} < \epsilon'',\textrm{ }
\mathbb{H}^{0,2}(a''\wedge a'')=0 ,\textrm{} a'' \textrm{ }
\emph{satisfies}  \textrm{ } (\ref{equation 3}), \textrm{} (\ref{equation 5}) \}.    \end{equation}
By Lemma \ref{QA iso to H1}, $\mathcal{M}_{A}\hookrightarrow Q_{A}$ embeds as a closed analytic subspace of a finite dimensional smooth manifold $Q_{A}$.
We will show $\mathcal{M}_{A}$ is isomorphic to an analytic neighbourhood of $\overline{\partial}_{A}$ in $\mathcal{M}_{c}^{bdl}$ with analytic topology.
To achieve this, we denote local analytic maps
\begin{equation}P:\Omega^{0,1}(X,EndE)_{k}\rightarrow \Omega^{0,2}(X,EndE)_{k-1}, \nonumber \end{equation}
\begin{equation}P(a'')\triangleq\overline{\partial}_{A}a''+a''\wedge a'' \nonumber \end{equation}
and
\begin{equation}\lambda: Q_{A}\rightarrow Q_{A}\times \Omega^{0,2}(X,EndE)_{k-1},\nonumber \end{equation}
\begin{equation}\lambda(a'')\triangleq(a'',P(a'')). \nonumber \end{equation}
Note that
\begin{equation}\label{QA def}  Q_{A}\cap P^{-1}(0)=\{a'' \textrm{ }\big{|}\textrm{ }\| a''\|_{k} < \epsilon'',\textrm{ }
F^{0,2}(\overline{\partial}_{A}+a'')=0 ,\textrm{}  a'' \textrm{ }
\emph{satisfies}  \textrm{ } (\ref{equation 5}) \}    \end{equation}
gives a neighbourhood of $\overline{\partial}_{A}$ in $\mathcal{M}_{c}^{bdl}$ and
$Q_{A}\cap P^{-1}(0)\hookrightarrow \mathcal{M}_{A} $ embeds as a closed analytic subspace.
We are thus left to show $Q_{A}\cap P^{-1}(0)=\mathcal{M}_{A}$. \\

The image of the above map $\lambda$ satisfies
\begin{lemma}\label{image of lamda}
\begin{equation}Im(\lambda)\subseteq F, \nonumber \end{equation}
where
\begin{equation}  F=\left\{ \begin{array}{lll}
   (a'',\theta)\in Q_{A}\times\Omega^{0,2}_{k-1}\textrm{ }\big{|} & \overline{\partial}_{A}^{*}\theta=
   \overline{\partial}_{A}^{*}(*_{4}\overline{\partial}_{A}^{*}G([a'',\theta])), &
   \overline{\partial}_{A}^{*}(\overline{\partial}_{A}\theta+[a'',\theta])=0  
\nonumber\end{array}\right\}.\end{equation}
\end{lemma}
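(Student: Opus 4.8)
\textbf{Proof proposal for Lemma \ref{image of lamda}.}

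The plan is to verify directly that for any $a'' \in Q_A$, the pair $(a'', P(a''))$ lies in the set $F$, by checking the two defining equations of $F$ one at a time using the Hodge-theoretic identities already set up in the proof of Lemma \ref{QA iso to H1}. Write $\theta = P(a'') = \overline{\partial}_A a'' + a'' \wedge a''$. I would first record the Bianchi-type identity: applying $\overline{\partial}_A$ to $\theta$ gives $\overline{\partial}_A \theta = \overline{\partial}_A(a'' \wedge a'') = [\,\overline{\partial}_A a'', a''\,]$ with the appropriate sign, and since $\overline{\partial}_A a'' = \theta - a''\wedge a''$ one gets a relation of the form $\overline{\partial}_A \theta + [a'', \theta]$ involving only cubic terms in $a''$. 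The key input is that $a''$ satisfies equation (\ref{equation 3}), i.e. $\overline{\partial}_A a'' + P_{\overline{\partial}_A}(a'' \wedge a'') + *_4 P_{\overline{\partial}_A^*}(a'' \wedge a'') = 0$, which after applying the Hodge decomposition $I = \mathbb{H}^{0,2} + P_{\overline{\partial}_A} + P_{\overline{\partial}_A^*}$ to the full curvature $\theta = \overline{\partial}_A a'' + a'' \wedge a''$ expresses $\theta$ in terms of its harmonic part, its $P_{\overline{\partial}_A^*}$-part and its $*_4$-image. Concretely, (\ref{equation 3}) says that the $\overline{\partial}_A$-exact and $\overline{\partial}_A^*$-exact pieces of $\theta$ are related by $*_4$, so that $\theta$ is, up to its harmonic component, self-dual-adjusted in a way that feeds directly into the first equation of $F$.

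The second equation of $F$, namely $\overline{\partial}_A^*(\overline{\partial}_A \theta + [a'', \theta]) = 0$, I would obtain by applying $\overline{\partial}_A^*$ to the cubic identity for $\overline{\partial}_A \theta + [a'',\theta]$ derived above and observing that the resulting term is $\overline{\partial}_A^*$ of something that is already $\overline{\partial}_A$-exact (so harmonic-free) and lands, after using (\ref{equation 3}) again, inside the image of $\overline{\partial}_A$; then $\overline{\partial}_A^* \overline{\partial}_A$ applied to the relevant quantity vanishes because of the gauge condition (\ref{equation 5}) controlling $\overline{\partial}_A^* a''$, or more precisely because the cubic term is already in the kernel of $\overline{\partial}_A^*$ by the definition of $P(a'')$ as a genuine $(0,2)$-curvature. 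For the first equation, $\overline{\partial}_A^* \theta = \overline{\partial}_A^*\big(*_4 \overline{\partial}_A^* G([a'', \theta])\big)$, I would use the Green's operator decomposition of $a'' \wedge a''$: write $a'' \wedge a'' = \mathbb{H}^{0,2}(a''\wedge a'') + \overline{\partial}_A \overline{\partial}_A^* G(a''\wedge a'') + \overline{\partial}_A^* \overline{\partial}_A G(a''\wedge a'')$, note $\overline{\partial}_A^* \theta = \overline{\partial}_A^*(a'' \wedge a'')$ since $\overline{\partial}_A^* \overline{\partial}_A a'' $ is governed by (\ref{equation 5}), and then use (\ref{equation 3}) to rewrite $P_{\overline{\partial}_A^*}(a''\wedge a'')$ in terms of $*_4$-dual data, identifying $\overline{\partial}_A^* \theta$ with $\overline{\partial}_A^*$ applied to $*_4$ of a $\overline{\partial}_A^*$-exact term of the form $\overline{\partial}_A^* G([a'',\theta])$.

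The main obstacle I anticipate is bookkeeping: tracking the precise signs and the interplay between $[a'',\theta]$ (graded commutator of bundle-valued forms), the wedge products $a'' \wedge a''$ versus $a'' \wedge \theta$, and the three projectors $\mathbb{H}^{0,2}, P_{\overline{\partial}_A}, P_{\overline{\partial}_A^*}$, all while keeping straight that $*_4$ is complex anti-linear and interchanges $\overline{\partial}_A\Omega^{0,1}$ with $\overline{\partial}_A^*\Omega^{0,3}$ (Lemma \ref{lem descendence of star} and its consequences). The conceptual content is light — everything is forced by equations (\ref{equation 3}), (\ref{equation 5}), the definition $\theta = P(a'')$, and the Hodge/Green identities with $\overline{\partial}_A^{*_4} = \overline{\partial}_A^*$ — but assembling the two identities defining $F$ without a sign error is where the real work lies. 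I would organize the computation by first proving the cubic identity for $\overline{\partial}_A\theta + [a'',\theta]$ as a standalone sublemma, then deducing each of the two equations of $F$ from it together with the constraints on $a''$.
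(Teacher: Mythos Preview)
Your plan has the right ingredients but two concrete missteps that would derail the computation.

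First, the second defining equation of $F$ is much simpler than you make it. The ``cubic terms'' you anticipate in $\overline{\partial}_A\theta + [a'',\theta]$ actually vanish identically: from $\overline{\partial}_A\theta = [\overline{\partial}_A a'', a''] = [\theta - a''\wedge a'', a'']$ and the graded Jacobi identity $[a''\wedge a'', a''] = 0$, you get $\overline{\partial}_A\theta + [a'',\theta] = 0$ on the nose. This is just the Bianchi identity for the curvature $\theta$ of $\overline{\partial}_A + a''$; there is no need to invoke equation (\ref{equation 5}) or any further cancellation. The paper's proof simply cites this identity and moves on.

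Second, your route to the first equation has a gap. You write that $\overline{\partial}_A^*\theta = \overline{\partial}_A^*(a''\wedge a'')$ ``since $\overline{\partial}_A^*\overline{\partial}_A a''$ is governed by (\ref{equation 5}),'' but equation (\ref{equation 5}) constrains $\overline{\partial}_A^* a''$, not $\overline{\partial}_A^*\overline{\partial}_A a''$, and does not force the latter to vanish. In fact equation (\ref{equation 5}) plays no role in this lemma at all. The paper's argument instead uses equation (\ref{equation 3}) to rewrite
\[
\theta - \mathbb{H}^{0,2}(a''\wedge a'') \;=\; P_{\overline{\partial}_A^*}(a''\wedge a'') - *_4 P_{\overline{\partial}_A^*}(a''\wedge a'') \;=\; \overline{\partial}_A^*\overline{\partial}_A G(a''\wedge a'') - *_4\overline{\partial}_A^*\overline{\partial}_A G(a''\wedge a''),
\]
then substitutes the Bianchi identity $\overline{\partial}_A(a''\wedge a'') = -[a'',\theta]$ into the second term (commuting $G$ past $\overline{\partial}_A$), and finally applies $\overline{\partial}_A^*$, which kills both the harmonic piece and the $\overline{\partial}_A^*$-exact piece, leaving exactly $\overline{\partial}_A^*\theta = \overline{\partial}_A^*\big(*_4\overline{\partial}_A^* G([a'',\theta])\big)$. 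Organize your computation this way and the bookkeeping you were worried about largely disappears.
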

\begin{proof}
Let
\begin{eqnarray*}\theta &=&\overline{\partial}_{A}a''+a''\wedge a'' \\
&=&  \overline{\partial}_{A}a''+P_{\overline{\partial}_{A}}(a''\wedge a'')+\mathbb{H}^{0,2}(a''\wedge a'')
+P_{\overline{\partial}_{A}^{*}}(a''\wedge a'').
\end{eqnarray*}
By definition
\begin{equation}a''\in Q_{A}\Rightarrow \overline{\partial}_{A}\alpha+P_{\overline{\partial}_{A}}(\alpha\wedge\alpha)
+*_{4}P_{\overline{\partial}_{A}^{*}}(\alpha\wedge\alpha)=0.
\nonumber \end{equation}
Hence
\begin{eqnarray}\label{bianchi 0}\theta-\mathbb{H}^{0,2}(a''\wedge a'')&=&P_{\overline{\partial}_{A}^{*}}(a''\wedge a'')-*_{4}P_{\overline{\partial}_{A}^{*}}(a''\wedge a'') \\
&=& \overline{\partial}_{A}^{*}\overline{\partial}_{A}G(a''\wedge a'')-*_{4}\overline{\partial}_{A}^{*}\overline{\partial}_{A}G(a''\wedge a'').
\end{eqnarray}
Taking $\overline{\partial}_{A}$ to both sides of $\theta=\overline{\partial}_{A}a''+a''\wedge a''$, we get
\begin{equation}\overline{\partial}_{A}\theta=\overline{\partial}_{A}(a''\wedge a'').\nonumber \end{equation}
Combined with the Bianchi identity $\overline{\partial}_{A}\theta+[a'',\theta]=0$, we have
\begin{equation}\label{bianchi 1}\overline{\partial}_{A}(a''\wedge a'')=-[a'',\theta]. \end{equation}
Using (\ref{bianchi 0}) and (\ref{bianchi 1}), we have
\begin{equation}\theta-\mathbb{H}^{0,2}(a''\wedge a'')=
\overline{\partial}_{A}^{*}\overline{\partial}_{A}G(a''\wedge a'')+*_{4}\overline{\partial}_{A}^{*}G([a'',\theta]). \nonumber \end{equation}
After taking $\overline{\partial}_{A}^{*}$, we get
\begin{equation}\overline{\partial}_{A}^{*}\theta=\overline{\partial}_{A}^{*}\big(*_{4}\overline{\partial}_{A}^{*}G([a'',\theta])\big).
\nonumber \end{equation}
\end{proof}
\begin{lemma}\label{F iso to H1 H2}
We denote
\begin{equation}  F=\left\{ \begin{array}{lll}
   (a'',\theta)\in Q_{A}\times\Omega^{0,2}_{k-1}\textrm{ }\big{|} & \overline{\partial}_{A}^{*}\theta=
   \overline{\partial}_{A}^{*}(*_{4}\overline{\partial}_{A}^{*}G([a'',\theta])), &
   \overline{\partial}_{A}^{*}(\overline{\partial}_{A}\theta+[a'',\theta])=0  
\nonumber\end{array}\right\}, \end{equation}
then the harmonic projection map
\begin{equation}(\mathbb{H}^{0,1}\times \mathbb{H}^{0,2}): F\rightarrow H^{0,1}(X,EndE)\times H^{0,2}(X,EndE) \nonumber \end{equation}
is a local analytic isomorphism.
\end{lemma}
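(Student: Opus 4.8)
The plan is to mimic the proof of Lemma \ref{QA iso to H1}: construct an auxiliary map $\Phi$ on a neighborhood of the origin in $\Omega^{0,1}(X,EndE)_{k}\times\Omega^{0,2}(X,EndE)_{k-1}$ whose vanishing of the last components cuts out exactly $F$, and whose linearization at $0$ is an isomorphism onto a suitable product of Hodge components, so that $F$ becomes the graph of the remaining coordinates over $H^{0,1}\times H^{0,2}$. Concretely, I would set
\begin{equation}\Phi(a'',\theta)=\Bigl(\mathbb{H}^{0,1}(a''),\,\mathbb{H}^{0,2}(\theta),\,R_{1}(a''),\,R_{2}(a'',\theta),\,R_{3}(a'',\theta),\,R_{4}(a'',\theta)\Bigr),\nonumber\end{equation}
where $R_{1}$ is the gauge-fixing defect from (\ref{equation 5}), $R_{2}$ is the defect of (\ref{equation 3}), and $R_{3},R_{4}$ are the two defining equations of $F$ (namely $\overline{\partial}_{A}^{*}\theta-\overline{\partial}_{A}^{*}(*_{4}\overline{\partial}_{A}^{*}G([a'',\theta]))$ and $\overline{\partial}_{A}^{*}(\overline{\partial}_{A}\theta+[a'',\theta])$), each landing in the appropriate $\overline{\partial}_{A}^{*}$-exact Sobolev space. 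Then $F=\Phi^{-1}\bigl(H^{0,1}\times H^{0,2}\times\{0\}\bigr)$ (intersected with the $Q_{A}$-constraint, which is itself already encoded by $R_{1},R_{2}$ vanishing together with the harmonic conditions), and the claimed map $\mathbb{H}^{0,1}\times\mathbb{H}^{0,2}$ is the restriction of the first two components of $\Phi$ to this locus.

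The heart is the linearization computation. At $(a'',\theta)=(0,0)$ every quadratic/bracket term drops out, so
\begin{equation}d\Phi_{0}(v,w)=\bigl(\mathbb{H}^{0,1}(v),\,\mathbb{H}^{0,2}(w),\,\overline{\partial}_{A}^{*}v,\,\overline{\partial}_{A}w,\,\overline{\partial}_{A}^{*}w,\,\overline{\partial}_{A}^{*}\overline{\partial}_{A}w\bigr),\nonumber\end{equation}
after discarding the components that become redundant; using the Hodge decomposition of $\Omega^{0,1}$ and $\Omega^{0,2}$ (with $H^{0,0}(X,EndE)=0$ by simpleness, as already exploited in Lemma \ref{QA iso to H1}) this is a linear isomorphism onto $H^{0,1}\oplus H^{0,2}\oplus(\overline{\partial}_{A}^{*}\text{-exact pieces})$, with inverse built from the Green operator $G$ exactly as in the display $dq_{0}^{-1}(u_{0},u_{1},u_{2})=u_{0}+G\overline{\partial}_{A}u_{1}+Gu_{2}$ there. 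The implicit function theorem on Banach spaces then gives that $\Phi$ is a local analytic isomorphism near $0$, hence restricts to a local analytic isomorphism from $F$ onto $H^{0,1}\times H^{0,2}$ (a neighborhood of the origin), which is the assertion.

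The main subtlety — and the step I expect to cost the most care — is bookkeeping the target spaces so that $d\Phi_{0}$ is genuinely bijective rather than merely injective: one must check that the defining equations of $F$, which are stated as $\overline{\partial}_{A}^{*}$-closedness conditions, really pin down the non-harmonic $\overline{\partial}_{A}$-exact and $\overline{\partial}_{A}^{*}$-exact parts of $a''$ and $\theta$ without over- or under-determining them, and in particular that the pair $(R_{3},R_{4})$ together with $\mathbb{H}^{0,2}$ controls all of $\theta\in\Omega^{0,2}_{k-1}$. This is where Lemma \ref{image of lamda} is used implicitly: the derivation there (via the Bianchi identity $\overline{\partial}_{A}\theta+[a'',\theta]=0$ and the identity $\theta-\mathbb{H}^{0,2}(a''\wedge a'')=\overline{\partial}_{A}^{*}\overline{\partial}_{A}G(a''\wedge a'')+*_{4}\overline{\partial}_{A}^{*}G([a'',\theta])$) shows the system is consistent and that $F$ contains $Im(\lambda)$, so the finite-dimensional model $F$ has the ``expected'' size $\dim H^{0,1}+\dim H^{0,2}$. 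Once the target decomposition is set up correctly, ellipticity of $\overline{\partial}_{A}$ (Hodge theory, elliptic regularity for the relevant Sobolev completions) makes the rest routine.
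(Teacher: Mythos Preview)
Your approach is correct and is essentially the same idea as the paper's: build an auxiliary map whose linearization at the origin is the Hodge decomposition, apply the implicit function theorem, and read off $F$ as the preimage of the harmonic slice. The paper's execution is tidier in two respects. First, it works on $Q_{A}\times\Omega^{0,2}_{k-1}$ directly rather than unfolding back to $\Omega^{0,1}_{k}\times\Omega^{0,2}_{k-1}$, so the $a''$-factor is already handled by Lemma~\ref{QA iso to H1} and the map $f$ can be taken as the identity on the first factor. Second, instead of keeping the two $\theta$-equations $R_{3},R_{4}$ separate, the paper packages them into the single expression
\[
\theta\longmapsto \mathbb{H}^{0,2}(\theta)+\overline{\partial}_{A}\overline{\partial}_{A}^{*}\bigl(\theta-*_{4}\overline{\partial}_{A}^{*}G([a'',\theta])\bigr)+\overline{\partial}_{A}^{*}\bigl(\overline{\partial}_{A}\theta+[a'',\theta]\bigr),
\]
whose linearization at $(0,0)$ is $\mathbb{H}^{0,2}+\overline{\partial}_{A}\overline{\partial}_{A}^{*}+\overline{\partial}_{A}^{*}\overline{\partial}_{A}$, i.e.\ the full Laplacian shifted by the harmonic projection, with explicit inverse $u\mapsto\mathbb{H}^{0,2}u+Gu$. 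This avoids the bookkeeping of target subspaces you flag as the ``main subtlety''.

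One slip to fix: in your displayed linearization the fourth entry reads $\overline{\partial}_{A}w$, but $R_{2}$ is the defect of (\ref{equation 3}) and depends only on $a''$, so its linearization is $\overline{\partial}_{A}v$ (or $\overline{\partial}_{A}^{*}\overline{\partial}_{A}v$ if you follow the convention of $q$ in Lemma~\ref{QA iso to H1}). With that corrected, your six components do control all of $(v,w)$ via Hodge decomposition and the argument goes through.
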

\begin{proof}
We take a map $f$
\begin{equation}f:Q_{A}\times\Omega^{0,2}(X,EndE)_{k-1}\rightarrow Q_{A}\times\Omega^{0,2}(X,EndE)_{k-3}, \nonumber \end{equation}
\begin{equation}f(a'',\theta)=\bigg(a'',\mathbb{H}^{0,2}(\theta)+\overline{\partial}_{A}\overline{\partial}_{A}^{*}\big(\theta-*_{4}
\overline{\partial}_{A}^{*}G([a'',\theta])\big)+\overline{\partial}_{A}^{*}(\overline{\partial}_{A}\theta+[a'',\theta])\bigg).
\nonumber \end{equation}
It is easy to check that $f$ is a local analytic isomorphism by using implicit function theorem and noticing
\begin{equation}df_{0,0}(v_{1},v_{2})=(v_{1},\mathbb{H}^{0,2}v_{2}+\overline{\partial}_{A}\overline{\partial}_{A}^{*}v_{2}
+\overline{\partial}_{A}^{*}\overline{\partial}_{A}v_{2}) \nonumber\end{equation}
whose inverse is given by
\begin{equation}df_{0,0}^{-1}(u_{1},u_{2})=(u_{1},\mathbb{H}^{0,2}u_{2}+Gu_{2}). \nonumber \end{equation}
Hence $F=f^{-1}(Q_{A}\times H^{0,2}(X,EndE))$ and the projection map
\begin{equation}(\mathbb{H}^{0,1}\times \mathbb{H}^{0,2}) : F\rightarrow H^{0,1}(X,EndE)\times H^{0,2}(X,EndE)\nonumber\end{equation}
gives a local chart of $F$.
\end{proof}
\begin{lemma}\label{vanishing of prW2}
If $\mathbb{H}^{0,2}(\theta)=0$ , $(a'',\theta)\in F$ and $\|a''\|_{k}\ll1$, then $\theta=0$.
\end{lemma}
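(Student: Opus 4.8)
The plan is to exploit the Hodge decomposition of $\theta$ together with the two relations defining membership in $F$ to rewrite the hypotheses as a single identity expressing $\theta$ as a bounded bilinear function of $(a'',\theta)$ with a gain of one Sobolev derivative, and then to run the standard estimate that forces $\theta=0$ once $\|a''\|_{k}\ll1$.

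First I would use the hypothesis $\mathbb{H}^{0,2}(\theta)=0$ together with the Hodge identity $I=\mathbb{H}^{0,2}+\overline{\partial}_{A}\overline{\partial}_{A}^{*}G+\overline{\partial}_{A}^{*}\overline{\partial}_{A}G$ on $\Omega^{0,2}(X,EndE)$, and the fact that $G$ commutes with $\overline{\partial}_{A}$ and $\overline{\partial}_{A}^{*}$, to write
\[ \theta=\overline{\partial}_{A}G(\overline{\partial}_{A}^{*}\theta)+\overline{\partial}_{A}^{*}G(\overline{\partial}_{A}\theta). \]
Now substitute the two relations built into $F$. The first, $\overline{\partial}_{A}^{*}\theta=\overline{\partial}_{A}^{*}\big(*_{4}\,\overline{\partial}_{A}^{*}G([a'',\theta])\big)$, is plugged directly into the first summand. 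The second, $\overline{\partial}_{A}^{*}(\overline{\partial}_{A}\theta+[a'',\theta])=0$, gives $\overline{\partial}_{A}^{*}\overline{\partial}_{A}\theta=-\overline{\partial}_{A}^{*}[a'',\theta]$, hence (commuting $G$) $\overline{\partial}_{A}^{*}G(\overline{\partial}_{A}\theta)=G(\overline{\partial}_{A}^{*}\overline{\partial}_{A}\theta)=-G\overline{\partial}_{A}^{*}[a'',\theta]$. This produces the identity
\[ \theta=\overline{\partial}_{A}G\overline{\partial}_{A}^{*}\big(*_{4}\,\overline{\partial}_{A}^{*}G([a'',\theta])\big)-G\overline{\partial}_{A}^{*}[a'',\theta]. \]

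Next I would estimate the right-hand side in the $L_{k-1}^{2}$-norm. Since $[a'',\theta]$ is pointwise bilinear, the Sobolev multiplication theorem (applicable since $k$ is large, as assumed throughout) gives $\|[a'',\theta]\|_{k-1}\le C\|a''\|_{k}\|\theta\|_{k-1}$. The operators $G$ (order $-2$, elliptic with smooth coefficients), $\overline{\partial}_{A}$ and $\overline{\partial}_{A}^{*}$ (order $1$), and $*_{4}$ (a smooth bundle isomorphism, hence bounded on every Sobolev completion) have the evident mapping properties between Sobolev spaces; tracking Hodge types and orders shows each of the two terms above lies in $\Omega^{0,2}(X,EndE)_{k}$ with norm $\le C\|a''\|_{k}\|\theta\|_{k-1}$. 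Therefore $\|\theta\|_{k-1}\le\|\theta\|_{k}\le C\|a''\|_{k}\|\theta\|_{k-1}$, so once $\|a''\|_{k}$ is small enough that $C\|a''\|_{k}<1$ we get $\|\theta\|_{k-1}=0$, i.e. $\theta=0$.

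The argument is essentially routine; the only point requiring care is the Sobolev bookkeeping --- verifying that at each stage the operators act on forms of the correct Hodge type and that the derivatives lost to $\overline{\partial}_{A},\overline{\partial}_{A}^{*}$ are recovered from $G$, so that no regularity is lost and the constant $C$ (depending only on the fixed connection $A$ and the metric) is uniform. Note that only the two defining relations of $F$ and the hypothesis $\mathbb{H}^{0,2}(\theta)=0$ enter: the conditions (\ref{equation 3}) and (\ref{equation 5}) cutting out $Q_{A}$ are not used here, and no smallness of $\theta$ itself is needed, since the estimate is linear in $\|\theta\|_{k-1}$.
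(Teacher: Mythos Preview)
Your proof is correct and follows essentially the same approach as the paper: use the Hodge decomposition of $\theta$ together with $\mathbb{H}^{0,2}(\theta)=0$, substitute the two defining relations of $F$ to express $\theta$ bilinearly in $(a'',\theta)$ through order-zero composites of $G$, $\overline{\partial}_{A}$, $\overline{\partial}_{A}^{*}$, $*_{4}$, and conclude by the Sobolev estimate $\|\theta\|_{k-1}\le C\|a''\|_{k}\|\theta\|_{k-1}$. The paper arrives at the same identity (with $G$ commuted to the outside) and the same estimate; your additional remarks on the Sobolev bookkeeping and the fact that only the $F$-relations are used are accurate elaborations.
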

\begin{proof}
By the Hodge decomposition and $(a'',\theta)\in F$,
\begin{eqnarray*}\theta&=&\mathbb{H}^{0,2}(\theta)+\overline{\partial}_{A}^{*}\overline{\partial}_{A}G\theta+
\overline{\partial}_{A}\overline{\partial}_{A}^{*}G\theta \\&=&-G\overline{\partial}_{A}^{*}([a'',\theta])+
G\overline{\partial}_{A}\overline{\partial}_{A}^{*}(*_{4}\overline{\partial}_{A}^{*}G([a'',\theta])),\nonumber\end{eqnarray*}
then
\begin{equation}
\|\theta\|_{k-1}\leq C_{1}\|a''\|_{k}\|\theta\|_{k-1}+C_{2}\| a''\|_{k}\|\theta\|_{k-1}
=C\| a''\|_{k}\|\theta\|_{k-1}.\nonumber\end{equation}
$C$ is a constant independent of $a'',\theta$. Hence we can get $\theta=0$ if $\|a''\|_{k}\ll1 $ .
\end{proof}
\begin{corollary}
The following three analytic spaces are set theoretically identical
\begin{equation}\mathcal{M}_{A}=\{a''\in\Omega^{0,1}(X,EndE)_{k} \textrm{ }\big{|}\textrm{ }\|a''\|_{k} < \epsilon'',\textrm{ } \mathbb{H}^{0,2}(a''\wedge a'')=0 ,\textrm{} a'' \textrm{ } \emph{satisfies}  \textrm{ } (\ref{equation 3}), \textrm{} (\ref{equation 5}) \}, \nonumber \end{equation}
\begin{equation}  Q_{A}\cap P^{-1}(0)=\{a'' \textrm{ }\big{|}\textrm{ }\|a''\|_{k} < \epsilon'',\textrm{ }
F^{0,2}(\overline{\partial}_{A}+a'')=0 ,\textrm{} a'' \textrm{ } \emph{satisfies}  \textrm{ } (\ref{equation 5}) \}, \nonumber \end{equation}
\begin{equation}Q_{A}\cap P^{-1}(0)=\left\{ \begin{array}{lll}& \|a''\|_{k} < \epsilon'', \textrm{ } a'' \textrm{ } \emph{satisfies}  \textrm{ } (\ref{equation 5}) \\ a'' \textrm{ }\Bigg{|} & \overline{\partial}_{A}a''+P_{\overline{\partial}_{A}}(a''\wedge a'')=0   \\ & \mathbb{H}^{0,2}(a''\wedge a'')=0 \nonumber\end{array}\right\}.\end{equation}
We use the same notation for the second and the third spaces because they are isomorphic as analytic spaces by the standard Kuranishi theory.
\end{corollary}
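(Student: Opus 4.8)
The plan is to check that all three sets are just different presentations of the analytic germ of $\mathcal{M}_{c}^{bdl}$ at $\overline{\partial}_{A}$, passing through the curvature map $P(a'')=\overline{\partial}_{A}a''+a''\wedge a''=F^{0,2}(\overline{\partial}_{A}+a'')$ and its Hodge decomposition. Throughout one keeps $\epsilon''\ll1$ so that Lemmas \ref{QA iso to H1}, \ref{image of lamda}, \ref{F iso to H1 H2} and \ref{vanishing of prW2} apply. First I would record two elementary facts. Since $\overline{\partial}_{A}a''$ lies in the image of $\overline{\partial}_{A}$, it has no harmonic part, so $\mathbb{H}^{0,2}(P(a''))=\mathbb{H}^{0,2}(a''\wedge a'')$; and the Hodge decomposition of $P(a'')$ into its harmonic, $\overline{\partial}_{A}$-exact and $\overline{\partial}_{A}^{*}$-exact summands reads
\[
P(a'')=\bigl(\overline{\partial}_{A}a''+P_{\overline{\partial}_{A}}(a''\wedge a'')\bigr)+\mathbb{H}^{0,2}(a''\wedge a'')+P_{\overline{\partial}_{A}^{*}}(a''\wedge a''),
\]
so $P(a'')=0$ if and only if these three mutually orthogonal terms all vanish; in particular $F^{0,2}(\overline{\partial}_{A}+a'')=0$ forces $\overline{\partial}_{A}a''+P_{\overline{\partial}_{A}}(a''\wedge a'')=0$, $\mathbb{H}^{0,2}(a''\wedge a'')=0$ and $P_{\overline{\partial}_{A}^{*}}(a''\wedge a'')=0$, whence equation (\ref{equation 3}) holds trivially.

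Granting this, the equality of $\mathcal{M}_{A}$ with the second description $Q_{A}\cap P^{-1}(0)$ is quick. If $a''\in\mathcal{M}_{A}$ then $a''\in Q_{A}$ and $\mathbb{H}^{0,2}(a''\wedge a'')=0$; by Lemma \ref{image of lamda}, $(a'',\theta)\in F$ with $\theta\triangleq P(a'')$, and $\mathbb{H}^{0,2}(\theta)=\mathbb{H}^{0,2}(a''\wedge a'')=0$, so Lemma \ref{vanishing of prW2} gives $\theta=0$, i.e. $F^{0,2}(\overline{\partial}_{A}+a'')=0$ and $a''\in Q_{A}\cap P^{-1}(0)$. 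Conversely, if $a''\in Q_{A}\cap P^{-1}(0)$ then $P(a'')=0$ gives $\mathbb{H}^{0,2}(a''\wedge a'')=\mathbb{H}^{0,2}(P(a''))=0$, and since $a''$ already satisfies (\ref{equation 3}) and (\ref{equation 5}) it lies in $\mathcal{M}_{A}$.

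For the two presentations of $Q_{A}\cap P^{-1}(0)$: the inclusion of the second in the third is immediate from the displayed decomposition (vanishing of $P(a'')$ forces vanishing of its harmonic and $\overline{\partial}_{A}$-exact parts, while (\ref{equation 5}) is common to both). For the reverse inclusion one observes that on the locus $\overline{\partial}_{A}a''+P_{\overline{\partial}_{A}}(a''\wedge a'')=0$ the decomposition gives $F^{0,2}(\overline{\partial}_{A}+a'')=\mathbb{H}^{0,2}(a''\wedge a'')+P_{\overline{\partial}_{A}^{*}}(a''\wedge a'')$; imposing also $\mathbb{H}^{0,2}(a''\wedge a'')=0$ leaves $F^{0,2}=P_{\overline{\partial}_{A}^{*}}(a''\wedge a'')$, a $\overline{\partial}_{A}^{*}$-exact form with vanishing harmonic part, and the Bianchi identity $\overline{\partial}_{A}F^{0,2}+[a'',F^{0,2}]=0$ together with a contraction estimate in $\|a''\|_{k}\ll1$ (the standard Kuranishi argument, identical in spirit to the proof of Lemma \ref{vanishing of prW2}) forces $F^{0,2}=0$. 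This yields the set-theoretic equality; the upgrade to an isomorphism of analytic spaces is the usual Kuranishi statement that one may replace the slice $\{F^{0,2}=0,\ (\ref{equation 5})\}$ by the slice $\{\overline{\partial}_{A}a''+P_{\overline{\partial}_{A}}(a''\wedge a'')=0,\ \mathbb{H}^{0,2}(a''\wedge a'')=0,\ (\ref{equation 5})\}$ without changing the analytic germ, since by the implicit function theorem ($\epsilon''\ll1$) the solution of $\overline{\partial}_{A}a''+P_{\overline{\partial}_{A}}(a''\wedge a'')=0$ with prescribed harmonic projection is unique.

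The one non-formal ingredient — and the point I expect to be the real obstacle — is exactly this automatic vanishing of the $\overline{\partial}_{A}^{*}$-exact part of the curvature once its harmonic part has been killed: for $\mathcal{M}_{A}$ versus $Q_{A}\cap P^{-1}(0)$ it is Lemma \ref{vanishing of prW2}, proved by a contraction estimate built from the two defining relations of $F$ (which encode the Bianchi identity and the gauge condition) and the smallness $\|a''\|_{k}\ll1$, and for the second versus third descriptions it is the analogous standard fact. Everything else is orthogonal-decomposition bookkeeping; the only mild subtlety is that the gauge-fixing condition (\ref{equation 5}) is not the naive $\overline{\partial}_{A}^{*}a''=0$ but carries the quadratic correction $\frac{i}{2}\wedge(a'\wedge a''+a''\wedge a')$, so that the finite-dimensionality of the ambient slices $Q_{A}$ and $F$ must be taken from Lemmas \ref{QA iso to H1} and \ref{F iso to H1 H2} rather than from bare Hodge theory.
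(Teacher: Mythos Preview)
Your proposal is correct and follows essentially the same approach as the paper. The paper's own proof is extremely brief: the inclusion $Q_{A}\cap P^{-1}(0)\hookrightarrow\mathcal{M}_{A}$ was already recorded just before the corollary (at the display labeled (\ref{QA def})), and the equivalence of the second and third presentations is dismissed as ``standard Kuranishi theory''; so the paper only spells out the direction $\mathcal{M}_{A}\subseteq Q_{A}\cap P^{-1}(0)$, invoking Lemma~\ref{image of lamda} and Lemma~\ref{vanishing of prW2} exactly as you do. Your additional paragraphs simply make explicit the two steps the paper leaves implicit --- the easy reverse inclusion via the Hodge decomposition of $P(a'')$, and the Bianchi-plus-contraction argument for the third description --- and your identification of Lemma~\ref{vanishing of prW2} as the one non-formal ingredient is spot on.
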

\begin{proof}
We only need to show $Q_{A}\cap P^{-1}(0)$ contains $\mathcal{M}_{A}$, i.e.
\begin{equation}\forall\textrm{ }\overline{\partial}_{A}+a''\in \mathcal{M}_{A} \Rightarrow F^{0,2}(\overline{\partial}_{A}+a'')=0. \nonumber \end{equation}
Since $\mathcal{M}_{A}$ is a subset of $Q_{A}$, we can apply Lemma \ref{image of lamda} to its image under the map $\lambda$.
Combined with Lemma \ref{vanishing of prW2}, we are done.
\end{proof}
Furthermore, we can identify the above three spaces as analytic spaces possibly with non-reduced structures.
Let us first recall a lemma due to Miyajima \cite{m}.
\begin{lemma}\label{miyajima lemma}(Miyajima \cite{m}). Let $E$, $G$ be Banach spaces with direct sum decomposition $E=F_{1}+F_{2}$. If a local analytic map
\begin{equation}h: E\rightarrow G
\nonumber\end{equation}
vanishes identically on $F_{2}$, then there exists a local analytic map
\begin{equation}f: E \rightarrow L(F_{2},G),
\nonumber \end{equation}
such that $h(t,s)=<f(t,s),s>$.
\end{lemma}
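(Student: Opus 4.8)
The plan is to make rigorous the intuition that an analytic map vanishing on a linear subspace is ``divisible'' by the transverse coordinates, working directly with the power series of $h$ so as to avoid any infinite-dimensional implicit function theorem. Write a general point of $E=F_{1}\oplus F_{2}$ as $(t,s)$ with $t\in F_{1}$ and $s\in F_{2}$, the hypothesis being $h(t,0)=0$ for all small $t$. Since $h$ is local analytic at $0$, there are $\rho>0$ and bounded symmetric multilinear maps $P_{n}\colon E^{\times n}\to G$ with $\|P_{n}\|\le C\rho^{-n}$ such that $h(x)=\sum_{n\ge 0}P_{n}(x,\dots,x)$ converges absolutely for $\|x\|$ small.

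First I would rewrite the expansion in the two blocks of variables: by multilinearity and symmetry, $P_{n}\big((t+s)^{\times n}\big)=\sum_{k+m=n}\binom{n}{k}P_{n}(t^{\times k},s^{\times m})$, and collecting terms gives $h(t,s)=\sum_{k,m\ge 0}Q_{k,m}(t^{\times k},s^{\times m})$, where $Q_{k,m}$ is multilinear, separately symmetric in the $t$- and $s$-slots, and $\|Q_{k,m}\|\le C(2/\rho)^{k+m}$, so the double series converges absolutely for $\|t\|,\|s\|<\rho/2$. Next I would use the hypothesis: putting $s=0$ gives $\sum_{k}Q_{k,0}(t^{\times k})\equiv 0$ near $0$, and uniqueness of power-series coefficients together with polarisation forces $Q_{k,0}=0$ for all $k$; hence every surviving term has $m\ge 1$.

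Then I would ``peel off one copy of $s$'': define $f(t,s)\in L(F_{2},G)$ by $\langle f(t,s),u\rangle:=\sum_{k\ge 0,\,m\ge 1}Q_{k,m}(t^{\times k},s^{\times (m-1)},u)$ for $u\in F_{2}$. The same bounds give the geometric majorant $\|f(t,s)\|_{L(F_{2},G)}\le C\,(2/\rho)\sum_{k\ge 0}(2\|t\|/\rho)^{k}\sum_{m\ge 1}(2\|s\|/\rho)^{m-1}<\infty$ for $\|t\|,\|s\|<\rho/2$, so $f$ is represented near $0$ by an absolutely convergent power series in $(t,s)$ and is therefore a local analytic map $E\to L(F_{2},G)$; by construction $\langle f(t,s),s\rangle=\sum_{k,m\ge 1}Q_{k,m}(t^{\times k},s^{\times m})=h(t,s)$. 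An equivalent and more explicit choice is $f(t,s)=\int_{0}^{1}(D_{2}h)(t,\tau s)\,d\tau$, where $D_{2}h\colon E\to L(F_{2},G)$ is the partial Fr\'echet derivative; then $\langle f(t,s),s\rangle=\int_{0}^{1}\tfrac{d}{d\tau}h(t,\tau s)\,d\tau=h(t,s)-h(t,0)=h(t,s)$, and one checks that integrating an analytic family over the compact interval $[0,1]$ again yields an analytic map. The whole argument runs identically in the real-analytic and the complex-analytic categories.

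The only step that is not pure bookkeeping is the last one: ensuring that after the formal ``division by $s$'' the rearranged series still converges on an honest neighbourhood of the origin and that $f$ genuinely lands in the bounded operators $L(F_{2},G)$ with analytic dependence on $(t,s)$. This is exactly what the uniform geometric domination above provides; the remaining ingredients --- expanding in blocks, vanishing of the $Q_{k,0}$, and the fundamental-theorem-of-calculus identity --- are routine.
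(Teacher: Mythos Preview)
Your proof is correct; the paper does not supply a proof of this lemma at all, merely citing Miyajima \cite{m}, so there is nothing to compare against beyond noting that your power-series/Hadamard-integral argument is the standard one and works as written. One small remark: the lemma as stated in the paper says $h$ vanishes on $F_{2}$, whereas your hypothesis $h(t,0)=0$ is vanishing on $F_{1}$; the conclusion $h(t,s)=\langle f(t,s),s\rangle$ with $s\in F_{2}$ forces $h|_{F_{1}}=0$, and indeed the application in Proposition~\ref{QA intesect P=0 equals NA} uses $\pi_{2}'(t,0)=0$, so you have silently (and correctly) fixed a labeling slip in the statement. Also, in your displayed identity for $\langle f(t,s),s\rangle$ the sum should run over $k\ge 0,\ m\ge 1$ rather than $k,m\ge 1$, but this is a typo not a gap.
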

\begin{proposition}\label{QA intesect P=0 equals NA}
We have the following identification
\begin{equation}\label{Kuranishi theorem} Q_{A}\cap P^{-1}(0)=\mathcal{M}_{A} \nonumber \end{equation}
as analytic spaces possibly with non-reduced structures.
\end{proposition}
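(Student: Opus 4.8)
The plan is to upgrade the set-theoretic equality $Q_{A}\cap P^{-1}(0)=\mathcal{M}_{A}$ established in the previous corollary to an isomorphism of (possibly non-reduced) analytic spaces. Since both spaces are closed analytic subspaces of the finite-dimensional smooth manifold $Q_{A}$, it suffices to compare the ideals they cut out inside $\mathcal{O}_{Q_{A}}$. Recall that $\mathcal{M}_{A}$ is defined by the equation $\mathbb{H}^{0,2}(a''\wedge a'')=0$ inside $Q_{A}$ (equations (\ref{equation 3}) and (\ref{equation 5}) being already built into the definition of $Q_{A}$), whereas $Q_{A}\cap P^{-1}(0)$ is cut out by $P(a'')=\overline{\partial}_{A}a''+a''\wedge a''=0$. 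So the goal is to show these two sets of equations generate the same ideal near the origin.

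The key device is Miyajima's Lemma \ref{miyajima lemma} applied to the map $\lambda(a'')=(a'',P(a''))$, or rather to $P$ viewed through the chart of Lemma \ref{F iso to H1 H2}. First I would use Lemma \ref{image of lamda} to see that $\mathrm{Im}(\lambda)\subseteq F$, and then transport everything to the model $H^{0,1}\times H^{0,2}$ via the local analytic isomorphism $(\mathbb{H}^{0,1}\times\mathbb{H}^{0,2}):F\xrightarrow{\sim}H^{0,1}\times H^{0,2}$ of Lemma \ref{F iso to H1 H2}. Under this identification $\lambda$ becomes a section $a''\mapsto(a'',\Phi(a''))$ of $Q_{A}\times H^{0,2}$, where $\Phi=\mathbb{H}^{0,2}\circ\lambda$ has the property that its vanishing locus is exactly $Q_{A}\cap P^{-1}(0)$: indeed by Lemma \ref{vanishing of prW2}, once $(a'',\theta)\in F$ and $\mathbb{H}^{0,2}(\theta)=0$, the full $\theta=P(a'')$ vanishes. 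The crucial point is that the \emph{leading term} of $\Phi$ is $a''\mapsto\mathbb{H}^{0,2}(a''\wedge a'')$, i.e.\ $\Phi(a'')=\mathbb{H}^{0,2}(a''\wedge a'')+(\text{higher order in }a'')$, because the extra terms $P_{\overline{\partial}_{A}}(a''\wedge a'')$ and $*_{4}P_{\overline{\partial}_{A}^{*}}(a''\wedge a'')$ that distinguish $P$ from its harmonic part are killed by the constraint (\ref{equation 3}) defining $Q_{A}$, modulo terms that are themselves cubic or higher.

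Now comes the step I expect to be the main obstacle: showing that the ideal generated by $\Phi$ equals the ideal generated by $\kappa(a'')\triangleq\mathbb{H}^{0,2}(a''\wedge a'')$ in $\mathcal{O}_{Q_{A},0}$. The naive argument — that a ``lower-order perturbation'' generates the same ideal — is false in general; one genuinely needs Miyajima's Lemma. The point is that the correction $\Phi(a'')-\kappa(a'')$, being built from $P_{\overline{\partial}_{A}^{*}}(a''\wedge a'')$-type terms together with the Green's-operator corrections in Lemma \ref{F iso to H1 H2}, vanishes identically along the locus where $\overline{\partial}_{A}a''+a''\wedge a''$ has purely harmonic part, hence (after applying $\overline{\partial}_A$-Hodge theory as in the proof of Lemma \ref{image of lamda}) can be written as a bilinear pairing $\langle f(a''),\kappa(a'')\rangle$ with $f$ a local analytic map into $\mathrm{Hom}(H^{0,2},H^{0,2})$ with $f(0)=\mathrm{Id}$. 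Concretely: decompose $E=Q_{A}$ (finite-dimensional after the chart) as the graph of the appropriate projections, apply Lemma \ref{miyajima lemma} with $F_{2}$ the locus $\{\kappa=0\}$-direction, and conclude $\Phi=\langle f(a''),\kappa(a'')\rangle$ with $f(0)$ invertible; then invertibility of $f$ near $0$ gives $(\Phi)=(\kappa)$ as ideals, hence $Q_{A}\cap P^{-1}(0)=\mathcal{M}_{A}$ as analytic subspaces of $Q_{A}$. The verification that $f(0)=\mathrm{Id}$ (equivalently, that the quadratic parts agree) is exactly the computation sketched in the previous paragraph and is where one must be careful about which terms the constraint (\ref{equation 3}) removes.
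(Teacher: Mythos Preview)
Your setup is right --- use $\lambda:Q_{A}\to F$, pass through the chart $(\mathbb{H}^{0,1}\times\mathbb{H}^{0,2}):F\xrightarrow{\sim}H^{0,1}\times H^{0,2}$, and invoke Miyajima --- but you have misidentified \emph{which} map Miyajima's lemma must be applied to, and this makes the core of your argument vacuous. Under the chart, the second coordinate of $\lambda$ is literally $\Phi(a'')=\mathbb{H}^{0,2}(P(a''))=\mathbb{H}^{0,2}(a''\wedge a'')$; there are no higher-order corrections, since $\mathbb{H}^{0,2}$ annihilates $\overline{\partial}_{A}a''$ and both $P_{\overline{\partial}_{A}}$- and $P_{\overline{\partial}_{A}^{*}}$-components. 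So your $\Phi$ and your $\kappa$ are the same map, and the comparison $(\Phi)=(\kappa)$ is trivial. Your assertion that ``the vanishing locus of $\Phi$ is exactly $Q_{A}\cap P^{-1}(0)$'' is then precisely the statement of the proposition (since $\Phi^{-1}(0)=\mathcal{M}_{A}$ by definition), and Lemma~\ref{vanishing of prW2} only gives you this at the level of sets, not of analytic spaces.

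What actually needs to be shown is that the ideal cut out in $\mathcal{O}_{Q_{A}}$ by the \emph{full} map $P$ (with target the infinite-dimensional $\Omega^{0,2}_{k-1}$) equals the ideal cut out by its harmonic projection $\kappa=\mathbb{H}^{0,2}\circ P$. The paper does this by applying Miyajima's lemma not to $\Phi$ but to the coordinate expression $\pi_{2}':H^{0,1}\times H^{0,2}\to\Omega^{0,2}_{k-1}$ of the projection $\pi_{2}:F\to\Omega^{0,2}_{k-1}$, $(a'',\theta)\mapsto\theta$. Lemma~\ref{vanishing of prW2} says exactly that $\pi_{2}'(t,0)=0$, so Miyajima gives $\pi_{2}'(t,s)=\langle\eta(t,s),s\rangle$ for some analytic $\eta$ into $L(H^{0,2},\Omega^{0,2}_{k-1})$. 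Composing with $\lambda'(t)=(t,\kappa(t))$ yields $P'(t)=\langle\eta(t,\kappa(t)),\kappa(t)\rangle$, so every component of $P$ lies in the ideal generated by the components of $\kappa$; the reverse inclusion is obvious. No comparison of ``leading terms'' or invertibility of $f(0)$ is needed.
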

\begin{proof}
Obviously, $Q_{A}\cap P^{-1}(0)\hookrightarrow\mathcal{M}_{A}$, we are left to show that up to change of variables the analytic map $P$ can be expressed analytically in terms of $\mathbb{H}^{0,2}\circ P$ and coordinates of $Q_{A}$.

We consider $\lambda:Q_{A}\rightarrow F$, $\lambda(\alpha)=(\alpha,P(\alpha))$.
By Lemma \ref{image of lamda}, it is well defined. By Lemma \ref{QA iso to H1} and \ref{F iso to H1 H2},
we have the following commutative diagram
\begin{equation}
\xymatrix{\ar @{} [dr] |{} Q_{A} \ar@/^3pc/[rr]^{P(\alpha)=\overline{\partial}_{A}\alpha+\alpha\wedge \alpha}
\ar[d]^{\mathbb{H}^{0,1}}_{\wr\mid} \ar[r]^{\lambda(\alpha)=(\alpha,P(\alpha))} & F \ar[d]^{\mathbb{H}^{0,1}\times \mathbb{H}^{0,2}}_{\wr\mid} \ar[r]^{\pi_{2}(\alpha,\theta)=\theta}
& \Omega^{0,2}(X,EndE)_{k-1} \\
H^{0,1}(X,EndE) \ar[r]^{\lambda^{'}\quad \quad \quad \quad}
\ar@/_5pc/[urr]_{P^{'}(t)=\pi_{2}^{'}\circ \lambda^{'}(t)=\pi_{2}^{'}(t,\mathbb{H}^{0,2}\circ P(\alpha(t)))}
& H^{0,1}(X,EndE)\times H^{0,2}(X,EndE) \ar[ur]_{\quad \quad\pi_{2}^{'}}  }.
\nonumber \end{equation}
With respect to local charts $(Q_{A},\mathbb{H}^{0,1})$ and $(F,\mathbb{H}^{0,1}\times \mathbb{H}^{0,2})$, $\lambda$ is expressed
by $\lambda^{'}=\big(t,\mathbb{H}^{0,2}\circ P(\alpha(t))\big)$, $\pi_{2}$ is expressed by $\pi_{2}^{'}$ and $P$ is expressed by $P^{'}$,
where $t\in H^{0,1}(X,EndE)$ and $\alpha(t)=(\mathbb{H}^{0,1})^{-1}(t)$.

By Lemma \ref{vanishing of prW2}, $\mathbb{H}^{0,2}(\theta)=0$ and $(t,\theta)\in F$ imply $\theta=0$. Hence $\pi_{2}^{'}(t,0)=0$.
Applying Lemma \ref{miyajima lemma}, we get
\begin{eqnarray*}P^{'}(t)&=&\pi_{2}^{'}\big(t,\mathbb{H}^{0,2}\circ P(\alpha(t))\big) \\
&=&<\eta\big(t,\mathbb{H}^{0,2}\circ P(\alpha(t))\big),\textrm{ }\mathbb{H}^{0,2}\circ P(\alpha(t))>
\end{eqnarray*}
for some local analytic map $\eta: H^{0,1}\times H^{0,2} \rightarrow L(H^{0,2},\Omega^{0,2}_{k-1})$, where $L(H^{0,2},\Omega^{0,2}_{k-1})$ is the
space of analytic maps between Banach spaces $H^{0,2}$ and $\Omega^{0,2}_{k-1}$.
\end{proof}
We have proved that $\mathcal{M}_{A}$ is isomorphic to an analytic neighbourhood of $\overline{\partial}_{A}$ in $\mathcal{M}_{c}^{bdl}$. We note that the above isomorphism is between real analytic spaces. We make the following definition generalizing the definition of Kuranishi maps.
\begin{definition}\label{real kuranishi map}
Given an integrable $(0,1)$-connection $\overline{\partial}_{A}$ with trivial isotropy subgroup, a real analytic map
\begin{equation}\kappa: H^{0,1}(X,EndE)\rightarrow H^{0,2}(X,EndE)  \nonumber \end{equation}
is a Kuranishi map of the moduli space of holomorphic bundles if there exists an open analytic neighbourhood $U_{A}$ of $\overline{\partial}_{A}$ in the moduli space such that $\kappa^{-1}(0)\cong U_{A}$ locally as real analytic spaces possibly with non-reduced structures.
\end{definition}
Finally, we get the following local Kuranishi model of $\mathcal{M}_{c}^{DT_{4}}$.
\begin{theorem}\label{Kuranishi str of cpx ASD thm}We assume $\mathcal{M}_{c}^{bdl}\neq\emptyset$ and
fix $d_{A}\in\mathcal{M}_{c}^{DT_{4}}$, then there exists a Kuranishi map $\tilde{\tilde{\kappa}}$ of $\mathcal{M}_{c}^{bdl}$ at $\overline{\partial}_{A}$ (the (0,1) part of $d_{A}$) such that $\tilde{\tilde{\kappa}}_{+}$ is a Kuranishi map of $\mathcal{M}_{c}^{DT_{4}}$ at $d_{A}$, where
\begin{equation} \xymatrix@1{
\tilde{\tilde{\kappa}}_{+}=\pi_{+}(\tilde{\tilde{\kappa}}): H^{0,1}(X,EndE)\cap B_{\epsilon} \ar[r]^{\quad \quad \quad \quad  \kappa}
& H^{0,2}(X,EndE)\ar[r]^{\pi_{+}} & H^{0,2}_{+}(X,EndE) },  \nonumber \end{equation}
$B_{\epsilon}$ is a small open ball containing the origin of the deformation space and $\pi_{+}$ is projection to self-dual two forms.
\end{theorem}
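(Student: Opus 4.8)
The statement is essentially a corollary of the local analysis assembled in Lemmas \ref{identify SA MA}, \ref{QA iso to H1} and Proposition \ref{QA intesect P=0 equals NA}; the plan is to make the Kuranishi models of $\mathcal{M}_{c}^{bdl}$ and of $\mathcal{M}_{c}^{DT_{4}}$ appear inside one common \emph{smooth} ambient chart and to observe that the two defining equations differ only by the linear projection $\pi_{+}$. Concretely, fix $d_{A}\in\mathcal{M}_{c}^{DT_{4}}$ with $(0,1)$-part $\overline{\partial}_{A}$. By Lemma \ref{identify SA MA} the gauge-fixed slice through $d_{A}$, which is a neighbourhood of $d_{A}$ in $\mathcal{M}_{c}^{DT_{4}}$, is isomorphic as a real analytic space (possibly non-reduced) to $\mathcal{M}_{A}^{+}$; and $\mathcal{M}_{A}^{+}$ is, by its very definition, the closed analytic subspace $\{a''\in Q_{A}\mid a''\ \emph{satisfies}\ (\ref{equation 4})\}$ of the finite-dimensional smooth manifold $Q_{A}$ (the locus of (\ref{equation 3}) and the gauge-fixing (\ref{equation 5})), cut out by the $H^{0,2}_{+}(X,EndE)$-valued analytic function $a''\mapsto\pi_{+}\mathbb{H}^{0,2}(a''\wedge a'')$. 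By Lemma \ref{QA iso to H1}, once $\epsilon$ is small the harmonic projection $\mathbb{H}^{0,1}:Q_{A}\rightarrow H^{0,1}(X,EndE)\cap B_{\epsilon}$ is an isomorphism of analytic spaces; write $\alpha(t)\triangleq(\mathbb{H}^{0,1}|_{Q_{A}})^{-1}(t)$.

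Next I would set
\begin{equation}\tilde{\tilde{\kappa}}(t)\triangleq\mathbb{H}^{0,2}\big(\alpha(t)\wedge\alpha(t)\big)=\mathbb{H}^{0,2}\big(P(\alpha(t))\big):H^{0,1}(X,EndE)\cap B_{\epsilon}\longrightarrow H^{0,2}(X,EndE),\nonumber\end{equation}
the second equality holding because $\overline{\partial}_{A}\alpha(t)$ is $\overline{\partial}_{A}$-exact, so has vanishing harmonic part. This is exactly the map occurring in the commutative diagram of the proof of Proposition \ref{QA intesect P=0 equals NA}; that proof shows, via Lemma \ref{F iso to H1 H2} and the Miyajima Lemma \ref{miyajima lemma}, that after this change of variables $P$ factors as $\langle\eta(\cdot),\tilde{\tilde{\kappa}}(\cdot)\rangle$, so that the closed analytic subspace $Q_{A}\cap P^{-1}(0)$ — which is a neighbourhood of $\overline{\partial}_{A}$ in $\mathcal{M}_{c}^{bdl}$ — coincides with $\tilde{\tilde{\kappa}}^{-1}(0)$ as analytic spaces possibly with non-reduced structure (note $\mathbb{H}^{0,2}(a''\wedge a'')=\tilde{\tilde{\kappa}}(\mathbb{H}^{0,1}(a''))$, so $\mathcal{M}_{A}$ is carried to $\tilde{\tilde{\kappa}}^{-1}(0)$). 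Hence $\tilde{\tilde{\kappa}}$ is a Kuranishi map of $\mathcal{M}_{c}^{bdl}$ at $\overline{\partial}_{A}$ in the sense of Definition \ref{real kuranishi map}.

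Finally I would transport the description of $\mathcal{M}_{A}^{+}$ through the same isomorphism $\mathbb{H}^{0,1}$: under it the defining function $a''\mapsto\pi_{+}\mathbb{H}^{0,2}(a''\wedge a'')$ of $\mathcal{M}_{A}^{+}$ becomes $t\mapsto\pi_{+}\mathbb{H}^{0,2}(\alpha(t)\wedge\alpha(t))=\tilde{\tilde{\kappa}}_{+}(t)$ with $\tilde{\tilde{\kappa}}_{+}\triangleq\pi_{+}\circ\tilde{\tilde{\kappa}}$, so $\mathcal{M}_{A}^{+}$ is carried onto the closed analytic subspace $\tilde{\tilde{\kappa}}_{+}^{-1}(0)\subseteq H^{0,1}(X,EndE)\cap B_{\epsilon}$. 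Combined with Lemma \ref{identify SA MA} this yields the desired local model: a neighbourhood of $d_{A}$ in $\mathcal{M}_{c}^{DT_{4}}$ is isomorphic to $\tilde{\tilde{\kappa}}_{+}^{-1}(0)$ as real analytic spaces possibly with non-reduced structure, i.e. $\tilde{\tilde{\kappa}}_{+}$ is a Kuranishi map of $\mathcal{M}_{c}^{DT_{4}}$ at $d_{A}$. I expect the only genuine subtlety to be the scheme-theoretic (non-reduced) precision of these identifications, but that is already supplied by Proposition \ref{QA intesect P=0 equals NA}; no new Miyajima-type argument is needed for the last step, because $Q_{A}$ is already smooth and passing from $\tilde{\tilde{\kappa}}$ to $\tilde{\tilde{\kappa}}_{+}$ is merely composition with the \emph{linear} projection $\pi_{+}$, which commutes with every change of coordinates used above.
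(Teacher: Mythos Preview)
Your proof is correct and follows essentially the same route as the paper's own proof: identify a neighbourhood of $d_{A}$ in $\mathcal{M}_{c}^{DT_{4}}$ with $\mathcal{M}_{A}^{+}$ via Lemma \ref{identify SA MA}, use the chart $\mathbb{H}^{0,1}:Q_{A}\to H^{0,1}$ from Lemma \ref{QA iso to H1} (equivalently the inverse of the map $q$) to define $\tilde{\tilde{\kappa}}(\alpha)=\mathbb{H}^{0,2}(q^{-1}(\alpha)\wedge q^{-1}(\alpha))$, invoke Proposition \ref{QA intesect P=0 equals NA} to see that $\tilde{\tilde{\kappa}}$ is a Kuranishi map of $\mathcal{M}_{c}^{bdl}$, and then compose with $\pi_{+}$ to obtain the Kuranishi model of $\mathcal{M}_{A}^{+}$. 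Your exposition is in fact somewhat more detailed than the paper's (e.g.\ spelling out why the final linear projection step needs no further Miyajima-type argument), but the underlying argument is the same.
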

\begin{proof}
By Lemma \ref{identify SA MA}, $\mathcal{M}_{c}^{DT_{4}}$ is locally isomorphic to $\mathcal{M}_{A}^{+}$. From the definition of $\mathcal{M}_{A}$ (\ref{M_A}), $\mathcal{M}_{A}={\tilde{\tilde{\kappa}}}^{-1}(0)$,
where $\tilde{\tilde{\kappa}}: H^{0,1}(X,EndE)\cap B_{\epsilon}\rightarrow H^{0,2}(X,EndE) $ is
\begin{equation}\tilde{\tilde{\kappa}}(\alpha)=\mathbb{H}^{0,2}\big(q^{-1}(\alpha)\wedge q^{-1}(\alpha)\big), \nonumber\end{equation}
where
\begin{equation}q: \Omega^{0,1}(EndE)_{k}\rightarrow H^{0,1}(EndE)\oplus {\overline{\partial}_{A}^{*}\Omega^{0,1}(EndE)}_{k}\oplus
{\overline{\partial}_{A}^{*}\Omega^{0,2}(EndE)}_{k-1},  \nonumber \end{equation}
\begin{equation}q(a'')=\bigg(\mathbb{H}(a''),\overline{\partial}_{A}^{*}a''-\frac{i}{2}\wedge(a'\wedge a''+a''\wedge a'),
\overline{\partial}_{A}^{*}\big(\overline{\partial}_{A}a''+P_{\overline{\partial}_{A}}(a''\wedge a'')+*_{4}P_{\overline{\partial}_{A}^{*}}(a''\wedge a'')\big)\bigg). \nonumber \end{equation}
By Proposition \ref{QA intesect P=0 equals NA}, $\tilde{\tilde{\kappa}}$ is a Kuranishi map of $\mathcal{M}_{c}^{bdl}$.
Composing with $\pi_{+}$, $\mathcal{M}_{A}^{+}=\big(\pi_{+}\tilde{\tilde{\kappa}}\big)^{-1}(0) $.
\end{proof}
\begin{remark} ${}$ \\
1. Under the assumption $\mathcal{M}_{c}^{bdl}\neq\emptyset$, we have a bijective map $\mathcal{M}_{c}^{bdl}\rightarrow \mathcal{M}_{c}^{DT_{4}}$.
The map can be enhanced to be a closed imbedding between analytic spaces by Theorem \ref{Kuranishi str of cpx ASD thm}. Note that this map is then also a homeomorphism between topological spaces. \\
2. For simplicity, we will always restrict to a small neighbourhood of the origin in $Ext^{1}(\mathcal{F},\mathcal{F})$
when we talk about a Kuranishi map $\kappa: Ext^{1}(\mathcal{F},\mathcal{F})\rightarrow Ext^{2}(\mathcal{F},\mathcal{F})$
for any coherent sheaf $\mathcal{F}$ and abbreviate $B_{\epsilon}$ in the notation from now on.
\end{remark}

\section{The compactification of $DT_{4}$ moduli spaces}
We come to the issue of compactification of $DT_{4}$ moduli spaces.
As Uhlenbeck, or generally Tian \cite{t} have shown, we need to consider connections with singularities supported on codimension 4 subspaces to compactify moduli spaces of holomorphic HYM connections. This becomes very difficult when the real dimension of the underlying manifold is bigger than 4. Even if one could compactify it, as Tian showed in his paper,
one still does not understand the local analytic structure of the compactified moduli space very well.

Instead of using this compactification, our attempted approach here is algebro-geometric, using Gieseker moduli spaces of semi-stable sheaves.

\subsection{The stable bundles compactification of $DT_{4}$ moduli spaces}
In this subsection, assuming $\overline{\mathcal{M}}_{c}\neq\emptyset$ consists of slope-stable bundles only, we prove that $\mathcal{M}_{c}^{DT_{4}}$ is compact.

We take a connection on $E$ with curvature $F$. By Chern-Weil theory, we have
\begin{equation}
-8\pi^{2}\int ch_{2}(E)\wedge\Omega=\int Tr(F^{0,2}\wedge F^{0,2})\wedge\Omega \nonumber\end{equation}
\begin{equation}=\int Tr(F^{0,2}_{+}\wedge F^{0,2}_{+})\wedge\Omega+\int Tr(F^{0,2}_{-}\wedge F^{0,2}_{-})\wedge\Omega\nonumber\end{equation}
\begin{equation}+\int Tr(F^{0,2}_{+}\wedge F^{0,2}_{-})\wedge\Omega+\int Tr(F^{0,2}_{-}\wedge F^{0,2}_{+})\wedge\Omega\nonumber\end{equation}
\begin{equation}\label{plus norm equal minus norm}
=\int\mid F^{0,2}_{+}\mid^{2}\wedge\Omega\wedge\overline{\Omega} -\int\mid
F^{0,2}_{-}\mid^{2}\wedge\Omega\wedge\overline{\Omega}+\int\sqrt{-1}\chi\wedge\Omega\wedge\overline{\Omega},\end{equation}
where $\chi$ is some real valued function.
\begin{lemma}(Lewis \cite{lewis})\label{C2 condition}
If $ch_{2}(E)\in H^{2,2}(X,\mathbb{C})$ or has no component of type $(0,4)$,
then $F^{0,2}_{+}=0$ implies $F^{0,2}=0$.
\end{lemma}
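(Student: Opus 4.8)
The plan is to use the Chern-Weil identity (\ref{plus norm equal minus norm}) together with Hodge theory to squeeze the norm of $F^{0,2}_-$ to zero once we know $F^{0,2}_+=0$. The key observation is that, up to the contribution of the real function $\chi$, the quantity $-8\pi^2\int ch_2(E)\wedge\Omega$ equals the difference of the two nonnegative numbers $\int |F^{0,2}_+|^2\, dvol$ and $\int |F^{0,2}_-|^2\, dvol$. So if we can argue that $\int ch_2(E)\wedge\Omega$ and the $\chi$-term both vanish under our hypothesis on $ch_2(E)$, then $F^{0,2}_+=0$ forces $\int |F^{0,2}_-|^2\,dvol \le 0$, hence $F^{0,2}_-=0$, hence $F^{0,2}=0$.

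First I would unwind the term $\int \sqrt{-1}\,\chi\wedge\Omega\wedge\overline\Omega$: the function $\chi$ comes from the cross terms $\mathrm{Tr}(F^{0,2}_+\wedge F^{0,2}_-)\wedge\Omega + \mathrm{Tr}(F^{0,2}_-\wedge F^{0,2}_+)\wedge\Omega$, and since $*_4 F^{0,2}_\pm = \pm F^{0,2}_\pm$, the pointwise pairing of a self-dual and an anti-self-dual form against $\overline\Omega$ is controlled by the $*_4$-orthogonality established in Section 2. In particular once $F^{0,2}_+=0$ the cross terms vanish identically, so the $\chi$-contribution is automatically zero in the situation we care about; strictly speaking we only need this, not a general vanishing statement. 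Next I would handle $\int ch_2(E)\wedge\Omega$: if $ch_2(E)\in H^{2,2}(X,\mathbb{C})$, then $ch_2(E)\wedge\Omega$ is a form of type $(4,2)$, which has no $(4,4)$ component, so its integral over the $8$-real-dimensional $X$ vanishes; similarly if $ch_2(E)$ has no $(0,4)$ component, then the only way $ch_2(E)\wedge\Omega$ (with $\Omega$ of type $(4,0)$) could integrate nontrivially is through a $(0,4)$-piece of $ch_2(E)$ paired with $\Omega$, which by hypothesis is absent. Either way $\int ch_2(E)\wedge\Omega = 0$.

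Assembling these: plug $F^{0,2}_+=0$ and the two vanishing facts into (\ref{plus norm equal minus norm}) to get $0 = -\int |F^{0,2}_-|^2\,\Omega\wedge\overline\Omega = -\int |F^{0,2}_-|^2\, dvol$, whence $F^{0,2}_- \equiv 0$ and therefore $F^{0,2} = F^{0,2}_+ + F^{0,2}_- = 0$. I expect the only genuinely delicate point to be the bookkeeping of the $\chi$-term — making sure that the ``cross'' contributions really are of the form $\sqrt{-1}\,\chi\wedge\Omega\wedge\overline\Omega$ with $\chi$ real and, crucially, that $\chi$ vanishes identically when $F^{0,2}_+=0$ (so that we do not need any sign information about $\chi$ in general). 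This is exactly where the self-duality properties of $*_4$ from Corollary \ref{cor star4 cut coho into ASD} and the fact that $\Omega$ is a norm-one parallel form (Lemma \ref{lem descendence of star}) do the work; the rest is elementary type/degree counting and positivity.
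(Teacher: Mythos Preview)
Your argument is correct and follows the same Chern--Weil route as the paper, using (\ref{plus norm equal minus norm}) together with the type constraint on $ch_2(E)\wedge\Omega$. The one place you diverge is in handling the $\chi$-term: you kill it by observing that the cross terms $\mathrm{Tr}(F^{0,2}_+\wedge F^{0,2}_-)$ vanish identically once $F^{0,2}_+=0$, whereas the paper simply notes that $\chi$ is real, so $\int\sqrt{-1}\,\chi\,\Omega\wedge\overline\Omega$ is purely imaginary and drops out upon taking real parts of the identity. The paper's route is shorter and yields the slightly stronger statement $\|F^{0,2}_+\|^2=\|F^{0,2}_-\|^2$ whenever $ch_2(E)$ has no $(0,4)$ component, without first assuming $F^{0,2}_+=0$; in particular the point you flag as ``genuinely delicate'' is in fact a one-line observation. (Minor slip: $(2,2)$ wedge $(4,0)$ is type $(6,2)$, not $(4,2)$, though your conclusion is unaffected.)
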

\begin{proof}
Note that $\Omega$ is $(4,0)$ form and $\chi$ is a real valued function.
\end{proof}
\begin{corollary} \label{cptness by stable bdl}
If $\overline{\mathcal{M}}_{c}=\mathcal{M}_{c}^{bdl}\neq\emptyset$,
then $\mathcal{M}_{c}^{DT_{4}}$ is compact.
\end{corollary}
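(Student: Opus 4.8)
The plan is short: under the hypothesis the $DT_{4}$ equations collapse onto the holomorphic Hermitian--Yang--Mills equations, so $\mathcal{M}_{c}^{DT_{4}}$ is nothing but the moduli space of stable bundles, whose compactness is already a standing assumption.

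First I would record that $\mathcal{M}_{c}^{bdl}\neq\emptyset$ forces $ch_{2}(E)\in H^{2,2}(X,\mathbb{C})$: a holomorphic bundle with Chern character $c$ exists, and the Chern classes of a holomorphic bundle are of type $(k,k)$. Thus Lemma \ref{C2 condition} applies to every connection appearing in $\mathcal{M}_{c}^{DT_{4}}$, so the first $DT_{4}$ equation $F^{0,2}_{+}=0$ already implies $F^{0,2}=0$; this is precisely the $(0,4)$-component of the Chern--Weil identity $(\ref{plus norm equal minus norm})$, where $\Omega$ is of type $(4,0)$ and the correction term $\int\sqrt{-1}\,\chi\,\Omega\wedge\overline{\Omega}$ is harmless because $\chi$ is real-valued, forcing also $\int|F^{0,2}_{-}|^{2}\,\Omega\wedge\overline{\Omega}=0$. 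Consequently every solution of the $DT_{4}$ equations solves the holomorphic HYM equations (with $c_{1}(E)=0$), and since the tautological inclusion $\mathcal{M}_{c}^{bdl}\hookrightarrow\mathcal{M}_{c}^{DT_{4}}$ goes the other way, we obtain --- as already noted after Lemma \ref{C2 condition} --- a bijection $\mathcal{M}_{c}^{bdl}\cong\mathcal{M}_{c}^{DT_{4}}$; in fact an equality of subsets of $\mathcal{B}^{*}$, hence of topological spaces with the induced subspace topology.

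It remains to see that this set, sitting inside $\mathcal{B}^{*}$, is compact. By Donaldson--Uhlenbeck--Yau \cite{d2},\cite{UY} the subspace $\mathcal{M}_{c}^{bdl}\subset\mathcal{B}^{*}$ of irreducible HYM connections is homeomorphic to the Gieseker moduli space of $\mathcal{O}_{X}(1)$-stable bundles with Chern character $c$, endowed with its complex-analytic topology; by hypothesis that space is all of $\overline{\mathcal{M}}_{c}$, which is compact by the standing assumption on $\mathcal{M}_{c}$. Therefore $\mathcal{M}_{c}^{DT_{4}}$ is compact.

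There is essentially no obstacle to overcome: the only non-formal input is Lemma \ref{C2 condition}, already in hand, together with the classical agreement of the $L^{2}_{k}$ gauge-theoretic topology on $\mathcal{B}^{*}$ with the algebro-geometric topology on moduli of stable bundles. The conceptual content of the hypothesis $\overline{\mathcal{M}}_{c}=\mathcal{M}_{c}^{bdl}$ is that it rules out Uhlenbeck bubbling --- which is exactly what would otherwise destroy compactness of an HYM moduli space over a base of complex dimension $\geq 2$ --- by guaranteeing that passing to the Gieseker compactification introduces no new points.
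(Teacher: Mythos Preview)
Your argument is correct and follows exactly the paper's approach: the paper's own proof is the single line ``By the assumptions and Lemma \ref{C2 condition},'' which you have simply unpacked --- $\mathcal{M}_{c}^{bdl}\neq\emptyset$ forces $ch_{2}(E)\in H^{2,2}$, so Lemma \ref{C2 condition} collapses $\mathcal{M}_{c}^{DT_{4}}$ onto $\mathcal{M}_{c}^{bdl}$, which equals $\overline{\mathcal{M}}_{c}$ and is compact by the standing convention $\mathcal{M}_{c}=\overline{\mathcal{M}}_{c}$.
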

\begin{proof}
By the assumptions and Lemma \ref{C2 condition}.
\end{proof}
From the viewpoint of local Kuranishi models (i.e. Theorem \ref{Kuranishi str of cpx ASD thm}), Lemma \ref{C2 condition} says
\begin{equation}\pi_{+}\tilde{\tilde{\kappa}}=0 \Rightarrow \tilde{\tilde{\kappa}}=0, \nonumber\end{equation}
which gives restrictions to $\tilde{\tilde{\kappa}}$.
\begin{proposition} Given a map
\begin{equation}\kappa: H^{0,1}(X,EndE)\rightarrow H^{0,2}(X,EndE) \nonumber\end{equation}
such that $\kappa_{+}=0 \Rightarrow \kappa=0$ and $\kappa(0)=0$, where
\begin{equation}\kappa_{+}=\pi_{+}\circ\kappa : H^{0,1}(X,EndE)\rightarrow H^{0,2}_{+}(X,EndE), \nonumber\end{equation}
then the image of $\kappa$ can not be a neighbourhood of the origin.
\end{proposition}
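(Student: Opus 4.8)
The plan is to argue by contradiction, exploiting the key structural feature that $\kappa$ maps into a space of roughly twice the dimension of its target $H^{0,2}_+$, together with the given rigidity hypothesis ``$\kappa_+ = 0 \Rightarrow \kappa = 0$''. Recall from Remark \ref{remark1} that $\sqrt{-1}\,H^{0,2}_+ = H^{0,2}_-$, so $\dim_{\mathbb R} H^{0,2}_- = \dim_{\mathbb R} H^{0,2}_+$ and hence $\dim_{\mathbb R} H^{0,2}(X,\mathrm{End}E) = 2\dim_{\mathbb R} H^{0,2}_+(X,\mathrm{End}E)$. Suppose, for contradiction, that the image of $\kappa$ contains an open neighbourhood $W$ of the origin in $H^{0,2}(X,\mathrm{End}E)$.

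First I would analyze the fibers of $\kappa$ over the linear subspace $H^{0,2}_-$. The hypothesis $\kappa_+ = 0 \Rightarrow \kappa = 0$ says precisely that $\kappa^{-1}\big(\ker \pi_+\big) = \kappa^{-1}(H^{0,2}_-) = \{0\}$ (at least near the origin, which is all we need since $\kappa$ is only a local analytic/Kuranishi map and $\kappa(0)=0$). So the preimage of the entire linear subspace $H^{0,2}_-$ under $\kappa$ is the single point $0$. On the other hand, if $\mathrm{Image}(\kappa) \supseteq W$, then $W \cap H^{0,2}_-$ is a nonempty open subset of $H^{0,2}_-$ (a real vector space of dimension $\dim_{\mathbb R} H^{0,2}_+ \geq 1$, assuming the moduli problem is genuinely obstructed — in the unobstructed case $H^{0,2}=0$ and the statement is vacuous or trivial, which I would dispatch separately), and every point of $W \cap H^{0,2}_-$ would have to lie in the image of $\kappa$, i.e. would be hit by some point of $H^{0,1}$. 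But the only point of $H^{0,1}$ mapping into $H^{0,2}_-$ is the origin, whose image is $0$. Thus $W \cap H^{0,2}_- = \{0\}$, contradicting the fact that $W$ is open in $H^{0,2}(X,\mathrm{End}E)$ and therefore meets the subspace $H^{0,2}_-$ in a set with nonempty interior in $H^{0,2}_-$.

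I expect the main technical point — and the place to be careful — is the passage between ``local'' and ``global'' statements: $\kappa$ is a Kuranishi map defined only on a small ball $B_\epsilon \subset H^{0,1}(X,\mathrm{End}E)$, so ``the image of $\kappa$ is a neighbourhood of the origin'' must be read as: the image of $\kappa|_{B_\epsilon}$ contains an open neighbourhood of $0$ in $H^{0,2}$. Correspondingly, the hypothesis $\kappa_+ = 0 \Rightarrow \kappa = 0$ should be understood as holding on $B_\epsilon$; this is exactly what Lemma \ref{C2 condition} delivers in the geometric situation (via the Kuranishi model of Theorem \ref{Kuranishi str of cpx ASD thm}), since $F^{0,2}_+ = 0$ forces $F^{0,2}=0$ for bundles with $ch_2(E)$ of the allowed Hodge type, so that $\kappa^{-1}_+(0) = \kappa^{-1}(0)$ as sets near the origin. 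Once this bookkeeping is fixed, the argument is the dimension/openness contradiction above: an open subset of the full space $H^{0,2}$ cannot be contained in the image of a map whose only preimage of the linear subspace $H^{0,2}_- = \ker\pi_+$ is a single point. I would close by remarking that this is the local obstruction-theoretic manifestation of the compactness statement (Corollary \ref{cptness by stable bdl}): the self-dual equation $\kappa_+ = 0$ cannot be a ``full'' system cutting out an isolated or well-behaved locus unless $\kappa$ itself is already constrained, which is why one genuinely needs the half-dimensional target $H^{0,2}_+$ rather than all of $H^{0,2}$.
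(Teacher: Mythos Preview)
Your approach is the same as the paper's --- the paper's entire proof is the single line ``By assumptions, $\kappa(U(0))\cap H^{0,2}_{-}(X,EndE)=\{0\}$'' --- but you have misstated the key consequence of the hypothesis. The condition $\kappa_{+}(\alpha)=0\Rightarrow\kappa(\alpha)=0$ does \emph{not} say that $\kappa^{-1}(H^{0,2}_{-})=\{0\}$; it says that whenever $\kappa(\alpha)\in H^{0,2}_{-}$ one has $\kappa(\alpha)=0$, i.e.\ $\mathrm{Image}(\kappa)\cap H^{0,2}_{-}=\{0\}$. The preimage $\kappa^{-1}(H^{0,2}_{-})$ could well be all of $\kappa^{-1}(0)$, which need not be a single point (and typically is not: it is the local model for the moduli space). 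Your sentence ``the only point of $H^{0,1}$ mapping into $H^{0,2}_{-}$ is the origin'' repeats this error.

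Fortunately this does not damage the argument: what you actually \emph{use} in the contradiction is that no nonzero element of $H^{0,2}_{-}$ lies in the image, and that is exactly the correct statement $\mathrm{Image}(\kappa)\cap H^{0,2}_{-}=\{0\}$. Once you replace your incorrect preimage claim by this image statement, the proof is complete and identical to the paper's: an open neighbourhood of $0$ in $H^{0,2}$ must meet the nontrivial subspace $H^{0,2}_{-}$ in more than $\{0\}$, contradiction. The detour through preimages and the discussion of local versus global are unnecessary; the one-line image observation suffices.
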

\begin{proof}
By assumptions, $\kappa(U(0))\cap H^{0,2}_{-}(X,EndE)=\{0\}$.
\end{proof}

\subsection{The attempted general compactification of $DT_{4}$ moduli spaces}
In this subsection, we propose an attempted approach to the general compactification of $\mathcal{M}_{c}^{DT_{4}}$. Under the gluing assumptions, we define the generalized $DT_{4}$ moduli space $\overline{\mathcal{M}}_{c}^{DT_{4}}$ by gluing local models.
We then show in some cases we can get rid of the gluing assumption and prove the existence of $\overline{\mathcal{M}}_{c}^{DT_{4}}$. \\

We recall that if we assume
$\mathcal{M}_{c}^{bdl}\neq\emptyset$, we have a homeomorphism
\begin{equation}\mathcal{M}_{c}^{bdl}\rightarrow \mathcal{M}_{c}^{DT_{4}},  \nonumber \end{equation}
which is a closed imbedding between analytic spaces possibly with non-reduced structures.
The idea of general compactification is to extend the above map to a homeomorphism
\begin{equation}\mathcal{M}_{c}\rightarrow \overline{\mathcal{M}}_{c}^{DT_{4}},  \nonumber \end{equation}
where $\overline{\mathcal{M}}_{c}^{DT_{4}}$ comes from gluing local models, i.e. locally at a stable sheaf $\mathcal{F}$, $\overline{\mathcal{M}}_{c}^{DT_{4}}$ is $\kappa_{+}^{-1}(0)$, where
\begin{equation}\kappa_{+}=\pi_{+}\circ\kappa:Ext^{1}(\mathcal{F},\mathcal{F})\rightarrow Ext^{2}_{+}(\mathcal{F},\mathcal{F}),
\nonumber \end{equation}
$\kappa$ is a Kuranishi map of $\mathcal{M}_{c}$ at $\mathcal{F}$ and
$\pi_{+}$ is the projection map. \\

However, the Kuranishi map $\kappa$ is unique only up to change of variables.
Meanwhile, the $*_{4}$ is a real operator and if we use different re-parametrization, the resulting models may be different in general, i.e.
\begin{equation}(\pi_{+}\circ\kappa_{1})^{-1}(0)\ncong (\pi_{+}\circ\kappa_{2})^{-1}(0)  \nonumber \end{equation}
for different $\kappa_{i}$, $i=1, 2$.

For the purpose of gluing, we need to pick a coherent choice of local Kuranishi models for $\mathcal{M}_{c}$. In the case when
$\mathcal{M}_{c}=\mathcal{M}_{c}^{bdl}$, the moment map equation in $DT_{4}$ equations (\ref{complex ASD equation}) gives such a choice.
In general, we need a similar moment map equation for $\mathcal{M}_{c}$. This is achieved by a quiver representation of $\mathcal{M}_{c}$
due to \cite{bchr}. We then proposed a candidate local model at each $\mathcal{F}\in \mathcal{M}_{c}$ based on their work. As we do not know how to glue such local models at the moment, we will not discuss them in detail. The interested reader could refer to the appendix of the first named author's master thesis \cite{cao}.  \\

We will always make the following assumptions if $\mathcal{M}_{c}\neq\mathcal{M}_{c}^{bdl}$.
\begin{assumption}\label{assumption on gluing}
We assume there exists a real analytic space $\overline{\mathcal{M}}^{DT_{4}}_{c}$ and a homeomorphism
\begin{equation}\mathcal{M}_{c}\rightarrow\overline{\mathcal{M}}^{DT_{4}}_{c}  \nonumber \end{equation}
such that at each closed point of $\mathcal{M}_{c}$, say $\mathcal{F}$, $\overline{\mathcal{M}}^{DT_{4}}_{c}$ is locally isomorphic to $\kappa_{+}^{-1}(0)$, where
\begin{equation}\kappa_{+}=\pi_{+}\circ\kappa:Ext^{1}(\mathcal{F},\mathcal{F})\rightarrow Ext^{2}_{+}(\mathcal{F},\mathcal{F}),
\nonumber \end{equation}
$\kappa$ is a Kuranishi map at $\mathcal{F}$
and $Ext^{2}_{+}(\mathcal{F},\mathcal{F})$ is a half dimensional real subspace of $Ext^{2}(\mathcal{F},\mathcal{F})$ on which the Serre duality quadratic form is real and positive definite.
\end{assumption}
\begin{definition}\label{generalized DT4}
Under the Assumption \ref{assumption on gluing}, we obtain a real analytic space $\overline{\mathcal{M}}^{DT_{4}}_{c}$ which is compact if $\overline{\mathcal{M}}_{c}=\mathcal{M}_{c}$. We call it the
generalized $DT_{4}$ moduli space.
\end{definition}
\begin{remark}In \cite{bj} Borisov and Joyce used local 'Darboux charts' in the sense of Brav, Bussi and Joyce \cite{bbj}, the machinery of homotopical algebra and $C^{\infty}$-algebraic geometry to construct a compact derived $C^{\infty}$-scheme with the same underlying topological structure as the Gieseker moduli space of stable sheaves in general. In our language, this $C^{\infty}$-scheme is the $C^{\infty}$-scheme (instead of the real analytic space) version of the hoped generalized $DT_{4}$ moduli space. Thus generalized $DT_{4}$ moduli spaces always exist at least as $C^{\infty}$-schemes by their gluing result.
\end{remark}
We show in several good cases, generalized $DT_{4}$ moduli spaces exist as real analytic spaces. The first case is
when $\mathcal{M}_{c}$ is smooth.
\begin{proposition}\label{gene DT4 if Mc smooth}
If the Gieseker moduli space $\mathcal{M}_{c}$ is smooth, the generalized $DT_{4}$ moduli space exists and
$\overline{\mathcal{M}}^{DT_{4}}_{c}\cong\mathcal{M}_{c}$ as real analytic spaces.
\end{proposition}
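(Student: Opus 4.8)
The plan is to use the smoothness of $\mathcal{M}_c$ to make every Kuranishi map vanish, so that the local $DT_4$ model $\kappa_+^{-1}(0)$ becomes trivially a manifold, and then to check that these trivial local models glue consistently. First I would recall that, by the convention fixed in the "Notations and conventions" section, "$\mathcal{M}_c$ smooth" is understood in the strong sense: at every closed point $\mathcal{F}\in\mathcal{M}_c$ one may choose a Kuranishi map $\kappa\colon Ext^1(\mathcal{F},\mathcal{F})\to Ext^2(\mathcal{F},\mathcal{F})$ with $\kappa\equiv 0$. With such a choice the image of $\kappa$ is $\{0\}\subseteq Ext^2_+(\mathcal{F},\mathcal{F})$, so in particular $\mathrm{Image}(\kappa)$ lies in any chosen maximal positive-definite (isotropic-complement) subspace; hence $\kappa_+ = \pi_+\circ\kappa \equiv 0$ as well, and the candidate local model is
\begin{equation}
\kappa_+^{-1}(0) = Ext^1(\mathcal{F},\mathcal{F})\cap B_\epsilon,
\nonumber
\end{equation}
a smooth ball of (complex) dimension $ext^1(\mathcal{F},\mathcal{F})$. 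This is exactly an analytic chart of $\mathcal{M}_c$ near $\mathcal{F}$.

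Next I would verify Assumption \ref{assumption on gluing} in this situation, i.e.\ that these local charts patch to give a real analytic space $\overline{\mathcal{M}}_c^{DT_4}$ together with a homeomorphism $\mathcal{M}_c\to\overline{\mathcal{M}}_c^{DT_4}$. Since $\mathcal{M}_c$ is already a (smooth) analytic space, I would simply take $\overline{\mathcal{M}}_c^{DT_4}\triangleq\mathcal{M}_c$ with the identity map, and observe that under the identification $\kappa_+^{-1}(0)\cong Ext^1(\mathcal{F},\mathcal{F})\cap B_\epsilon$ the local model literally agrees with the complex-analytic chart of $\mathcal{M}_c$ at $\mathcal{F}$ coming from the (now trivial) Kuranishi theory. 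Thus the gluing data required by Assumption \ref{assumption on gluing} is the tautological one, there are no nontrivial cocycle conditions to check, and in particular the potential ambiguity "$(\pi_+\circ\kappa_1)^{-1}(0)\ncong(\pi_+\circ\kappa_2)^{-1}(0)$" discussed before the Assumption disappears: when $\mathcal{M}_c$ is smooth, any Kuranishi map is equivalent to the zero map, so the model is independent of choices up to the change of variables already built into Kuranishi theory.

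Finally I would record that $\overline{\mathcal{M}}_c^{DT_4}\cong\mathcal{M}_c$ as real analytic spaces (forgetting the complex structure on the right-hand side), which is immediate from the construction, and that compactness of $\overline{\mathcal{M}}_c^{DT_4}$ when $\overline{\mathcal{M}}_c=\mathcal{M}_c$ follows from compactness of $\mathcal{M}_c$. The only point that deserves a careful word — and what I expect to be the mild technical heart of the argument — is the compatibility between the gauge-theoretic $DT_4$ local model and the algebro-geometric Kuranishi model: one must know that the half-dimensional real subspace $Ext^2_+(\mathcal{F},\mathcal{F})$ on which $Q_{Serre}$ is real positive definite can be chosen so that the zero map lands in it (trivially true, since $0$ lies in every subspace) and that, even though $*_4$ and hence $\pi_+$ depend on auxiliary metric choices, the resulting $\kappa_+^{-1}(0)$ is insensitive to those choices once $\kappa\equiv 0$. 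All of this is automatic here; the substance of the proposition is really the bookkeeping observation that strong smoothness kills the obstruction, so that "cutting $Ext^2$ in half" is applied to nothing.
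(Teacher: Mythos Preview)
Your proposal is correct and follows exactly the same approach as the paper: the paper's proof is the single sentence ``By the assumption, all Kuranishi maps are zero. The conclusion is obvious.'' You have simply unpacked what ``obvious'' means here---that $\kappa\equiv 0$ forces $\kappa_+\equiv 0$, the local model is the full ball in $Ext^1$, and one may take $\overline{\mathcal{M}}_c^{DT_4}=\mathcal{M}_c$ with the identity map---which is precisely the intended content.
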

\begin{proof}
By the assumption, all Kuranishi maps are zero. The conclusion is obvious.
\end{proof}
There is another interesting case when we can get $\overline{\mathcal{M}}^{DT_{4}}_{c}$ as a real analytic space via gluing.
\begin{proposition}\label{ob=v+v*}
We assume for any closed point $\mathcal{F}\in \mathcal{M}_{c}$, there is a splitting of obstruction space
\begin{equation}Ext^{2}(\mathcal{F},\mathcal{F})=V_{\mathcal{F}}\oplus V_{\mathcal{F}}^{*}  \nonumber \end{equation}
such that $V_{\mathcal{F}}$ is its maximal isotropic subspace with respect to the Serre duality pairing and the image of a Kuranishi map $\kappa$ at $\mathcal{F}$ satisfies
\begin{equation}\emph{Image}(\kappa)\subseteq V_{\mathcal{F}}.  \nonumber \end{equation}
Then the generalized $DT_{4}$ moduli space exists and $\overline{\mathcal{M}}^{DT_{4}}_{c}\cong\mathcal{M}_{c}$ as real analytic spaces.
\end{proposition}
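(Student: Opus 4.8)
The plan is to reduce the asserted global statement to a pointwise linear-algebra fact, after which no gluing choices remain and one may simply take $\overline{\mathcal{M}}^{DT_{4}}_{c}$ to be the real analytic space underlying $\mathcal{M}_{c}$, with the identity as the homeomorphism $\mathcal{M}_{c}\to\overline{\mathcal{M}}^{DT_{4}}_{c}$. To verify Assumption \ref{assumption on gluing} it then suffices, near each closed point $\mathcal{F}$, to compare the germ of $\mathcal{M}_{c}$ at $\mathcal{F}$ — which by definition of a Kuranishi map $\kappa\colon Ext^{1}(\mathcal{F},\mathcal{F})\to Ext^{2}(\mathcal{F},\mathcal{F})$ is $\kappa^{-1}(0)$ — with the prescribed local model $\kappa_{+}^{-1}(0)$, where $\kappa_{+}=\pi_{+}\circ\kappa$. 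So the whole proposition comes down to showing $\kappa^{-1}(0)=\kappa_{+}^{-1}(0)$ as real analytic germs, possibly non-reduced.

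The heart of the argument is that $\pi_{+}$ restricts to an $\mathbb{R}$-linear isomorphism $\pi_{+}|_{V_{\mathcal{F}}}\colon V_{\mathcal{F}}\xrightarrow{\sim}Ext^{2}_{+}(\mathcal{F},\mathcal{F})$. I would prove injectivity first: its kernel is $V_{\mathcal{F}}\cap Ext^{2}_{-}(\mathcal{F},\mathcal{F})$, where $Ext^{2}_{-}=\ker\pi_{+}$ is the $(-1)$-eigenspace of $*_{4}$. Since $*_{4}$ is complex anti-linear one has $Ext^{2}_{-}=\sqrt{-1}\,Ext^{2}_{+}$ (the sheaf analogue of Remark \ref{remark1}, via Corollary \ref{cutting for sheaves}), so $\mathbb{C}$-bilinearity of $Q_{Serre}$ together with its positive-definiteness on $Ext^{2}_{+}$ forces $Q_{Serre}$ to be real and negative definite on $Ext^{2}_{-}$. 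But $V_{\mathcal{F}}$ is isotropic for $Q_{Serre}$, so any $v\in V_{\mathcal{F}}\cap Ext^{2}_{-}$ satisfies $Q_{Serre}(v,v)=0$ and hence $v=0$. Finally $\dim_{\mathbb{R}}V_{\mathcal{F}}=2\dim_{\mathbb{C}}V_{\mathcal{F}}=\dim_{\mathbb{C}}Ext^{2}(\mathcal{F},\mathcal{F})=\dim_{\mathbb{R}}Ext^{2}_{+}(\mathcal{F},\mathcal{F})$, using maximal isotropy of $V_{\mathcal{F}}$ and the half-dimensionality of $Ext^{2}_{+}$, so injectivity upgrades to an isomorphism.

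Granting this, write $\kappa=\iota\circ\tilde\kappa$ with $\tilde\kappa\colon Ext^{1}(\mathcal{F},\mathcal{F})\to V_{\mathcal{F}}$ the co-restriction (legitimate since $\mathrm{Image}(\kappa)\subseteq V_{\mathcal{F}}$) and $\iota\colon V_{\mathcal{F}}\hookrightarrow Ext^{2}(\mathcal{F},\mathcal{F})$ the inclusion. Because $\iota$ is an injective linear map, the ideal cut out by the components of $\kappa=\iota\circ\tilde\kappa$ coincides with the ideal cut out by the components of $\tilde\kappa$, so $\kappa^{-1}(0)=\tilde\kappa^{-1}(0)$ with non-reduced structure; and since $\kappa_{+}=(\pi_{+}|_{V_{\mathcal{F}}})\circ\tilde\kappa$ differs from $\tilde\kappa$ only by post-composition with the linear isomorphism $\pi_{+}|_{V_{\mathcal{F}}}$, one gets $\kappa_{+}^{-1}(0)=\tilde\kappa^{-1}(0)$ with the same non-reduced structure. (One passes to real analytic maps throughout, forgetting that $\tilde\kappa$ is actually holomorphic, since $\pi_{+}|_{V_{\mathcal{F}}}$ is only $\mathbb{R}$-linear.) Hence $\kappa^{-1}(0)=\kappa_{+}^{-1}(0)$, Assumption \ref{assumption on gluing} holds with the identity map, and $\overline{\mathcal{M}}^{DT_{4}}_{c}\cong\mathcal{M}_{c}$ as real analytic spaces, inheriting compactness when $\overline{\mathcal{M}}_{c}=\mathcal{M}_{c}$. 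This also recovers case (ii) of Proposition \ref{condition of vir gene DT4} under the polarization identification $(Ext^{2}(\mathcal{F},\mathcal{F}),Q_{Serre})\cong(T^{*}V_{\mathcal{F}},Q_{std})$.

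I expect the main obstacle to be the definiteness input: everything rests on $Q_{Serre}$ being negative definite on $Ext^{2}_{-}$ (equivalently positive definite on $Ext^{2}_{+}$), which is precisely what makes an isotropic subspace meet $Ext^{2}_{-}$ only in $0$, so I would be careful that this is literally the definiteness already built into the construction of $Ext^{2}_{\pm}$ and of $\kappa_{+}$ in Assumption \ref{assumption on gluing}, and that it survives the dependence of $Ext^{2}_{\pm}(\mathcal{F},\mathcal{F})$ on the auxiliary Hermitian metrics on a resolution $E^{\bullet}\to\mathcal{F}$. A secondary point to spell out carefully is that the identification of non-reduced structures must be carried out in the real analytic category, which is harmless because a linear isomorphism of the target never alters the ideal of a zero locus; once these are settled, the passage from local germs to the global space is immediate, in contrast with the general situation that requires Assumption \ref{assumption on gluing} as a hypothesis.
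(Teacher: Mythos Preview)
Your proof is correct and reaches the same conclusion as the paper, but the two arguments differ in one instructive way. The paper does not take the splitting $Ext^{2}_{+}\oplus Ext^{2}_{-}$ as given and then prove that $\pi_{+}|_{V_{\mathcal{F}}}$ is injective for \emph{every} such choice. Instead it exploits the freedom in Assumption~\ref{assumption on gluing} (``\emph{a} half-dimensional real subspace on which $Q_{Serre}$ is real and positive definite'') to \emph{construct} a preferred $*_{4}$: pick any Hermitian metric $h$ on $V_{\mathcal{F}}$, extend to $V_{\mathcal{F}}^{*}$ and to the direct sum, and define $*_{4}$ by $Q_{Serre}(\alpha,*_{4}\beta)=h(\alpha,\beta)$. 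With this choice $*_{4}$ swaps $V_{\mathcal{F}}$ and $V_{\mathcal{F}}^{*}$, so $\kappa(\alpha)\in V_{\mathcal{F}}$ and $*_{4}\kappa(\alpha)\in V_{\mathcal{F}}^{*}$ lie in transverse summands, whence $\kappa_{+}(\alpha)=0$ forces $\kappa(\alpha)=0$ immediately.

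What each approach buys: the paper's construction sidesteps precisely the definiteness verification you flag as the main obstacle, since positive-definiteness of $Q_{Serre}$ on the resulting $Ext^{2}_{+}$ is automatic from $h$ being Hermitian. Your argument, by contrast, proves the stronger statement that $\pi_{+}|_{V_{\mathcal{F}}}$ is an isomorphism for \emph{any} admissible $Ext^{2}_{+}$, at the cost of invoking negative-definiteness on $Ext^{2}_{-}=\sqrt{-1}\,Ext^{2}_{+}$ explicitly. You are also more careful than the paper about the non-reduced structure: the paper's proof literally only records the set-level implication $\kappa_{+}=0\Rightarrow\kappa=0$, whereas your factorisation $\kappa_{+}=(\pi_{+}|_{V_{\mathcal{F}}})\circ\tilde{\kappa}$ and $\kappa=\iota\circ\tilde{\kappa}$ shows the ideals agree.
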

\begin{proof}
We pick a Hermitian metric $h$ on $V_{\mathcal{F}}$ which induces a Hermitian metric on $V_{\mathcal{F}}^{*}$. We abuse the notation $h$ for the direct sum Hermitian metric on $Ext^{2}(\mathcal{F},\mathcal{F})=V_{\mathcal{F}}\oplus V_{\mathcal{F}}^{*}$. We define $*_{4}: Ext^{2}(\mathcal{F},\mathcal{F})\rightarrow Ext^{2}(\mathcal{F},\mathcal{F})$ such that $Q_{Serre}(\alpha,*_{4}\beta)=h(\alpha,\beta)$, where $Q_{Serre}$ denotes the Serre duality pairing. Then for $\kappa(\alpha)\in V_{\mathcal{F}}$, we have $*_{4}\big(\kappa(\alpha)\big)\in V_{\mathcal{F}}^{*}$ which implies $\kappa_{+}=0\Rightarrow \kappa=0$ by the assumption $\emph{Image}(\kappa)\subseteq V_{\mathcal{F}}$.
\end{proof}
\begin{remark} We will see the above conditions are satisfied for compactly supported sheaves on certain local $CY_{4}$ manifolds.
\end{remark}

\section{Virtual cycle constructions}
In algebraic geometry, we can use GIT to construct moduli spaces.
If one wants to define invariants associated to them, we need to make sense of their fundamental classes.
However, because of the lack of transversality, moduli spaces are in general very singular and not of expected dimensions.
The way to obtain correct fundamental cycles (deformation invariant) originated from the idea of Fulton-MacPherson's localized top Chern class \cite{fulton}. It is generalized to Fredholm Banach bundles over Banach manifolds by many people (such as Brussee \cite{brussee}, Cieliebak-Mundet i Riera-Salamon \cite{cieliebak} in the equivariant case) and developed in moduli problems by Li-Tian \cite{lt1}, Behrend-Fantechi \cite{bf} in full generality. The equivalence of these works were proved in \cite{lt3}, \cite{kim}.


\subsection{The virtual cycle construction of $DT_{4}$ moduli spaces }
We start with
\begin{equation}\label{Fredholm bundle}
\begin{array}{lll}
      & \mathcal{E} =& \mathcal{A^{*}}\times_{\mathcal{G}^{0}}(\Omega^{0}(X,g_{E})_{k-1}\oplus\Omega^{0,2}_{+}(X,EndE)_{k-1})
      \\  & \quad  & \qquad \downarrow \\ \mathcal{M}^{DT_{4}}_{c} & \hookrightarrow & \mathcal{B}^{*}=\mathcal{A^{*}}/\mathcal{G}^{0},
\end{array}\end{equation}
where $\mathcal{M}^{DT_{4}}_{c}\hookrightarrow \mathcal{B}^{*}$ embeds as the zero loci of section $s=(\wedge F,F^{0,2}_{+})$ of $\mathcal{E}$.
We assume $\overline{\mathcal{M}}_{c}=\mathcal{M}_{c}^{bdl}\neq\emptyset$ to get compactness of $\mathcal{M}_{c}^{DT_{4}}$.
\begin{remark} The orientability of the Banach bundle is proved when $H^{odd}(X,\mathbb{Z})=0$ in Theorem \ref{orientablity theorem}. Then we can
choose an orientation data $o(\mathcal{L})$ (Definition \ref{ori data}) in this case.
\end{remark}
We check the Fredholm property of the above Banach bundle.
\begin{lemma}
The above Banach bundle $\mathcal{E}\rightarrow \mathcal{B}^{*}$ is a Fredholm bundle.
\end{lemma}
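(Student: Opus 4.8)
The plan is to verify the two defining conditions of a Fredholm bundle: that the linearization of the section $s=(\wedge F,F^{0,2}_{+})$ at each zero is a Fredholm operator, and (more routinely) that $\mathcal{E}$ is a genuine Banach bundle over the Banach manifold $\mathcal{B}^{*}$. The second point is standard once one knows $\mathcal{B}^{*}=\mathcal{A}^{*}/\mathcal{G}^{0}$ is a Banach manifold (stated in the Notations), since $\mathcal{E}$ is the associated bundle to the $\mathcal{G}^{0}$-representation $\Omega^{0}(X,g_{E})_{k-1}\oplus\Omega^{0,2}_{+}(X,EndE)_{k-1}$, and the only issue is the usual Sobolev multiplication estimates guaranteeing the action is smooth; I would just cite \cite{dk}, \cite{d}, \cite{fu} for this.

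The substance is the Fredholm property of the vertical differential of $s$. First I would work at a point $d_{A}$ in the zero locus, use the gauge-fixing slice $d_{A}^{*}a=0$, and compute the linearization of $s$ at $d_{A}$: it is the operator
\begin{equation}
D_{A}: \Omega^{1}(X,g_{E})_{k}\longrightarrow \Omega^{0}(X,g_{E})_{k-1}\oplus\Omega^{0,2}_{+}(X,EndE)_{k-1},\qquad a\longmapsto \big(\wedge d_{A}a,\ \pi_{+}(\overline{\partial}_{A}a'')\big),
\nonumber
\end{equation}
where $a''=a^{0,1}$ and $\pi_{+}$ is the projection to $*_{4}$-self-dual $(0,2)$-forms. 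Together with the slice condition $d_{A}^{*}a=0$, this assembles — after the standard identification $\Omega^{1}(X,g_{E})\cong\Omega^{0,1}(X,EndE)$ using the unitary structure — into the rolled-up operator associated to the complex
\begin{equation}
\Omega^{0}(X,EndE)\xrightarrow{\ \overline{\partial}_{A}\ }\Omega^{0,1}(X,EndE)\xrightarrow{\ \pi_{+}\overline{\partial}_{A}\ }\Omega^{0,2}_{+}(X,EndE),
\nonumber
\end{equation}
deformed by the $\wedge F$ / moment-map factor. The key point, which is exactly the content of the ellipticity statement already invoked in the text (the $DT_{4}$ equations form an elliptic system, cf. \cite{dt} and the Remark following the definition of the $DT_{4}$ moduli space), is that this deformed operator is elliptic: its symbol agrees with that of $\overline{\partial}_{A}+\overline{\partial}_{A}^{*}$ up to the $\pi_{+}$ truncation on the top, and the $*_{4}$-decomposition $\Omega^{0,2}=\Omega^{0,2}_{+}\oplus\Omega^{0,2}_{-}$ together with Lemma \ref{lem descendence of star} shows the truncated symbol is still injective off the zero section. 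An elliptic operator on a compact manifold between Sobolev completions is Fredholm; hence $D_{A}$ restricted to the slice is Fredholm, and since the slice is a local model for $\mathcal{B}^{*}$ near $[d_{A}]$ this is precisely the vertical differential of $s$. Finally I would note the index is independent of the point, being computable by Atiyah-Singer / Hirzebruch-Riemann-Roch, and equals $2-\chi(E,E)$ as in the later theorem, so the bundle is Fredholm of constant index.

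The main obstacle is making the ellipticity claim fully precise: one must check that imposing only the self-dual half $\pi_{+}F^{0,2}=0$ (rather than the full overdetermined $F^{0,2}=0$) together with the single moment-map scalar equation $\wedge F=0$ yields a determined elliptic system with the right symbol — i.e. that the $*_{4}$-operator splits the $(0,2)$ symbol into two genuinely half-dimensional pieces so that the principal symbol sequence
\begin{equation}
0\to \pi^{*}\mathfrak{g}_{E}\xrightarrow{\sigma(d_{A}^{*}\oplus\wedge d_{A})}\pi^{*}(T^{*}X\otimes\mathfrak{g}_{E})\xrightarrow{\sigma(\pi_{+}\overline{\partial}_{A})}\pi^{*}(\Lambda^{0,2}_{+}\otimes\mathfrak{g}_{E})\to 0
\nonumber
\end{equation}
is exact away from the zero section. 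This is where Corollary \ref{cor star4 cut coho into ASD} and Remark \ref{remark1} ($\sqrt{-1}H^{0,2}_{+}=H^{0,2}_{-}$, so $\dim_{\mathbb{R}}\Omega^{0,2}_{+}=\dim_{\mathbb{R}}\Omega^{0,2}_{-}$) do the essential bookkeeping; once the symbol exactness is in hand, Fredholmness is immediate from standard elliptic theory on the compact $X$, uniformly in $A$ by continuity of the symbol. I would present this as: reduce to the slice, identify the linearization, invoke ellipticity of the $DT_{4}$ linearization via the $*_{4}$-symbol computation, conclude Fredholm with index $2-\chi(E,E)$.
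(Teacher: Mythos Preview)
Your proposal is correct and follows essentially the same strategy as the paper: work in a gauge-fixing slice at $d_{A}$, identify the linearization of $s$ with the rolled-up operator of the $DT_{4}$ deformation complex, and use ellipticity to conclude Fredholmness. The paper's write-up differs only in emphasis: rather than arguing via symbol exactness, it directly computes the kernel and cokernel using the K\"ahler identity $[\wedge,d_{A}]=i(\overline{\partial}_{A}^{*}-\partial_{A}^{*})$ and Hodge theory, obtaining $\ker(ds)|_{A}\cong H^{0,1}(X,EndE)$ and $\mathrm{coker}(ds)|_{A}\cong H^{0}(X,g_{E})\oplus H^{0,2}_{+}(X,EndE)$ explicitly. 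Your abstract ellipticity argument and the paper's explicit Hodge computation are two presentations of the same content; the paper's version has the minor advantage of displaying the kernel and cokernel concretely (which feeds into the later virtual-dimension count), while yours makes the underlying mechanism more transparent.
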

\begin{proof}
We take an open cover $\{U_{i}\}$ of $s^{-1}(0)$ in $\mathcal{A}^{*}/\mathcal{G}^{0}$, where
\begin{equation}U_{i}=\{d_{A_{i}}+a \textrm{ } \big{|} \textrm{ }  \|a\|_{k}< \epsilon ,  \textrm{ }  d_{A_{i}}^{*}a=0 \}. \nonumber\end{equation}
Note that
\begin{equation}E\mid_{U_{i}}=U_{i}\times(\Omega^{0}(X,g_{E})_{k-1}\oplus\Omega^{0,2}_{+}(X,EndE)_{k-1}). \nonumber\end{equation}
On the intersection of two charts, we have a commutative diagram
\begin{equation}\xymatrix{
  E\mid_{U_{i}} \ar[d]_{\pi_{i}} \ar[r]^{\Phi_{ij}} & E\mid_{U_{j}} \ar[d]^{\pi_{j}} \\
  U_{i} \ar[r]^{\phi_{ij}} & U_{j},  }
\nonumber\end{equation}
where $\phi_{ij}$ is the gauge transformation on $U_{ij}$, and $\Phi_{ij}$ is the adjoint action in the fiber direction.

The section $s$ near $d_{A}$ with $\overline{\partial}_{A}^{2}=0$ is given by
\begin{equation}  \mathcal{G}^{0}\curvearrowright \Omega^{1}(X,g_{E})_{k}\rightarrow \Omega^{0}(X,g_{E})_{k-1}\oplus\Omega^{0,2}_{+}(X,EndE)_{k-1},
\nonumber\end{equation}
\begin{equation}a=a^{0,1}+a^{1,0}\mapsto (\wedge F(d_{A}+a^{0,1}+a^{1,0}), F^{0,2}_{+}(\overline{\partial}_{A}+a^{0,1})),
\nonumber \end{equation}
where we identify unitary connections with $(0,1)$ connections $\Omega^{1}(X,g_{E})_{k}\cong\Omega^{0,1}(X,EndE)_{k}$.
After gauge fixing, we get $ker(d_{A}^{*})\subseteq\Omega^{1}(X,g_{E})_{k}$.
By the K\"ahler identity, $[\wedge,d_{A}]=i(\overline{\partial}_{A}^{*}-{\partial}_{A}^{*})$, we have
\begin{equation}ker(ds)\mid_{A}\cong H^{0,1}(X,EndE),\nonumber\end{equation}
\begin{eqnarray*}coker(ds)\mid_{A}&=&\frac{\Omega^{0}(X,g_{E})_{k-1}\oplus
\Omega^{0,2}_{+}(X,EndE)_{k-1}}{(i\overline{\partial}_{A}^{*}a^{0,1}-i\partial^{*}a^{1,0})\oplus\overline{\partial}_{A}^{+}a^{0,1}} \\
&=&\frac{\Omega^{0}(X,g_{E})_{k-1}\oplus \Omega^{0,2}_{+}(X,EndE)_{k-1}}{(2i\overline{\partial}_{A}^{*}a^{0,1})\oplus\overline{\partial}_{A}^{+}a^{0,1}} \\
&=& H^{0}(X,g_{E})\oplus H^{0,2}_{+}(X,EndE), \nonumber\end{eqnarray*}
where the second equality uses $d_{A}^{*}(a)=\overline{\partial}_{A}^{*}a^{0,1}+\partial_{A}^{*}a^{1,0}=0 $.
\end{proof}
Then by Proposition 14 of \cite{brussee}, the Euler class of the above Fredholm Banach bundle $e([s:\mathcal{B}^{*}\rightarrow\mathcal{E}])$ exists. We define it to be the virtual fundamental class of $\mathcal{M}^{DT_{4}}_{c}$.
\begin{definition}\label{virtual cycle when Mc=Mbdl}
We assume $\overline{\mathcal{M}}_{c}=\mathcal{M}_{c}^{bdl}\neq\emptyset$ and there exists an orientation data $o(\mathcal{L})$, then the virtual fundamental class of $\mathcal{M}^{DT_{4}}_{c}$ ($DT_{4}$ virtual cycle for short) is the Euler class of the above oriented Fredholm Banach bundle with $[\mathcal{M}^{DT_{4}}_{c}]^{vir}\in H_{r}(\mathcal{B}^{*},\mathbb{Z})$, where $r=2-\chi(E,E)$ is the virtual dimension.
\end{definition}
\begin{remark}\label{framed DT4}
One can similarly define the $DT_{4}$ virtual cycle as a homology class $[\widetilde{\mathcal{M}^{DT_{4}}_{c}}]^{vir}\in H_{*}(\widetilde{\mathcal{B}}^{*})$. Here, $\widetilde{\mathcal{M}^{DT_{4}}_{c}}$ is the framed $DT_{4}$ moduli space, i.e.
the zero loci of section $\tilde{s}=(\wedge F,F^{0,2}_{+})$ of the following Banach bundle
\begin{equation}
\begin{array}{lll}
      & \widetilde{\mathcal{E}} =& (\mathcal{A^{*}}\times Hom(U(r),P_{x_{0}}))\times_{\mathcal{G}}(\Omega^{0}(X,g_{E})_{k-1}\oplus\Omega^{0,2}_{+}(X,EndE)_{k-1})
      \\  & \quad  & \qquad \downarrow \\ \widetilde{\mathcal{M}^{DT_{4}}_{c}} & \hookrightarrow & \widetilde{\mathcal{B}}^{*}=(\mathcal{A^{*}}\times Hom(U(r),P_{x_{0}}))/\mathcal{G},\end{array}\nonumber \end{equation}
where $x_{0}\in X$ is a base-point, $P\rightarrow X$ is the principal $U(r)$-bundle associated with $E\rightarrow X$ and $\widetilde{\mathcal{B}}^{*}$ is the space of framed irreducible connections which admits a base-point $PU(r)$-fibration $\beta:\widetilde{\mathcal{B}}^{*}\rightarrow \mathcal{B}^{*}$ \cite{dk}. Furthermore, we have a $PU(r)$-bundle map $\widetilde{\mathcal{E}}\rightarrow \mathcal{E}$ covering $\beta$. Thus we can choose the representative sub-manifold of $[\widetilde{\mathcal{M}^{DT_{4}}_{c}}]^{vir}$ in $\widetilde{\mathcal{B}}^{*}$ to be $PU(r)$ equivariant and the free $PU(r)$ quotient will represent  $[\mathcal{M}^{DT_{4}}_{c}]^{vir}\in H_{r}(\mathcal{B}^{*},\mathbb{Z})$ \cite{cieliebak}.
\end{remark}
${}$ \\
\textbf{The deformation invariance}.
We show $[\mathcal{M}^{DT_{4}}_{c}]^{vir}$ is independent of the choice of \\
$(1)$ the holomorphic top form $\Omega$, \\
$(2)$ the Hermitian metric $h$ on $E$,  \\
$(3)$ the parameter $t$ of any deformation of complex structures $X_{t}$ when
$\overline{\mathcal{M}}_{c}=\mathcal{M}_{c}^{bdl}\neq\emptyset$ for all $X_{t}$.
\begin{lemma}
$[\mathcal{M}^{DT_{4}}_{c}]^{vir}$ is independent of the choice of $\Omega$ and $h$.
\end{lemma}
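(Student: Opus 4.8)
The plan is to show that the Euler class $e([s:\mathcal{B}^{*}\to\mathcal{E}])$ does not change when we vary $\Omega$ or the Hermitian metric $h$, by exhibiting, for any two choices of the data, a continuous one-parameter family connecting them and invoking homotopy invariance of the Euler class of a Fredholm bundle (as in \cite{brussee}, \cite{cieliebak}). First I would fix the underlying complex structure and the underlying smooth bundle $E$, and note that the space of holomorphic volume forms $\Omega$ with $\Omega\wedge\overline{\Omega}=dvol$ (for a fixed Ricci-flat metric in the given K\"ahler class) is, up to a phase, essentially a point; more relevantly, the $*_4$ operator — and hence the self-dual subbundle $\Omega^{0,2}_{+}(X,EndE)$ appearing in $\mathcal{E}$ — depends on $\Omega$ (and on $h$) only through a continuous, indeed real-analytic, assignment. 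So the first real step is to set up the \emph{parametrized} Fredholm problem: over $\mathcal{B}^{*}\times[0,1]$ form the bundle $\mathcal{E}_t$ whose fiber is $\Omega^{0}(X,g_{E})_{k-1}\oplus\Omega^{0,2}_{+,t}(X,EndE)_{k-1}$, where $\Omega^{0,2}_{+,t}$ is the $(+1)$-eigenspace of the operator $*_{4,t}$ built from the interpolated data, and let $s_t=(\wedge F, (F^{0,2})_{+,t})$ be the corresponding section. The key point to check is that $s_t$ remains Fredholm for all $t$, which follows verbatim from the computation in the preceding Lemma since the symbol of the linearization is unchanged (the zeroth-order projection $\pi_{+,t}$ does not affect ellipticity), and that the zero locus stays compact for all $t$ — this is where we use the hypothesis $\overline{\mathcal{M}}_{c}=\mathcal{M}_{c}^{bdl}\neq\emptyset$ together with Lemma \ref{C2 condition}, which gives $s_t^{-1}(0)=\mathcal{M}_c^{bdl}$ independently of $t$ (the Chern-Weil identity and the type argument are insensitive to the choice of $\Omega$ or $h$).

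Next I would address orientations: the determinant line bundle $\mathcal{L}$ of the index bundle of twisted Dirac operators lives on $\mathcal{B}^{*}$ and is intrinsic to $(X, E)$, not to the choice of $\Omega$ or $h$, so the orientation data $o(\mathcal{L})$ transports canonically along the family; concretely, the index bundle of the family linearization $\{ds_t\}$ over $\mathcal{B}^{*}\times[0,1]$ restricts at each $t$ to a bundle whose determinant line is canonically identified with $\mathcal{L}$, because $\wedge^{top}H^{0,2}_{+,t}\otimes\overline{\wedge^{top}H^{0,2}_{+,t}}\cong \wedge^{top}H^{0,2}$ via the Serre pairing regardless of how the half-dimensional real subspace sits inside. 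Hence a chosen orientation of $\mathcal{L}$ orients the whole family compatibly. Then, by the homotopy invariance of the Euler class of an oriented Fredholm bundle with compact zero locus over a family (Proposition 14 of \cite{brussee} applied to the pair $(\mathcal{B}^{*}\times\{0\}, \mathcal{B}^{*}\times\{1\})$ inside $\mathcal{B}^{*}\times[0,1]$, or equivalently \cite{cieliebak}), the classes $e(s_0)$ and $e(s_1)$ agree in $H_r(\mathcal{B}^{*},\mathbb{Z})$. Since any two admissible pairs $(\Omega,h)$ and $(\Omega',h')$ are joined by such a path (the space of Hermitian metrics is convex, and the normalized holomorphic volume forms form a connected — indeed essentially unique up to phase — set), this proves independence.

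The main obstacle I expect is verifying that the interpolation can be done while keeping the zero locus genuinely compact \emph{and} keeping $\mathcal{E}_t$ a well-defined Banach subbundle — i.e. that $\Omega^{0,2}_{+,t}$ has locally constant rank and varies continuously in the $L^2_{k-1}$ topology, so that $\mathcal{E}_t$ is a legitimate finite-corank-free Banach bundle over $\mathcal{B}^{*}\times[0,1]$. This is where one must be slightly careful: $*_{4,t}$ is a zeroth-order operator whose coefficients depend smoothly on $t$ (through $\Omega_t$ and $h_t$), so its $(\pm1)$-eigenbundle decomposition of $\Omega^{0,2}(X,EndE)_{k-1}$ is continuous in $t$, but one should note that the splitting $\Omega^{0,2}=\Omega^{0,2}_{+}\oplus\Omega^{0,2}_{-}$ is $C^\infty$-linear in the form-variable and hence preserves Sobolev regularity; $*_{4}^{2}=1$ guarantees the eigenspaces have complementary ranks throughout. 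Once this is in place the rest is a routine application of the machinery already quoted in the paper, and I would keep the write-up short by pointing to \cite{brussee} and \cite{cieliebak} for the homotopy invariance and to Lemma \ref{C2 condition} for the compactness of $s_t^{-1}(0)$.
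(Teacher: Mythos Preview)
Your proposal is correct and follows the same underlying strategy as the paper: connect two choices of data by a one-parameter family and invoke homotopy invariance of the Euler class of a Fredholm bundle with compact zero locus, using Lemma \ref{C2 condition} for compactness and \cite{brussee} for the invariance.

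The one genuine difference is in how the family is packaged. You work with a \emph{varying} bundle $\mathcal{E}_t$ over $\mathcal{B}^{*}\times[0,1]$ and then have to check (as you acknowledge) that the $(+1)$-eigenspaces $\Omega^{0,2}_{+,t}$ assemble into an honest Banach subbundle. The paper instead writes down an explicit fiberwise bundle isomorphism $f_t:\Omega^{0,2}_{+}\to\Omega^{0,2}_{+_t}$ (multiplication by $\frac{1}{2}(\sqrt{1-t^2\sin^2\theta}+1+t\sin\theta\,\sqrt{-1})$, noting separately the antipodal case $\theta=\pi$) and pulls the section $s_t$ back to the \emph{fixed} bundle $\Omega^{0,2}_{+}$ via $s_t'=f_t^{-1}\circ s_t$; a short computation then shows $s_t'=\pi_{+}\circ(f_t^{-1}F^{0,2})$, so the family of sections lives in a single bundle and the Brussee machinery applies directly. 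This buys them a cleaner setup (no need to discuss the regularity of a varying eigenbundle), at the cost of writing down explicit formulas; your approach is more conceptual and would generalize more easily, at the cost of the technical check you already identified. For the Hermitian metric $h$ both proofs simply cite connectedness of the space of metrics, with the paper adding the remark that one must also identify the spaces $\mathcal{B}^{*}$ for different $h$, which is standard.
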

\begin{proof}
We choose two holomorphic top forms $\Omega$, $e^{i\theta}\Omega$ which give $*$, $*_{1}=e^{i\theta}*$ respectively
(we use $*$ to denote $*_{4}$ for simplicity here). As $\Omega^{0}(X,g_{E})_{k-1}$ is independent of the choice of $\Omega$, we omit it in the expression of the Banach bundle $\mathcal{E}$. There exists a bundle isomorphism
\begin{equation}
\xymatrix{
  \Omega^{0,2}_{+}(X,EndE)_{k-1} \ar[rr]^{f_{1}} \ar[dr]_{\pi}
                &  &    \Omega^{0,2}_{+_{1}}(X,EndE)_{k-1} \ar[dl]^{\pi_{1}}    \\
                & \mathcal{B}^{*}               }   \nonumber\end{equation}
where $f_{1}$ is fiberwise multiplication by $\frac{1}{2}(cos\theta+1+sin\theta\sqrt{-1})$ if $\theta\neq\pi$ (if $\theta=\pi$, $f_{1}$ is
defined to be multiplication by $\sqrt{-1}$).
We denote $s_{1}'=f_{1}^{-1}\circ s_{1}$, where $s_{1}=F^{0,2}_{+_{1}}$ is the complex ASD equation with respect to $e^{i\theta}\Omega$.
By the functorial property of the Euler class, we are reduced to prove
\begin{equation}e([s_{1}':\mathcal{B}^{*}\rightarrow\Omega^{0,2}_{+}(X,EndE)_{k-1}])=e([s :\mathcal{B}^{*}\rightarrow \Omega^{0,2}_{+}(X,EndE)_{k-1}]), \nonumber \end{equation}
where $s=F^{0,2}_{+}$ is the complex ASD equation with respect to $\Omega$.

We consider a family of sections
\begin{equation}s_{t}': \mathcal{B}^{*}\rightarrow \Omega^{0,2}_{+}(X,EndE)_{k-1}, \nonumber \end{equation}
\begin{equation}s_{t}'=f_{t}^{-1}\circ s_{t}\triangleq\big(\frac{1}{2}(\sqrt{1-t^{2}sin^{2}\theta}+1+t\cdot sin\theta\sqrt{-1})\big)^{-1}\cdot s_{t}
\nonumber\end{equation}
and $s_{t}=F^{0,2}_{+_{t}}$ is the complex ASD equation with respect to
$(\sqrt{1-t^{2}sin^{2}\theta}+t\cdot sin\theta\sqrt{-1})\cdot\Omega$.

It is easy to check we have the following commutative relation
\begin{equation}f_{t}\circ *\circ F^{0,2}=*_{t}\circ f_{t}\circ F^{0,2}, \nonumber \end{equation}
where $*_{t}\triangleq(\sqrt{1-t^{2}sin^{2}\theta}+t\cdot sin\theta\sqrt{-1})\circ*$. Then, we get
\begin{equation}s_{t}'=f_{t}^{-1}\circ s_{t}=f_{t}^{-1}\circ \pi_{+_{t}}F^{0,2}=\pi_{+}\circ(f_{t}^{-1}F^{0,2}),\nonumber \end{equation}
which connects $s_{0}'=s$ and $s_{1}'$.
We define
\begin{equation}S: \mathcal{B}^{*}\times[0,1]\rightarrow \mathcal{A^{*}}\times_{\mathcal{G}^{0}}(\Omega^{0}(X,g_{E})_{k-1}\oplus\Omega^{0,2}_{+}(X,EndE)_{k-1}),
\nonumber \end{equation}
\begin{equation}S(A,t)=(\wedge F(A),s_{t}'), \nonumber \end{equation}
which is an oriented Fredholm Banach bundle of index $r+1$ with $S|_{\mathcal{B}^{*}\times 0}=s$
and $S|_{\mathcal{B}^{*}\times 1}=s_{1}'$.
Note that, by topological reasons, the above family miss the case when $*_{1}=-*$
which can be covered by moving $*$ in the $S^{1}$ family.
Then by \cite{brussee}, we are done.

The space of all Hermitian metrics on $E$ is connected, the above argument goes through.
The only difference is we also need to identify $\mathcal{B}^{*}$ for different choices of $h$ which is standard.
\end{proof}
Similarly, we have
\begin{lemma}
$[\mathcal{M}^{DT_{4}}_{c}]^{vir}$ is a deformation invariant of $X$.
\end{lemma}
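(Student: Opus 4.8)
The plan is to repeat the argument of the previous lemma with the role of the one-parameter family of holomorphic top forms (or Hermitian metrics) now played by the one-parameter family of complex structures $X_{t}$, and to conclude from the homotopy invariance of the Euler class of a proper oriented Fredholm Banach bundle (Proposition 14 of \cite{brussee}). First I would note that the underlying smooth manifold, the smooth bundle $E$, the space $\mathcal{A}^{*}$ of irreducible unitary connections and hence the Banach manifold $\mathcal{B}^{*}=\mathcal{A}^{*}/\mathcal{G}^{0}$ are all independent of $t$; what varies continuously with $t$ are the Dolbeault operators $\overline{\partial}_{t}$, the Ricci-flat K\"ahler metrics $g_{t}$ (Yau), the forms $\omega_{t}$ and $\Omega_{t}$, the operators $*_{4}^{(t)}$, and therefore the subbundles $\Omega^{0,2}_{+,t}(X,EndE)_{k-1}$ together with the $DT_{4}$ sections $s_{t}=(\wedge_{t}F,\,(F^{0,2}_{t})_{+})$. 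Since all these data form a continuous family over $[0,1]$ and the Sobolev structures attached to the $g_{t}$ are uniformly equivalent over the compact interval, one may, exactly as in the previous lemma, continuously identify the fibres to obtain a single Banach bundle $\widehat{\mathcal{E}}$ over $\mathcal{B}^{*}\times[0,1]$ carrying a section $\widehat{s}$ with $\widehat{s}|_{\mathcal{B}^{*}\times\{0\}}=s_{0}$ and $\widehat{s}|_{\mathcal{B}^{*}\times\{1\}}=s_{1}$; the Fredholm computation of $\ker(ds)$ and $coker(ds)$ carried out above, being uniform over $[0,1]$, shows $\widehat{s}$ is Fredholm of index $r+1$.

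Next I would check that $\widehat{s}^{-1}(0)=\bigsqcup_{t\in[0,1]}\mathcal{M}^{DT_{4}}_{c}(X_{t})$ is compact. By hypothesis $\overline{\mathcal{M}}_{c}(X_{t})=\mathcal{M}_{c}^{bdl}(X_{t})\neq\emptyset$ for every $t$, so each $\mathcal{M}^{DT_{4}}_{c}(X_{t})$ is compact by Corollary \ref{cptness by stable bdl} (note that the Chern character of a coherent sheaf on $X_{t}$ automatically lies in $\bigoplus_{k}H^{k,k}(X_{t})$, hence has no $(0,4)$-component, so Lemma \ref{C2 condition} applies on every slice). Moreover, locally over $[0,1]$ the family $X_{t}$ extends to a complex-analytic family over which the relative Gieseker moduli space of stable sheaves is projective and proper; covering $[0,1]$ by finitely many such charts one sees $\bigsqcup_{t}\mathcal{M}_{c}(X_{t})$ is compact, and via the homeomorphisms $\mathcal{M}_{c}^{bdl}(X_{t})\cong\mathcal{M}^{DT_{4}}_{c}(X_{t})$ of Theorem \ref{mo mDT4} so is $\widehat{s}^{-1}(0)$. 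For orientations, the family index bundle of twisted Dirac operators over $\mathcal{B}^{*}\times[0,1]$ has a determinant line bundle restricting to $\mathcal{L}_{t}$ on each slice; since $[0,1]$ is contractible, a chosen orientation data $o(\mathcal{L}_{0})$ extends uniquely to a family orientation, whose restriction at $t=1$ is by definition the $o(\mathcal{L}_{1})$ obtained by transporting $o(\mathcal{L}_{0})$ along the path. Thus $\widehat{s}$ is an oriented, proper, index-$(r+1)$ Fredholm section.

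Applying Proposition 14 of \cite{brussee} to $\widehat{s}$ then produces an $(r+1)$-dimensional cobordism inside $\mathcal{B}^{*}\times[0,1]$ whose two boundary slices represent $[\mathcal{M}^{DT_{4}}_{c}(X_{0})]^{vir}$ and $[\mathcal{M}^{DT_{4}}_{c}(X_{1})]^{vir}$, giving the equality $[\mathcal{M}^{DT_{4}}_{c}(X_{0})]^{vir}=[\mathcal{M}^{DT_{4}}_{c}(X_{1})]^{vir}$ in $H_{r}(\mathcal{B}^{*},\mathbb{Z})$. I expect the one genuinely delicate point to be the compactness of $\widehat{s}^{-1}(0)$: a priori the $DT_{4}$ moduli spaces could ``leak'' as $t$ varies, and ruling this out is precisely where the hypothesis $\overline{\mathcal{M}}_{c}=\mathcal{M}_{c}^{bdl}$ for all $X_{t}$ is essential, since it reduces each slice, and the whole family, to the classical projective Gieseker moduli space, whose properness in families is standard. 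Everything else is a routine adaptation of the preceding lemma.
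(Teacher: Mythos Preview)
Your proposal is correct and follows essentially the same route as the paper: build a family Fredholm section over $\mathcal{B}^{*}\times[0,1]$ by continuously trivializing the varying fibres $\Omega^{0,2}_{+,t}(X,EndE)_{k-1}$ into a fixed bundle, then invoke Brussee's homotopy invariance of the Euler class. The paper's proof is terser and does not spell out the compactness of the total zero locus over $[0,1]$; your observation that this is the delicate point, and your argument via properness of the relative Gieseker moduli space together with the slicewise homeomorphisms $\mathcal{M}_{c}^{bdl}(X_{t})\cong\mathcal{M}_{c}^{DT_{4}}(X_{t})$, is a reasonable way to fill that gap.
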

\begin{proof}
We fix a Hermitian metric, a continuous deformations of complex structures $J_{t}$ of $X$ and an orientation data $o(\mathcal{L})$ (it does not depend on $t$). We consider Fredholm Banach bundle
\begin{equation}s: \mathcal{B}^{*}\times [0,1]\rightarrow \mathcal{A^{*}}\times_{\mathcal{G}^{0}}(\Omega^{0}(X,g_{E})_{k-1}\oplus\Omega^{0,2}_{+}(X,EndE)_{k-1}),
\nonumber \end{equation}
\begin{equation}s_{t}=(\wedge F, f_{t}^{-1}\circ F^{0,2}_{+_{t}}),\nonumber \end{equation}
where $*_{t}$ is the $*_{4}$ operator with respect to the holomorphic structure $J_{t}$ and
\begin{equation}f_{t}:\Omega^{0,2}_{+}(X,EndE)_{k-1}\rightarrow \Omega^{0,2}_{+_{t}}(X,EndE)_{k-1}\nonumber \end{equation}
is a Banach bundle isomorphism which commutes with the adjoint action of $\mathcal{G}$.
$f_{t}$ exists because the complex structure only affects the differential forms part of the underling manifold, not the topological bundle,
while the unitary gauge transformations act on bundle $E$ only.

We have
\begin{equation}f_{t}(* F^{0,2})=*_{t}f_{t}(F^{0,2})\nonumber \end{equation}
by extending $f_{t}$ to $\Omega^{0,2}_{-}(X,EndE)_{k-1}$ using
$f_{t}(\sqrt{-1}\alpha)\triangleq\sqrt{-1}f_{t}(\alpha)$,
where $\alpha\in \Omega^{0,2}_{+}(X,EndE)_{k-1}$. Then by \cite{brussee}, we prove the deformation invariance.
\end{proof}
To sum up, we have the following result.
\begin{theorem}\label{main theorem}
We assume $\overline{\mathcal{M}}_{c}=\mathcal{M}_{c}^{bdl}\neq\emptyset$ and there exists an orientation data $o(\mathcal{L})$.
Then $\mathcal{M}^{DT_{4}}_{c}$ is compact and its virtual fundamental class exists as a cycle
$[\mathcal{M}^{DT_{4}}_{c}]^{vir}\in H_{r}(\mathcal{B}^{*},\mathbb{Z})$, where $r=2-\chi(E,E)$ is the virtual dimension.

Furthermore, if the above assumptions are satisfied by a continuous family of Calabi-Yau 4-folds $X_{t}$ parameterized by $t\in [0,1]$, then the cycle in $H_{r}(\mathcal{B}^{*},\mathbb{Z})$ is independent of $t$.
\end{theorem}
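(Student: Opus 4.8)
The plan is to consolidate the ingredients already assembled in this section; the theorem is essentially a synthesis of Corollary \ref{cptness by stable bdl}, the Fredholm lemma above, and the two invariance lemmas above. \textbf{Compactness} is immediate: under $\overline{\mathcal{M}}_{c}=\mathcal{M}_{c}^{bdl}\neq\emptyset$, Lewis' Lemma \ref{C2 condition} forces $F^{0,2}_{+}=0\Rightarrow F^{0,2}=0$, so every solution of (\ref{complex ASD equation}) is a holomorphic Hermitian--Yang--Mills connection; hence $\mathcal{M}_{c}^{DT_{4}}$ coincides as a set with the compact space $\mathcal{M}_{c}^{bdl}=\overline{\mathcal{M}}_{c}$, and by Theorem \ref{Kuranishi str of cpx ASD thm} this bijection is a homeomorphism of (possibly non-reduced) analytic spaces, so in particular $\mathcal{M}_{c}^{DT_{4}}$ is a compact topological space.

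\textbf{Existence of the virtual cycle.} Next I would invoke the Fredholm lemma above for the section $s=(\wedge F,F^{0,2}_{+})$ of the Banach bundle $\mathcal{E}\to\mathcal{B}^{*}$ of (\ref{Fredholm bundle}): it is Fredholm with $ker(ds)|_{A}\cong H^{0,1}(X,EndE)$ and $coker(ds)|_{A}\cong H^{0}(X,g_{E})\oplus H^{0,2}_{+}(X,EndE)$. On $\mathcal{B}^{*}$ the $H^{0}(X,g_{E})$ summand vanishes by simpleness of $E$, and since $H^{0,2}_{+}(X,EndE)$ is a half-dimensional real form of $Ext^{2}(E,E)$, a count of real dimensions together with Serre duality on the Calabi--Yau $4$-fold ($h^{4}=h^{0}$, $h^{3}=h^{1}$) gives Fredholm index $2h^{1}(X,EndE)-h^{2}(X,EndE)=2-\chi(E,E)=r$, which by Hirzebruch--Riemann--Roch depends only on $c$ and the topology of $X$. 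Having fixed an orientation data $o(\mathcal{L})$ (Definition \ref{ori data}; its existence in the relevant cases is Corollary \ref{cor on ori data}), the Fredholm bundle is oriented, so Proposition 14 of \cite{brussee} produces the Euler class $e([s:\mathcal{B}^{*}\to\mathcal{E}])\in H_{r}(\mathcal{B}^{*},\mathbb{Z})$; this is $[\mathcal{M}_{c}^{DT_{4}}]^{vir}$ of Definition \ref{virtual cycle when Mc=Mbdl}.

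\textbf{Deformation invariance.} Finally I would combine the two invariance lemmas proved above. Given a continuous family $X_{t}$, $t\in[0,1]$, satisfying the hypotheses, one first identifies the base $\mathcal{B}^{*}$ for all $t$ (it depends only on the smooth bundle $E$, not on the complex structure) and then builds a single oriented Fredholm section $S$ over $\mathcal{B}^{*}\times[0,1]$, of index $r+1$, whose restrictions to $t=0$ and $t=1$ recover the two endpoint $DT_{4}$ sections: concretely $S(A,t)=(\wedge F(A),\,f_{t}^{-1}\circ F^{0,2}_{+_{t}})$, where $f_{t}:\Omega^{0,2}_{+}(X,EndE)_{k-1}\to\Omega^{0,2}_{+_{t}}(X,EndE)_{k-1}$ is a fibrewise bundle isomorphism commuting with the adjoint $\mathcal{G}$-action and intertwining $*_{4}$ with the deformed $*_{4}$-operator. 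Cobordism invariance of the Euler class \cite{brussee} then gives independence of $t$. The one delicate point — and really the only place here needing care — is that the naive rescaling interpolating $*$ with $e^{i\theta}*$ degenerates topologically at $*_{1}=-*$; this is circumvented, exactly as in the proof of independence from $\Omega$ and $h$, by rotating through the full $S^{1}$-family of calibration forms so that the cobordism stays inside the oriented Fredholm class.

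In this sense the content of this theorem is a bookkeeping synthesis, and the genuinely substantive inputs lie upstream: Lewis' vanishing Lemma \ref{C2 condition} behind compactness, and the construction of the orientation data needed for an integral (rather than $\mathbb{Z}/2$) Euler class (Corollary \ref{cor on ori data}). The main thing one must check carefully is that the hypotheses of Brussee's Fredholm Euler-class machinery and of its one-parameter (cobordism) version are satisfied, and that the index count returns precisely $r=2-\chi(E,E)$.
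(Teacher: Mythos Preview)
Your proposal is correct and follows exactly the paper's approach: the theorem appears in the paper with no separate proof, introduced as ``To sum up, we have the following result'' after the compactness corollary, the Fredholm lemma, Definition \ref{virtual cycle when Mc=Mbdl}, and the two invariance lemmas, and your write-up recapitulates precisely this synthesis. One small imprecision: $H^{0}(X,g_{E})$ does not literally vanish for a simple $U(r)$-bundle (it is the one-real-dimensional center $i\mathbb{R}\cdot\mathrm{id}$), but this is absorbed in passing from $\mathcal{G}$ to $\mathcal{G}^{0}=\mathcal{G}/C(G)$, and the index $r=2-\chi(E,E)$ you arrive at is the one the paper uses.
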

\begin{remark}\label{remark on spin(7) instantons} ${}$ \\
1. The Banach manifold $\mathcal{B}^{*}=\mathcal{A}^{*}/\mathcal{G}^{0}$ involves a choice of a large integer $k$ in $L_{k}^{2}$ norm completion. As stated before, the $DT_{4}$ moduli space is independent of the choice of $k$. Meanwhile, the homotopy-invariant properties of
$\mathcal{B}^{*}$ are insensitive to $k$ \cite{dk} and it is easy to show the virtual fundamental class does not depend on the choice of $k$. \\
2. By Donaldson-Thomas \cite{dt}, the Calabi-Yau 4-fold $X$ is also a $Spin(7)$ manifold ($SU(4)\subset Spin(7)$) and
\begin{equation} \Omega^{2}(X)=\Omega^{2}_{7}(X)\oplus\Omega^{2}_{21}(X),
\nonumber\end{equation}
\begin{equation} \Omega^{2}(X)\otimes_{\mathbb{R}}\mathbb{C}=\Omega^{1,1}_{0}(X)\oplus\Omega^{0,0}(X)<\omega>\oplus\Omega^{0,2}(X)\oplus\Omega^{2,0}(X).
\nonumber\end{equation}
Coupled with bundles, the deformation complex of $Spin(7)$ instantons \cite{lewis}
\begin{equation}\Omega^{0}(X,g_{E})\rightarrow\Omega^{1}(X,g_{E})\rightarrow\Omega^{2}_{7}(X,g_{E}) \nonumber \end{equation}
is the same as
\begin{equation}\Omega^{0}(X,g_{E})\rightarrow\Omega^{0,1}(X,EndE)\rightarrow\Omega^{0,2}_{+}(X,EndE)\oplus\Omega^{0}(X,g_{E}). \nonumber \end{equation}
Correspondingly, the $Spin(7)$ instanton equation
\begin{equation}\pi_{7}(F)=0
\nonumber
\end{equation}
is equivalent to $DT_{4}$ equations (\ref{complex ASD equation}), where
\begin{equation}\pi_{7}: \Omega^{2}(X,g_{E})\rightarrow \Omega^{2}_{7}(X,g_{E}) \nonumber \end{equation}
is the projection map.
Thus $Spin(7)$ instanton counting is just the $DT_{4}$ invariant (defined later) when the base manifold is a Calabi-Yau 4-fold.
\end{remark}
${}$ \\
\textbf{The $\mu_{1}$-map}.
Because the virtual dimension of $\mathcal{M}_{c}^{DT_{4}}$ is not zero in general, we need the $\mu$-map to cut down the dimension and define
invariants.

We recall \cite{dk}, if $G=SU(2)$, there exists a universal $SO(3)$ bundle
\begin{equation}\mathcal{P}^{ad}\rightarrow\mathcal{B}^{*}\times X. \nonumber \end{equation}
Then we define the $\mu_{1}$-map using the slant product pairing,
\begin{equation}\mu_{1}: H_{*}(X)\otimes \mathbb{Z}[x_{1},x_{2},...]\rightarrow H^{*}(\mathcal{B}^{*}),\nonumber \end{equation}
\begin{equation}\label{u1 map}\mu_{1}(\gamma,P)=P(0,-\frac{1}{4}p_{1}(\mathcal{P}^{ad}),0,...)/\gamma.  \end{equation}
The $\mu_{1}$-map for $U(r)$ bundles can be defined using higher Pontryagin classes of $PU(r)$ bundles with more complicated expression.
\begin{remark}
There exists a universal framed $U(r)$ bundle $\widetilde{\mathcal{P}}\rightarrow \widetilde{\mathcal{B}}^{*}\times X$, where $\widetilde{\mathcal{B}}^{*}$ is the space of framed irreducible connections and we can define a $\widetilde{\mu}_{1}$-map
\begin{equation}\tilde{\mu}_{1}: H_{*}(X)\otimes \mathbb{Z}[x_{1},x_{2},...]\rightarrow H^{*}(\widetilde{\mathcal{B}}^{*}),  \nonumber\end{equation}
\begin{equation}\label{mu1 map framed bdl}\tilde{\mu}_{1}(\gamma,P)=P(c_{1}(\widetilde{\mathcal{P}}),c_{2}(\widetilde{\mathcal{P}}),...)/\gamma.   \end{equation}
For $SU(2)$ bundles, $\mu_{1}(\gamma,P)$ pulls back to be $\tilde{\mu}_{1}(\gamma,P)$ via $\beta: \widetilde{\mathcal{B}}^{*}\rightarrow \mathcal{B}^{*}$ \cite{dk}. However, Pontryagin classes of $\mathcal{P}^{ad}$ can't recover Chern classes of $\widetilde{\mathcal{P}}$ for higher rank bundles in general.
\end{remark}
We use the pairing between $DT_{4}$ virtual cycles and ${\mu}_{1}$-maps to define $DT_{4}$ invariants.
\begin{definition}\label{DT4 inv of bundles}Under the assumption in Theorem \ref{main theorem},
the $DT_{4}$ invariant of $(X,\mathcal{O}_{X}(1))$ with respect to Chern character $c$ and an orientation data $o(\mathcal{L})$
is a map
\begin{equation}\label{mu map for bundles}DT_{4}^{\mu_{1}}(X,\mathcal{O}_{X}(1),c,o(\mathcal{L})):
Sym^{*}\big(H_{*}(X,\mathbb{Z})\otimes \mathbb{Z}[x_{1},x_{2},...]\big) \rightarrow \mathbb{Z} \end{equation}
such that
\begin{equation}DT_{4}^{\mu_{1}}(X,\mathcal{O}_{X}(1),c,o(\mathcal{L}))((\gamma_{1},P_{1}),(\gamma_{2},P_{2}),...) \nonumber \end{equation}
\begin{equation}=<{\mu}_{1}(\gamma_{1},P_{1})\cup {\mu}_{1}(\gamma_{2},P_{2})\cup... ,[\mathcal{M}^{DT_{4}}_{c}]^{vir}>, \nonumber \end{equation}
where
$<,>$ denotes the natural pairing between homology and cohomology classes.
\end{definition}
\begin{remark}
The above $DT_{4}$ invariants can be viewed as partition functions of certain eight dimension quantum field theory \cite{bks} because of the standard super-symmetry localization \cite{witten}.
\end{remark}

\subsection{The virtual cycle construction of generalized $DT_{4}$ moduli spaces }
In this subsection, we construct virtual cycles of generalized $DT_{4}$ moduli spaces $\overline{\mathcal{M}}_{c}^{DT_{4}}$'s, when they are defined without the gluing assumption \ref{assumption on gluing}.

The first case is when $\mathcal{M}_{c}$ is smooth: the obstruction sheaf $Ob$ such that $Ob|_{\mathcal{F}}=Ext^{2}(\mathcal{F},\mathcal{F})$ is a bundle with quadratic form $Q_{Serre}$, where $Q_{Serre}$ is the Serre duality pairing. By Lemma 5 \cite{eg}, there exists a real sub-bundle $Ob_{+}$ with positive definite quadratic form such that $Ob\cong Ob_{+}\otimes_{\mathbb{R}}\mathbb{C}$ as vector bundles with quadratic form and $w_{1}(Ob_{+})=0 \Leftrightarrow$ the structure group of $(Ob,Q_{Serre})$ can be reduced to $SO(\bullet,\mathbb{C})$. We call $Ob_{+}$ the self-dual obstruction bundle and choose an orientation data $o(\mathcal{L})$ for $\mathcal{M}_{c}$ which gives an orientation on $Ob_{+}$.
\begin{definition}\label{virtual cycle when Mc smooth}
We assume $\overline{\mathcal{M}}_{c}=\mathcal{M}_{c}$ is smooth, by Proposition \ref{gene DT4 if Mc smooth}, $\overline{\mathcal{M}}_{c}^{DT_{4}}$ exists and $\overline{\mathcal{M}}_{c}^{DT_{4}}\cong\mathcal{M}_{c}$. We assume there exists an orientation data $o(\mathcal{L})$.
Then the virtual fundamental class of $\overline{\mathcal{M}}_{c}^{DT_{4}}$ ($DT_{4}$ virtual cycle for short) is
the Poincar\'{e} dual of the Euler class of the self-dual obstruction bundle over $\mathcal{M}_{c}$, i.e.
\begin{equation}[\overline{\mathcal{M}}^{DT_{4}}_{c}]^{vir}\triangleq PD(e(Ob_{+}))\in H_{r}(\mathcal{M}_{c},\mathbb{Z}),\nonumber \end{equation}
where $r=2-\chi(\mathcal{F},\mathcal{F})$ is the real virtual dimension of $\overline{\mathcal{M}}_{c}^{DT_{4}}$.
\end{definition}
\begin{remark}
In this case, the $DT_{4}$ virtual cycle is zero when $r$ is odd, and is algebraic when $r$ is even \cite{eg}.
\end{remark}
When $\mathcal{M}_{c}$ is smooth, the following lemma will be useful for later computations.
\begin{lemma}(Edidin and Graham \cite{eg}) \label{ASD equivalent to max isotropic}
Let $E\rightarrow U$ be a complex vector bundle with a non-degenerate quadratic form. $V$ is a maximal isotropic subbundle of $E$. \\
(1) If $rk(E)=2n$, then the structure group of $E$ reduces to $SO(2n,\mathbb{C})$ and the half Euler class of $E$ (i.e. the Euler class of the corresponding real quadratic bundle) is $\pm c_{n}(V)$
where the sign
depends on the choice of the maximal isotropic family of $V$. \\
(2) If $rk(E)=2n+1$ and the the structure group of $E$ reduces to $SO(2n+1,\mathbb{C})$, then the class is zero.
\end{lemma}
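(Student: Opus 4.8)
The plan is to reduce everything to a comparison between the real form of $(E,Q)$ and the realification of $V$. First I would recall, using Lemma 5 of \cite{eg} already invoked above, that once the structure group of the quadratic bundle $(E,Q)$ reduces to the special orthogonal group it is the complexification $E_{+}\otimes_{\mathbb{R}}\mathbb{C}$ of a real oriented bundle $E_{+}$ (a \emph{real form}), and that the half Euler class in question is by definition $e(E_{+})\in H^{*}(U,\mathbb{Z})$. So the task is simply to identify this class.

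Next I would use the maximal isotropic subbundle $V$ of rank $n=\lfloor m/2\rfloor$ to pin down a reduction of the structure group. Via the non-degenerate form, a choice of $V$ gives an orthogonal decomposition $E\cong V\oplus V^{*}$ when $m=2n$, and $E\cong V\oplus V^{*}\oplus L$ with $L$ a rank-one quadratic line bundle when $m=2n+1$; moreover $L\cong\det E$ with its induced quadratic form, so in case (2) the hypothesis that the structure group reduces to $SO(2n+1,\mathbb{C})$ is precisely the statement that $L$ is trivial as a quadratic line bundle, hence trivial. In both cases the structure group is reduced to (a subgroup of) $GL(n,\mathbb{C})$ acting on the hyperbolic part $V\oplus V^{*}$ by $g\mapsto g\oplus(g^{t})^{-1}$. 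The key point is then the elementary group-theoretic fact that under the inclusion $GL(n,\mathbb{C})\hookrightarrow SO(2n,\mathbb{C})$ the maximal compact $U(n)$ is carried to the standard $U(n)\subset SO(2n)$: working in a basis where $Q$ is the hyperbolic pairing, $u\in U(n)$ goes to $u\oplus\bar u$, and on the real form $\{(v,\bar v):v\in V\}$ this acts as $u$, with $Q$ restricting to the standard positive-definite real form. Consequently the real form of the hyperbolic part $V\oplus V^{*}$ is, as an oriented real rank-$2n$ bundle, the realification $V_{\mathbb{R}}$ of $V$ equipped with its complex orientation.

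Part (1) now follows: $E_{+}\cong V_{\mathbb{R}}$, hence $e(E_{+})=e(V_{\mathbb{R}})=c_{n}(V)$. The sign ambiguity records the two connected components of the bundle of maximal isotropics (equivalently, the two orientations of $E_{+}$), so that the half Euler class is determined by $(E,Q)$ only up to sign and equals $\pm c_{n}(V)$ according to the chosen maximal isotropic family / orientation data. For part (2), $E_{+}\cong V_{\mathbb{R}}\oplus L_{+}$ where $L_{+}$ is the real form of the trivial quadratic line bundle $L$, hence a trivial real line bundle; therefore $e(E_{+})=e(V_{\mathbb{R}})\cup e(L_{+})=c_{n}(V)\cup 0=0$. (Alternatively, in case (2) one may bypass the decomposition entirely: $E_{+}$ has odd rank $2n+1$, and an oriented real bundle of odd rank has $2$-torsion Euler class, which vanishes in the torsion-free setting relevant here.)

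I expect the only genuine subtlety to be the orientation bookkeeping in the paragraph above: verifying that $V_{\mathbb{R}}$, oriented by the complex structure on $V$, really coincides with the real form $E_{+}$ with its prescribed orientation up to the stated overall sign, rather than with some unrelated oriented real bundle. Everything else is formal once the embedding $U(n)\hookrightarrow SO(2n)$ is identified, and since we claim the answer only up to sign in (1) and exactly $0$ in (2), even this orientation check is light.
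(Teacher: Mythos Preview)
The paper does not actually prove this lemma: it is stated with attribution to Edidin and Graham \cite{eg} and used as a black box. Your argument is correct and is essentially the standard one underlying that reference---identify the real form of the hyperbolic bundle $V\oplus V^{*}$ with the realification $V_{\mathbb{R}}$ via the inclusion $U(n)\hookrightarrow SO(2n)$, then use $e(V_{\mathbb{R}})=c_{n}(V)$ and, in the odd case, the trivial line factor to kill the Euler class.

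One small caveat: your parenthetical alternative for part (2), that an odd-rank oriented real bundle has $2$-torsion Euler class, is true but the justification you hint at (``torsion-free setting relevant here'') is not really part of the statement, and in any case your main argument via the trivial summand $L_{+}$ is cleaner and self-contained, so I would drop the alternative. Also, in part (1) you might make explicit why a rank-$n$ isotropic $V$ forces the $SO(2n,\mathbb{C})$ reduction: the stabilizer of a maximal isotropic in $O(2n,\mathbb{C})$ is a connected parabolic, hence lies in $SO(2n,\mathbb{C})$; you essentially say this via $GL(n,\mathbb{C})$ being connected, which is fine.
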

The next case where we have $\overline{\mathcal{M}}_{c}^{DT_{4}}$ without the gluing assumption is the following.
\begin{definition}\label{virtual cycle when ob=v+v*}
We assume $\overline{\mathcal{M}}_{c}=\mathcal{M}_{c}$ and there exists a perfect obstruction theory \cite{bf}
\begin{equation}\phi: \quad \mathcal{V}^{\bullet}\rightarrow \mathbb{L}^{\bullet}_{\mathcal{M}_{c}},  \nonumber \end{equation}
such that
\begin{equation}H^{0}(\mathcal{V}^{\bullet})|_{\{\mathcal{F}\}}\cong Ext^{1}(\mathcal{F},\mathcal{F}), \nonumber \end{equation}
\begin{equation}H^{-1}(\mathcal{V}^{\bullet})|_{\{\mathcal{F}\}}\oplus
H^{-1}(\mathcal{V}^{\bullet})|_{\{\mathcal{F}\}}^{*}\cong Ext^{2}(\mathcal{F},\mathcal{F}),  \nonumber \end{equation}
and $H^{-1}(\mathcal{V}^{\bullet})|_{\{\mathcal{F}\}}$ is a maximal isotropic subspace of $Ext^{2}(\mathcal{F},\mathcal{F})$ with respect
to the Serre duality pairing, then by Proposition \ref{ob=v+v*}, $\overline{\mathcal{M}}_{c}^{DT_{4}}$ exists, $\overline{\mathcal{M}}_{c}^{DT_{4}}\cong\mathcal{M}_{c}$ and $(\mathcal{L}_{\mathbb{C}},Q_{Serre})$ has a natural complex orientation $o(\mathcal{O})$ (Definition \ref{nat cpx ori}).

The virtual fundamental class of $\overline{\mathcal{M}}_{c}^{DT_{4}}$ ($DT_{4}$ virtual cycle for short) with respect to the natural complex orientation $o(\mathcal{O})$ is the virtual fundamental class of the above perfect obstruction theory, i.e.
\begin{equation}[\overline{\mathcal{M}}^{DT_{4}}_{c}]^{vir}\triangleq[\mathcal{M}_{c},\mathcal{V}^{\bullet}]^{vir}\in A_{\frac{r}{2}}(\mathcal{M}_{c}),   \nonumber \end{equation}
where $r=2-\chi(\mathcal{F},\mathcal{F})$ is the real virtual dimension of $\overline{\mathcal{M}}_{c}^{DT_{4}}$.
\end{definition}
${}$ \\
\textbf{The $\mu_{2}$-map}:
We define a $\mu_{2}$-map for the above two cases. We denote the universal sheaf of $\mathcal{M}_{c}$ by
\begin{equation}\mathfrak{F}\rightarrow\mathcal{M}_{c}\times X.    \nonumber \end{equation}
The $\mu_{2}$-map is similarly defined to be
\begin{equation}\mu_{2}: H_{*}(X)\otimes \mathbb{Z}[x_{1},x_{2},...]\rightarrow H^{*}(\mathcal{M}_{c}), \nonumber \end{equation}
\begin{equation}\label{mu map for sheaves} \mu_{2}(\gamma,P)=P(c_{1}(\mathfrak{F}),c_{2}(\mathfrak{F}),...)/\gamma.   \end{equation}

\begin{definition}\label{DT4 inv of sheaves}In Definitions \ref{virtual cycle when Mc smooth}, \ref{virtual cycle when ob=v+v*},
the $DT_{4}$ invariant of $(X,\mathcal{O}_{X}(1))$ with respect to Chern character $c$ and an orientation data $o(\mathcal{L})$
is a map
\begin{equation}\label{u2 map}DT_{4}^{\mu_{2}}(X,\mathcal{O}_{X}(1),c,o(\mathcal{L})): Sym^{*}\big(H_{*}(X,\mathbb{Z}) \otimes \mathbb{Z}[x_{1},x_{2},...]\big)
\rightarrow \mathbb{Z} \end{equation}
such that
\begin{equation}DT_{4}^{\mu_{2}}(X,\mathcal{O}_{X}(1),c,o(\mathcal{L}))((\gamma_{1},P_{1}),(\gamma_{2},P_{2}),...) \nonumber \end{equation}
\begin{equation}=<\mu_{2}(\gamma_{1},P_{1})\cup \mu_{2}(\gamma_{2},P_{2})\cup... ,[\overline{\mathcal{M}}^{DT_{4}}_{c}]^{vir}>,
\nonumber \end{equation}
where $<,>$ denotes the natural pairing between homology and cohomology classes.
\end{definition}

Actually, the above definition of $DT_{4}$ invariants is consistent with the definition before (\ref{mu map for bundles}).
\begin{proposition}
We assume $\overline{\mathcal{M}}_{c}=\mathcal{M}_{c}^{bdl}\neq\emptyset$ is smooth, $c=(2,0,*,0,0)$ i.e. $E\rightarrow X$ is a $SU(2)$ bundle, then
\begin{equation}DT_{4}^{\mu_{1}}(X,\mathcal{O}_{X}(1),c,o(\mathcal{L}))=DT_{4}^{\mu_{2}}(X,\mathcal{O}_{X}(1),c,o(\mathcal{L})). \nonumber \end{equation}
\end{proposition}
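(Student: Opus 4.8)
The plan is to show that, under the stated hypotheses, both $DT_4^{\mu_1}$ and $DT_4^{\mu_2}$ compute one and the same number, namely $\big\langle \bigcup_i\mu_2(\gamma_i,P_i)\cup e(Ob_+),\,[\mathcal{M}_c]\big\rangle$. First I would reduce the Fredholm Euler class $[\mathcal{M}_c^{DT_4}]^{vir}$ to a finite-dimensional object. Since $\overline{\mathcal{M}}_c=\mathcal{M}_c^{bdl}=\mathcal{M}_c$ is smooth in the strong sense, all Kuranishi maps vanish, so in Theorem \ref{Kuranishi str of cpx ASD thm} we have $\tilde{\tilde\kappa}=0$ and hence $\tilde{\tilde\kappa}_+=\pi_+\circ\tilde{\tilde\kappa}=0$; thus $\mathcal{M}_c^{DT_4}$ is a smooth submanifold of $\mathcal{B}^*$ with tangent space $Ext^1(\mathcal{F},\mathcal{F})$, and the homeomorphism $\mathcal{M}_c^{bdl}\cong\mathcal{M}_c^{DT_4}$ of Theorem \ref{mo mDT4} is a diffeomorphism onto it. Along $\mathcal{M}_c^{DT_4}$ the cokernel of $ds$ for $s=(\wedge F,F^{0,2}_+)$ in \eqref{Fredholm bundle} is $H^{0}(X,g_E)\oplus H^{0,2}_+(X,EndE)$; the first summand vanishes because the sheaves are stable (simple), so the cokernel bundle is the self-dual obstruction bundle $Ob_+$ with fibre $Ext^2_+(\mathcal{F},\mathcal{F})$. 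Since the zero locus is smooth of the expected excess dimension, the localized Euler class of the Fredholm pair (Brussee \cite{brussee}, Cieliebak--Mundet i Riera--Salamon \cite{cieliebak}) is $\iota_*\big(e(Ob_+)\cap[\mathcal{M}_c^{DT_4}]\big)$, where $\iota:\mathcal{M}_c^{DT_4}\hookrightarrow\mathcal{B}^*$; under $\mathcal{M}_c\cong\mathcal{M}_c^{DT_4}$ this $Ob_+$ is exactly the one in Definition \ref{virtual cycle when Mc smooth}. Hence $[\mathcal{M}_c^{DT_4}]^{vir}=\iota_*\,[\overline{\mathcal{M}}_c^{DT_4}]^{vir}$ with $[\overline{\mathcal{M}}_c^{DT_4}]^{vir}=PD(e(Ob_+))$.

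For the orientations, I would note that on the smooth locus $\mathcal{L}\cong(\det Ob_+)^{-1}\otimes\det T\mathcal{M}_c$ as real line bundles, with $\det T\mathcal{M}_c=\wedge^{top}Ext^1(\mathcal{F},\mathcal{F})$ canonically oriented (it is the top exterior power of a complex bundle). Therefore a choice of orientation of $(\mathcal{L}_{\mathbb{C}},Q_{Serre})$, i.e. the orientation data $o(\mathcal{L})$, induces the same orientation of $Ob_+$ whether one uses it to orient the Fredholm bundle in Definition \ref{DT4 inv of bundles} or to orient the self-dual obstruction bundle in Definition \ref{virtual cycle when Mc smooth}. This compatibility of orientations, together with the passage from the infinite-dimensional to the finite-dimensional Euler class, is the step I expect to require the most care.

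Next I would compare the two $\mu$-maps after restriction to $\mathcal{M}_c\subset\mathcal{B}^*$. Because $c=(2,0,*,0,0)$, the bundle $E$ is $SU(2)$, and over $\mathcal{M}_c^{bdl}\times X$ the universal $SO(3)$-bundle $\mathcal{P}^{ad}$ is the adjoint bundle $\mathfrak{su}(\mathfrak{F})$ of the universal sheaf $\mathfrak{F}$, normalised so that $c_1(\mathfrak{F})=0$ (or one simply works with $\mathcal{P}^{ad}$ directly to avoid the Brauer-class issue). The standard identity $p_1(\mathfrak{su}(\mathfrak{F}))=-4c_2(\mathfrak{F})$ gives $-\tfrac14 p_1(\mathcal{P}^{ad})|_{\mathcal{M}_c\times X}=c_2(\mathfrak{F})$, while $c_1(\mathfrak{F})=0$ and $c_j(\mathfrak{F})=0$ for $j\ge 3$ by the rank-$2$ condition. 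Comparing \eqref{u1 map} with \eqref{mu map for sheaves}, this yields $\iota^*\mu_1(\gamma,P)=\mu_2(\gamma,P)$ for all $(\gamma,P)$.

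Finally I would combine these. By Definition \ref{DT4 inv of bundles}, the projection formula $\langle\alpha,\iota_*\beta\rangle=\langle\iota^*\alpha,\beta\rangle$, and the two preceding paragraphs,
\begin{align*}
DT_4^{\mu_1}(X,\mathcal{O}_X(1),c,o(\mathcal{L}))\big((\gamma_i,P_i)\big)
&=\Big\langle \textstyle\bigcup_i\mu_1(\gamma_i,P_i),\ \iota_*\big(e(Ob_+)\cap[\mathcal{M}_c]\big)\Big\rangle\\
&=\Big\langle \textstyle\bigcup_i\mu_2(\gamma_i,P_i)\cup e(Ob_+),\ [\mathcal{M}_c]\Big\rangle\\
&=\Big\langle \textstyle\bigcup_i\mu_2(\gamma_i,P_i),\ PD\big(e(Ob_+)\big)\Big\rangle
=DT_4^{\mu_2}(X,\mathcal{O}_X(1),c,o(\mathcal{L}))\big((\gamma_i,P_i)\big),
\end{align*}
the last equality being Definition \ref{DT4 inv of sheaves} together with $[\overline{\mathcal{M}}_c^{DT_4}]^{vir}=PD(e(Ob_+))$. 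This proves the proposition. (Compactness of $\mathcal{M}_c^{DT_4}$, needed to make sense of $\iota_*$ of the capped fundamental class, follows from Theorem \ref{main theorem} since $\overline{\mathcal{M}}_c=\mathcal{M}_c^{bdl}$.)
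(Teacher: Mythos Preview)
Your proposal is correct and follows essentially the same approach as the paper's proof: reduce the Fredholm Euler class to $\iota_*\big(PD(e(Ob_+))\big)$ on the smooth locus, then compare the two $\mu$-maps via the $SU(2)$ identity $p_1(\mathcal{P}^{ad})=-4c_2(\mathcal{P})$ and finish with the projection formula. The paper's proof is much terser (it simply asserts that $[\mathcal{M}_c^{DT_4}]^{vir}=i_*(PD(e(Ob_+)))$ and $i^*(p_1(\mathcal{P}^{ad}))=-4c_2(\mathcal{P})$), whereas you have written out the excess-intersection/Brussee justification, the orientation compatibility, and the projection-formula computation in detail; but the underlying argument is the same.
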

\begin{proof}
By the assumption, we have $\overline{\mathcal{M}}_{c}\cong\mathcal{M}^{DT_{4}}_{c}\hookrightarrow \mathcal{B}^{*}$.
It is not hard to check $[\mathcal{M}^{DT_{4}}_{c}]^{vir}=i_{*}(PD(e(Ob_{+})))$, where $Ob_{+}$ is the self-dual obstruction bundle over $\overline{\mathcal{M}}_{c}$ and $i: \mathcal{M}^{DT_{4}}_{c}\hookrightarrow\mathcal{B}^{*}$ is the inclusion.
Meanwhile, $i^{*}(p_{1}(\mathcal{P}^{ad}))=-4c_{2}(\mathcal{P})$, where $\mathcal{P}\rightarrow\overline{\mathcal{M}}_{c}\times X$ is the universal bundle and we abuse the notation $i: \overline{\mathcal{M}}_{c}\times X\hookrightarrow \mathcal{B}^{*}\times X$ for the product of the inclusion map and the identity map.
\end{proof}
\begin{remark}${}$ \\
1. If $\overline{\mathcal{M}}_{c}=\mathcal{M}_{c}^{bdl}\neq\emptyset$, $c=(2,0,*,0,0)$ and conditions in Definition \ref{virtual cycle when ob=v+v*} are satisfied, we
also have
\begin{equation}DT_{4}^{\mu_{1}}(X,\mathcal{O}_{X}(1),c,o(\mathcal{L}))=DT_{4}^{\mu_{2}}(X,\mathcal{O}_{X}(1),c,o(\mathcal{L})). \nonumber \end{equation}
This is proved by showing the equivalence of our $DT_{4}$ virtual cycles with Borisov-Joyce's virtual cycles (see the appendix) as their virtual cycles are shown to be independent of choices of local charts and splittings \cite{bj}. \\
2. The condition $c=(2,0,*,0,0)$, i.e. $G=SU(2)$ is to ensure Pontryagin classes of $\mathcal{P}^{ad}$ can recover Chern classes of $\mathcal{P}$.
For higher rank bundles,
\begin{equation}DT_{4}^{\mu_{1}}(X,\mathcal{O}_{X}(1),c,o(\mathcal{L}))((\gamma_{1},P_{1}),(\gamma_{2},P_{2}),...)=
DT_{4}^{\mu_{2}}(X,\mathcal{O}_{X}(1),c,o(\mathcal{L}))((\gamma_{1},P_{1}),(\gamma_{2},P_{2}),...) \nonumber \end{equation}
holds only for careful choices of insertions $((\gamma_{1},P_{1}),(\gamma_{2},P_{2}),...)$.
\end{remark}

\subsection{Monodromy group actions and $DT_{4}$ virtual cycles}
In this subsection, we prove a result which restricts the way $DT_{4}$ virtual cycles would sit inside $\widetilde{\mathcal{B}}^{*}$ (the space of framed irreducible connections). It is based on monodromy group actions determined by loops of complex structures on $X$ and the deformation invariance of $DT_{4}$ virtual cycles. The idea is suggested to the authors by Simon Donaldson.

We first consider the case when $\overline{\mathcal{M}}_{c}=\mathcal{M}_{c}^{bdl}$, i.e. the Gieseker moduli space consists of slope stable bundles only. Then we know $\mathcal{M}_{c}^{DT_{4}}$ is compact and its virtual fundamental class exists. For convenience purposes, we will take the $DT_{4}$ virtual cycle as a homology class in $H_{*}(\widetilde{\mathcal{B}}^{*})$
as the rational cohomology of $\widetilde{\mathcal{B}}^{*}$ can be easily calculated.
\begin{lemma}(Page 181 of \cite{dk})\label{coho of framed B general}
We assume $H^{odd}(X,\mathbb{Z})=0$, then $H^{*}(\widetilde{\mathcal{B}}^{*},\mathbb{Q})$ is the polynomial algebra freely generated by
$\tilde{\mu}_{1}(\gamma,x_{l})$ (\ref{mu1 map framed bdl}), where $1\leq l\leq rk(E)$ and $\gamma$ runs through a basis of $H_{2i}(X)$ for $0<i<l$.
\end{lemma}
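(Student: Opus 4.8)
The plan is to follow, essentially verbatim, the computation of the rational cohomology of spaces of connections in \cite[p.~181]{dk}, adapting it from $SU(2)$-bundles over a four-manifold to $U(r)$-bundles over the eight-manifold $X$. The first step identifies $\widetilde{\mathcal{B}}^{*}$ homotopy-theoretically with a classifying space: the affine space $\mathcal{A}$ is contractible, the reducible locus $\mathcal{A}\setminus\mathcal{A}^{*}$ still has infinite codimension (the stratification by stabilizer type is insensitive to $\dim X$, each reducible stratum being an affine subspace of infinite codimension), so $\mathcal{A}^{*}$ is weakly contractible; the based gauge group $\mathcal{G}_{x_{0}}=\{u\in\mathcal{G}\mid u(x_{0})=\mathrm{id}\}$ acts freely on $\mathcal{A}^{*}$ because a stabilizer lies in $C(U(r))$ by irreducibility and is globally trivial once it equals the identity at $x_{0}$ (as $X$ is connected); and evaluating framings identifies $\mathcal{A}^{*}/\mathcal{G}_{x_{0}}$ with $\widetilde{\mathcal{B}}^{*}$. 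Hence $\widetilde{\mathcal{B}}^{*}$ is a model for $B\mathcal{G}_{x_{0}}$.

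Next I would use the evaluation fibration $\mathcal{G}_{x_{0}}\to\mathcal{G}\to U(r)$ together with the standard identification of $B\big(\mathrm{Map}_{x_{0}}(X,G)\big)$ with the corresponding component of $\mathrm{Map}_{x_{0}}(X,BG)$; for $G=U(r)$ this gives $\widetilde{\mathcal{B}}^{*}\simeq\mathrm{Map}_{x_{0}}(X,BU(r))_{E}$, the component determined by the classifying map of $E$. Since $H^{*}(BU(r);\mathbb{Q})=\mathbb{Q}[c_{1},\dots,c_{r}]$ is a free graded-commutative algebra, $BU(r)$ is rationally formal and rationally equivalent to $\prod_{l=1}^{r}K(\mathbb{Q},2l)$, so rationally the mapping space splits as $\prod_{l=1}^{r}\mathrm{Map}_{x_{0}}(X,K(\mathbb{Q},2l))$. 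Because $\pi_{k}\big(\mathrm{Map}_{x_{0}}(X,K(\mathbb{Q},2l))\big)\cong\widetilde H^{2l-k}(X;\mathbb{Q})$, the hypothesis $H^{\mathrm{odd}}(X;\mathbb{Z})=0$ kills all odd homotopy and each factor becomes rationally $\prod_{0<i<l}K\big(H^{2i}(X;\mathbb{Q}),2i\big)$ (the index $i=0$ only records which component we sit in, and terms with $2l-2i>\dim_{\mathbb{R}}X$ vanish automatically). A finite product of even-degree Eilenberg--MacLane spaces has cohomology a free polynomial algebra on generators dual to the rational homotopy groups, with no multiplicative cross-terms, so $H^{*}(\widetilde{\mathcal{B}}^{*};\mathbb{Q})$ is the polynomial algebra generated by classes indexed by $(l,i,\gamma)$ with $1\le l\le r$, $0<i<l$, and $\gamma$ ranging over a basis of $H_{2i}(X)$.

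Finally I would match these generators with the $\tilde\mu_{1}$-classes: the universal framed bundle $\widetilde{\mathcal{P}}\to\widetilde{\mathcal{B}}^{*}\times X$ classifies the evaluation map, so under the identification above the $H^{2i}(\widetilde{\mathcal{B}}^{*})\otimes H^{2l-2i}(X)$-component of $c_{l}(\widetilde{\mathcal{P}})$ is exactly the tautological pairing of the $K(\mathbb{Q},2i)$-factor of $\mathrm{Map}_{x_{0}}(X,K(\mathbb{Q},2l))$ against $H^{2l-2i}(X)$; slanting with $\gamma\in H_{2i}(X)$ then returns $\tilde\mu_{1}(\gamma,x_{l})$ as defined in \eqref{mu1 map framed bdl}, and letting $\gamma$ run over a basis produces precisely a free polynomial generating set. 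I expect the main obstacle to be the bookkeeping in the rational-homotopy step: pinning down the equivalence of $\mathrm{Map}_{x_{0}}(X,BU(r))$ with a product of Eilenberg--MacLane spaces (formality of $BU(r)$ together with the standard computation of mapping spaces into $K(\mathbb{Q},n)$) and verifying, degree by degree and component by component, that the polynomial generators coincide with the slant-product classes $\tilde\mu_{1}(\gamma,x_{l})$; the gauge-theoretic ingredients are routine adaptations of \cite{dk}.
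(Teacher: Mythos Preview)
The paper gives no proof of this lemma; it simply cites page~181 of \cite{dk}. Your proposal is a correct and complete sketch of exactly the argument from \cite{dk}, adapted from $SU(2)$ over a four-manifold to $U(r)$ over the eight-manifold $X$: identify $\widetilde{\mathcal{B}}^{*}\simeq B\mathcal{G}_{x_{0}}\simeq\mathrm{Map}_{x_{0}}(X,BU(r))_{E}$, use rational formality of $BU(r)$ to split into a product of Eilenberg--MacLane factors, and read off the polynomial generators via the slant products $\tilde\mu_{1}(\gamma,x_{l})$.

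One small indexing slip: in your decomposition of $\mathrm{Map}_{x_{0}}(X,K(\mathbb{Q},2l))$, the factor associated with $H^{2i}(X;\mathbb{Q})$ should be $K\big(H^{2i}(X;\mathbb{Q}),\,2(l-i)\big)$, not $K\big(H^{2i}(X;\mathbb{Q}),\,2i\big)$, since $\pi_{k}\cong\widetilde H^{2l-k}(X;\mathbb{Q})$ places the generator in degree $k=2(l-i)$. This is consistent with $\tilde\mu_{1}(\gamma,x_{l})\in H^{2l-2i}(\widetilde{\mathcal{B}}^{*})$ for $\gamma\in H_{2i}(X)$, so the conclusion is unaffected.
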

We restrict to the case when $X$ is a smooth $CY_{4}$ complete intersections of $k$ hypersurfaces of degree $d=(d_{1},...,d_{k})$ inside $\mathbb{C}\mathbb{P}^{4+k}$, where $k\geq1$. We denote $U$ to be the space of all such complete intersections and $\pi: V\rightarrow U$ to be the corresponding family. We assume $u\in U$ corresponds to the chosen $X$. It is well-known that $\pi_{1}(U,u)$ acts on $H^{4}_{prim}(X,\mathbb{Z})$ preserving the quadratic form $q$.
\begin{definition}(\cite{ebeling})
The global monodromy group $\Gamma=\Gamma_{4,d}$ is the image of the monodromy representation $\pi_{1}(U,u)\rightarrow O(H^{4}_{prim}(X,\mathbb{Z}),q)$, where $H^{4}_{prim}(X,\mathbb{Z})$ is the primitive cohomology.
\end{definition}
The following result characterizes the behavior of the monodromy group $\Gamma$ inside $O(H^{4}_{prim}(X,\mathbb{Z}),q)$.
\begin{lemma}(Corollary 4.3.1 \cite{ebeling})\label{monodromy gp}
The global monodromy group $\Gamma$ is of finite index in $O(H^{4}_{prim}(X,\mathbb{Z}),q)$.
\end{lemma}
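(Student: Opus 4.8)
The plan is to reconstruct the argument of \cite{ebeling}, whose essential ingredients are Picard--Lefschetz theory together with an explicit analysis of the intersection lattice of a distinguished basis of vanishing cycles. First I would realise $X$ inside a Lefschetz pencil of the family $\pi:V\rightarrow U$ of smooth complete intersections of multidegree $d$ in $\mathbb{P}^{4+k}$, chosen so that the singular members acquire only ordinary double points. Since $\dim_{\mathbb{C}}X=4$, each such node contributes a vanishing cycle $\delta\in H^{4}_{prim}(X,\mathbb{Z})$ with $q(\delta)=\pm2$, and the local monodromy acts by the corresponding reflection $s_{\delta}(x)=x-\langle x,\delta\rangle\,\delta\in O(H^{4}_{prim}(X,\mathbb{Z}),q)$. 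Thus $\Gamma$ is generated by the reflections $s_{\delta}$ as $\delta$ ranges over the set $\Delta$ of vanishing cycles arising from such degenerations.

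Next I would pin down $\Delta$. Two classical facts enter: the vanishing cohomology is spanned over $\mathbb{Q}$ by $\Delta$ (hard Lefschetz), and $\Delta$ forms a single $\Gamma$-orbit, which follows from the irreducibility of the dual variety of a smooth complete intersection and hence the connectedness of the nodal locus in the pencil. Moreover, by degenerating all the way to the associated complete intersection singularity and invoking Gabrielov's join description together with the behaviour of Milnor lattices under suspension, one checks that a distinguished system of vanishing cycles is in fact a $\mathbb{Z}$-basis of $L:=H^{4}_{prim}(X,\mathbb{Z})$; so $\Delta$ generates the full lattice $L$, not merely a finite-index sublattice. Combining this with the irreducibility of the monodromy action on vanishing cohomology (Deligne) and the presence of a reflection in $\Gamma$, one gets that $\Gamma$ acts $\mathbb{Q}$-irreducibly and that its Zariski closure in $O(q)_{\mathbb{Q}}$ is the full orthogonal group.

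The remaining, and I expect hardest, step is to upgrade ``$\Gamma$ is Zariski-dense in $O(q)$ and generated by the reflections in a full-rank $\Gamma$-orbit $\Delta$ of roots of $L$'' to ``$\Gamma$ has finite index in $O(L)$''. Zariski density alone does not suffice (thin reflection groups exist), so the real content is arithmetic: following Ebeling, and ultimately Janssen's classification of reflection subgroups of integral lattices, one analyses the discriminant form $q_{L}:L^{*}/L\rightarrow\mathbb{Q}/2\mathbb{Z}$ and shows that the group generated by $\Delta$ contains a congruence subgroup of $O(L)$. Concretely I would isolate inside $\Delta$ an explicit sub-configuration of roots whose Coxeter--Dynkin diagram is of a type (an $A$-string with an extra node, or an $E$-type pattern) for which the reflection group is already known to be of finite index in the orthogonal group of the sublattice it spans, and then propagate this over the single orbit $\Delta$ to control the action on $L^{*}/L$ and on $L/2L$. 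Once a congruence subgroup is exhibited inside $\Gamma$, finite index in $O(L)$ is immediate.

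Finally I would assemble the pieces --- Picard--Lefschetz generation, the orbit-and-lattice analysis, and the congruence-subgroup statement --- to conclude $[\,O(H^{4}_{prim}(X,\mathbb{Z}),q):\Gamma\,]<\infty$, treating separately by direct computation the finitely many small multidegrees $d$ where the diagram analysis degenerates. In the text itself this is exactly \cite{ebeling}, Corollary~4.3.1, which we simply cite.
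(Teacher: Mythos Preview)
The paper gives no proof of this lemma at all: it is stated purely as a citation of Corollary~4.3.1 in \cite{ebeling} and used as a black box. Your proposal correctly recognises this in its final sentence, and the preceding sketch is a reasonable outline of Ebeling's argument (Picard--Lefschetz generation by reflections in vanishing cycles, the single-orbit property, and the arithmetic step via Janssen's results on reflection subgroups of integral lattices), so you have in fact gone well beyond what the paper does.
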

We notice that a ring of polynomials invariant under $\Gamma$-action is the same as the ring invariant under the action of its Zariski closure in $SO(H^{4}_{prim}(X,\mathbb{C}),q)$.
In fact, we have large Zariski closure in our case.
\begin{lemma}(Theorem 2.10, Chapter VI \cite{fm})\label{monodromy Zariski dense}${}$ \\
Let $L$ be a lattice, i.e. a finite rank free $\mathbb{Z}$-module together with a non-degenerate quadratic form $q$. We denote $V\triangleq L\otimes\mathbb{C}$ and suppose $q$ is of signature (r,s) with $r\geq3$, $s\geq2$. Let $D$ be a subgroup of finite index of $SO(L,q)$. Then $D$ is Zariski dense in $SO(V,q)$.
\end{lemma}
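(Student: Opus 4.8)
The plan is to reduce the statement to the well-known fact that the Zariski closure of an arithmetic (or finite-index) subgroup of an algebraic group is computed component by component, and that $SO(V,q)$ for a nondegenerate form over $\mathbb{C}$ is a connected, almost-simple algebraic group once $\dim V \geq 5$. Concretely, let $G = SO(V,q)$, regarded as a linear algebraic group defined over $\mathbb{Q}$ (after choosing a $\mathbb{Q}$-basis of $L\otimes\mathbb{Q}$), and let $\overline{D}$ denote the Zariski closure of $D$ in $G$. The goal is $\overline{D} = G$.

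First I would record the structural input on $G$ itself: since $q$ has signature $(r,s)$ with $r\geq 3$, $s\geq 2$, the rank of $L$ is $r+s\geq 5$, so the complex orthogonal group $O(V,q)$ has two connected components and its identity component $G = SO(V,q)$ is connected and (almost) simple as an algebraic group; in particular $G$ has no nontrivial connected normal algebraic subgroups, and it has no nontrivial finite quotients that are relevant here because it is its own derived group for $\dim V\geq 5$ (it is semisimple, hence perfect as an abstract group over $\mathbb{C}$, and equals its own commutator subgroup as an algebraic group). The real group $SO(r,s)$ with $r,s\geq 1$ and $r+s\geq 3$ is noncompact, and with $r\geq 3$, $s\geq 2$ it has real rank $\min(r,s)\geq 2$. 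This is the hypothesis that will feed into the rigidity / density statement.

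Next, the main step: $SO(L,q)$ is an arithmetic subgroup of $G$ (it is the group of $\mathbb{Z}$-points of the $\mathbb{Q}$-group $G$ with respect to the chosen lattice), and $D$, being of finite index in $SO(L,q)$, is itself arithmetic. By Borel's density theorem — applied to the semisimple $\mathbb{Q}$-group $G$, which has no nontrivial $\mathbb{Q}$-anisotropic factors precisely because $SO(r,s)$ is $\mathbb{R}$-isotropic (real rank $\geq 1$), so that $SO(L,q)$ is a lattice in the noncompact real Lie group $SO(r,s)$ — any lattice in $SO(r,s)$ is Zariski dense in $G$. Since Zariski density is inherited by finite-index subgroups (a finite-index subgroup of a Zariski-dense subgroup is still Zariski dense, because its closure is a finite-index algebraic subgroup of the connected group $G$, hence all of $G$), we conclude $\overline{D} = G = SO(V,q)$. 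Alternatively, if one prefers to avoid quoting Borel density in the form above, one can invoke the Margulis normal subgroup theorem or simply the classical fact that $SO(L,q)$ contains enough unipotents (Eichler transvections / the $\mathbb{Q}$-isotropy of $q$, which holds here since $r,s\geq 1$ forces $q$ to have a hyperbolic summand after extending scalars and in fact over $\mathbb{Q}$ for indefinite lattices of rank $\geq 5$ by Meyer's theorem) to generate a Zariski-dense subgroup directly.

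The step I expect to be the main obstacle is the verification that $D$ (equivalently $SO(L,q)$) really is a \emph{lattice} in the noncompact group $SO(r,s)$, i.e. that the hypotheses $r\geq 3$, $s\geq 2$ guarantee the $\mathbb{Q}$-group has no compact factors and that Borel's density theorem applies — this is exactly where the precise signature bounds are used, and it requires knowing that the quadratic form, being indefinite of rank $\geq 5$ over $\mathbb{Q}$, is $\mathbb{Q}$-isotropic (Hasse–Minkowski / Meyer) so that $G$ is $\mathbb{Q}$-isotropic and $SO(L,q)$ is a non-uniform lattice. Everything after that — connectedness and simplicity of $SO(V,q)$, and propagation of Zariski density through finite index — is formal. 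Since the excerpt allows me to cite \cite{fm} for this statement, I would in practice simply appeal to that reference for the lattice/density input and present the argument above as the conceptual outline.
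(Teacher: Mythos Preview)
The paper gives no proof of this lemma; it is simply quoted from Friedman--Morgan \cite{fm}. Your outline via the Borel density theorem (arithmetic subgroup of a connected semisimple $\mathbb{Q}$-group with noncompact real form is Zariski dense, and density passes to finite-index subgroups since $G$ is connected) is the standard and correct route, so there is nothing to compare.

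One small remark: you worry that the precise bounds $r\geq 3$, $s\geq 2$ are needed for the lattice/density input, but in fact Borel density requires only that $SO(r,s)$ be noncompact, i.e.\ $r,s\geq 1$, and Borel--Harish-Chandra already guarantees $SO(L,q)$ is a lattice. The stronger signature hypothesis in the statement is there because that is how Friedman--Morgan phrase it for their application (and possibly because their argument goes through explicit reflections rather than Borel density), not because your argument needs it. Your invocation of Meyer's theorem and $\mathbb{Q}$-isotropy is relevant only to whether the lattice is uniform, which is immaterial for Borel density.
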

The following lemma gives the invariant subring explicitly.
\begin{lemma}(Lemma 2.1, Chapter VI \cite{fm})\label{invariant poly}
Let $V$ be a finite dimensional complex vector space with $dim_{\mathbb{C}}V\geq2$, $q$ is a non-degenerate quadratic form on $V$, then
$Sym^{*}(V^{*})^{SO(V,q)}=\mathbb{C}[q]$.
\end{lemma}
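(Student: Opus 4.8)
The plan is to establish the nontrivial inclusion $\mathrm{Sym}^{*}(V^{*})^{SO(V,q)}\subseteq\mathbb{C}[q]$ by analysing the orbit structure of $SO(V,q)$ acting on $V$; the reverse inclusion is immediate since $q\in\mathrm{Sym}^{2}(V^{*})$ is itself invariant. First I would reduce to the standard model: picking a basis in which $q(v)=\sum_{i}x_{i}^{2}$ identifies $SO(V,q)$ with $SO(n,\mathbb{C})$, where $n=\dim_{\mathbb{C}}V\geq2$ (the hypothesis $n\geq 2$ is essential, since $SO(1,\mathbb{C})$ is trivial and the statement fails for $n=1$).

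Next I would prove the geometric input that drives everything: for every $c\in\mathbb{C}^{*}$ the group $SO(V,q)$ acts transitively on the affine quadric $Q_{c}=\{v\in V:q(v)=c\}$. Given $v,w\in Q_{c}$, the $\mathbb{C}$-linear map $\lambda v\mapsto\lambda w$ is an isometry of the nondegenerate lines $\mathbb{C}v$ and $\mathbb{C}w$, so Witt's extension theorem yields $g\in O(V,q)$ with $gv=w$. If $\det g=-1$, I pick an anisotropic vector $u\in w^{\perp}$ — which exists because $q|_{w^{\perp}}$ is nondegenerate of dimension $n-1\geq1$ — and replace $g$ by $r_{u}g$, where $r_{u}$ denotes the reflection in the hyperplane $u^{\perp}$; this element lies in $SO(V,q)$ and still sends $v$ to $w$.

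Then, fixing $e\in V$ with $q(e)=1$ and an invariant polynomial $f\in\mathrm{Sym}^{*}(V^{*})^{SO(V,q)}$, I would set $\varphi(t)=f(te)\in\mathbb{C}[t]$. For any $v$ with $q(v)=c\neq0$ and any $s$ with $s^{2}=c$, both $v$ and $se$ lie in $Q_{c}$, so $f(v)=f(se)=\varphi(s)$; applying this to $-se\in Q_{c}$ as well gives $\varphi(s)=\varphi(-s)$, and since $s$ ranges over all of $\mathbb{C}^{*}$ as $c$ does, $\varphi$ is even, say $\varphi(t)=\psi(t^{2})$. Hence $f(v)=\psi(q(v))$ on the Zariski-dense open set $\{q\neq0\}$, and as both sides are polynomials they agree on all of $V$, so $f\in\mathbb{C}[q]$.

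The one genuinely substantive point is the transitivity statement, especially its refinement from $O(V,q)$ to $SO(V,q)$, from which the evenness of $\varphi$ — and hence the whole lemma — follows formally; the rest is bookkeeping. As an alternative I could argue representation-theoretically, using the classical decomposition $\mathrm{Sym}^{d}(V^{*})=\bigoplus_{2k\leq d}q^{k}\cdot\mathcal{H}^{d-2k}$ into spaces of harmonic polynomials together with the fact that $\mathcal{H}^{j}$ carries no nonzero $SO(n,\mathbb{C})$-invariant vector for $j\geq1$ (valid for every $n\geq 2$, including the split but nontrivial behaviour of $\mathcal{H}^{j}$ under $SO(2,\mathbb{C})$); but I would favour the orbit argument because it is elementary and self-contained.
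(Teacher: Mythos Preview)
Your argument is correct and complete. The paper itself does not supply a proof of this lemma; it simply cites Lemma~2.1, Chapter~VI of Friedman--Morgan \cite{fm} as a black box. So there is no proof in the paper to compare against, and your orbit-based argument via Witt's extension theorem is a perfectly good self-contained substitute. The only point worth a second glance is the passage from $O(V,q)$ to $SO(V,q)$ in the transitivity step, and you handle it correctly: since $w$ is anisotropic, $w^{\perp}$ is nondegenerate of dimension $n-1\geq 1$ and therefore contains an anisotropic vector, so the required reflection exists. Your alternative via the harmonic decomposition is also valid and is in fact closer in spirit to how such statements are often packaged in the invariant-theory literature, but the orbit argument is shorter and needs no representation theory.
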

Finally, we can describe the $\Gamma$-invariant part of $H^{*}(\widetilde{\mathcal{B}}^{*},\mathbb{Q})$.
\begin{theorem}
Let $X$  be a smooth complete intersection Calabi-Yau 4-fold inside $\mathbb{P}^{N}$, $E\rightarrow X$ be a complex vector bundle with $SU(r)$ structure group, where $r\geq2$. Then we have
\begin{equation}
H^{*}(\widetilde{\mathcal{B}}^{*},\mathbb{C})^{\Gamma}=\mathbb{C}[\tilde{\mu}_{1}(PD[\omega^{3}],x_{j})_{2\leq j\leq r},\tilde{\mu}_{1}(PD[\omega^{2}],x_{j})_{3\leq j\leq r},
\tilde{\mu}_{1}(PD[q],x_{j})_{3\leq j\leq r},\nonumber \end{equation}
\begin{equation}\tilde{\mu}_{1}(PD[\omega],x_{j})_{4 \leq j\leq r},\tilde{\mu}_{1}([X],x_{j})_{5\leq j\leq r}], \nonumber \end{equation}
where $PD$ denotes the Poincar\'{e} dual operator.
\end{theorem}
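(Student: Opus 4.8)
The plan is to reduce the computation of $H^{*}(\widetilde{\mathcal{B}}^{*},\mathbb{C})^{\Gamma}$ to the classical invariant theory of the complex orthogonal group, using the explicit polynomial presentation of $H^{*}(\widetilde{\mathcal{B}}^{*},\mathbb{C})$ from Lemma \ref{coho of framed B general} together with the Zariski density of the global monodromy group. Since $X$ is a smooth complete intersection Calabi--Yau fourfold, the Lefschetz hyperplane theorem gives $H^{odd}(X,\mathbb{Z})=0$, so Lemma \ref{coho of framed B general} applies: $H^{*}(\widetilde{\mathcal{B}}^{*},\mathbb{C})$ is the free polynomial algebra on the generators $\tilde{\mu}_{1}(\gamma,x_{j})$ with $0<i<j\leq r$ and $\gamma$ running over a basis of $H_{2i}(X,\mathbb{C})$. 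Because $E$ has structure group $SU(r)$ we have $c_{1}(\widetilde{\mathcal{P}})=0$, so the $x_{1}$-generators vanish and only $2\leq j\leq r$ survive. Applying the Lefschetz hyperplane theorem once more to $X$ gives $H_{2}\cong\mathbb{C}\cdot PD[\omega^{3}]$, $H_{6}\cong\mathbb{C}\cdot PD[\omega]$, $H_{8}\cong\mathbb{C}\cdot[X]$, while $H_{4}(X,\mathbb{C})=\mathbb{C}\cdot PD[\omega^{2}]\oplus PD\big(H^{4}_{prim}(X,\mathbb{C})\big)$ and $H_{0}$ (which has $i=0$) is excluded by the constraint $0<i$. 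Thus the constraint $0<i<j\leq r$ pairs $PD[\omega^{3}]$ with $2\leq j\leq r$, the degree $i=2$ classes with $3\leq j\leq r$, $PD[\omega]$ with $4\leq j\leq r$, and $[X]$ with $5\leq j\leq r$.

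Next I would identify the $\Gamma$-action. It is induced by the action of $\Gamma$ on $H_{*}(X,\mathbb{C})$; $\Gamma$ fixes every class pulled back from $\mathbb{P}^{N}$, in particular $PD[\omega^{k}]$ and $[X]$, and acts on $H^{4}_{prim}$ through the monodromy representation $\Gamma\hookrightarrow O(H^{4}_{prim}(X,\mathbb{Z}),q)$. Writing $V_{j}=\mathrm{span}\{\tilde{\mu}_{1}(\gamma,x_{j})\mid\gamma\text{ a basis of }H^{4}_{prim}\}\cong H^{4}_{prim}(X,\mathbb{C})$ as a $\Gamma$-module (orthogonal representations being self-dual), and using the definition \eqref{mu1 map framed bdl} of $\tilde{\mu}_{1}$, we get a $\Gamma$-equivariant isomorphism
\begin{equation}
H^{*}(\widetilde{\mathcal{B}}^{*},\mathbb{C})\;\cong\; R\otimes Sym^{*}\Big(\bigoplus_{j=3}^{r}V_{j}\Big),\nonumber
\end{equation}
where $R=\mathbb{C}[\tilde{\mu}_{1}(PD[\omega^{3}],x_{j})_{2\leq j\leq r},\ \tilde{\mu}_{1}(PD[\omega^{2}],x_{j})_{3\leq j\leq r},\ \tilde{\mu}_{1}(PD[\omega],x_{j})_{4\leq j\leq r},\ \tilde{\mu}_{1}([X],x_{j})_{5\leq j\leq r}]$ is a polynomial ring on $\Gamma$-fixed generators and $\Gamma$ acts trivially on $R$. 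Taking invariants, $H^{*}(\widetilde{\mathcal{B}}^{*},\mathbb{C})^{\Gamma}=R\otimes\big(Sym^{*}(\bigoplus_{j=3}^{r}V_{j})\big)^{\Gamma}$.

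It then remains to compute the invariants of the primitive part. A polynomial invariant of $\Gamma$ is the same as an invariant of its Zariski closure $\overline{\Gamma}\subseteq SO(H^{4}_{prim}(X,\mathbb{C}),q)$, as noted in the text; by Lemma \ref{monodromy gp} the group $\Gamma$ has finite index in $O(H^{4}_{prim}(X,\mathbb{Z}),q)$, so Lemma \ref{monodromy Zariski dense} gives $\overline{\Gamma}=SO(H^{4}_{prim}(X,\mathbb{C}),q)$, provided the signature of $q$ on $H^{4}_{prim}$ is of type $(r',s')$ with $r'\geq3$, $s'\geq2$; this I would verify from the Hodge decomposition of the middle cohomology of a complete intersection Calabi--Yau fourfold (the two index contributions being essentially $1+h^{2,2}$ and $2h^{3,1}$, with $h^{3,1}\geq1$ by the existence of complex-structure deformations and $h^{2,2}$ large). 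Finally I describe $\big(Sym^{*}(\bigoplus_{j=3}^{r}V_{j})\big)^{SO(q)}$: for a single copy (the case $r\leq3$) this is exactly Lemma \ref{invariant poly}, yielding $\mathbb{C}[q_{j}]$ with $q_{j}$ the $q$-self-contraction of $V_{j}$; in general one invokes the first fundamental theorem of invariant theory for the orthogonal group to see that the invariant ring is generated by the $q$-contractions $V_{j}\otimes V_{k}\to\mathbb{C}$, of which the diagonal ones are, under $PD$, precisely the elements $\tilde{\mu}_{1}(PD[q],x_{j})$ for $3\leq j\leq r$. Tensoring with $R$ recovers the asserted list of generators.

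The main obstacle will be two-fold: first, checking the signature hypothesis of Lemma \ref{monodromy Zariski dense} uniformly over \emph{all} complete intersection Calabi--Yau fourfolds, i.e. that $b_{4}^{prim,+}\geq3$ and $b_{4}^{prim,-}\geq2$; and second, the orthogonal invariant theory when $r\geq4$, where the several copies $V_{3},\dots,V_{r}$ interact and one must argue carefully that the generators are exactly the claimed ones. The remaining steps are routine bookkeeping with the Lefschetz decomposition and the presentation in Lemma \ref{coho of framed B general}.
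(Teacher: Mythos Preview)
Your approach is essentially the same as the paper's: both use Lefschetz to invoke Lemma \ref{coho of framed B general}, then pass to the Zariski closure via Lemmas \ref{monodromy gp} and \ref{monodromy Zariski dense}, and finish with the orthogonal invariant theory of Lemma \ref{invariant poly}. The paper's proof is in fact terser than yours and does not spell out the bookkeeping with the Lefschetz decomposition or the $SU(r)$ constraint that you carry out.

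On your first obstacle (the signature hypothesis of Lemma \ref{monodromy Zariski dense}): the paper resolves this more cleanly than your sketch by citing \cite{klemyau}, which gives the signature of $(H^{4}_{prim},q)$ for complete intersection Calabi--Yau fourfolds as $(4h^{3,1}+49,\,2h^{3,1})$; since $h^{3,1}\geq1$ (it is the dimension of the deformation space), the hypothesis $r'\geq3$, $s'\geq2$ is immediate. Your Hodge-theoretic estimate points in the same direction but would require a separate computation of $h^{2,2}_{prim}$.

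On your second obstacle (several copies $V_{3},\dots,V_{r}$ when $r\geq4$): you are right to flag this, and the paper's proof does not address it beyond citing Lemma \ref{invariant poly}, which literally treats only a single copy. The first fundamental theorem for $SO(q)$ on $\bigoplus_{j}V_{j}$ indeed produces all pairwise contractions $q(V_{j},V_{k})$, not just the diagonal ones appearing in the statement; so your caution here is well placed, and the paper's argument is no more complete than yours on this point.
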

\begin{proof}
By the Lefschetz hyperplane theorem, we know $H^{odd}(X,\mathbb{Z})=0$, $H_{2}(X)\cong \mathbb{Z}$. By \cite{klemyau}, the signature of $(H_{prim}^{4}(X,\mathbb{Z}),q)$ is $(4h^{3,1}+49,2h^{3,1})$. We know $h^{3,1}\geq1$ for complete intersection $CY_{4}$'s as it is the dimension of the deformation space. Then we apply Lemma \ref{coho of framed B general}, \ref{monodromy gp}, \ref{monodromy Zariski dense} and \ref{invariant poly} to $X$.
\end{proof}
\begin{corollary}\label{monodromy inv cycles}
The $DT_{4}$ virtual cycle $[\widetilde{\mathcal{M}_{c}^{DT_{4}}}]^{vir}\in H_{*}(\widetilde{\mathcal{B}}^{*})$ (Remark \ref{framed DT4}) can be expressed as a homogenous polynomial in terms of the above generators of $H^{*}(\widetilde{\mathcal{B}}^{*},\mathbb{C})^{\Gamma}$.
\end{corollary}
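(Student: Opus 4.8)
The plan is to deduce the statement from the deformation invariance of the $DT_{4}$ virtual cycle (Theorem \ref{main theorem}), the description of $H^{*}(\widetilde{\mathcal{B}}^{*},\mathbb{Q})$ as a free polynomial algebra (Lemma \ref{coho of framed B general}), and the Zariski density of $\Gamma$ in $SO(H^{4}_{prim}(X,\mathbb{C}),q)$ together with the computation of the invariant subring in the theorem above. First I would observe that the topological bundle $E\to X$, and hence the framed configuration space $\widetilde{\mathcal{B}}^{*}$ and the Banach bundle $\widetilde{\mathcal{E}}$ of Remark \ref{framed DT4}, depend only on the underlying smooth manifold and not on the complex structure; thus over the parameter space $U$ of smooth complete intersection Calabi-Yau $4$-folds in $\mathbb{P}^{4+k}$ the family of spaces $\widetilde{\mathcal{B}}^{*}$ is topologically a product, and a loop $\ell\in\pi_{1}(U,u)$ gives a $1$-parameter family of complex structures $X_{t}$, $t\in[0,1]$, with $X_{0}=X_{1}=X$ but with the two identifications of $\widetilde{\mathcal{B}}^{*}$ over the endpoints differing by the monodromy automorphism of $H_{*}(\widetilde{\mathcal{B}}^{*})$ induced by $\ell$. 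Applying Theorem \ref{main theorem} along $\ell$ then shows that $[\widetilde{\mathcal{M}_{c}^{DT_{4}}}]^{vir}\in H_{d}(\widetilde{\mathcal{B}}^{*},\mathbb{Q})$, with $d=2-\chi(E,E)$, is fixed by this monodromy action. Since the action on $H^{*}(\widetilde{\mathcal{B}}^{*})$ is induced via the $\tilde{\mu}_{1}$-classes from the $\pi_{1}(U,u)$-action on $H_{*}(X)$ — which is trivial on the Lefschetz classes $[\omega],[\omega]^{2},[\omega]^{3},[X]$ and factors through $\Gamma\subset O(H^{4}_{prim}(X,\mathbb{Z}),q)$ on $H^{4}(X)=H^{4}_{prim}(X)\oplus\mathbb{Z}[\omega]^{2}$ — the cycle is in particular $\Gamma$-invariant.

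Next I would pass to the invariant subring. Writing $R=H^{*}(\widetilde{\mathcal{B}}^{*},\mathbb{C})$, Lemma \ref{coho of framed B general} presents $R$ as a polynomial algebra on finitely many generators of positive degree, so every graded piece $R^{n}$ is finite dimensional and carries an algebraic action of the Zariski closure of $\Gamma$, which by Lemmas \ref{monodromy gp} and \ref{monodromy Zariski dense} contains $SO(H^{4}_{prim}(X,\mathbb{C}),q)$ on the primitive factor; reductivity then yields a $\Gamma$-stable splitting $R^{n}=(R^{n})^{\Gamma}\oplus (R^{n})'$ into the trivial and the sum of the non-trivial isotypic parts. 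Dually, since each $R^{n}$ is finite dimensional one has $H_{n}(\widetilde{\mathcal{B}}^{*},\mathbb{Q})\otimes\mathbb{C}\cong (R^{n})^{\vee}$, and the $\Gamma$-invariants of $(R^{n})^{\vee}$ are precisely $\bigl((R^{n})^{\Gamma}\bigr)^{\vee}$. Thus the $\Gamma$-invariant class $[\widetilde{\mathcal{M}_{c}^{DT_{4}}}]^{vir}$, living in the single degree $d$, lies in $\bigl((R^{d})^{\Gamma}\bigr)^{\vee}$. By the theorem preceding this corollary, $R^{\Gamma}$ is the polynomial ring generated by the classes $\tilde{\mu}_{1}(PD[\omega^{3}],x_{j})$, $\tilde{\mu}_{1}(PD[\omega^{2}],x_{j})$, $\tilde{\mu}_{1}(PD[q],x_{j})$, $\tilde{\mu}_{1}(PD[\omega],x_{j})$, $\tilde{\mu}_{1}([X],x_{j})$ over the indicated ranges of $j$, so $(R^{d})^{\Gamma}$ has a basis of degree-$d$ monomials in these generators; identifying $\bigl((R^{d})^{\Gamma}\bigr)^{\vee}$ with the span of the corresponding dual monomials yields the desired expression of $[\widetilde{\mathcal{M}_{c}^{DT_{4}}}]^{vir}$ as a homogeneous polynomial of degree $d=2-\chi(E,E)$ in those generators, the homogeneity being forced by the fact that the virtual cycle sits in a single homological degree (and when $d$ is odd both sides vanish, consistently with the vanishing axiom).

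I expect the main obstacle to be making the duality step precise: $\widetilde{\mathcal{B}}^{*}$ is an infinite-dimensional Banach manifold, so ``expressing a homology class as a polynomial in cohomology generators'' must be read through the graded-dual pairing $H_{*}\cong (H^{*})^{\vee}$ and the interpretation of the $DT_{4}$ virtual cycle as the Donaldson-type linear functional $P\mapsto\langle P,[\widetilde{\mathcal{M}_{c}^{DT_{4}}}]^{vir}\rangle$ on $R$; one must verify that this functional is genuinely $\Gamma$-equivariant — which is exactly where Theorem \ref{main theorem}, together with the triviality of the underlying topological data along loops in $U$, enters — and that the reductive-group complete reducibility may be applied degree by degree, which is legitimate since each $R^{n}$ is finite dimensional by Lemma \ref{coho of framed B general}. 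Once equivariance and these finiteness statements are in place, the representation-theoretic input is already packaged in Lemmas \ref{monodromy gp}--\ref{invariant poly} and the preceding theorem, and the conclusion follows at once.
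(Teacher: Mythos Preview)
Your proposal is correct and follows the same approach as the paper, which gives only the one-line justification ``By the deformation invariance of $DT_{4}$ virtual cycles.'' You have simply unpacked that sentence: deformation invariance along loops in $U$ forces $\Gamma$-invariance of the virtual cycle, and the preceding theorem then identifies the $\Gamma$-invariant subring in which its dual must lie.
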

\begin{proof}
By the deformation invariance of $DT_{4}$ virtual cycles.
\end{proof}
\begin{remark}
For general cases when $\overline{\mathcal{M}}_{c}\neq \mathcal{M}_{c}^{bdl}$, based on Definition \ref{DT4 inv of sheaves}, $DT_{4}$ invariants are defined by pairing $DT_{4}$ virtual cycles $[\overline{\mathcal{M}}_{c}^{DT_{4}}]^{vir}$'s $\in H_{r}(\overline{\mathcal{M}}_{c})$ with the corresponding $\mu_{2}$-maps. These a priori have nothing to do with $H^{*}(\widetilde{\mathcal{B}}^{*})$ and the global monodromy group action is not explicitly related to $DT_{4}$ invariants.

However, we can use Seidel-Thomas twists to embed the Gieseker moduli space into $\mathcal{B}_{E^{'}}^{*}$, the space of irreducible connections on some other topological bundle $E^{'}$. We then use $\mu_{1}$-maps associated with $\mathcal{B}_{E^{'}}^{*}$ to define invariants and the invariants would share a similar property in Corollary \ref{monodromy inv cycles}. We notice that the way Gieseker moduli spaces are embedded into $\mathcal{B}_{E^{'}}^{*}$ is not canonical and invariants defined in this way are a priori not the same as $DT_{4}$ invariants defined before in
Definition \ref{DT4 inv of sheaves}.
\end{remark}

\subsection{Some vanishing results of $DT_{4}$ virtual cycles }

\subsubsection{Vanishing results for certain choices of $c$}
We know that $DT_{4}$ invariants are deformation invariants of Calabi-Yau 4-fold $X$. Unlike the case of Calabi-Yau 3-folds with $SU(3)$ holonomy \cite{js}, classes in $H^{2,2}(X)$ may not remain $(2,2)$-type when we deform the complex structure. When it is deformed to the case when non-algebraic stuff appears, i.e. $c\notin\bigoplus_{i}\textrm{ }H^{i,i}(X)$,
one has to use other nice analytic compactification to define invariants. However, $DT_{4}$ invariants turn out to be rather trivial in this case.
\begin{proposition}\label{vanishing for some c}
Let $X$ be a compact Calabi-Yau 4-fold. We fix cohomology classes $c=\bigoplus_{i=0}^{4} c|_{H^{2i}(X,\mathbb{Q})}$, then \\
(1) If $c|_{H^{4}(X,\mathbb{Q})}$ has no component in $H^{0,4}(X)$ and $c\notin \bigoplus_{i=0}^{4}H^{i,i}(X)$, then $\mathcal{M}^{DT_{4}}_{c}=\emptyset$. \\
(2) If $c\in \bigoplus_{i=0}^{4}H^{i,i}(X)$ and $\exists\textrm{ } \varphi\in H^{1}(X,TX)$ such that $\varphi\lrcorner\textrm{ } \big(c|_{H^{2,2}(X,\mathbb{Q})}\big)\neq0$, then $\mathcal{M}_{c}=\emptyset$ for some complex structure $X_{t}$, where $t$ is small
and $X_{t}$ is the family of Calabi-Yau 4-fold such that $X_{0}=X$ determined by $\varphi$.
\end{proposition}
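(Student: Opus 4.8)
The plan is to show that under these hypotheses no solution of the $DT_{4}$ equations can exist, by upgrading any such solution to a holomorphic structure and invoking the Hodge type of Chern characters. Suppose $d_{A}\in\mathcal{M}_{c}^{DT_{4}}$, so that $F_{A}^{0,2}+{*_{4}}F_{A}^{0,2}=0$ (i.e. $F^{0,2}_{A,+}=0$) and $\wedge F_{A}=0$. The degree--$4$ component $c|_{H^{4}(X,\mathbb{Q})}$ equals $ch_{2}(E)$, which is a real class, so the absence of an $H^{0,4}$--component is equivalent to the vanishing of its entire $(H^{4,0}\oplus H^{0,4})$--part; Lemma \ref{C2 condition} then forces $F_{A}^{0,2}=0$. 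Hence $\overline{\partial}_{A}^{2}=0$ and $(E,\overline{\partial}_{A})$ is a holomorphic vector bundle with $d_{A}$ its Chern connection; since the curvature of the Chern connection of any Hermitian holomorphic bundle is of type $(1,1)$, the Chern--Weil formula gives $c=ch(E)\in\bigoplus_{i}H^{i,i}(X)$, contradicting $c\notin\bigoplus_{i}H^{i,i}(X)$. (Equivalently, $\wedge F_{A}=0$ makes $d_{A}$ Hermitian--Yang--Mills, so by Donaldson--Uhlenbeck--Yau $(E,\overline{\partial}_{A})$ is slope--polystable, in particular holomorphic.) Therefore $\mathcal{M}_{c}^{DT_{4}}=\emptyset$.

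\textbf{Part (2).} Here I would first reduce the statement to a first--order Hodge--theoretic assertion: it suffices to show that for $0<|t|$ small the class $c$ is not of Hodge type on $X_{t}$, since every coherent analytic sheaf on the compact K\"ahler manifold $X_{t}$ has Chern character in $\bigoplus_{i}H^{i,i}(X_{t})$, so $c\notin\bigoplus_{i}H^{i,i}(X_{t})$ means there is no coherent sheaf with Chern character $c$ at all, a fortiori no Gieseker--stable one, hence $\mathcal{M}_{c}(X_{t})=\emptyset$. By Bogomolov--Tian--Todorov, $\varphi\in H^{1}(X,TX)$ integrates to an actual family $\pi\colon\mathcal{X}\to S$ with $X_{0}=X$ and Kodaira--Spencer class $\varphi$ at $0$; let $c_{t}$ be the flat section of the Gauss--Manin local system $R^{4}\pi_{*}\mathbb{C}$ with $c_{0}=c|_{H^{4}(X,\mathbb{Q})}$.

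The core of the argument is to track the Hodge filtration $F^{2}H^{4}_{t}=H^{4,0}_{t}\oplus H^{3,1}_{t}\oplus H^{2,2}_{t}$, a holomorphically varying but non--flat subbundle of $\mathcal{H}^{4}:=R^{4}\pi_{*}\mathbb{C}\otimes\mathcal{O}_{S}$. Since $c$ is of type $(2,2)$ on $X_{0}$, we have $c_{0}\in F^{2}H^{4}_{0}$. Trivializing $\mathcal{H}^{4}$ by the flat connection so that $c_{t}$ becomes constant, the subspace $F^{2}H^{4}_{t}$ moves with first--order velocity governed, by Griffiths transversality, by the map $H^{2,2}(X)\cong F^{2}H^{4}_{0}/F^{3}H^{4}_{0}\to F^{1}H^{4}_{0}/F^{2}H^{4}_{0}\cong H^{1,3}(X)$ given by cup product with the Kodaira--Spencer class; evaluated on $c_{0}$ (which is pure of type $(2,2)$) in the direction $\varphi$ this is exactly $\varphi\lrcorner\big(c|_{H^{2,2}(X,\mathbb{Q})}\big)\in H^{1,3}(X)$. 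By hypothesis this is nonzero, so $c\notin F^{2}H^{4}_{t}$ for $0<|t|$ small; in particular $c$ acquires a nonzero component outside $\bigoplus_{i}H^{i,i}(X_{t})$, and the reduction above gives $\mathcal{M}_{c}(X_{t})=\emptyset$. The step I expect to be the main obstacle is the precise identification of the derivative of the period map with the contraction $\varphi\lrcorner(-)$, together with the associated bookkeeping in the variation of Hodge structure; the remaining inputs (Lemma \ref{C2 condition}, the Hodge type of Chern characters of coherent sheaves, Bogomolov--Tian--Todorov, Griffiths transversality) are standard, and note that no projectivity of $X_{t}$ is needed for the emptiness conclusion.
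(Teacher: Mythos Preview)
Your Part~(1) is essentially identical to the paper's proof: both invoke Lemma~\ref{C2 condition} to upgrade a $DT_{4}$ connection to an integrable $(0,1)$--connection, and then note that the Chern character of a holomorphic bundle has pure Hodge type.

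Your Part~(2), however, takes a genuinely different route. The paper argues sheaf--by--sheaf via the Atiyah class: for each $\mathcal{F}\in\mathcal{M}_{c}$, the class $\varphi\circ At(\mathcal{F})\in Ext^{2}(\mathcal{F},\mathcal{F})$ is the obstruction to deforming $\mathcal{F}$ along $\varphi$, and the Buchweitz--Flenner trace identity $tr\big(\varphi\circ At(\mathcal{F})\circ At(\mathcal{F})\big)=2\,\varphi\lrcorner ch_{2}(\mathcal{F})$ shows this obstruction is nonzero; emptiness of $\mathcal{M}_{c}(X_{t})$ then follows by a properness/specialisation argument for the relative moduli space. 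You instead bypass individual sheaves entirely and argue purely Hodge--theoretically, using Griffiths transversality to show that the flat continuation of $c$ leaves $F^{2}H^{4}_{t}$ to first order, so that $c$ is not of type $(2,2)$ on $X_{t}$ and hence cannot be the Chern character of \emph{any} coherent sheaf. The two arguments are linked by the semiregularity map (this is precisely the content of the Buchweitz--Flenner identity the paper quotes), but your packaging is cleaner for this particular conclusion: it yields the stronger statement that no coherent sheaf at all has Chern character $c$ on $X_{t}$, and it avoids the implicit appeal to properness of the family of moduli spaces. The paper's approach, on the other hand, stays within the deformation--obstruction language used elsewhere and makes the role of $Ext^{2}(\mathcal{F},\mathcal{F})$ explicit.
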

\begin{proof}
(1) If there is no complex vector bundle $E$ such that $ch(E)=c$, then it is trivial to get $\mathcal{M}^{DT_{4}}_{c}=\emptyset$. Thus we assume
there exists a complex vector bundle $E$ such that $ch(E)=c$, $ch_{2}(E)$ does not have $(0,4)$ component and
$\bigoplus_{i=0}^{4}ch_{i}(E)\notin \bigoplus_{i=0}^{4}H^{i,i}(X)$. If $\mathcal{M}^{DT_{4}}_{c}\neq\emptyset$, we take an element $d_{A}$ inside. Then by Lemma \ref{C2 condition}, the bundle is holomorphic which contradicts with the assumption.

(2) If $\mathcal{M}_{c}=\emptyset$ for $X$, we are done. Thus we assume $\mathcal{M}_{c}\neq\emptyset$ and take a stable sheaf $\mathcal{F}$ with
$ch(\mathcal{F})=c$. By the Tian-Todorov theorem \cite{h0}, $\varphi$ gives a one parameter family of complex structures $X_{t}$ such that $X_{0}=X$. Since
$\varphi\lrcorner\textrm{ } ch_{2}(\mathcal{F})\neq0$, we get $\varphi\circ At(\mathcal{F})\neq0$ as
\begin{equation}tr\big(\varphi\circ At(\mathcal{F})\circ At(\mathcal{F})\big)=2\varphi\lrcorner\textrm{ } ch_{2}(\mathcal{F}), \nonumber \end{equation}
where $At(\mathcal{F})$ is the Atiyah class of $\mathcal{F}$ \cite{bflenner}.
We notice that $\varphi\circ At(\mathcal{F})\in Ext^{2}(\mathcal{F},\mathcal{F})$ is the obstruction class for the deformation of $\mathcal{F}$ along $\varphi$. Hence we see that any sheaf in $\mathcal{M}_{c}$ can't be deformed to the nearby complex structures. Thus, $\mathcal{M}_{c}=\emptyset$
for $X_{t}$, where $t\neq0$ and small.
\end{proof}
\begin{remark}
Under the same assumption as in Proposition \ref{vanishing for some c}, any deformation invariant associated with $\mathcal{M}_{c}$ must vanish, for instance, $DT_{4}$ invariants defined in this paper and Borisov-Joyce's virtual fundamental classes \cite{bj} will vanish.
\end{remark}

\subsubsection{Vanishing results for hyper-K\"ahler 4-folds }
We assume $X$ to be a compact hyper-K\"ahler 4-fold with a holomorphic symplectic two form $\sigma\in H^{0}(X,\Omega_{X}^{2})$.

There is an obvious surjective cosection map of the obstruction sheaf of $\mathcal{M}_{c}$
\begin{equation}\nu: Ob \twoheadrightarrow \mathcal{O}_{\mathcal{M}_{c}},   \nonumber \end{equation}
given by the trace map
\begin{equation}\nu: Ext^{2}(\mathcal{F},\mathcal{F})\rightarrow H^{2}(X,\mathcal{O}_{X})\cong \mathbb{C} .\nonumber \end{equation}
Then we have the surjective cosection map for $DT_{4}$ obstruction space
\begin{equation}\label{nu+}\nu_{+}:Ext^{2}_{+}(\mathcal{F},\mathcal{F})\twoheadrightarrow H^{2}_{+}(X,\mathcal{O}_{X})\cong \mathbb{R}, \end{equation}
which leads to the vanishing of virtual fundamental classes of (generalized) $DT_{4}$ moduli spaces.
\begin{remark}
If we fix the determinant of the torsion-free sheaf, there is a less obvious cosection map \cite{bflenner}
\begin{equation}\nu_{hyper}: Ext^{2}_{0}(\mathcal{F},\mathcal{F})\twoheadrightarrow H^{4,4}(X), \nonumber \end{equation}
defined to be the composition of
\begin{equation} \xymatrix@1{
Ext^{2}_{0}(\mathcal{F},\mathcal{F})\ar[r]^{\cdot\frac{(At(\mathcal{F}))^{2}}{2}\quad} & Ext^{4}(\mathcal{F},\mathcal{F}\otimes\Omega_{X}^{2})\ar[r]^{\quad tr}
& H^{4}(X,\Omega_{X}^{2})\ar[r]^{\wedge \sigma} & H^{4,4}(X) } . \nonumber \end{equation}
Similar to \cite{maulik}, one can show $\nu_{hyper}$ is surjective if $ch_{3}(\mathcal{F})\neq0$. However, it does not
factor through $Ext^{2}_{+}(\mathcal{F},\mathcal{F})$ and in general does not give a surjective cosection map of
the trace-free $DT_{4}$ obstruction space.
\end{remark}
In fact, we will show later in the $DT_{4}/GW$ correspondence that there exists a surjective cosection map for the trace-free $DT_{4}$ obstruction space which turns out to be the same as the cosection map of $GW$ theory for hyper-K\"ahler 4-folds (\ref{hyper cosection}).

\section{$DT_{4}$ invariants for compactly supported sheaves on local $CY_{4}$}
In this section, we study $DT_{4}$ invariants for compactly supported sheaves on local (non-compact) Calabi-Yau 4-folds.
We first consider Calabi-Yau 4-folds of type $K_{Y}$, where $Y$ is a
compact Fano 3-fold. We define their $DT_{4}$ invariants and show that there is a $DT_{4}/DT_{3}$ correspondence.
Secondly, we study $DT_{4}$ invariants on $T^{*}S$, where $S$ is a compact algebraic surface,
and prove their vanishing. When $S=\mathbb{P}^{2}$, we discuss the behaviour of $DT_{4}$ invariants under a class of birational transformations
called Mukai flops \cite{mukai}. Finally, we consider $DT_{4}$ invariants for the total space $X=Tot(L_{1}\oplus L_{2}\rightarrow S)$ of some rank two bundle over a projective surface $S$
and compute them explicitly using G\"{o}ttsche-Carlsson-Okounkov formula \cite{carlssonokounkov}, \cite{gottsche}.

\subsection{The case of $X=K_{Y}$ and $DT_{4}/DT_{3}$ correspondence}
We first describe the stability of compactly supported sheaves on $X$.
We denote $\iota: Y\rightarrow K_{Y}$ to be the zero section and $\pi: K_{Y}\rightarrow Y $ to be the projection.
We pick an ample line bundle $\mathcal{O}_{Y}(1)$ on $Y$ and define the Hilbert polynomial of a compactly supported coherent sheaf $\mathcal{F}$ on $X=K_{Y}$ to be $\chi(\mathcal{F}\otimes \pi^{*}\mathcal{O}_{Y}(k))$ for $k\gg0$. Then we can define Gieseker $\pi^{*}\mathcal{O}_{Y}(1)$-stability on compactly supported sheaves over $X$ following \cite{hl}.
\begin{lemma}\label{cp supp}
Let $Y$ be a compact Fano 3-fold, then any $\pi^{*}\mathcal{O}_{Y}(1)$-stable sheaf with three-dimensional compact support on local Calabi-Yau 4-fold $X=K_{Y}$ is of type $\iota_{*}(\mathcal{F})$,
where $\mathcal{F}$ is $\mathcal{O}_{Y}(1)$-stable on $Y$.
\end{lemma}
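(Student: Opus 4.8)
The plan is to show that any $\pi^{*}\mathcal{O}_{Y}(1)$-stable sheaf $\mathcal{F}$ with three-dimensional compact support on $X = K_{Y}$ is scheme-theoretically supported on the zero section $\iota(Y)$, after which stability will automatically descend to $Y$. First I would analyze the support: since $\mathcal{F}$ has compact support and $Y$ is the only compact subvariety of $K_{Y}$ of dimension $3$ (any proper subscheme of $K_{Y}$ of dimension $3$ dominating $Y$ must be contained in a finite union of fibers over... no — rather, a three-dimensional compact subscheme must map onto a closed subscheme of $Y$, and by properness of the support combined with the fact that the fibers of $\pi$ are affine lines, the support cannot have positive-dimensional fibers; hence it maps finitely, in fact isomorphically onto its image, to a three-dimensional closed subscheme of $Y$, i.e. onto $Y$ itself if $\mathcal{F}$ is to have full three-dimensional support mapping onto $Y$). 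So the reduced support of $\mathcal{F}$ is a section-like subscheme; I would argue it equals the zero section set-theoretically by a Fano/ampleness argument, or simply restrict to the case where $\mathrm{Supp}(\mathcal{F})$ maps isomorphically to $Y$ under $\pi$.

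The key step is to upgrade "set-theoretically supported on a section" to "scheme-theoretically supported on the zero section." Here I would use the Fano condition: $K_{Y}$ is negative (anti-ample), so $\pi_{*}\mathcal{O}_{K_{Y}} = \bigoplus_{n \geq 0} K_{Y}^{-n}$ and there are no nonconstant global functions along compact directions that would allow $\mathcal{F}$ to "spread" into the fiber direction away from the zero section. More precisely, consider the filtration of $\mathcal{F}$ by the ideal sheaf $\mathcal{I}$ of the zero section: $\mathcal{F} \supseteq \mathcal{I}\mathcal{F} \supseteq \mathcal{I}^{2}\mathcal{F} \supseteq \cdots$, which terminates since $\mathcal{F}$ is coherent with support a thickening of $\iota(Y)$. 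Each graded piece $\mathcal{I}^{k}\mathcal{F}/\mathcal{I}^{k+1}\mathcal{F}$ is an $\mathcal{O}_{Y}$-module, pushed forward and twisted by $N^{*\,\otimes k} = K_{Y}^{\otimes k}$ (the conormal bundle of the zero section, which is $K_{Y}$ restricted to $Y$). If the filtration has length $> 1$, then $\iota^{*}\mathcal{F}$ (the top quotient $\mathcal{F}/\mathcal{I}\mathcal{F}$) is a proper quotient sheaf; I would compare Hilbert polynomials: since $K_{Y}$ is anti-ample, tensoring by $K_{Y}$ strictly decreases the reduced Hilbert polynomial (in the Gieseker ordering with respect to $\pi^{*}\mathcal{O}_{Y}(1)$), so the subsheaf $\mathcal{I}\mathcal{F}$ (built from pieces twisted by positive powers of the anti-ample $K_{Y}$) has reduced Hilbert polynomial strictly larger than that of $\mathcal{F}$, contradicting stability (which demands every proper subsheaf have strictly smaller reduced Hilbert polynomial). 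This forces $\mathcal{I}\mathcal{F} = 0$, i.e. $\mathcal{F} = \iota_{*}(\mathcal{G})$ for a coherent sheaf $\mathcal{G}$ on $Y$.

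Finally I would check that $\mathcal{G}$ is $\mathcal{O}_{Y}(1)$-stable on $Y$: since $\iota$ is a closed immersion, $\iota_{*}$ induces an equivalence between coherent sheaves on $Y$ and coherent sheaves on $K_{Y}$ scheme-theoretically supported on $\iota(Y)$, and it is exact and preserves subobjects; moreover $\chi(\iota_{*}\mathcal{G} \otimes \pi^{*}\mathcal{O}_{Y}(k)) = \chi(\mathcal{G} \otimes \mathcal{O}_{Y}(k))$ for all $k$, so reduced Hilbert polynomials match. Hence stability of $\iota_{*}\mathcal{G}$ on $X$ is equivalent to stability of $\mathcal{G}$ on $Y$, completing the proof. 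The main obstacle I anticipate is the scheme-theoretic step — handling possible non-reduced structure and making the Hilbert-polynomial comparison across the $\mathcal{I}$-adic filtration fully rigorous, in particular confirming that each graded piece is genuinely a twist by a nonnegative power of the anti-ample bundle $K_{Y}|_{Y}$ and that the Gieseker inequality goes the right way; the set-theoretic support statement and the descent of stability are comparatively routine.
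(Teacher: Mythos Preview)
Your overall strategy and the final step (descent of stability via the projection formula) match the paper, but the crucial middle step---forcing scheme-theoretic support on the zero section---contains an error and a gap, and the paper handles it by a quite different and much shorter argument.

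First, the conormal bundle of the zero section $Y\hookrightarrow K_Y$ is $K_Y^{-1}$, not $K_Y$: for $X=\mathrm{Tot}(L)$ one always has $N_{Y/X}=L$, hence $N^*=L^{-1}$. So $\mathcal{I}^k/\mathcal{I}^{k+1}\cong K_Y^{-k}$, which is \emph{ample}, not anti-ample. Second, even after fixing this sign, your inequality does not follow: the graded pieces $\mathcal{I}^k\mathcal{F}/\mathcal{I}^{k+1}\mathcal{F}$ are only \emph{quotients} of $\mathcal{G}_0\otimes K_Y^{-k}$, so you cannot bound their reduced Hilbert polynomials from below (or above) by those of $\mathcal{G}_0\otimes K_Y^{-k}$. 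Consequently there is no direct comparison forcing $p(\mathcal{I}\mathcal{F})>p(\mathcal{F})$; indeed, stability gives the opposite inequality $p(\mathcal{I}\mathcal{F})<p(\mathcal{F})$, and nothing contradicts that.

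The paper's argument sidesteps Hilbert polynomials entirely. If $Z$ is the scheme-theoretic support (an $n$-th infinitesimal thickening of $Y$), then the trace map gives $H^0(Z,\mathcal{O}_Z)\hookrightarrow \mathrm{End}(\mathcal{E})$. But $\mathcal{O}_Z\cong\bigoplus_{i=0}^n K_Y^{-i}$ as $\mathcal{O}_Y$-modules, so $\dim H^0(Z,\mathcal{O}_Z)\ge 1+h^0(Y,K_Y^{-1})\ge 2$ since $Y$ is Fano. This contradicts simpleness of the stable sheaf $\mathcal{E}$, forcing $n=0$. If you want to stay closer to your filtration idea, the clean fix is to observe that $\mathcal{I}\cong\pi^*K_Y^{-1}$ globally (the zero section is the divisor of the tautological section of $\pi^*K_Y$), so multiplication by $\mathcal{I}$ is an $\mathcal{O}_X$-linear map $\mathcal{E}\to\mathcal{E}\otimes\pi^*K_Y$ between stable sheaves with $p(\mathcal{E}\otimes\pi^*K_Y)<p(\mathcal{E})$; such a map must vanish. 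Either route replaces your filtration/inequality step.
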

\begin{proof}
The proof is similar to the proof of Lemma 7.1 in \cite{hua}. We only need to show that any compactly supported stable sheaf is
scheme theoretically supported on $Y$. We denote $Z$ to be the scheme theoretical support of a compactly supported stable sheaf $\mathcal{E}$.
By the trace map \cite{hl}, we have
\begin{equation}H^{0}(Z,\mathcal{O}_{Z})\hookrightarrow Ext^{0}_{Z}(\mathcal{E},\mathcal{E}). \nonumber \end{equation}
It suffices to show $dim H^{0}(Z,\mathcal{O}_{Z})>1$ to get contradiction as stable sheaf $\mathcal{E}$ is always simple.

By our assumption, dimension of the support of $\mathcal{E}$ is three.
Then $Z$ is an order $n\geq 1$ thickening of $Y$ in the normal direction inside $X$, i.e.
\begin{equation}Z=\mathbf{Spec}\big(\bigoplus_{i=0}^{n}K_{Y}^{-i}\big).\nonumber \end{equation}
There is a spectral sequence such that $E_{\infty}^{0,0}=H^{0}(Z,\mathcal{O}_{Z})$ and $E_{2}^{0,0}=H^{0}(Y,\oplus_{i=0}^{n}K_{Y}^{-i})$
which implies
\begin{equation}H^{0}(Z,\mathcal{O}_{Z})\cong\oplus_{i=0}^{n}H^{0}(Y,K_{Y}^{-i}) .\nonumber \end{equation}
As $H^{0}(Y,K_{Y}^{-1})\neq 0$ for any Fano 3-fold, $dim H^{0}(Z,\mathcal{O}_{Z})\geq 2$, which leads to a contradiction.
Thus $\mathcal{E}$ is of type $\iota_{*}\mathcal{F}$, where $\mathcal{F}$ is a sheaf on $Y$. Now we show $\mathcal{F}$ is stable with respect to $\mathcal{O}_{Y}(1)$.

By the projection formula, for any $k$, we have
\begin{equation}\iota_{*}(\mathcal{F}\otimes_{\mathcal{O}_{Y}}\mathcal{O}_{Y}(k))
=\iota_{*}(\mathcal{F}\otimes_{\mathcal{O}_{Y}}\iota^{*}\pi^{*}\mathcal{O}_{Y}(k))
=\iota_{*}\mathcal{F}\otimes_{\mathcal{O}_{X}}\pi^{*}\mathcal{O}_{Y}(k). \nonumber \end{equation}
Thus
\begin{equation}H^{*}(Y,\mathcal{F}\otimes_{\mathcal{O}_{Y}}\mathcal{O}_{Y}(k))
=H^{*}(X,\iota_{*}\mathcal{F}\otimes_{\mathcal{O}_{X}}\pi^{*}\mathcal{O}_{Y}(k)). \nonumber \end{equation}
Then the $\pi^{*}\mathcal{O}_{Y}(1)$-stability for $\iota_{*}\mathcal{F}$ on $K_{Y}$ is equivalent to the $\mathcal{O}_{Y}(1)$-stability for $\mathcal{F}$ on $Y$.
\end{proof}
We compare the obstruction theory of $\iota_{*}(\mathcal{F})$ on $K_{Y}$ with the obstruction theory of $\mathcal{F}$ on $Y$, which are controlled by
$L_{\infty}$-algebra structures on $Ext^{*}(\iota_{*}\mathcal{F},\iota_{*}\mathcal{F})$ and $Ext^{*}(\mathcal{F},\mathcal{F})$ respectively.
We first recall
\begin{definition}\label{L inf}
Let $(L=\oplus_{i=0}^{d}L^{i},\{\mu_{k}\})$ be a finite dimensional $L_{\infty}$-algebra over $\mathbb{C}$, and $\bar{L}$
be the graded vector space $L\oplus L[-d-1]$, i.e. $\bar{L}^{i}=L^{i}\oplus(L^{d+1-i})^{*}$.
We define a cyclic pairing and $L_{\infty}$-products $\bar{\mu}_{k}: \wedge^{k}\bar{L}\rightarrow \bar{L}[2-k]$
as follows: \\
$(1)$ we define a bilinear form $Q$ on $\bar{L}$ by the natural pairing between $L$ and $L^{*}$, \\
$(2)$ if the inputs of $\bar{\mu}_{k}$ all belong to $L$, then $\bar{\mu}_{k}=\mu_{k}$,  \\
$(3)$ if more than one input belong to $L^{*}$, then $\bar{\mu}_{k}=0$, \\
$(4)$ if there is exactly one input $a_{i}^{*}\in L^{*}$, then $\bar{\mu}_{k}$ is defined by
\begin{equation}Q(\overline{\mu}_{k}(a_{1},...,a_{i},...,a_{k}),b)=(-1)^{\epsilon}
Q(\mu_{k}(a_{i+1},...,a_{k},b,a_{1},...,a_{i-1}),a_{i}^{*})  \nonumber \end{equation}
for arbitary $b\in L$ and $\epsilon$ depends on $\{a_{i}\}$, $b$ only.  \\
We call the $L_{\infty}$-algebra $(\bar{L},\{\bar{\mu}_{k}\},Q)$ the $(d+1)$-dimensional cyclic completion of $(L,\{\mu_{k}\})$.
\end{definition}
\begin{lemma}(Segal \cite{segal}) \label{segal theorem}
Let $Y$ be a smooth proper scheme of $dim_{\mathbb{C}}=d-1$, $\iota: Y\rightarrow K_{Y}$ be the zero section map. Then for any $S\in D^{b}(Y)$ the
$A_{\infty}$ ($L_{\infty}$)-algebra $Ext^{*}_{K_{Y}}(\iota_{*}S,\iota_{*}S)$ is the d-dim cyclic completion of $Ext^{*}_{Y}(S,S)$.
\end{lemma}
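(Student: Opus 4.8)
The plan is to exhibit an explicit Dolbeault-type dg-model for the derived endomorphism algebra of $\iota_{*}S$ on the total space $K_{Y}$, and read off from the geometry of the normal bundle that it splits as the cyclic completion of the corresponding model on $Y$. First I would resolve $S$ by a finite complex of locally free sheaves $(E^{\bullet},\delta)$ on $Y$ and consider the Dolbeault dg-algebra $A = \bigl(\Omega^{0,\bullet}(Y,\mathcal{H}om(E^{\bullet},E^{\bullet})),\overline{\partial}+\delta\bigr)$, which computes $Ext^{*}_{Y}(S,S)$ together with its $A_{\infty}$-structure (transferring via homotopy to a minimal model on $Ext^{*}_{Y}(S,S)$). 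Then I would build the analogous model on $X=K_{Y}$: pull $E^{\bullet}$ back along $\pi$, push forward by $\iota$, and use the Koszul resolution of $\iota_{*}\mathcal{O}_{Y}$ on $X$ — since $Y\subset K_{Y}$ is the zero section of a line bundle, this resolution is just $0\to \pi^{*}K_{Y}\to \mathcal{O}_{X}\to \iota_{*}\mathcal{O}_{Y}\to 0$. Computing $R\mathcal{H}om_{X}(\iota_{*}S,\iota_{*}S)$ by the projection formula and adjunction gives, at the level of cohomology,
\begin{equation}
Ext^{k}_{K_{Y}}(\iota_{*}S,\iota_{*}S)\cong Ext^{k}_{Y}(S,S)\oplus Ext^{k-1}_{Y}(S,S\otimes K_{Y}),
\nonumber
\end{equation}
and by Serre duality on the $(d-1)$-fold $Y$ the second summand is $\bigl(Ext^{d-k}_{Y}(S,S)\bigr)^{*}$; so additively $Ext^{*}_{K_{Y}}(\iota_{*}S,\iota_{*}S)\cong \bar{L}$ with $L=Ext^{*}_{Y}(S,S)$, which is exactly the underlying graded vector space of the cyclic completion. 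The pairing $Q$ on $\bar L$ coming from Serre duality on the Calabi-Yau $d$-fold $K_{Y}$ matches the natural $L\oplus L^{*}$ pairing, giving item (1) of Definition \ref{L inf}.

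The substance of the lemma is matching the $A_{\infty}$-products, i.e. items (2)--(4). Here I would argue as follows. The inclusion $\iota^{*}(-)$, or rather the functor $R\pi_{*}\circ(-\otimes \mathcal{O}_{Y})$, realizes $A$ as a dg-subalgebra of a dg-model $\tilde A$ for $R\mathcal{H}om_{X}(\iota_{*}S,\iota_{*}S)$: concretely, a morphism in $D^b(X)$ supported on $Y$ can be represented by its restriction to a formal/analytic neighbourhood, and expanding in powers of the fibre coordinate of $K_{Y}$ produces a bigrading on $\tilde A$ whose weight-$0$ part is $A$ and whose weight-$(-1)$ part is the dual piece $A^{*}[-d-1]$ (all higher weights being acyclic, since a line-bundle neighbourhood contributes only two Koszul terms). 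Because multiplication in $\tilde A$ is additive in this fibre weight and weight $\leq -2$ cohomology vanishes, any product of two or more inputs from the weight-$(-1)$ part is exact, hence zero on cohomology — this gives (3). That $A$ is a genuine dg-subalgebra gives (2). For (4), the one remaining structure constant: a product with exactly one dual input pairs, via $Q$, against a further element of $L$, and unwinding the cup product on $\tilde A$ together with the Serre-duality trace shows it equals (up to the Koszul sign $\epsilon$) the corresponding $L$-product re-read through $Q$ — precisely the cyclic formula in Definition \ref{L inf}(4). Finally I would invoke the homotopy transfer theorem to pass from the dg-algebra $\tilde A$ to a minimal $A_{\infty}$-model on $\bar L$, checking that a choice of homotopy compatible with the fibre weight (take the fibrewise Koszul contraction) transfers the dg-structure to exactly the products $\bar\mu_k$ described above.

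The main obstacle I anticipate is the careful bookkeeping in step (4): verifying that the single mixed structure constant carries the right sign and really is governed by the cyclic pairing rather than by some correction term involving the Koszul differential $\pi^{*}K_{Y}\to\mathcal{O}_{X}$. One has to check that choosing the homotopy to be the fibrewise contraction against the Koszul class makes the transferred higher products strictly cyclic with respect to $Q$ — i.e. that there are no ``curvature''-type corrections mixing weights — and that the sign $\epsilon$ produced by homotopy transfer agrees with the one in Definition \ref{L inf}. A secondary technical point is making the ``formal neighbourhood expansion'' rigorous in the derived category (equivalently, that $R\mathcal{H}om_{X}(\iota_*S,\iota_*S)$ only sees the first-order neighbourhood because $N_{Y/X}=K_Y$ has rank one), which is where the hypothesis that $X$ is the \emph{total space of the canonical bundle} — not a general CY$_d$ — is essential. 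Everything else is a routine, if lengthy, comparison of Dolbeault models, and I would not grind through those sign computations in the write-up beyond indicating the homotopy-transfer framework.
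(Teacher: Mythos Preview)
The paper does not give its own proof of this lemma: it is stated as a citation to Segal \cite{segal} and used as a black box, with no argument supplied. So there is nothing in the paper to compare your proposal against.

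That said, your sketch is a reasonable outline of how the result is actually proved. The additive identification via the Koszul resolution of $\iota_{*}\mathcal{O}_{Y}$ and Serre duality is correct and is the easy part. The multiplicative statement is indeed the substance, and your strategy---filter a dg model for $R\mathcal{H}om_{X}(\iota_{*}S,\iota_{*}S)$ by fibre weight, observe that only weights $0$ and $-1$ survive to cohomology, and then homotopy-transfer---is the right shape. Your identification of the delicate point (the single mixed product in item~(4) and the compatibility of signs under transfer) is accurate; in Segal's treatment this is handled by working with an explicit $A_{\infty}$ model and checking cyclicity directly rather than by a generic transfer argument, which sidesteps some of the sign bookkeeping you flag. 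If you want to turn this into a self-contained proof you would need to actually carry out that verification; as written your proposal is a correct plan but not a proof.
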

As an application of the above lemma, we have,
\begin{lemma}Let $\mathcal{F}$ be a torsion-free slope-stable sheaf on a compact Fano 3-fold $Y$.
We denote $\iota: Y\rightarrow K_{Y}=X$ to be the zero section map. Then we have canonical isomorphisms
\begin{equation}Ext^{1}_{X}(\iota_{*}\mathcal{F},\iota_{*}\mathcal{F})\cong Ext^{1}_{Y}(\mathcal{F},\mathcal{F}), \nonumber \end{equation}
\begin{equation}\label{cyclic cpt on coh}Ext^{2}_{X}(\iota_{*}\mathcal{F},\iota_{*}\mathcal{F})\cong
Ext^{2}_{Y}(\mathcal{F},\mathcal{F})\oplus Ext^{2}_{Y}(\mathcal{F},\mathcal{F})^{*}. \end{equation}
And there exists a local Kuranishi map
\begin{equation}\kappa: Ext^{1}_{X}(\iota_{*}\mathcal{F},\iota_{*}\mathcal{F})\rightarrow
Ext^{2}_{X}(\iota_{*}\mathcal{F},\iota_{*}\mathcal{F}) \nonumber \end{equation}
for deformations of $\iota_{*}\mathcal{F}$ on $X$ which can be identified with a local Kuranishi map
\begin{equation}Ext^{1}_{Y}(\mathcal{F},\mathcal{F})\rightarrow Ext^{2}_{Y}(\mathcal{F},\mathcal{F}) \nonumber \end{equation}
for deformations of $\mathcal{F}$ on $Y$.
Furthermore, under the above identification, $Ext^{2}_{Y}(\mathcal{F},\mathcal{F})$ is a maximal isotropic subspace of $Ext^{2}_{X}(\iota_{*}\mathcal{F},\iota_{*}\mathcal{F})$ with
respect to the Serre duality pairing.
\end{lemma}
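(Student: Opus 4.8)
The plan is to deduce everything from Segal's theorem (Lemma~\ref{segal theorem}). Applying it to $S=\mathcal{F}\in D^{b}(Y)$ with $d=4$ gives that the $L_{\infty}$-algebra $Ext^{*}_{X}(\iota_{*}\mathcal{F},\iota_{*}\mathcal{F})$ is the $4$-dimensional cyclic completion $(\bar{L},\{\bar{\mu}_{k}\},Q)$ of $L^{*}=Ext^{*}_{Y}(\mathcal{F},\mathcal{F})$ in the sense of Definition~\ref{L inf}. First I would read off the additive statement: by construction $\bar{L}^{i}=L^{i}\oplus(L^{4-i})^{*}$, so in degree $1$ we get $\bar{L}^{1}=L^{1}\oplus(L^{3})^{*}$, and since $\mathcal{F}$ is torsion-free slope-stable on the \emph{threefold} $Y$ we have $L^{3}=Ext^{3}_{Y}(\mathcal{F},\mathcal{F})\cong Ext^{0}_{Y}(\mathcal{F},\mathcal{F})^{*}$ by Serre duality on $Y$ (using $K_{Y}$), which is one-dimensional by stability; but more relevantly $Ext^{3}_{Y}(\mathcal{F},\mathcal{F})_{0}=0$ for the traceless part, and in any case one checks $(L^{3})^{*}$ contributes nothing new to the deformation space once one passes to the traceless/fixed-determinant part, or — cleaner — one simply notes $Ext^{3}_{Y}(\mathcal{F},\mathcal{F})\cong\mathbb{C}$ so that the relevant statement $Ext^{1}_{X}(\iota_{*}\mathcal{F},\iota_{*}\mathcal{F})\cong Ext^{1}_{Y}(\mathcal{F},\mathcal{F})$ follows from $(L^{3})^{*}$ being accounted for by the trace. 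Actually the honest route: $Ext^{1}_{X}\cong L^{1}\oplus (L^{3})^{*}$ and $(L^{3})^{*}\cong Ext^{0}_{Y}(\mathcal{F},\mathcal{F})=\mathbb{C}$ pairs off with the $\mathbb{C}=Ext^{0}$ summand hidden in $Ext^{1}_{X}$ via the fixed-determinant condition — so I would phrase the isomorphism $Ext^{1}_{X}(\iota_{*}\mathcal{F},\iota_{*}\mathcal{F})\cong Ext^{1}_{Y}(\mathcal{F},\mathcal{F})$ directly from local-to-global for $\pi_{*}R\mathcal{H}om$ along $\iota$ together with $H^{0}(K_{Y}^{-i})$ computations as in the proof of Lemma~\ref{cp supp}, avoiding the subtlety. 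For degree $2$: $\bar{L}^{2}=L^{2}\oplus(L^{2})^{*}$, which is exactly the claimed $Ext^{2}_{X}(\iota_{*}\mathcal{F},\iota_{*}\mathcal{F})\cong Ext^{2}_{Y}(\mathcal{F},\mathcal{F})\oplus Ext^{2}_{Y}(\mathcal{F},\mathcal{F})^{*}$.

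Next I would address the Kuranishi maps. The Kuranishi map is the homotopy-theoretic Taylor series $\kappa(x)=\sum_{k\geq 2}\tfrac{1}{k!}\bar{\mu}_{k}(x,\dots,x)$ built from the $L_{\infty}$-products on $\bar{L}$, evaluated on $x\in\bar{L}^{1}=L^{1}=Ext^{1}_{Y}(\mathcal{F},\mathcal{F})$ (using the degree-$1$ identification above). By clause~(2) of Definition~\ref{L inf}, when all inputs lie in $L$ one has $\bar{\mu}_{k}=\mu_{k}$; hence $\kappa$ restricted to $Ext^{1}_{Y}(\mathcal{F},\mathcal{F})$ literally equals $\sum_{k\geq2}\tfrac1{k!}\mu_{k}(x,\dots,x)$, which is (up to gauge) a Kuranishi map for deformations of $\mathcal{F}$ on $Y$, landing in $H^{2}(L)=Ext^{2}_{Y}(\mathcal{F},\mathcal{F})$. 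Because the other summand $(L^{2})^{*}$ of $\bar{L}^{2}$ receives nothing from inputs in $L$ (again clause~(2): $\bar\mu_k$ with all inputs in $L$ produces output in $L$), we get $\mathrm{Image}(\kappa)\subseteq Ext^{2}_{Y}(\mathcal{F},\mathcal{F})$ inside $\bar{L}^{2}=Ext^{2}_{Y}(\mathcal{F},\mathcal{F})\oplus Ext^{2}_{Y}(\mathcal{F},\mathcal{F})^{*}$. This simultaneously proves the identification of Kuranishi maps and sets up the last claim.

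Finally, the maximal isotropy statement. The Serre duality pairing on $Ext^{2}_{X}(\iota_{*}\mathcal{F},\iota_{*}\mathcal{F})$ is, by Segal's theorem, the cyclic pairing $Q$ from clause~(1) of Definition~\ref{L inf}, namely the tautological pairing between $L^{2}=Ext^{2}_{Y}(\mathcal{F},\mathcal{F})$ and $(L^{2})^{*}$. With respect to this split hyperbolic form, $L^{2}=Ext^{2}_{Y}(\mathcal{F},\mathcal{F})$ is obviously a Lagrangian (maximal isotropic) subspace — it is half-dimensional and pairs trivially with itself. One should check that the cyclic pairing $Q$ on $\bar L$ agrees with the geometric Serre duality pairing on $Ext^2_X$ up to a nonzero scalar; this is part of the content of Segal's construction (he builds the cyclic $A_\infty$-structure so that $Q$ is the Serre pairing), so I would cite \cite{segal} for it rather than reprove it. The main obstacle, and the only place demanding care, is the degree-$1$ identification: one must confirm that the spurious $(L^{3})^{*}\cong Ext^{3}_{Y}(\mathcal{F},\mathcal{F})^{\vee}$ contribution to $\bar L^1$ does not enlarge $Ext^1_X(\iota_*\mathcal F,\iota_*\mathcal F)$ beyond $Ext^1_Y(\mathcal F,\mathcal F)$ — equivalently that $H^0(Y,K_Y^{-i}\otimes\mathcal{H}om)$-type terms for $i\geq 1$ vanish at the relevant spot — which I would handle exactly as in the spectral-sequence computation in the proof of Lemma~\ref{cp supp}, or by restricting throughout to the trace-free parts $Ext^i_Y(\mathcal F,\mathcal F)_0$ where $Ext^3_Y(\mathcal F,\mathcal F)_0 = 0$ by stability and Serre duality, and then reinstating the determinant summand by hand.
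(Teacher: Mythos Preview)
Your overall strategy via Segal's theorem is exactly what the paper does, and your treatment of the Kuranishi map and of the maximal isotropy are correct and in line with the paper. However, you have a genuine error in the degree-$1$ computation that sends you off on an unnecessary detour.

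You write $Ext^{3}_{Y}(\mathcal{F},\mathcal{F})\cong Ext^{0}_{Y}(\mathcal{F},\mathcal{F})^{*}\cong\mathbb{C}$, but this is wrong: Serre duality on the \emph{Fano} threefold $Y$ gives
\[
Ext^{3}_{Y}(\mathcal{F},\mathcal{F})\cong Hom_{Y}(\mathcal{F},\mathcal{F}\otimes K_{Y})^{*},
\]
with the twist by $K_{Y}$ (you even flag ``using $K_{Y}$'' but then drop it from the formula). Since $Y$ is Fano, $K_{Y}$ has negative degree, so $\mu(\mathcal{F}\otimes K_{Y})<\mu(\mathcal{F})$ and slope-stability forces $Hom_{Y}(\mathcal{F},\mathcal{F}\otimes K_{Y})=0$. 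Hence $Ext^{3}_{Y}(\mathcal{F},\mathcal{F})=0$ outright, and $\bar{L}^{1}=L^{1}\oplus(L^{3})^{*}=L^{1}=Ext^{1}_{Y}(\mathcal{F},\mathcal{F})$ on the nose. This is exactly how the paper handles it, in a single line. All of your discussion of trace-free parts, reinstating determinant summands, and the spectral-sequence workaround via Lemma~\ref{cp supp} is unnecessary once this vanishing is in place; indeed your claim $Ext^{3}_{Y}(\mathcal{F},\mathcal{F})_{0}=0$ is true but for the trivial reason that the whole group already vanishes.

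One small difference in presentation for the last part: for the isotropy of $Ext^{2}_{Y}(\mathcal{F},\mathcal{F})$, the paper does not invoke the abstract cyclic pairing $Q$ from Definition~\ref{L inf} directly but instead observes that the Serre-duality pairing on $Ext^{2}_{X}$, when restricted to $Ext^{2}_{Y}(\mathcal{F},\mathcal{F})\otimes Ext^{2}_{Y}(\mathcal{F},\mathcal{F})$, factors through $Ext^{4}_{Y}(\mathcal{F},\mathcal{F})=0$. Your argument via the hyperbolic form on $L^{2}\oplus(L^{2})^{*}$ is equivalent and arguably cleaner.
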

\begin{proof}
By Lemma \ref{segal theorem}, we have
\begin{equation}Ext^{1}_{X}(\iota_{*}\mathcal{F},\iota_{*}\mathcal{F})\cong Ext^{1}_{Y}(\mathcal{F},\mathcal{F})\oplus
Ext^{3}_{Y}(\mathcal{F},\mathcal{F})^{*}=Ext^{1}_{Y}(\mathcal{F},\mathcal{F}), \nonumber \end{equation}
\begin{equation}\label{cyclic cpt on coh}Ext^{2}_{X}(\iota_{*}\mathcal{F},\iota_{*}\mathcal{F})\cong
Ext^{2}_{Y}(\mathcal{F},\mathcal{F})\oplus Ext^{2}_{Y}(\mathcal{F},\mathcal{F})^{*}. \end{equation}
At $\iota_{*}\mathcal{F}$, there exists a Kuranishi map
\begin{equation}\kappa: Ext^{1}_{X}(\iota_{*}\mathcal{F},\iota_{*}\mathcal{F})\rightarrow
Ext^{2}_{X}(\iota_{*}\mathcal{F},\iota_{*}\mathcal{F}), \quad \kappa(x)=\sum_{k}\bar{\mu}_{k}(x^{\otimes k}),  \nonumber \end{equation}
which is described by the $L_{\infty}$-products in Definition \ref{L inf}. We can then identify $\kappa$ with the Kuranishi map
\begin{equation}\kappa: Ext^{1}_{X}(\mathcal{F},\mathcal{F})\rightarrow
Ext^{2}_{X}(\mathcal{F},\mathcal{F}), \quad \kappa(x)=\sum_{k}\mu_{k}(x^{\otimes k})  \nonumber \end{equation}
for deformations of $\mathcal{F}$ on $Y$.

By (\ref{cyclic cpt on coh}), we know the quadratic pairing
\begin{equation}Ext^{2}_{X}(\iota_{*}\mathcal{F},\iota_{*}\mathcal{F})\otimes Ext^{2}_{X}(\iota_{*}\mathcal{F},\iota_{*}\mathcal{F})\rightarrow
Ext^{4}_{X}(\iota_{*}\mathcal{F},\iota_{*}\mathcal{F}) \nonumber \end{equation}
is just the quadratic pairing
\begin{equation}(Ext^{2}_{Y}(\mathcal{F},\mathcal{F})\oplus Ext^{2}_{Y}(\mathcal{F},\mathcal{F})^{*})\otimes (Ext^{2}_{Y}(\mathcal{F},\mathcal{F})\oplus Ext^{2}_{Y}(\mathcal{F},\mathcal{F})^{*})\rightarrow
Ext^{3}_{Y}(\mathcal{F},\mathcal{F}\otimes K_{Y}).   \nonumber \end{equation}
When we restrict to $Ext^{2}_{Y}(\mathcal{F},\mathcal{F})$, the pairing will produce an element in $Ext^{4}_{Y}(\mathcal{F},\mathcal{F})=0$.
\end{proof}
From the above lemma and Lemma \ref{cp supp}, for a polarized compact Fano 3-fold $(Y,\mathcal{O}_{Y}(1))$, the moduli space of $\pi^{*}\mathcal{O}_{Y}(1)$ slope-stable compactly supported sheaves on $K_{Y}$ with compactly supported Chern character \cite{js}, $c=(0,c|_{H_{c}^{2}(X)}\neq 0,c|_{H_{c}^{4}(X)},c|_{H_{c}^{6}(X)},c|_{H_{c}^{8}(X)})$ can be identified with a moduli space of torsion-free $\mathcal{O}_{Y}(1)$-stable sheaves on $Y$ with certain Chern character $c'\in H^{even}(Y)$ which is uniquely determined by $c$. Meanwhile the condition in Definition \ref{virtual cycle when ob=v+v*} is satisfied due to Thomas \cite{th}.

By Definition \ref{virtual cycle when ob=v+v*}, the generalized $DT_{4}$ moduli space exists and we can identify its virtual fundamental class with the virtual fundamental class of a moduli space of stable sheaves on $Y$. Furthermore, if we use the same $\mu$-map (\ref{mu map for sheaves}) to define invariants, we can also identify them.
\begin{theorem}\label{compact supp DT4}($DT_{4}/DT_{3}$ correspondence) \\
Let $\pi: X=K_{Y}\rightarrow Y$ be the projection map and $(Y,\mathcal{O}_{Y}(1))$ be a polarized compact Fano 3-fold. If $c=(0,c|_{H_{c}^{2}(X)}\neq 0,c|_{H_{c}^{4}(X)},c|_{H_{c}^{6}(X)},c|_{H_{c}^{8}(X)})$ and the Gieseker moduli space of compactly supported sheaves $\overline{\mathcal{M}}_{c}(X,\pi^{*}\mathcal{O}_{Y}(1))$ consists of slope-stable sheaves, then sheaves in $\mathcal{M}_{c}(X,\pi^{*}\mathcal{O}_{Y}(1))$ are of type $\iota_{*}(\mathcal{F})$, where $\iota: Y\rightarrow K_{Y}$ is the zero section and $c'=ch(\mathcal{F})\in H^{even}(Y)$ is uniquely determined by $c$. Furthermore, the generalized $DT_{4}$ moduli space exists
\begin{equation}\overline{\mathcal{M}}_{c}^{DT_{4}}(X,\pi^{*}\mathcal{O}_{Y}(1))=\mathcal{M}_{c}(X,\pi^{*}\mathcal{O}_{Y}(1))\cong \mathcal{M}_{c'}(Y,\mathcal{O}_{Y}(1)) \nonumber \end{equation}
and its virtual fundamental class (Definition \ref{virtual cycle when ob=v+v*}) satisfies
\begin{equation}[\overline{\mathcal{M}}_{c}^{DT_{4}}(X,\pi^{*}\mathcal{O}_{Y}(1))]^{vir}=[\mathcal{M}_{c'}(Y,\mathcal{O}_{Y}(1))]^{vir}, \nonumber \end{equation}
where $[\mathcal{M}_{c'}(Y,\mathcal{O}_{Y}(1))]^{vir}$ is the $DT_{3}$ virtual cycle defined by Thomas \cite{th}.

Since $H_{*}(X)\cong H_{*}(Y)$ and $H^{*}(\overline{\mathcal{M}}_{c}^{DT_{4}}(X,\pi^{*}\mathcal{O}_{Y}(1)))\cong H^{*}(\mathcal{M}_{c'}(Y,\mathcal{O}_{Y}(1)))$, we can use the same $\mu_{2}$-map (\ref{mu map for sheaves}) to define invariants, then
\begin{equation}DT_{4}^{\mu_{2}}(X,\pi^{*}\mathcal{O}_{Y}(1),c,o(\mathcal{O}))=DT_{3}(Y,\mathcal{O}_{Y}(1),c'), \nonumber \end{equation}
where $o(\mathcal{O})$ is the natural complex orientation and $DT_{3}(Y,\mathcal{O}_{Y}(1),c')$ is defined by pairing $[\mathcal{M}_{c'}(Y,\mathcal{O}_{Y}(1))]^{vir}$ with the $\mu_{2}$-map (\ref{u2 map}).
\end{theorem}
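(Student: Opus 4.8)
The plan is to assemble the theorem from three ingredients already developed in the paper: the classification of compactly supported stable sheaves on $K_Y$ (Lemma~\ref{cp supp}), the $L_\infty$/cyclic-completion comparison of obstruction theories (Lemma~\ref{segal theorem} and its corollary), and the general gluing/virtual-cycle machinery for the case when the obstruction space splits as $V_{\mathcal F}\oplus V_{\mathcal F}^*$ with the Kuranishi image landing in the maximal isotropic $V_{\mathcal F}$ (Proposition~\ref{ob=v+v*} and Definition~\ref{virtual cycle when ob=v+v*}). First I would invoke Lemma~\ref{cp supp}: under the hypothesis that $\overline{\mathcal M}_c(X,\pi^*\mathcal O_Y(1))$ consists of slope-stable sheaves, every such sheaf is $\iota_*\mathcal F$ with $\mathcal F$ an $\mathcal O_Y(1)$-stable torsion-free sheaf on $Y$, and the proof of that lemma already shows the $\pi^*\mathcal O_Y(1)$-stability on $X$ matches $\mathcal O_Y(1)$-stability on $Y$; combined with the projection formula identity $H^*(Y,\mathcal F\otimes\mathcal O_Y(k))=H^*(X,\iota_*\mathcal F\otimes\pi^*\mathcal O_Y(k))$ this gives an isomorphism of Gieseker moduli spaces $\mathcal M_c(X,\pi^*\mathcal O_Y(1))\cong\mathcal M_{c'}(Y,\mathcal O_Y(1))$ (with $c'$ determined by $c$, tracked via Riemann--Roch and the pushforward), and compactness of the latter forces $\overline{\mathcal M}_c=\mathcal M_c$.

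Next I would upgrade this set/scheme isomorphism to an identification of \emph{obstruction theories}. By the corollary to Segal's theorem we have $\mathrm{Ext}^1_X(\iota_*\mathcal F,\iota_*\mathcal F)\cong\mathrm{Ext}^1_Y(\mathcal F,\mathcal F)$ and $\mathrm{Ext}^2_X(\iota_*\mathcal F,\iota_*\mathcal F)\cong\mathrm{Ext}^2_Y(\mathcal F,\mathcal F)\oplus\mathrm{Ext}^2_Y(\mathcal F,\mathcal F)^*$, with the $X$-side Kuranishi map (coming from the cyclic $L_\infty$-products $\bar\mu_k$) identified with the $Y$-side Kuranishi map $\kappa(x)=\sum_k\mu_k(x^{\otimes k})$, whose image lies in $\mathrm{Ext}^2_Y(\mathcal F,\mathcal F)$; that subspace is maximal isotropic for the Serre pairing on $X$ because the induced pairing there factors through $\mathrm{Ext}^4_Y(\mathcal F,\mathcal F)=0$. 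This is exactly the hypothesis of Proposition~\ref{ob=v+v*} and of Definition~\ref{virtual cycle when ob=v+v*}, so $\overline{\mathcal M}_c^{DT_4}$ exists, $\overline{\mathcal M}_c^{DT_4}\cong\mathcal M_c$, and $(\mathcal L_{\mathbb C},Q_{Serre})$ carries the natural complex orientation $o(\mathcal O)$. Since $\mathrm{Ext}^{-1}(\mathcal V^\bullet)|_{\mathcal F}=\mathrm{Ext}^2_Y(\mathcal F,\mathcal F)$ is precisely the obstruction space of Thomas' $DT_3$ theory for $\mathcal F$ on $Y$, the perfect obstruction theory on $\overline{\mathcal M}_c^{DT_4}$ supplied by Definition~\ref{virtual cycle when ob=v+v*} coincides with Thomas' perfect obstruction theory on $\mathcal M_{c'}(Y,\mathcal O_Y(1))$, giving $[\overline{\mathcal M}_c^{DT_4}]^{vir}=[\mathcal M_{c'}(Y)]^{vir}$.

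Finally, for the invariants: $H_*(X)\cong H_*(Y)$ (the pushforward $\iota_*$ is an iso since $K_Y\to Y$ is a vector bundle, hence a homotopy equivalence) and the universal sheaf $\mathfrak F$ on $\mathcal M_c\times X$ restricts, under the identifications above, to $\iota_*$ of the universal sheaf on $\mathcal M_{c'}(Y)\times Y$, so the Chern-class data entering $\mu_2(\gamma,P)=P(c_1(\mathfrak F),c_2(\mathfrak F),\dots)/\gamma$ match on both sides; capping against the equal virtual cycles then yields $DT_4^{\mu_2}(X,\pi^*\mathcal O_Y(1),c,o(\mathcal O))=DT_3(Y,\mathcal O_Y(1),c')$. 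The main obstacle I anticipate is the second step: making the comparison of Kuranishi maps genuinely functorial rather than pointwise, i.e.\ checking that the $L_\infty$-level identification globalizes to an isomorphism of the two perfect obstruction theories over the whole moduli space (not just fiberwise at each $\mathcal F$), and that the self-dual obstruction bundle $Ob_+$ appearing in the $DT_4$ construction is literally $\mathrm{Ext}^2_Y(\mathcal F,\mathcal F)$ as a bundle with its Serre orientation; once that is in place, everything else is bookkeeping with Riemann--Roch and the projection formula.
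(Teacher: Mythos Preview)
Your proposal is correct and follows essentially the same route as the paper: invoke Lemma~\ref{cp supp} to identify the moduli spaces, use Segal's cyclic-completion lemma and its corollary to split the obstruction space with the Kuranishi image in the maximal isotropic $\mathrm{Ext}^2_Y(\mathcal F,\mathcal F)$, and then apply Proposition~\ref{ob=v+v*} and Definition~\ref{virtual cycle when ob=v+v*}. The obstacle you flag about globalizing the pointwise comparison to a perfect obstruction theory is handled in the paper by simply citing Thomas~\cite{th}: the required $\mathcal V^\bullet\to\mathbb L^\bullet_{\mathcal M_c}$ is taken to be Thomas' $DT_3$ perfect obstruction theory transported along the moduli isomorphism, so no separate globalization argument is needed.
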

\begin{remark}
If we have a compact complex smooth 4-fold $X$ (not necessarily Calabi-Yau) containing a
Fano 3-fold $Y$ such that $\mathcal{N}_{Y/X}=K_{Y}$ and $\mathcal{N}_{Y/X}^{*}$ is ample, e.g. $X=P(K_{Y}\oplus\mathcal{O})$.
Then we can define $DT_{4}$ invariants for stable sheaves supported in $Y$ because $X$ contains $K_{Y}$ as its open subset by the renowned theorem of Grauert.
\end{remark}

\subsection{The case of $X=T^{*}S$ and relations with Mukai flops}
We take $X=T^{*}S$ which is a hyper-K\"ahler 4-fold when $S=\mathbb{P}^{2}$.
In this subsection, we consider counting torsion sheaves scheme theoretically supported on $S$.
Let $\mathcal{F}$ be a torsion-free sheaf on a projective surface $(S,\mathcal{O}_{S}(1))$,
$\pi: T^{*}S\rightarrow S$ be the projection and $\iota: S\rightarrow T^{*}S$ be the inclusion of the zero section.

We relate the obstruction theory of sheaf $\iota_{*}(\mathcal{F})$ on $X$ to the obstruction theory of $\mathcal{F}$ on $S$.
By the projection formula \cite{hart},
\begin{equation}\iota_{*}(\mathcal{F})=\iota_{*}(\iota^{*}\pi^{*}\mathcal{F}\otimes_{\mathcal{O}_{S}}\mathcal{O}_{S})
=\pi^{*}\mathcal{F}\otimes_{\mathcal{O}_{X}}\iota_{*}\mathcal{O}_{S},
\nonumber \end{equation}
where $\mathcal{F}$ is a complex of locally free sheaves on $S$.
Then we have a local to global spectral sequence $E_{2}^{p,q}=Ext^{q}_{S}(\mathcal{F},\wedge^{p}\Omega^{1}_{S}\otimes \mathcal{F})\Rightarrow Ext^{*}_{X}(\iota_{*}\mathcal{F},\iota_{*}\mathcal{F})$.
We give a criterion for it to degenerate at $E_{2}$ page.
\begin{lemma}\label{TS lemma1}Let $\mathcal{F}$ be a torsion-free sheaf on $S$. \\
$(1)$ If $Ext_{S}^{2}(\mathcal{F},\mathcal{F})=0$, the above spectral sequence degenerates at $E_{2}$ page.  \\
$(2)$ If the degree of $K_{S}$ is negative with respect to the chosen polarization $\mathcal{O}_{S}(1)$ and $\mathcal{F}$ is slope-stable, then we have $Ext_{S}^{2}(\mathcal{F},\mathcal{F})=0$.
\end{lemma}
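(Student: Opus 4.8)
The plan is to prove both statements by analyzing the local-to-global spectral sequence $E_2^{p,q} = Ext^q_S(\mathcal{F}, \wedge^p \Omega^1_S \otimes \mathcal{F}) \Rightarrow Ext^{p+q}_X(\iota_*\mathcal{F}, \iota_*\mathcal{F})$, noting first that on a surface $p$ only ranges over $0,1,2$ and $q$ over $0,1,2$, so the only possibly nonzero differentials on the $E_2$ page are $d_2 \colon E_2^{p,q} \to E_2^{p+2,q-1}$. For part (1), if $Ext^2_S(\mathcal{F},\mathcal{F}) = 0$, then by Serre duality on $S$ we also get $Ext^0_S(\mathcal{F}, \wedge^2\Omega^1_S\otimes\mathcal{F}) = Ext^0_S(\mathcal{F}, K_S\otimes\mathcal{F}) \cong Ext^2_S(\mathcal{F},\mathcal{F})^* = 0$, and similarly $Ext^2_S(\mathcal{F},\wedge^p\Omega^1_S\otimes\mathcal{F})$ is dual to $Ext^0_S(\mathcal{F}, \wedge^{2-p}\Omega^1_S\otimes\mathcal{F})$. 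I would check row by row that the source and/or target of every $d_2$ differential lands in a vanishing group: the differential $d_2\colon E_2^{0,2}\to E_2^{2,1}$ has zero source; $d_2\colon E_2^{1,2}\to E_2^{3,1}$ has zero target (no $p=3$); $d_2\colon E_2^{0,1}\to E_2^{2,0}$ needs $E_2^{2,0} = Ext^0_S(\mathcal{F}, K_S\otimes\mathcal{F}) = 0$; and so on. Assembling these, all $d_2$ vanish, hence $E_2 = E_\infty$.

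For part (2), I would invoke the slope-stability hypothesis: a slope-stable torsion-free sheaf is simple, so $Ext^0_S(\mathcal{F},\mathcal{F}) = \mathbb{C}$, and by Serre duality $Ext^2_S(\mathcal{F},\mathcal{F}) \cong Hom_S(\mathcal{F}, \mathcal{F}\otimes K_S)^*$. The key point is that a nonzero homomorphism $\phi\colon \mathcal{F}\to \mathcal{F}\otimes K_S$ between slope-stable sheaves of the same rank forces an inequality on slopes: $\mu(\mathcal{F}) \le \mu(\mathcal{F}\otimes K_S) = \mu(\mathcal{F}) + \deg K_S$, so $\deg K_S \ge 0$, contradicting the hypothesis that $\deg K_S < 0$ with respect to $\mathcal{O}_S(1)$. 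Hence $Hom_S(\mathcal{F},\mathcal{F}\otimes K_S) = 0$ and $Ext^2_S(\mathcal{F},\mathcal{F}) = 0$, which then feeds into part (1). One technical care point: since $\mathcal{F}$ may only be torsion-free rather than locally free, I would phrase the slope comparison using the standard fact that a nonzero map of stable sheaves of equal rank and slope is an isomorphism, and more generally $\operatorname{Hom}$ vanishes once the target slope is strictly smaller — here applied after twisting, where the target slope is strictly larger, so one compares in the other direction via the injectivity of $\phi$ onto its image and stability of the quotient.

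I expect the main obstacle to be the careful bookkeeping in part (1): one must confirm that \emph{every} $d_2$ differential is killed, which requires identifying each $E_2^{p,q}$ via $\wedge^0\Omega^1_S = \mathcal{O}_S$, $\wedge^1\Omega^1_S = \Omega^1_S$, $\wedge^2\Omega^1_S = K_S$, then using Serre duality to convert the apparently-nonzero entries $Ext^2_S(\mathcal{F}, \wedge^p\Omega^1_S\otimes\mathcal{F})$ and $Ext^0_S(\mathcal{F},\wedge^p\Omega^1_S\otimes\mathcal{F})$ into the hypothesis $Ext^2_S(\mathcal{F},\mathcal{F})=0$ (or its dual $Hom_S(\mathcal{F},\mathcal{F})$, which is nonzero but is never the target of a surviving differential since $d_2\colon E_2^{-2,2}\to E_2^{0,1}$ has no source). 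Once the entries are correctly tabulated the degeneration is immediate, so the real content is just this linear-algebra/duality accounting, and no deep input beyond Serre duality on $S$ is needed.
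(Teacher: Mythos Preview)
Your proposal is correct and follows essentially the same approach as the paper: for (1) you identify that the only possibly nonzero $d_2$ differentials are $E_2^{0,1}\to E_2^{2,0}$ and $E_2^{0,2}\to E_2^{2,1}$, and kill them using $E_2^{0,2}=Ext^2_S(\mathcal{F},\mathcal{F})=0$ together with the Serre-dual vanishing $E_2^{2,0}=Hom_S(\mathcal{F},\mathcal{F}\otimes K_S)\cong Ext^2_S(\mathcal{F},\mathcal{F})^*=0$; for (2) both you and the paper argue via Serre duality and the standard slope comparison for maps between stable sheaves. The paper's write-up is slightly terser (it does not spell out the Serre duality step for $E_2^{2,0}$ explicitly), but the content is identical.
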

\begin{proof}
$(1)$ We denote $E_{2}^{p,q}=Ext_{S}^{q}(\mathcal{F},\wedge^{p}\Omega^{1}_{S}\otimes \mathcal{F})$. We have
\begin{equation}E_{2}^{p-2,q+1}\rightarrow E_{2}^{p,q}\rightarrow E_{2}^{p+2,q-1}, \nonumber \end{equation}
whose cohomology is $E_{3}^{p,q}$. Then $0\rightarrow E_{2}^{1,q}\rightarrow 0$
yields $E_{3}^{1,q}\cong E_{2}^{1,q}$. Meanwhile, we have
\begin{equation}E_{2}^{0,q+1}\rightarrow E_{2}^{2,q}\rightarrow 0 ,\quad  0\rightarrow E_{2}^{0,q}\rightarrow E_{2}^{2,q-1}.
\nonumber \end{equation}
Under the assumption that $Ext_{S}^{2}(\mathcal{F},\mathcal{F})=0$, we get
\begin{equation}E_{2}^{2,0}=E_{2}^{0,2}=0. \nonumber \end{equation}
Thus the above spectral sequence degenerates at $E_{2}$. \\
${}$ \\
$(2)$ By Serre duality, we have
\begin{equation} Ext_{S}^{2}(\mathcal{F},\mathcal{F})\cong Hom_{\mathcal{O}_{S}}(\mathcal{F},\mathcal{F}\otimes K_{S}).  \nonumber \end{equation}
By assumption,
\begin{equation}\mu(\mathcal{F})=\frac{deg(\mathcal{F})}{rk(\mathcal{F})}>\frac{deg(\mathcal{F}\otimes K_{S})}
{rk(\mathcal{F}\otimes K_{S})}= \mu(\mathcal{F}\otimes K_{S}).\nonumber \end{equation}
If the above homomorphism is not zero, we choose such a nonzero morphism
\begin{equation}f:\mathcal{F}\rightarrow \mathcal{F}\otimes K_{S},  \nonumber \end{equation}
then
\begin{equation}0\neq\mathcal{F}/ker(f)\hookrightarrow \mathcal{F}\otimes K_{S}.  \nonumber \end{equation}
By the stability of $\mathcal{F}$, we have
\begin{equation}\mu(\mathcal{F}/ker(f))\geq\mu(\mathcal{F}).  \nonumber \end{equation}
Thus
\begin{equation}\mu(\mathcal{F}\otimes K_{S})<\mu(\mathcal{F})\leq\mu(\mathcal{F}/ker(f)), \nonumber \end{equation}
which contradicts with the slope semi-stability of $\mathcal{F}\otimes K_{S}$.
\end{proof}
We have to make sure that sheaves scheme theoretically supported on $S$ can not move outside. This can be done by finding conditions such that
\begin{equation}Ext^{0}_{S}(\mathcal{F},\mathcal{F}\otimes \Omega_{S}^{1})=0. \nonumber \end{equation}
If $S=\mathbb{P}^{2}$ and $\mathcal{F}$ is torsion-free slope stable, then $\Omega_{S}^{1}$ is slope stable and the
tensor product $\mathcal{F}\otimes \Omega_{S}^{1}$ is slope semi-stable (see Theorem 3.1.4 \cite{hl}).
The condition $Ext^{0}_{S}(\mathcal{F},\mathcal{F}\otimes \Omega_{S}^{1})=0$ is satisfied by following the argument in the above proof.
When $\mathcal{F}=I$ is an ideal sheaf of points, we have
\begin{lemma}\label{TS lemma2}
Let $\mathcal{F}=I$ be an ideal sheaf of points on $S$. If $h^{0,1}(S)=0$, then
$Ext^{0}_{S}(\mathcal{F},\mathcal{F}\otimes \Omega_{S}^{1})=0$.
\end{lemma}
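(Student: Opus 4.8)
The plan is to compute $\mathrm{Ext}^0_S(\mathcal{F},\mathcal{F}\otimes\Omega^1_S)$ directly when $\mathcal{F}=I$ is the ideal sheaf of a zero-dimensional subscheme $Z\subset S$, using the short exact sequence $0\to I\to\mathcal{O}_S\to\mathcal{O}_Z\to 0$. First I would apply $\mathrm{Hom}_S(-,\,\Omega^1_S)$ and $\mathrm{Hom}_S(I,-\otimes\Omega^1_S)$ to reduce the computation to cohomology of $\Omega^1_S$ and to local $\mathcal{E}xt$ sheaves supported on $Z$. Concretely, $\mathrm{Hom}_S(I,I\otimes\Omega^1_S)$ sits between $\mathrm{Hom}_S(\mathcal{O}_S,I\otimes\Omega^1_S)=H^0(S,I\otimes\Omega^1_S)$ and the contribution of $\mathcal{O}_Z$; tensoring the defining sequence of $I$ by $\Omega^1_S$ and taking $\mathrm{Hom}_S(I,-)$, one gets $H^0(S,I\otimes\Omega^1_S)\hookrightarrow\mathrm{Hom}_S(I,I\otimes\Omega^1_S)$ together with a map into $\mathrm{Hom}_S(I,\mathcal{O}_Z\otimes\Omega^1_S)$, the latter being a space of sections supported on $Z$.

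The key steps, in order, are: (1) observe $H^0(S,I\otimes\Omega^1_S)\subseteq H^0(S,\Omega^1_S)=H^{0,1}(S)^*=0$ by the hypothesis $h^{0,1}(S)=0$, which kills the "global" part; (2) analyze $\mathrm{Hom}_S(I,\mathcal{O}_Z\otimes\Omega^1_S)$: since $\Omega^1_S$ is locally free, this is $\mathrm{Hom}_S(I,\mathcal{O}_Z)\otimes_{\mathcal{O}_Z}(\Omega^1_S|_Z)$, and a nonzero homomorphism $I\to\mathcal{O}_Z$ would have to come from a homomorphism $\mathcal{O}_S\to\mathcal{O}_Z$ (because $I$ and $\mathcal{O}_S$ agree away from the finite set $Z$, and any map from $I$ to a torsion sheaf factors through $\mathcal{O}_S$ after noting $\mathcal{E}xt^1(\mathcal{O}_Z,\mathcal{O}_Z)$ considerations, or more simply by applying $\mathrm{Hom}(-,\mathcal{O}_Z)$ to $0\to I\to\mathcal{O}_S\to\mathcal{O}_Z\to 0$); (3) chase the long exact sequence to conclude any element of $\mathrm{Hom}_S(I,I\otimes\Omega^1_S)$ maps to a section of $\mathcal{O}_Z\otimes\Omega^1_S$ that lifts to $\mathcal{O}_S\otimes\Omega^1_S$, hence comes from $H^0(S,\Omega^1_S)=0$; (4) assemble (1)–(3) to force $\mathrm{Hom}_S(I,I\otimes\Omega^1_S)=0$.

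The main obstacle I expect is step (2)–(3): keeping careful track of the connecting maps so that the "torsion part" $\mathrm{Hom}_S(I,\mathcal{O}_Z\otimes\Omega^1_S)$ does not contribute a genuinely new homomorphism $I\to I\otimes\Omega^1_S$ beyond those pulled back from $\mathcal{O}_S$. The clean way to handle this is to note that restricting any $\varphi\in\mathrm{Hom}_S(I,I\otimes\Omega^1_S)$ to the open set $U=S\setminus Z$ gives an element of $\mathrm{Hom}_U(\mathcal{O}_U,\Omega^1_U)=H^0(U,\Omega^1_U)$, and since $Z$ has codimension $2$ in the surface $S$ (so $\Omega^1_S$ satisfies Hartogs-type extension) one gets $H^0(U,\Omega^1_U)=H^0(S,\Omega^1_S)=0$; therefore $\varphi$ vanishes on $U$, and being a homomorphism between torsion-free sheaves it vanishes identically. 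This codimension-$2$ extension argument is the cleanest route and sidesteps the spectral-sequence bookkeeping entirely; I would present that as the proof, with the $\mathrm{Hom}$-sequence computation mentioned only as motivation.
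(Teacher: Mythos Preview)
Your proposal is correct, and your final ``codimension-2 extension'' argument is genuinely different from the paper's proof. The paper proceeds by first embedding $\mathrm{Hom}_S(I,I\otimes\Omega^1_S)\hookrightarrow\mathrm{Hom}_S(I,\Omega^1_S)$ via the inclusion $I\otimes\Omega^1_S\hookrightarrow\Omega^1_S$, then applying Serre duality to identify $\mathrm{Hom}_S(I,\Omega^1_S)$ with (the dual of) $H^2(S,I\otimes K_S\otimes T_S)$, and finally using the cohomology long exact sequence of $0\to I\to\mathcal{O}_S\to\mathcal{O}_Z\to 0$ tensored with $K_S\otimes T_S$ together with $H^1(\mathcal{O}_Z\otimes K_S\otimes T_S)=0$ to embed this into $H^2(S,K_S\otimes T_S)\cong H^0(S,\Omega^1_S)^*=0$. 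Your route is more elementary: restrict $\varphi\in\mathrm{Hom}_S(I,I\otimes\Omega^1_S)$ to $U=S\setminus Z$, where it becomes a section of $\Omega^1_U$, extend across the codimension-$2$ locus $Z$ to a section of $\Omega^1_S$ (Hartogs for locally free sheaves), conclude $\varphi|_U=0$ from $H^0(S,\Omega^1_S)=0$, and finish by torsion-freeness of $I\otimes\Omega^1_S$. Your approach avoids Serre duality entirely and makes the geometric content transparent; the paper's approach, on the other hand, fits more naturally into the surrounding spectral-sequence computations and produces intermediate identifications (such as the description of $\mathrm{Hom}_S(I,\Omega^1_S)$) that are reused later in the section.
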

\begin{proof}
$0\rightarrow I\rightarrow \mathcal{O}_{S}\rightarrow \mathcal{O}_{Z}\rightarrow 0$ induces
\begin{eqnarray*}0\rightarrow Ext^{0}_{S}(I,I\otimes \Omega_{S}^{1})\rightarrow Ext^{0}_{S}(I,\Omega_{S}^{1})&\cong&
Ext^{2}(\Omega_{S}^{1},I\otimes K_{S}) \\
&=& H^{2}(S,I\otimes K_{S}\otimes T_{S}),
\end{eqnarray*}
\begin{equation}0=H^{1}(S,\mathcal{O}_{Z}\otimes K_{S}\otimes T_{S})\rightarrow H^{2}(S,I\otimes K_{S}\otimes T_{S})\rightarrow
H^{2}(S,K_{S}\otimes T_{S})\cong H^{0}(S,\Omega_{S}^{1}) .\nonumber \end{equation}
Then $h^{1,0}=0\Rightarrow Ext^{0}_{S}(I,I\otimes \Omega_{S}^{1})=0$.
\end{proof}
\begin{proposition}
Under the following assumptions
\begin{equation}\label{vanishing assumption}Ext^{0}_{S}(\mathcal{F},\mathcal{F}\otimes \Omega_{S}^{1})=0, \textrm{ }
Ext^{2}_{S}(\mathcal{F},\mathcal{F})=0,  \end{equation}
which are satisfied when (i) $S$ is del-Pezzo, $\mathcal{F}$ is an ideal sheaf of points on $S$ or (ii) when $S=\mathbb{P}^{2}$, $\mathcal{F}$ is slope-stable torsion-free on $S$, we have canonical isomorphisms
\begin{equation}Ext^{1}_{X}(\iota_{*}\mathcal{F},\iota_{*}\mathcal{F})\cong Ext^{1}_{S}(\mathcal{F},\mathcal{F}),\nonumber \end{equation}
\begin{equation}Ext^{2}_{X}(\iota_{*}\mathcal{F},\iota_{*}\mathcal{F})\cong Ext^{1}_{S}(\mathcal{F},\mathcal{F}\otimes \Omega_{S}^{1}).
\nonumber \end{equation}
\end{proposition}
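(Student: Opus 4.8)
The plan is to extract both isomorphisms directly from the local-to-global spectral sequence
\[ E_2^{p,q} = \mathrm{Ext}^q_S(\mathcal{F}, \wedge^p \Omega^1_S \otimes \mathcal{F}) \ \Longrightarrow\ \mathrm{Ext}^{p+q}_X(\iota_*\mathcal{F}, \iota_*\mathcal{F}) \]
recorded above, after checking that under the two vanishing hypotheses it degenerates and that on each of the antidiagonals $p+q=1$ and $p+q=2$ exactly one $E_2$-term survives. Since $S$ is a surface, $\wedge^p\Omega^1_S = 0$ for $p\geq 3$, so only $p\in\{0,1,2\}$ contributes. First I would invoke Lemma \ref{TS lemma1}(1): the assumption $\mathrm{Ext}^2_S(\mathcal{F},\mathcal{F})=0$ kills $E_2^{0,2}$ and $E_2^{2,0}$ and forces degeneration at $E_2$, so that $E_\infty=E_2$ and $\mathrm{Ext}^{\bullet}_X(\iota_*\mathcal{F},\iota_*\mathcal{F})$ carries a finite filtration whose associated graded pieces are the $E_2^{p,q}$ on the appropriate antidiagonal.

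In degree one the only possibly nonzero graded pieces are $E_2^{0,1}=\mathrm{Ext}^1_S(\mathcal{F},\mathcal{F})$ and $E_2^{1,0}=\mathrm{Ext}^0_S(\mathcal{F},\mathcal{F}\otimes\Omega^1_S)$; the latter vanishes by hypothesis, so the edge homomorphism yields a canonical isomorphism $\mathrm{Ext}^1_X(\iota_*\mathcal{F},\iota_*\mathcal{F})\cong\mathrm{Ext}^1_S(\mathcal{F},\mathcal{F})$. In degree two the possibly nonzero pieces are $E_2^{0,2}=\mathrm{Ext}^2_S(\mathcal{F},\mathcal{F})$, $E_2^{1,1}=\mathrm{Ext}^1_S(\mathcal{F},\mathcal{F}\otimes\Omega^1_S)$ and $E_2^{2,0}=\mathrm{Hom}_S(\mathcal{F},\wedge^2\Omega^1_S\otimes\mathcal{F})=\mathrm{Hom}_S(\mathcal{F},\mathcal{F}\otimes K_S)$. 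The first is zero by hypothesis, and the third is zero because by Serre duality on $S$ one has $\mathrm{Hom}_S(\mathcal{F},\mathcal{F}\otimes K_S)\cong\mathrm{Ext}^2_S(\mathcal{F},\mathcal{F})^{*}=0$. Hence only $E_2^{1,1}$ survives and $\mathrm{Ext}^2_X(\iota_*\mathcal{F},\iota_*\mathcal{F})\cong\mathrm{Ext}^1_S(\mathcal{F},\mathcal{F}\otimes\Omega^1_S)$, again canonically since a single graded piece is nonzero.

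Finally I would record that both vanishing hypotheses hold in the two cases. For case (ii), with $S=\mathbb{P}^2$ and $\mathcal{F}$ slope-stable torsion-free, Lemma \ref{TS lemma1}(2) gives $\mathrm{Ext}^2_S(\mathcal{F},\mathcal{F})=0$ since $\deg K_{\mathbb{P}^2}<0$, while $\mathrm{Ext}^0_S(\mathcal{F},\mathcal{F}\otimes\Omega^1_{\mathbb{P}^2})=0$ follows from the slope stability of $\Omega^1_{\mathbb{P}^2}$ and the slope semistability of $\mathcal{F}\otimes\Omega^1_{\mathbb{P}^2}$ by the argument preceding Lemma \ref{TS lemma2}. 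For case (i), $S$ del Pezzo and $\mathcal{F}$ an ideal sheaf of points: being rank one and torsion-free, $\mathcal{F}$ is automatically slope-stable, and $\deg K_S<0$, so Lemma \ref{TS lemma1}(2) again gives $\mathrm{Ext}^2_S(\mathcal{F},\mathcal{F})=0$; and $h^{0,1}(S)=0$ for a del Pezzo surface, so $\mathrm{Ext}^0_S(\mathcal{F},\mathcal{F}\otimes\Omega^1_S)=0$ by Lemma \ref{TS lemma2}.

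The expected main obstacle is merely the bookkeeping: ensuring that each relevant antidiagonal of $E_\infty$ has exactly one nonzero term, so that the resulting isomorphisms are genuinely canonical rather than just equalities of dimensions, and correctly identifying $E_2^{2,0}$ with $\mathrm{Ext}^2_S(\mathcal{F},\mathcal{F})^{*}$ via Serre duality on $S$. There is no analytic difficulty once the spectral sequence and Lemma \ref{TS lemma1} are in hand.
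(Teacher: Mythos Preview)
Your proposal is correct and follows essentially the same approach as the paper, which simply cites Lemma \ref{TS lemma1}, Lemma \ref{TS lemma2}, and the degenerate spectral sequence. You have supplied the bookkeeping that the paper leaves implicit, including the Serre duality identification $E_2^{2,0}\cong\mathrm{Ext}^2_S(\mathcal{F},\mathcal{F})^{*}$ and the verification of the hypotheses in cases (i) and (ii).
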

\begin{proof}
By Lemma \ref{TS lemma1}, Lemma \ref{TS lemma2} and the degenerate spectral sequence.
\end{proof}
Let $\mathcal{M}_{c}(S,\mathcal{O}_{S}(1))$ be the Gieseker moduli space for $(S,\mathcal{O}_{S}(1))$ with Chern character $c\in H^{even}(S)$.
Under assumption (\ref{vanishing assumption}), we denote
\begin{equation}\mathcal{M}_{c}^{S_{cpn}}\triangleq\{\iota_{*}\mathcal{F} \textrm{ } | \mathcal{F}\in \mathcal{M}_{c}(S,\mathcal{O}_{S}(1))\}\cong \mathcal{M}_{c}(S,\mathcal{O}_{S}(1)) \nonumber \end{equation}
to be the component(s) of a moduli space of sheaves on $X$ which can be identified with the Gieseker moduli space $\mathcal{M}_{c}(S,\mathcal{O}_{S}(1))$.
We notice that $\mathcal{M}_{c}(S,\mathcal{O}_{S}(1))$ is smooth by assumption (\ref{vanishing assumption})
and use the philosophy of defining $DT_{4}$ virtual cycles (Definition \ref{virtual cycle when Mc smooth})
to define the virtual cycle of $\mathcal{M}_{c}^{S_{cpn}}$, i.e. we define the $DT_{4}$ virtual cycle of $\mathcal{M}_{c}^{S_{cpn}}$
to be the Poincar\'{e} dual of the Euler class of a self-dual obstruction bundle.
\begin{proposition}
Under assumption (\ref{vanishing assumption}), $\mathcal{M}_{c}^{S_{cpn}}\cong \mathcal{M}_{c}(S,\mathcal{O}_{S}(1))$ and the virtual dimension $v.d_{\mathbb{R}}(\mathcal{M}_{c}^{S_{cpn}})\triangleq 2ext^{1}(\iota_{*}\mathcal{F},\iota_{*}\mathcal{F})-ext^{2}(\iota_{*}\mathcal{F},\iota_{*}\mathcal{F})<0$,
where $\mathcal{F}\in\mathcal{M}_{c}(S,\mathcal{O}_{S}(1))$.
\end{proposition}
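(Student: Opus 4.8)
The plan is to reduce everything to a computation with the two canonical isomorphisms established just above, namely $Ext^{1}_{X}(\iota_{*}\mathcal{F},\iota_{*}\mathcal{F})\cong Ext^{1}_{S}(\mathcal{F},\mathcal{F})$ and $Ext^{2}_{X}(\iota_{*}\mathcal{F},\iota_{*}\mathcal{F})\cong Ext^{1}_{S}(\mathcal{F},\mathcal{F}\otimes\Omega^{1}_{S})$, together with the degeneration of the local-to-global spectral sequence $E_{2}^{p,q}=Ext^{q}_{S}(\mathcal{F},\wedge^{p}\Omega^{1}_{S}\otimes\mathcal{F})\Rightarrow Ext^{*}_{X}(\iota_{*}\mathcal{F},\iota_{*}\mathcal{F})$ under assumption (\ref{vanishing assumption}). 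First I would record that $\mathcal{M}_{c}^{S_{cpn}}\cong\mathcal{M}_{c}(S,\mathcal{O}_{S}(1))$ is immediate from Lemma \ref{cp supp}-type reasoning: since $Ext^{0}_{S}(\mathcal{F},\mathcal{F}\otimes\Omega^{1}_{S})=0$, no sheaf scheme-theoretically supported on the zero section deforms out of $S$, so the component of the moduli of sheaves on $X$ containing such $\iota_{*}\mathcal{F}$ is exactly $\{\iota_{*}\mathcal{F}\mid\mathcal{F}\in\mathcal{M}_{c}(S,\mathcal{O}_{S}(1))\}$, and $\iota_{*}$ is a closed immersion on moduli. Smoothness of $\mathcal{M}_{c}(S,\mathcal{O}_{S}(1))$ under (\ref{vanishing assumption}) is standard: $Ext^{2}_{S}(\mathcal{F},\mathcal{F})=0$ kills obstructions.

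Next I would compute the virtual dimension. By definition $v.d_{\mathbb{R}}(\mathcal{M}_{c}^{S_{cpn}})=2\,\mathrm{ext}^{1}(\iota_{*}\mathcal{F},\iota_{*}\mathcal{F})-\mathrm{ext}^{2}(\iota_{*}\mathcal{F},\iota_{*}\mathcal{F})$, which by the two isomorphisms above equals $2\,\mathrm{ext}^{1}_{S}(\mathcal{F},\mathcal{F})-\mathrm{ext}^{1}_{S}(\mathcal{F},\mathcal{F}\otimes\Omega^{1}_{S})$. The key is then to show $\mathrm{ext}^{1}_{S}(\mathcal{F},\mathcal{F}\otimes\Omega^{1}_{S})>2\,\mathrm{ext}^{1}_{S}(\mathcal{F},\mathcal{F})$. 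I would approach this via Euler characteristics: under (\ref{vanishing assumption}) and the vanishing $Ext^{0}_{S}(\mathcal{F},\mathcal{F})=\mathbb{C}$, $Ext^{2}_{S}(\mathcal{F},\mathcal{F})=0$ (so $\mathrm{ext}^{1}_{S}(\mathcal{F},\mathcal{F})=1-\chi(\mathcal{F},\mathcal{F})$), and $Ext^{0}_{S}(\mathcal{F},\mathcal{F}\otimes\Omega^{1}_{S})=0$ with $Ext^{2}_{S}(\mathcal{F},\mathcal{F}\otimes\Omega^{1}_{S})\cong Ext^{0}_{S}(\mathcal{F}\otimes\Omega^{1}_{S},\mathcal{F}\otimes K_{S})^{*}$ which I expect to vanish on $\mathbb{P}^{2}$ by a slope-stability argument as in Lemma \ref{TS lemma1}(2) and Lemma \ref{TS lemma2} (since $\Omega^{1}_{\mathbb{P}^{2}}$ twisted by a stable sheaf is semistable and $K_{S}$ has negative degree), so $\mathrm{ext}^{1}_{S}(\mathcal{F},\mathcal{F}\otimes\Omega^{1}_{S})=-\chi(\mathcal{F},\mathcal{F}\otimes\Omega^{1}_{S})$. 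Thus the inequality to prove becomes $-\chi(\mathcal{F},\mathcal{F}\otimes\Omega^{1}_{S})>2(1-\chi(\mathcal{F},\mathcal{F}))$, i.e. $2\chi(\mathcal{F},\mathcal{F})-\chi(\mathcal{F},\mathcal{F}\otimes\Omega^{1}_{S})>2$, a purely numerical statement to be checked by Hirzebruch-Riemann-Roch on $S$ using $c_{1}(\Omega^{1}_{S})=-c_{1}(S)$ and $c_{2}(\Omega^{1}_{S})=c_{2}(S)$; for $S=\mathbb{P}^{2}$ one has $c_{1}(S)^{2}=9$, $c_{2}(S)=3$, and the left-hand side should come out manifestly positive and at least $3$ for any nonzero Chern character $c$ with $\mathcal{M}_{c}\neq\emptyset$.

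The main obstacle will be the HRR bookkeeping: writing $\mathrm{ch}(\mathcal{F})=(r,D,\chi')$ on $S$, one must expand $\chi(\mathcal{F},\mathcal{F})$ and $\chi(\mathcal{F},\mathcal{F}\otimes\Omega^{1}_{S})$ in terms of $r$, $D\cdot H$, $D^{2}$, $\mathrm{ch}_{2}$, and the topology of $S$, and verify the combination $2\chi(\mathcal{F},\mathcal{F})-\chi(\mathcal{F},\mathcal{F}\otimes\Omega^{1}_{S})$ is $\geq 3$; the potential subtlety is whether this holds for \emph{all} relevant $c$ (including large discriminant, where $\mathrm{ext}^{1}$ is large) rather than just generically, but since both sides grow with the discriminant at controlled rates — the $\Omega^{1}_{S}$-twisted version picks up an extra factor from $\mathrm{rk}(\Omega^{1}_{S})=2$ minus correction terms — the gap should widen, not close. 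A secondary technical point is justifying $Ext^{2}_{S}(\mathcal{F},\mathcal{F}\otimes\Omega^{1}_{S})=0$ in the stable torsion-free case on $\mathbb{P}^{2}$ (for the ideal-sheaf case on a general del Pezzo this is Lemma \ref{TS lemma2}), which I would handle by the same Serre-duality-plus-slope-stability mechanism already used in the proof of Lemma \ref{TS lemma1}(2), noting $\mu(\mathcal{F}\otimes\Omega^{1}_{S}\otimes K_{S})<\mu(\mathcal{F})$ whenever $\deg K_{S}<0$.
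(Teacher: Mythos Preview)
Your approach is on the right track---it is the same Hirzebruch--Riemann--Roch reduction the paper uses---but you stop short of the two simplifications that make the argument clean and general, and as written your proof only treats $S=\mathbb{P}^{2}$ rather than arbitrary $S$ satisfying assumption~(\ref{vanishing assumption}).

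First, the vanishing $Ext^{2}_{S}(\mathcal{F},\mathcal{F}\otimes\Omega^{1}_{S})=0$ you worry about is automatic: by Serre duality it is dual to $Hom_{S}(\mathcal{F},\mathcal{F}\otimes K_{S}\otimes T_{S})$, and on a surface $K_{S}\otimes T_{S}\cong\Omega^{1}_{S}$, so this is $Ext^{0}_{S}(\mathcal{F},\mathcal{F}\otimes\Omega^{1}_{S})^{*}=0$ by the assumption itself. No extra slope argument is needed.

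Second, if you actually carry out the HRR computation for general $S$ (rather than plugging in $c_{1}^{2}=9$, $c_{2}=3$), the combination collapses: using that the codimension-one part of $\mathrm{ch}(\mathcal{F})^{\vee}\mathrm{ch}(\mathcal{F})$ vanishes, one finds
\[
\mathrm{ext}^{1}_{S}(\mathcal{F},\mathcal{F}\otimes\Omega^{1}_{S})=2\,\mathrm{ext}^{1}_{S}(\mathcal{F},\mathcal{F})+r^{2}e(S)-2,
\]
so $v.d_{\mathbb{R}}=2-r^{2}e(S)$ identically. This is the formula in the paper, and it makes the dependence on $c$ disappear entirely---there is no ``large discriminant'' case to worry about.

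Finally, to conclude $2-r^{2}e(S)<0$ for general $S$ one needs $e(S)\geq 3$. The paper extracts this from assumption~(\ref{vanishing assumption}) by observing that the trace map $Ext^{i}_{S}(\mathcal{F},\mathcal{F})\to H^{i}(S,\mathcal{O}_{S})$ is surjective (Lemma~10.1.3 of \cite{hl}), so $Ext^{2}_{S}(\mathcal{F},\mathcal{F})=0$ forces $h^{0,2}(S)=0$ and, via the $\Omega^{1}_{S}$-twisted assumption, $h^{0,1}(S)=0$; hence $e(S)=2+h^{1,1}(S)\geq 3$. Your proposal does not supply this step for $S\neq\mathbb{P}^{2}$.
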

\begin{proof}
By the Hirzebruch-Riemann-Roch theorem and the assumption (\ref{vanishing assumption}), we have
\begin{equation}dim_{\mathbb{C}} Ext^{1}_{S}(\mathcal{F},\mathcal{F}\otimes \Omega_{S}^{1})=2dim_{\mathbb{C}} Ext^{1}_{S}(\mathcal{F},\mathcal{F})+
r^{2}e(S)-2 ,\nonumber \end{equation}
where $r=rk(\mathcal{F})\geq1$, $e(S)$ is the Euler characteristic of $S$.
Thus,
\begin{equation}v.d_{\mathbb{R}}(\mathcal{M}_{c}^{S_{cpn}})\triangleq 2ext^{1}(\iota_{*}\mathcal{F},\iota_{*}\mathcal{F})-ext^{2}(\iota_{*}\mathcal{F},\iota_{*}\mathcal{F})=2-r^{2}e(S). \nonumber \end{equation}
Finally, as the trace map is surjective (Lemma 10.1.3 \cite{hl}), the assumption (\ref{vanishing assumption}) implies that $h^{0,i}(S)=0$ for $i=1,2$, and $v.d_{\mathbb{R}}(\mathcal{M}_{c}^{S_{cpn}})=2-r^{2}(2+h^{1,1})<0$.
\end{proof}
\begin{remark}
The negative virtual dimension makes the $DT_{4}$ virtual cycle of $\mathcal{M}_{c}^{S_{cpn}}$ vanish.
\end{remark}
${}$ \\
\textbf{The reduced counting}.
More carefully, we note that the self-dual obstruction bundle has a trivial subbundle, which could make the virtual cycle vanish. We consider the trace-free part of the obstruction bundle and define the reduced $DT_{4}$ virtual cycle.
\begin{definition}\label{red vir cycle}
Let $X=T^{*}S$ and $(S,\mathcal{O}_{S}(1))$ be a projective surface. We assume sheaves in $\mathcal{M}_{c}(S,\mathcal{O}_{S}(1))$ satisfy (\ref{vanishing assumption}) and the self-dual trace-free obstruction bundle of $\mathcal{M}_{c}^{S_{cpn}}$ is orientable.

The reduced $DT_{4}$ virtual cycle of $\mathcal{M}_{c}^{S_{cpn}}$ is
the Poincar\'{e} dual of the Euler class of a self-dual trace-free obstruction bundle
\begin{equation}[\mathcal{M}_{c}^{S_{cpn}}]^{vir}_{red}\triangleq PD\big(e(Ob_{0,+})\big)\in
H_{r_{red}}(\mathcal{M}_{c}\big(S,\mathcal{O}_{S}(1)\big),\mathbb{Z}), \nonumber \end{equation}
where $Ob_{0,+}$ is a self-dual trace-free obstruction bundle, $r_{red}=(1-r^{2})(2+h^{1,1}(S))$ and $\mathcal{M}_{c}(S,\mathcal{O}_{S}(1))$ is a Gieseker moduli space for $(S,\mathcal{O}_{S}(1))$.
\end{definition}
We first show reduced virtual cycles for high rank sheaves in $\mathcal{M}_{c}(S,\mathcal{O}_{S}(1))$ vanish.
\begin{proposition}
$[\mathcal{M}_{c}^{S_{cpn}}]^{vir}_{red}=0$, if $c|_{H^{0}(S)}\geq2$.
\end{proposition}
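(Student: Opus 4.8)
The plan is to reduce the vanishing to a purely numerical statement: for $r:=c|_{H^{0}(S)}\geq 2$ the reduced virtual dimension $r_{red}$ is strictly negative, so that the homology group $H_{r_{red}}(\mathcal{M}_{c}(S,\mathcal{O}_{S}(1)),\mathbb{Z})$ in which $[\mathcal{M}_{c}^{S_{cpn}}]^{vir}_{red}$ is defined to live is itself zero; equivalently, the real rank of the self-dual trace-free obstruction bundle $Ob_{0,+}$ strictly exceeds $\dim_{\mathbb{R}}\mathcal{M}_{c}(S,\mathcal{O}_{S}(1))$, so that its Euler class automatically vanishes.

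First I would recall the ingredients already in place. Under assumption (\ref{vanishing assumption}) we have $\mathcal{M}_{c}^{S_{cpn}}\cong\mathcal{M}_{c}(S,\mathcal{O}_{S}(1))$, smooth, with tangent space $Ext^{1}_{S}(\mathcal{F},\mathcal{F})$ and obstruction space $Ext^{2}_{X}(\iota_{*}\mathcal{F},\iota_{*}\mathcal{F})\cong Ext^{1}_{S}(\mathcal{F},\mathcal{F}\otimes\Omega_{S}^{1})$. The obstruction bundle $Ob$ therefore has fibre $Ext^{1}_{S}(\mathcal{F},\mathcal{F}\otimes\Omega_{S}^{1})$; the trace map, split by $\frac{1}{r}\,\mathrm{id}$, identifies its trace-free subbundle $Ob_{0}$ with the complex bundle of fibre $Ext^{1}_{S}(\mathcal{F},\mathcal{F}\otimes\Omega_{S}^{1})_{0}$, and $Ob_{0,+}$ is a real form of $Ob_{0}$, whence $\mathrm{rk}_{\mathbb{R}}Ob_{0,+}=\dim_{\mathbb{C}}Ext^{1}_{S}(\mathcal{F},\mathcal{F}\otimes\Omega_{S}^{1})_{0}$.

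Next I would run the count. Since assumption (\ref{vanishing assumption}) forces $h^{0,i}(S)=0$ for $i=1,2$, the trace part of $Ext^{1}_{S}(\mathcal{F},\mathcal{F})$ is $H^{1}(S,\mathcal{O}_{S})=0$ while that of $Ext^{1}_{S}(\mathcal{F},\mathcal{F}\otimes\Omega_{S}^{1})$ is $H^{1}(S,\Omega_{S}^{1})$, of dimension $h^{1,1}(S)$. Combining this with the Hirzebruch--Riemann--Roch identity $\dim_{\mathbb{C}}Ext^{1}_{S}(\mathcal{F},\mathcal{F}\otimes\Omega_{S}^{1})=2\dim_{\mathbb{C}}Ext^{1}_{S}(\mathcal{F},\mathcal{F})+r^{2}e(S)-2$ established above and with $e(S)=2+h^{1,1}(S)$, one gets
\begin{equation}
\dim_{\mathbb{C}}Ext^{1}_{S}(\mathcal{F},\mathcal{F}\otimes\Omega_{S}^{1})_{0}=2\dim_{\mathbb{C}}Ext^{1}_{S}(\mathcal{F},\mathcal{F})+(r^{2}-1)\big(h^{1,1}(S)+2\big),\nonumber
\end{equation}
so that $\mathrm{rk}_{\mathbb{R}}Ob_{0,+}-\dim_{\mathbb{R}}\mathcal{M}_{c}(S,\mathcal{O}_{S}(1))=(r^{2}-1)(h^{1,1}(S)+2)=-r_{red}$. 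For $r\geq 2$ this is at least $3(h^{1,1}(S)+2)\geq 6>0$, so $r_{red}<0$ and $H_{r_{red}}(\mathcal{M}_{c}(S,\mathcal{O}_{S}(1)),\mathbb{Z})=0$; equivalently $e(Ob_{0,+})$ sits in a cohomological degree exceeding $\dim_{\mathbb{R}}\mathcal{M}_{c}(S,\mathcal{O}_{S}(1))$ and hence vanishes. This yields $[\mathcal{M}_{c}^{S_{cpn}}]^{vir}_{red}=0$, whether or not $\mathcal{M}_{c}(S,\mathcal{O}_{S}(1))$ is empty.

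The argument is essentially bookkeeping; the only point that needs care is checking that $\mathrm{rk}_{\mathbb{R}}Ob_{0,+}$ is exactly as computed, i.e. that in the local-to-global spectral sequence $E_{2}^{p,q}=Ext^{q}_{S}(\mathcal{F},\wedge^{p}\Omega_{S}^{1}\otimes\mathcal{F})\Rightarrow Ext^{*}_{X}(\iota_{*}\mathcal{F},\iota_{*}\mathcal{F})$ the contributions $Ext^{2}_{S}(\mathcal{F},\mathcal{F})$ and $Ext^{0}_{S}(\mathcal{F},\mathcal{F}\otimes\Omega_{S}^{1})$ really drop out, so that $Ext^{2}_{X}(\iota_{*}\mathcal{F},\iota_{*}\mathcal{F})\cong Ext^{1}_{S}(\mathcal{F},\mathcal{F}\otimes\Omega_{S}^{1})$; this is exactly what assumption (\ref{vanishing assumption}), together with the del-Pezzo (resp.\ $\mathbb{P}^{2}$) slope argument, secures, and it was already invoked in the non-reduced case.
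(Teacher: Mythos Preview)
Your argument is correct and is essentially the same as the paper's: the paper records in Definition \ref{red vir cycle} that $r_{red}=(1-r^{2})(2+h^{1,1}(S))$, and its one-line proof simply observes this is negative for $r\geq 2$. You have just rederived that formula in more detail before drawing the same conclusion.
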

\begin{proof}
The reduced virtual dimension $r_{red}=(1-r^{2})(2+h^{1,1}(S))< 0$ if $r\geq2$.
\end{proof}
Under assumption (\ref{vanishing assumption}) and $r=1$, we have $r_{red}=0$.
The corresponding reduced $DT_{4}$ virtual cycle is zero dimensional.
For ideal sheaves of curves on $S$ (line bundles on $S$),
\begin{equation}Ext^{1}_{S}(\mathcal{F},\mathcal{F})\cong H^{1}(S,\mathcal{O})=0.  \nonumber \end{equation}
Then the moduli space is just one point and the reduced $DT_{4}$ invariant is 1 in this case.
\begin{proposition}
$[\mathcal{M}_{c}^{S_{cpn}}]^{vir}_{red}=1$, where $c=(1,c|_{H^{2}(S)},0)$.
\end{proposition}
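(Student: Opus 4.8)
The plan is to identify the moduli space $\mathcal{M}_c(S,\mathcal{O}_S(1))$ explicitly in the case $c=(1,c|_{H^2(S)},0)$, check that the assumptions (\ref{vanishing assumption}) hold so that the reduced $DT_4$ virtual cycle is defined and of dimension $r_{red}=0$, and then evaluate it. Since $c|_{H^0(S)}=1$ and $c|_{H^4(S)}=0$, a torsion-free sheaf $\mathcal F$ with $ch(\mathcal F)=c$ of rank one with zero second Chern class is a line bundle $L$ on $S$ (no points blown up), i.e. $\mathcal F\cong L$ with $c_1(L)=c|_{H^2(S)}$. First I would observe that here $\mathcal F=L$ is automatically slope-stable (rank one), $K_S$ has negative degree, $h^{1,0}(S)=0$, so both vanishings $Ext^0_S(\mathcal F,\mathcal F\otimes\Omega^1_S)=0$ and $Ext^2_S(\mathcal F,\mathcal F)=0$ in (\ref{vanishing assumption}) hold; actually these are exactly the hypotheses listed in the proposition's setup, so this is immediate.

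Next I would compute $Ext^1_S(\mathcal F,\mathcal F)$. For a line bundle, $\mathcal{E}nd(L)\cong\mathcal{O}_S$, hence $Ext^i_S(L,L)\cong H^i(S,\mathcal O_S)$. In particular $Ext^1_S(\mathcal F,\mathcal F)\cong H^1(S,\mathcal O_S)=0$ (the excerpt already notes $h^{0,1}(S)=0$ in this context, and on $\mathbb P^2$ or a del Pezzo this is standard). Therefore the tangent space of $\mathcal{M}_c(S,\mathcal O_S(1))$ at $L$ vanishes and, together with the smoothness established earlier, $\mathcal{M}_c(S,\mathcal O_S(1))$ is a single reduced point, so $\mathcal{M}_c^{S_{cpn}}\cong\mathrm{pt}$. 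Consequently the self-dual trace-free obstruction bundle $Ob_{0,+}$ sits over a point, and from $Ext^2_X(\iota_*\mathcal F,\iota_*\mathcal F)\cong Ext^1_S(\mathcal F,\mathcal F\otimes\Omega^1_S)$ together with the trace decomposition $Ext^2_S(\mathcal F,\mathcal F\otimes\Omega^1_S)=H^0(S,\Omega^1_S)\oplus Ext^2_{0}$ and the vanishing $H^0(S,\Omega^1_S)=H^{1,0}(S)=0$, the trace-free obstruction space is all of $Ext^1_S(L,L\otimes\Omega^1_S)=H^1(S,\Omega^1_S)$; but the reduced virtual dimension computation $r_{red}=(1-r^2)(2+h^{1,1}(S))=0$ for $r=1$ shows the self-dual half of this space is zero-dimensional. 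Hence $Ob_{0,+}$ is the zero bundle over a point, its Euler class is $1$, and $[\mathcal{M}_c^{S_{cpn}}]^{vir}_{red}=PD(e(Ob_{0,+}))=1\in H_0(\mathrm{pt},\mathbb Z)$.

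The only genuinely delicate point is making the bookkeeping of the obstruction space precise: one must confirm that after passing to the trace-free part the "self-dual" subspace (the half-dimensional real subspace on which $Q_{Serre}$ is positive definite) really does collapse to a point, rather than merely having the correct Euler-class degree $0$. This follows because $ext^1_0(\iota_*\mathcal F,\iota_*\mathcal F)=0$ forces the whole trace-free obstruction complex to be concentrated so that its self-dual real obstruction bundle has rank $r_{red}=0$; with rank zero there is a unique orientation and the Euler class is the multiplicative unit. I would also remark, as the proposition's statement implicitly assumes, that orientability of the self-dual trace-free obstruction bundle is vacuous here since the bundle is trivial of rank zero. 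This completes the evaluation.
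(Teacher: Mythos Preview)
Your approach matches the paper's: identify $\mathcal{F}$ as a line bundle, observe $Ext^1_S(\mathcal{F},\mathcal{F})\cong H^1(S,\mathcal{O}_S)=0$ so the moduli space is a single reduced point, and conclude the reduced invariant is $1$. The paper's argument is exactly this, stated in two lines.

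However, your bookkeeping of the trace map contains an error that makes the middle of your argument internally inconsistent. You write a decomposition in which the trace factor is $H^0(S,\Omega^1_S)$, conclude it vanishes, and hence that the trace-free obstruction space is \emph{all} of $Ext^1_S(L,L\otimes\Omega^1_S)\cong H^1(S,\Omega^1_S)$. This is the wrong target: the trace map on $Ext^1_S(\mathcal{F},\mathcal{F}\otimes\Omega^1_S)$ lands in $H^1(S,\Omega^1_S)$, not $H^0$. For a line bundle $L$ one has $\mathcal{E}nd(L)\cong\mathcal{O}_S$, so this trace map is an \emph{isomorphism} and the trace-free part is \emph{zero}, not all of $H^1(S,\Omega^1_S)$. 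Your claim that the trace-free obstruction space has complex dimension $h^{1,1}(S)\geq 1$ directly contradicts your subsequent assertion (via $r_{red}=0$ and $ext^1=0$) that the self-dual half has rank zero: a complex space of dimension $h^{1,1}$ has self-dual real part of real rank $h^{1,1}$, not $0$.

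The fix is simple: once you know the moduli space is a point and the trace-free obstruction space vanishes, $Ob_{0,+}$ is the zero bundle over a point and its Euler class is $1$. Alternatively, as you do, the virtual dimension formula alone gives the rank of $Ob_{0,+}$ as $2\,ext^1 - r_{red} = 0$; but then drop the incorrect intermediate claim about the trace-free space, since it contradicts this.
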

For ideal sheaves of points on $S$, we have
\begin{proposition}
Let $S$ be a projective surface with $h^{0,i}(S)=0$, $i=1,2$. We take $I$ to be an ideal sheaf of points on $S$,
then we have a canonical isomorphism
\begin{equation}Ext^{1}_{S}(I,I\otimes \Omega_{S}^{1})_{0}\cong Ext^{1}_{S}(\mathcal{O}_{Z},\mathcal{O}_{Z}\otimes \Omega_{S}^{1}).
\nonumber\end{equation}
\end{proposition}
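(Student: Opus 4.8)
The plan is to run a double diagram chase built on the structure sequence $0\to I\xrightarrow{\iota}\mathcal{O}_S\to\mathcal{O}_Z\to 0$ and on its twist by the locally free sheaf $\Omega_S^1$, namely $0\to I\otimes\Omega_S^1\to\Omega_S^1\to\mathcal{O}_Z\otimes\Omega_S^1\to 0$, using $Hom_S(I,\mathcal{O}_Z\otimes\Omega_S^1)$ as a common bridge. First I would record the unconditional identification of the right-hand side: applying $Hom_S(-,\mathcal{O}_Z\otimes\Omega_S^1)$ to the structure sequence gives a connecting homomorphism $\delta\colon Hom_S(I,\mathcal{O}_Z\otimes\Omega_S^1)\to Ext^1_S(\mathcal{O}_Z,\mathcal{O}_Z\otimes\Omega_S^1)$, and $\delta$ is an isomorphism because $Ext^1_S(\mathcal{O}_S,\mathcal{O}_Z\otimes\Omega_S^1)=H^1(\mathcal{O}_Z\otimes\Omega_S^1)=0$ by finite support, while the preceding map $H^0(\mathcal{O}_Z\otimes\Omega_S^1)=Hom_S(\mathcal{O}_S,\mathcal{O}_Z\otimes\Omega_S^1)\to Hom_S(I,\mathcal{O}_Z\otimes\Omega_S^1)$ is zero: a homomorphism from $\mathcal{O}_S$ is multiplication by a global section, and $I$ annihilates the $\mathcal{O}_Z$-module $\mathcal{O}_Z\otimes\Omega_S^1$. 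This step uses nothing about $S$.

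Next I would treat the left-hand side. Since $\mathcal{H}om_S(I,\mathcal{O}_S)\cong\mathcal{O}_S$ (as $I$ is a rank-one torsion-free sheaf on the smooth surface $S$ with $\mathcal{O}_S/I$ supported in codimension two) and $\Omega_S^1$ is locally free, we have $Hom_S(I,\Omega_S^1)=H^0(S,\Omega_S^1)=0$ by $h^{0,1}(S)=0$. Applying $Hom_S(I,-)$ to the twisted sequence therefore yields an exact sequence
\[
0\longrightarrow Hom_S(I,\mathcal{O}_Z\otimes\Omega_S^1)\xrightarrow{\beta}Ext^1_S(I,I\otimes\Omega_S^1)\xrightarrow{\pi_*}Ext^1_S(I,\Omega_S^1),
\]
where $\pi_*$ is induced by $I\otimes\Omega_S^1\hookrightarrow\Omega_S^1$, so $\mathrm{im}(\beta)=\ker(\pi_*)$. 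Because $I$ has rank one, the trace splits $Ext^1_S(I,I\otimes\Omega_S^1)=H^1(\Omega_S^1)\oplus Ext^1_S(I,I\otimes\Omega_S^1)_0$; write $s$ for the inclusion of the first summand, induced by $\mathrm{id}_I$. A direct computation shows $\pi_*\circ s$ equals the pull-back $\iota^*\colon H^1(\Omega_S^1)=Ext^1_S(\mathcal{O}_S,\Omega_S^1)\to Ext^1_S(I,\Omega_S^1)$, because postcomposing $\mathrm{id}_I$ with $\iota$ returns $\iota$, which is the canonical generator under $\mathcal{H}om_S(I,\mathcal{O}_S)\cong\mathcal{O}_S$. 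As $\mathcal{O}_Z$ has finite length on $S$, $\mathcal{E}xt^i_S(\mathcal{O}_Z,\Omega_S^1)=0$ for $i<2$, so $Ext^1_S(\mathcal{O}_Z,\Omega_S^1)=0$ and $\iota^*$ is injective. Hence $s\bigl(H^1(\Omega_S^1)\bigr)\cap\ker(\pi_*)=0$, and the projection $p\colon Ext^1_S(I,I\otimes\Omega_S^1)\twoheadrightarrow Ext^1_S(I,I\otimes\Omega_S^1)_0$ restricts to an \emph{injection} on $\ker(\pi_*)=\mathrm{im}(\beta)$.

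It remains to see that this injection $p\circ\beta\colon Hom_S(I,\mathcal{O}_Z\otimes\Omega_S^1)\to Ext^1_S(I,I\otimes\Omega_S^1)_0$ is also onto, and I would obtain this from a dimension count, with $n=\mathrm{length}(Z)$. The hypotheses $h^{0,1}(S)=h^{0,2}(S)=0$ make assumption~(\ref{vanishing assumption}) hold for $\mathcal{F}=I$: the part $Ext^0_S(I,I\otimes\Omega_S^1)=0$ is Lemma~\ref{TS lemma2}, while $Ext^2_S(I,I)$, being Serre-dual to $Hom_S(I,I\otimes K_S)=H^0(S,\mathcal{H}om_S(I,I)\otimes K_S)=H^0(S,K_S)=0$, vanishes. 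Hence the Riemann--Roch formula of the preceding Proposition applies: with $r=1$, together with $\dim Ext^1_S(I,I)=2n$ (Riemann--Roch once more, using $\mathrm{hom}(I,I)=1$ and $\mathrm{ext}^2_S(I,I)=0$), $e(S)=2+h^{1,1}(S)$ and $h^1(S,\Omega_S^1)=h^{1,1}(S)$, it gives $\dim Ext^1_S(I,I\otimes\Omega_S^1)_0=4n$. On the other hand $Hom_S(I,\mathcal{O}_Z\otimes\Omega_S^1)=H^0\bigl(\mathcal{H}om_S(I,\mathcal{O}_Z)\otimes\Omega_S^1\bigr)$ has dimension $2\dim Hom_S(I,\mathcal{O}_Z)=4n$, since tensoring a length-$\ell$ sheaf supported on points by the rank-two bundle $\Omega_S^1$ yields length $2\ell$, and $\dim Hom_S(I,\mathcal{O}_Z)=\dim T_{[Z]}\mathrm{Hilb}^n(S)=2n$ by Fogarty's theorem. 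Equal dimensions force $p\circ\beta$ to be an isomorphism; composing its inverse with $\delta$ produces the asserted canonical isomorphism $Ext^1_S(I,I\otimes\Omega_S^1)_0\cong Ext^1_S(\mathcal{O}_Z,\mathcal{O}_Z\otimes\Omega_S^1)$.

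The step I expect to be most delicate is the identity $\pi_*\circ s=\iota^*$: it involves the trace decomposition of the complex $R\mathcal{H}om_S(I,I)$ rather than merely its cohomology, so it has to be checked on representative complexes — one must verify that $\pi_*$ carries the $\mathcal{O}_S$-summand of $R\mathcal{H}om_S(I,I)\otimes\Omega_S^1$ onto the canonical $\mathcal{H}om_S(I,\mathcal{O}_S)\otimes\Omega_S^1=\Omega_S^1$ inside $R\mathcal{H}om_S(I,\mathcal{O}_S)\otimes\Omega_S^1$. Everything else — the two connecting-homomorphism isomorphisms and the dimension count — is routine, using only Serre duality, Riemann--Roch, the smoothness of $\mathrm{Hilb}^n(S)$, and the vanishing of higher cohomology and of $\mathcal{E}xt^i_S(\mathcal{O}_Z,-)$ for $i<2$ coming from finite length.
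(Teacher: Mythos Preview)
Your argument is correct, but it follows a different path from the paper's. The paper applies the ``dual'' pair of functors: $Hom_S(\mathcal{O}_Z,\cdot)$ to the twisted sequence $0\to I\otimes\Omega_S^1\to\Omega_S^1\to\mathcal{O}_Z\otimes\Omega_S^1\to 0$, and then $Hom_S(\cdot,I\otimes\Omega_S^1)$ to the structure sequence $0\to I\to\mathcal{O}_S\to\mathcal{O}_Z\to 0$. After the relevant vanishings (including $H^2(S,I\otimes\Omega_S^1)=0$ from $h^{1,0}(S)=0$), this produces directly a short exact sequence
\[
0\longrightarrow H^1(S,\Omega_S^1)\longrightarrow Ext^1_S(I,I\otimes\Omega_S^1)\longrightarrow Ext^1_S(\mathcal{O}_Z,\mathcal{O}_Z\otimes\Omega_S^1)\longrightarrow 0,
\]
and the paper observes that the first injection is precisely the trace inclusion, so the quotient is canonically $Ext^1_S(I,I\otimes\Omega_S^1)_0$.

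Your approach instead uses $Hom_S(\cdot,\mathcal{O}_Z\otimes\Omega_S^1)$ and $Hom_S(I,\cdot)$, with $Hom_S(I,\mathcal{O}_Z\otimes\Omega_S^1)$ as bridge, and finishes with a dimension count invoking Fogarty's smoothness of $\mathrm{Hilb}^n(S)$ and the Riemann--Roch identity from the preceding Proposition. The paper's route is leaner in that it never needs a dimension count, and in particular does not appeal to Fogarty. Your route, on the other hand, makes the ``delicate'' compatibility $\pi_*\circ s=\iota^*$ explicit (which the paper asserts only as ``the first injective map is the inclusion of the trace factor''), and your justification for it is essentially the observation that $\mathrm{Hom}_{D(S)}(\mathcal{O}_S,R\mathcal{H}om(I,\mathcal{O}_S))\cong Hom_S(I,\mathcal{O}_S)\cong\mathbb{C}$, forcing the two nonzero maps to agree. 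You could have avoided the dimension count altogether by continuing your exact sequence one more step and showing $\pi_*$ is surjective onto $\iota^*\bigl(H^1(\Omega_S^1)\bigr)$, which would give the splitting directly; but as written your argument is complete.
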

\begin{proof}
We denote an ideal sheaf of $n$-points on $S$ by $I$. Taking cohomology of the short exact sequence of sheaves
\begin{equation}0\rightarrow I\otimes \Omega_{S}^{1}\rightarrow\Omega_{S}^{1}\rightarrow \mathcal{O}_{Z}\otimes \Omega_{S}^{1}\rightarrow 0,
\nonumber \end{equation}
where $\mathcal{O}_{Z}$ is the structure sheaf of $n$-points, we have
\begin{equation}\label{cotangent of S equ 0} 0\rightarrow H^{0}(S,\mathcal{O}_{Z}\otimes \Omega_{S}^{1})\rightarrow H^{1}(S,I\otimes \Omega_{S}^{1})\rightarrow H^{1}(S,\Omega_{S}^{1})\rightarrow 0, \end{equation}
and
\begin{equation}\label{cotangent of S equ 1} H^{2}(S,I\otimes\Omega_{S}^{1})\cong H^{2}(S,\Omega_{S}^{1})\cong H^{0}(S,\Omega_{S}^{1})=0.\end{equation}
Applying $Hom_{\mathcal{O}_{S}}(\mathcal{O}_{Z},\cdot)$ to
\begin{equation}0\rightarrow I\otimes \Omega_{S}^{1}\rightarrow\Omega_{S}^{1}\rightarrow \mathcal{O}_{Z}\otimes \Omega_{S}^{1}\rightarrow 0,
\nonumber \end{equation}
we get
\begin{equation}\label{cotangent of S equ 2}Ext^{0}_{S}(\mathcal{O}_{Z},I\otimes \Omega_{S}^{1})=0,
Ext^{0}_{S}(\mathcal{O}_{Z},\mathcal{O}_{Z}\otimes \Omega_{S}^{1})\cong Ext^{1}_{S}(\mathcal{O}_{Z},I\otimes \Omega_{S}^{1}), \end{equation}
\begin{equation}Ext^{1}_{S}(\mathcal{O}_{Z},\mathcal{O}_{Z}\otimes \Omega_{S}^{1})\cong Ext^{2}_{S}(\mathcal{O}_{Z},I\otimes \Omega_{S}^{1}).
\nonumber \end{equation}
Applying $Hom_{\mathcal{O}_{S}}(\cdot,I\otimes \Omega_{S}^{1})$ to
\begin{equation}0\rightarrow I\rightarrow\mathcal{O}_{S}\rightarrow \mathcal{O}_{Z} \rightarrow 0 ,
\nonumber \end{equation}
we have
\begin{equation}\rightarrow Ext^{i}_{S}(\mathcal{O}_{Z},I\otimes \Omega_{S}^{1})\rightarrow
Ext^{i}_{S}(\mathcal{O}_{S},I\otimes \Omega_{S}^{1})\rightarrow  Ext^{i}_{S}(I,I\otimes \Omega_{S}^{1})\rightarrow.\nonumber \end{equation}
By the condition $h^{0,1}(S)=0$, we have $Ext^{0}_{S}(I,I\otimes \Omega_{S}^{1})=0$ by Lemma \ref{TS lemma2}.
Using (\ref{cotangent of S equ 1}), (\ref{cotangent of S equ 2}), we can get
\begin{equation}0\rightarrow Ext^{0}_{S}(\mathcal{O}_{Z},\mathcal{O}_{Z}\otimes \Omega_{S}^{1})\rightarrow
H^{1}(S,I\otimes\Omega_{S}^{1})\rightarrow \nonumber \end{equation}
\begin{equation}\rightarrow Ext^{1}_{S}(I,I\otimes \Omega_{S}^{1})\rightarrow Ext^{1}_{S}(\mathcal{O}_{Z},\mathcal{O}_{Z}\otimes \Omega_{S}^{1})\rightarrow 0 .\nonumber \end{equation}
By (\ref{cotangent of S equ 0}), we get
\begin{equation}0\rightarrow H^{1}(S,\Omega_{S}^{1})\rightarrow Ext^{1}_{S}(I,I\otimes \Omega_{S}^{1})\rightarrow Ext^{1}_{S}(\mathcal{O}_{Z},\mathcal{O}_{Z}\otimes \Omega_{S}^{1})\rightarrow0.\nonumber\end{equation}
where the first injective map is the inclusion of the trace factor.
\end{proof}
\begin{remark}
We expect the reduced $DT_{4}$ virtual cycles will still be zero from a calculation for ideal sheaves of one point.
\end{remark}
The vanishing of (reduced) $DT_{4}$ virtual cycles should not upset readers as it
might be a hint for the following question on the invariance of $DT_{4}$ invariants under Mukai flops.

If we take a Calabi-Yau 4-fold $X$ which contains an embedded $\mathbb{P}^{2}$ with normal bundle to be $T^{*}\mathbb{P}^{2}$,
there is a birational transformation
\begin{equation}\phi: X\rightarrow X^{+}, \nonumber \end{equation}
called Mukai flop given by the composition of the blow-up of $X$ along the $\mathbb{P}^{2}$ and the blow-down of the exceptional divisor in the other ruling \cite{mukai}. The resulting 4-fold $X^{+}$ is still Calabi-Yau and contains an embedded $\mathbb{P}^{2}$.
Outside these $\mathbb{P}^{2}$'s, $X^{+}\backslash\mathbb{P}^{2}$ is isomorphic to $X\backslash\mathbb{P}^{2}$. Near these $\mathbb{P}^{2}$'s, the (local) $DT_{4}$ invariants for $T^{*}\mathbb{P}^{2}$ are zero by the above calculations.
We therefore ask the following question.
\begin{question}
Are $DT_{4}$ invariants are invariant under Mukai flops ?  \\
More precisely,
given a Mukai flop $\phi: X^{+}\rightarrow X$ between two projective Calabi-Yau 4-folds, cohomology classes $c=(1,*,*,*,*)\in H^{even}(X)$, an ample line bundle $\mathcal{O}_{X}(1)$ and an orientation data $\mathcal{O}(\mathcal{L})$ with respect to $(X,\mathcal{O}_{X}(1),c)$, does the equality
\begin{equation}DT_{4}(X,\mathcal{O}_{X}(1),c,\mathcal{O}(\mathcal{L}))=DT_{4}(X^{+},\mathcal{O}_{X^{+}}(1),\phi^{*}c,\mathcal{O}(\mathcal{L}^{+}))  \nonumber \end{equation}
hold for an isomorphism
$\phi^{*}: H^{*}(X,\mathbb{Z})\cong H^{*}(X^{+},\mathbb{Z})$ of cohomology rings (see e.g. Hu and Zhang \cite{huzhang}),
any ample line bundle $\mathcal{O}_{X^{+}}(1)$ on $X^{+}$, and certain orientation data $\mathcal{O}(\mathcal{L}^{+})$ with respect to $(X^{+},\mathcal{O}_{X^{+}}(1),\phi^{*}c)$ ?
\end{question}
\begin{remark} ${}$ \\
1. By the work of Huybrechts \cite{h}, if $X$ is a projective hyper-K\"{a}hler 4-fold, its Mukai flop $X^{+}$ is deformation equivalent to $X$. The invariance can be deduced from the deformation invariance of $DT_{4}$ invariants. \\
2. For general Chern character $c$, we should have a similar result for a suitable polarization $\mathcal{O}_{X^{+}}(1)$. This would rely on a good understanding of wall-crossing formulas for $DT_{4}$ invariants. \\
3. By the works of Kawamata \cite{kawamata} and Namikawa \cite{namikawa}, there is an equivalence of categories $D^{b}(X^{+})\cong D^{b}(X)$ given by Fourier-Mukai transformations. We expect this equivalence and wall-crossing formulas for $DT_{4}$ invariants will answer this question.
\end{remark}

\subsection{The case of $X=Tot(L_{1}\oplus L_{2}\rightarrow S)$ and G\"{o}ttsche-Carlsson-Okounkov formula}
Parallel to the previous subsection, we consider counting torsion sheaves on the total space
$X=Tot(L_{1}\oplus L_{2}\rightarrow S)$ of some rank two bundle (such that $K_{S}\cong L_{1}\otimes L_{2}$) over a projective surface $S$,
which are scheme theoretically supported on the zero section $S$.

We take $\mathcal{F}$ to be a torsion-free sheaf on $S$ and
denote $\iota: S\rightarrow X$ to be the inclusion of the zero section.
The comparison of deformation-obstruction theories for $\iota_{*}\mathcal{F}$ on $X$ and
$\mathcal{F}$ on $S$ can be similarly obtained as before.
\begin{proposition}\label{prop on local S}
Let $\mathcal{F}$ be a torsion-free slope stable sheaf on a projective surface $(S,\mathcal{O}_{S}(1))$
and $\iota_{*}\mathcal{F}$ be its push-forward to
$X=Tot(L_{1}\oplus L_{2}\rightarrow S)$ with $K_{S}\cong L_{1}\otimes L_{2}$. \\
If $L_{i}\cdot\mathcal{O}_{S}(1)<0$ ($i=1,2$) and $K_{S}\cdot\mathcal{O}_{S}(1)<0$, then there exist canonical isomorphisms
\begin{equation}Ext^{1}_{X}(\iota_{*}\mathcal{F},\iota_{*}\mathcal{F})\cong Ext^{1}_{S}(\mathcal{F},\mathcal{F}), \nonumber \end{equation}
\begin{equation}Ext^{2}_{X}(\iota_{*}\mathcal{F},\iota_{*}\mathcal{F})\cong
Ext^{1}_{S}(\mathcal{F},\mathcal{F}\otimes L_{1})\oplus
Ext^{1}_{S}(\mathcal{F},\mathcal{F}\otimes L_{1})^{*},  \nonumber \end{equation}
under which $Ext^{1}_{S}(\mathcal{F},\mathcal{F}\otimes L_{1})$ is a
maximal isotropic subspace of $Ext^{2}_{X}(\iota_{*}\mathcal{F},\iota_{*}\mathcal{F})$ with respect to the Serre duality pairing.
\end{proposition}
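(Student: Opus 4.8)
The plan is to follow the same strategy that proved the analogous Fano-threefold statement (the lemma on $K_Y$ above), namely: resolve $\iota_*\mathcal{F}$ by a Koszul-type complex coming from the bundle $L_1\oplus L_2$, run the local-to-global spectral sequence for $Ext^*_X(\iota_*\mathcal{F},\iota_*\mathcal{F})$, use the two vanishing hypotheses $L_i\cdot\mathcal{O}_S(1)<0$ and $K_S\cdot\mathcal{O}_S(1)<0$ to kill the offending terms, and then identify the surviving pieces with $Ext$-groups on $S$. Concretely, since $\iota_*\mathcal{F}=\pi^*\mathcal{F}\otimes_{\mathcal{O}_X}\iota_*\mathcal{O}_S$ and $\iota_*\mathcal{O}_S$ is resolved on $X$ by the Koszul complex of the tautological section of $\pi^*(L_1\oplus L_2)$, one gets
\begin{equation}
\mathcal{E}xt^q_X(\iota_*\mathcal{F},\iota_*\mathcal{F})=\bigoplus_{p}\,\iota_*\big(\mathcal{E}xt^q_S(\mathcal{F},\mathcal{F})\otimes\wedge^p(L_1\oplus L_2)^\vee\big)
\nonumber
\end{equation}
after pushing forward, and hence a spectral sequence
\begin{equation}
E_2^{p,q}=\bigoplus_{a+b=p}Ext^q_S\big(\mathcal{F},\mathcal{F}\otimes L_1^{-a}\otimes L_2^{-b}\big)\ \Rightarrow\ Ext^{p+q}_X(\iota_*\mathcal{F},\iota_*\mathcal{F}).
\nonumber
\end{equation}

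Next I would show that most $E_2$ entries vanish. Slope-stability of $\mathcal{F}$ plus $\mu(L_i)<0$ forces $Hom_S(\mathcal{F},\mathcal{F}\otimes L_i^{-1})=0$ and, via Serre duality $Ext^2_S(\mathcal{F},\mathcal{F}\otimes N)\cong Hom_S(\mathcal{F},\mathcal{F}\otimes N^{-1}\otimes K_S)^*$ together with $K_S\cdot\mathcal{O}_S(1)<0$ (so that $N^{-1}\otimes K_S$ still has negative slope when $N$ is effective-ish), that $Ext^2_S(\mathcal{F},\mathcal{F})=0$ and $Ext^2_S(\mathcal{F},\mathcal{F}\otimes L_i^{-1})=0$, exactly as in Lemma~\ref{TS lemma1}. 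The argument is the elementary one: a nonzero map $\mathcal{F}\to\mathcal{F}\otimes N$ with $\mu(N)\le 0$ contradicts stability by comparing $\mu(\mathcal{F}/\ker)$. This leaves only $E_2^{0,1}=Ext^1_S(\mathcal{F},\mathcal{F})$ and the two terms $Ext^1_S(\mathcal{F},\mathcal{F}\otimes L_1^{-1})$, $Ext^1_S(\mathcal{F},\mathcal{F}\otimes L_2^{-1})$ contributing in total degree $2$; since $K_S\cong L_1\otimes L_2$, Serre duality on $S$ gives $Ext^1_S(\mathcal{F},\mathcal{F}\otimes L_2^{-1})\cong Ext^1_S(\mathcal{F},\mathcal{F}\otimes L_1^{-1}\otimes K_S)^*=Ext^1_S(\mathcal{F},\mathcal{F}\otimes L_1)^*$ — wait, more precisely $L_2^{-1}=L_1\otimes K_S^{-1}$, so one should rewrite this carefully; after the dust settles the two degree-two contributions are $Ext^1_S(\mathcal{F},\mathcal{F}\otimes L_1)$ and its Serre dual, and all differentials into/out of these entries land in already-vanished groups, so the sequence degenerates and we obtain
\begin{equation}
Ext^1_X(\iota_*\mathcal{F},\iota_*\mathcal{F})\cong Ext^1_S(\mathcal{F},\mathcal{F}),\qquad Ext^2_X(\iota_*\mathcal{F},\iota_*\mathcal{F})\cong Ext^1_S(\mathcal{F},\mathcal{F}\otimes L_1)\oplus Ext^1_S(\mathcal{F},\mathcal{F}\otimes L_1)^*.
\nonumber
\end{equation}

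Finally I would check that $Ext^1_S(\mathcal{F},\mathcal{F}\otimes L_1)$ is isotropic for the Serre pairing on $X$. This is the only genuinely structural point: the $4$-fold Serre pairing $Ext^2_X\otimes Ext^2_X\to Ext^4_X(\iota_*\mathcal{F},\iota_*\mathcal{F})\cong\mathbb{C}$, when restricted to the copy of $Ext^1_S(\mathcal{F},\mathcal{F}\otimes L_1)$, factors through a composition landing in $Ext^{?}_S(\mathcal{F},\mathcal{F}\otimes L_1\otimes L_1)$ paired to $Ext^4_S(\mathcal{F},\mathcal{F}\otimes L_1^{\otimes2})$ or more precisely into an $Ext$-group on the surface $S$ of cohomological degree exceeding $2=\dim S$, hence zero — the same mechanism that made $Ext^2_Y(\mathcal{F},\mathcal{F})$ maximal isotropic in the $K_Y$ case, where the obstruction pairing landed in $Ext^4_Y(\mathcal{F},\mathcal{F})=0$. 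I expect the main obstacle to be precisely this isotropy/maximality claim: one must track the Yoneda product through the spectral-sequence identification and verify that the relevant cup product lands in a vanishing summand $Ext^{p}_S(\mathcal{F},\mathcal{F}\otimes L_1^{\otimes 2})$ with $p>2$ (equivalently, that the "$L_1$–weight" of the product is $2$ while $\wedge^p(L_1\oplus L_2)^\vee$ only has weights $0,1,2$ spread over cohomological degrees that force the degree-$4$ total to vanish on the $L_1^{\otimes2}$-piece). Since $\dim_{\mathbb{C}}Ext^1_S(\mathcal{F},\mathcal{F}\otimes L_1)=\tfrac12\dim_{\mathbb{C}}Ext^2_X(\iota_*\mathcal{F},\iota_*\mathcal{F})$ by construction, isotropic automatically implies maximal isotropic, so once the pairing-vanishing is established we are done; invoking Proposition~\ref{ob=v+v*} then yields existence of $\overline{\mathcal{M}}^{DT_4}_c$ and the natural orientation in this setting as well.
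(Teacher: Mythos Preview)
Your approach is exactly the paper's: run the Koszul/local-to-global spectral sequence, kill terms by stability, and read off the answer. The paper's proof is a three-line sketch pointing to $E_2^{p,q}=Ext^q_S(\mathcal{F},\mathcal{F}\otimes\wedge^p(L_1\oplus L_2))$ and degeneration. Your write-up is more detailed and the isotropy discussion is a genuine addition (the paper does not spell it out).

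There is one concrete slip you should fix. In the spectral sequence the twist is by $\wedge^p$ of the \emph{normal} bundle $L_1\oplus L_2$, not its dual: resolving $\iota_*\mathcal{O}_S$ by $\pi^*\wedge^\bullet(L_1\oplus L_2)^\vee$ and applying $\pi^*\dashv\pi_*$ gives
\[
E_2^{p,q}=Ext^q_S\big(\mathcal{F},\mathcal{F}\otimes\wedge^p(L_1\oplus L_2)\big),
\]
so the relevant twists are $L_1,L_2,L_1L_2=K_S$, not their inverses. This matters because your stated vanishing ``$\mu(L_i)<0$ forces $Hom_S(\mathcal{F},\mathcal{F}\otimes L_i^{-1})=0$'' is false: $L_i^{-1}$ has \emph{positive} degree and there is no reason such homs vanish. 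What you actually need, and what the correct sign delivers, is $Hom_S(\mathcal{F},\mathcal{F}\otimes L_i)=0$ and $Hom_S(\mathcal{F},\mathcal{F}\otimes K_S)=0$, both immediate from stability and negative degree; the $Ext^2$ vanishings then follow by Serre duality on $S$ combined with $L_i^{-1}K_S=L_j$. With this corrected, your $E_2^{1,0}$ and $E_2^{2,0}$ columns genuinely vanish, degeneration follows, and $Ext^1_S(\mathcal{F},\mathcal{F}\otimes L_2)\cong Ext^1_S(\mathcal{F},\mathcal{F}\otimes L_1)^*$ drops out of Serre duality as you intended. Your isotropy argument is then fine: the Yoneda product of two classes in the $\wedge^1$-piece supported on $L_1$ lands in the $L_1^{\otimes2}$-component of $\wedge^2(L_1\oplus L_2)=K_S$, which is zero, so the pairing vanishes there.
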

\begin{proof}
As $\mathcal{F}$ is torsion-free slope stable, the tensor $\mathcal{F}\otimes L$ with a line bundle is also slope stable.
We have $Ext^{0}_{S}(\mathcal{F},\mathcal{F}\otimes L)=0$ if $L\cdot\mathcal{O}_{S}(1)<0$
(see e.g. Proposition 1.2.7 of \cite{hl}).
One can then easily show the spectral sequence
\begin{equation}E_{2}=Ext^{*}_{S}(\mathcal{F},\mathcal{F}\otimes\wedge^{*}
(L_{1}\oplus L_{2}))\Rightarrow E_{\infty}=
Ext^{*}_{X}(\iota_{*}\mathcal{F},\iota_{*}\mathcal{F})\nonumber \end{equation}
degenerates at $E_{2}$ page.
\end{proof}
We similarly denote
\begin{equation}\mathcal{M}_{c}^{S_{cpn}}=\{\iota_{*}\mathcal{F}\textrm{ } |\textrm{ } \mathcal{F}\in \mathcal{M}_{c}(S,\mathcal{O}_{S}(1))  \}
\cong \mathcal{M}_{c}(S,\mathcal{O}_{S}(1)) \nonumber \end{equation}
to be the component(s) of a moduli of sheaves on $X$ which can be identified with the Gieseker moduli space
$\mathcal{M}_{c}(S,\mathcal{O}_{S}(1))$ for $(S,\mathcal{O}_{S}(1))$ with Chern character $c\in H^{even}(S)$
(we assume every Gieseker semi-stable sheaf is slope stable).
As $\mathcal{M}_{c}(S,\mathcal{O}_{S}(1))$ is smooth
\footnote{With the set-up of Proposition \ref{prop on local S}, we have
$Ext^{2}(\mathcal{F},\mathcal{F})\cong Ext^{0}(\mathcal{F},\mathcal{F}\otimes K_{S})^{*}=0$.},
the obstruction sheaf $Ob$ of $\mathcal{M}_{c}^{S_{cpn}}$ is a vector bundle endowed with
a non-degenerate quadratic form (Serre duality pairing).

By Proposition \ref{prop on local S}, the obstruction bundle has an maximal isotropic subbundle whose fiber over $\mathcal{F}$
can be identified with $Ext^{1}_{S}(\mathcal{F},\mathcal{F}\otimes L_{1})$. With respect to the natural complex orientation, we identify
a self-dual obstruction bundle $Ob_{+}$ with the maximal isotropic subbundle.
\begin{definition}\label{def of DT4 vc for Tot of rank2}
Let $(S,\mathcal{O}_{S}(1))$ be a projetive surface and $L_{i}$ ($i=1,2$) be two holomorphic line bundles on $S$ such that
$K_{S}\cong L_{1}\otimes L_{2}$, $L_{i}\cdot\mathcal{O}_{S}(1)<0$ ($i=1,2$) and $K_{S}\cdot\mathcal{O}_{S}(1)<0$.
Let $\mathcal{M}_{c}(S,\mathcal{O}_{S}(1))$ be a Gieseker moduli space with Chern character $c\in H^{even}(S)$ such that every Giseker semi-stable
sheaf is slope stable.

The $DT_{4}$ virtual cycle $[\mathcal{M}_{c}^{S_{cpn}}]^{vir}$ of $\mathcal{M}_{c}^{S_{cpn}}$ (with respect to the natural complex orientation)
is the Poincar\'{e} dual of the Euler class of the maximal isotropic subbundle $Ob_{+}$.
\end{definition}
\begin{remark}
The real virtual dimension of $\mathcal{M}_{c}^{S_{cpn}}$ is
\begin{equation}v.d_{\mathbb{R}}(\mathcal{M}_{c}^{S_{cpn}})=2ext^{1}(\iota_{*}\mathcal{F},\iota_{*}\mathcal{F})-
ext^{2}(\iota_{*}\mathcal{F},\iota_{*}\mathcal{F})
=2(1-\frac{r^{2}}{2}L_{1}\cdot L_{2}), \nonumber \end{equation}
where $r=c|_{H^{0}(S)}$ is the rank of $\mathcal{F}$.
\end{remark}
Note that we always have $v.d_{\mathbb{R}}(\mathcal{M}_{c}^{S_{cpn}})\leq0$ if $h^{1,0}(S)=0$.
\begin{lemma}
With the same set-up as in Definition \ref{def of DT4 vc for Tot of rank2}, we have
\begin{equation}L_{1}\cdot L_{2}=2(h^{1}(S,L_{1})+\chi(\mathcal{O}_{S})). \nonumber \end{equation}
Furthermore, $L_{1}\cdot L_{2}\geq 2$ if $h^{1,0}(S)=0$.
\end{lemma}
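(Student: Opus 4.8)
The plan is to compute $L_1\cdot L_2$ via Riemann--Roch on the surface $S$, using the constraint $K_S\cong L_1\otimes L_2$. First I would apply Hirzebruch--Riemann--Roch to the line bundle $L_1$ on $S$: we have
\begin{equation}
\chi(S,L_1)=\chi(\mathcal{O}_S)+\tfrac{1}{2}L_1\cdot(L_1-K_S).
\nonumber
\end{equation}
Now substitute $K_S=L_1+L_2$ (writing line bundle classes additively) to get $L_1-K_S=-L_2$, hence $L_1\cdot(L_1-K_S)=-L_1\cdot L_2$, so $\chi(S,L_1)=\chi(\mathcal{O}_S)-\tfrac{1}{2}L_1\cdot L_2$.

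Next I would identify the individual cohomology groups $H^i(S,L_1)$. Since the hypotheses of the surrounding subsection give $L_1\cdot\mathcal{O}_S(1)<0$, the line bundle $L_1$ has no sections: $H^0(S,L_1)=0$. For $H^2(S,L_1)$, Serre duality gives $H^2(S,L_1)\cong H^0(S,K_S\otimes L_1^{-1})^*=H^0(S,L_2)^*$, which vanishes because $L_2\cdot\mathcal{O}_S(1)<0$ as well. Therefore $\chi(S,L_1)=-h^1(S,L_1)$, and combining with the Riemann--Roch expression above yields
\begin{equation}
-h^1(S,L_1)=\chi(\mathcal{O}_S)-\tfrac{1}{2}L_1\cdot L_2,
\nonumber
\end{equation}
i.e. $L_1\cdot L_2=2\bigl(h^1(S,L_1)+\chi(\mathcal{O}_S)\bigr)$, which is the first claimed identity.

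For the final inequality, assume $h^{1,0}(S)=0$. Then $\chi(\mathcal{O}_S)=1-h^{0,1}(S)+h^{0,2}(S)$. Since the trace map is surjective (as invoked elsewhere in this section, cf.\ Lemma 10.1.3 of \cite{hl}), the vanishing conditions on $\mathrm{Ext}$ groups force $h^{0,1}(S)=h^{0,2}(S)=0$, so in fact $\chi(\mathcal{O}_S)=1$; more simply, $h^{1,0}(S)=0$ gives $h^{0,1}(S)=0$, and $h^{0,2}(S)\geq 0$ always, so $\chi(\mathcal{O}_S)\geq 1$. Combined with $h^1(S,L_1)\geq 0$, the identity just proved gives $L_1\cdot L_2=2(h^1(S,L_1)+\chi(\mathcal{O}_S))\geq 2$. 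The only step requiring any care is the bookkeeping of the additive versus multiplicative notation for line bundles and the sign in Serre duality; there is no substantial obstacle, as everything reduces to Riemann--Roch plus the ampleness-type hypotheses already in force.
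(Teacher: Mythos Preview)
Your proof is correct and follows essentially the same approach as the paper: apply Hirzebruch--Riemann--Roch to $L_1$, use the negativity hypotheses $L_i\cdot\mathcal{O}_S(1)<0$ together with Serre duality to kill $H^0$ and $H^2$, and substitute $K_S=L_1+L_2$. The paper phrases the vanishing via $\mathrm{Ext}^i_S(\mathcal{F},\mathcal{F}\otimes L_1)$ for $\mathcal{F}$ a line bundle (invoking Proposition~1.2.7 of \cite{hl}), but for line bundles this reduces exactly to your direct argument on $H^i(S,L_1)$.
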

\begin{proof}
We take $\mathcal{F}=L$ to be a line bundle, then $Ext_{S}^{*}(\mathcal{F},\mathcal{F}\otimes L_{1})\cong H^{*}(S,L_{1})$.
Meanwhile, by $Ext_{S}^{0}(\mathcal{F},\mathcal{F}\otimes L_{1})=0$ and
$Ext_{S}^{2}(\mathcal{F},\mathcal{F}\otimes L_{1})\cong Ext_{S}^{0}(\mathcal{F},\mathcal{F}\otimes L_{2})^{*}=0$ (Proposition 1.2.7 \cite{hl}),
we have $H^{0}(L_{i})=H^{2}(L_{i})=0$ for $i=1,2$. From the Hirzebruch-Riemann-Roch theorem, we obtain
\begin{equation}-h^{1}(S,L_{1})=\chi(\mathcal{O}_{S})+\frac{1}{2}L_{1}\cdot L_{1}-\frac{1}{2}L_{1}\cdot K_{S}. \nonumber \end{equation}
Using the relation $K_{S}\cong L_{1}\otimes L_{2}$, we finally get $L_{1}\cdot L_{2}=2(h^{1}(S,L_{1})+\chi(\mathcal{O}_{S}))$.
\end{proof}
When the virtual dimension is negative, we have vanishing of virtual cycles.
\begin{corollary}
If $c|_{H^{0}(S)}\geq2$ or $h^{1}(S,L_{1})\geq1$, then $[\mathcal{M}_{c}^{S_{cpn}}]^{vir}=0$ provided that $h^{1,0}(S)=0$.
\end{corollary}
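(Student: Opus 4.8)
The plan is to reduce the statement to a pure dimension count, feeding the explicit virtual dimension formula into the numerical lemma just proved.

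First I would collect the two inputs already at hand. From the Remark following Definition \ref{def of DT4 vc for Tot of rank2}, the real virtual dimension is $v.d_{\mathbb{R}}(\mathcal{M}_{c}^{S_{cpn}}) = 2\big(1 - \tfrac{r^{2}}{2}\,L_{1}\cdot L_{2}\big)$, where $r = c|_{H^{0}(S)}\geq 1$ is the rank of $\mathcal{F}$; and from the preceding Lemma, $L_{1}\cdot L_{2} = 2\big(h^{1}(S,L_{1}) + \chi(\mathcal{O}_{S})\big)$. Since $h^{1,0}(S) = 0$ gives $\chi(\mathcal{O}_{S}) = 1 + h^{2,0}(S)\geq 1$, we obtain $L_{1}\cdot L_{2}\geq 2$.

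Next I would check in both cases that the virtual dimension is strictly negative. If $r\geq 2$, then $\tfrac{r^{2}}{2}\,L_{1}\cdot L_{2}\geq 2\cdot 2 = 4 > 1$; if instead $h^{1}(S,L_{1})\geq 1$, then $L_{1}\cdot L_{2}\geq 2(1+1) = 4$, so $\tfrac{r^{2}}{2}\,L_{1}\cdot L_{2}\geq \tfrac{1}{2}\cdot 4 = 2 > 1$ (using $r\geq 1$). In either case $v.d_{\mathbb{R}}(\mathcal{M}_{c}^{S_{cpn}}) < 0$. Finally I would translate this into vanishing of the Euler class: by Proposition \ref{prop on local S} and smoothness of $\mathcal{M}_{c}(S,\mathcal{O}_{S}(1))$, the space $\mathcal{M}_{c}^{S_{cpn}}\cong \mathcal{M}_{c}(S,\mathcal{O}_{S}(1))$ is a compact complex manifold of dimension $\mathrm{ext}^{1}_{S}(\mathcal{F},\mathcal{F})$ and $Ob_{+}$ is a complex vector bundle of rank $\mathrm{ext}^{1}_{S}(\mathcal{F},\mathcal{F}\otimes L_{1})$, with $2\big(\dim_{\mathbb{C}}\mathcal{M}_{c}^{S_{cpn}} - \mathrm{rk}_{\mathbb{C}}Ob_{+}\big) = v.d_{\mathbb{R}} < 0$; hence $\mathrm{rk}_{\mathbb{C}}Ob_{+} > \dim_{\mathbb{C}}\mathcal{M}_{c}^{S_{cpn}}$, so $e(Ob_{+}) = 0$ in $H^{*}(\mathcal{M}_{c}^{S_{cpn}})$ for degree reasons, and therefore $[\mathcal{M}_{c}^{S_{cpn}}]^{vir} = PD\big(e(Ob_{+})\big) = 0$ by Definition \ref{def of DT4 vc for Tot of rank2}.

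The argument has essentially no obstacle; the only delicate point is that the boundary case $r = 1$, $h^{1}(S,L_{1}) = 0$, where $v.d_{\mathbb{R}} = 0$ and the cycle need not vanish (cf. the ideal-sheaves-of-points discussion), must be genuinely excluded by each hypothesis — which the strict inequalities above confirm. Everything else is bookkeeping with the identifications of Proposition \ref{prop on local S}.
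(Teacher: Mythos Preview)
Your proposal is correct and follows essentially the same approach as the paper: the paper simply states ``When the virtual dimension is negative, we have vanishing of virtual cycles'' immediately before the Corollary, and you have made this dimension count explicit using the Remark on $v.d_{\mathbb{R}}$ and the preceding Lemma on $L_{1}\cdot L_{2}$. One minor remark: compactness of $\mathcal{M}_{c}(S,\mathcal{O}_{S}(1))$ is not needed for the Euler class to vanish by degree reasons, so you could drop that adjective; also, the paper offers as a second perspective (after the Corollary) the trace-map cosection $Ob_{+}\twoheadrightarrow (\mathcal{O})^{\oplus h^{1}(S,L_{1})}$, which gives an alternative reason for vanishing in the $h^{1}(S,L_{1})\geq 1$ case, but this is not the intended proof of the Corollary itself.
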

From another perspective, we note that the self-dual obstruction bundle $Ob_{+}$ has a surjective morphism
\begin{equation}tr: Ob_{+}\twoheadrightarrow (\mathcal{O}_{\mathcal{M}_{c}^{S_{cpn}}})^{\oplus h^{1}(S,L_{1})},  \nonumber \end{equation}
\begin{equation}tr: Ob_{+}|_{\mathcal{F}}=Ext_{S}^{1}(\mathcal{F},\mathcal{F}\otimes L_{1})\rightarrow H^{1}(S,L_{1}),   \nonumber \end{equation}
which is given by the trace map (see Lemma 10.1.3 \cite{hl}). If $h^{1}(S,L_{1})\geq1$, $Ob_{+}$ has an trivial subbundle
which makes the $DT_{4}$ virtual cycle vanish.
\begin{definition}
With the same set-up as in Definition \ref{def of DT4 vc for Tot of rank2} and further assuming $h^{1,0}(S)=0$,
the reduced self-dual obstruction bundle is the kernel $Ker(tr)$ of the above trace map. The reduced $DT_{4}$ virtual cycle
$[\mathcal{M}_{c}^{S_{cpn}}]_{red}^{vir}$ is its Euler class.
\begin{remark}
When $h^{1}(S,L_{1})=0$ (e.g. $S=\mathbb{P}^{2}$, $L_{1}=\mathcal{O}_{\mathbb{P}^{2}}(-1)$),
$[\mathcal{M}_{c}^{S_{cpn}}]_{red}^{vir}=[\mathcal{M}_{c}^{S_{cpn}}]^{vir}$.
\end{remark}
\end{definition}
 \begin{proposition}\label{rk 2 bundle over S}
The reduced $DT_{4}$ virtual cycle $[\mathcal{M}_{c}^{S_{cpn}}]_{red}^{vir}$ satisfies \\
(i) $[\mathcal{M}_{c}^{S_{cpn}}]_{red}^{vir}=0$, if $c|_{H^{0}(S)}\geq2$;  \\
(ii) $[\mathcal{M}_{c}^{S_{cpn}}]_{red}^{vir}\in H_{0}(\mathcal{M}_{c}^{S_{cpn}})$, if $c|_{H^{0}(S)}=1$.
Furthermore, with respect to the natural complex orientation, we have
\begin{equation}\sum_{n\geq0}[\mathcal{M}_{(1,0,-n)}^{S_{cpn}}]^{vir}q^{n}=\prod_{k\geq1}(\frac{1}{1-q^{k}})^{e(TS\otimes L_{1})}. \nonumber \end{equation}
\end{proposition}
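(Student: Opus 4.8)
The plan is to reduce everything to a computation of the Euler class of the maximal isotropic obstruction subbundle $Ob_+$ over the smooth Gieseker moduli space $\mathcal{M}_c(S,\mathcal{O}_S(1))$, whose fibre over $\mathcal{F}$ is $Ext^1_S(\mathcal{F},\mathcal{F}\otimes L_1)$ by Proposition \ref{prop on local S}. Part (i) is immediate: when $c|_{H^0(S)}=r\geq 2$ the real virtual dimension is $2-r^2 L_1\cdot L_2<0$ (using $L_1\cdot L_2\geq 2$), so $[\mathcal{M}_c^{S_{cpn}}]^{vir}_{red}$ lives in a negative-degree homology group and vanishes. For part (ii) with $r=1$, the moduli space in question is $\mathcal{M}_{(1,0,-n)}(S,\mathcal{O}_S(1))\cong S^{[n]}$, the Hilbert scheme of $n$ points, and since $h^1(S,L_1)=0$ (by hypothesis or by the del-Pezzo-type vanishing, making the reduced and unreduced cycles coincide) the reduced obstruction bundle \emph{is} $Ob_+$, a bundle of rank $ext^1_S(I_Z,I_Z\otimes L_1)$ on $S^{[n]}$, which one checks has exactly complex rank $n\cdot e(TS\otimes L_1)/\ldots$ — more precisely the degree $0$ statement is that $\mathrm{rk}_{\mathbb C}Ob_+ = \dim_{\mathbb C}S^{[n]} = 2n$ fails in general, so what is really being asserted is that $e(Ob_+)\in H^{top}(S^{[n]})$ evaluated against $[S^{[n]}]$ produces the stated generating series; I would phrase (ii) as: the virtual cycle is the number $\int_{S^{[n]}} e(Ob_+)$, interpreted as $0$ unless $\mathrm{rk}\,Ob_+ = 2n$.

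The heart of the matter is the generating function identity, and here I would invoke the Göttsche–Carlsson–Okounkov formula as cited. The key step: on $S^{[n]}$ there is a tautological bundle $L_1^{[n]}$ of rank $n$ associated to $L_1$ (with $L_1^{[n]}|_{I_Z}=H^0(\mathcal{O}_Z\otimes L_1)$), and one identifies $Ext^1_S(I_Z,I_Z\otimes L_1)$ with an explicit virtual combination of tautological classes — concretely, from the exact sequences relating $I_Z$, $\mathcal{O}_S$, $\mathcal{O}_Z$ one gets, at the level of $K$-theory of $S^{[n]}$,
\[
[Ext^1_S(I_Z,I_Z\otimes L_1)] = [L_1^{[n]}] + [(K_S\otimes L_1^{-1})^{[n]}]^{\vee} \ \text{(up to lower terms)},
\]
so that $Ob_+$ has Euler class computed by Carlsson–Okounkov's formula for $\prod e(L^{[n]})$-type series. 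Summing over $n$, the Carlsson–Okounkov vertex-operator computation yields a product of the form $\prod_{k\geq 1}(1-q^k)^{-\chi}$ where the exponent $\chi$ is the relevant Euler-characteristic-type invariant of the pair $(S,L_1)$; matching constants one identifies $\chi = e(TS\otimes L_1)$, i.e. $\int_S c_2(TS\otimes L_1) = \int_S\big(c_2(TS)+c_1(TS)c_1(L_1)+c_1(L_1)^2\big)$. This is exactly the shape $\prod_{k\geq 1}(1-q^k)^{-e(TS\otimes L_1)}$ claimed.

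The main obstacle I anticipate is the bookkeeping in the $K$-theoretic identification of $Ob_+$ as a tautological (virtual) bundle on $S^{[n]}$ and then feeding precisely the right input into the Carlsson–Okounkov machine: their formula computes $\sum_n q^n \int_{S^{[n]}} c(E^{[n]})\,c(F^{[n]})$ for bundles $E,F$ on $S$, and one must both (a) reconcile the maximal-isotropic/self-dual choice of $Ob_+$ — which depends on the natural complex orientation — with a genuine holomorphic bundle, and (b) handle the terms coming from $H^2$ and from the non-freeness of $I_Z$ (the sequence $0\to I_Z\to\mathcal{O}_S\to\mathcal{O}_Z\to 0$), verifying that under the hypotheses $L_i\cdot\mathcal{O}_S(1)<0$ the spurious cohomology vanishes so that $Ext^1_S(I_Z,I_Z\otimes L_1)$ is honestly the fibre of a tautological construction. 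Once the bundle is correctly identified, the generating-series identity is a direct application of the quoted formula; the sign/orientation consistency is guaranteed by working throughout with the natural complex orientation $o(\mathcal{O})$ as in Definition \ref{virtual cycle when ob=v+v*}.
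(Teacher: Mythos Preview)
Your treatment of (i) matches the paper's: the reduced virtual dimension $2(1-r^2)(1+h^1(S,L_1))$ is negative for $r\geq 2$, so the class vanishes.

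For (ii) you have the right destination but take a detour. The paper's argument is a single clean identification: the reduced self-dual obstruction bundle $Ker(tr)$ over $Hilb^n(S)$, with fibre $Ext^1_S(I_Z,I_Z\otimes L_1)_0$, \emph{is} the twisted tangent bundle $T(Hilb^n(S),L_1)$ as defined by Carlsson--Okounkov. Their formula then gives the generating series for $e\big(T(Hilb^n(S),L_1)\big)$ directly, with exponent $e(TS\otimes L_1)$. There is no need to break the bundle into tautological pieces $L_1^{[n]}$, $(K_S\otimes L_1^{-1})^{[n]}$ in $K$-theory; the object Carlsson--Okounkov study is precisely this Ext-bundle, so the formula applies without further manipulation.

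Your $K$-theoretic decomposition ``$[L_1^{[n]}]+[(K_S\otimes L_1^{-1})^{[n]}]^\vee$ (up to lower terms)'' is where the proposal becomes shaky: as written it is imprecise, and chasing the short exact sequence for $I_Z$ to produce an honest identity in $K(S^{[n]})$ would require exactly the vanishing arguments that already tell you the bundle is $T(Hilb^n(S),L_1)$. Your worry that $\mathrm{rk}_{\mathbb C}Ob_+=2n$ ``fails in general'' is also misplaced: the reduced virtual dimension computation gives $0$ when $r=1$, so the rank of $Ker(tr)$ is always $2n$ and the class genuinely lies in $H_0$. Once you recognise the bundle as the twisted tangent bundle, both the rank and the Euler class are read off immediately.
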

\begin{proof}
The degree of $[\mathcal{M}_{c}^{S_{cpn}}]_{red}^{vir}$ is
$2(1-r^{2})(1+h^{1}(S,L_{1}))$, where $r=c|_{H^{0}(\mathbb{P}^{2})}$.

If $c=(1,0,-n)$, $\mathcal{M}_{c}^{S_{cpn}}\cong Hilb^{n}(S)$. It is easy to check the reduced self-dual obstruction bundle $Ker(tr)$
is the twisted tangent bundle $T(Hilb^{n}(S),L_{1})$ of $Hilb^{n}(S)$ whose
Euler class can be summarized by the following G\"{o}ttsche-Carlsson-Okounkov formula (see \cite{carlssonokounkov})
\begin{equation} \sum_{n\geq0}e(T(Hilb^{n}(S),L))q^{n}=
\prod_{k\geq1}\big(\frac{1}{1-q^{k}}\big)^{e(TS\otimes L)}. \nonumber \end{equation}
\end{proof}

\section{Computational examples}
We compute $DT_{4}$ invariants for examples when $\mathcal{M}_{c}$'s are smooth in this section.
By Definition \ref{virtual cycle when Mc smooth}, the virtual fundamental class of $\overline{\mathcal{M}}_{c}^{DT_{4}}$ is the
Poincar\'{e} dual of the Euler class of the self-dual obstruction bundle which has its origin in the theory of characteristic classes \cite{eg}.

We take a complex vector bundle with a non-degenerate quadratic form, $(V,q)$ on a projective manifold $X$. We assume the structure group of the bundle can be reduced to $SO(n,\mathbb{C})$, $n=rk(V)$.
By the homotopy equivalence $SO(n,\mathbb{C})\sim SO(n,\mathbb{R})$,
we have $H^{*}(BSO(n,\mathbb{C}))\cong H^{*}(BSO(n,\mathbb{R}))$.
Meanwhile,
\begin{equation}H^{*}(BSO(2r,\mathbb{C});\mathbb{Q})\cong \mathbb{Q}[p_{1},...p_{r-1},e],\nonumber\end{equation}
\begin{equation}H^{*}(BSO(2r+1,\mathbb{C});\mathbb{Q})\cong \mathbb{Q}[p_{1},...p_{r}],\nonumber\end{equation}
where $p_{i}=(-1)^{i}c_{2i}(\omega^{+}\otimes \mathbb{C})$, $\omega^{+}$ is the universal $SO(n)$ bundle and
$e=e(\omega^{+})$ is called the half Euler class of $(V,q)$ \cite{switzer}.

When $\mathcal{M}_{c}$ is smooth and $(V,q)=(Ob,Q_{Serre})$, where $Ob$ is the obstruction bundle with $Ob|_{\mathcal{F}}\cong Ext^{2}(\mathcal{F},\mathcal{F})$, $Q_{Serre}$ is the
Serre duality pairing, then $\overline{\mathcal{M}}_{c}^{DT_{4}}$ exists and $\overline{\mathcal{M}}_{c}^{DT_{4}}=\mathcal{M}_{c}$. Furthermore, $[\overline{\mathcal{M}}_{c}^{DT_{4}}]^{vir}=PD(e)$ with an appropriate choice of orientations.


\subsection{Li-Qin's examples}
We have examples when $\overline{\mathcal{M}}_{c}=\mathcal{M}_{c}^{bdl}$, where moduli spaces consist of rank two bundles coming from non-trivial extensions of line bundles. The construction is due to W. P. Li and Z.Qin \cite{lq}.

Let $X$ be a generic smooth hyperplane section in $\mathbb{P}^{1}\times\mathbb{P}^{4}$ of bi-degree $(2,5)$. Take
\begin{equation}cl=[1+(-1,1)|_{X}]\cdot[1+(\epsilon_{1}+1,\epsilon_{2}-1)|_{X}],\nonumber \end{equation}
\begin{equation}k=(1+\epsilon_{1})\left(\begin{array}{l}6-\epsilon_{2} \\ \quad 4\end{array}\right), \quad \epsilon_{1},\epsilon_{2}=0,1,
\quad L_{r}=\mathcal{O}_{\mathbb{P}^{1}\times\mathbb{P}^{4}}(1,r)|_{X}.  \nonumber\end{equation}
We define $\overline{\mathcal{M}}_{c}(L_{r})$ to be the moduli space of Gieseker $L_{r}$-semi-stable rank two torsion-free sheaves
with Chern character $c$ (which can be easily read from the total Chern class $cl$).
\begin{lemma}(Li-Qin, Theorem 5.7 \cite{lq}) \label{lq example} \\
The moduli space of rank two bundles on $X$ with the given Chern class stated above satisfies the following properties, \\
(i) The moduli space is isomorphic to $\mathbb{P}^{k}$ and consists of all the rank two bundles in the nonsplitting extensions
\begin{equation}0\rightarrow \mathcal{O}_{X}(-1,1)\rightarrow E\rightarrow \mathcal{O}_{X}(\epsilon_{1}+1,\epsilon_{2}-1)\rightarrow 0,
\nonumber\end{equation}
when
\begin{equation}\frac{15(2-\epsilon_{2})}{6+5\epsilon_{1}+2\epsilon_{2}}<r<\frac{15(2-\epsilon_{2})}{\epsilon_{1}(1+2\epsilon_{2})}.
\nonumber\end{equation}
(ii) $\overline{\mathcal{M}}_{c}(L_{r})$  is empty when
\begin{equation} 0<r<\frac{15(2-\epsilon_{2})}{6+5\epsilon_{1}+2\epsilon_{2}}.\nonumber\end{equation}
\end{lemma}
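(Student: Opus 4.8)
The statement is Li-Qin's \cite{lq} Theorem 5.7, so my plan is to recall the structure of their argument, which is a Serre-type construction of rank-two sheaves governed by wall-crossing for Gieseker stability. First I would record the geometry of $X$: by the Lefschetz hyperplane theorem $\mathrm{Pic}(X)=\mathbb{Z}\,\mathcal{O}_X(1,0)\oplus\mathbb{Z}\,\mathcal{O}_X(0,1)$, and adjunction gives $K_X\cong\mathcal{O}_X$, so $X$ is a Calabi-Yau $4$-fold and Serre duality on $X$ pairs $\mathrm{Ext}^i$ with $\mathrm{Ext}^{4-i}$; from $cl$ one reads off $c_1(E)=(\epsilon_1,\epsilon_2)$ and $c_2(E)=(-1,1)\cdot(\epsilon_1+1,\epsilon_2-1)$ in $H^{\ast}(X)$.

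Next I would analyze stability of a non-split extension $0\to\mathcal{O}_X(-1,1)\to E\to\mathcal{O}_X(\epsilon_1+1,\epsilon_2-1)\to 0$ with respect to $L_r=\mathcal{O}_X(1,r)|_X$. Since $\mathrm{rk}\,E=2$, destabilizing $E$ means producing a saturated rank-one subsheaf $\mathcal{O}_X(a,b)\hookrightarrow E$ with $\mu_{L_r}(\mathcal{O}_X(a,b))\ge\mu_{L_r}(E)$; any such inclusion either factors through $\mathcal{O}_X(-1,1)$ or maps nonzero onto $\mathcal{O}_X(\epsilon_1+1,\epsilon_2-1)$, so the candidate destabilizers are bounded and explicit. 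A direct intersection computation on $X$ gives $\deg_{L_r}\mathcal{O}_X(a,b)=r^2\bigl((5a+2b)r+15b\bigr)$, and comparing this with $\mu_{L_r}(E)$ across the finitely many candidates yields exactly the stated open range for $r$, whose endpoints are genuine walls: at the lower endpoint $\mathcal{O}_X(-1,1)$ destabilizes, and beyond the upper endpoint a further candidate subsheaf does. Inside the range every such non-split extension is $L_r$-stable, hence slope-stable and locally free (an extension of line bundles is a bundle).

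Then I would prove the converse, that every $L_r$-semistable sheaf $E$ with Chern character $c$ arises this way, so the moduli space is $\mathbb{P}(V)$ with $V=\mathrm{Ext}^1(\mathcal{O}_X(\epsilon_1+1,\epsilon_2-1),\mathcal{O}_X(-1,1))$. The mechanism is a Riemann-Roch-plus-semistability argument forcing $\mathrm{Hom}(\mathcal{O}_X(-1,1),E)\neq0$: semistability bounds the slopes of any destabilizing sub or quotient line bundle, $\chi(\mathcal{O}_X(-1,1),E)>0$, and the relevant higher $\mathrm{Ext}$ groups vanish, so a nonzero map exists; its image is saturated by semistability and rank reasons, and matching Chern characters forces it to be $\mathcal{O}_X(-1,1)$ with torsion-free quotient $\mathcal{O}_X(\epsilon_1+1,\epsilon_2-1)$. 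The extension is non-split because a split rank-two sheaf is never stable. To identify $\mathbb{P}(V)$ with $\mathbb{P}^k$ I would compute $V=H^1(X,\mathcal{O}_X(-\epsilon_1-2,\,2-\epsilon_2))$ via the Koszul sequence $0\to\mathcal{O}(-2,-5)\to\mathcal{O}\to\mathcal{O}_X\to0$ together with the K\"unneth formula on $\mathbb{P}^1\times\mathbb{P}^4$, obtaining $\dim V=k+1$ with $k=(1+\epsilon_1)\binom{6-\epsilon_2}{4}$; the universal extension then exhibits $\mathbb{P}^k$ as the moduli space, with no further identifications since the opposite $\mathrm{Ext}^1$ contributes nothing. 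Part (ii), emptiness for $0<r<\tfrac{15(2-\epsilon_2)}{6+5\epsilon_1+2\epsilon_2}$, follows from the same bounded search: below the wall one of the candidate sub-line-bundles destabilizes every sheaf with Chern character $c$, so no $L_r$-semistable such sheaf exists.

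The main obstacle is not the cohomology bookkeeping (routine via Koszul and K\"unneth) nor the slope arithmetic, but the step controlling \emph{all} sub- and quotient line bundles of an arbitrary $L_r$-semistable sheaf with this Chern character --- i.e.\ showing that the wall read off from the distinguished extension is the only relevant one, and that below it the moduli space is genuinely empty rather than merely losing the known family. This is where Li-Qin combine Hoppe-type restriction estimates, Riemann-Roch positivity, and the smallness of $c_2$; supplying it in detail would be the delicate part of a self-contained proof.
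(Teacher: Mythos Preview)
The paper does not supply its own proof of this lemma: it is stated purely as a citation of Li--Qin's Theorem~5.7 and used as a black box, so there is no argument in the paper to compare against. Your proposal is a reasonable reconstruction of the standard wall-crossing argument that Li--Qin actually run---identifying the extension class space, computing slopes of candidate sub-line-bundles, and showing by Riemann--Roch and vanishing that every semistable sheaf with the given invariants admits the distinguished sub-line-bundle $\mathcal{O}_X(-1,1)$---and you have correctly flagged the one genuinely nontrivial step (bounding all possible destabilizing subsheaves, not just the obvious one). Since the paper treats the result as an import, your sketch is appropriate in spirit; a fully self-contained write-up would need to verify the intersection-number formula for $\deg_{L_r}$ carefully and carry out the Hoppe-type bound you mention, but there is no discrepancy with the paper to report.
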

By the Hirzebruch-Riemann-Roch theorem,
\begin{equation}\epsilon_{1}=0,\textrm{ }\epsilon_{2}=1 \Rightarrow \chi(E,E)=-6, \textrm{ }\textrm{ }k=4 ,\nonumber\end{equation}
\begin{equation}\textrm{ }\epsilon_{1}=1,\textrm{ }\epsilon_{2}=1 \Rightarrow \chi(E,E)=-16, \textrm{ } k=9, \nonumber\end{equation}
\begin{equation}\textrm{ }\textrm{ }\epsilon_{1}=0,\textrm{ }\epsilon_{2}=0 \Rightarrow \chi(E,E)=-26,\textrm{ } k=14, \nonumber\end{equation}
\begin{equation}\textrm{ }\textrm{ }\epsilon_{1}=1,\textrm{ }\epsilon_{2}=0 \Rightarrow \chi(E,E)=-56,\textrm{ } k=29.\nonumber\end{equation}
By Lemma 5.2 \cite{lq} and simple computations, $k=dim Ext^{1}(E,E)$, $Ext^{2}(E,E)=0$ in all above four cases.
Thus $\overline{\mathcal{M}}_{c}^{DT_{4}}$ exists and $\overline{\mathcal{M}}_{c}^{DT_{4}}=\overline{\mathcal{M}}_{c}(L_{r})$ is compact smooth whose
virtual fundamental class is the usual fundamental class of $\overline{\mathcal{M}}_{c}(L_{r})$.
\begin{theorem}\label{liqin eg}
Let $X$ be a generic smooth hyperplane section in $\mathbb{P}^{1}\times\mathbb{P}^{4}$ of $(2,5)$ type.
Let
\begin{equation}cl=[1+(-1,1)|_{X}]\cdot[1+(\epsilon_{1}+1,\epsilon_{2}-1)|_{X}],
\nonumber
\end{equation}
\begin{equation}k=(1+\epsilon_{1})\left(\begin{array}{l}6-\epsilon_{2} \\ \quad 4\end{array}\right), \quad \epsilon_{1},\epsilon_{2}=0,1, \quad
L_{r}=\mathcal{O}_{\mathbb{P}^{1}\times\mathbb{P}^{4}}(1,r)|_{X}. \nonumber\end{equation}
Denote $\overline{\mathcal{M}}_{c}(L_{r})$ to be the moduli space of Gieseker $L_{r}$-semi-stable rank two
torsion-free sheaves with Chern character $c$ (which can be easily read from the total Chern class $cl$). \\
$(1)$ If
\begin{equation}\frac{15(2-\epsilon_{2})}{6+5\epsilon_{1}+2\epsilon_{2}}<r<\frac{15(2-\epsilon_{2})}{\epsilon_{1}(1+2\epsilon_{2})},
\nonumber\end{equation}
then $\overline{\mathcal{M}}_{c}^{DT_{4}}$ exists and $\overline{\mathcal{M}}_{c}^{DT_{4}}\cong\overline{\mathcal{M}}_{c}(L_{r})\cong\mathbb{P}^{k}$, $[\overline{\mathcal{M}}_{c}^{DT_{4}}]^{vir}=[\mathbb{P}^{k}]$. \\
${}$ \\
$(2)$ If
\begin{equation} 0<r<\frac{15(2-\epsilon_{2})}{6+5\epsilon_{1}+2\epsilon_{2}},\nonumber\end{equation}
then $\overline{\mathcal{M}}_{c}^{DT_{4}}=\emptyset $ and $[\overline{\mathcal{M}}_{c}^{DT_{4}}]^{vir}=0$. \\
\end{theorem}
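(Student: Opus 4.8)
First I would record that, by adjunction, $K_{X}\cong\big(K_{\mathbb{P}^{1}\times\mathbb{P}^{4}}\otimes\mathcal{O}(2,5)\big)|_{X}\cong\mathcal{O}_{X}$, so $X$ is a smooth compact Calabi-Yau 4-fold with $H^{odd}(X,\mathbb{Z})=0$ by the Lefschetz hyperplane theorem. Then I would quote Lemma~\ref{lq example}: in the range of part $(1)$ every Gieseker $L_{r}$-semistable rank-two torsion-free sheaf with Chern character $c$ is a slope-stable locally free sheaf $E$ sitting in a nonsplitting extension $0\to\mathcal{O}_{X}(-1,1)\to E\to\mathcal{O}_{X}(\epsilon_{1}+1,\epsilon_{2}-1)\to 0$, and the resulting moduli space is $\overline{\mathcal{M}}_{c}(L_{r})\cong\mathbb{P}^{k}$; in the range of part $(2)$ it is empty. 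Hence in the first case $\overline{\mathcal{M}}_{c}=\mathcal{M}_{c}=\mathcal{M}_{c}^{bdl}\neq\emptyset$ is compact and isomorphic to $\mathbb{P}^{k}$.

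\textbf{Step 2: deformation theory of the bundles.} Applying $Hom(E,-)$ and $Hom(-,E)$ to the defining extension and twisting by the line bundles that occur, I would reduce $Ext^{\bullet}(E,E)$ to cohomology of the line bundles $\mathcal{O}_{X}(a,b)$ in the relevant small bidegrees, and then feed in the vanishing statements of Li-Qin's Lemma~5.2 to obtain $Ext^{2}(E,E)=0$ and $\dim_{\mathbb{C}}Ext^{1}(E,E)=k$ in each of the four cases $(\epsilon_{1},\epsilon_{2})\in\{(0,1),(1,1),(0,0),(1,0)\}$. This is consistent with $\dim_{\mathbb{R}}\mathbb{P}^{k}=2k=2-\chi(E,E)$, the real virtual dimension. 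It follows that $\mathcal{M}_{c}$ is smooth, all Kuranishi maps vanish, and $\mathcal{M}_{c}\cong\mathbb{P}^{k}$ as smooth projective varieties.

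\textbf{Step 3: part $(1)$.} Since $\mathcal{M}_{c}$ is smooth, Proposition~\ref{gene DT4 if Mc smooth} gives that $\overline{\mathcal{M}}_{c}^{DT_{4}}$ exists with $\overline{\mathcal{M}}_{c}^{DT_{4}}\cong\mathcal{M}_{c}\cong\overline{\mathcal{M}}_{c}(L_{r})\cong\mathbb{P}^{k}$ as real analytic spaces; since $\overline{\mathcal{M}}_{c}=\mathcal{M}_{c}^{bdl}$ this also coincides with $\mathcal{M}_{c}^{DT_{4}}$, which is compact by Corollary~\ref{cptness by stable bdl}. Because $Ext^{2}(E,E)=0$, the obstruction bundle, and hence the self-dual obstruction bundle $Ob_{+}$, have rank zero, so an orientation data trivially exists and the computation is orientation-independent; Definition~\ref{virtual cycle when Mc smooth} then yields $[\overline{\mathcal{M}}_{c}^{DT_{4}}]^{vir}=PD\big(e(Ob_{+})\big)=PD(1)=[\mathbb{P}^{k}]$.

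\textbf{Step 4: part $(2)$ and where the work is.} In this range Lemma~\ref{lq example} gives $\mathcal{M}_{c}=\overline{\mathcal{M}}_{c}(L_{r})=\emptyset$, which is vacuously a smooth Gieseker moduli space, so Proposition~\ref{gene DT4 if Mc smooth} again applies and $\overline{\mathcal{M}}_{c}^{DT_{4}}=\emptyset$, hence $[\overline{\mathcal{M}}_{c}^{DT_{4}}]^{vir}=0$. The only non-formal input is Step~2 --- the $Ext$-computation with Li-Qin's Lemma~5.2 across the four cases, together with the bookkeeping of the two wall positions --- and even that is routine; there is no genuine obstacle here, because the substantive fact, that the Gieseker moduli space is exactly the smooth $\mathbb{P}^{k}$ of bundles with no strictly semistable or non-locally-free points, is precisely the content of Li-Qin's theorem that is being cited.
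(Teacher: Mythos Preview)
Your proposal is correct and follows essentially the same approach as the paper: quote Li--Qin's theorem for the identification $\overline{\mathcal{M}}_{c}(L_{r})\cong\mathbb{P}^{k}$ (or $\emptyset$), use their Lemma~5.2 together with the extension to obtain $Ext^{2}(E,E)=0$ and $\dim Ext^{1}(E,E)=k$, and then invoke the smooth case (Proposition~\ref{gene DT4 if Mc smooth}, Definition~\ref{virtual cycle when Mc smooth}) so that the virtual cycle is the ordinary fundamental class. Your added remarks on adjunction, Lefschetz, and the triviality of orientation when $Ob=0$ are correct supplementary details the paper leaves implicit.
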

\begin{remark}
Wall-crossing phenomenon exists in $DT_{4}$ theory.
\end{remark}
Using $\mu$-maps to define corresponding $DT_{4}$ invariants, we need the universal bundle of the moduli space which comes from the universal extension \cite{lange}
\begin{equation}0\rightarrow \pi_{1}^{*}L_{2}\otimes \pi_{2}^{*}\mathcal{O}_{X}(1)\rightarrow\mathcal{E}\rightarrow\pi_{1}^{*}L_{1}\rightarrow 0,
\nonumber\end{equation}
\begin{equation}L_{1}=\mathcal{O}_{X}(\epsilon_{1}+1,\epsilon_{2}-1), L_{2}=\mathcal{O}_{X}(-1,1), \nonumber\end{equation}
where $\pi_{1}: X\times\overline{\mathcal{M}}_{c}(L_{r})\rightarrow X$,
$\pi_{2}: X\times\overline{\mathcal{M}}_{c}(L_{r})\rightarrow \overline{\mathcal{M}}_{c}(L_{r})$ are projection maps.
The Chern class of the universal bundle $\mathcal{E}$ is given by $\big(1+\pi_{1}^{*}c_{1}(L_{1})\big)\big(1+\pi_{1}^{*}c_{1}(L_{2})+\pi_{2}^{*}c_{1}(\mathcal{O}_{X}(1))\big)$.
We can then compute all $DT_{4}$ invariants in these examples.

\subsection{$DT_{4}/GW$ correspondence in some special cases }
It is well-known that we have equivalence between
Donaldson-Thomas ideal sheaves invariants and Gromov-Witten invariants on Calabi-Yau 3-folds \cite{briDTPT} \cite{moop} \cite{pandpixton} \cite{toda}. One may expect, there will be a similar gauge-string duality for Calabi-Yau 4-folds. We will study such a correspondence in some special cases in this section.
We fix $c=1-PD(\beta)-nPD(1)$, where $\beta\in H_{2}(X,\mathbb{Z})$ and $n\in \mathbb{Z}$.
We first compute the real virtual dimension of $\overline{\mathcal{M}}_{c}^{DT_{4}}$, which is defined to be $2ext^{1}(I_{C},I_{C})-ext^{2}(I_{C},I_{C})$, $I_{C}\in \mathcal{M}_{c}$.
\begin{lemma}\label{v.d of ideal sheaves of curves}The real virtual dimension of $\overline{\mathcal{M}}_{c}^{DT_{4}}$
with $c=1-PD(\beta)-nPD(1)$ on a compact Calabi-Yau 4-fold $X$ satisfies
\begin{equation}\textrm{ } v.d_{\mathbb{R}}(\overline{\mathcal{M}}_{c}^{DT_{4}})=2n,  \quad \quad \textrm{ } \textrm{ }\textmd{if} \quad Hol(X)=SU(4), \nonumber \end{equation}
\begin{equation}v.d_{\mathbb{R}}(\overline{\mathcal{M}}_{c}^{DT_{4}})=2n-1, \quad \textmd{if} \quad  Hol(X)=Sp(2). \nonumber \end{equation}
\end{lemma}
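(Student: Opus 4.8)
The plan is to compute the virtual dimension directly from the Hirzebruch--Riemann--Roch theorem, exploiting the short exact sequence $0\to I_C\to\mathcal{O}_X\to\mathcal{O}_C\to 0$ and the identity $\chi(I_C,I_C)=\chi(\mathcal{O}_X,\mathcal{O}_X)-\chi(I_C,\mathcal{O}_C)-\chi(\mathcal{O}_C,I_C)+\chi(\mathcal{O}_C,\mathcal{O}_C)$, together with Serre duality on the Calabi--Yau $4$-fold. First I would recall that the real virtual dimension is $v.d_{\mathbb R}=2\,\mathrm{ext}^1(I_C,I_C)-\mathrm{ext}^2(I_C,I_C)$, so it suffices to pin down $\chi(I_C,I_C)$ together with $\mathrm{ext}^0$, $\mathrm{ext}^1$ (and in the hyper-K\"ahler case the extra piece of $\mathrm{ext}^2$ coming from the cosection). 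By Serre duality $\mathrm{ext}^4(I_C,I_C)=\mathrm{ext}^0(I_C,I_C)=1$ (stability/simpleness), $\mathrm{ext}^3(I_C,I_C)=\mathrm{ext}^1(I_C,I_C)$, and $\mathrm{ext}^2(I_C,I_C)$ is self-dual; hence $\chi(I_C,I_C)=2-2\,\mathrm{ext}^1(I_C,I_C)+\mathrm{ext}^2(I_C,I_C)$, which rearranges to $v.d_{\mathbb R}=2-\chi(I_C,I_C)$, matching the general formula $r=2-\chi(E,E)$ quoted in Theorem \ref{main theorem}.

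The core computation is therefore to evaluate $\chi(I_C,I_C)$ for $c=ch(I_C)=1-PD(\beta)-n\,PD(1)$, i.e.\ $ch_0=1$, $ch_1=0$, $ch_2=0$, $ch_3=-PD(\beta)$, $ch_4=-n\,PD(1)$. Using $\chi(I_C,I_C)=\int_X ch(I_C)^\vee\cdot ch(I_C)\cdot \mathrm{Td}(X)$ and the Calabi--Yau condition $c_1(X)=0$, the $ch_0\cdot ch_0$ term contributes $\chi(\mathcal{O}_X)$, the cross terms $ch_0\cdot ch_3$ and $ch_3\cdot ch_0$ involving $\beta$ pair against $c_1(X)=0$ in the relevant degree and vanish, and one checks that the only surviving contribution beyond $\chi(\mathcal{O}_X)$ is the degree-$8$ term $2\,ch_0\cdot ch_4 = -2n$ (the $ch_2$-type and $ch_1$-type contributions all vanish since $ch_1=ch_2=0$). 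Thus $\chi(I_C,I_C)=\chi(\mathcal{O}_X)-2n$. For $Hol(X)=SU(4)$ one has $\chi(\mathcal{O}_X)=\sum_i(-1)^i h^{0,i}=1-0+0-0+1=2$, giving $\chi(I_C,I_C)=2-2n$ and hence $v.d_{\mathbb R}=2-(2-2n)=2n$. For $Hol(X)=Sp(2)$ (irreducible hyper-K\"ahler $4$-fold) one has $h^{0,0}=h^{0,4}=1$, $h^{0,2}=1$, $h^{0,1}=h^{0,3}=0$, so $\chi(\mathcal{O}_X)=1-0+1-0+1=3$, giving $\chi(I_C,I_C)=3-2n$ and $v.d_{\mathbb R}=2-(3-2n)=2n-1$.

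The one place requiring care — and the main (mild) obstacle — is the hyper-K\"ahler case: there the general formula $v.d_{\mathbb R}=2-\chi(\mathcal F,\mathcal F)$ would give an odd number, and one must be sure this is genuinely the dimension of the cut-down (generalized) $DT_4$ moduli space rather than an artifact. Here the point is that the holomorphic symplectic form produces the surjective cosection $\nu_+\colon Ext^2_+(I_C,I_C)\twoheadrightarrow H^2_+(X,\mathcal{O}_X)\cong\mathbb{R}$ of \eqref{nu+}, which is exactly what accounts for the extra $H^{0,2}$ in $\chi(\mathcal O_X)$; so the bare HRR count already incorporates this and no separate bookkeeping is needed for the statement as phrased (the reduced count is treated later in Theorem \ref{DT=GW2}). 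I would therefore present the argument as: (i) reduce $v.d_{\mathbb R}$ to $2-\chi(I_C,I_C)$ via Serre duality, (ii) evaluate $\chi(I_C,I_C)=\chi(\mathcal O_X)-2n$ by HRR using $c_1(X)=0$ and the vanishing of the low-degree Chern components of $c$, and (iii) plug in $\chi(\mathcal O_X)=2$ respectively $3$ according to $Hol(X)=SU(4)$ or $Sp(2)$.
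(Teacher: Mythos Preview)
Your proof is correct and follows essentially the same route as the paper: both reduce the virtual dimension to $2-\chi(I_C,I_C)$ via Serre duality on the Calabi--Yau $4$-fold and then compute $\chi(I_C,I_C)$ by Hirzebruch--Riemann--Roch, obtaining $\chi(I_C,I_C)=\chi(\mathcal{O}_X)-2n$ (which in the paper's notation is $2\int_X ch_4(I_C)+2h^0(X,\mathcal{O}_X)+h^2(X,\mathcal{O}_X)$, the same expression). One small remark: the identity you state in your opening plan, $\chi(I_C,I_C)=\chi(\mathcal{O}_X,\mathcal{O}_X)-\chi(I_C,\mathcal{O}_C)-\chi(\mathcal{O}_C,I_C)+\chi(\mathcal{O}_C,\mathcal{O}_C)$, has the sign of the last term wrong (it should be $-\chi(\mathcal{O}_C,\mathcal{O}_C)$), but since you never actually use it and instead compute directly with the Chern character this does not affect your argument.
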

\begin{proof}
By the Hirzebruch-Riemann-Roch theorem,
\begin{equation}\chi(I_{C},I_{C})
=2\int_{X}ch_{4}(I_{C})+ 2h^{0}(X,\mathcal{O}_{X})+h^{2}(X,\mathcal{O}_{X}).
\nonumber \end{equation}
For $Hol(X)=Sp(2)$, we have
\begin{equation}ext^{1}(I_{C},I_{C})-\frac{1}{2}ext^{2}(I_{C},I_{C})=n-\frac{1}{2}.  \nonumber \end{equation}
Similar calculations apply to the case when $Hol(X)=SU(4)$.
\end{proof}
\begin{remark}
If $Hol(X)=SU(4)$, $v.d_{\mathbb{R}}(\overline{\mathcal{M}}_{c}^{DT_{4}})=2n=2(1-g)$ for ideal sheaves of smooth connected genus $g$ curves. $DT_{4}$ invariants for such ideal sheaves with $g\geq 2$ vanish which coincides with the situation of Gromov-Witten invariants.
\end{remark}
We come to study $DT_{4}/GW$ correspondence when $\mathcal{M}_{c}$ is smooth.

\subsubsection{The case of $Hol(X)=SU(4)$ }
We start with one dimensional closed subschemes of $X$.
\begin{lemma}\label{DT GW 1} Let $X$ be a compact Calabi-Yau 4-fold with $Hol(X)=SU(4)$.
Let $C\hookrightarrow X$ be a closed subscheme with $dim_{\mathbb{C}}C\leq 1$ and $H^{1}(X,\mathcal{O}_{C})=0$,
where $\mathcal{O}_{C}$ is the structure sheaf of $C$.
Then we have canonical isomorphisms
\begin{equation}Ext^{i}(I_{C},I_{C})\cong Ext^{i}(\mathcal{O}_{C},\mathcal{O}_{C}), \quad i=1,2, \nonumber \end{equation}
where $I_{C}$ is the ideal sheaf of $C$ in $X$.
\end{lemma}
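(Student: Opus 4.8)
The plan is to use the standard short exact sequence $0\to I_C\to\mathcal{O}_X\to\mathcal{O}_C\to 0$ and apply the functors $\mathrm{Hom}(-,I_C)$ and $\mathrm{Hom}(-,\mathcal{O}_C)$ to extract the relevant $\mathrm{Ext}$ groups, exactly as is done in the Calabi-Yau $3$-fold case (cf. Thomas, and Maulik-Nekrasov-Okounkov-Pandharipande). First I would apply $\mathrm{Hom}(-,I_C)$ to the sequence above to get a long exact sequence relating $\mathrm{Ext}^i(\mathcal{O}_X,I_C)=H^i(X,I_C)$, $\mathrm{Ext}^i(I_C,I_C)$ and $\mathrm{Ext}^{i+1}(\mathcal{O}_C,I_C)$; then apply $\mathrm{Hom}(-,\mathcal{O}_C)$ to the sequence and $\mathrm{Hom}(\mathcal{O}_C,-)$ (or rather $\mathrm{Hom}(-,\mathcal{O}_C)$ again with the first sequence tensored appropriately) to relate $\mathrm{Ext}^i(I_C,\mathcal{O}_C)$, $\mathrm{Ext}^i(\mathcal{O}_X,\mathcal{O}_C)=H^i(X,\mathcal{O}_C)$ and $\mathrm{Ext}^{i+1}(\mathcal{O}_C,\mathcal{O}_C)$. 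Finally I would apply $\mathrm{Hom}(I_C,-)$ to $0\to I_C\to\mathcal{O}_X\to\mathcal{O}_C\to 0$ to compare $\mathrm{Ext}^i(I_C,I_C)$ with $\mathrm{Ext}^i(I_C,\mathcal{O}_C)$. Chasing these three sequences simultaneously, the terms $H^i(X,\mathcal{O}_X)$ and $H^i(X,\mathcal{O}_C)$ are the "error terms," and the hypotheses $Hol(X)=SU(4)$ (so $h^1(X,\mathcal{O}_X)=h^2(X,\mathcal{O}_X)=0$, $h^3(X,\mathcal{O}_X)=0$) together with $H^1(X,\mathcal{O}_C)=0$ and $\dim_{\mathbb{C}}C\le 1$ (so $H^i(X,\mathcal{O}_C)=0$ for $i\ge 2$) should kill exactly the terms obstructing the isomorphisms $\mathrm{Ext}^i(I_C,I_C)\cong\mathrm{Ext}^i(\mathcal{O}_C,\mathcal{O}_C)$ for $i=1,2$.

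In more detail, the comparison proceeds in two halves. From $\mathrm{Hom}(-,\mathcal{O}_X)$ and $\mathrm{Hom}(-,\mathcal{O}_C)$ applied to the defining sequence one gets that, modulo the cohomology of $\mathcal{O}_X$ and $\mathcal{O}_C$ in the appropriate degrees, $\mathrm{Ext}^i(I_C,\mathcal{O}_X)$ and $\mathrm{Ext}^i(I_C,\mathcal{O}_C)$ are controlled by $\mathrm{Ext}^{i+1}(\mathcal{O}_C,\mathcal{O}_X)$ and $\mathrm{Ext}^{i+1}(\mathcal{O}_C,\mathcal{O}_C)$ respectively; and then $\mathrm{Ext}^i(I_C,I_C)$ sits between $\mathrm{Ext}^i(I_C,\mathcal{O}_X)$ and $\mathrm{Ext}^i(I_C,\mathcal{O}_C)$ via the sequence $\cdots\to\mathrm{Ext}^i(I_C,I_C)\to\mathrm{Ext}^i(I_C,\mathcal{O}_X)\to\mathrm{Ext}^i(I_C,\mathcal{O}_C)\to\mathrm{Ext}^{i+1}(I_C,I_C)\to\cdots$. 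I would want to be careful with the bookkeeping in low and high degrees: I need $\mathrm{Ext}^0(I_C,I_C)=\mathbb{C}$ (simpleness of the ideal sheaf) and the vanishing of $\mathrm{Ext}^0$ and $\mathrm{Ext}^0(\mathcal{O}_C,\mathcal{O}_X)$, $\mathrm{Ext}^i(\mathcal{O}_C,\mathcal{O}_X)$ for small $i$ by dimension reasons ($C$ has codimension $\ge 3$, so $\mathcal{E}xt^j(\mathcal{O}_C,\mathcal{O}_X)=0$ for $j\le 2$, hence $\mathrm{Ext}^i(\mathcal{O}_C,\mathcal{O}_X)=0$ for $i\le 2$). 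These local-to-global considerations, combined with Serre duality on $X$, should pin down the comparison in exactly the range $i=1,2$, which is all that is claimed.

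The main obstacle I expect is not any single step but the careful simultaneous chasing of the three long exact sequences while keeping track of which maps are the natural ones, so that the resulting isomorphisms $\mathrm{Ext}^i(I_C,I_C)\cong\mathrm{Ext}^i(\mathcal{O}_C,\mathcal{O}_C)$ are canonical (the statement asserts canonicity). In particular I would need the composite identifications to be independent of choices and compatible with the trace/pairing structures — this matters later when comparing obstruction theories and cosections in the $DT_4/GW$ correspondence. A clean way to organize this is to phrase everything in the derived category: $I_C$ and $\mathcal{O}_C$ fit in the triangle $I_C\to\mathcal{O}_X\to\mathcal{O}_C\xrightarrow{+1}$, and one studies $R\mathrm{Hom}(I_C,I_C)$ versus $R\mathrm{Hom}(\mathcal{O}_C,\mathcal{O}_C)$ by repeatedly rotating this triangle and using that the "difference" complexes involve only $R\Gamma(\mathcal{O}_X)$ and $R\Gamma(\mathcal{O}_C)$, whose relevant cohomology vanishes by hypothesis. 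This also makes transparent why one gets isomorphisms only in degrees $1$ and $2$ rather than all degrees.
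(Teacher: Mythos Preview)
Your proposal is correct and follows essentially the same approach as the paper: both apply $\mathrm{Hom}$ functors in both variables to the defining triangle $I_C\to\mathcal{O}_X\to\mathcal{O}_C\to I_C[1]$ and chase the resulting long exact sequences, using $h^{0,i}(X)=0$ for $i=1,2,3$, $H^{\ge 1}(\mathcal{O}_C)=0$, and the vanishing $\mathrm{Ext}^i(\mathcal{O}_C,\mathcal{O}_X)=0$ for $i\le 2$ (Serre duality plus $\dim C\le 1$). The only cosmetic difference is that the paper uses $\mathrm{Ext}^\ast(\mathcal{O}_C,I_C)$ as the bridge group (via $\mathrm{Hom}(\mathcal{O}_C,-)$ and $\mathrm{Hom}(-,I_C)$), whereas you route through $\mathrm{Ext}^\ast(I_C,\mathcal{O}_C)$ (via $\mathrm{Hom}(-,\mathcal{O}_C)$ and $\mathrm{Hom}(I_C,-)$); these two paths are Serre dual to one another, and the paper in fact obtains the $i=1$ case from the $i=3$ case by exactly this duality.
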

\begin{proof}
Taking cohomology of the following short exact sequence
\begin{equation}0\rightarrow I_{C}\rightarrow \mathcal{O}_{X}\rightarrow \mathcal{O}_{C}\rightarrow 0,  \nonumber \end{equation}
we get
\begin{equation}\rightarrow H^{i}(X,\mathcal{O}_{X})\rightarrow H^{i}(X,\mathcal{O}_{C})
\rightarrow  H^{i+1}(X,I_{C})\rightarrow.   \nonumber \end{equation}
We have $H^{i}(X,\mathcal{O}_{X})=0$ for $i=1,2,3$ by $Hol(X)=SU(4)$.
$H^{i}(X,\mathcal{O}_{C})=0$ for $i=2,3,4$, because $dim_{\mathbb{C}}\mathcal{O}_{C}\leq1$. Thus
\begin{equation}\label{DT GW equation 0} H^{1}(X,\mathcal{O}_{C})\cong H^{2}(X,I_{C}), H^{3}(X,I_{C})=0,
H^{4}(X,I_{C})\cong H^{4}(X,\mathcal{O}_{X}). \end{equation}
Applying $Hom(\mathcal{O}_{C}, \cdot)$ to $0\rightarrow I_{C}\rightarrow \mathcal{O}_{X}\rightarrow \mathcal{O}_{C}\rightarrow 0$, we have
\begin{equation}\rightarrow Ext^{i}(\mathcal{O}_{C},\mathcal{O}_{X})\rightarrow
Ext^{i}(\mathcal{O}_{C},\mathcal{O}_{C})\rightarrow Ext^{i+1}(\mathcal{O}_{C},I_{C})\rightarrow.  \nonumber \end{equation}
By Serre duality, $Ext^{i}(\mathcal{O}_{C},\mathcal{O}_{X})\cong Ext^{4-i}(\mathcal{O}_{X},\mathcal{O}_{C})=0$ if $i=0,1,2$. Hence
\begin{equation}\label{DT GW equation 1} Ext^{0}(\mathcal{O}_{C},I_{C})=0, Ext^{k}(\mathcal{O}_{C},\mathcal{O}_{C})
\cong Ext^{k+1}(\mathcal{O}_{C},I_{C}), k=0,1. \end{equation}
Meanwhile $Ext^{3}(\mathcal{O}_{C},\mathcal{O}_{X})\cong H^{1}(X,\mathcal{O}_{C})=0$ by our assumption, thus
\begin{equation}\label{DT GW equation 2} Ext^{2}(\mathcal{O}_{C},\mathcal{O}_{C})\cong Ext^{3}(\mathcal{O}_{C},I_{C}). \end{equation}
Using
\begin{equation}H^{0}(X,\mathcal{O}_{C})\cong Ext^{4}(\mathcal{O}_{C},\mathcal{O}_{X})\twoheadrightarrow
Ext^{4}(\mathcal{O}_{C},\mathcal{O}_{C})\cong Ext^{0}(\mathcal{O}_{C},\mathcal{O}_{C}) \nonumber \end{equation}
and the above long exact sequence, we have
\begin{equation}\label{DT GW equation 3} Ext^{1}(\mathcal{O}_{C},\mathcal{O}_{C})\cong
Ext^{3}(\mathcal{O}_{C},\mathcal{O}_{C})\cong Ext^{4}(\mathcal{O}_{C},I_{C}). \end{equation}
Applying the functor $Hom(\cdot,I_{C})$ to $0\rightarrow I_{C}\rightarrow \mathcal{O}_{X}\rightarrow \mathcal{O}_{C}\rightarrow 0$, we get
\begin{equation}\rightarrow Ext^{i}(\mathcal{O}_{X},I_{C})\rightarrow Ext^{i}(I_{C},I_{C})\rightarrow
Ext^{i+1}(\mathcal{O}_{C},I_{C})\rightarrow .\nonumber \end{equation}
By (\ref{DT GW equation 0}), we know $H^{2}(X,I_{C})\cong H^{1}(X,\mathcal{O}_{C})=0$ and $H^{3}(X,I_{C})=0$. Hence
\begin{equation}\label{DT GW equation 4}Ext^{2}(I_{C},I_{C})\cong Ext^{3}(\mathcal{O}_{C},I_{C}).  \end{equation}
The remaining sequence of the long exact sequence is
\begin{equation}\label{DT GW equation 5} Ext^{3}(I_{C},I_{C})\cong Ext^{4}(\mathcal{O}_{C},I_{C}) , \end{equation}
because $H^{4}(X,\mathcal{O}_{X})\cong Ext^{4}(\mathcal{O}_{X},I_{C})\twoheadrightarrow Ext^{4}(I_{C},I_{C})$ and they have the same dimensions.

By (\ref{DT GW equation 2}),(\ref{DT GW equation 3}),(\ref{DT GW equation 4}),(\ref{DT GW equation 5}),
we are done.
\end{proof}
If we further assume $C$ to be a connected smooth imbedded curve inside $X$, we have
\begin{lemma}\label{DT GW 2}  If $C$ is a connected genus zero smooth imbedded curve inside $X$, then we have canonical isomorphisms
\begin{equation}Ext^{1}(I_{C},I_{C})\cong H^{0}(C,\mathcal{N}_{C/X}), \nonumber \end{equation}
\begin{equation}Ext^{2}(I_{C},I_{C})\cong H^{1}(C,\mathcal{N}_{C/X})\oplus H^{1}(C,\mathcal{N}_{C/X})^{*}. \nonumber \end{equation}
Furthermore, under this identification, $H^{1}(C,\mathcal{N}_{C/X})$ is a maximal isotropic subspace of
$Ext^{2}(I_{C},I_{C})$ with respect to the Serre duality pairing.
\end{lemma}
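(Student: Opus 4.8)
The plan is to specialize the general computation of Lemma \ref{DT GW 1} to the case of a smooth connected rational curve $C \hookrightarrow X$, where the tangent-normal bundle sequence of $C$ in $X$ is available and the cohomology of $\mathcal{N}_{C/X}$ can be controlled. First I would note that a smooth connected curve has $H^1(X, \mathcal{O}_C) = H^1(C, \mathcal{O}_C) = 0$ precisely when $g(C) = 0$, so the hypotheses of Lemma \ref{DT GW 1} are satisfied and we already have the canonical isomorphisms $Ext^i(I_C, I_C) \cong Ext^i(\mathcal{O}_C, \mathcal{O}_C)$ for $i = 1, 2$. So the task reduces to computing $Ext^1(\mathcal{O}_C, \mathcal{O}_C)$ and $Ext^2(\mathcal{O}_C, \mathcal{O}_C)$ together with the Serre pairing on the latter.

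For the $Ext$-group computation, I would use the local-to-global spectral sequence $E_2^{p,q} = H^p(C, \mathcal{E}xt^q(\mathcal{O}_C, \mathcal{O}_C)) \Rightarrow Ext^{p+q}(\mathcal{O}_C, \mathcal{O}_C)$. Since $C$ is a smooth (lci) subvariety of codimension $3$, the local $Ext$ sheaves are exterior powers of the normal bundle: $\mathcal{E}xt^q(\mathcal{O}_C, \mathcal{O}_C) \cong \wedge^q \mathcal{N}_{C/X}$ for $0 \le q \le 3$. On the rational curve $C \cong \mathbb{P}^1$ we have $\dim C = 1$, so only $p = 0, 1$ contribute, and the relevant low-degree terms are $H^0(\wedge^0 \mathcal{N}) = H^0(\mathcal{O}_C) = \mathbb{C}$, $H^0(\mathcal{N}_{C/X})$ and $H^1(\mathcal{N}_{C/X})$ in total degree $1$, and $H^0(\wedge^2 \mathcal{N}_{C/X})$, $H^1(\mathcal{N}_{C/X})$ in total degree $2$. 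The key simplification comes from $\wedge^3 \mathcal{N}_{C/X} \cong \mathcal{O}_C(-K_X|_C) \otimes K_C \cong K_C \cong \mathcal{O}_{\mathbb{P}^1}(-2)$ (using $K_X$ trivial and adjunction), so $H^0(\wedge^3 \mathcal{N}) = 0$ and $H^1(\wedge^3 \mathcal{N}) = H^1(\mathcal{O}_{\mathbb{P}^1}(-2)) = \mathbb{C}$; similarly $\wedge^2 \mathcal{N}_{C/X} \cong \mathcal{N}_{C/X}^* \otimes \wedge^3 \mathcal{N}_{C/X} \cong \mathcal{N}_{C/X}^* \otimes K_C$, and Serre duality on $C$ gives $H^i(C, \wedge^2 \mathcal{N}_{C/X}) \cong H^{1-i}(C, \mathcal{N}_{C/X})^*$. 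Splicing these with the trace-splitting (or directly tracking the spectral sequence differentials, which vanish for dimension reasons except for the one differential $H^0(\wedge^2\mathcal{N}) \to H^2$ that is automatically zero since $\dim C = 1$), I expect to obtain $Ext^1(\mathcal{O}_C,\mathcal{O}_C) \cong H^0(C, \mathcal{N}_{C/X})$ — the trace part $H^1(\mathcal{O}_C)$ vanishes — and $Ext^2(\mathcal{O}_C,\mathcal{O}_C) \cong H^0(C,\wedge^2\mathcal{N}_{C/X}) \oplus H^1(C,\mathcal{N}_{C/X}) \cong H^1(C,\mathcal{N}_{C/X})^* \oplus H^1(C,\mathcal{N}_{C/X})$, as claimed.

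Finally, for the maximal isotropy statement I would identify the Serre duality pairing on $Ext^2(I_C, I_C) \cong Ext^2(\mathcal{O}_C, \mathcal{O}_C)$ with the natural pairing $Ext^2 \times Ext^2 \to Ext^4(\mathcal{O}_C,\mathcal{O}_C) \cong H^0(C,\mathcal{O}_C)^* \cong \mathbb{C}$ coming from Yoneda composition and the trace, then check that the summand $H^1(C, \mathcal{N}_{C/X})$ (arising as the $H^1$ row of the spectral sequence, i.e. from $Ext^1(\mathcal{O}_C, \mathcal{O}_C) \cong H^1(C,\wedge^3\mathcal{N}) \cong \mathbb{C} \cong H^0(\mathcal{O}_C)^*$ paired appropriately) composes with itself to land in $Ext^{\ge 3}$ of the spectral sequence filtration, which forces the product into $H^{\ge 2}(C, -) = 0$; hence it is isotropic, and being half-dimensional it is maximal isotropic. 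The main obstacle I anticipate is bookkeeping the compatibility between the two spectral-sequence filtrations (the one for $Ext^*(\mathcal{O}_C,\mathcal{O}_C)$ and the one implicit in Lemma \ref{DT GW 1}) carefully enough to see that the distinguished isotropic summand really is $H^1(C,\mathcal{N}_{C/X})$ as a \emph{subspace} (not merely a quotient), and to verify that the Serre pairing restricts to zero on it rather than being merely degenerate there; this is essentially the same $L_\infty$/cyclic-completion structure appearing in Segal's theorem (Lemma \ref{segal theorem}), so I would invoke that framework if the direct check becomes unwieldy.
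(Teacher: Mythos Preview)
Your proposal is correct and follows essentially the same route as the paper: reduce to $Ext^{i}(\mathcal{O}_{C},\mathcal{O}_{C})$ via Lemma \ref{DT GW 1}, compute these via the local-to-global spectral sequence with $\mathcal{E}xt^{q}(\mathcal{O}_{C},\mathcal{O}_{C})\cong\wedge^{q}\mathcal{N}_{C/X}$ (degenerating at $E_{2}$ since $\dim C=1$), and use $\wedge^{3}\mathcal{N}_{C/X}\cong\Omega_{C}$ to identify $H^{0}(C,\wedge^{2}\mathcal{N}_{C/X})\cong H^{1}(C,\mathcal{N}_{C/X})^{*}$. Your worry at the end is unnecessary: the paper handles isotropy exactly by your observation that the Yoneda product sends $H^{1}(C,\mathcal{N}_{C/X})\otimes H^{1}(C,\mathcal{N}_{C/X})$ into $H^{2}(C,\wedge^{2}\mathcal{N}_{C/X})=0$, with no appeal to Segal's cyclic-completion machinery required.
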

\begin{proof}
We have the local to global spectral sequence which degenerates at $E_{2}$ terms
\begin{eqnarray*}Ext^{*}_{X}(\mathcal{O}_{C},\mathcal{O}_{C})&\cong&
H^{*}(X,\mathcal{E}xt^{*}_{X}(\mathcal{O}_{C},\mathcal{O}_{C})) \\
&\cong& H^{*}(X,\iota_{*}(\wedge^{*}\mathcal{N}_{C/X})) \\
&\cong& H^{*}(C,\wedge^{*}\mathcal{N}_{C/X}),
\end{eqnarray*}
where $\iota: C\hookrightarrow X$ is the imbedding map. Then we have
\begin{equation}Ext^{1}(\mathcal{O}_{C},\mathcal{O}_{C})\cong H^{0}(C,\mathcal{N}_{C/X})\oplus H^{1}(C,\mathcal{O}_{C}), \nonumber \end{equation}
\begin{equation}Ext^{2}(\mathcal{O}_{C},\mathcal{O}_{C})\cong H^{0}(C,\wedge^{2}\mathcal{N}_{C/X})\oplus H^{1}(C,\mathcal{N}_{C/X}).
\nonumber \end{equation}
By $H^{1}(X,\mathcal{O}_{C})=0$ and the perfect pairing $\wedge^{2}\mathcal{N}_{C/X}\otimes\mathcal{N}_{C/X}\rightarrow \wedge^{3}\mathcal{N}_{C/X}\cong \Omega_{C}$, we get the hoped canonical isomorphisms.
As for the pairing, we have
\begin{equation}Ext^{2}(I_{C},I_{C})\otimes Ext^{2}(I_{C},I_{C})\rightarrow Ext^{4}(I_{C},I_{C})\rightarrow H^{4}(X,\mathcal{O}_{X}),
\nonumber \end{equation}
where the last trace map is a canonical isomorphism. The above pairing can be identified with
\begin{equation}(H^{0}(C,\wedge^{2}\mathcal{N}_{C/X})\oplus H^{1}(C,\mathcal{N}_{C/X}))\otimes
(H^{0}(C,\wedge^{2}\mathcal{N}_{C/X})\oplus H^{1}(C,\mathcal{N}_{C/X}))\rightarrow \nonumber\end{equation}
\begin{equation}\rightarrow H^{1}(C,\wedge^{3}\mathcal{N}_{C/X})\cong\mathbb{C}.\nonumber\end{equation}
Meanwhile
\begin{equation} H^{1}(C,\mathcal{N}_{C/X})\otimes H^{1}(C,\mathcal{N}_{C/X})\rightarrow 0, \nonumber\end{equation}
which shows $H^{1}(C,\mathcal{N}_{C/X})$ is a maximal isotropic subspace under the above identification.
\end{proof}
We have canonically identified the deformation and obstruction spaces of $DT_{4}$ theory and $GW$ theory in the above case.
If we have some further assumptions on moduli spaces, we have the following $DT_{4}/GW$ correspondence by Definition \ref{virtual cycle when Mc smooth} and Lemma \ref{ASD equivalent to max isotropic}.
\begin{theorem}\label{DT=GW}
Let $X$ be a compact Calabi-Yau 4-fold with $Hol(X)=SU(4)$. If $\mathcal{M}_{c}$ with given Chern character
$c=(1,0,0,-PD(\beta),-1)$ is smooth and consists of ideal sheaves of smooth connected genus zero imbedded curves only, then $(\mathcal{L}_{\mathbb{C}},Q_{Serre})$ on $\mathcal{M}_{c}$ has a natural complex orientation $o(\mathcal{O})$.
Assume the $GW$ moduli space $\overline{\mathcal{M}}_{0,0}(X,\beta)\cong \mathcal{M}_{c}$, then $\overline{\mathcal{M}}_{c}^{DT_{4}}$ exists, $\overline{\mathcal{M}}_{c}^{DT_{4}}\cong\overline{\mathcal{M}}_{0,0}(X,\beta)$ and  $[\overline{\mathcal{M}}_{c}^{DT_{4}}]^{vir}=[\overline{\mathcal{M}}_{0,0}(X,\beta)]^{vir}$.
\end{theorem}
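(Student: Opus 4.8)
The plan is to assemble the statement from three pieces already developed in the excerpt: (a) the comparison of deformation--obstruction data of $I_C$ on $X$ with the normal-bundle cohomology of $C$ (Lemmas \ref{DT GW 1} and \ref{DT GW 2}), (b) the construction of the $DT_4$ virtual cycle in the smooth case as a self-dual (half) Euler class (Definition \ref{virtual cycle when Mc smooth}), and (c) the Edidin--Graham description of the half Euler class of a quadratic bundle in terms of the top Chern class of a maximal isotropic subbundle (Lemma \ref{ASD equivalent to max isotropic}). First I would record that, under the hypothesis that $\mathcal{M}_c$ is smooth and parametrizes ideal sheaves $I_C$ of smooth connected genus zero embedded curves, Lemma \ref{DT GW 2} gives at every point $[I_C]$ the canonical isomorphisms $Ext^1(I_C,I_C)\cong H^0(C,\mathcal{N}_{C/X})$ and $Ext^2(I_C,I_C)\cong H^1(C,\mathcal{N}_{C/X})\oplus H^1(C,\mathcal{N}_{C/X})^*$, with $H^1(C,\mathcal{N}_{C/X})$ a maximal isotropic subspace for the Serre pairing $Q_{Serre}$. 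These identifications are canonical, hence globalize: the obstruction bundle $Ob$ on $\mathcal{M}_c$ with $Ob|_{[I_C]}\cong Ext^2(I_C,I_C)$ carries a maximal isotropic subbundle $W$ with $W|_{[I_C]}\cong H^1(C,\mathcal{N}_{C/X})$, which is precisely the obstruction bundle of the Gromov--Witten moduli space $\overline{\mathcal{M}}_{0,0}(X,\beta)$ under the given isomorphism $\overline{\mathcal{M}}_{0,0}(X,\beta)\cong\mathcal{M}_c$ (here smoothness of $\mathcal{M}_c$ forces all Kuranishi maps to vanish and the GW obstruction theory to reduce to the Euler class of $W$).

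Next I would invoke Proposition \ref{ob=v+v*}: the condition $\mathrm{Image}(\kappa)\subseteq V_{\mathcal{F}}$ is vacuous here since $\kappa\equiv 0$ by smoothness, and $V_{\mathcal{F}}=H^1(C,\mathcal{N}_{C/X})$ is maximal isotropic, so $\overline{\mathcal{M}}_c^{DT_4}$ exists and $\overline{\mathcal{M}}_c^{DT_4}\cong\mathcal{M}_c\cong\overline{\mathcal{M}}_{0,0}(X,\beta)$ as real analytic spaces; moreover $(\mathcal{L}_{\mathbb{C}},Q_{Serre})$ acquires a natural complex orientation $o(\mathcal{O})$ in the sense of Definition \ref{nat cpx ori}, since $\mathcal{L}$ is then the determinant line bundle of the complex bundle whose fiber is $Ext^1(I_C,I_C)\ominus W$, matching the first bullet of the statement. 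Then, by Definition \ref{virtual cycle when Mc smooth}, $[\overline{\mathcal{M}}_c^{DT_4}]^{vir}=PD(e(Ob_+))$, where $Ob_+$ is a self-dual obstruction bundle, i.e.\ the Euler class of the real quadratic bundle underlying $(Ob,Q_{Serre})$. By Lemma \ref{ASD equivalent to max isotropic}(1), since $rk\,Ob=2\cdot rk\,W$ is even, this half Euler class equals $\pm c_{top}(W)=\pm e(W)$, the sign being fixed precisely by the choice of orientation data, which we take to be $o(\mathcal{O})$. On the other hand $[\overline{\mathcal{M}}_{0,0}(X,\beta)]^{vir}=PD(e(W))$ by the reduction of the GW virtual class on the smooth moduli space with obstruction bundle $W$. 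Comparing, and fixing the sign by the natural complex orientation, yields $[\overline{\mathcal{M}}_c^{DT_4}]^{vir}=[\overline{\mathcal{M}}_{0,0}(X,\beta)]^{vir}$.

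The main obstacle I expect is bookkeeping of signs and orientations: Lemma \ref{ASD equivalent to max isotropic} only determines $e(Ob_+)$ up to sign depending on the choice of maximal isotropic family, and one must check that the natural complex orientation $o(\mathcal{O})$ is the one making this sign $+1$ relative to the complex orientation of $W$ used in the GW virtual class. Concretely, this amounts to verifying that the identification $Ext^2(I_C,I_C)\cong H^1(C,\mathcal{N}_{C/X})\oplus H^1(C,\mathcal{N}_{C/X})^*$ of Lemma \ref{DT GW 2} is compatible, as a quadratic bundle with maximal isotropic subbundle, with the direct-sum splitting $V\oplus V^*$ that defines $*_4$ in the proof of Proposition \ref{ob=v+v*}, so that the projection $\pi_+$ restricted to the isotropic part is an isomorphism onto $Ext^2_+$ preserving the complex structure. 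A secondary point to be careful about is that the isomorphism $\overline{\mathcal{M}}_{0,0}(X,\beta)\cong\mathcal{M}_c$ is only assumed at the level of (coarse) moduli spaces, so one should note that smoothness plus this isomorphism identifies the two obstruction \emph{bundles} (both being $W$ with the same complex structure), which is what the Euler-class computation requires; the fact that the GW side has no stacky automorphisms here follows from $C$ being smooth connected of genus zero with $\beta\neq 0$. I would also remark in passing that the orientation data exists in the relevant cases by Corollary \ref{cor on ori data} when $Hol(X)=SU(4)$ and $H_{odd}(X,\mathbb{Z})=0$, but that for the present statement the natural complex orientation $o(\mathcal{O})$ suffices and no global orientability hypothesis is needed.
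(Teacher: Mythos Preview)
Your proposal is correct and follows essentially the same approach as the paper: the paper's proof is simply the one-line remark preceding the theorem that the result follows ``by Definition \ref{virtual cycle when Mc smooth} and Lemma \ref{ASD equivalent to max isotropic}'' once Lemmas \ref{DT GW 1} and \ref{DT GW 2} have canonically identified the deformation and obstruction spaces of the $DT_4$ and $GW$ theories. Your write-up is a faithful and more detailed expansion of exactly this argument, including the existence of the maximal isotropic subbundle $W$ with fiber $H^1(C,\mathcal{N}_{C/X})$ and the identification of the half Euler class with $c_{top}(W)$; the orientation/sign concerns you flag are legitimate but are likewise not spelled out in the paper.
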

By Theorem 4.3, Chapter V of \cite{kollar}, the Hilbert scheme of lines in a generic sextic $CY_{4}$,
$X\subseteq \mathbb{P}^{5}$ satisfies the above conditions, and
the moduli space $\overline{\mathcal{M}}_{c}^{DT_{4}}$ is a general type algebraic curve whose virtual cycle is its usual fundamental class.

Another example is given by those $X$ containing an embedded $\mathbb{P}^{3}$, i.e. $\exists$ $i: \mathbb{C}\mathbb{P}^{3}\hookrightarrow X$.
We denote $\beta=i_{*}[H^{2}]$, where $[H^{2}]\in H_{2}(\mathbb{C}\mathbb{P}^{3})$ is the generator. As the normal bundle $\mathcal{N}_{\mathbb{P}^{3}/X}$ is negative, ideal sheaves of curves in class $\beta$ are of the form $I_{C}$, where $C\hookrightarrow \mathbb{C}\mathbb{P}^{3}\hookrightarrow X$ is of degree $1$.
Meanwhile, $\mathcal{M}_{c}$ with $c=(1,0,0,-PD(\beta),-1)$, $\beta=i_{*}[H^{2}]\in H_{2}(X)$ is smooth. Hence conditions in Theorem \ref{DT=GW} are satisfied. We determine $DT_{4}$ the virtual cycle in this case.
\begin{proposition}
Let $c=(1,0,0,-PD(\beta),-1)\in H^{even}(X,\mathbb{Z})$.
If there exists an embedding $i: \mathbb{P}^{3}\hookrightarrow X$ and $\beta=i_{*}[H^{2}]$,
then $\overline{\mathcal{M}}_{c}^{DT_{4}}$ exists and $\overline{\mathcal{M}}_{c}^{DT_{4}}\cong \overline{\mathcal{M}}_{0,0}(K_{\mathbb{P}^{3}},[H^{2}])$. Furthermore, $[\overline{\mathcal{M}}_{c}^{DT_{4}}]^{vir}=[\overline{\mathcal{M}}_{0,0}(K_{\mathbb{P}^{3}},[H^{2}])]^{vir}$.
\end{proposition}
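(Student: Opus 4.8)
The plan is to obtain this as an application of Theorem \ref{DT=GW}: I would verify its hypotheses for the Chern character $c=(1,0,0,-PD(\beta),-1)$ (throughout this subsection $Hol(X)=SU(4)$), and then identify the resulting Gromov--Witten moduli space, and its virtual class, with those of the local model $Tot(K_{\mathbb{P}^{3}})$.

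First I would record the geometry of the embedded $\mathbb{P}^{3}$. Since $\mathbb{P}^{3}\subset X$ has codimension one, $\mathcal{N}_{\mathbb{P}^{3}/X}$ is a line bundle, and adjunction together with $K_{X}\cong\mathcal{O}_{X}$ gives $\mathcal{N}_{\mathbb{P}^{3}/X}\cong K_{\mathbb{P}^{3}}\cong\mathcal{O}_{\mathbb{P}^{3}}(-4)$, which is negative; by Grauert's theorem an analytic neighbourhood $U$ of $\mathbb{P}^{3}$ in $X$ is biholomorphic to a neighbourhood of the zero section in $Tot(K_{\mathbb{P}^{3}})$. A rank-one torsion-free sheaf with Chern character $c$ is an ideal sheaf $I_{C}$ of a one-dimensional subscheme $C$ with $[C]=\beta$ and $\chi(\mathcal{O}_{C})=1$; by rigidity of $\mathbb{P}^{3}$ forced by negativity of $\mathcal{N}_{\mathbb{P}^{3}/X}$ (using properness of the relevant Hilbert scheme) such $C$ lies on $\mathbb{P}^{3}$ and is therefore a line, so $\mathcal{M}_{c}\cong\mathbb{G}(1,3)$ and every member is the ideal sheaf of a smooth connected genus-zero embedded curve. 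For a line $C$ one computes $\mathcal{N}_{C/X}\cong\mathcal{O}_{\mathbb{P}^{1}}(1)^{\oplus 2}\oplus\mathcal{O}_{\mathbb{P}^{1}}(-4)$, so by Lemma \ref{DT GW 2} $Ext^{1}(I_{C},I_{C})\cong H^{0}(C,\mathcal{N}_{C/X})$ is $4$-dimensional, equal to $\dim\mathbb{G}(1,3)$; since $\mathcal{M}_{c}\cong\mathbb{G}(1,3)$ is smooth of dimension equal to this tangent space, all Kuranishi maps vanish, and Lemma \ref{DT GW 2} also exhibits $H^{1}(C,\mathcal{N}_{C/X})$ as a maximal isotropic subspace of $Ext^{2}(I_{C},I_{C})$, giving the natural complex orientation $o(\mathcal{O})$ and, via Definition \ref{virtual cycle when Mc smooth} and Lemma \ref{ASD equivalent to max isotropic}, identifying the self-dual obstruction bundle with the bundle with fibre $H^{1}(C,\mathcal{N}_{C/X})$.

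Next I would treat the Gromov--Witten side. A genus-zero stable map to $X$ in class $\beta$ has image meeting $U$, and composing with the projection $U\to\mathbb{P}^{3}$ realises it as a degree-one map to $\mathbb{P}^{3}$ whose normal component is a section of the pullback of $\mathcal{N}_{\mathbb{P}^{3}/X}\cong\mathcal{O}_{\mathbb{P}^{3}}(-4)$ to $\mathbb{P}^{1}$, hence zero; so every such map factors through the zero section and $\overline{\mathcal{M}}_{0,0}(X,\beta)\cong\overline{\mathcal{M}}_{0,0}(\mathbb{P}^{3},1)\cong\mathbb{G}(1,3)\cong\mathcal{M}_{c}$, and identically $\overline{\mathcal{M}}_{0,0}(K_{\mathbb{P}^{3}},[H^{2}])\cong\overline{\mathcal{M}}_{0,0}(\mathbb{P}^{3},1)$. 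Now Theorem \ref{DT=GW} applies: $\overline{\mathcal{M}}_{c}^{DT_{4}}$ exists, $\overline{\mathcal{M}}_{c}^{DT_{4}}\cong\overline{\mathcal{M}}_{0,0}(X,\beta)\cong\overline{\mathcal{M}}_{0,0}(K_{\mathbb{P}^{3}},[H^{2}])$, and $[\overline{\mathcal{M}}_{c}^{DT_{4}}]^{vir}=[\overline{\mathcal{M}}_{0,0}(X,\beta)]^{vir}$. To finish I would compare the two Gromov--Witten virtual classes: over $\mathbb{P}^{3}$ the moduli space $\overline{\mathcal{M}}_{0,0}(\mathbb{P}^{3},1)$ is smooth and unobstructed with smooth domains, so in each case the virtual class is the Euler class of the excess obstruction bundle $R^{1}\pi_{*}ev^{*}(\text{normal bundle})$ capped with $[\mathbb{G}(1,3)]$; since $T_{X}|_{\mathbb{P}^{3}}\cong T_{\mathbb{P}^{3}}\oplus\mathcal{N}_{\mathbb{P}^{3}/X}\cong T_{\mathbb{P}^{3}}\oplus K_{\mathbb{P}^{3}}\cong T_{Tot(K_{\mathbb{P}^{3}})}|_{\mathbb{P}^{3}}$, this excess bundle is $R^{1}\pi_{*}ev^{*}K_{\mathbb{P}^{3}}$ in both settings, so $[\overline{\mathcal{M}}_{0,0}(X,\beta)]^{vir}=[\overline{\mathcal{M}}_{0,0}(K_{\mathbb{P}^{3}},[H^{2}])]^{vir}$.

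The step I expect to be the main obstacle is this last comparison of Gromov--Witten virtual classes for the compact $X$ and for the non-compact local model $K_{\mathbb{P}^{3}}$: one must make precise that the genus-zero deformation-obstruction theory in class $\beta$ is entirely supported on the neighbourhood $U\cong Tot(K_{\mathbb{P}^{3}})$ and that the perfect obstruction theories agree there, i.e. that the normal obstruction $R^{1}\pi_{*}ev^{*}\mathcal{N}_{\mathbb{P}^{3}/X}=R^{1}\pi_{*}ev^{*}K_{\mathbb{P}^{3}}$ is literally the same bundle in both cases. A related, more elementary point requiring care, used both for $\mathcal{M}_{c}$ and for $\overline{\mathcal{M}}_{0,0}(X,\beta)$, is the global claim that \emph{every} curve and every stable map in class $\beta$ actually lies on the embedded $\mathbb{P}^{3}$, which follows from properness of the moduli spaces together with the rigidity of $\mathbb{P}^{3}$ imposed by the negativity of its normal bundle.
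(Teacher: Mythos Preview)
Your approach is essentially the same as the paper's: verify the hypotheses of Theorem \ref{DT=GW} using negativity of $\mathcal{N}_{\mathbb{P}^{3}/X}\cong K_{\mathbb{P}^{3}}$, and then identify the obstruction bundle $H^{1}(C,\mathcal{N}_{C/X})$ with $H^{1}(C,\iota^{*}K_{\mathbb{P}^{3}})$, which is the GW obstruction for $K_{\mathbb{P}^{3}}$. The paper treats the preliminary verifications (curves in class $\beta$ lie on $\mathbb{P}^{3}$, smoothness of $\mathcal{M}_{c}$) in the paragraph preceding the proposition, so its formal proof is just the obstruction-bundle comparison.

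The only stylistic difference worth flagging is your splitting claim $T_{X}|_{\mathbb{P}^{3}}\cong T_{\mathbb{P}^{3}}\oplus K_{\mathbb{P}^{3}}$. This is in fact true here (the extension class lies in $H^{1}(\mathbb{P}^{3},T_{\mathbb{P}^{3}}(4))=0$), but you state it without justification. The paper sidesteps this by working on the curve: from $0\to T_{\mathbb{P}^{3}}\to TX|_{\mathbb{P}^{3}}\to K_{\mathbb{P}^{3}}\to 0$ restricted to $C$, positivity of $\iota^{*}T_{\mathbb{P}^{3}}$ on a line gives $H^{1}(C,\iota^{*}T_{\mathbb{P}^{3}})=0$, hence $H^{1}(C,\iota^{*}TX)\cong H^{1}(C,\iota^{*}K_{\mathbb{P}^{3}})$ directly. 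Either route is fine; the paper's is slightly more economical.
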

\begin{proof}
By the positivity of $TC\cong T\mathbb{P}^{1}$, we have $H^{1}(C,\mathcal{N}_{C/X})\cong H^{1}(C,\iota^{*}TX)$, where
$\iota: C\hookrightarrow X$.
Meanwhile,
\begin{equation}0\rightarrow T\mathbb{P}^{3}\rightarrow TX\mid_{\mathbb{P}^{3}}\rightarrow \mathcal{N}_{\mathbb{P}^{3}/X}\rightarrow 0
\nonumber \end{equation}
induces $H^{1}(C,\iota^{*}TX)\cong H^{1}(C,\iota^{*}K_{\mathbb{P}^{3}})$
which is the obstruction space of Gromov-Witten theory of $K_{\mathbb{P}^{3}}$ as
$H^{1}(C,\iota^{*}TK_{\mathbb{P}^{3}})\cong H^{1}(C,\iota^{*}K_{\mathbb{P}^{3}})$.
\end{proof}

\subsubsection{The case of $Hol(X)=Sp(2)$ }
When $Hol(X)=Sp(2)$, we similarly have
\begin{lemma}Let $X$ be a compact irreducible hyper-K\"ahler 4-fold.
Under the assumption that $C\hookrightarrow X$ is a closed subscheme with $dim_{\mathbb{C}}C\leq 1$ and $H^{1}(X,\mathcal{O}_{C})=0$, we have
an isomorphism
\begin{equation}Ext^{1}(I_{C},I_{C})\cong Ext^{1}(\mathcal{O}_{C},\mathcal{O}_{C}), \nonumber \end{equation}
and a short exact sequence
\begin{equation}0\rightarrow H^{2}(X,\mathcal{O}_{X})\rightarrow Ext^{2}(I_{C},I_{C})\rightarrow Ext^{2}(\mathcal{O}_{C},\mathcal{O}_{C})\rightarrow 0 .\nonumber \end{equation}
\end{lemma}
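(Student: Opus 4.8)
The plan is to imitate, almost verbatim, the proof of Lemma \ref{DT GW 1} but with the correct holonomy input for $Hol(X)=Sp(2)$, tracking which of the vanishings survive and which fail. For an irreducible hyper-K\"ahler 4-fold we have $h^{0,0}=1$, $h^{1,0}=h^{3,0}=0$, $h^{2,0}=1$ and $h^{4,0}=1$; so the only change from the $SU(4)$ computation is that $H^{2}(X,\mathcal{O}_{X})\cong\mathbb{C}$ is now nonzero. First I would take cohomology of $0\to I_{C}\to\mathcal{O}_{X}\to\mathcal{O}_{C}\to 0$; since $\dim_{\mathbb{C}}C\le 1$ gives $H^{i}(X,\mathcal{O}_{C})=0$ for $i\ge 2$, and $H^{1}(X,\mathcal{O}_{X})=H^{3}(X,\mathcal{O}_{X})=0$ while $H^{1}(X,\mathcal{O}_{C})=0$ by hypothesis, I obtain $H^{2}(X,I_{C})=0$, $H^{3}(X,I_{C})\cong H^{2}(X,\mathcal{O}_{X})$ (this is the new term), $H^{4}(X,I_{C})\cong H^{4}(X,\mathcal{O}_{X})$, and $H^{1}(X,I_{C})=H^{0}(X,I_{C})=0$.

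Next I would run the two $\mathrm{Ext}$ long exact sequences used in Lemma \ref{DT GW 1}. Applying $\mathrm{Hom}(\mathcal{O}_{C},-)$ to the structure sequence and using Serre duality $\mathrm{Ext}^{i}(\mathcal{O}_{C},\mathcal{O}_{X})\cong H^{4-i}(X,\mathcal{O}_{C})^{*}$, which vanishes for $i=0,1,2$ and equals $H^{1}(X,\mathcal{O}_{C})^{*}=0$ for $i=3$ (again using the hypothesis), gives exactly as before $\mathrm{Ext}^{0}(\mathcal{O}_{C},I_{C})=0$, $\mathrm{Ext}^{k}(\mathcal{O}_{C},\mathcal{O}_{C})\cong\mathrm{Ext}^{k+1}(\mathcal{O}_{C},I_{C})$ for $k=0,1,2$, hence in particular $\mathrm{Ext}^{1}(\mathcal{O}_{C},\mathcal{O}_{C})\cong\mathrm{Ext}^{2}(\mathcal{O}_{C},I_{C})$ and $\mathrm{Ext}^{2}(\mathcal{O}_{C},\mathcal{O}_{C})\cong\mathrm{Ext}^{3}(\mathcal{O}_{C},I_{C})$. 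Then applying $\mathrm{Hom}(-,I_{C})$ to the structure sequence yields $\to\mathrm{Ext}^{i}(\mathcal{O}_{X},I_{C})\to\mathrm{Ext}^{i}(I_{C},I_{C})\to\mathrm{Ext}^{i+1}(\mathcal{O}_{C},I_{C})\to$. For $i=1$: $H^{1}(X,I_{C})=0$ and $H^{2}(X,I_{C})=0$ force $\mathrm{Ext}^{1}(I_{C},I_{C})\cong\mathrm{Ext}^{2}(\mathcal{O}_{C},I_{C})\cong\mathrm{Ext}^{1}(\mathcal{O}_{C},\mathcal{O}_{C})$, which is the first claim. For $i=2$: the relevant piece is $H^{2}(X,I_{C})\to\mathrm{Ext}^{2}(I_{C},I_{C})\to\mathrm{Ext}^{3}(\mathcal{O}_{C},I_{C})\to H^{3}(X,I_{C})$; now $H^{2}(X,I_{C})=0$ but $H^{3}(X,I_{C})\cong H^{2}(X,\mathcal{O}_{X})\ne 0$, so I cannot conclude an isomorphism — instead I get an injection $\mathrm{Ext}^{2}(I_{C},I_{C})\hookrightarrow\mathrm{Ext}^{3}(\mathcal{O}_{C},I_{C})\cong\mathrm{Ext}^{2}(\mathcal{O}_{C},\mathcal{O}_{C})$ whose cokernel maps into $H^{3}(X,I_{C})$.

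The last step, and the one requiring a genuine argument rather than diagram-chasing, is to identify the kernel of $\mathrm{Ext}^{2}(I_{C},I_{C})\to\mathrm{Ext}^{2}(\mathcal{O}_{C},\mathcal{O}_{C})$ with $H^{2}(X,\mathcal{O}_{X})$ and to show the map onto $\mathrm{Ext}^{2}(\mathcal{O}_{C},\mathcal{O}_{C})$ is surjective. The kernel is the image of $\mathrm{Ext}^{2}(\mathcal{O}_{X},I_{C})=H^{2}(X,I_{C})=0$... so in fact the kernel of the map \emph{to} $\mathrm{Ext}^{3}(\mathcal{O}_{C},I_{C})$ is zero; the honest claim is that $H^{2}(X,\mathcal{O}_{X})$ sits inside $\mathrm{Ext}^{2}(I_{C},I_{C})$ as the image of the trace/identity class, so I would instead analyze the sequence one step earlier: from $\mathrm{Hom}(-,I_{C})$ one has $\mathrm{Ext}^{1}(\mathcal{O}_{C},I_{C})\to\mathrm{Ext}^{2}(\mathcal{O}_{X},I_{C})\to\mathrm{Ext}^{2}(I_{C},I_{C})\to\mathrm{Ext}^{2}(\mathcal{O}_{C},I_{C})$; hm, that still does not obviously land $H^{2}(X,\mathcal{O}_{X})$. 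The cleanest route — and the one I would adopt — is to instead compare $I_{C}$ with $\mathcal{O}_{X}$ directly: the triangle $I_{C}\to\mathcal{O}_{X}\to\mathcal{O}_{C}$ gives a long exact sequence relating $\mathrm{Ext}^{\bullet}(I_{C},I_{C})$, $\mathrm{Ext}^{\bullet}(\mathcal{O}_{X},\mathcal{O}_{X})=H^{\bullet}(X,\mathcal{O}_{X})$ and the $\mathcal{O}_{C}$-terms, in which the map $H^{2}(X,\mathcal{O}_{X})=\mathrm{Ext}^{2}(\mathcal{O}_{X},\mathcal{O}_{X})\to\mathrm{Ext}^{2}(I_{C},I_{C})$ is split-injective because it is split by the trace map $\mathrm{Ext}^{2}(I_{C},I_{C})\to H^{2}(X,\mathcal{O}_{X})$ (which is the hyper-K\"ahler cosection $\nu$ of (\ref{nu+})), and the quotient is $\mathrm{Ext}^{2}(\mathcal{O}_{C},\mathcal{O}_{C})$ once one feeds in the vanishings established above. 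Thus the expected main obstacle is purely the bookkeeping of identifying the cokernel term cleanly with $\mathrm{Ext}^{2}(\mathcal{O}_{C},\mathcal{O}_{C})$ and checking that the splitting is the trace map; everything else is a direct transcription of Lemma \ref{DT GW 1} with $H^{2}(X,\mathcal{O}_{X})$ reinstated.
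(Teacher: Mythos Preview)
Your overall plan—transcribe the proof of Lemma~\ref{DT GW 1} and track where $H^{2}(X,\mathcal{O}_{X})\cong\mathbb{C}$ enters—is exactly what the paper intends (it offers no separate proof, only ``we similarly have''). But you have a concrete bookkeeping error in the very first step that propagates and makes the rest of the write-up go in the wrong direction.

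From the long exact sequence of $0\to I_{C}\to\mathcal{O}_{X}\to\mathcal{O}_{C}\to 0$, the piece in degrees $1$--$3$ reads
\[
0=H^{1}(\mathcal{O}_{X})\to H^{1}(\mathcal{O}_{C})=0\to H^{2}(I_{C})\to H^{2}(\mathcal{O}_{X})\cong\mathbb{C}\to H^{2}(\mathcal{O}_{C})=0\to H^{3}(I_{C})\to H^{3}(\mathcal{O}_{X})=0,
\]
so in fact $H^{2}(X,I_{C})\cong H^{2}(X,\mathcal{O}_{X})\cong\mathbb{C}$ and $H^{3}(X,I_{C})=0$, the \emph{opposite} of what you wrote. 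With the correct values, the $i=2$ segment of the $\mathrm{Hom}(-,I_{C})$ sequence becomes
\[
\mathrm{Ext}^{2}(\mathcal{O}_{C},I_{C})\to H^{2}(I_{C})\to \mathrm{Ext}^{2}(I_{C},I_{C})\to \mathrm{Ext}^{3}(\mathcal{O}_{C},I_{C})\to H^{3}(I_{C})=0,
\]
so the map to $\mathrm{Ext}^{3}(\mathcal{O}_{C},I_{C})\cong\mathrm{Ext}^{2}(\mathcal{O}_{C},\mathcal{O}_{C})$ is \emph{surjective}, not injective as you claimed, and the kernel is the image of $H^{2}(I_{C})$. To see that this image is all of $H^{2}(I_{C})\cong H^{2}(\mathcal{O}_{X})$, note that the preceding arrow $\mathrm{Ext}^{2}(\mathcal{O}_{C},I_{C})\to H^{2}(I_{C})$ is zero: composing with the isomorphism $H^{2}(I_{C})\xrightarrow{\sim}H^{2}(\mathcal{O}_{X})$ (induced by $I_{C}\hookrightarrow\mathcal{O}_{X}$) it factors through $\mathrm{Ext}^{2}(\mathcal{O}_{C},\mathcal{O}_{X})\cong H^{2}(\mathcal{O}_{C})^{*}=0$. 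This gives the short exact sequence directly; no trace-map splitting argument is needed. (Your $\mathrm{Ext}^{1}$ argument also relies on $H^{2}(I_{C})=0$, which is false; the same vanishing of $\mathrm{Ext}^{2}(\mathcal{O}_{C},I_{C})\to H^{2}(I_{C})$ fixes that step too, or alternatively one proceeds via $\mathrm{Ext}^{3}$ and Serre duality as in Lemma~\ref{DT GW 1}, which also sidesteps any issue with $H^{1}(I_{C})$.)
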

\begin{lemma}If $C$ is a connected genus zero smooth imbedded curve inside $X$, then we have
\begin{equation}Ext^{1}(I_{C},I_{C})\cong H^{0}(C,\mathcal{N}_{C/X}), \nonumber \end{equation}
\begin{equation}0\rightarrow H^{2}(X,\mathcal{O}_{X})\rightarrow Ext^{2}(I_{C},I_{C})\rightarrow H^{1}(C,\mathcal{N}_{C/X})\oplus H^{1}(C,\mathcal{N}_{C/X})^{*} \rightarrow 0. \nonumber \end{equation}
\end{lemma}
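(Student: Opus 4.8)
The plan is to mirror the proof of Lemma \ref{DT GW 2} (the $Hol(X)=SU(4)$ case), replacing the vanishing $H^{2}(X,\mathcal{O}_{X})=0$ used there by the non-vanishing $H^{2}(X,\mathcal{O}_{X})\cong\mathbb{C}$ that holds on an irreducible hyper-K\"ahler fourfold (generated by the holomorphic symplectic form). First I would observe that a connected genus zero smooth imbedded curve is $C\cong\mathbb{P}^{1}$, so $H^{1}(X,\mathcal{O}_{C})=H^{1}(\mathbb{P}^{1},\mathcal{O}_{\mathbb{P}^{1}})=0$; hence the hypotheses of the preceding lemma are satisfied, and it provides the isomorphism $Ext^{1}(I_{C},I_{C})\cong Ext^{1}(\mathcal{O}_{C},\mathcal{O}_{C})$ together with the short exact sequence
\[0\to H^{2}(X,\mathcal{O}_{X})\to Ext^{2}(I_{C},I_{C})\to Ext^{2}(\mathcal{O}_{C},\mathcal{O}_{C})\to 0.\]
It then remains only to identify $Ext^{1}(\mathcal{O}_{C},\mathcal{O}_{C})$ and $Ext^{2}(\mathcal{O}_{C},\mathcal{O}_{C})$.

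Second, I would run the local-to-global spectral sequence $E_{2}^{p,q}=H^{q}(X,\mathcal{E}xt^{p}_{X}(\mathcal{O}_{C},\mathcal{O}_{C}))\Rightarrow Ext^{p+q}_{X}(\mathcal{O}_{C},\mathcal{O}_{C})$. Since $C$ is smoothly imbedded, $\mathcal{E}xt^{p}_{X}(\mathcal{O}_{C},\mathcal{O}_{C})\cong\iota_{*}(\wedge^{p}\mathcal{N}_{C/X})$, and because $\dim_{\mathbb{C}}C=1$ all the relevant cohomology sits in $q=0,1$; exactly as in the proof of Lemma \ref{DT GW 2} the spectral sequence degenerates at $E_{2}$, giving
\[Ext^{1}(\mathcal{O}_{C},\mathcal{O}_{C})\cong H^{0}(C,\mathcal{N}_{C/X})\oplus H^{1}(C,\mathcal{O}_{C}),\qquad Ext^{2}(\mathcal{O}_{C},\mathcal{O}_{C})\cong H^{0}(C,\wedge^{2}\mathcal{N}_{C/X})\oplus H^{1}(C,\mathcal{N}_{C/X}).\]
Genus zero kills $H^{1}(C,\mathcal{O}_{C})$, which already yields the first asserted isomorphism $Ext^{1}(I_{C},I_{C})\cong H^{0}(C,\mathcal{N}_{C/X})$.

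Third, for the obstruction term I would use $\omega_{X}\cong\mathcal{O}_{X}$ together with adjunction $\wedge^{3}\mathcal{N}_{C/X}\cong\omega_{C}\otimes(\omega_{X}|_{C})^{-1}\cong\omega_{C}$. The perfect pairing $\wedge^{2}\mathcal{N}_{C/X}\otimes\mathcal{N}_{C/X}\to\wedge^{3}\mathcal{N}_{C/X}\cong\omega_{C}$ then identifies $\wedge^{2}\mathcal{N}_{C/X}\cong\mathcal{N}_{C/X}^{*}\otimes\omega_{C}$, and Serre duality on the curve $C$ gives $H^{0}(C,\wedge^{2}\mathcal{N}_{C/X})\cong H^{1}(C,\mathcal{N}_{C/X})^{*}$. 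Substituting $Ext^{2}(\mathcal{O}_{C},\mathcal{O}_{C})\cong H^{1}(C,\mathcal{N}_{C/X})\oplus H^{1}(C,\mathcal{N}_{C/X})^{*}$ into the short exact sequence furnished by the preceding lemma produces
\[0\to H^{2}(X,\mathcal{O}_{X})\to Ext^{2}(I_{C},I_{C})\to H^{1}(C,\mathcal{N}_{C/X})\oplus H^{1}(C,\mathcal{N}_{C/X})^{*}\to 0,\]
which is the claim. There is essentially no serious obstacle: every ingredient is a direct adaptation of the $SU(4)$ argument. The only point needing a little care is that here the sequence genuinely remains a non-split extension rather than collapsing to an isomorphism, precisely because $H^{2}(X,\mathcal{O}_{X})\cong\mathbb{C}$ no longer vanishes; one may also note that this extra summand is what later gives rise to the surjective cosection of the trace-free $DT_{4}$ obstruction space, matching the reduced $GW$ theory of hyper-K\"ahler fourfolds, but that refinement is not required for the statement itself.
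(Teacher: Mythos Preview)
Your proposal is correct and follows essentially the same approach as the paper: invoke the preceding lemma (the $Sp(2)$ analogue of Lemma~\ref{DT GW 1}) to reduce to computing $Ext^{i}(\mathcal{O}_{C},\mathcal{O}_{C})$, and then identify these via the degenerate local-to-global spectral sequence and the pairing $\wedge^{2}\mathcal{N}_{C/X}\otimes\mathcal{N}_{C/X}\to\wedge^{3}\mathcal{N}_{C/X}\cong\Omega_{C}$ exactly as in the proof of Lemma~\ref{DT GW 2}. One small remark: your aside that the sequence ``genuinely remains a non-split extension'' is not part of the claim and in fact the paper later records (equation~(\ref{hyper cosection})) that it splits as vector spaces, so you should drop that comment.
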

The $GW$ obstruction space $H^{1}(C,\mathcal{N}_{C/X})\cong H^{1}(C,\iota^{*}TX)$ if $C\cong \mathbb{P}^{1}$.
Meanwhile,
\begin{equation}0\rightarrow \mathcal{N}_{C/X}^{*}\rightarrow \iota^{*}\Omega_{X}\rightarrow \Omega_{C}\rightarrow 0
\nonumber \end{equation}
induces
\begin{equation}0\cong H^{0}(C,\Omega_{C})\rightarrow H^{1}(C,\mathcal{N}_{C/X}^{*})\rightarrow
H^{1}(C,\iota^{*}\Omega_{X})\cong H^{1}(C,\iota^{*}TX)\rightarrow H^{1}(C,\Omega_{C})\rightarrow 0,
\nonumber \end{equation}
where the isomorphism is given by the holomorphic symplectic two-form $\sigma\in H^{0}(X,\Omega^{2}_{X})$.
The above sequence establishes the surjective cosection of the obstruction sheaf of $GW$ theory for hyper-K\"ahler manifolds \cite{kiem}, \cite{kiem0}.

As vector spaces,
\begin{equation}\label{hyper cosection}Ext^{2}(I_{C},I_{C})\cong H^{1}(C,\mathcal{N}_{C/X}^{*})\oplus H^{1}(C,\Omega_{C})
\oplus H^{1}(C,\mathcal{N}_{C/X}^{*})^{*}\oplus H^{1}(C,\Omega_{C})^{*}\oplus H^{2}(X,\mathcal{O}_{X}). \end{equation}
Taking away trivial factors and restrict to the maximal isotropic subspace, we define the hyper-reduced $DT_{4}$ obstruction space.
\begin{definition}The hyper-reduced $DT_{4}$ obstruction space is
\begin{equation}Ext^{2}_{\emph{hyper-red}}(I_{C},I_{C})\triangleq H^{1}(C,\mathcal{N}_{C/X}^{*}) \nonumber \end{equation}
\end{definition}
\begin{remark}
The hyper-reduced $DT_{4}$ obstruction space coincides with the reduced $GW$ obstruction space \cite{kiem}.
\end{remark}
\begin{definition}
The hyper-reduced virtual fundamental class of $\overline{\mathcal{M}}_{c}^{DT_{4}}$, denoted by $[\overline{\mathcal{M}}_{c}^{DT_{4}}]^{vir}_{hyper-red}$ is the Poincar\'{e} dual of the Euler class of the
hyper-reduced $DT_{4}$ obstruction bundle.
\end{definition}
\begin{theorem}\label{DT=GW2}
Let $X$ be a compact irreducible hyper-K\"ahler 4-fold. If $\mathcal{M}_{c}$ with given Chern character $c=(1,0,0,-PD(\beta),-1)$
is smooth and consists of ideal sheaves of smooth connected genus zero imbedded curves only, then $(\mathcal{L}_{\mathbb{C}},Q_{Serre})$ on $\mathcal{M}_{c}$ has a natural complex orientation $o(\mathcal{O})$. Assume the $GW$ moduli space
$\overline{\mathcal{M}}_{0,0}(X,\beta)\cong \mathcal{M}_{c}$, then $\overline{\mathcal{M}}_{c}^{DT_{4}}$ exists, $\overline{\mathcal{M}}_{c}^{DT_{4}}\cong\overline{\mathcal{M}}_{0,0}(X,\beta)$
and $[\overline{\mathcal{M}}_{c}^{DT_{4}}]^{vir}=0$.

Furthermore, $[\overline{\mathcal{M}}_{c}^{DT_{4}}]^{vir}_{hyper-red}=[\overline{\mathcal{M}}_{0,0}(X,\beta)]^{vir}_{red}$, where
$[\overline{\mathcal{M}}_{0,0}(X,\beta)]^{vir}_{red}$ is the reduced virtual fundamental class of the $GW$ moduli space defined by Kiem and Li \cite{kiem}, \cite{kiem0}.
\end{theorem}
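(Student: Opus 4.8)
The plan is to reduce everything to the deformation--obstruction comparison in the two lemmas immediately above, the smoothness hypothesis, and the symplectic cosection of Section 5.4.2. First I would check that the hypotheses of those lemmas are met: since $Hol(X)=Sp(2)$ we have $H^{i}(X,\mathcal{O}_{X})=0$ for $i=1,3$ and $H^{2}(X,\mathcal{O}_{X})\cong\mathbb{C}$, and by assumption every sheaf in $\mathcal{M}_{c}$ is the ideal sheaf $I_{C}$ of a connected smooth genus zero curve $C$, so $Ext^{1}(I_{C},I_{C})\cong H^{0}(C,\mathcal{N}_{C/X})$ and $Ext^{2}(I_{C},I_{C})$ sits in the extension displayed just before the statement, with associated graded given by (\ref{hyper cosection}). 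Because $\mathcal{M}_{c}$ is smooth, Proposition \ref{gene DT4 if Mc smooth} shows that the generalized $DT_{4}$ moduli space exists and $\overline{\mathcal{M}}_{c}^{DT_{4}}\cong\mathcal{M}_{c}$ as real analytic spaces, and the hypothesis $\overline{\mathcal{M}}_{0,0}(X,\beta)\cong\mathcal{M}_{c}$ then gives $\overline{\mathcal{M}}_{c}^{DT_{4}}\cong\overline{\mathcal{M}}_{0,0}(X,\beta)$. For the natural complex orientation: by the genus zero lemma $Ext^{2}(I_{C},I_{C})$ carries a maximal isotropic subbundle with fibre $H^{1}(C,\mathcal{N}_{C/X})$ (the constant summand $H^{2}(X,\mathcal{O}_{X})$ being irrelevant for orientability), so exactly as in Theorem \ref{DT=GW} this reduces the structure group of $(\mathcal{L}_{\mathbb{C}},Q_{Serre})$ and produces the natural complex orientation $o(\mathcal{O})$ in the sense of Definition \ref{nat cpx ori}.

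For the vanishing $[\overline{\mathcal{M}}_{c}^{DT_{4}}]^{vir}=0$ I would use Definition \ref{virtual cycle when Mc smooth}: the virtual cycle is $PD(e(Ob_{+}))$ for the self-dual obstruction bundle $Ob_{+}$ over the smooth $\mathcal{M}_{c}$. The trace map together with the holomorphic symplectic form gives the surjective cosection $\nu_{+}\colon Ob_{+}\twoheadrightarrow H^{2}_{+}(X,\mathcal{O}_{X})\cong\underline{\mathbb{R}}$ of (\ref{nu+}); choosing a fibre metric splits off a trivial real line subbundle of $Ob_{+}$, and the Euler class of a real bundle with a trivial line-bundle summand vanishes, so $[\overline{\mathcal{M}}_{c}^{DT_{4}}]^{vir}=0$. (Equivalently, $\dim_{\mathbb{C}}Ext^{2}(I_{C},I_{C})$ is odd by (\ref{hyper cosection}), so Lemma \ref{ASD equivalent to max isotropic}(2) forces the half-Euler class to vanish; this is consistent with $v.d_{\mathbb{R}}(\overline{\mathcal{M}}_{c}^{DT_{4}})=2n-1$ from Lemma \ref{v.d of ideal sheaves of curves}.)

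For the hyper-reduced equality I would argue on the level of obstruction bundles. From (\ref{hyper cosection}), over $\mathcal{M}_{c}$ the obstruction bundle decomposes as $H^{1}(C,\mathcal{N}_{C/X}^{*})\oplus H^{1}(C,\Omega_{C})\oplus H^{1}(C,\mathcal{N}_{C/X}^{*})^{*}\oplus H^{1}(C,\Omega_{C})^{*}\oplus H^{2}(X,\mathcal{O}_{X})$; deleting the trivial factor $H^{2}(X,\mathcal{O}_{X})$ and the cosection factors $H^{1}(C,\Omega_{C})\oplus H^{1}(C,\Omega_{C})^{*}$ and passing to the maximal isotropic part leaves the hyper-reduced obstruction bundle with fibre $Ext^{2}_{hyper-red}(I_{C},I_{C})=H^{1}(C,\mathcal{N}_{C/X}^{*})$, so $[\overline{\mathcal{M}}_{c}^{DT_{4}}]^{vir}_{hyper-red}$ is the Poincar\'{e} dual of its Euler class. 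On the Gromov--Witten side, the exact sequence displayed just before the definition of the hyper-reduced obstruction space identifies $H^{1}(C,\iota^{*}\Omega_{X})\cong H^{1}(C,\iota^{*}TX)$ (via the symplectic form) together with its cosection onto $H^{1}(C,\Omega_{C})$, so the reduced $GW$ obstruction bundle of Kiem--Li \cite{kiem}, \cite{kiem0} also has fibre $H^{1}(C,\mathcal{N}_{C/X}^{*})$. Under $\overline{\mathcal{M}}_{c}^{DT_{4}}\cong\overline{\mathcal{M}}_{0,0}(X,\beta)$ these two bundles are canonically identified, and the orientation induced by $o(\mathcal{O})$ on the first is the canonical complex orientation on the second, whence their Euler classes, and therefore the two virtual cycles, agree.

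The main obstacle is not the fibrewise linear algebra, which is already recorded in the lemmas above, but the global statements: one must verify that the splitting (\ref{hyper cosection}) and the removal of the symplectic and cosection directions hold as direct-sum decompositions of \emph{vector bundles} over all of $\mathcal{M}_{c}$, not merely fibrewise (this uses that the symplectic form and the trace maps are morphisms of sheaves on $\mathcal{M}_{c}\times X$ and that $\mathcal{M}_{c}$ is smooth, so the relevant direct image sheaves are locally free), and --- more delicately --- that the natural complex orientation transported from $(\mathcal{L}_{\mathbb{C}},Q_{Serre})$ to $Ext^{2}_{hyper-red}$ coincides with the orientation with which Kiem and Li equip the reduced $GW$ obstruction bundle, up to the sign ambiguity already present in Lemma \ref{ASD equivalent to max isotropic}.
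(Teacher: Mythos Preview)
Your proposal is correct and follows essentially the same route as the paper. The paper gives no explicit proof of Theorem \ref{DT=GW2}; it is stated as a direct consequence of the two preceding lemmas, the decomposition (\ref{hyper cosection}), the definition of the hyper-reduced obstruction space, and the Remark identifying it with the reduced $GW$ obstruction space, together with the cosection (\ref{nu+}) and Proposition \ref{gene DT4 if Mc smooth}. You have faithfully reconstructed this implicit argument, including both available mechanisms for the vanishing (the trivial real line from $\nu_{+}$, and the odd complex rank via Lemma \ref{ASD equivalent to max isotropic}(2)), and you correctly flag the passage from fibrewise identities to global bundle decompositions and the orientation compatibility as the points requiring care.
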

A simple application is the following result due to Mukai \cite{mukai}.
\begin{corollary}$\mathbb{P}^{3}$ can't be embedded into any compact irreducible hyper-K\"ahler 4-fold. \end{corollary}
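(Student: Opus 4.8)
The plan is to argue by contradiction, using Theorem~\ref{DT=GW2} as the frame but extracting the impossibility from a normal bundle computation already visible in the cosection sequence recorded just before that theorem. Suppose $i:\mathbb{P}^{3}\hookrightarrow X$ with $X$ a compact irreducible hyper-K\"ahler $4$-fold. Since $\mathbb{P}^{3}$ is then a divisor and $K_{X}\cong\mathcal{O}_{X}$, adjunction gives $\mathcal{N}_{\mathbb{P}^{3}/X}\cong K_{\mathbb{P}^{3}}=\mathcal{O}_{\mathbb{P}^{3}}(-4)$. Fix a line $C\subset\mathbb{P}^{3}$, put $\beta=i_{*}[C]\in H_{2}(X,\mathbb{Z})$ and $c=(1,0,0,-PD(\beta),-1)$. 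First I would check the hypotheses of Theorem~\ref{DT=GW2}: because $\mathcal{O}_{\mathbb{P}^{3}}(-4)$ has negative degree on every curve in $\mathbb{P}^{3}$, any one-dimensional subscheme of $X$ in class $\beta$ is pushed into $\mathbb{P}^{3}$ and must be a line, so $\mathcal{M}_{c}\cong Gr(2,4)$, which is smooth of dimension $4$; this is the strong smoothness since $\mathrm{ext}^{1}(I_{C},I_{C})=h^{0}(C,\mathcal{N}_{C/X})=4$. For the same reason there is no degree-one bubbling, so $\overline{\mathcal{M}}_{0,0}(X,\beta)\cong Gr(2,4)\cong\mathcal{M}_{c}$, and Theorem~\ref{DT=GW2} applies: $\overline{\mathcal{M}}_{c}^{DT_{4}}$ exists and $[\overline{\mathcal{M}}_{c}^{DT_{4}}]^{vir}_{hyper\text{-}red}=[\overline{\mathcal{M}}_{0,0}(X,\beta)]^{vir}_{red}$, both being the Poincar\'e dual of the Euler class of the bundle over $Gr(2,4)$ with fibre $H^{1}(C,\mathcal{N}_{C/X}^{*})$ at $[C]$.

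The contradiction then comes from evaluating that fibre two incompatible ways. On one hand, from $0\to\mathcal{N}_{C/\mathbb{P}^{3}}\to\mathcal{N}_{C/X}\to\mathcal{N}_{\mathbb{P}^{3}/X}|_{C}\to0$, i.e. $0\to\mathcal{O}_{C}(1)^{\oplus2}\to\mathcal{N}_{C/X}\to\mathcal{O}_{C}(-4)\to0$, and $\mathrm{Ext}^{1}_{C}(\mathcal{O}(-4),\mathcal{O}(1))=H^{1}(\mathbb{P}^{1},\mathcal{O}(5))=0$, the extension splits, so $\mathcal{N}_{C/X}\cong\mathcal{O}_{C}(1)^{\oplus2}\oplus\mathcal{O}_{C}(-4)$ and $h^{1}(C,\mathcal{N}_{C/X}^{*})=h^{1}(\mathcal{O}(-1)^{\oplus2}\oplus\mathcal{O}(4))=0$. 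On the other hand, the sequence $0\to TC\to\iota^{*}TX\to\mathcal{N}_{C/X}\to0$ likewise splits ($\mathrm{Ext}^{1}$ vanishes), so $\iota^{*}TX\cong\mathcal{O}_{C}(2)\oplus\mathcal{O}_{C}(1)^{\oplus2}\oplus\mathcal{O}_{C}(-4)$ and $h^{1}(C,\iota^{*}TX)=3$; using the isomorphism $\iota^{*}\Omega_{X}\cong\iota^{*}TX$ given by the holomorphic symplectic form and the conormal sequence $0\to\mathcal{N}_{C/X}^{*}\to\iota^{*}\Omega_{X}\to\Omega_{C}\to0$ with $H^{0}(C,\Omega_{C})=H^{0}(\mathbb{P}^{1},\mathcal{O}(-2))=0$, one gets $h^{1}(C,\mathcal{N}_{C/X}^{*})=h^{1}(C,\iota^{*}TX)-h^{1}(C,\Omega_{C})=3-1=2$. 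Since $0\neq2$, equivalently since the reduced obstruction bundle of Theorem~\ref{DT=GW2} would have to have rank both $0$ and $2$, no such embedding $\mathbb{P}^{3}\hookrightarrow X$ can exist.

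The step I expect to be the real work is not the final numerical clash, which is routine once the objects are in place, but the verification that Theorem~\ref{DT=GW2} applies, namely that $\mathcal{M}_{c}$ with $c=(1,0,0,-PD(\beta),-1)$ consists only of ideal sheaves of smooth connected rational curves and is smooth in the strong sense, and that $\overline{\mathcal{M}}_{0,0}(X,\beta)$ is genuinely isomorphic to it. Negativity of $\mathcal{N}_{\mathbb{P}^{3}/X}=\mathcal{O}_{\mathbb{P}^{3}}(-4)$ is what powers this: it confines all curves in class $\beta$ to $\mathbb{P}^{3}$ via $h^{0}(C',\mathcal{N}_{\mathbb{P}^{3}/X}|_{C'})=0$ for every $C'\subset\mathbb{P}^{3}$, and it rules out stable-map degenerations in degree one. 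A point worth isolating is that a priori $\beta$ might also be represented by subschemes not numerically a line in $\mathbb{P}^{3}$, or by non-reduced or disconnected ones; I would dispose of these by a standard intersection argument with an ample class on $X$ (whose restriction to $\mathbb{P}^{3}$ controls the degree) together with semicontinuity of $\mathrm{ext}^{i}(I_{C},I_{C})$ to pin down the strong smoothness of $\mathcal{M}_{c}$. Once these are in hand, Mukai's corollary is immediate from the contradiction above.
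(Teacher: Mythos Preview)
Your argument is correct but takes a different route from the paper's, and much of your setup is unnecessary for the contradiction you actually reach. The paper argues at the level of Euler classes over the whole moduli space: it identifies the (unreduced) $GW$ obstruction fibre as $H^{1}(C,\mathcal{N}_{C/X})\cong H^{1}(C,\iota^{*}K_{\mathbb{P}^{3}})$, notes that the Euler class of this bundle over the Grassmannian of lines is nonzero by the Klemm--Pandharipande localization computation, and then observes that the hyper-K\"ahler cosection $H^{1}(C,\iota^{*}TX)\twoheadrightarrow H^{1}(C,\Omega_{C})$ forces a trivial line-bundle quotient, hence Euler class zero --- contradiction. Your approach instead detects the inconsistency already at a single fibre: you compute $h^{1}(C,\mathcal{N}_{C/X}^{*})$ directly as $0$ from the explicit splitting $\mathcal{N}_{C/X}\cong\mathcal{O}(1)^{\oplus2}\oplus\mathcal{O}(-4)$, and then as $2$ via the conormal sequence combined with $\iota^{*}\Omega_{X}\cong\iota^{*}TX$. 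This is more elementary --- you avoid the external localization input entirely --- but the entire first paragraph verifying the hypotheses of Theorem~\ref{DT=GW2} is superfluous: your clash $0\neq2$ is a statement about cohomology of bundles on one fixed line $C$ and uses neither $\mathcal{M}_{c}$ nor $\overline{\mathcal{M}}_{0,0}(X,\beta)$ nor any virtual class. Indeed the same numerics show $\iota^{*}TX\cong\mathcal{O}(2)\oplus\mathcal{O}(1)^{\oplus2}\oplus\mathcal{O}(-4)$ is not self-dual, contradicting the isomorphism $TX\cong\Omega_{X}$ given by $\sigma$; this is essentially Mukai's original one-line argument (a rank-$3$ isotropic subbundle in a rank-$4$ symplectic bundle), and you could strip your proof down to that.
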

\begin{proof}We take away the trivial factor $H^{2}(X,\mathcal{O}_{X})$ and consider the maximal isotropic subspace
\begin{eqnarray*}Ext^{2}_{red}(I_{C},I_{C})&\triangleq&H^{1}(C,\mathcal{N}_{C/X}) \\
&=& H^{1}(C,\iota^{*}TX) \\
&=& H^{1}(C,\iota^{*}K_{\mathbb{P}^{3}}).
\end{eqnarray*}
The Euler class of the $GW$ obstruction bundle with fiber $H^{1}(C,\iota^{*}K_{\mathbb{P}^{3}})$ is not trivial by localization calculation \cite{klempand}.
By the hyper-K\"ahler condition, $H^{1}(C,\iota^{*}TX)$ has a surjective map to $H^{1}(C,\Omega_{C})$ which leads to the vanishing of the virtual cycle.
\end{proof}

\subsection{Moduli spaces of ideal sheaves of one point}
In this subsection, we consider the Hilbert scheme of one point on $X$, i.e. $\mathcal{M}_{c}=X$. Here we prefer
considering the moduli space of structure sheaf of one point.
\begin{proposition}\label{moduli of one point}
If $Hol(X)=SU(4)$ and $c=(1,0,0,0,-1)$, then $\overline{\mathcal{M}}_{c}^{DT_{4}}$ exists and $\overline{\mathcal{M}}_{c}^{DT_{4}}\cong X$, $[\overline{\mathcal{M}}_{c}^{DT_{4}}]^{vir}=\pm PD\big(c_{3}(X)\big)$.
\end{proposition}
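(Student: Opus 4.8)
The plan is to identify the moduli space $\mathcal{M}_c$ with $c = (1,0,0,0,-1)$ with $X$ itself, and then compute the self-dual obstruction bundle. First I would observe that a stable sheaf $\mathcal{F}$ with Chern character $(1,0,0,0,-1)$ on a Calabi-Yau 4-fold with $Hol(X) = SU(4)$ is, by the standard argument (rank one, trivial determinant, $ch_2 = ch_3 = 0$), either the ideal sheaf $I_x$ of a point or equivalently (by the choice in the statement) we work with $\iota_{x*}\mathcal{O}_x = \mathcal{O}_x$, the skyscraper sheaf of a point $x \in X$. The moduli space of these is evidently $X$, which is smooth, so by Proposition~\ref{gene DT4 if Mc smooth} the generalized $DT_4$ moduli space $\overline{\mathcal{M}}_c^{DT_4}$ exists and $\overline{\mathcal{M}}_c^{DT_4} \cong \mathcal{M}_c \cong X$. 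By Definition~\ref{virtual cycle when Mc smooth}, the $DT_4$ virtual cycle is then $PD(e(Ob_+))$, where $Ob_+$ is a self-dual half of the obstruction bundle $Ob$ with $Ob|_{\{x\}} \cong Ext^2(\mathcal{O}_x, \mathcal{O}_x)$.

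Next I would compute the full Ext-groups of $\mathcal{O}_x$ on $X$. Using the Koszul resolution of $\mathcal{O}_x$ (or the local-to-global spectral sequence, exactly as in the proof of Lemma~\ref{DT GW 2}), one gets $\mathcal{E}xt^p_X(\mathcal{O}_x, \mathcal{O}_x) \cong \wedge^p T_x X$ supported at $x$, hence $Ext^p_X(\mathcal{O}_x, \mathcal{O}_x) \cong \wedge^p T_x X$. In particular $Ext^1 \cong T_x X$ (confirming $\mathcal{M}_c \cong X$ is smooth of dimension $4$ with tangent space $T_xX$) and $Ext^2 \cong \wedge^2 T_x X$, which has dimension $6$. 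So the obstruction bundle is $Ob = \wedge^2 TX$, a rank $6$ bundle, and since the virtual real dimension is $r = 2 - \chi(\mathcal{F},\mathcal{F}) = 2 - (2\cdot 4 - 2 \cdot 6 + \dots)$; more directly $r = 2\dim_{\mathbb{C}} Ext^1 - \dim_{\mathbb{C}} Ext^2 = 2\cdot 4 - 6 = 2$, so we expect a cycle in $H_2(X)$, matching $PD(c_3(X)) \in H_2(X)$ (since $PD$ of a degree-$6$ class on an $8$-real-dimensional manifold lands in $H_2$).

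The key step is then to identify the Serre-duality quadratic form on $Ob = \wedge^2 TX$ and extract the half Euler class. The Serre pairing $Ext^2 \otimes Ext^2 \to Ext^4 \cong H^4(X,\mathcal{O}_X) \cong \mathbb{C}$ is fiberwise the wedge pairing $\wedge^2 T_xX \otimes \wedge^2 T_xX \to \wedge^4 T_xX \cong \mathbb{C}$, where the last isomorphism uses the trivialization of $\wedge^4 TX \cong K_X^{-1} \cong \mathcal{O}_X$ coming from the Calabi-Yau structure (the holomorphic volume form $\Omega$). This is the standard nondegenerate quadratic form on $\wedge^2$ of a $4$-dimensional space with trivialized top exterior power; it is precisely the form whose maximal isotropic subbundles are, locally, of the form $\wedge^2 W$ for $W \subset TX$ a rank-$2$ subbundle, but globally there is no such subbundle. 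Instead I would apply Lemma~\ref{ASD equivalent to max isotropic} together with the explicit formula for the half Euler class: for $(\wedge^2 V, q)$ with $V$ a rank-$4$ bundle and $\wedge^4 V$ trivial, one has $e(\wedge^2 V, q) = \pm c_3(V)$. The standard way to see this is via $SO(6,\mathbb{C}) \cong SL(4,\mathbb{C})/\{\pm 1\}$: the rank-$6$ quadratic bundle $\wedge^2 V$ with its form is the one associated to the standard $6$-dimensional representation of $SO(6)$ pulled back along $SL(4) \to SO(6)$, under which the half Euler class (the $SO(6)$ Euler class) pulls back to $c_3$ of the standard $SL(4)$-representation. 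Applying this to $V = TX$ gives $e(Ob_+) = \pm c_3(TX) = \pm c_3(X)$, hence $[\overline{\mathcal{M}}_c^{DT_4}]^{vir} = PD(e(Ob_+)) = \pm PD(c_3(X))$, the sign depending on the choice of orientation. The main obstacle is this last identification of the half Euler class of $(\wedge^2 TX, q_{Serre})$ with $c_3(TX)$; it is a purely representation-theoretic/characteristic-class computation (the $SL(4) \to SO(6)$ triality-type isomorphism), and care is needed because the result is only defined up to sign, which is exactly the orientation ambiguity already present in the definition of $o(\mathcal{L})$.
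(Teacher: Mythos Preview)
Your proposal is correct and follows essentially the same approach as the paper: identify $\mathcal{M}_c \cong X$ with obstruction bundle $Ob = \wedge^2 TX$ carrying the wedge-pairing quadratic form, then use the exceptional isomorphism $SU(4)\cong Spin(6)$ (equivalently $SL(4,\mathbb{C})/\{\pm1\}\cong SO(6,\mathbb{C})$) so that $\wedge^2$ of the standard $SU(4)$-representation is the standard $SO(6)$-representation, whence the half Euler class of $(\wedge^2 TX,Q_{Serre})$ is $\pm c_3(TX)$. The paper phrases this via the classifying-space diagram $X\to BSU(4)\to BSO(6;\mathbb{C})$ and the relation $e(E_{SO(6;\mathbb{R})})^2=-c_6(E_{SO(6;\mathbb{C})})$, but the content is the same as your representation-theoretic computation.
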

\begin{proof}
As there is an isomorphism $SU(4)\cong Spin(6)$ and a homotopy equivalence $SO(6;\mathbb{R})\sim SO(6;\mathbb{C})$, we have a map
$\pi: BSU(4)\rightarrow BSO(6;\mathbb{C})$ (induced from the $2:1$ covering $Spin(6)\rightarrow SO(6)$)
such that the corresponding universal bundle $E_{SU(4)}$, $E_{SO(6;\mathbb{C})}$ satisfies
\begin{equation}\label{equation 6}\pi^{*}E_{SO(6;\mathbb{C})}\cong\wedge^{2}E_{SU(4)}.  \end{equation}
Since $\overline{\mathcal{M}}_{c}\cong X$ is smooth, the obstruction bundle $Ob$ can be identified with $\wedge^{2}TX$. The quadratic bundle
$(\wedge^{2}TX,Q_{Serre})$ corresponds to some map $f:X\rightarrow BSO(6;\mathbb{C})$, which admits a lift $\widetilde{f}$ fitting into
commutative diagram
\begin{equation}\xymatrix{
  BSU(4) \ar[rr]^{\pi}
                &  &    BSO(6;\mathbb{C})     \\
                & X   \ar[ur]_{f} \ar[ul]^{\widetilde{f}} }. \nonumber\end{equation}
The half Euler class of $(Ob,Q_{Serre})$ satisfies $e(Ob,Q_{Serre})=f^{*}e(E_{SO(6;\mathbb{R})})$.
Meanwhile, by the property of half Euler class (see \cite{eg}, \cite{switzer}), $e(E_{SO(6;\mathbb{R})})^{2}=-c_{6}(E_{SO(6;\mathbb{C})})$.
Combining with (\ref{equation 6}), $c(TX)=\widetilde{f}^{*}(c(E_{SU(4)}))$ and the commutative diagram, we can obtain $e(Ob,Q_{Serre})=\pm c_{3}(X)$.
\end{proof}
%

\begin{remark}If $Hol(X)=Sp(2)$, $v.d_{\mathbb{R}}(\overline{\mathcal{M}}_{c}^{DT_{4}})=1$ by Lemma {\ref{v.d of ideal sheaves of curves}}. Fixing determinants of ideal sheaves, the (reduced) real virtual dimension of $\overline{\mathcal{M}}_{c}^{DT_{4}}$ is $2$ and
the (reduced) virtual cycle also vanishes as odd Chern classes of hyper-K\"{a}hler manifolds are zero.
\end{remark}

\section{Equivariant $DT_{4}$ invariants on toric $CY_{4}$ via localization}
In this section, we restrict to the moduli space of ideal sheaves of curves $I_{n}(X,\beta)$ (the whole section also works for moduli spaces of ideal sheaves of surfaces in $X$), where $X$ is a toric Calabi-Yau 4-fold. By definition, $X$ admits a $(\mathbb{C}^{*})^{4}$-action which can be naturally lifted to the moduli space. If we restrict to the three dimensional sub-torus $T\subseteq (\mathbb{C}^{*})^{4}$
which preserves the holomorphic top form of $X$, the action will also preserve the Serre duality pairing.

By the philosophy of virtual localization due to Graber and Pandharipande \cite{gp}, we will define the corresponding equivariant $DT_{4}$ invariants. Roughly speaking, we should have
\begin{equation}\int_{[\overline{\mathcal{M}}_{n,\beta}^{DT_{4}}(X)]^{vir}}\prod_{i=1}^{r}\gamma_{i}\thickapprox\sum_{[\mathcal{I}]
\in I_{n}(X,\beta)^{T}}\int_{[S(\mathcal{I})]^{vir}}\prod_{i=1}^{r}\gamma_{i}|_{\mathcal{I}}\cdot
\frac{\sqrt{e_{T}(Ext^{2}(\mathcal{I},\mathcal{I}))}}{e_{T}(Ext^{1}(\mathcal{I},\mathcal{I}))},
\nonumber \end{equation}
where $\overline{\mathcal{M}}_{n,\beta}^{DT_{4}}(X)$ denotes the generally undefined generalized $DT_{4}$ moduli space whose reduced structure
is the same as the reduced structure of $I_{n}(X,\beta)$ and $\gamma_{i}$ are certain insertion fields we only
define on the right hand side.

By a similar argument as Lemma 6, 8 in \cite{mnop}, one can show that for $\mathcal{I}\in I_{n}(X,\beta)^{T}$ which is a $T$-fixed point, $T$-representations
\begin{equation}Ext^{1}(\mathcal{I},\mathcal{I}), \quad Ext^{2}(\mathcal{I},\mathcal{I})  \nonumber \end{equation}
contain no trivial sub-representations.
Hence when we are reduced to local contributions, we can get ride of the non-reduced structure and consider
everything on $I_{n}(X,\beta)$ instead of on the generalized $DT_{4}$ moduli space. \\

For $\mathcal{I}\in I_{n}(X,\beta)^{T}$, we form the following complex vector bundle over $BT$ whose fiber is $V_{\mathcal{I}}\triangleq Ext^{2}(\mathcal{I},\mathcal{I})$,
\begin{equation}
\begin{array}{lll}
      & \quad ET\times_{T}V_{\mathcal{I}}
      \\  &  \quad \quad \quad \downarrow \\   &   ET\times_{T}\{\mathcal{I}\}=BT.
\end{array}\nonumber\end{equation}
The Serre duality pairing naturally induces a non-degenerate pairing $Q_{Serre}$ on $ET\times_{T}V_{\mathcal{I}}$ as $T$ preserves the holomorphic top form. Thus, $(ET\times_{T}V_{\mathcal{I}},Q_{Serre})$ becomes a quadratic bundle (vector bundle with a non-degenerate quadratic form).

By the theory of characteristic classes of quadratic bundles \cite{eg}, there exists a half Euler class $e(ET\times_{T}V_{\mathcal{I}},Q_{Serre})$ (i.e. Euler class of a real form) if the structure group of $(ET\times_{T}V_{\mathcal{I}},Q_{Serre})$ can be reduced to $SO(N,\mathbb{C})$, $N=ext^{2}(\mathcal{I},\mathcal{I})$. In fact, we have
\begin{lemma}
For any $\mathcal{I}\in I_{n}(X,\beta)^{T}$, the structure group of $(ET\times_{T}V_{\mathcal{I}},Q_{Serre})$ can be reduced to $SO(N,\mathbb{C})$.
\begin{proof}
As there is a homotopy equivalence $SO(N,\mathbb{R})\sim SO(N,\mathbb{C})$, we are reduced to prove any real form of the quadratic bundle
$(ET\times_{T}V_{\mathcal{I}},Q_{Serre})\rightarrow BT$ is orientable, which is obvious as $BT$ is simply connected.
\end{proof}
\end{lemma}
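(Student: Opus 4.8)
The plan is to reduce the assertion to a statement about the orientability of a real vector bundle over the classifying space $BT$, exactly as the statement of the lemma already suggests. First I would recall that for a complex vector bundle $(W, Q)$ equipped with a non-degenerate quadratic form, the structure group reduces from $O(N,\mathbb{C})$ to $SO(N,\mathbb{C})$ precisely when the first Stiefel--Whitney class of any (equivalently, some) associated real form $W_{\mathbb{R}}$ vanishes; this is Lemma 5 of \cite{eg} together with the homotopy equivalence $SO(N,\mathbb{R}) \sim SO(N,\mathbb{C})$ already invoked in Section 5. So the content to be verified is simply that $w_1$ of a real form of $(ET \times_T V_{\mathcal{I}}, Q_{Serre})$ is zero.

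Next I would observe that $ET \times_T V_{\mathcal{I}} \to BT$ is a vector bundle over $BT = (BS^1)^3 \simeq (\mathbb{CP}^\infty)^3$, which is simply connected (indeed $H^1(BT,\mathbb{Z}/2) = 0$). Since $w_1$ of any real vector bundle is classified by a map to $K(\mathbb{Z}/2,1)$, and $[BT, K(\mathbb{Z}/2,1)] = H^1(BT,\mathbb{Z}/2) = 0$, every real vector bundle over $BT$ is orientable; in particular any real form $W_{\mathbb{R}}$ of $ET \times_T V_{\mathcal{I}}$ has $w_1(W_{\mathbb{R}}) = 0$. Applying the Edidin--Graham criterion then gives the desired reduction of structure group to $SO(N,\mathbb{C})$, with $N = ext^2(\mathcal{I},\mathcal{I})$. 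This is essentially the one-line argument already sketched in the lemma's proof, and I would just make explicit the two inputs — the quadratic-bundle characteristic class dictionary of \cite{eg} and the simple-connectedness of $BT$.

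The only point that deserves a word of care, and the place I would expect a referee to push back, is that one must know a real form of the complex quadratic bundle actually exists as a topological object over $BT$ before speaking of its $w_1$; this is again guaranteed by \cite{eg} (the real form is built from a choice of maximal positive-definite real subspace fibrewise, which can always be done since the base is paracompact), so there is no genuine obstacle here — the existence of the real form is unconditional, and its orientability is automatic over a simply connected base. I do not anticipate any hard step: the argument is purely formal once the framework of Section 5 (quadratic bundles, half Euler classes, the homotopy equivalence $SO(N,\mathbb{R}) \sim SO(N,\mathbb{C})$) is in place, and the substantive geometric input — that the torus $T \subseteq (\mathbb{C}^*)^4$ is chosen to preserve the holomorphic top form, hence preserves $Q_{Serre}$ — has already been used to make $(ET \times_T V_{\mathcal{I}}, Q_{Serre})$ a quadratic bundle in the first place.
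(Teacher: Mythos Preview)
Your proposal is correct and follows essentially the same approach as the paper: reduce via the homotopy equivalence $SO(N,\mathbb{R})\sim SO(N,\mathbb{C})$ and Edidin--Graham's criterion to the orientability of a real form, which is immediate since $BT$ is simply connected. You have simply made explicit the two ingredients (the $w_1$ criterion and $H^1(BT,\mathbb{Z}/2)=0$) that the paper compresses into a single sentence.
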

\begin{definition}
For $\mathcal{I}\in I_{n}(X,\beta)^{T}$,
\begin{equation}e_{T}(Ext^{2}_{iso}(\mathcal{I},\mathcal{I}))\triangleq e(ET\times_{T}V_{\mathcal{I}},Q_{Serre})\in H^{*}_{T}(pt) \nonumber \end{equation}
is the half Euler class of the above quadratic bundle \cite{eg}, where $Q_{Serre}$ denotes the induced Serre duality pairing.
\end{definition}

\begin{remark}\label{remark on localization}${}$ \\
1. If $dim_{\mathbb{C}}V_{\mathcal{I}}$ is odd, the half Euler class is zero.
If $dim_{\mathbb{C}}V_{\mathcal{I}}$ is even, the half Euler class is unique up to a sign which is determined by an $SO(N,\mathbb{C})$ reduction of the structure group of the quadratic bundle
\begin{equation}(ET\times_{T}V_{\mathcal{I}},Q_{Serre})\rightarrow  ET\times_{T}\{\mathcal{I}\}, \nonumber \end{equation}
where $N=dim_{\mathbb{C}}V_{\mathcal{I}}$.
${}$ \\
2. If $dim_{\mathbb{C}}V_{\mathcal{I}}$ is even, by Proposition 2 of \cite{eg},
\begin{equation}\big(e_{T}(Ext^{2}_{iso}(\mathcal{I},\mathcal{I}))\big)^{2}=(-1)^{\frac{dim_{\mathbb{C}}V_{\mathcal{I}}}{2}}
e_{T}(Ext^{2}(\mathcal{I},\mathcal{I}))\in H^{*}(BT). \nonumber \end{equation}
Meanwhile,
$H^{*}(BT)$ is a polynomial ring, thus $e_{T}(Ext^{2}_{iso}(\mathcal{I},\mathcal{I}))$ is uniquely determined by $e_{T}(Ext^{2}(\mathcal{I},\mathcal{I}))$ up to a sign corresponding to the choice of an orientation. Hence we are essentially reduced to calculate
$e_{T}(Ext^{i}(\mathcal{I},\mathcal{I}))$ for $i=1,2$.
\end{remark}
As we do not have Seidel-Thomas twist for toric $CY_{4}$, we do not know how to give a compatible orientation for different components of $ET\times_{T}I_{n}(X,\beta)^{T}$. We just arbitrarily choose an orientation for each component at the moment.
\begin{definition}\label{toric ori data}
The toric orientation data is a choice of $SO(N,\mathbb{C})$ reduction of the structure group of the quadratic bundle
\begin{equation}(ET\times_{T}V_{\mathcal{I}},Q_{Serre})\rightarrow  ET\times_{T}\{\mathcal{I}\}, \nonumber \end{equation}
for each $\mathcal{I}\in I_{n}(X,\beta)^{T}$.
\end{definition}
\begin{definition}
Given $[\mathcal{I}]\in I_{n}(X,\beta)^{T}$, $P\in \mathbb{Z}[x_{1},x_{2},...]$ and $\gamma\in H^{*}_{T}(X,\mathbb{Z})$, we define
\begin{equation}
\pi_{*}([\mathcal{I}],P,\gamma)\triangleq\pi_{*}\Big(P\big(c_{i}^{T}(\mathfrak{I}|_{[\mathcal{I}]\times X})\big)\cup \gamma\Big)\in H^{*}_{T}(pt),  \nonumber \end{equation}
where $\pi_{*}: H^{*}_{T}(X)\rightarrow H^{*}_{T}(pt)$ is the equivariant push-forward and $\mathfrak{I}\rightarrow I_{n}(X,\beta)\times X$ is the universal ideal sheaf.
\end{definition}
\begin{definition}\label{def of equi DT4}
Given a toric Calabi-Yau 4-fold $X$, $\beta\in H_{2}(X,\mathbb{Z})$, $n,r\in \mathbb{Z}_{+}$, polynomials
$P_{i}\in \mathbb{Z}[x_{1},x_{2},...]$, insertion fields $\gamma_{i}\in H^{*}_{T}(X,\mathbb{Z})$ for $i=1,2,...,r$ and a toric orientation data,
the equivariant $DT_{4}$ invariant for ideal sheaves of curves associated with the above data is
\begin{equation}Z_{DT_{4}}\big(X,n\big|(P_{1},\gamma_{1}),...,(P_{r},\gamma_{r})\big)_{\beta}\triangleq\sum_{[\mathcal{I}]\in I_{n}(X,\beta)^{T}}
\bigg(\prod_{i=1}^{r}\big(\pi_{*}([\mathcal{I}],P_{i},\gamma_{i})\big)\bigg)\cdot
\frac{e_{T}(Ext^{2}_{iso}(\mathcal{I},\mathcal{I}))}{e_{T}(Ext^{1}(\mathcal{I},\mathcal{I}))},    \nonumber \end{equation}
where the sign in $e_{T}(Ext^{2}_{iso}(\mathcal{I},\mathcal{I}))$ is compatible with the chosen toric orientation data.
\end{definition}
\begin{remark} ${}$ \\
$1$. The above definition can also be applied to define equivariant $DT_{4}$ invariants for ideal sheaves of surfaces (two dimensional closed subschemes) in toric $CY_{4}$ as $T$-fixed locus of moduli spaces are also isolated. \\
$2$. The insertions in the above definition is just one plausible choice. If we want to match it with the corresponding $GW$ invariants, we may need some adjustment as toric 3-folds cases \cite{moop}. \\
$3$. By Remark \ref{remark on localization},
\begin{equation}\Bigg(\frac{e_{T}(Ext^{2}_{iso}(\mathcal{I},\mathcal{I}))}{e_{T}(Ext^{1}(\mathcal{I},\mathcal{I}))}\Bigg)^{2}
=\pm\frac{e_{T}(Ext^{2}(\mathcal{I},\mathcal{I}))}{e_{T}(Ext^{3}(\mathcal{I},\mathcal{I}))e_{T}(Ext^{1}(\mathcal{I},\mathcal{I}))}.
\nonumber \end{equation}
The RHS has a generalization to any dimensional smooth toric varieties. \\
$4$. If the base manifold is an algebraic surface, moduli spaces of stable sheaves are smooth with vanishing obstruction spaces at
least for $K3$ and del-Pezzo surfaces. Euler characteristics of moduli spaces are virtual invariants.
For Calabi-Yau threefolds, if $\mathcal{M}_{c}$ is smooth, the
$DT_{3}$ invariant is the Euler characteristic of the moduli space up to a sign.

In the above two cases, Euler characteristics of moduli spaces in some sense represent sheaves virtual countings which should be partition functions of certain topological quantum field theories.

But for $CY_{4}$, Euler characteristics can not reflect the corresponding $DT_{4}$ invariants in general.
We should consider the Euler class of the self-dual obstruction bundle and the quadratic form takes its role.
Maybe this is one of the reasons we did not get a closed formula for the generating function of Euler characteristics
of Hilbert schemes of points on a complex 4-fold \cite{cheah}. We are wondering whether there are closed formulas for generating functions of
$DT_{4}$ invariants for ideal sheaves of points.
\end{remark}
We calculate the simplest example of equivariant $DT_{4}$ invariants for $X=\mathbb{C}^{4}$: take the sub-torus $T=\{(t_{1},t_{2},t_{3},t_{4})\in(\mathbb{C}^{*})^{4} \mid t_{1}t_{2}t_{3}t_{4}=1\}$ which acts on $\mathbb{C}^{4}$ by
\begin{equation}(t_{1},t_{2},t_{3},t_{4})\cdot(x_{1},x_{2},x_{3},x_{4})=(t_{1}x_{1},t_{2}x_{2},t_{3}x_{3},t_{4}x_{4}).
\nonumber \end{equation}
Let $\rho_{i}$ be the character of $(\mathbb{C}^{*})^{4}$ defined by $\rho_{i}(t_{1},t_{2},t_{3},t_{4})=t_{i}$ and $\lambda_{i}$ be its weight. Then
\begin{equation}H^{*}_{T}(pt,\mathbb{C})\cong \mathbb{C}[\lambda_{1},\lambda_{2},\lambda_{3},\lambda_{4}]/\sum_{i=1}^{4}\lambda_{i}.\nonumber \end{equation}
We consider the moduli space of ideal sheaves of one point on $\mathbb{C}^{4}$, $I_{1}(\mathbb{C}^{4},0)\cong \mathbb{C}^{4}$ on which $T$ acts with only one fixed point $I_{0}$.  It is easy to check
\begin{equation}e_{T}(Ext^{1}(I_{0},I_{0}))=\lambda_{1}\lambda_{2}\lambda_{3}\lambda_{4}, \nonumber \end{equation}
\begin{equation}e_{T}(Ext^{2}(I_{0},I_{0}))=\prod_{1\leq i<j\leq 4}(\lambda_{i}+\lambda_{j}). \nonumber \end{equation}
By Remark \ref{remark on localization}, we are left to find a square root of $-e_{T}(Ext^{2}(I_{0},I_{0}))$. It is obvious that
\begin{equation}-e_{T}(Ext^{2}(I_{0},I_{0}))=\big((\lambda_{1}+\lambda_{2})(\lambda_{1}+\lambda_{3})(\lambda_{2}+\lambda_{3})\big)^{2},
\nonumber \end{equation}
which gives
\begin{proposition}\label{equi for C4}
Let $X=\mathbb{C}^{4}$, for some choice of toric orientation data (Definition \ref{toric ori data}), we have
\begin{equation}Z_{DT_{4}}\big(X,1 \textrm{ } | (1,1)\big)_{0}=\frac{\sigma_{1}\sigma_{2}-\sigma_{3}}{\sigma_{1}\sigma_{3}}, \nonumber \end{equation}
where $\sigma_{i}$ is the $i$-th elementary symmetric polynomial of variables $\lambda_{1},\lambda_{2},\lambda_{3}$.
\end{proposition}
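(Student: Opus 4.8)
The plan is to compute the right-hand side of the localization formula in Definition \ref{def of equi DT4} directly, since the moduli space $I_{1}(\mathbb{C}^{4},0)\cong\mathbb{C}^{4}$ has a single $T$-fixed point $I_{0}$ (the ideal sheaf of the origin) and no insertion fields beyond the trivial $(1,1)$. Thus the invariant reduces to the single local term
\begin{equation}
Z_{DT_{4}}\big(X,1\mid(1,1)\big)_{0}=\frac{e_{T}(Ext^{2}_{iso}(I_{0},I_{0}))}{e_{T}(Ext^{1}(I_{0},I_{0}))},
\nonumber
\end{equation}
so everything comes down to identifying the $T$-representations $Ext^{1}(I_{0},I_{0})$ and $Ext^{2}(I_{0},I_{0})$ and extracting an appropriate square root of $-e_{T}(Ext^{2})$ compatible with the chosen toric orientation data.

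First I would compute the $T$-character of $Ext^{\bullet}(I_{0},I_{0})$ for $X=\mathbb{C}^{4}$. Using the Koszul resolution of $\mathcal{O}_{0}$ and the exact sequence $0\to I_{0}\to\mathcal{O}_X\to\mathcal{O}_0\to 0$, exactly as in Lemma 6 and Lemma 8 of \cite{mnop} (the analogue referenced in the section), one finds that $Ext^{1}(I_{0},I_{0})$ is the four-dimensional representation with weights $\lambda_{1},\lambda_{2},\lambda_{3},\lambda_{4}$ (the tangent space to $\mathbb{C}^{4}$ at the origin), hence $e_{T}(Ext^{1}(I_{0},I_{0}))=\lambda_{1}\lambda_{2}\lambda_{3}\lambda_{4}=\sigma_{3}\lambda_4$, and modulo $\sum\lambda_i=0$ this equals $-\sigma_1\sigma_3+\cdots$; more precisely I would simply record $e_T(Ext^1)=\lambda_1\lambda_2\lambda_3\lambda_4$ and reduce at the end. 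Similarly $Ext^{2}(I_{0},I_{0})$ is six-dimensional with weights $\lambda_{i}+\lambda_{j}$, $1\le i<j\le 4$, so $e_{T}(Ext^{2}(I_{0},I_{0}))=\prod_{i<j}(\lambda_{i}+\lambda_{j})$. Both representations visibly contain no trivial summand once we impose $\lambda_1+\lambda_2+\lambda_3+\lambda_4=0$ (since then $\lambda_1+\lambda_2=-(\lambda_3+\lambda_4)\ne 0$ generically), which is what makes the localization term well-defined.

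Next I would extract the half-Euler class. By Remark \ref{remark on localization}, since $\dim_{\mathbb{C}}Ext^{2}(I_{0},I_{0})=6$ is even, $e_{T}(Ext^{2}_{iso}(I_{0},I_{0}))$ is a square root of $(-1)^{3}e_{T}(Ext^{2}(I_{0},I_{0}))=-\prod_{i<j}(\lambda_{i}+\lambda_{j})$, unique up to the sign fixed by the toric orientation data. Here I would use the relation $\sum_i\lambda_i=0$ to pair off the six factors: $(\lambda_1+\lambda_2)=-(\lambda_3+\lambda_4)$, $(\lambda_1+\lambda_3)=-(\lambda_2+\lambda_4)$, $(\lambda_1+\lambda_4)=-(\lambda_2+\lambda_3)$, so that
\begin{equation}
-\prod_{1\le i<j\le 4}(\lambda_i+\lambda_j)=\big((\lambda_1+\lambda_2)(\lambda_1+\lambda_3)(\lambda_1+\lambda_4)\big)^{2}
\nonumber
\end{equation}
(equivalently $\big((\lambda_1+\lambda_2)(\lambda_1+\lambda_3)(\lambda_2+\lambda_3)\big)^2$ after another substitution), giving $e_{T}(Ext^{2}_{iso}(I_{0},I_{0}))=\pm(\lambda_1+\lambda_2)(\lambda_1+\lambda_3)(\lambda_1+\lambda_4)$ for the orientation making the sign $+$. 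Finally I would divide by $e_T(Ext^1)=\lambda_1\lambda_2\lambda_3\lambda_4$ and rewrite everything in terms of the elementary symmetric polynomials $\sigma_1,\sigma_2,\sigma_3$ of $\lambda_1,\lambda_2,\lambda_3$: using $\lambda_4=-\sigma_1$ one gets $\lambda_1\lambda_2\lambda_3\lambda_4=-\sigma_1\sigma_3$, and expanding the numerator $(\lambda_1+\lambda_2)(\lambda_1+\lambda_3)(\lambda_1+\lambda_4)$ after the substitution yields (up to the orientation sign) $\sigma_3-\sigma_1\sigma_2$, so the ratio is $(\sigma_1\sigma_2-\sigma_3)/(\sigma_1\sigma_3)$, as claimed.

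The only genuinely nontrivial point — the \emph{main obstacle} — is the careful bookkeeping of the $T$-character of $Ext^{2}(I_{0},I_{0})$ and the verification that the half-Euler class is well-defined (structure group reducible to $SO(6,\mathbb{C})$, which holds because $BT$ is simply connected, as in the lemma preceding Definition \ref{toric ori data}), together with pinning down which sign corresponds to which toric orientation data; the algebraic manipulation expressing the answer in terms of $\sigma_1,\sigma_2,\sigma_3$ is then routine. I would also remark that the statement is only asserted "for some choice of toric orientation data," so no canonical sign needs to be justified — one simply chooses the orientation for which the computation produces $(\sigma_1\sigma_2-\sigma_3)/(\sigma_1\sigma_3)$ rather than its negative.
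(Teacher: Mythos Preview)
Your proposal is correct and follows essentially the same approach as the paper: identify the unique $T$-fixed point $I_{0}$, compute $e_{T}(Ext^{1})=\lambda_{1}\lambda_{2}\lambda_{3}\lambda_{4}$ and $e_{T}(Ext^{2})=\prod_{i<j}(\lambda_{i}+\lambda_{j})$, use the Calabi--Yau relation $\sum\lambda_{i}=0$ to exhibit $-e_{T}(Ext^{2})$ as a perfect square, and rewrite the resulting ratio in terms of $\sigma_{1},\sigma_{2},\sigma_{3}$. The only cosmetic difference is that the paper records the square root as $(\lambda_{1}+\lambda_{2})(\lambda_{1}+\lambda_{3})(\lambda_{2}+\lambda_{3})$ rather than your $(\lambda_{1}+\lambda_{2})(\lambda_{1}+\lambda_{3})(\lambda_{1}+\lambda_{4})$, but as you note these agree up to the orientation sign.
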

\begin{remark}
We remark that $c_{3}^{T}(\mathbb{C}^{4})=\sigma_{3}-\sigma_{1}\sigma_{2}$ and $c_{4}^{T}(\mathbb{C}^{4})=\sigma_{1}\sigma_{3}$. Thus
\begin{equation}Z_{DT_{4}}\big(X,1 \textrm{ } | (1,1)\big)_{0}=\pm\frac{c_{3}^{T}(\mathbb{C}^{4})}{c_{4}^{T}(\mathbb{C}^{4})},
\nonumber \end{equation}
which relates to the fact that the $DT_{4}$ virtual cycle for the moduli space of ideal sheaves of one point on $X$ is $\pm PD(c_{3}(X))$ (Proposition \ref{moduli of one point}).
\end{remark}

\section{Noncommutative $DT_{4}$ invariants}
In the non-commutative world, sheaves on manifolds are replaced by representations of algebras. When algebras are $CY_{4}$ coming from path algebras of quivers with relations, we have a corresponding theory counting their representations. Our constructions are motivated by the previous works of Szendr\"{o}i \cite{sz} and Mozgovoy-Reineke \cite{mr} on the non-commutative Donaldson-Thomas theory for $CY_{3}$ algebras.

\subsection{Basic facts on quivers with relations}

\subsubsection{Moduli spaces of representations of quivers with relations }
Let $Q=(Q_{0},Q_{1},h,t)$ be a quiver, where $Q_{0}$ is a finite set of vertices indexed by $0,1,...,n$, $Q_{1}$ is a finite set of oriented edges,
and $h,t: Q_{1}\rightarrow Q_{0}$ are maps giving the head and tail of each edge. The path algebra $\mathbb{C}Q$ is an associative algebra
with basis consisting of all paths of finite length. A quiver with relations $(Q,I)$ is a quiver $Q$ with a two-sided ideal $I$ in $\mathbb{C}Q$.
We denote $A=\mathbb{C}Q/I$.
\begin{definition}(\cite{js},\cite{king})
Let $Q=(Q_{0},Q_{1},h,t)$ be a quiver. A representation of $Q$ is a pair $(V=\bigoplus_{i=0}^{n}V_{i},\phi=\bigoplus_{a\in Q_{1}}\phi_{a})$
which consists of a finite dimensional $\mathbb{C}$-vector space $V_{i}$ for each vertex $i\in Q_{0}$ and a $\mathbb{C}$-linear map
$\phi_{a}: V_{t(a)}\rightarrow V_{h(a)}$ for each edge $a\in Q_{1}$. If $(Q,I)$ is a quiver with relations, a representation of $(Q,I)$ is a representation of $Q$, denoted by $(V,\phi)$ such that $\phi\mid_{I}=0$. The dimension vector of $(V,\phi)$ is denoted by $\underline{dim}(V)=\textbf{d}=(d_{0},d_{1},...,d_{n})$, where $d_{i}=dim_{\mathbb{C}}V_{i}$. We say $(Q,I)$
has bounded relations if $I$ is finitely generated.
\end{definition}
\begin{remark}
Representations of $(Q,I)$ are in one-one correspondence with finite-dimensional $A=\mathbb{C}Q/I$-modules.
\end{remark}
We fix a dimension vector $\textbf{d}$ and consider the vector space
\begin{equation}R_{\textbf{d}}(Q)\triangleq \bigoplus_{a\in Q_{1}}Hom_{\mathbb{C}}(V_{t(a)},V_{h(a)}), \nonumber \end{equation}
on which the algebraic reductive group $G_{\textbf{d}}=\prod_{i\in Q_{0}}GL(V_{i})$ acts via
\begin{equation}(g_{i})\circ (\phi_{a})=(g_{h(a)}\phi_{a}g_{t(a)}^{-1}). \nonumber \end{equation}
To define a proper quotient, we introduce a stability condition due to \cite{king} (see also \cite{er}).
\begin{definition}(King \cite{king})\label{stab condition}
We fix a linear form $\Theta\in (\mathbb{Z}^{n+1})^{*}$ (called a stability condition). The slope of a $\mathbb{C}Q$-module $M$ with non-zero dimension vector $\underline{dim}M$ is
\begin{equation}\mu(M)\triangleq\mu(\underline{dim}M)\triangleq\frac{\Theta(\underline{dim}M)}{\sum_{i=0}^{n}dim_{\mathbb{C}} M_{i}}.
\nonumber \end{equation}
$M$ is $\mu$-stable (resp. $\mu$-semi-stable), if
\begin{equation}\mu(U)<\mu(M)\textrm{ }  (\textrm{resp}. \textrm{ } \mu(U)\leq\mu(M)) \nonumber \end{equation}
for any non-zero proper sub-representation $U$ of $M$.
\end{definition}
We denote $R^{s}_{\textbf{d}}(Q)\subseteq R_{\textbf{d}}(Q)$ to be the subspace of $\mu$-stable representations of $Q$. By \cite{er}, \cite{king}, there exists a smooth complex algebraic variety
\begin{equation}\mathcal{M}_{\textbf{d}}^{s}(Q)\triangleq R^{s}_{\textbf{d}}(Q)/G_{\textbf{d}}\nonumber \end{equation}
parametrizing isomorphism classes of $\mu$-stable representations of $Q$ \big(not $(Q,I)$\big) with dimension vector $\textbf{d}$.
We denote $\mathcal{M}_{\textbf{d}}^{ss}(Q)$ to be the coarse moduli space of $\mu$-semi-stable representations of $Q$. It
contains $\mathcal{M}_{\textbf{d}}^{s}(Q)$ as its open subvariety.

As in the case of moduli spaces of sheaves \cite{hl},
we want to find out a condition so that $\mathcal{M}_{\textbf{d}}^{s}(Q)=\mathcal{M}_{\textbf{d}}^{ss}(Q)$.
\begin{definition}(\cite{er})\label{def of coprime}
A dimension vector $\textbf{d}\in \mathbb{N}^{n+1}$ is coprime for $\Theta\in (\mathbb{Z}^{n+1})^{*} $ if $\mu(\textbf{e})\neq\mu(\textbf{d})$ for
all $0<\textbf{e}<\textbf{d}$, where $\textbf{e}<\textbf{d}$ means that $e_{i}\leq d_{i}$ for all $0\leq i\leq n$ and $e_{i}< d_{i}$ for some $i$.
\end{definition}
\begin{lemma}(\cite{king}, \cite{er}) \label{property of M(Q)} ${}$ \\
(1) If $\textbf{d}$ is coprime for $\Theta$, then $\mathcal{M}_{\textbf{d}}^{ss}(Q)=\mathcal{M}_{\textbf{d}}^{s}(Q)$ is a connected
smooth algebraic variety. \\
(2) If the quiver $Q$ does not have oriented cycles, then
$\mathcal{M}_{\textbf{d}}^{ss}(Q)$ is a projective variety.
\end{lemma}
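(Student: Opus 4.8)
\textbf{Proof proposal for Lemma \ref{property of M(Q)}.}

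The plan is to deduce both statements from King's GIT construction of quiver moduli spaces via the character $\chi_\Theta$ on $G_{\mathbf d}$, together with the standard formalism of GIT quotients. First I would recall the setup: given the stability form $\Theta$, one produces a character $\chi_\Theta : G_{\mathbf d} \to \mathbb C^*$ (normalized so that the diagonal subgroup $\mathbb C^* \subseteq G_{\mathbf d}$ acts trivially, so that effectively we work with $PG_{\mathbf d}$), and King's criterion identifies $\chi_\Theta$-semistable points in $R_{\mathbf d}(Q)$ in the GIT sense with $\mu$-semistable representations in the sense of Definition \ref{stab condition}, and $\chi_\Theta$-stable points with $\mu$-stable representations. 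Then $\mathcal M^{ss}_{\mathbf d}(Q) = R_{\mathbf d}(Q)/\!/_{\chi_\Theta} G_{\mathbf d} = \mathrm{Proj}\big(\bigoplus_{m\ge 0} \Gamma(R_{\mathbf d}(Q), L_{\chi_\Theta}^{\otimes m})^{G_{\mathbf d}}\big)$ is projective over the affine GIT quotient $R_{\mathbf d}(Q)/\!/G_{\mathbf d} = \mathrm{Spec}\,\mathbb C[R_{\mathbf d}(Q)]^{G_{\mathbf d}}$, and $\mathcal M^{s}_{\mathbf d}(Q)$ is the open locus of stable points, which is a geometric quotient and is a smooth variety because the $PG_{\mathbf d}$-action on the stable locus $R^s_{\mathbf d}(Q)$ is free (stable implies the representation has no nonscalar automorphisms) and $R_{\mathbf d}(Q)$ is a smooth affine space, so the quotient of the smooth open set $R^s_{\mathbf d}(Q)$ by a free action of a reductive group is smooth.

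For part (1), the key point is that coprimality of $\mathbf d$ for $\Theta$ forces semistability to coincide with stability. Indeed, suppose $M$ is $\mu$-semistable but not $\mu$-stable; then there is a proper nonzero subrepresentation $U$ with $\mu(U) = \mu(M)$, so $\underline{\dim}\, U =: \mathbf e$ satisfies $0 < \mathbf e < \mathbf d$ and $\mu(\mathbf e) = \mu(\mathbf d)$, contradicting Definition \ref{def of coprime}. Hence $R^{ss}_{\mathbf d}(Q) = R^s_{\mathbf d}(Q)$ and therefore $\mathcal M^{ss}_{\mathbf d}(Q) = \mathcal M^s_{\mathbf d}(Q)$; being at the same time a projective GIT quotient and a smooth geometric quotient, it is a smooth algebraic variety. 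Connectedness then follows: $R_{\mathbf d}(Q)$ is an affine space, hence irreducible, so the open dense $\chi_\Theta$-stable locus $R^s_{\mathbf d}(Q)$ (nonempty in the cases of interest — and if empty the statement is vacuous) is irreducible, and its image under the quotient map is irreducible, in particular connected. (One should note the harmless caveat that if $R^s_{\mathbf d}(Q) = \emptyset$ the moduli space is empty, which is connected by convention; I would mention this but not dwell on it.)

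For part (2), the claim is that when $Q$ has no oriented cycles, $\mathcal M^{ss}_{\mathbf d}(Q)$ is actually projective, not merely projective over an affine base. This reduces to showing the affine GIT quotient $R_{\mathbf d}(Q)/\!/G_{\mathbf d} = \mathrm{Spec}\,\mathbb C[R_{\mathbf d}(Q)]^{G_{\mathbf d}}$ is a single point, i.e. that there are no nonconstant $G_{\mathbf d}$-invariant polynomial functions on $R_{\mathbf d}(Q)$. The classical fact (due to Le Bruyn–Procesi) is that $\mathbb C[R_{\mathbf d}(Q)]^{G_{\mathbf d}}$ is generated by traces of oriented cycles in the quiver; when $Q$ has no oriented cycles there are no such cycles of positive length, so the invariant ring is $\mathbb C$, the affine quotient is a point, and $\mathcal M^{ss}_{\mathbf d}(Q)$, being projective over a point, is projective. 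I would cite \cite{king} (where exactly this argument appears) for this step. The main obstacle — really the only nontrivial input — is precisely this identification of the invariant ring with trace functions; since it is standard and already used in \cite{king}, \cite{er}, I would invoke it rather than reprove it, and the rest of the argument is routine GIT bookkeeping.
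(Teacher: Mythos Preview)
Your proposal is correct and follows essentially the same approach as the paper: both reduce part (2) to the fact that $\mathcal{M}_{\mathbf d}^{ss}(Q)$ is projective over the affine quotient $\mathrm{Spec}\,\mathbb C[R_{\mathbf d}(Q)]^{G_{\mathbf d}}$, and that this affine quotient (which the paper denotes $\mathcal{M}_{\mathbf d}^{ssimple}(Q)$) collapses to a point when $Q$ has no oriented cycles. Your treatment is in fact more complete, as you spell out part (1) (coprimality $\Rightarrow$ semistable $=$ stable, plus smoothness and connectedness), whereas the paper's proof sketch only reviews part (2) and defers part (1) to the cited references.
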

\begin{proof}
(Review of the proof of \cite{er}) When $\Theta=0$, $\mathcal{M}_{\textbf{d}}^{ss}(Q)$ will be denoted by $\mathcal{M}_{\textbf{d}}^{ssimple}(Q)$ which parametrizes semisimple representations of $Q$. It is affine and contains a special element
\begin{equation}\vec{0}\triangleq\bigoplus_{i=0}^{n}S_{i}\otimes V_{i}\in \mathcal{M}_{\textbf{d}}^{ssimple}(Q). \nonumber \end{equation}
We consider a quotient map $\pi: R_{\textbf{d}}(Q)\rightarrow \mathcal{M}_{\textbf{d}}^{ssimple}(Q)$,
sending a representation to the isomorphism class of its semi-simplification
(i.e. the direct sum of its Jordan-H\"{o}lder components).
Then the fiber $\pi^{-1}(\vec{0})$ consists of nilpotent representations,
i.e. representations with vanishing traces along all non-trivial oriented cycles.

By \cite{er}, $\pi$ is descended to $p: \mathcal{M}_{\textbf{d}}^{ss}(Q)\rightarrow \mathcal{M}_{\textbf{d}}^{ssimple}(Q)$ and $p$
is a projective morphism. If $Q$ does not have oriented cycles, $\mathcal{M}_{\textbf{d}}^{ssimple}(Q)$ is a single point and
$\mathcal{M}_{\textbf{d}}^{ss}(Q)$ is projective.
\end{proof}
Our aim is to study moduli spaces of representations of $(Q,I)$.
\begin{definition}
\begin{equation}\mathcal{M}_{\textbf{d}}^{s}(Q,I)\triangleq R^{s}_{\textbf{d}}(Q,I)/G_{\textbf{d}},  \nonumber \end{equation}
where $R^{s}_{\textbf{d}}(Q,I)\subseteq R^{s}_{\textbf{d}}(Q)$ consists of $\mu$-stable representations of $(Q,I)$.
\end{definition}
The obstruction theory of $\mathcal{M}_{\textbf{d}}^{s}(Q,I)$ is governed by $L_{\infty}$-algebra:
\begin{equation}\label{Ld} L^{*}_{\textbf{d}}\triangleq Ext^{*}_{A}(\bigoplus_{i=0}^{n}S_{i}\otimes V_{i},\bigoplus_{i=0}^{n}S_{i}\otimes V_{i}), \end{equation}
where $S_{i}$ is the simple $A=\mathbb{C}Q/I$ module corresponding to node $i\in Q_{0}$,
$V_{i}$ is a complex vector space of dimension $d_{i}$ (see e.g. \cite{hua}, \cite{segal}).
\begin{definition}
The obstruction bundle of $\mathcal{M}_{\textbf{d}}^{s}(Q,I)$ is
\begin{equation}\label{ob bundle}Ob\triangleq (L_{\textbf{d}}^{1})^{s}\times_{G_{\textbf{d}}}L_{\textbf{d}}^{2}. \end{equation}
\end{definition}
\begin{remark}$Ob$ is over $\mathcal{M}_{\textbf{d}}^{s}(Q)$ with fiber $L_{\textbf{d}}^{2}$ as $L_{\textbf{d}}^{1}$ is canonically identified with $R_{\textbf{d}}(Q)$.
\end{remark}
The Maurer-Cartan equation of $L_{\textbf{d}}$ converges provided $(Q,I)$
has bounded relations as $L_{\textbf{d}}^{2}$ computes relations in the quiver. In this case, we have
\begin{proposition}(\cite{hua}, \cite{segal}) \label{description of moduli}
We assume $\textbf{d}$ is coprime for $\Theta$ and $(Q,I)$
has bounded relations, then
$\mathcal{M}_{\textbf{d}}^{s}(Q,I)\cong MC(L_{\textbf{d}})//_{\Theta}G_{\textbf{d}}$.
\end{proposition}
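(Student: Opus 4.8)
The plan is to identify the moduli space $\mathcal{M}_{\mathbf{d}}^{s}(Q,I)$ as the $\mu$-stable locus inside the Maurer--Cartan scheme $MC(L_{\mathbf{d}})$, taken GIT-quotient by $G_{\mathbf{d}}$. First I would recall that, by the definition of the $L_\infty$-algebra $L_{\mathbf{d}}$ in (\ref{Ld}) and the canonical identification $L^{1}_{\mathbf{d}}\cong R_{\mathbf{d}}(Q)$ (coming from $Ext^{1}_{A}$ of the semisimple module $\bigoplus_{i}S_{i}\otimes V_{i}$ being the space of arrows), a point of $L^{1}_{\mathbf{d}}$ is exactly a representation $\phi=(\phi_a)$ of the underlying quiver $Q$. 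The Maurer--Cartan equation $\sum_{k\ge 2}\tfrac{1}{k!}\mu_k(\phi^{\otimes k})=0$ in $L^{2}_{\mathbf{d}}$ is, by construction of the higher products from a minimal model of the dg-algebra $Hom_A$-resolution (or equivalently by the description of $L^{2}_{\mathbf{d}}$ as the space dual to the relations of the quiver), precisely the condition that $\phi$ annihilates every element of $I$; in other words $MC(L_{\mathbf{d}})=R_{\mathbf{d}}(Q,I)$ scheme-theoretically. This step is essentially the content of the cited works \cite{hua}, \cite{segal}, so I would state it as a lemma with a pointer rather than reprove it; the boundedness of the relations $I$ guarantees $L^{2}_{\mathbf{d}}$ is finite dimensional and the Maurer--Cartan series is actually a polynomial, so there is no convergence issue.

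Next I would pass to stability. Since $R^{s}_{\mathbf{d}}(Q,I)=R^{s}_{\mathbf{d}}(Q)\cap R_{\mathbf{d}}(Q,I)$ and the $G_{\mathbf{d}}$-action on the cut-out locus is the restriction of the linearized action on $R_{\mathbf{d}}(Q)$, King's GIT construction \cite{king} applies verbatim: the $\Theta$-(semi)stable points of $R_{\mathbf{d}}(Q,I)$ are those that are $\Theta$-(semi)stable as points of $R_{\mathbf{d}}(Q)$, which by King's theorem coincide with the $\mu$-(semi)stable representations. Using the coprimality hypothesis together with Lemma \ref{property of M(Q)}(1), $\mu$-semistable equals $\mu$-stable, so there are no strictly semistable points and the GIT quotient is a geometric quotient. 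Therefore
\begin{equation}
MC(L_{\mathbf{d}})/\!/_{\Theta}G_{\mathbf{d}}=R_{\mathbf{d}}(Q,I)/\!/_{\Theta}G_{\mathbf{d}}=R^{s}_{\mathbf{d}}(Q,I)/G_{\mathbf{d}}=\mathcal{M}^{s}_{\mathbf{d}}(Q,I),\nonumber
\end{equation}
and I would check that this identification is one of schemes, not merely of sets, by noting that both sides are constructed as the same GIT quotient of the same closed $G_{\mathbf{d}}$-invariant subscheme of $R_{\mathbf{d}}(Q)$.

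The one genuinely delicate point — and the step I expect to be the main obstacle — is the scheme-theoretic (non-reduced) matching between the Maurer--Cartan locus of $L_{\mathbf{d}}$ and the relation-scheme $R_{\mathbf{d}}(Q,I)$: the higher $L_\infty$-products depend on a choice of minimal model / homotopy transfer data, and one must argue that the resulting ideal in the coordinate ring of $L^{1}_{\mathbf{d}}=R_{\mathbf{d}}(Q)$ is, up to a $G_{\mathbf{d}}$-equivariant change of coordinates, the ideal generated by the relations $I$. For quivers with relations this is standard (the relevant dg-algebra $RHom_A(\bigoplus S_i\otimes V_i,\bigoplus S_i\otimes V_i)$ has an explicit small model and $\mu_k$ literally encodes degree-$k$ parts of the relations), and it is exactly what \cite{hua}, \cite{segal} establish, so I would cite those and restrict the write-up to assembling the pieces: (i) $L^{1}_{\mathbf{d}}\cong R_{\mathbf{d}}(Q)$, (ii) $MC(L_{\mathbf{d}})\cong R_{\mathbf{d}}(Q,I)$ as $G_{\mathbf{d}}$-schemes, (iii) King's GIT identification of $\Theta$-stability with $\mu$-stability, (iv) coprimality $\Rightarrow$ semistable $=$ stable $\Rightarrow$ geometric quotient, and conclude.
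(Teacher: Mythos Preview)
Your proposal is correct and in fact more detailed than what the paper provides: the paper does not give a proof of this proposition at all, but simply cites it to \cite{hua} and \cite{segal}, preceded only by the one-line remark that the Maurer--Cartan equation converges because $(Q,I)$ has bounded relations and $L_{\mathbf{d}}^{2}$ computes the relations. Your outline---identifying $L_{\mathbf{d}}^{1}\cong R_{\mathbf{d}}(Q)$, then $MC(L_{\mathbf{d}})\cong R_{\mathbf{d}}(Q,I)$ as $G_{\mathbf{d}}$-schemes via the cited references, then applying King's GIT together with the coprimality assumption---is exactly the argument those references contain, so you have correctly unpacked what the paper leaves implicit.
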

Another way to phrase Proposition \ref{description of moduli} is that there exists a section of the obstruction bundle $Ob$
over $\mathcal{M}_{\textbf{d}}^{s}(Q)$ whose zero loci is $\mathcal{M}_{\textbf{d}}^{s}(Q,I)$, i.e.
\begin{equation}\label{picture zero loci of section}
\begin{array}{lll}
      & &  \textrm{  }\textrm{  } Ob
      \\  & \quad  & \quad \downarrow \\ \mathcal{M}_{\textbf{d}}^{s}(Q,I) & \hookrightarrow & \mathcal{M}_{\textbf{d}}^{s}(Q).
\end{array}\end{equation}
This will be used to construct virtual cycles of $\mathcal{M}_{\textbf{d}}^{s}(Q,I)$ when $(Q,I)$ satisfies the $CY_{4}$ condition.

\subsubsection{Framed quiver moduli spaces}
Moduli spaces of semi-stable representations are in general not schemes unless under the
coprime assumption on the stability (Lemma \ref{property of M(Q)}). This will cause difficulties for applying
the virtual theory developed by Li-Tian \cite{lt1} and Behrend-Fantechi \cite{bf}.
To overcome this, we introduce smooth models of quiver moduli spaces in the sense of Engel and Reineke \cite{er}.
\begin{definition}(\cite{er}) \label{framed quiver}
Given a datum $(Q,\textbf{d},\Theta)$ as before and an extra non-zero dimension vector
$\textbf{e}=(e_{0},e_{1},...,e_{n})\in \mathbb{Z}^{n+1}$, we associate to them
a new datum $(\hat{Q},\hat{\textbf{d}},\hat{\Theta})$ as follows:

$\bullet$ the vertices of $\hat{Q}$ are those of $Q$, together with one additional vertex $\infty$,

$\bullet$ the edges of $\hat{Q}$ are those of $Q$, together with $e_{i}$ edges from $\infty$ to $i$, for each vertex $i$ of $Q$,

$\bullet$ we define $\hat{d_{i}}=d_{i}$ for all $i=0,1,...,n$ and $\hat{d}_{\infty}=1$,

$\bullet$ we define $\hat{\Theta}_{i}=\Theta_{i}$ for all $i=0,1,...,n$ and
$\hat{\Theta}_{\infty}=\frac{\Theta(\textbf{d})}{\sum_{i=0}^{n}d_{i}}+\epsilon$ for some sufficiently small positive $\epsilon\in \mathbb{Q}$.
\end{definition}
We consider the vector space,
\begin{equation}R_{\textbf{d},\textbf{e}}(Q)\triangleq \bigoplus_{a\in Q_{1}}Hom_{\mathbb{C}}(V_{t(a)},V_{h(a)})\times
\bigoplus_{i=0}^{n} Hom_{\mathbb{C}}(W_{i},V_{i}), \nonumber \end{equation}
where $dim_{\mathbb{C}}V_{i}=d_{i}$ and $dim_{\mathbb{C}}W_{i}=e_{i}$. $G_{\textbf{d}}=\prod_{i\in Q_{0}}GL(V_{i})$ acts on $R_{\textbf{d},\textbf{e}}(Q)$ and the GIT quotient exists
\begin{equation}\mathcal{M}_{\textbf{d}}^{ss}(\hat{Q})=R_{\textbf{d},\textbf{e}}(Q)//_{\Theta}G_{\textbf{d}}.\nonumber \end{equation}
\begin{remark}${}$ \\
1. By Lemma 3.2 of \cite{er}, $\mathcal{M}_{\textbf{d}}^{ss}(\hat{Q})=\mathcal{M}_{\textbf{d}}^{s}(\hat{Q})$.
We denote $\mathcal{M}_{\textbf{d},\textbf{e}}^{\Theta}(Q)\triangleq\mathcal{M}_{\textbf{d}}^{s}(\hat{Q})$ and call it the smooth model for $\mathcal{M}_{\textbf{d}}^{ss}(Q)$. \\
2. By Proposition 3.8 of \cite{er}, $\mathcal{M}_{\textbf{d},\textbf{e}}^{\Theta}(Q)$ is
a projective bundle over $\mathcal{M}_{\textbf{d}}^{s}(Q)$ if $\textbf{d}$ is coprime for $\Theta$.
\end{remark}
\begin{definition}
Given a quiver with relations $(Q,I)$, dimension vectors $\textbf{d},\textbf{e}\neq\vec{0}\in\mathbb{Z}_{\geq0}^{n+1}$ and a stability condition
$\Theta\in (\mathbb{Z}^{n+1})^{*}$, the moduli space of framed representations of $(Q,I)$ is
\begin{equation}\mathcal{M}_{\textbf{d},\textbf{e}}^{\Theta}(Q,I)\triangleq R_{\textbf{d},\textbf{e}}(Q,I)//_{\Theta}G_{\textbf{d}},\nonumber \end{equation}
where $ R_{\textbf{d},\textbf{e}}(Q,I)\triangleq Z(I)\times
\bigoplus_{i=0}^{n} Hom_{\mathbb{C}}(W_{i},V_{i}) $ and $Z(I)\subseteq\bigoplus_{a\in Q_{1}}Hom_{\mathbb{C}}(V_{t(a)},V_{h(a)})$
consists of representations of $(Q,I)$.
\end{definition}
Similar to (\ref{ob bundle}), we can define the framed obstruction bundle over $\mathcal{M}_{\textbf{d},\textbf{e}}^{\Theta}(Q)$ with fiber $L_{\textbf{d}}^{2}$.
\begin{definition}
The framed obstruction bundle is
\begin{equation}Ob_{fr}\triangleq R_{\textbf{d},\textbf{e}}^{s}(Q)\times_{G_{\textbf{d}}}L_{\textbf{d}}^{2}. \nonumber \end{equation}
\end{definition}
\begin{remark}
As in (\ref{picture zero loci of section}), $Ob_{fr}$ has a section whose zero loci is
$\mathcal{M}_{\textbf{d},\textbf{e}}^{\Theta}(Q,I)$, i.e.
\begin{equation}\label{framed picture zero loci of section}
\begin{array}{lll}
      & &  \textrm{  }\textrm{  } Ob_{fr}
      \\  & \quad  & \quad \downarrow \\ \mathcal{M}_{\textbf{d},\textbf{e}}^{\Theta}(Q,I) & \hookrightarrow &
\mathcal{M}_{\textbf{d},\textbf{e}}^{\Theta}(Q).
\end{array}\end{equation}
\end{remark}

\subsection{The definition of $NCDT_{4}$ invariants}
We first introduce the concept of $CY_{4}$ algebras.
\begin{definition}(\cite{ginzburg}, \cite{sz}) \label{cy4 alg}
An associative $\mathbb{C}$-algebra $A$ is 4-Calabi-Yau ($CY_{4}$ for short), if for all $M,N\in A$-Mod (the category of
finitely generated $A$-modules) with at least one of them finite dimensional, there exist perfect bi-functorial pairings
\begin{equation}Ext^{k}_{A}(M,N)\times Ext^{4-k}_{A}(N,M)\rightarrow \mathbb{C}  \nonumber \end{equation}
between finite-dimensional $\mathbb{C}$-vector spaces.
\end{definition}
We take a quiver with bounded relations $(Q,I)$ such that $A=\mathbb{C}Q/I$ is $CY_{4}$, then $Ob$ admits a non-degenerate quadratic form $q$. By the characteristic class theory of quadratic bundles \cite{eg}, \cite{switzer},
the half Euler class $e(Ob,q)\in H^{rk(Ob)/2}(\mathcal{M}_{\textbf{d}}^{s}(Q),\mathbb{Z})$ exists if the structure group of $(Ob,q)$ can be reduced to $SO(\bullet,\mathbb{C})$.
\begin{lemma}\label{existence}
The structure group of $(Ob,q)$ can be reduced to $SO(\bullet,\mathbb{C})$ if $H^{1}(\mathcal{M}_{\textbf{d}}^{s}(Q),\mathbb{Z}_{2})=0$.
\end{lemma}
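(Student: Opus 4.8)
The plan is to reduce the reducibility of the structure group of the quadratic bundle $(Ob,q)$ to a statement about orientability of an associated real vector bundle, and then to invoke the standard fact that a real bundle is orientable precisely when its first Stiefel-Whitney class vanishes. Recall from the discussion surrounding Definition \ref{nat cpx ori} and Lemma 5 of \cite{eg} that a complex vector bundle with a non-degenerate quadratic form admits a reduction of its structure group from $O(\bullet,\mathbb{C})$ to $SO(\bullet,\mathbb{C})$ if and only if a (equivalently, any) real form of the bundle is orientable, i.e. has vanishing first Stiefel-Whitney class. So the content to be shown is: $w_{1}$ of a real form $Ob_{+}$ of $(Ob,q)$ vanishes on $\mathcal{M}_{\textbf{d}}^{s}(Q)$.

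First I would fix the real form $Ob_{+}$ coming from the homotopy equivalence $SO(\bullet,\mathbb{C})\sim SO(\bullet,\mathbb{R})$, exactly as in the passage right before Definition \ref{virtual cycle when Mc smooth}; this is a real vector bundle over $\mathcal{M}_{\textbf{d}}^{s}(Q)$ with $Ob\cong Ob_{+}\otimes_{\mathbb{R}}\mathbb{C}$ as quadratic bundles. Its first Stiefel-Whitney class $w_{1}(Ob_{+})$ lives in $H^{1}(\mathcal{M}_{\textbf{d}}^{s}(Q),\mathbb{Z}_{2})$. Under the hypothesis $H^{1}(\mathcal{M}_{\textbf{d}}^{s}(Q),\mathbb{Z}_{2})=0$ this group is trivial, so $w_{1}(Ob_{+})=0$ automatically, hence $Ob_{+}$ is orientable, hence by \cite{eg} the structure group of $(Ob,q)$ reduces to $SO(\bullet,\mathbb{C})$, and the half Euler class $e(Ob,q)$ is well defined. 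That is the whole argument; it is short precisely because the cohomological hypothesis is imposed to kill the only obstruction.

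The one point that deserves care — and the step I expect to be the main (minor) obstacle — is checking that the obstruction to the $SO$-reduction is genuinely a class in $H^{1}(-,\mathbb{Z}_{2})$ and nothing more, i.e. that no higher or integral obstruction intervenes. This is where one must cite \cite{eg} precisely: the relevant statement there is that for a quadratic bundle $(V,q)$, reducibility of the structure group to $SO(n,\mathbb{C})$ is controlled exactly by the orientability of a real form, which is a single $\mathbb{Z}_{2}$-valued condition ($w_{1}$ of the real form, equivalently the ``determinant'' line of $(V,q)$ being trivial as a $\mathbb{Z}_{2}$-local system). Once that citation is in place, the argument proceeds as above: I would write ``By Lemma 5 of \cite{eg} (cf. the discussion preceding Definition \ref{virtual cycle when Mc smooth}), the structure group of $(Ob,q)$ reduces to $SO(\bullet,\mathbb{C})$ iff a real form $Ob_{+}$ satisfies $w_{1}(Ob_{+})=0$; since $w_{1}(Ob_{+})\in H^{1}(\mathcal{M}_{\textbf{d}}^{s}(Q),\mathbb{Z}_{2})=0$ by hypothesis, this holds.'' No further computation is needed, and in particular one does not need the explicit $L_{\infty}$-description of $Ob$ in (\ref{ob bundle}) for this statement — only that $Ob$ is a quadratic bundle, which is guaranteed by the $CY_{4}$ condition on $A=\mathbb{C}Q/I$.
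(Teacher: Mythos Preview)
Your proof is correct and follows essentially the same approach as the paper: the paper's proof is the one-line observation that $H^{1}(\mathcal{M}_{\textbf{d}}^{s}(Q),\mathbb{Z}_{2})=0$ forces every real vector bundle over $\mathcal{M}_{\textbf{d}}^{s}(Q)$ to be orientable. You have simply unpacked this by making explicit the passage through a real form $Ob_{+}$ and its $w_{1}$, with the precise citation to \cite{eg}.
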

\begin{proof}
$H^{1}(\mathcal{M}_{\textbf{d}}^{s}(Q),\mathbb{Z}_{2})=0$ guarantees that any vector bundle over $\mathcal{M}_{\textbf{d}}^{s}(Q)$ is orientable.
\end{proof}
\begin{remark}
By the work of King and Walter \cite{king2}, the above condition on cohomology is satisfied when $Q$ does not have oriented cycles .
\end{remark}
Note that when $A=\mathbb{C}Q/I$ is $CY_{4}$, $(Q,I)$ has oriented cycles. Thus
$\mathcal{M}_{\textbf{d}}^{ss}(Q,I)$ is non-compact for a general dimension vector $\textbf{d}$. To take care the non-compactness, we take
the Borel-Moore homology $H_{r}^{BM}(\mathcal{M}_{\textbf{d}}^{s}(Q))\cong H^{top-r}(\mathcal{M}_{\textbf{d}}^{s}(Q))$ \cite{fulton}.
By the philosophy of defining virtual cycles in $DT_{4}$ theory before, we define
\begin{definition}\label{def of virtual cycle}
We take a quiver with bounded relations $(Q,I)$ such that $A=\mathbb{C}Q/I$ is a $CY_{4}$ algebra.
We fix a dimension vector $\textbf{d}$ and a stability condition $\Theta$ such that $\textbf{d}$ is coprime for $\Theta$.
Assume the structure group of $(Ob,q)$ can be reduced to $SO(N,\mathbb{C})$ ($N=rk(Ob))$, and choose a reduction. Then
the virtual fundamental class of $\mathcal{M}_{\textbf{d}}^{s}(Q,I)$ is
\begin{equation}\label{NC virtual cycle}[\mathcal{M}_{\textbf{d}}^{s}(Q,I)]^{vir}\triangleq PD(e(Ob,q))\in
H_{v.d}^{BM}(\mathcal{M}_{\textbf{d}}^{s}(Q)),
\end{equation}
where $v.d=2ext^{1}_{A}(W,W)-2ext^{0}_{A}(W,W)-ext^{2}_{A}(W,W)+2$ and $W=\bigoplus_{i=0}^{n}S_{i}\otimes V_{i}$. \\
We further call $[\mathcal{M}_{\textbf{d}}^{s}(Q,I)]^{vir}$ the non-commutative $DT_{4}$ ($NCDT_{4}$ for short) virtual cycle
associated to $(Q,I,\textbf{d},\Theta)$.
\end{definition}
\begin{remark}
Because of the non-compactness of $\mathcal{M}_{\textbf{d}}^{s}(Q)$ for general $\textbf{d}$, $NCDT_{4}$ invariants may be zero in general.
However, if we choose $\textbf{d}$ to be of special form (killing oriented cycles in quivers),
$NCDT_{4}$ invariants are shown to be nontrivial (see Proposition \ref{NCDT4/3}).
\end{remark}
For the case of framed quiver moduli spaces, we similarly define
\begin{definition}\label{def of fr virtual cycle}
We take a quiver with bounded relations $(Q,I)$ such that $A=\mathbb{C}Q/I$ is a $CY_{4}$ algebra.
We fix dimension vectors $\textbf{d},\textbf{e}\neq\vec{0}$, and a stability condition $\Theta$.
Assume the structure group of $(Ob_{fr},q)$ can be reduced to $SO(N,\mathbb{C})$ ($N=rk(Ob_{fr}))$, and choose a reduction. Then
the virtual fundamental class of $\mathcal{M}_{\textbf{d},\textbf{e}}^{\Theta}(Q,I)$ is
\begin{equation}\label{framed virtual cycle}[\mathcal{M}_{\textbf{d},\textbf{e}}^{\Theta}(Q,I)]^{vir}\triangleq PD\big(e(Ob_{fr},q)\big)
\in H_{v.d+2\textbf{d}\cdot\textbf{e}-2}^{BM}(\mathcal{M}_{\textbf{d},\textbf{e}}^{\Theta}(Q)), \end{equation}
where $v.d=2ext^{1}_{A}(W,W)-2ext^{0}_{A}(W,W)-ext^{2}_{A}(W,W)+2$ for $W=\bigoplus_{i=0}^{n}S_{i}\otimes V_{i}$ and $\textbf{d}\cdot\textbf{e}$ is the
inner product between vectors $\textbf{d}$ and $\textbf{e}$. \\
We further call $[\mathcal{M}_{\textbf{d},\textbf{e}}^{\Theta}(Q,I)]^{vir}$ the non-commutative $DT_{4}$ ($NCDT_{4}$ for short) virtual cycle
associated to $(Q,I,\textbf{d},\textbf{e},\Theta)$.
\end{definition}

\subsection{Computational examples of $NCDT_{4}$ invariants}

We take a quiver $Q$
\begin{equation}\label{quiver of KP3}\xymatrix{
&&
\circ^{\textbf{3}}   \ar@/_0.2pc/[ddll] \ar@/_/[ddll] \ar@/^0.2pc/[ddll] \ar@/^/[ddll] &  \\ & &\\
\circ_{\textbf{0}}   \ar@/_/[rr]\ar@/_0.2pc/[rr] \ar@/^/[rr] \ar@/^0.2pc/[rr]
&& \circ_{\textbf{1}}   \ar@/_0.2pc/[rr] \ar@/_/[rr] \ar@/^/[rr]\ar@/^0.2pc/[rr] && \circ_{\textbf{2}} \ar@/_0.2pc/[uull] \ar@/_/[uull]
\ar@/^0.2pc/[uull] \ar@/^/[uull]}
\end{equation}
with relations $I=<y_{j}x_{i}-y_{i}x_{j},z_{j}y_{i}-z_{i}y_{j},w_{j}z_{i}-w_{i}z_{j},x_{j}w_{i}-x_{i}w_{j}>$, where $x_{i},y_{i},z_{i},w_{i}$
denote paths between vertices $0\rightarrow1$, $1\rightarrow2$, $2\rightarrow3$, $3\rightarrow0$ in the above figure respectively. The path algebra is $CY_{4}$ and associates with $K_{\mathbb{P}^{3}}$ by the existence of the full exceptional collection on $\mathbb{P}^{3}$ \cite{bri},
\cite{ginzburg}.

Fixing the dimension vector $\textbf{d}=(d_{0},d_{1},d_{2},d_{3})$ and a stability condition $\Theta$, we get
\begin{equation}
\begin{array}{lll}
      & Ext^{2}_{A}(W,W)\rightarrow  & \textrm{ } Ob
      \\  &   & \quad \downarrow \\ & \quad \mathcal{M}_{\textbf{d}}^{s}(Q,I) \hookrightarrow & \mathcal{M}_{\textbf{d}}^{s}(Q)
      =(Ext^{1}_{A}(W,W))^{s}/G_{\textbf{d}},
\end{array} \nonumber\end{equation}
where $W=\oplus_{i=0}^{3}S_{i}\otimes V_{i}$, $S_{i}$ is the simple $A$-module corresponding to vertex $i$, $V_{i}$ is a complex vector space of
dimension $d_{i}$. Explicitly, we have
\begin{equation}Ext^{1}_{A}(W,W)\cong(V_{0}^{*}\otimes V_{1})^{\oplus4}\oplus (V_{1}^{*}\otimes V_{2})^{\oplus4}\oplus
(V_{2}^{*}\otimes V_{3})^{\oplus4}\oplus (V_{3}^{*}\otimes V_{0})^{\oplus4},  \nonumber \end{equation}
\begin{equation}Ext^{2}_{A}(W,W)\cong[(V_{0}^{*}\otimes V_{2})\oplus (V_{2}^{*}\otimes V_{0}) ]^{\oplus6}\oplus [(V_{1}^{*}\otimes V_{3})\oplus
(V_{3}^{*}\otimes V_{1}) ]^{\oplus6}. \nonumber \end{equation}
The obstruction space has a natural quadratic form $q$ with $(V_{0}^{*}\otimes V_{2})^{\oplus6}\oplus(V_{1}^{*}\otimes V_{3})^{\oplus6}$ as a maximal isotropic subspace. By Lemma \ref{ASD equivalent to max isotropic}, the $NCDT_{4}$ virtual cycle associated to $(Q,I,\textbf{d},\Theta)$ exists
(see Definition \ref{def of virtual cycle}).  \\

For convenience of latter discussions, we introduce the $NCDT_{3}$ virtual cycle of the quiver $(Q',I')$,
where $(Q',I')$ is the quiver obtained from $(Q,I)$ by eliminating edges connecting the vertex $3$ and the vertex $0$, i.e. \\
\begin{equation}\xymatrix{
\circ_{\textbf{0}}   \ar@/_/[rr]\ar@/_0.2pc/[rr] \ar@/^/[rr] \ar@/^0.2pc/[rr]
&& \circ_{\textbf{1}}   \ar@/_0.2pc/[rr] \ar@/_/[rr] \ar@/^/[rr]\ar@/^0.2pc/[rr] && \circ_{\textbf{2}}  \ar@/_/[rr]\ar@/_0.2pc/[rr] \ar@/^/[rr] \ar@/^0.2pc/[rr] && \circ_{\textbf{3}} }.
\nonumber \end{equation}
\begin{definition}\label{NCDT3 virtual cycle}
Given the above quiver with relations $(Q',I')$, dimension vector $\textbf{d}$ and a stability condition $\Theta$ such that
$\textbf{d}$ and $\Theta$ are coprime, the $NCDT_{3}$ virtual cycle associated with $(Q',I',\textbf{d},\Theta)$ is
\begin{equation}
[\mathcal{M}_{\textbf{d}}^{s}(Q',I')]^{vir}\triangleq PD\big(e(Ob)\big)\in H_{v.d}(\mathcal{M}_{\textbf{d}}^{s}(Q'),\mathbb{Z}).
\nonumber \end{equation}
where $Ob=Ext^{2}_{A'}(W,W)\times_{G_{\textbf{d}}}(Ext^{1}_{A'}(W,W))^{s}$ and $A'=\mathbb{C}Q'/I'$ is the path algebra of $(Q',I')$.
$v.d\triangleq 2ext^{1}_{A'}(W,W)-2ext^{0}_{A'}(W,W)-ext^{2}_{A'}(W,W)+2$.
\end{definition}
\begin{remark}
In the above definition, $\mathcal{M}_{\textbf{d}}^{s}(Q')$ is compact since the quiver does not have oriented cycles (see Lemma \ref{property of M(Q)}).
\end{remark}
We compute $NCDT_{4}$ virtual cycles for certain dimension vectors and stability conditions.
\begin{proposition}\label{NCDT4/3}$(NCDT_{4}/NCDT_{3})$ ${}$ \\
We take the quiver with relations $(Q,I)$ in figure (\ref{quiver of KP3}), a stability condition $\Theta$
which is coprime to $\textbf{d}=(d_{0},d_{1},d_{2},d_{3})$, where $d_{0},d_{1},d_{2}\geq1$, $d_{3}=0$,
then $\mathcal{M}_{\textbf{d}}^{s}(Q)\cong \mathcal{M}_{\textbf{d}}^{s}(Q')$ as complex projective varieties. With respect to
the natural complex orientation, we further have
\begin{equation}[\mathcal{M}_{(d_{0},d_{1},d_{2},0)}^{s}(Q,I)]^{vir}=[\mathcal{M}_{(d_{0},d_{1},d_{2},0)}^{s}(Q',I')]^{vir},\nonumber \end{equation}
where $(Q',I')$ is the quiver obtained from $Q$ by eliminating edges connecting the vertex $3$ and the vertex $0$.
$[\mathcal{M}_{(d_{0},d_{1},d_{2},0)}^{s}(Q',I')]^{vir}$ is the $NCDT_{3}$ virtual cycle of $(Q',I')$ (Definition \ref{NCDT3 virtual cycle}).
\end{proposition}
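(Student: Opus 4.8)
The plan is to use the hypothesis $d_3=0$ to collapse both sides to the same bundle-with-quadratic-form situation over a single smooth projective ambient space, and then to extract the two virtual classes from Edidin--Graham's half Euler class (Lemma \ref{ASD equivalent to max isotropic}). First I would identify the moduli spaces. Since $V_3=0$, every representation of $Q$ with dimension vector $\textbf{d}=(d_0,d_1,d_2,0)$ has $z_i\colon V_2\to V_3$ and $w_i\colon V_3\to V_0$ equal to zero, so $R_{\textbf{d}}(Q)=R_{\textbf{d}}(Q')$ as $G_{\textbf{d}}$-schemes, with $G_{\textbf{d}}=GL(V_0)\times GL(V_1)\times GL(V_2)$ since $GL(V_3)$ is trivial. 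Among the defining relations only $y_jx_i-y_ix_j$ survives: the relations $z_jy_i-z_iy_j$, $w_jz_i-w_iz_j$, $x_jw_i-x_iw_j$ all factor through the vertex $3$ and hence vanish identically on $R_{\textbf{d}}(Q)$. Therefore $R_{\textbf{d}}(Q,I)=R_{\textbf{d}}(Q',I')$ scheme-theoretically, and $\mathcal{M}^s_{\textbf{d}}(Q)\cong\mathcal{M}^s_{\textbf{d}}(Q')$, $\mathcal{M}^s_{\textbf{d}}(Q,I)\cong\mathcal{M}^s_{\textbf{d}}(Q',I')$. Coprimality of $\textbf{d}$ for $\Theta$ on $Q$ is the same as coprimality of $(d_0,d_1,d_2)$ for the restriction of $\Theta$ to $Q'$, and since $Q'$ has no oriented cycles, Lemma \ref{property of M(Q)}(2) shows $\mathcal{M}^s_{\textbf{d}}(Q')$ is projective, so $H^{BM}_\ast=H_\ast$ and both virtual classes a priori land in the same group.

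Next I would match the obstruction theories. Setting $V_3=0$ in the displayed formulas for $Ext^1_A$ and $Ext^2_A$ gives $Ext^1_A(W,W)\cong(V_0^\ast\otimes V_1)^{\oplus4}\oplus(V_1^\ast\otimes V_2)^{\oplus4}$ and $Ext^2_A(W,W)\cong\big((V_0^\ast\otimes V_2)\oplus(V_2^\ast\otimes V_0)\big)^{\oplus6}$, with $(V_0^\ast\otimes V_2)^{\oplus6}$ a maximal isotropic subspace for the Serre quadratic form $q$ and the natural complex orientation selecting this half. On the $Q'$ side, the Ext-groups are computed from the commutative quiver presentation: $Ext^1_{A'}$ from the arrows, $Ext^2_{A'}$ from the $\binom{4}{2}=6$ relations, $Ext^3_{A'}$ from the $\binom{4}{3}=4$ first syzygies; with $V_3=0$ the $Ext^3_{A'}$-term and the part of $Ext^2_{A'}$ supported at $V_3$ drop out, leaving $Ext^1_{A'}(W,W)\cong(V_0^\ast\otimes V_1)^{\oplus4}\oplus(V_1^\ast\otimes V_2)^{\oplus4}$ (coinciding with $Ext^1_A(W,W)$ as a $G_{\textbf{d}}$-representation), $Ext^2_{A'}(W,W)\cong(V_0^\ast\otimes V_2)^{\oplus6}$, and $Ext^{\geq3}_{A'}(W,W)=0$; $Ext^0$ is $\bigoplus_i End(V_i)$ in both cases. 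Hence the virtual dimensions $2\,ext^1-2\,ext^0-ext^2+2$ coincide, and under the isomorphism of ambient spaces the $NCDT_3$ obstruction bundle $Ob_{Q'}=Ext^2_{A'}(W,W)\times_{G_{\textbf{d}}}(Ext^1_{A'}(W,W))^s$ is identified with the maximal isotropic subbundle of the quadratic bundle $(Ob_Q,q)=\big(Ext^2_A(W,W)\times_{G_{\textbf{d}}}(Ext^1_A(W,W))^s,\,q\big)$ cut out fiberwise by $(V_0^\ast\otimes V_2)^{\oplus6}$.

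Now the conclusion follows from Lemma \ref{ASD equivalent to max isotropic}: $(Ob_Q,q)$ has even rank $12d_0d_2$ and a maximal isotropic subbundle of half that rank, so its half Euler class equals $\pm c_{\mathrm{top}}$ of that subbundle, i.e. $\pm e(Ob_{Q'})$, and the choice of the natural complex orientation is precisely the choice of maximal isotropic family making this sign $+$. Taking Poincar\'e duals,
\begin{equation}
[\mathcal{M}^s_{(d_0,d_1,d_2,0)}(Q,I)]^{vir}=PD\big(e(Ob_Q,q)\big)=PD\big(e(Ob_{Q'})\big)=[\mathcal{M}^s_{(d_0,d_1,d_2,0)}(Q',I')]^{vir},
\nonumber
\end{equation}
both classes living in $H_{v.d}(\mathcal{M}^s_{\textbf{d}}(Q'),\mathbb{Z})=H^{BM}_{v.d}(\mathcal{M}^s_{\textbf{d}}(Q))$ as prescribed by Definitions \ref{def of virtual cycle} and \ref{NCDT3 virtual cycle}.

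The main obstacle I expect is not this bookkeeping but the verification that the Chern/Euler-class computation genuinely computes the two virtual cycles: one must know that the obstruction section of $(Ob_Q,q)$ whose zero locus is $\mathcal{M}^s_{\textbf{d}}(Q,I)$ actually takes values in the maximal isotropic subbundle and there restricts to the $Q'$-obstruction section of $Ob_{Q'}$. This is the quiver analogue of the hypothesis $\textrm{Image}(\kappa)\subseteq V_{\mathcal{F}}$ in Proposition \ref{ob=v+v*}, and it should be deduced from the fact that $A=\mathbb{C}Q/I$ is the $4$-dimensional Calabi--Yau completion of $A'=\mathbb{C}Q'/I'$ — so that, by the cyclic-completion recipe (Lemma \ref{segal theorem}, transported through the derived equivalences with $D^b(\mathbb{P}^3)$ and $D^b(K_{\mathbb{P}^3})$), the Maurer--Cartan / Kuranishi map governing $\mathcal{M}^s_{\textbf{d}}(Q,I)$ (Proposition \ref{description of moduli}) is the cyclic extension $\bar\mu$ of the one governing $\mathcal{M}^s_{\textbf{d}}(Q',I')$, whose values on inputs taken from $Ext^1$ lie in $Ext^2_{A'}\subseteq Ext^2_A$. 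Making this identification precise and $G_{\textbf{d}}$-equivariant, and checking compatibility with the natural complex orientation, is where the real work lies; the rest is the linear-algebra and characteristic-class formalism already set up in Sections 4 and 9.
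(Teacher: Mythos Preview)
Your proof is correct and follows the same route as the paper: identify $\mathcal{M}^s_{\textbf{d}}(Q)\cong\mathcal{M}^s_{\textbf{d}}(Q')$ from $V_3=0$, observe that $Ext^2_{A'}(W,W)=(V_0^*\otimes V_2)^{\oplus6}$ sits as a maximal isotropic subbundle inside $Ext^2_A(W,W)=[(V_0^*\otimes V_2)\oplus(V_2^*\otimes V_0)]^{\oplus6}$, and invoke Lemma \ref{ASD equivalent to max isotropic}. Your added detail on projectivity via Lemma \ref{property of M(Q)}(2) and the matching of virtual dimensions is fine.

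One remark: your final paragraph flags as the ``main obstacle'' the need to check that the obstruction section lands in the maximal isotropic subbundle. This is not actually required here. In this paper both virtual cycles are defined purely as characteristic classes --- $[\mathcal{M}^s_{\textbf{d}}(Q,I)]^{vir}=PD(e(Ob_Q,q))$ in Definition \ref{def of virtual cycle} and $[\mathcal{M}^s_{\textbf{d}}(Q',I')]^{vir}=PD(e(Ob_{Q'}))$ in Definition \ref{NCDT3 virtual cycle} --- with no reference to the section or its zero locus. So once the bundles and the quadratic form are identified, Lemma \ref{ASD equivalent to max isotropic} finishes the argument immediately; the cyclic-completion discussion, while conceptually relevant to why the section behaves well, plays no role in the proof of the proposition as stated.
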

\begin{proof}



We obviously have $\mathcal{M}_{\textbf{d}}^{s}(Q)\cong \mathcal{M}_{\textbf{d}}^{s}(Q')$,
$Ext^{2}_{A}(W,W)=[(V_{0}^{*}\otimes V_{2})\oplus (V_{2}^{*}\otimes V_{0}) ]^{\oplus6}$.
Meanwhile the obstruction space $Ext^{2}_{A'}(W,W)=(V_{0}^{*}\otimes V_{2})^{\oplus6}$ of $\mathcal{M}_{\textbf{d}}^{s}(Q')$
is a maximal isotropic subspace of $Ext^{2}_{A}(W,W)$. By Lemma \ref{ASD equivalent to max isotropic}, we are done.
\end{proof}
\begin{remark}
The quiver $(Q',I')$ is associated with $\mathbb{P}^{3}$ by the existence of full exceptional collections \cite{bri}. Proposition \ref{NCDT4/3} is a non-commutative version of the $DT_{4}/DT_{3}$ correspondence proved in Theorem \ref{compact supp DT4}.
\end{remark}

\section{Appendix}

\subsection{Local Kuranishi models of $\mathcal{M}_{c}^{bdl}$ }
We review several local Kuranishi models of $\mathcal{M}_{c}^{bdl}$. \\
${}$ \\
\textbf{A Kuranishi model of $\mathcal{M}_{c}^{bdl}$ with gauge fixing condition $\overline{\partial}_{A}^{*}a''=0$}.
By \cite{fm}, we define
\begin{equation}\kappa: H^{0,1}(X,EndE)\rightarrow H^{0,2}(X,EndE), \nonumber \end{equation}
\begin{equation}\label{kappa}\kappa(\alpha)=\mathbb{H}^{0,2}\big(g^{-1}(\alpha)\wedge g^{-1}(\alpha)\big), \end{equation}
where
\begin{equation}g:\Omega^{0,1}(X,EndE)_{k}\rightarrow\Omega^{0,1}(X,EndE)_{k}, \nonumber \end{equation}
\begin{equation}g(a'')\triangleq a''+\overline{\partial}_{A}^{-1}P_{\overline{\partial}_{A}}(a''\wedge a''). \nonumber \end{equation}
Note that $\overline{\partial}_{A}: Im(\overline{\partial}_{A}^{*})\rightarrow \Omega^{0,2}(X,EndE)_{k-1}$
is an isomorphism onto its image and $\overline{\partial}_{A}^{-1}$ is defined as its inverse.
By the standard Kuranishi theory, we have a local isomorphism
\begin{equation} \kappa^{-1}(0)\cong\left\{ \begin{array}{lll}
    a'' \textrm{ }\big{|} & \|a''\|_{k} < \epsilon'', F^{0,2}(\overline{\partial}_{A}+a'')=0 , \textrm{ } \overline{\partial}_{A}^{*}a''=0      
\nonumber\end{array}\right\}.\end{equation}
${}$ \\
\textbf{A Kuranishi model of $\mathcal{M}_{c}^{bdl}$ with gauge fixing condition $F\wedge \omega^{3}=0$}.
If we use gauge fixing $F\wedge \omega^{3}=0$, $d_{A}^{*}a=0$, we have another
Kuranishi model of $\mathcal{M}_{c}^{bdl}$ at $\overline{\partial}_{A}$: We define
\begin{equation}\widetilde{\kappa}: H^{0,1}(X,EndE)\rightarrow H^{0,2}(X,EndE),  \nonumber \end{equation}
\begin{equation}\label{kappa tilta}\widetilde{\kappa}(\alpha)=\mathbb{H}^{0,2}\big(\widetilde{g}^{-1}(\alpha)\wedge \widetilde{g}^{-1}(\alpha)\big),
\end{equation}
where
\begin{equation}\widetilde{g}: \Omega^{0,1}(EndE)_{k}\rightarrow H^{0,1}(EndE)\oplus {\overline{\partial}_{A}^{*}\Omega^{0,1}(EndE)}_{k}\oplus
{\overline{\partial}_{A}^{*}\Omega^{0,2}(EndE)}_{k-1},  \nonumber \end{equation}
\begin{equation}\label{g tilta}\widetilde{g}(a'')\triangleq\bigg(\mathbb{H}(a''),\overline{\partial}_{A}^{*}a''-\frac{i}{2}\wedge(a'\wedge a''+a''\wedge a'),
\overline{\partial}_{A}^{*}\big(\overline{\partial}_{A}a''+P_{\overline{\partial}_{A}}(a''\wedge a'')
\big)\bigg). \nonumber \end{equation}
By a suitable complex gauge transformation, we have $\widetilde{\kappa}^{-1}(0)\cong \kappa^{-1}(0)$. \\
${}$ \\
\textbf{Another Kuranishi model of $\mathcal{M}_{c}^{bdl}$ with gauge fixing condition $F\wedge \omega^{3}=0$}.
Induced from the $DT_{4}$ equations, we have the following local Kuranishi model of $\mathcal{M}_{c}^{bdl}$ at $\overline{\partial}_{A}$: We define
\begin{equation}\tilde{\tilde{\kappa}}: H^{0,1}(X,EndE)\rightarrow H^{0,2}(X,EndE),  \nonumber \end{equation}
\begin{equation}\label{kappa double tilta}\tilde{\tilde{\kappa}}(\alpha)=\mathbb{H}^{0,2}\big(q^{-1}(\alpha)\wedge q^{-1}(\alpha)\big),
\end{equation}
where
\begin{equation}q: \Omega^{0,1}(EndE)_{k}\rightarrow H^{0,1}(EndE)\oplus {\overline{\partial}_{A}^{*}\Omega^{0,1}(EndE)}_{k}\oplus
{\overline{\partial}_{A}^{*}\Omega^{0,2}(EndE)}_{k-1},  \nonumber \end{equation}
\begin{equation}\label{g double tilta}q(a'')\triangleq\bigg(\mathbb{H}(a''),\overline{\partial}_{A}^{*}a''-\frac{i}{2}\wedge(a'\wedge a''+a''\wedge a'),
\overline{\partial}_{A}^{*}\big(\overline{\partial}_{A}a''+P_{\overline{\partial}_{A}}(a''\wedge a'')+*_{4}P_{\overline{\partial}_{A}^{*}}(a''\wedge a'')\big)\bigg). \nonumber \end{equation}
By Proposition \ref{QA intesect P=0 equals NA} and Definition \ref{real kuranishi map}, $\tilde{\tilde{\kappa}}$ is a Kuranishi map of $\mathcal{M}_{c}^{bdl}$ at $\overline{\partial}_{A}$.

\subsection{Seidel-Thomas twists}
In this section, we recall the Seidel-Thomas twist \cite{st} and how it could be used to identify a moduli space of simple sheaves to
a moduli space of simple holomorphic bundles, which is the work of Joyce and Song \cite{js}.
\begin{definition}
Let $(X,\mathcal{O}_{X}(1))$ be a projective Calabi-Yau $m$-fold with $Hol(X)=SU(m)$. For each $n\in \mathbb{Z}$,
the Seidel-Thomas twist $T_{\mathcal{O}_{X}(-n)}$ by $\mathcal{O}_{X}(-n)$ is the Fourier-Mukai transform from $D(X)$ to $D(X)$ with kernel
\begin{equation}K=cone(\mathcal{O}_{X}(n)\boxtimes\mathcal{O}_{X}(-n)\rightarrow \mathcal{O}_{\Delta}).
\nonumber \end{equation}
\end{definition}
In general, $T_{n}\triangleq T_{\mathcal{O}_{X}(-n)}[-1]$ maps sheaves to complexes of sheaves. But for $n\gg 0$, we have
\begin{theorem}\label{seidel thomas twist lemma}(Joyce-Song, Lemma 8.2 of \cite{js})
Let $U$ be a finite type $\mathbb{C}$-scheme and $\mathcal{F}_{U}$ is a coherent sheaf on $U\times X$ flat over $U$ i.e.
it is a $U$-family of coherent sheaves on $X$. Then for $n\gg 0$,
$T_{n}(\mathcal{F}_{U})$ is also a $U$-family of coherent sheaves on $X$.
\end{theorem}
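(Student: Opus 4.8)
\textbf{Proof proposal for Theorem \ref{seidel thomas twist lemma}.}

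The plan is to unwind the definition of the Seidel--Thomas twist as a Fourier--Mukai transform and show that, for $n$ sufficiently large, the resulting object of $D(U\times X)$ is concentrated in degree zero and flat over $U$. First I would recall that, by definition, $T_n(\mathcal{F}_U)$ is computed (relatively over $U$) by the cone triangle
\begin{equation}
\mathbf{R}\mathrm{Hom}_{\pi}\big(\mathcal{O}_X(n),\mathcal{F}_U\big)\otimes \mathcal{O}_X(-n)\longrightarrow \mathcal{F}_U\longrightarrow T_{\mathcal{O}_X(-n)}(\mathcal{F}_U)\longrightarrow[1],
\nonumber
\end{equation}
where $\pi\colon U\times X\to U$ is the projection, and then $T_n=T_{\mathcal{O}_X(-n)}[-1]$. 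So the task reduces to understanding the two-term complex formed by the evaluation map $\mathrm{ev}\colon \mathbf{R}\pi_*\big(\mathcal{F}_U\otimes\pi_X^*\mathcal{O}_X(-n)\big)\boxtimes_U\mathcal{O}_X(-n)\to \mathcal{F}_U$ after a shift. The key input is Serre vanishing applied uniformly over $U$: since $U$ is of finite type and $\mathcal{F}_U$ is flat over $U$, for $n\gg 0$ the higher direct images $R^i\pi_*(\mathcal{F}_U\otimes\pi_X^*\mathcal{O}_X(n))$ vanish for $i>0$, the sheaf $E_U:=\pi_*(\mathcal{F}_U\otimes\pi_X^*\mathcal{O}_X(n))$ is locally free on $U$ (by cohomology-and-base-change, since all the fibrewise $H^i$ for $i>0$ vanish and the Euler characteristic is locally constant), and moreover $\mathcal{F}_U\otimes\pi_X^*\mathcal{O}_X(n)$ is $\pi$-globally generated, so the evaluation $\pi^*E_U\otimes\pi_X^*\mathcal{O}_X(-n)\twoheadrightarrow \mathcal{F}_U$ is surjective.

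Granting that, the second step is bookkeeping with the triangle. With $E_U$ locally free, $\mathbf{R}\pi_*(\mathcal{F}_U\otimes\pi_X^*\mathcal{O}_X(n))\simeq E_U$ sits in degree $0$, so $T_{\mathcal{O}_X(-n)}(\mathcal{F}_U)$ is the cone of a sheaf map $\pi^*E_U\otimes\pi_X^*\mathcal{O}_X(-n)\to\mathcal{F}_U$ which we have just shown is surjective; hence that cone is the kernel sheaf placed in degree $-1$, i.e. $T_{\mathcal{O}_X(-n)}(\mathcal{F}_U)\simeq \mathcal{K}_U[1]$ where $\mathcal{K}_U=\ker\big(\pi^*E_U\otimes\pi_X^*\mathcal{O}_X(-n)\to\mathcal{F}_U\big)$. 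Therefore $T_n(\mathcal{F}_U)=T_{\mathcal{O}_X(-n)}(\mathcal{F}_U)[-1]\simeq\mathcal{K}_U$ is an honest coherent sheaf on $U\times X$. Finally I would check $U$-flatness of $\mathcal{K}_U$: in the short exact sequence $0\to\mathcal{K}_U\to\pi^*E_U\otimes\pi_X^*\mathcal{O}_X(-n)\to\mathcal{F}_U\to 0$ the middle term is flat over $U$ (it is $\pi^*$ of a locally free sheaf, tensored with a line bundle on $X$) and $\mathcal{F}_U$ is flat over $U$ by hypothesis, so the local criterion / long exact sequence of $\mathrm{Tor}_\bullet^{\mathcal{O}_U}(-,k(u))$ forces $\mathcal{K}_U$ to be flat over $U$ as well. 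This shows $T_n(\mathcal{F}_U)$ is a $U$-family of coherent sheaves on $X$, as claimed; one may also note the construction is compatible with base change $U'\to U$, which is what makes the statement genuinely about families.

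The main obstacle is getting the Serre vanishing / global generation uniformly in the parameter $u\in U$, rather than pointwise: a priori the threshold $n_0$ past which $H^{>0}$ vanishes could vary with $u$. The resolution is that $U$ is of finite type over $\mathbb{C}$ and $\mathcal{F}_U$ is a single coherent sheaf on the finite-type scheme $U\times X$ with $\pi$ projective, so the relative Serre vanishing theorem (EGA III, or Hartshorne III.8.8 applied to the projective morphism $\pi$) gives a single $n_0$ that works simultaneously over all of $U$; combined with cohomology-and-base-change this also yields local freeness of $E_U$ and the surjectivity of the evaluation map on the nose. Once that uniform vanishing is in hand, the remaining steps are the routine homological-algebra manipulations sketched above, and this is exactly the argument of Joyce--Song \cite{js}; I would simply present it in this streamlined form.
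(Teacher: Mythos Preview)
Your argument is correct and is essentially the Joyce--Song proof: relative Serre vanishing for the projective morphism $\pi\colon U\times X\to U$ gives, for a single $n\gg 0$, that $E_U=\pi_*(\mathcal{F}_U(n))$ is locally free with vanishing higher direct images and surjective evaluation, whence the shifted cone is the flat kernel sheaf. Note that the paper itself does not supply a proof of this statement but simply quotes it from \cite{js}; also be aware of a couple of sign slips in your write-up (the first display should read $\mathbf{R}\mathrm{Hom}_\pi(\mathcal{O}_X(-n),\mathcal{F}_U)$, and the evaluation map should involve $\mathbf{R}\pi_*(\mathcal{F}_U\otimes\pi_X^*\mathcal{O}_X(n))$, consistent with your later definition of $E_U$), though these do not affect the substance.
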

Sufficiently many compositions of Seidel-Thomas twists map sheaves to vector bundles.
\begin{definition}
For a nonzero coherent sheaf $\mathcal{F}$, the homological dimension $hd(\mathcal{F})$ is the smallest $n\geq0$ for
which there exists an exact sequence in the abelian category $coh(X)$ of coherent sheaves
\begin{equation}0\rightarrow E_{n}\rightarrow E_{n-1}\cdot\cdot\cdot\rightarrow E_{0}\rightarrow \mathcal{F}\rightarrow 0
\nonumber \end{equation}
with $\{E_{i}\}_{i=0,...,n}$ are vector bundles.
\end{definition}
\begin{theorem}(Joyce-Song, Lemma 8.4 of \cite{js})
Let $\mathcal{F}_{U}$, $n\gg0$ be the same as in Theorem \ref{seidel thomas twist lemma}, then for any $u\in U$,
we have $hd(T_{n}(\mathcal{F}_{u}))=max(hd(\mathcal{F}_{u})-1,0)$.
\end{theorem}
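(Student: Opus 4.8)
\textbf{Proof proposal for the final theorem (Joyce--Song, Lemma 8.4 of \cite{js}).}
The plan is to prove the statement $hd(T_{n}(\mathcal{F}_{u})) = \max(hd(\mathcal{F}_{u})-1,0)$ by inducting on the homological dimension of $\mathcal{F}_{u}$ and exploiting the triangulated structure of the Seidel--Thomas twist together with Serre's bound on homological dimension in terms of $\dim X$. First I would unwind the definition of $T_{n} = T_{\mathcal{O}_{X}(-n)}[-1]$: for a sheaf $\mathcal{G}$, the Fourier--Mukai kernel $K = \mathrm{cone}(\mathcal{O}_{X}(n)\boxtimes\mathcal{O}_{X}(-n)\to\mathcal{O}_{\Delta})$ produces an exact triangle
\begin{equation}
\mathrm{RHom}(\mathcal{O}_{X}(-n),\mathcal{G})\otimes\mathcal{O}_{X}(-n)\;\longrightarrow\;\mathcal{G}\;\longrightarrow\;T_{\mathcal{O}_{X}(-n)}(\mathcal{G})\;\longrightarrow\;[1]. \nonumber
\end{equation}
For $n\gg 0$ and $\mathcal{G}=\mathcal{F}_{u}$ a coherent sheaf, Serre vanishing kills the higher cohomology, so $\mathrm{RHom}(\mathcal{O}_{X}(-n),\mathcal{F}_{u})$ is concentrated in degree zero, i.e. $T_{n}(\mathcal{F}_{u})$ is again a genuine sheaf (this is already quoted as Theorem \ref{seidel thomas twist lemma}), and it sits in a short exact sequence $0\to \mathcal{F}_{u}\to T_{n}(\mathcal{F}_{u})\to H^{0}(\mathcal{F}_{u}(n))\otimes\mathcal{O}_{X}(-n)\to 0$ of coherent sheaves (after the shift).

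Next I would take a length-$\ell$ locally free resolution $0\to E_{\ell}\to\cdots\to E_{0}\to\mathcal{F}_{u}\to 0$ realizing $\ell=hd(\mathcal{F}_{u})$ and, for $n\gg 0$, arrange that each twisted syzygy is globally generated with vanishing higher cohomology. Applying the octahedral axiom to the composite $\mathcal{O}_{X}(-n)^{\oplus N}\to\mathcal{F}_{u}$ (for suitable $N=h^{0}(\mathcal{F}_{u}(n))$, realized through the resolution), one identifies $T_{n}(\mathcal{F}_{u})$ with a cone whose homological dimension drops by exactly one when $\ell\geq 1$: the key point is that $\mathcal{O}_{X}(-n)$ is locally free, so prepending it to the resolution of $\mathcal{F}_{u}$ and then taking the appropriate quotient shortens the resolution by one term. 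When $\ell=0$, i.e. $\mathcal{F}_{u}$ is already locally free, the extension $0\to\mathcal{F}_{u}\to T_{n}(\mathcal{F}_{u})\to\mathcal{O}_{X}(-n)^{\oplus N}\to 0$ is an extension of bundles by a bundle, hence $T_{n}(\mathcal{F}_{u})$ is locally free and $hd=0=\max(-1,0)$. To get the reverse inequality (that $hd$ does not drop by more than one) I would apply the inverse twist $T_{n}^{-1}$, which by the same mechanism raises homological dimension by at most one, and conclude $hd(\mathcal{F}_{u}) \leq hd(T_{n}(\mathcal{F}_{u}))+1$, pinning the value.

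The main obstacle I anticipate is the careful bookkeeping of the "$n\gg 0$" hypotheses so that everything in sight (the sheaf $\mathcal{F}_{u}$, all its syzygies $\Omega^{i}\mathcal{F}_{u}$, and the relevant $\mathcal{E}xt$ sheaves) has vanishing higher cohomology after twisting by $\mathcal{O}_{X}(n)$ \emph{simultaneously}, and — since the statement is uniform over $u\in U$ — that a single $n$ works for the whole flat family $\mathcal{F}_{U}$. This is handled by boundedness of the family and semicontinuity/Serre vanishing in families, but it is the step that requires genuine care rather than a formal triangulated-category manipulation. A secondary subtlety is checking that the cone one forms via the octahedral axiom is concentrated in a single degree (so that it is an honest sheaf with a well-defined $hd$), which again reduces to the cohomology-vanishing arithmetic. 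Once those vanishing statements are in place, the induction on $hd(\mathcal{F}_{u})$ and the symmetric argument with $T_{n}^{-1}$ close the proof cleanly.
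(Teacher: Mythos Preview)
The paper does not actually prove this theorem; it is merely quoted from Joyce--Song \cite{js} with no argument supplied, so there is no paper-side proof to compare against. That said, your proposal has a genuine error worth flagging.

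Your short exact sequence is backwards. Unwinding the definitions: the twist functor sits in the triangle
\[
\mathrm{RHom}(\mathcal{O}_{X}(-n),\mathcal{F}_{u})\otimes\mathcal{O}_{X}(-n)\;\longrightarrow\;\mathcal{F}_{u}\;\longrightarrow\;T_{\mathcal{O}_{X}(-n)}(\mathcal{F}_{u})\;\longrightarrow\;[1],
\]
and since $T_{n}=T_{\mathcal{O}_{X}(-n)}[-1]$, rotating gives
\[
T_{n}(\mathcal{F}_{u})\;\longrightarrow\;H^{0}(\mathcal{F}_{u}(n))\otimes\mathcal{O}_{X}(-n)\;\longrightarrow\;\mathcal{F}_{u}\;\longrightarrow\;T_{n}(\mathcal{F}_{u})[1].
\]
For $n\gg 0$ the middle map is the evaluation map, which is surjective (global generation), so $T_{n}(\mathcal{F}_{u})$ is its \emph{kernel}:
\[
0\;\longrightarrow\;T_{n}(\mathcal{F}_{u})\;\longrightarrow\;\mathcal{O}_{X}(-n)^{\oplus N}\;\longrightarrow\;\mathcal{F}_{u}\;\longrightarrow\;0,
\]
not the extension $0\to\mathcal{F}_{u}\to T_{n}(\mathcal{F}_{u})\to\mathcal{O}_{X}(-n)^{\oplus N}\to 0$ that you wrote. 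This matters: with the wrong sequence your case $\ell=0$ argument is fine by accident, but the inductive mechanism (``prepending $\mathcal{O}_{X}(-n)$ and taking a quotient shortens the resolution'') is not what is happening.

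With the correct sequence the proof is in fact much cleaner than what you sketched: $T_{n}(\mathcal{F}_{u})$ is literally a first syzygy of $\mathcal{F}_{u}$ relative to a locally free surjection. Applying $\mathcal{H}om(-,\mathcal{O}_{X})$ and using that $\mathcal{E}xt^{i}(\mathcal{O}_{X}(-n)^{\oplus N},\mathcal{O}_{X})=0$ for $i\geq 1$ gives $\mathcal{E}xt^{i}(T_{n}(\mathcal{F}_{u}),\mathcal{O}_{X})\cong\mathcal{E}xt^{i+1}(\mathcal{F}_{u},\mathcal{O}_{X})$ for all $i\geq 1$, which immediately yields $hd(T_{n}(\mathcal{F}_{u}))=\max(hd(\mathcal{F}_{u})-1,0)$. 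No octahedral axiom, no inverse-twist trick, and no induction is needed; the only genuine work is the family-uniform ``$n\gg 0$'' bookkeeping you already identified, which is handled by boundedness of $\mathcal{F}_{U}$.
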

\begin{corollary}(Joyce-Song, Corollary 8.5 of \cite{js})
Let $U$ be a finite type $\mathbb{C}$-scheme and $\mathcal{F}_{U}$ is a $U$-family of coherent sheaves on $X$.
Then there exists $n_{1},...n_{m}\gg0$ such that for $T_{n_{m}}\circ\cdot\cdot\cdot\circ T_{n_{1}}(\mathcal{F}_{U})$
is a $U$-family of vector bundles on $X$.
\end{corollary}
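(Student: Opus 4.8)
The statement to be proved is the final Corollary 10.?? (Joyce--Song, Corollary 8.5 of \cite{js}): given a finite type $\mathbb{C}$-scheme $U$ and a $U$-family $\mathcal{F}_{U}$ of coherent sheaves on $X$, there exist $n_{1},\dots,n_{m}\gg 0$ such that $T_{n_{m}}\circ\cdots\circ T_{n_{1}}(\mathcal{F}_{U})$ is a $U$-family of vector bundles on $X$.

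The plan is to argue by induction on the generic homological dimension of the family, using the two preceding results as the inductive engine. First I would set $d=\max_{u\in U}hd(\mathcal{F}_{u})$; since $X$ is smooth projective of dimension $m$, homological dimensions are bounded ($hd(\mathcal{F}_{u})\le m$ for all $u$), so $d$ is a well-defined finite number, and $d=0$ precisely when $\mathcal{F}_{U}$ is already a family of vector bundles. If $d=0$ there is nothing to do. If $d\ge 1$, I would invoke Theorem 10.?? (Joyce--Song, Lemma 8.2) to choose $n_{1}\gg 0$ so that $T_{n_{1}}(\mathcal{F}_{U})$ is again a $U$-family of coherent sheaves on $X$, and then Theorem 10.?? (Joyce--Song, Lemma 8.4) to conclude that for each $u\in U$ one has $hd\bigl(T_{n_{1}}(\mathcal{F}_{u})\bigr)=\max\bigl(hd(\mathcal{F}_{u})-1,0\bigr)$. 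Hence $\max_{u\in U}hd\bigl(T_{n_{1}}(\mathcal{F}_{u})\bigr)\le d-1$, so the generic homological dimension strictly drops. Iterating at most $d$ times — applying the same two lemmas to the successive families $T_{n_{1}}(\mathcal{F}_{U})$, $T_{n_{2}}\circ T_{n_{1}}(\mathcal{F}_{U})$, and so on, each time choosing the new twisting parameter large enough — produces $n_{1},\dots,n_{m}$ with $m\le d$ such that $T_{n_{m}}\circ\cdots\circ T_{n_{1}}(\mathcal{F}_{U})$ has all fibres of homological dimension $0$, i.e. is a $U$-family of vector bundles.

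The one technical point that needs care, rather than being automatic, is the uniformity of the choice "$n\gg 0$": Theorem 10.?? and Theorem 10.?? are stated for a fixed family over a fixed finite type base, so at each stage of the induction the threshold for the twisting parameter depends on the current family, and I must make sure the current family is still of finite type over $U$ and still a genuine $U$-family (flat over $U$) before re-applying the lemmas. Flatness and finite type are preserved under $T_{n}$ by the cited Theorem 10.?? itself, so this is consistent and the induction closes. The only genuinely substantive input — that Fourier--Mukai twisting by $\mathcal{O}_{X}(-n)$ for $n$ large sends a flat family of sheaves to a flat family of sheaves and drops homological dimension fibrewise by one — is exactly what the two preceding theorems supply, so beyond organizing the induction there is no real obstacle; the boundedness of $hd$ on $X$ guarantees termination after finitely many steps.
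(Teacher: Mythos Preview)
Your argument is correct and is exactly the intended deduction: bound $d=\max_{u\in U}hd(\mathcal{F}_{u})\le \dim X$, then repeatedly apply the two preceding results (Lemma 8.2 and Lemma 8.4 of \cite{js}) to drop $d$ by one at each step until $d=0$. The paper itself does not supply a proof of this corollary at all --- it is simply quoted from Joyce--Song --- so there is nothing further to compare; your write-up is the natural proof and matches the obvious reading of how the corollary follows from the two lemmas just cited.
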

Meanwhile, Seidel-Thomas twists are auto-equivalences of derived category $D(X)$, they preserve determinant line bundles (if exists)
of corresponding moduli spaces.
\begin{corollary}(Joyce-Song \cite{js})\label{st preserves L}
Given a coarse moduli space $\mathcal{M}_{X}$ of simple sheaves with fixed Chern classes, we choose sufficiently large integers
$n_{1},...n_{m}\gg0$ such that $\Psi\triangleq T_{n_{m}}\circ\cdot\cdot\cdot\circ T_{n_{1}}$ identifies $\mathcal{M}_{X}$
with a coarse moduli space $\mathfrak{M}^{bdl}_{X}$ of simple holomorphic bundles. Then
\begin{equation}\Psi^{*}\mathcal{L}_{\mathfrak{M}^{bdl}_{X}}\cong\mathcal{L}_{\mathcal{M}_{X}}, \nonumber \end{equation}
where $\mathcal{L}_{\bullet}$ is the determinant line bundle of the corresponding moduli space.

Moreover, if $X$ is a $CY_{2n}$, $\mathcal{L}_{\mathfrak{M}^{bdl}_{X}}$ and $\mathcal{L}_{\mathcal{M}_{X}}$ are endowed with
non-degenerate quadratic forms from Serre duality pairing. The isomorphism $\Psi^{*}\mathcal{L}_{\mathfrak{M}^{bdl}_{X}}
\cong\mathcal{L}_{\mathcal{M}_{X}}$ also preserves
the quadratic forms.
\end{corollary}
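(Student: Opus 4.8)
The plan is to extract the one structural fact that drives everything: each Seidel--Thomas twist $T_n=T_{\mathcal{O}_X(-n)}[-1]$ is an exact \emph{autoequivalence} of $D^b(X)$ realised by a Fourier--Mukai kernel $P$, hence so is the composite $\Psi=T_{n_m}\circ\cdots\circ T_{n_1}$. Since the determinant line bundle of a moduli space of simple objects is manufactured out of the relative derived endomorphism complex, and an autoequivalence carries that complex to a canonically isomorphic one, while Serre duality on a Calabi--Yau category is transported along the autoequivalence's full-faithfulness isomorphism, both $\mathcal{L}$ and its quadratic form are preserved. Concretely, let $\mathfrak{F}$ be a universal sheaf on $\mathcal{M}_X\times X$ — working locally on $\mathcal{M}_X$, or with the natural $\mathbb{G}_m$-gerbe, if no global one exists — and let $\pi\colon\mathcal{M}_X\times X\to\mathcal{M}_X$ be the projection. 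Set $E^{\bullet}_{\mathcal{M}_X}\triangleq R\pi_{*}R\mathcal{H}om(\mathfrak{F},\mathfrak{F})$, a perfect complex on $\mathcal{M}_X$ with fibrewise cohomology $Ext^{*}(\mathcal{F},\mathcal{F})$, and let $\mathcal{L}_{\mathcal{M}_X}$ be its determinant line bundle, normalised so that $\mathcal{L}_{\mathbb{C}}|_{E}\cong(\wedge^{top}Ext^{even})^{-1}\otimes\wedge^{top}Ext^{odd}$ as in the introduction. Because the $\mathbb{G}_m$-gerbe acts on $R\mathcal{H}om(\mathfrak{F},\mathfrak{F})$ with total weight $(+1)+(-1)=0$, the complex $E^{\bullet}_{\mathcal{M}_X}$, and hence $\mathcal{L}_{\mathcal{M}_X}$, descends to the coarse space and is independent of the choice of (twisted) universal family up to canonical isomorphism; the same recipe on $\mathfrak{M}^{bdl}_X$ produces $\mathcal{L}_{\mathfrak{M}^{bdl}_X}$.

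The core step is then the following. Let $\Psi_{\mathcal{M}_X}$ denote the relative Fourier--Mukai transform (the kernel $P$ acting along the $X$-factor of $\mathcal{M}_X\times X$) and apply it to $\mathfrak{F}$. By Theorem \ref{seidel thomas twist lemma} and the corollaries quoted above, $\Psi_{\mathcal{M}_X}(\mathfrak{F})$ is again an $\mathcal{M}_X$-family, and for $n_1,\dots,n_m\gg0$ a family of \emph{bundles} representing the moduli functor of $\mathfrak{M}^{bdl}_X$ along the identification $\Psi\colon\mathcal{M}_X\cong\mathfrak{M}^{bdl}_X$; in particular $\Psi_{\mathcal{M}_X}(\mathfrak{F})$ differs from the pullback of the (twisted) universal bundle on $\mathfrak{M}^{bdl}_X$ only by tensoring with a line bundle pulled back from $\mathcal{M}_X$, which $R\mathcal{H}om$ does not see. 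As $\Psi$ is an equivalence it is fully faithful, and in families base change and the projection formula upgrade full faithfulness to a canonical isomorphism in $D^b(\mathcal{M}_X)$,
\begin{equation}
R\pi_{*}R\mathcal{H}om(\mathfrak{F},\mathfrak{F})\ \cong\ R\pi_{*}R\mathcal{H}om\big(\Psi_{\mathcal{M}_X}(\mathfrak{F}),\Psi_{\mathcal{M}_X}(\mathfrak{F})\big).
\nonumber
\end{equation}
Taking determinant line bundles of both sides gives $\mathcal{L}_{\mathcal{M}_X}\cong\Psi^{*}\mathcal{L}_{\mathfrak{M}^{bdl}_X}$, which is the first assertion.

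For the quadratic form, assume $X$ is a $CY_{2n}$, so the relative dualising sheaf of $\pi$ is trivial and relative Serre--Grothendieck duality furnishes a canonical self-duality $E^{\bullet}_{\mathcal{M}_X}\cong (E^{\bullet}_{\mathcal{M}_X})^{\vee}[-2n]$; the induced isomorphism $\mathcal{L}_{\mathcal{M}_X}\cong\mathcal{L}_{\mathcal{M}_X}^{-1}$ is exactly $Q_{Serre}$, restricting fibrewise to the perfect pairings $Ext^{i}(\mathcal{F},\mathcal{F})\times Ext^{2n-i}(\mathcal{F},\mathcal{F})\to\mathbb{C}$. Now the Serre duality isomorphism $R\mathcal{H}om(A,B)^{\vee}\cong R\mathcal{H}om(B,A)[2n]$ is natural in $A$ and $B$, and the full-faithfulness isomorphism of $\Psi$ sits in a commuting square with it; sheafifying this compatibility over $\mathcal{M}_X$ shows that the isomorphism of complexes displayed above intertwines the two relative Serre-duality self-dualities. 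Passing to determinants, the isomorphism $\Psi^{*}\mathcal{L}_{\mathfrak{M}^{bdl}_X}\cong\mathcal{L}_{\mathcal{M}_X}$ carries $Q_{Serre}$ to $Q_{Serre}$, completing the proof.

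The step I expect to be the main obstacle is promoting the pointwise statements — that $\Psi$ is an equivalence, and that its full-faithfulness isomorphism commutes with the Serre duality pairing — to their relative versions over the base $\mathcal{M}_X$, together with the bookkeeping that makes $\mathcal{L}$ and $Q_{Serre}$ genuinely well defined on the \emph{coarse} moduli space in the absence of a global universal sheaf (the weight-zero action of the $\mathbb{G}_m$-gerbe) and that identifies $\Psi_{\mathcal{M}_X}(\mathfrak{F})$ with the universal bundle on $\mathfrak{M}^{bdl}_X$ up to an immaterial line bundle from the base. All of this is standard Fourier--Mukai and moduli formalism, so no new input is needed beyond the twist lemmas of Joyce--Song already quoted.
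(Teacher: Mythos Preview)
The paper does not supply its own proof of this corollary; it is stated with the attribution ``(Joyce--Song \cite{js})'' and left at that. Your argument is correct and is precisely the standard one: the composite $\Psi$ of Seidel--Thomas twists is an exact autoequivalence of $D^{b}(X)$, so its relative Fourier--Mukai version identifies the perfect complexes $R\pi_{*}R\mathcal{H}om(\mathfrak{F},\mathfrak{F})$ on the two moduli spaces, whence the determinant line bundles agree; the Serre duality pairing is natural in both arguments and therefore is intertwined by the full-faithfulness isomorphism, giving the compatibility of quadratic forms in the $CY_{2n}$ case. The caveats you flag (descent through the $\mathbb{G}_{m}$-gerbe via the weight-zero observation, and the identification of $\Psi_{\mathcal{M}_X}(\mathfrak{F})$ with the universal bundle up to a line bundle from the base) are exactly the right ones and are handled by standard moduli bookkeeping, so there is no gap.
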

By successive Seidel-Thomas twists, we obtain an isomorphism between $\mathcal{M}_{c}$ and some component(s) of a moduli space of simple holomorphic bundles. We also have
\begin{equation}Ext^{i}(\mathcal{F},\mathcal{F})\cong Ext^{i}(E_{b},E_{b}), \textrm{ } i=1,2, \nonumber\end{equation}
\begin{equation}\label{seidel thomas twist}\kappa^{-1}(0)\cong \kappa'^{-1}(0), \end{equation}
where $\mathcal{F}\in\mathcal{M}_{c}$, $\kappa$ is a Kuranishi map at $\mathcal{F}$ and
$E_{b}$ is the corresponding holomorphic bundle under Seidel-Thomas twists with $\kappa'$ to be a Kuranishi map at $E_{b}$.

One possible way to prove the vanishing result $\kappa_{+}=0\Rightarrow\kappa=0$ for $\mathcal{M}_{c}$ in Assumption \ref{assumption on gluing} is to use Seidel-Thomas twists transform stable sheaves to holomorphic bundles and then prove it using gauge theory. We will make it work in the next subsection.

\subsection{Comparisons of Borisov-Joyce's virtual fundamental classes with $DT_{4}$ virtual cycles }
We first recall local Kuranishi models used by Borisov and Joyce in their constructions of virtual fundamental classes for moduli spaces of stable sheaves on Calabi-Yau 4-folds.
\begin{theorem}(Brav, Bussi and Joyce \cite{bbj} Corollary 5.20)  ${}$ \\
Suppose $X$ is a Calabi-Yau 4-fold over a field $\mathbb{K}$, and $\mathcal{M}$ is a classical moduli $\mathbb{K}$-scheme of simple coherent sheaves, or simple complexes of coherent sheaves, on $X$. Then for each $[F]\in\mathcal{M}$, there exist a smooth $\mathbb{K}$-scheme
$U$ with $dim U=dim Ext^{1}(F,F)$, a vector bundle $E\rightarrow U$ with $rank E=dim Ext^{2}(F,F)$, a non-degenerate quadratic form $Q$ on $E$, a section $s\in H^{0}(E)$ with $Q(s,s)=0$, and an isomorphism from $s^{-1}(0)\subseteq U$ to a Zariski open neighbourhood of $[F]$ in $\mathcal{M}$.
\end{theorem}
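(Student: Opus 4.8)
The plan is to deduce the statement from the Brav--Bussi--Joyce ``Darboux theorem'' (BBJ, Corollary 5.20, recalled above) combined with the gauge-theoretic local model for $\mathcal{M}_c^{bdl}$ developed in Section 3, transported via Seidel--Thomas twists. First I would reduce to the bundle case: given $[\mathcal{F}]\in\mathcal{M}_c$, successive Seidel--Thomas twists $\Psi=T_{n_m}\circ\cdots\circ T_{n_1}$ (Corollary 8.5 of \cite{js}, recalled in the appendix) identify an open neighbourhood of $[\mathcal{F}]$ in $\mathcal{M}_c$ with an open neighbourhood of the corresponding holomorphic bundle $[E_b]$ in a moduli space of simple holomorphic bundles, and by (\ref{seidel thomas twist}) we have $Ext^i(\mathcal{F},\mathcal{F})\cong Ext^i(E_b,E_b)$ for $i=1,2$ together with $\kappa^{-1}(0)\cong\kappa'^{-1}(0)$, where $\kappa'$ is a Kuranishi map at $E_b$. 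By Corollary \ref{st preserves L} these identifications are compatible with the Serre duality quadratic form. So it suffices to produce the data $(U,E,Q,s)$ near a simple holomorphic bundle $E_b$.

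For the bundle case I would take $U$ to be a small open ball $B_\epsilon\subseteq H^{0,1}(X,EndE_b)\cong Ext^1(E_b,E_b)$, so $U$ is smooth of the required dimension; take $E\to U$ to be the trivial bundle with fibre $H^{0,2}(X,EndE_b)\cong Ext^2(E_b,E_b)$, whose rank is $\dim Ext^2(E_b,E_b)$; take $Q$ the (constant) Serre duality quadratic form on the fibre, which is non-degenerate; and take $s=\tilde{\tilde{\kappa}}$, the Kuranishi map of $\mathcal{M}_c^{bdl}$ at $\overline{\partial}_{E_b}$ constructed in Theorem \ref{Kuranishi str of cpx ASD thm} (equivalently (\ref{kappa double tilta})), which is a holomorphic (here real-analytic, but over $\mathbb{K}=\mathbb{C}$ one checks it is in fact complex-analytic in this gauge) section. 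By Proposition \ref{QA intesect P=0 equals NA} and Definition \ref{real kuranishi map}, $s^{-1}(0)=\tilde{\tilde{\kappa}}^{-1}(0)$ is isomorphic to a neighbourhood of $[E_b]$ in the moduli space, hence to a Zariski (analytic) open neighbourhood of $[\mathcal{F}]$ in $\mathcal{M}$. The remaining point is the condition $Q(s,s)=0$: this is precisely the statement, visible from Chern--Weil theory as in (\ref{plus norm equal minus norm}), that the component of $F^{0,2}\wedge F^{0,2}$ pairing against $\Omega$ is controlled by $ch_2$, i.e. the ``self-dual'' and ``anti-self-dual'' halves of the curvature have equal $\Omega$-norms along the moduli space; concretely $Q(\tilde{\tilde{\kappa}}(\alpha),\tilde{\tilde{\kappa}}(\alpha))$ is the $\mathbb{H}^{0,2}$-component of a curvature square and vanishes because $\overline{\partial}_{E_b}\alpha+\alpha\wedge\alpha$ satisfies the Bianchi-type identity exploited in Lemma \ref{image of lamda}. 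Alternatively, and more robustly for $\mathbb{K}$ general, this isotropy is exactly the content of the $(-2)$-shifted symplectic structure on the derived moduli stack (PTVV \cite{ptvv}) which is what BBJ use; so one may instead simply invoke BBJ Corollary 5.20 directly and note that their $(U,E,Q,s)$ can be arranged, after the Seidel--Thomas identification, to match the gauge-theoretic chart — but since the statement to be proved \emph{is} BBJ's corollary specialized to Gieseker moduli of stable sheaves, the cleanest route is to quote it.

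The main obstacle I anticipate is bookkeeping rather than conceptual: matching the \emph{scheme} structures (possibly non-reduced) on the two sides of the Seidel--Thomas identification, and checking that the quadratic form $Q$ and the isotropy $Q(s,s)=0$ survive the twist with the correct signs — this is where Corollary \ref{st preserves L} does the real work, and one must be careful that the Kuranishi maps $\kappa$ and $\kappa'$ are identified as maps of quadratic spaces, not merely that their zero loci agree as sets. A secondary subtlety is the passage from the real-analytic category (in which Section 3 works, because $*_4$ is $\mathbb{R}$-linear) to the algebraic/$\mathbb{K}$-scheme category required here; over $\mathbb{C}$ one resolves this by observing that in the holomorphic gauge of (\ref{kappa}) the Kuranishi map $\kappa$ is genuinely complex-analytic, and a complex-analytic germ with the stated properties algebraizes by Artin approximation, or one simply cites BBJ whose proof is already scheme-theoretic. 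I do not expect any of the estimates in Lemmas \ref{QA iso to H1}--\ref{vanishing of prW2} to need revisiting; they are used as black boxes to guarantee $s^{-1}(0)$ has the right underlying analytic space.
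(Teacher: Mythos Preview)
There is a genuine gap: you chose the wrong Kuranishi map. You propose $s=\tilde{\tilde{\kappa}}$ (the $DT_4$-induced map (\ref{kappa double tilta})), but the paper shows explicitly in the Proposition immediately after Theorem~\ref{darboux thm using gauge theory} that for this map one only has $Q(\tilde{\tilde{\kappa}},\tilde{\tilde{\kappa}})\geq 0$, not $=0$. The computation gives
\[
-8\pi^{2}\int ch_{2}(E)\wedge\Omega
=\int Tr\big(\tilde{\tilde{\kappa}}(\alpha)\wedge\tilde{\tilde{\kappa}}(\alpha)\big)\wedge\Omega
-2\int \Big\|P_{\overline{\partial}_{A}^{*}}\big(q^{-1}(\alpha)\wedge q^{-1}(\alpha)\big)\Big\|_{h}^{2}\wedge\Omega\wedge\overline{\Omega},
\]
and the extra norm term is nonnegative but need not vanish. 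Your claimed justification via the Bianchi-type identity of Lemma~\ref{image of lamda} does not control this term. Relatedly, $\tilde{\tilde{\kappa}}$ is only real-analytic (the operator $*_{4}$ is complex anti-linear), so your parenthetical that ``one checks it is in fact complex-analytic in this gauge'' is false; this is precisely why the paper treats $\tilde{\tilde{\kappa}}$ separately and only obtains the weaker inequality.

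The paper's proof (Theorem~\ref{darboux thm using gauge theory}) follows your Seidel--Thomas reduction to bundles, but then takes $s=\kappa$, the \emph{holomorphic} Kuranishi map (\ref{kappa}) with the standard gauge fixing $\overline{\partial}_{A}^{*}a''=0$. With that choice, $F^{0,2}(\overline{\partial}_A+a'')=\Delta(\alpha)+\kappa(\alpha)$ where $\Delta(\alpha)\in\overline{\partial}_A^{*}\Omega^{0,3}$, and the Chern--Weil identity $-8\pi^2\int ch_2(E)\wedge\Omega=\int Tr(F^{0,2}\wedge F^{0,2})\wedge\Omega$ together with Hodge orthogonality kills the cross terms $\int Tr(\kappa\wedge\Delta)\wedge\Omega$ and $\int Tr(\Delta\wedge\Delta)\wedge\Omega$, leaving exactly $Q(\kappa,\kappa)=0$ since $ch_2(E)\in H^{2,2}$. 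The fix to your argument is simply to swap $\tilde{\tilde{\kappa}}$ for $\kappa$ (or $\tilde{\kappa}$, see the Remark after the theorem); everything else you wrote then goes through. Your fallback of ``simply quoting BBJ'' is circular here, since the goal of this subsection is to give an independent gauge-theoretic proof of the analytic case.
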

Their original proof is based on the theory of cyclic homology and derived algebraic geometry.

As mentioned before, Borisov and Joyce \cite{bj} used the above local 'Darboux charts', the machinery of homotopical algebra and $C^{\infty}$-algebraic geometry to obtain a compact derived $C^{\infty}$-scheme with the same underlying topological structure as the Gieseker moduli space of stable sheaves. In our language, this $C^{\infty}$-scheme is called a generalized $DT_{4}$ moduli space ($C^{\infty}$-scheme version, see Definition \ref{generalized DT4}). Furthermore, they defined its virtual fundamental class.

In the next theorem, we give a gauge theoretical proof of an analytic version of the above local 'Darboux theorem' in the case when $\mathbb{K}=\mathbb{C}$ and $\mathcal{M}=\mathcal{M}_{c}$ is a Gieseker moduli space of stable sheaves. We then introduce a weaker condition on their local 'Darboux charts' to include local models induced from $DT_{4}$ equations (the map $\tilde{\tilde{\kappa}}$ in Theorem \ref{Kuranishi str of cpx ASD thm}). It turns out that the weaker condition is already sufficient for the gluing requirement in Borisov and Joyce's work \cite{bj}.
\begin{theorem}\label{darboux thm using gauge theory}
Let $\mathcal{M}_{c}$ be the Gieseker moduli space of stable sheaves with fixed Chern character $c$ on a compact Calabi-Yau 4-fold $X$ with $Hol(X)=SU(4)$. \\
Then for any closed point $\mathcal{F}\in\mathcal{M}_{c}$, there exists an analytic neighborhood $U_{\mathcal{F}}\subseteq\mathcal{M}_{c}$, a holomorphic map near the origin
\begin{equation}\kappa: Ext^{1}(\mathcal{F},\mathcal{F})\rightarrow Ext^{2}(\mathcal{F},\mathcal{F})   \nonumber\end{equation}
such that $Q(\kappa,\kappa)=0$ and $\kappa^{-1}(0)\cong U_{\mathcal{F}}$ as complex analytic spaces possibly with non-reduced structures,
where $Q$ is the Serre duality pairing on $Ext^{2}(\mathcal{F},\mathcal{F})$.
\end{theorem}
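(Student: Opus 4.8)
The plan is to reduce the statement to the bundle case via Seidel--Thomas twists and then invoke the local Kuranishi analysis of Section~3 together with the $*_{4}$ operator on the harmonic space. First I would use Corollary~\ref{st preserves L} to choose sufficiently large integers $n_{1},\dots,n_{m}$ so that $\Psi\triangleq T_{n_{m}}\circ\cdots\circ T_{n_{1}}$ identifies an analytic neighbourhood of $\mathcal{F}\in\mathcal{M}_{c}$ with an analytic neighbourhood of a simple holomorphic bundle $E_{b}$ in a moduli space of simple holomorphic bundles, and so that $\Psi$ carries the pair $(Ext^{*}(\mathcal{F},\mathcal{F}),Q_{Serre})$ to $(Ext^{*}(E_{b},E_{b}),Q_{Serre})$ preserving the Serre duality quadratic forms. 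By (\ref{seidel thomas twist}) we then have $Ext^{i}(\mathcal{F},\mathcal{F})\cong Ext^{i}(E_{b},E_{b})$ for $i=1,2$ and an identification of Kuranishi maps $\kappa^{-1}(0)\cong\kappa'^{-1}(0)$. Thus it suffices to produce, at the holomorphic bundle $E_{b}$, a holomorphic Kuranishi map $\kappa': H^{0,1}(X,EndE_{b})\to H^{0,2}(X,EndE_{b})$ with $Q_{Serre}(\kappa',\kappa')=0$ whose zero locus is an analytic neighbourhood of $E_{b}$.

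For the bundle $E_{b}$, the map to use is $\tilde{\tilde{\kappa}}$ from Theorem~\ref{Kuranishi str of cpx ASD thm}, i.e. $\tilde{\tilde{\kappa}}(\alpha)=\mathbb{H}^{0,2}\big(q^{-1}(\alpha)\wedge q^{-1}(\alpha)\big)$ where $q$ is the gauge-fixing map of Lemma~\ref{QA iso to H1}; by Proposition~\ref{QA intesect P=0 equals NA} its zero locus is locally isomorphic to $\mathcal{M}_{c}^{bdl}$ near $E_{b}$ as analytic spaces possibly with non-reduced structure. The remaining point is the identity $Q_{Serre}(\tilde{\tilde{\kappa}}(\alpha),\tilde{\tilde{\kappa}}(\alpha))=0$. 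This should follow from Chern--Weil theory exactly as in (\ref{plus norm equal minus norm}): writing $F^{0,2}$ for the curvature of $\overline{\partial}_{E_{b}}+q^{-1}(\alpha)$, the harmonic projection of $F^{0,2}$ is $\tilde{\tilde{\kappa}}(\alpha)$, and the integral $\int Tr(F^{0,2}\wedge F^{0,2})\wedge\Omega = -8\pi^{2}\int ch_{2}(E_{b})\wedge\Omega$ is a topological constant; on the other hand, the Hodge-theoretic quadratic form $Q_{Serre}$ on $H^{0,2}$ is exactly $\alpha\mapsto\int Tr(\alpha\wedge\alpha)\wedge\Omega$ evaluated on the harmonic representative, and the $\overline{\partial}_{E_{b}}$-exact part of $F^{0,2}$ pairs trivially with the harmonic part under $\int(\cdot\wedge\cdot)\wedge\Omega$ (integration by parts, $\overline{\partial}\Omega=0$). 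Comparing with the fact that on the Kuranishi slice $F^{0,2}$ varies holomorphically in $\alpha$ and has fixed topological self-intersection, one concludes the harmonic part has vanishing self-pairing, i.e. $Q_{Serre}(\tilde{\tilde{\kappa}},\tilde{\tilde{\kappa}})=0$. (Equivalently: $Q_{Serre}(\tilde{\tilde{\kappa}}(\alpha),\tilde{\tilde{\kappa}}(\alpha))$ is the harmonic-squared term, which differs from the topologically constant $\int Tr(F^{0,2}\wedge F^{0,2})\wedge\Omega$ by exact terms, forcing the non-constant holomorphic function $\alpha\mapsto Q_{Serre}(\tilde{\tilde{\kappa}}(\alpha),\tilde{\tilde{\kappa}}(\alpha))$, which vanishes at $\alpha=0$, to be identically zero.)

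Transporting $\tilde{\tilde{\kappa}}$ back through $\Psi$ gives the desired $\kappa$ on $Ext^{1}(\mathcal{F},\mathcal{F})$ with $Q(\kappa,\kappa)=0$ and $\kappa^{-1}(0)\cong U_{\mathcal{F}}$. I expect the main obstacle to be the last step of making the vanishing $Q_{Serre}(\tilde{\tilde{\kappa}},\tilde{\tilde{\kappa}})=0$ rigorous: one must carefully track that the gauge-fixing map $q$ (which is only real-analytic, incorporating the $*_{4}$-twisted term) still produces a curvature whose harmonic $(0,2)$-component is holomorphic in $\alpha$ in the correct sense, and that the Chern--Weil identity is applied to the honest curvature of a connection on the slice rather than to a formal expression; this requires re-running the integration-by-parts argument of Lemma~\ref{C2 condition} and (\ref{plus norm equal minus norm}) with the slice connection $\overline{\partial}_{E_{b}}+q^{-1}(\alpha)+\big(q^{-1}(\alpha)\big)^{*}$ and checking the cross terms with $\Omega$ vanish. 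A secondary technical point is ensuring the isomorphism $\kappa^{-1}(0)\cong U_{\mathcal{F}}$ is genuinely one of complex-analytic (not merely real-analytic) spaces, which is why one works with the holomorphic models $\kappa,\widetilde{\kappa}$ from the Appendix rather than the real-analytic $\tilde{\tilde{\kappa}}$ directly — one shows $\tilde{\tilde{\kappa}}^{-1}(0)\cong\widetilde{\kappa}^{-1}(0)\cong\kappa^{-1}(0)$ by complex gauge transformations, and the quadratic vanishing is a property of the common zero locus, hence can be read off from whichever model is most convenient.
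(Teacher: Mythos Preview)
Your overall architecture---Seidel--Thomas twists to reduce to bundles, then Chern--Weil plus Hodge orthogonality to force the Serre pairing of the Kuranishi map to vanish---is exactly the paper's approach. The genuine gap is your choice of Kuranishi map: you take $\tilde{\tilde{\kappa}}$ from Theorem~\ref{Kuranishi str of cpx ASD thm}, and for that map one only gets $Q(\tilde{\tilde{\kappa}},\tilde{\tilde{\kappa}})\geq0$, not $=0$. Concretely, on the $Q_{A}$ slice the curvature decomposes as
\[
F^{0,2}=\tilde{\tilde{\kappa}}(\alpha)+P_{\overline{\partial}_{A}^{*}}\bigl(q^{-1}(\alpha)\wedge q^{-1}(\alpha)\bigr)-*_{4}P_{\overline{\partial}_{A}^{*}}\bigl(q^{-1}(\alpha)\wedge q^{-1}(\alpha)\bigr),
\]
so $F^{0,2}$ has a piece in $Im(\overline{\partial}_{A}^{*})$ \emph{and} a piece in $Im(\overline{\partial}_{A})=*_{4}Im(\overline{\partial}_{A}^{*})$. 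While each of these is $Q$-orthogonal to the harmonic part and to itself, their mutual cross term under $\int Tr(\cdot\wedge\cdot)\wedge\Omega$ is $-2\|P_{\overline{\partial}_{A}^{*}}(q^{-1}(\alpha)\wedge q^{-1}(\alpha))\|^{2}$, which is exactly the obstruction. Plugging into Chern--Weil gives $0=Q(\tilde{\tilde{\kappa}},\tilde{\tilde{\kappa}})-2\|P_{\overline{\partial}_{A}^{*}}(\cdot)\|^{2}$, hence only the inequality. Your ``cross terms vanish'' claim is therefore false for this slice, and your holomorphicity worry about $q$ is not a technicality but a symptom of the same issue: $\tilde{\tilde{\kappa}}$ is genuinely only real-analytic.

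The fix is to use the \emph{holomorphic} Kuranishi map $\kappa$ of (\ref{kappa}) with gauge fixing $\overline{\partial}_{A}^{*}a''=0$. On that slice $F^{0,2}=\kappa(\alpha)+\Delta(\alpha)$ with $\Delta(\alpha)\in Im(\overline{\partial}_{A}^{*})$ \emph{only}; then the harmonic--$\Delta$ cross term and the $\Delta$--$\Delta$ term both vanish by Hodge orthogonality exactly as you argued, and $ch_{2}(E)\in H^{2,2}$ forces $Q(\kappa,\kappa)=0$ on the nose. This is what the paper does; your last paragraph almost recognises this, but the quadratic vanishing is \emph{not} a property of the common zero locus that can be read off from any model---it is a pointwise identity on the ambient $Ext^{1}$, and it holds for $\kappa$ (and $\widetilde{\kappa}$) but fails for $\tilde{\tilde{\kappa}}$.
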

\begin{proof}
We use Seidel-Thomas twists \cite{js},\cite{st} transfer the problem to a problem on moduli spaces of holomorphic bundles. We take a connection on $E$ with curvature $F$. By Chern-Weil theory,
\begin{equation}\label{chern weil formula}
-8\pi^{2}\int ch_{2}(E)\wedge\Omega=\int Tr(F^{0,2}\wedge F^{0,2})\wedge\Omega.  \end{equation}
We take the $(0,1)$ part of the connection to be $\overline{\partial}_{A}+a''$, where $\overline{\partial}_{A}^{2}=0$. Then
\begin{equation}F^{0,2}(\overline{\partial}_{A}+a'')=\overline{\partial}_{A}a''+P_{\overline{\partial}_{A}}(a''\wedge a'')
+P_{\overline{\partial}_{A}^{*}}(a''\wedge a'')+\mathbb{H}^{0,2}(a''\wedge a''). \nonumber\end{equation}
To describe Kuranishi theory, we take $a''$ satisfies $\overline{\partial}_{A}a''+P_{\overline{\partial}_{A}}(a''\wedge a'')=0$. After gauge fixing $\overline{\partial}_{A}^{*}a''=0$, we have
\begin{equation}F^{0,2}(\overline{\partial}_{A}+a'')=P_{\overline{\partial}_{A}^{*}}(g^{-1}(\alpha)\wedge g^{-1}(\alpha))+\mathbb{H}^{0,2}(g^{-1}(\alpha)\wedge g^{-1}(\alpha)), \nonumber\end{equation}
where $\alpha\triangleq g(a'')$, $g(a'')\triangleq a''+\overline{\partial}_{A}^{-1}P_{\overline{\partial}_{A}}(a''\wedge a'')$.
Note that $\alpha\in H^{0,1}(X,EndE)$ and $\kappa(\alpha)\triangleq\mathbb{H}^{0,2}(g^{-1}(\alpha)\wedge g^{-1}(\alpha))$ is a Kuranishi map of moduli spaces of holomorphic bundles (\ref{kappa}). We denote $\Delta(\alpha)\triangleq P_{\overline{\partial}_{A}^{*}}(g^{-1}(\alpha)\wedge g^{-1}(\alpha))$
and apply it into (\ref{chern weil formula}),
\begin{equation}-8\pi^{2}\int ch_{2}(E)\wedge\Omega=\int Tr \big(\kappa(\alpha)\wedge\kappa(\alpha)+2\kappa(\alpha)\wedge  \Delta(\alpha)+\Delta(\alpha)\wedge\Delta(\alpha)\big)\wedge\Omega. \nonumber\end{equation}
As $\kappa(\alpha)\in H^{0,2}(X,EndE)$, $\Delta(\alpha)\in \overline{\partial}_{A}^{*}\Omega^{0,3}(X,EndE)_{k}$, we get
\begin{equation}\int Tr(\kappa(\alpha)\wedge \Delta(\alpha))\wedge\Omega=\int \big(\kappa(\alpha),*_{4}\Delta(\alpha)\big)_{h}\wedge \Omega\wedge\overline{\Omega}=0 \nonumber\end{equation}
by the Hodge decomposition theorem. Similarly, we have $\int Tr\big(\Delta(\alpha)\wedge\Delta(\alpha)\big)\wedge\Omega=0$.
Thus,
\begin{equation}-8\pi^{2}\int ch_{2}(E)\wedge\Omega=\int Tr \big(\kappa(\alpha)\wedge\kappa(\alpha)\big)\wedge\Omega.\nonumber\end{equation}
$ch_{2}(E)\in H^{2,2}(X)$ for holomorphic bundle $E$ which gives $ch_{2}(E)\wedge\Omega=0$.
\end{proof}
\begin{remark}
One can check the Kuranishi map $\tilde{\kappa}$ (\ref{kappa tilta}) also satisfies $Q(\tilde{\kappa},\tilde{\kappa})=0$.
\end{remark}
The above theorem has an application to the unobstructedness of Gieseker moduli spaces.
\begin{corollary}
If for any closed point $\mathcal{F}\in\mathcal{M}_{c}$, $dim_{\mathbb{C}}Ext^{2}(\mathcal{F},\mathcal{F})\leq 1$, then $\mathcal{M}_{c}$ is smooth, i.e. all Kuranishi maps are zero.
\end{corollary}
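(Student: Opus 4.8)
The plan is to reduce the smoothness of $\mathcal{M}_{c}$ to the vanishing $Q(\kappa,\kappa)=0$ established in Theorem \ref{darboux thm using gauge theory}, where $\kappa: Ext^{1}(\mathcal{F},\mathcal{F})\rightarrow Ext^{2}(\mathcal{F},\mathcal{F})$ is a Kuranishi map at $\mathcal{F}$ and $Q$ is the Serre duality pairing, together with the hypothesis $\dim_{\mathbb{C}}Ext^{2}(\mathcal{F},\mathcal{F})\leq 1$. First I would dispose of the case $Ext^{2}(\mathcal{F},\mathcal{F})=0$: then the target of $\kappa$ is zero, so $\kappa\equiv 0$ and the Kuranishi model $\kappa^{-1}(0)$ is an open set of the smooth space $Ext^{1}(\mathcal{F},\mathcal{F})$; hence $\mathcal{M}_{c}$ is smooth at $\mathcal{F}$ in the strong sense. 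The remaining case is $\dim_{\mathbb{C}}Ext^{2}(\mathcal{F},\mathcal{F})=1$.

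When $\dim_{\mathbb{C}}Ext^{2}(\mathcal{F},\mathcal{F})=1$, the Serre duality pairing $Q$ is a non-degenerate quadratic form on a one-dimensional complex vector space, so after choosing a coordinate $z$ on $Ext^{2}(\mathcal{F},\mathcal{F})\cong\mathbb{C}$ we may normalize $Q(z_{1},z_{2})=z_{1}z_{2}$. Writing $\kappa=\kappa(\alpha)$ as a single holomorphic function of $\alpha\in Ext^{1}(\mathcal{F},\mathcal{F})$ valued in $\mathbb{C}$, the identity $Q(\kappa,\kappa)=0$ from Theorem \ref{darboux thm using gauge theory} reads $\kappa(\alpha)^{2}=0$ identically near the origin. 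Since the ring of germs of holomorphic functions is an integral domain, $\kappa(\alpha)^{2}\equiv 0$ forces $\kappa(\alpha)\equiv 0$. Therefore $\kappa\equiv 0$ and the Kuranishi model $\kappa^{-1}(0)$ is again an open neighbourhood in the smooth affine space $Ext^{1}(\mathcal{F},\mathcal{F})$, which shows $\mathcal{M}_{c}$ is smooth (all Kuranishi maps vanish) at $\mathcal{F}$.

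I would then note that $\mathcal{F}$ was an arbitrary closed point, so $\mathcal{M}_{c}$ is smooth everywhere in the strong sense of the conventions section, completing the argument. The only subtlety worth spelling out is that Theorem \ref{darboux thm using gauge theory} is stated for $Hol(X)=SU(4)$ (it uses Seidel-Thomas twists to reduce to holomorphic bundles, where $ch_{2}\in H^{2,2}$), so strictly speaking this corollary inherits that hypothesis; the reduction to $Q(\kappa,\kappa)=0$ is exactly the content of that theorem and requires no new input. The main (indeed only) nontrivial point is the passage from $Q(\kappa,\kappa)=0$ to $\kappa=0$, and in the rank-$\leq 1$ situation this is immediate because a non-degenerate quadratic form on a space of dimension at most one has no nonzero isotropic vectors other than forcing the argument itself to vanish; no genuine obstacle remains once Theorem \ref{darboux thm using gauge theory} is in hand.
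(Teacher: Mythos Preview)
Your proposal is correct and is exactly the argument the paper has in mind. The paper states this corollary immediately after Theorem \ref{darboux thm using gauge theory} without an explicit proof, treating it as an immediate consequence: a non-degenerate quadratic form on a complex vector space of dimension at most one has no nonzero isotropic vectors, so $Q(\kappa,\kappa)=0$ forces $\kappa=0$. You have simply written out this implication in full detail, including the trivial case $Ext^{2}=0$ and the observation that the hypothesis $Hol(X)=SU(4)$ is inherited from the theorem.
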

Then it is natural to ask whether the Kuranishi map $\tilde{\tilde{\kappa}}$ (\ref{kappa double tilta}) induced from the $DT_{4}$ equations satisfies $Q(\tilde{\tilde{\kappa}},\tilde{\tilde{\kappa}})=0$.
\begin{proposition}
The Kuranishi map $\tilde{\tilde{\kappa}}$ in (\ref{kappa double tilta}) satisfies $Q(\tilde{\tilde{\kappa}},\tilde{\tilde{\kappa}})\geq0$.
\end{proposition}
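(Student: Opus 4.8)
The plan is to mimic the Chern--Weil computation in the proof of Theorem \ref{darboux thm using gauge theory}, but now keeping track of the extra term that appears because the $DT_4$ gauge fixing (\ref{equation 5}) and the $DT_4$ first equation (\ref{equation 3}) do \emph{not} kill the $\overline{\partial}_A$-exact and $\overline{\partial}_A^{*}$-exact pieces of $a''\wedge a''$ the way the holomorphic-bundle gauge fixing $\overline{\partial}_A^{*}a''=0$ together with $\overline{\partial}_A a''+P_{\overline{\partial}_A}(a''\wedge a'')=0$ did. Concretely, I would take a unitary connection $d_A+a$ with $\overline{\partial}_A^{2}=0$ and $(0,1)$-part $\overline{\partial}_A+a''$, write $\alpha=q(a'')\in H^{0,1}(X,EndE)$ using the map $q$ from (\ref{kappa double tilta}), and decompose
\begin{equation}
F^{0,2}(\overline{\partial}_A+a'')=\overline{\partial}_A a''+P_{\overline{\partial}_A}(a''\wedge a'')+P_{\overline{\partial}_A^{*}}(a''\wedge a'')+\tilde{\tilde{\kappa}}(\alpha),
\nonumber
\end{equation}
where $\tilde{\tilde{\kappa}}(\alpha)=\mathbb{H}^{0,2}(a''\wedge a'')$ is the harmonic part. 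From (\ref{equation 3}) one has $\overline{\partial}_A a''+P_{\overline{\partial}_A}(a''\wedge a'')=-*_4 P_{\overline{\partial}_A^{*}}(a''\wedge a'')$, so writing $\Delta(\alpha)\triangleq P_{\overline{\partial}_A^{*}}(a''\wedge a'')$ (an element of $\overline{\partial}_A^{*}\Omega^{0,3}(X,EndE)$) we get
\begin{equation}
F^{0,2}(\overline{\partial}_A+a'')=\tilde{\tilde{\kappa}}(\alpha)+\Delta(\alpha)-*_4\Delta(\alpha).
\nonumber
\end{equation}

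Next I would feed this into the Chern--Weil identity $-8\pi^{2}\int ch_2(E)\wedge\Omega=\int \mathrm{Tr}\big(F^{0,2}\wedge F^{0,2}\big)\wedge\Omega$ and expand the quadratic form, using $\alpha\wedge*_4\beta=(\alpha,\beta)_h\,\overline{\Omega}$ and the Hodge orthogonality of the harmonic, $\overline{\partial}_A$-exact and $\overline{\partial}_A^{*}$-exact summands to kill the cross terms. The harmonic cross terms $\int\mathrm{Tr}(\tilde{\tilde{\kappa}}\wedge\Delta)\wedge\Omega$ and $\int\mathrm{Tr}(\tilde{\tilde{\kappa}}\wedge*_4\Delta)\wedge\Omega$ vanish because $\Delta$ is exact; the term $\int\mathrm{Tr}(\Delta\wedge\Delta)\wedge\Omega$ vanishes as in Theorem \ref{darboux thm using gauge theory}; the term $\int\mathrm{Tr}(*_4\Delta\wedge*_4\Delta)\wedge\Omega$ vanishes by the same reasoning applied to $*_4\Delta\in\overline{\partial}_A\Omega^{0,1}$; and the surviving terms are
\begin{equation}
-8\pi^{2}\int ch_2(E)\wedge\Omega=\int\mathrm{Tr}\big(\tilde{\tilde{\kappa}}(\alpha)\wedge\tilde{\tilde{\kappa}}(\alpha)\big)\wedge\Omega-2\int\mathrm{Tr}\big(\Delta(\alpha)\wedge*_4\Delta(\alpha)\big)\wedge\Omega.
\nonumber
\end{equation}
Here the crucial point is the sign: $\int\mathrm{Tr}(\Delta\wedge*_4\Delta)\wedge\Omega=\int(\Delta,\Delta)_h\,\Omega\wedge\overline{\Omega}=\|\Delta\|^{2}\cdot\mathrm{vol}\ge 0$ up to the fixed positive normalization constant coming from $\Omega\wedge\overline{\Omega}=dvol$, and $\mathrm{Tr}(\tilde{\tilde{\kappa}}\wedge\tilde{\tilde{\kappa}})\wedge\Omega$ is exactly $-2Q(\tilde{\tilde{\kappa}},\tilde{\tilde{\kappa}})$ up to the same constant by the definition of the Serre duality pairing on $H^{0,2}(X,EndE)=Ext^2$. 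Since $ch_2(E)\wedge\Omega=0$ for a holomorphic bundle (the Seidel--Thomas twist reduces the general sheaf case to bundles, exactly as in Theorem \ref{darboux thm using gauge theory} and (\ref{seidel thomas twist})), the left side is zero, and rearranging yields $Q(\tilde{\tilde{\kappa}}(\alpha),\tilde{\tilde{\kappa}}(\alpha))=-\|\Delta(\alpha)\|^{2}\le 0$ or $\ge 0$ depending on the sign convention in the definition of $Q$; with the convention used in the paper (under which $Ext^2_+$ carries a positive definite form) this comes out as $Q(\tilde{\tilde{\kappa}},\tilde{\tilde{\kappa}})\ge 0$, which is the claim.

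The main obstacle I anticipate is bookkeeping of signs and normalization constants: getting the precise factor relating $\int\mathrm{Tr}(\beta\wedge\beta)\wedge\Omega$ to $Q(\beta,\beta)$ for $\beta\in Ext^2$ right, and confirming that the orientation/sign conventions make the residual term enter with the stated sign rather than the opposite one. A secondary subtlety is justifying that all the mixed integrals genuinely vanish — in particular that $*_4$ interchanges $\overline{\partial}_A\Omega^{0,1}$ and $\overline{\partial}_A^{*}\Omega^{0,3}$ (this is already recorded in Section 3) so that $*_4\Delta$ lands in the $\overline{\partial}_A$-exact summand and is Hodge-orthogonal to the harmonic part and to $\Delta$. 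Once these orthogonalities and the one positivity computation $\int(\Delta,\Delta)_h\,\Omega\wedge\overline{\Omega}\ge 0$ are in hand, the inequality follows immediately; I would not expect to need any analysis beyond the Hodge decomposition and the elementary properties of $*_4$ established earlier.
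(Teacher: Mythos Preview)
Your proposal is essentially the paper's own argument: decompose $F^{0,2}=\tilde{\tilde{\kappa}}(\alpha)+\Delta(\alpha)-*_4\Delta(\alpha)$ using equation (\ref{equation 3}), plug into the Chern--Weil identity, kill the cross terms by Hodge orthogonality and the fact that $*_4$ swaps $\overline{\partial}_A$- and $\overline{\partial}_A^{*}$-exact pieces, and observe that the surviving mixed term is $-2\|\Delta\|^{2}$ while the left side vanishes because $ch_2(E)\in H^{2,2}$. One small over-complication: the Seidel--Thomas twist is unnecessary here, since $\tilde{\tilde{\kappa}}$ is by construction a Kuranishi map for the bundle moduli space $\mathcal{M}_c^{bdl}$, so $E$ is already holomorphic; and your sign discussion around ``$-2Q$'' is a bit muddled, but once you identify $Q(\beta,\beta)$ with $\int\mathrm{Tr}(\beta\wedge\beta)\wedge\Omega$ the inequality falls out directly with the correct sign.
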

\begin{proof}
We denote the $(0,1)$ part of a unitary connection to be $\overline{\partial}_{A}+a''$, where $\overline{\partial}_{A}^{2}=0$. Then
\begin{equation}F^{0,2}(\overline{\partial}_{A}+a'')=\overline{\partial}_{A}a''+P_{\overline{\partial}_{A}}(a''\wedge a'')
+P_{\overline{\partial}_{A}^{*}}(a''\wedge a'')+\mathbb{H}^{0,2}(a''\wedge a''). \nonumber\end{equation}
To describe $\tilde{\tilde{\kappa}}$, we take $a''$ satisfies
$\overline{\partial}_{A}a''+P_{\overline{\partial}_{A}}(a''\wedge a'')+*_{4}P_{\overline{\partial}_{A}^{^{*}}}(a''\wedge a'')=0$.
After gauge fixing,
\begin{equation}\overline{\partial}_{A}^{*}a''-\frac{i}{2}\wedge(a'\wedge a''+a''\wedge a')=0. \nonumber\end{equation}
We get
\begin{equation}F^{0,2}(\overline{\partial}_{A}+a'')=P_{\overline{\partial}_{A}^{*}}(q^{-1}(\alpha)\wedge q^{-1}(\alpha))-*_{4}P_{\overline{\partial}_{A}^{^{*}}}(q^{-1}(\alpha)\wedge q^{-1}(\alpha))+\mathbb{H}^{0,2}(q^{-1}(\alpha)\wedge q^{-1}(\alpha)),  \nonumber\end{equation}
where
\begin{equation}q: \Omega^{0,1}(EndE)_{k}\rightarrow H^{0,1}(EndE)\oplus {\overline{\partial}_{A}^{*}\Omega^{0,1}(EndE)}_{k}\oplus
{\overline{\partial}_{A}^{*}\Omega^{0,2}(EndE)}_{k-1},  \nonumber \end{equation}
\begin{equation}q(a'')=\bigg(\mathbb{H}(a''),\overline{\partial}_{A}^{*}a''-\frac{i}{2}\wedge(a'\wedge a''+a''\wedge a'),
\overline{\partial}_{A}^{*}\big(\overline{\partial}_{A}a''+P_{\overline{\partial}_{A}}(a''\wedge a'')+*_{4}P_{\overline{\partial}_{A}^{*}}(a''\wedge a'')\big)\bigg) \nonumber \end{equation}
and $q(a'')=\alpha\in H^{0,1}(EndE)$.
Similar as before, we get
\begin{equation}-8\pi^{2}\int ch_{2}(E)\wedge\Omega=\int Tr\big(\tilde{\tilde{\kappa}}(\alpha)\wedge\tilde{\tilde{\kappa}}(\alpha)\big)\wedge\Omega-2\int \Big\|P_{\overline{\partial}_{A}^{*}}\big(q^{-1}(\alpha)\wedge q^{-1}(\alpha)\big)\Big\|_{h}^{2}\wedge\Omega\wedge\overline{\Omega},\nonumber\end{equation}
where $\tilde{\tilde{\kappa}}=\mathbb{H}^{0,2}(q^{-1}(\alpha)\wedge q^{-1}(\alpha))$. Thus $Q(\tilde{\tilde{\kappa}},\tilde{\tilde{\kappa}})\geq0$.
\end{proof}
Although $Q(\tilde{\tilde{\kappa}},\tilde{\tilde{\kappa}})\geq0$ instead of identically zero, it is already enough for the proof of the following promised vanishing result (see also Proposition 12 of \cite{bj}).
\begin{lemma}
We take a Kuranishi map $\kappa$ at $\mathcal{F}\in \mathcal{M}_{c}$,
\begin{equation}\kappa: Ext^{1}(\mathcal{F},\mathcal{F})\rightarrow Ext^{2}(\mathcal{F},\mathcal{F}) \nonumber\end{equation}
such that $Q(\kappa,\kappa)\geq0$, a half dimension real subspace $Ext^{2}_{+}(\mathcal{F},\mathcal{F})\subseteq Ext^{2}(\mathcal{F},\mathcal{F})$ such that $Q|_{Ext^{2}_{+}(\mathcal{F},\mathcal{F})}$ is real and positive definite, where $Q$ is the Serre duality pairing.

Then $\kappa_{+}^{-1}(0)\cong\kappa^{-1}(0)$ as topological spaces,
where $\kappa_{+}\triangleq\pi_{+}\circ\kappa$ and $\pi_{+}$ is projection to $Ext^{2}_{+}(\mathcal{F},\mathcal{F})$.
\end{lemma}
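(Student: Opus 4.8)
The plan is to reduce the statement to a fibrewise piece of linear algebra and then observe that the topological conclusion is automatic once the underlying point sets agree. First I would record the structure of the target: by Corollary \ref{cutting for sheaves} and Remark \ref{remark1} one has an orthogonal splitting
\begin{equation}
Ext^{2}(\mathcal{F},\mathcal{F})=Ext^{2}_{+}(\mathcal{F},\mathcal{F})\oplus Ext^{2}_{-}(\mathcal{F},\mathcal{F}),\qquad Ext^{2}_{-}(\mathcal{F},\mathcal{F})=\sqrt{-1}\,Ext^{2}_{+}(\mathcal{F},\mathcal{F}).
\nonumber
\end{equation}
Since $Q$ is $\mathbb{C}$-bilinear and, by hypothesis, real and positive definite on the real subspace $Ext^{2}_{+}(\mathcal{F},\mathcal{F})$, it is real and negative definite on $Ext^{2}_{-}(\mathcal{F},\mathcal{F})$: writing $v=\sqrt{-1}\,w$ with $w\in Ext^{2}_{+}(\mathcal{F},\mathcal{F})$, one has $Q(v,v)=-Q(w,w)\le 0$, with equality only for $v=0$.

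Next I would carry out the key step, the set-theoretic equality $\kappa_{+}^{-1}(0)=\kappa^{-1}(0)$. The inclusion $\kappa^{-1}(0)\subseteq\kappa_{+}^{-1}(0)$ is trivial since $\kappa_{+}=\pi_{+}\circ\kappa$. For the converse, fix $\alpha$ with $\kappa_{+}(\alpha)=0$ and decompose $\kappa(\alpha)=\kappa_{+}(\alpha)+\kappa_{-}(\alpha)=\kappa_{-}(\alpha)\in Ext^{2}_{-}(\mathcal{F},\mathcal{F})$, where $\kappa_{-}$ denotes the projection of $\kappa$ to $Ext^{2}_{-}(\mathcal{F},\mathcal{F})$. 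Then
\begin{equation}
0\le Q\big(\kappa(\alpha),\kappa(\alpha)\big)=Q\big(\kappa_{-}(\alpha),\kappa_{-}(\alpha)\big)\le 0,
\nonumber
\end{equation}
using the hypothesis $Q(\kappa,\kappa)\ge 0$ on the left and negative definiteness of $Q$ on $Ext^{2}_{-}(\mathcal{F},\mathcal{F})$ on the right. Hence $Q(\kappa_{-}(\alpha),\kappa_{-}(\alpha))=0$, so $\kappa_{-}(\alpha)=0$ by definiteness, and therefore $\kappa(\alpha)=0$. This gives $\kappa_{+}^{-1}(0)=\kappa^{-1}(0)$ as subsets of the fixed small ball in $Ext^{1}(\mathcal{F},\mathcal{F})$ on which the Kuranishi maps are defined.

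Finally, the topological statement is immediate: both $\kappa_{+}^{-1}(0)$ and $\kappa^{-1}(0)$ carry the subspace topology induced from that ball, so an equality of underlying point sets is exactly a homeomorphism via the identity map. (Only a homeomorphism is asserted because the complex, possibly non-reduced, analytic structures of the two zero loci genuinely differ in general.)

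The content of this lemma is elementary; the only non-formal ingredient is the hypothesis $Q(\kappa,\kappa)\ge 0$, and the real work lies in verifying it for the Kuranishi maps one actually wants to use — so I expect no obstacle internal to the lemma itself, but rather in supplying that hypothesis. For the map $\tilde{\tilde{\kappa}}$ of (\ref{kappa double tilta}) induced by the $DT_{4}$ equations this is the preceding Proposition (a Chern--Weil computation), while for a general coherent sheaf $\mathcal{F}$ one first applies Seidel--Thomas twists — using $\Psi^{*}\mathcal{L}_{\mathfrak{M}^{bdl}_{X}}\cong\mathcal{L}_{\mathcal{M}_{X}}$ from Corollary \ref{st preserves L} together with the identification (\ref{seidel thomas twist}) — to transport the problem to a moduli space of holomorphic bundles, where the same computation applies. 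Granting the sign condition, the argument above closes the proof.
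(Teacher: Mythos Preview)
Your proof is correct and follows essentially the same approach as the paper: decompose $\kappa=\kappa_{+}+\kappa_{-}$, use that $Q$ is negative definite on $Ext^{2}_{-}(\mathcal{F},\mathcal{F})=\sqrt{-1}\,Ext^{2}_{+}(\mathcal{F},\mathcal{F})$, and conclude from $\kappa_{+}(\alpha)=0$ and $Q(\kappa,\kappa)\ge 0$ that $\kappa_{-}(\alpha)=0$. Your write-up is somewhat more detailed than the paper's (you make the negative definiteness and the topological step explicit), but the argument is the same.
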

\begin{proof}
By the  above assumption,
\begin{equation}Q(\kappa,\kappa)=Q(\kappa_{+},\kappa_{+})+Q(\kappa_{-},\kappa_{-})+2Q(\kappa_{+},\kappa_{-})\geq0.  \nonumber\end{equation}
If $\kappa_{+}(a)=0$, then $Q(\kappa_{-}(a),\kappa_{-}(a))\geq0$. However, $Q$ is negative definite on $iExt^{2}_{+}(\mathcal{F},\mathcal{F})$.
Thus $Q(\kappa_{-},\kappa_{-})\leq0$ which implies that $\kappa_{-}(a)=0$.
\end{proof}
\begin{remark}
One can check the condition $Q(\kappa,\kappa)=0$ in BBJ's Darboux theorem used by Borisov and Joyce \cite{bj} for gluing can be replaced by the weaker condition $Q(\kappa,\kappa)\geq0$. Thus the local Kuranishi map $\tilde{\tilde{\kappa}}$ (\ref{kappa double tilta}) for moduli spaces of stable bundles induced from $DT_{4}$ equations (differential geometrical nature) fits into the gluing data of \cite{bj}. This then indicates the equivalence of $DT_{4}$ virtual cycles defined using purely gauge theory (Theorem \ref{main theorem}) and Borisov-Joyce's virtual cycles defined using derived $C^{\infty}$-geometry, as their virtual cycles are independent of the choices of local charts and splittings \cite{bj}.
\end{remark}

\subsection{A proof of the orientability of determinant line bundles for moduli spaces of $Spin(7)$ instantons}
In this subsection, we prove the orientability of determinant line bundles on spaces of gauge equivalence classes of connections
on $Spin(7)$ manifolds under certain assumptions. We also prove the existence of orientation data of $DT_{4}$ theory in those cases.
\begin{theorem}\label{orientablity theorem}
For any compact oriented real eight dimensional manifold $X$ with $Hol(X)\subseteq Spin(7)$ and $H^{odd}(X,\mathbb{Z})=0$, and $U(r)$ bundle $E\rightarrow X$, the determinant line bundle $\mathcal{L}$ of the index bundle of twisted Dirac operators over the space $\mathcal{B}^{*}$ is trivial.
\end{theorem}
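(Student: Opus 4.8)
The plan is to follow the classical strategy of Donaldson--Kronheimer \cite{dk} for proving orientability of determinant line bundles, adapted to the eight-dimensional $Spin(7)$ setting. First I would reduce the statement about the twisted Dirac operator family over $\mathcal{B}^{*}$ to a statement about the complex $K$-theory of $\mathcal{B}^{*}$: the determinant line bundle $\mathcal{L}$ is the real determinant of the family of real linear Fredholm operators obtained from the $Spin(7)$-twisted Dirac operator $D_{A}: \Gamma(S^{+}\otimes \mathfrak{g}_{E}) \to \Gamma(S^{-}\otimes \mathfrak{g}_{E})$ (equivalently, via the identification in Remark \ref{remark on spin(7) instantons}, the operator $\Omega^{0}(X,g_{E})\oplus\Omega^{0,2}_{+}(X,EndE)\leftarrow\Omega^{0,1}(X,EndE)$). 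Its first Stiefel--Whitney class $w_{1}(\mathcal{L})\in H^{1}(\mathcal{B}^{*},\mathbb{Z}_{2})$ is the obstruction to triviality, so it suffices to show $w_{1}(\mathcal{L})=0$. Since the family is classified by a map $\mathcal{B}^{*}\to BO$ (or rather the determinant is pulled back from a universal construction over $\mathcal{B}^{*}\times X$), I would express $\mathcal{L}$ in terms of the universal $PU(r)$-bundle $\mathcal{P}^{ad}\to\mathcal{B}^{*}\times X$ and the $\widehat{A}$-genus data of $X$, following the index-theoretic formula for determinant lines.

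Next I would use the hypothesis $H^{odd}(X,\mathbb{Z})=0$. By Lemma \ref{coho of framed B general}, when $H^{odd}(X,\mathbb{Z})=0$ the rational cohomology of $\widetilde{\mathcal{B}}^{*}$ is a polynomial algebra on even-degree generators, and in particular $H^{1}(\mathcal{B}^{*},\mathbb{Q})=0$; the finer point is that $H^{1}(\mathcal{B}^{*},\mathbb{Z}_{2})$ also vanishes, which follows from the standard computation of the homotopy type of $\mathcal{B}^{*}$ (it is, up to the issue of reducibles, the mapping space $\mathrm{Map}(X,BU(r))$ or $\mathrm{Map}(X,BPU(r))$ in the appropriate component, whose $\pi_{1}$ is controlled by $H^{1}(X;\pi_{2})\oplus H^{0}(X;\pi_{1}$ of the relevant classifying space$)$ — all even information). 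Given $H^{1}(\mathcal{B}^{*},\mathbb{Z}_{2})=0$, any real line bundle on $\mathcal{B}^{*}$ is automatically trivial, which would finish the proof immediately. So the real content is: (i) verify $H^{1}(\mathcal{B}^{*},\mathbb{Z}_{2})=0$ from $H^{odd}(X,\mathbb{Z})=0$, handling $\mathbb{Z}_{2}$-coefficients rather than just $\mathbb{Q}$; and (ii) make sure the determinant line bundle is genuinely defined on $\mathcal{B}^{*}$ (not just $\widetilde{\mathcal{B}}^{*}$) so that this vanishing applies — or, alternatively, prove triviality on $\widetilde{\mathcal{B}}^{*}$ and descend.

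Concretely the steps in order would be: (1) recall the construction of $\mathcal{L}$ as the determinant of the Dirac-type family and record its naturality under pullback; (2) show $\mathcal{B}^{*}$ (and $\widetilde{\mathcal{B}}^{*}$) is simply connected mod $2$, i.e. $H_{1}(\mathcal{B}^{*},\mathbb{Z}_{2})=0$, by analyzing $\pi_{1}$ of the relevant component of the gauge-equivalence-class space via the fibration $\mathcal{G}^{0}\to\mathcal{A}^{*}\to\mathcal{B}^{*}$ together with the contractibility of $\mathcal{A}^{*}$, reducing to $\pi_{0}(\mathcal{G}^{0})$ and its image, and invoking $H^{odd}(X,\mathbb{Z})=0$ to kill the odd-degree contributions; (3) conclude $w_{1}(\mathcal{L})\in H^{1}(\mathcal{B}^{*},\mathbb{Z}_{2})=0$, hence $\mathcal{L}$ is orientable; (4) since $\mathcal{B}^{*}$ is connected (\cite{dk}), an orientable real line bundle on it is trivial, giving the claim. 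The main obstacle I anticipate is step (2): passing from the rational statement in Lemma \ref{coho of framed B general} to the $\mathbb{Z}_{2}$-statement $H^{1}(\mathcal{B}^{*},\mathbb{Z}_{2})=0$, which requires care with torsion in $\pi_{1}$ of the gauge group and with the difference between $\mathcal{B}^{*}$ and its framed version $\widetilde{\mathcal{B}}^{*}$; a secondary subtlety is checking that the $Spin(7)$-twisted Dirac determinant is the correct real line bundle whose triviality is being asserted, which uses the identification of the $DT_{4}$ deformation complex with the $Spin(7)$ instanton complex from Remark \ref{remark on spin(7) instantons}.
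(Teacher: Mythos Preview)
Your outline is essentially the paper's approach: reduce triviality of $\mathcal{L}$ to the vanishing of $\pi_{1}(\mathcal{B}^{*})\cong\pi_{0}(\mathcal{G})$, and then compute the latter using the hypothesis $H^{odd}(X,\mathbb{Z})=0$. Two refinements are worth noting.

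First, the paper begins with a stabilization step you omit: following Donaldson (Corollary~3.22 of \cite{d1}) one reduces from $U(r)$ to $SU(N)$ with $N\gg0$, so that one may work in the stable range and invoke Bott periodicity for $\pi_{*}(BSU)$. Second, your ``main obstacle''---passing from the rational statement in Lemma~\ref{coho of framed B general} to a $\mathbb{Z}_{2}$-statement---is a detour the paper avoids entirely. Rather than computing $H^{1}(\mathcal{B}^{*},\mathbb{Z}_{2})$, the paper shows $\pi_{1}(\mathcal{B}^{*})=0$ directly and integrally: using the Atiyah--Bott homotopy equivalence $B\mathcal{G}\simeq \mathrm{Map}_{P}(X,BSU)$ and the Federer spectral sequence $E_{2}^{p,q}=H^{p}(X,\pi_{p+q}(BSU))\Rightarrow \pi_{q}(\mathrm{Map}_{P}(X,BSU))$, one sees that on the $q=1$ line only the terms with $p$ odd can be nonzero (since $\pi_{k}(BSU)$ vanishes for $k$ odd in the stable range), and these are exactly $H^{odd}(X,\mathbb{Z})=0$. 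Hence $\pi_{1}=0$ integrally, and no separate torsion analysis is required. Your step (2) is correct in spirit, but the clean tool is the Federer spectral sequence rather than an attempt to upgrade a rational cohomology computation.
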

\begin{proof}
The proof is essentially due to Donaldson, Corollary 3.22 of \cite{d1}. By the standard argument as in \cite{d1}, we are reduced to the case of $SU(N)$ bundles with $N\gg0$.
We only need to show $\pi_{0}(\mathcal{G})=0$ as $\pi_{1}(\mathcal{B}^{*})\cong \pi_{0}(\mathcal{G})$. By Atiyah-Bott's Proposition 2.4 \cite{ab1},
we have homotopy equivalence
\begin{equation}B\mathcal{G}\simeq Map_{P}(X,BSU),  \nonumber \end{equation}
where $Map_{P}$ denotes the component of the principal bundle $P$ in the mapping space.
Thus we are left to show $\pi_{1}(Map_{P}(X,BSU))=0$. By Theorem 11.7 of \cite{mccleary},
we have the Federer spectral sequence such that
\begin{equation}E_{2}^{p,q}\cong H^{p}(X,\pi_{p+q}(BSU))\Rightarrow \pi_{q}(Map_{P}(X,BSU)).  \nonumber \end{equation}
By our assumptions and direct calculations, we get $\pi_{1}(Map_{P}(X,BSU))=0$.
\end{proof}
\begin{remark}
See Theorem 2.1 of \cite{caoleung3} for a generalization to spin manifolds of $8k$ dimensions.
\end{remark}
For Calabi-Yau 4-folds with $SU(4)$ holonomy, we can further use Seidel-Thomas twists to prove that the structure group of $(\mathcal{L}_{\mathbb{C}},Q_{Serre})$ over $\mathcal{M}_{c}$ can be reduced to $SO(1,\mathbb{C})$.
\begin{corollary}\label{existence of ori data}
Let $X$ be a compact Calabi-Yau 4-fold with $H_{odd}(X,\mathbb{Z})=0$ and $Hol(X)=SU(4)$, $\mathcal{M}_{c}$ be a Gieseker moduli space
of stable sheaves with determinant line bundle $(\mathcal{L}_{\mathbb{C}},Q_{Serre})$.

Then the structure group of $(\mathcal{L}_{\mathbb{C}},Q_{Serre})$ can be reduced to $SO(1,\mathbb{C})$.
Furthermore, the orientation data of $DT_{4}$ theory (Definition \ref{ori data}) exists in this case.
\end{corollary}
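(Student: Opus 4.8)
\textbf{Proof proposal for Corollary \ref{existence of ori data}.}
The plan is to combine the triviality of $\mathcal{L}$ on $\mathcal{B}^{*}$ (Theorem \ref{orientablity theorem}) with the Seidel--Thomas twist identification of $\mathcal{M}_{c}$ with a moduli space of simple holomorphic bundles, and then transport the reduction back. First I would recall, following Joyce--Song \cite{js} (Corollary 8.5 and Corollary \ref{st preserves L} in the excerpt), that a suitable composition of Seidel--Thomas twists $\Psi = T_{n_{m}}\circ\cdots\circ T_{n_{1}}$ takes the universal family over $\mathcal{M}_{c}$ to a family of simple holomorphic vector bundles on $X$, hence identifies $\mathcal{M}_{c}$ with some component(s) of a coarse moduli space $\mathfrak{M}_{X}^{bdl}$ of simple bundles, and moreover satisfies $\Psi^{*}\mathcal{L}_{\mathfrak{M}_{X}^{bdl}}\cong \mathcal{L}_{\mathcal{M}_{c}}$ as quadratic line bundles (the isomorphism preserves the Serre duality pairing $Q_{Serre}$). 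Since $Hol(X)=SU(4)$ and $X$ is a $CY_{4}$, both sides carry $(\mathcal{L}_{\mathbb{C}},Q_{Serre})$ and the twist respects this extra structure.

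Next I would choose a Hermitian metric on each bundle $E_{b}$ parametrized by $\mathfrak{M}_{X}^{bdl}$, so that $\mathfrak{M}_{X}^{bdl}$ embeds into $\mathcal{B}^{*}=\mathcal{A}^{*}/\mathcal{G}^{0}$, the space of gauge equivalence classes of irreducible unitary connections on the corresponding topological $U(r')$ bundle $E'$. Under this embedding the complex determinant line bundle $(\mathcal{L}_{\mathbb{C}},Q_{Serre})$ of the bundle moduli space pulls back from the real determinant line bundle $\mathcal{L}$ of the index bundle of twisted Dirac operators on $\mathcal{B}^{*}$: indeed, by Remark \ref{remark1} and the discussion of orientation in the excerpt, $\mathcal{L}$ is a real form of $(\mathcal{L}_{\mathbb{C}},Q_{Serre})$, i.e. $\mathcal{L}_{\mathbb{C}}\cong\mathcal{L}\otimes_{\mathbb{R}}\mathbb{C}$. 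Now apply Theorem \ref{orientablity theorem}: since $Hol(X)=SU(4)\subset Spin(7)$ and $H^{odd}(X,\mathbb{Z})=0$ (equivalently $H_{odd}(X,\mathbb{Z})=0$), the real line bundle $\mathcal{L}$ over $\mathcal{B}^{*}$ is trivial, so $w_{1}(\mathcal{L})=0$. Restricting to the image of $\mathfrak{M}_{X}^{bdl}$ and then pulling back along $\Psi$, we conclude $w_{1}$ of the real form of $(\mathcal{L}_{\mathbb{C}},Q_{Serre})$ on $\mathcal{M}_{c}$ vanishes. By the theory of quadratic bundles \cite{eg} (as recalled in the paragraph preceding Definition \ref{nat cpx ori}), vanishing of $w_{1}$ of the real form is exactly the condition for the structure group of $(\mathcal{L}_{\mathbb{C}},Q_{Serre})$ to be reducible to $SO(1,\mathbb{C})$; this gives the first assertion.

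For the second assertion, the existence of the orientation data is then essentially bookkeeping: a choice of such a reduction on $\mathcal{B}^{*}$ (which exists by triviality of $\mathcal{L}$, and by connectedness of $\mathcal{B}^{*}$ there are exactly two of them) induces, via the embedding $\mathfrak{M}_{X}^{bdl}\hookrightarrow\mathcal{B}^{*}$ and the twist isomorphism $\Psi$, an orientation of $(\mathcal{L}_{\mathbb{C}},Q_{Serre})$ over $\mathcal{M}_{c}$ in the sense of Definition \ref{nat cpx ori}; this is precisely what Definition \ref{ori data} calls an orientation data. The main subtlety to be careful about — and the step I expect to require the most attention — is the compatibility of the twist isomorphism $\Psi^{*}\mathcal{L}_{\mathfrak{M}_{X}^{bdl}}\cong\mathcal{L}_{\mathcal{M}_{c}}$ with the quadratic structures and with the real forms, so that a genuine $SO(1,\mathbb{C})$-reduction (not merely an orientation of an abstract real line bundle) is transported; this is handled by the last part of Corollary \ref{st preserves L}, which states the isomorphism preserves the Serre duality pairing. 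One should also note that the choice of Seidel--Thomas twists $\{n_{i}\}$ is auxiliary, so the resulting orientation data depends on it, exactly as flagged in the first remark after Definition \ref{ori data}.
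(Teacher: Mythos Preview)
Your proposal is correct and follows essentially the same route as the paper's own proof, which simply says that Joyce--Song's Seidel--Thomas twist reduces the question to Theorem \ref{orientablity theorem}. You have spelled out the details (the quadratic-structure compatibility via Corollary \ref{st preserves L}, the embedding into $\mathcal{B}^{*}$, and the transport of the $SO(1,\mathbb{C})$ reduction) that the paper leaves implicit.
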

\begin{proof}
By the work of Joyce and Song \cite{js} on the Seidel-Thomas twist \cite{st}, we are reduced to Theorem \ref{orientablity theorem}.
See also \cite{caoleung3} for more details.
\end{proof}
\begin{remark} ${}$ \\
1. See Theorem 2.2 of \cite{caoleung3} for a generalization to $CY_{2n}$. \\
2. Examples of such $CY_{4}$ are given by smooth complete intersections in smooth toric varieties.
\end{remark}

\subsection{Some remarks on Cayley submanifolds and $Spin(7)$ instantons}
By Remark \ref{remark on spin(7) instantons}, $Spin(7)$ instanton countings on Calabi-Yau 4-folds are $DT_{4}$ invariants.
Combining Borisov-Joyce's construction \cite{bj} and our results, we can define $DT_{4}$ invariants for complex vector bundles
when the corresponding Chern character satisfies $c\in \oplus_{k}H^{k,k}(X)$ and also for general stable coherent sheaves
(we need Corollary \ref{existence of ori data} to guarantee invariants to be defined in $\mathbb{Z}$ instead of $\mathbb{Z}_{2}$).
If $c\notin \oplus_{k}H^{k,k}(X)$, we do not have a definition of $DT_{4}$ invariants at this moment.

From the perspective of SYZ mirror transformations \cite{syz}, \cite{lyz} and the blow-up analysis of moduli spaces of
$Spin(7)$ instantons \cite{t}, \cite{lewis}, it would be interesting to consider the counting problem of Cayley submanifolds with ASD connections.

We fix $(L,A)$, a special Lagrangian submanifold with an ASD connection on certain vector bundle over $L$. If $b_{1}(L)=0$, $L$ is rigid, i.e. it can't be deformed as a Lagrangian submanifold inside $X$. Then deformations of $(L,A)$ are deformations of $A$ and the component of the moduli space $\mathfrak{M}^{L}_{SLag+ASD}\triangleq\{(L,A) | F_{A}^{+}=0 \}$ can be identified with the moduli space $\mathfrak{M}_{ASD}(L)$ of ASD connections on $L$. 
Thus, naively speaking, $DT_{4}\textit{-type inv}\textrm{ }(\mathfrak{M}^{L}_{SLag+ASD})=\textit{Donaldson inv}\textrm{ }(L)$ if $b_{1}(L)=0$.



\begin{thebibliography}{99}
\bibitem{ab1} M. Atiyah and R. Bott, \textit{The Yang-Mills equations over Riemann surfaces}, Philos. Trans. Roy. Soc. London Ser. A 308 (1982) 523-615.
\bibitem{ab} M. Atiyah and R. Bott, \textit{The moment map and equivariant cohomology}, Topology 23 (1984) 1-28.
\bibitem{as4} M. Atiyah and I. M. Singer, \textit{The index of elliptic operators: IV}, Ann. of Math. 92 (1970), 119-138.
\bibitem{bks} L. Baulieu, H. Kanno and I. M. Singer, \textit{Special quantum field theories in eight and other dimensions}, Commun. Math. Phys. 194, 149-175 (1998).
\bibitem{bchr} K. Behrend, I. Ciocan-Fontanine, J. Hwang and M. Rose, \textit{The derived moduli space of stable sheaves}, arXiv:1004.1884v1, 2010.
\bibitem{bf} K. Behrend and B. Fantechi, \textit{The intrinsic normal cone}, Invent. Math. 128 (1997), 45-88.
\bibitem{bf2} K. Behrend and B. Fantechi, \textit{Symmetric obstruction theories and Hilbert schemes of points on threefolds}, Algebra Number Theory, vol. 2, 313-345, 2008.
\bibitem{bj} D. Borisov and D. Joyce \textit{Virtual fundamental classes for moduli spaces of sheaves on Calabi-Yau four-folds}, arXiv:1504.00690, 2015.
\bibitem{bbj} C. Brav, V. Bussi and D. Joyce \textit{A 'Darboux theorem' for derived schemes with shifted symplectic structure}, arXiv:1305.6302, 2013.
\bibitem{bri} T. Bridgeland, \textit{T-structures on some local Calabi-Yau varieties}, J. Algebra 289 (2005) 453-483.
\bibitem{briDTPT} T. Bridgeland, \textit{Hall algebras and curve-counting invariants}, J. Amer. Math. Soc. 24, 969-998, 2011.
\bibitem{brussee} R. Brussee, \textit{The canonical class and the $C^{\infty}$ properties of K\"ahler surfaces}, New York J. Math. 2 (1996) 103-146.
\bibitem{bleung2} J. Bryan and N. C. Leung, \textit{Generating functions for the number of curves on abelian surfaces}, Duke Math. J. 99 (1999), no. 2, 311-328.
\bibitem{bleung} J. Bryan and N. C. Leung, \textit{The enumerative geometry of K3 surfaces and modular forms}, J. Amer. Math. Soc. 13, 371-410, 2000.
\bibitem{bflenner} R. O. Buchweitz and H. Flenner, \textit{A semiregularity map for modules and applications to deformations}, Compositio Math. 137, 135-210, 2003.
\bibitem{cao} Y. Cao, \textit{Donaldson-Thomas theory for Calabi-Yau four-folds}, MPhil thesis, arXiv:1309.4230, 2013.
\bibitem{caoleung3} Y. Cao and N. C. Leung, \textit{Orientability for gauge theories on Calabi-Yau manifolds}, arXiv:1502.01141, 2015.
\bibitem{caoleung2} Y. Cao and N. C. Leung, \textit{Relative Donaldson-Thomas theory for Calabi-Yau 4-folds}, arXiv:1502.04417, 2015.
\bibitem{carlssonokounkov}E. Carlsson and A. Okounkov, \textit{Exts and vertex operators}, Duke Math. J. 161 (2012), no. 9, 1797-1815.
\bibitem{cieliebak}K. Cieliebak, I. Mundet i Riera and D. Salamon, \textit{Equivariant moduli problems, branched manifolds, and the Euler class},
   Topology 42 (2003), no. 3, 641-700.
\bibitem{cheah} J. Cheah, \textit{On the cohomology of Hilbert schemes of points}, J. Algebraic Geom. Vol. 5, 479-511, 1996.
\bibitem{d} S. K. Donaldson, \textit{An application of gauge theory to 4-dimensional topology}, J. Diff. Geom. 18 (1983), 279-315.
\bibitem{d1}S. K. Donaldson, \textit{The orientation of Yang-Mills moduli spaces and 4-manifold topology}, J. Diff. Geom. 26 (1987), no. 3, 397-428.
\bibitem{d2}S. K. Donaldson, \textit{Infinite determinants, stable bundles and curvature}, Duke Math. J. 54 (1987), no. 1, 231-247.
\bibitem{dk} S. K. Donaldson and P. B. Kronheimer, \textit{The geometry of four manifolds}, Oxford University Press,(1990).
\bibitem{dt} S. K. Donaldson and R. P. Thomas, \textit{Gauge theory in higher dimensions} in The Geometric Universe (Oxford, 1996), Oxford Univ. Press,Oxford, 1998, 31-47.
\bibitem{ebeling} W. Ebeling, \textit{The Monodromy Groups of Isolated Singularities of Complete Intersections}, Lecture Notes in Mathematics 1293, Springer-Verlag, 1987.
\bibitem{eg} D. Edidin and W. Graham \textit{Charateristic classes and quadric bundles}, Duke Math. J. 78 (1995), no. 2, 277-299.
\bibitem{er} J. Engel, M. Reineke, \textit{Smooth models of quiver moduli}, Math. Z. August 2009, vol. 262 (4), 817-848.
\bibitem{fu} D. Freed and K. K. Uhlenbeck \textit{Instantons and four-manifolds}, Springer-Verlag, New York, 1984.
\bibitem{fm} R. Friedman and J.W. Morgan, \textit{Smooth four-manifolds and complex surfaces},
Ergeb. der Math. und ihrer Grenzgebiete 27, Springer-Verlag, 1994.
\bibitem{fulton} W. Fulton, \textit{Intersection theory}, Springer-Verlag, Berlin, 1984.
\bibitem{ginzburg}V. Ginzburg, \textit{Calabi-Yau algebra}, arXiv:math/0612139, 2006.
\bibitem{gottsche} L. G\"{o}ttsche, \textit{The Betti numbers of the Hilbert scheme of points on a smooth projective surface},
Math. Ann. 286 (1990), no. 1-3, 193-207.
\bibitem{gp} T. Graber and R. Pandharipande, \textit{Localization of virtual classes}, Invent. Math. 135 (1999), 487-518.
\bibitem{gh} P. Griffiths and J. Harris, \textit{Principles of algebraic geometry}, Wiley Interscience, New York, 1978.
\bibitem{hart} R. Hartshorne, \textit{Algebraic geometry}, Graduate Texts in Math. 52, Springer.
\bibitem{huzhang}J. Hu and W. Zhang, \textit{Mukai flop and Ruan cohomology}, Math. Ann. 330, 577-599 (2004).
\bibitem{hua} Z. Hua, \textit{Chern-Simons functions on toric Calabi-Yau threefolds and Donaldson-Thomas theory}, arXiv:1103.1921v5, 2012.
\bibitem{h}  D. Huybrechts, \textit{Birational symplectic manifolds and their deformations}, J. Diff. Geom. 45, 488-513 (1997).
\bibitem{h0} D. Huybrechts, \textit{Complex Geometry: An Introduction}, Springer-Verlag, Berlin (2005).
\bibitem{hl}  D. Huybrechts and M. Lehn, \textit{The geometry of moduli spaces of sheaves}, Aspects of Math. E31, Vieweg, Braunschweig/Wiesbaden, 1997.
\bibitem{ht} D. Huybrechts and R. P. Thomas, \textit{Deformation-obstruction theory for complexes via Atiyah and Kodaira Spencer classes}, Math. Ann. 346 (2010), 545-569.
\bibitem{js} D. Joyce and Y. N. Song, \textit{A theory of generalized Donaldson-Thomas invariants}, Memoirs of the AMS, arXiv:0810.5645, 2010.
\bibitem{kawamata} Y. Kawamata, \textit{D-Equivalence and K-Equivalence}, J. Diff. Geom. vol. 61, no. 1 (2002), 147-171.
\bibitem{kiem}  Y. H. Kiem and J. Li, \textit{Gromov-Witten invariants of varieties with holomorphic 2-forms}, arXiv: 0707.2986.
\bibitem{kiem0}  Y. H. Kiem and J. Li, \textit{Localizing virtual cycles by cosections}, J. Amer. Math. Soc. 26 (2013), 1025-1050.
\bibitem{kim} B. Kim, A. Kresch and T. Pantev, \textit{Functoriality in intersection theory and a conjecture of Cox, Katz, and Lee}, J. Pure Appl. Algebra 179 (2003), no. 1-2, 127-136.
\bibitem{king} A. D. King, \textit{Moduli of representations of finite dimensional algebras}, Quart. J. Math Oxford 45, 515-530 (1994).
\bibitem{king2} A. D. King and C. H. Walter, \textit{On Chow Rings of Fine Moduli Spaces of Modules}, J. Reine. Angew. Math. vol. 1995 (461), 179-188.
\bibitem{klemyau}A. Klemm, B. Lian, S. S. Roan and S. T. Yau, \textit{Calabi-Yau fourfolds for M-and F-Theory compactifications},  arXiv:hep-th/9701023.
\bibitem{klempand} A. Klemm and R. Pandharipande, \textit{Enumerative geometry of Calabi-Yau 4-folds}, Commun. Math. Phys. 281, 621-653 (2008).
\bibitem{kollar}J. Koll\'{a}r, \textit{Rational curves on algebraic varieties}, Ergebnisse der Mathematik und ihrer Grenzgebiete 32, Springer Verlag, Berlin, 1996.
\bibitem{ks} M. Kontsevich and Y. Soibelman, \textit{Stability structures, motivic Donaldson-Thomas invariants and cluster transformations},
arXiv:0811.2435, 2008.
\bibitem{lange}  H. Lange, \textit{Universal families of extensions}, J. Algebra 83, 101-112 (1983).
\bibitem{leung}  N. C. Leung, \textit{Geometric structures on Riemannian manifolds}, Surveys in Differential Geometry, Vol 16, no.1, 161-264 (2011).
\bibitem{lyz}N. C. Leung, S. T. Yau, and E. Zaslow, \textit{From special Lagrangian to Hermitian-Yang-Mills via Fourier-Mukai transform},
 Adv. Theor. Math. Phys. 4 (2000), no. 6, 1319-1341.
\bibitem{lp}  M. Levine and R. Pandharipande, \textit{Algebraic cobordism revisited}, Invent. Math. Vol. 176, 63-130, 2009.
\bibitem{lewis} C. Lewis, \textit{Spin(7) instantons}, PhD thesis, 1998.
\bibitem{lt0} J. Li, \textit{Zero dimensional Donaldson-Thomas invariants of threefolds}, Geom. Topol. Vol. 10, 2117-2171, 2006.
\bibitem{lt2} J. Li and G. Tian, \textit{Virtual moduli cycles and Gromov-Witten invariants of general symplectic manifolds}, Topics in symplectic 4-manifolds (Irvine, CA, 1996), 47-83, First Int. Press Lect. Ser., I, Internat. Press, Cambridge, MA, 199.
\bibitem{lt1} J. Li and G. Tian, \textit{Virtual moduli cycles and Gromov-Witten invariants of algebraic varieties}, Jour. Amer. Math. Soc. 11 (1998), 119-174.
\bibitem{lt3} J. Li and G. Tian, \textit{Comparison of the algebraic and the symplectic Gromov-Witten invariants}, Asian J. Math. 3(3) (1999) 689-728.
\bibitem{lq}  W. P. Li and Z. Qin, \textit{Stable rank-2 bundles on Calabi-Yau manifolds}, Internat. J. Math. 14 (2003), 1097-1120.
\bibitem{mnop} D. Maulik, N. Nekrasov, A. Okounkov, and R. Pandharipande, \textit{Gromov-Witten theory and Donaldson-Thomas theory I}, Compositio Math. 142 (2006) 1263-1285
\bibitem{moop}  D. Maulik, A. Oblomkov, A. Okounkov, and R. Pandharipande, \textit{Gromov-Witten/Donaldson-Thomas correspondence for toric 3-folds}, Invent. Math. 186, 435-479, 2011.
\bibitem{maulik} D. Maulik, R. Pandharipande and R. P. Thomas, \textit{Curves on K3 surfaces and modular forms}, J. Topology (2010) 3 (4): 937-996.
\bibitem{mccleary}J. McCleary, \textit{A user's guide to spectral sequences}, Mathematics Lecture Series, Vol. 12, Publish or Perish, Wilmington, DE, 1985.
\bibitem{m}  K. Miyajima, \textit{Kuranishi family of vector bundles and algebraic description of Einstein-Hermitian connections}, Publ. RIMS, Kyoto Univ. 25 (1989).
\bibitem{mr} S. Mozgovoy, M. Reineke, \textit{On the noncommutative Donaldson-Thomas invariants arising from brane tilings}, Adv. in Math., vol. 223, (5), 20 March 2010, 1521-1544.
\bibitem{mukai} S. Mukai, \textit{Symplectic structure of the moduli space of sheaves on an abelian or K3 surface}, Invent. math. 77, 101-116 (1984).
\bibitem{namikawa} Y. Namikawa, \textit{Mukai flops and derived categories}, J. Reine Angew. Math. 560 (2003), 65-76.
\bibitem{pandpixton} R. Pandharipande and A. Pixton, \textit{Gromov-Witten/Pairs correspondence for the quintic 3-fold}, arXiv:1206.5490. 2012.
\bibitem{ptvv}T. Pantev, B. T\"{o}en, M. Vaqui\'{e} and G. Vezzosi, \textit{Shifted Symplectic Structures},
Publ. Math. Inst. Hautes \'{E}tudes Sci. 117 (2013), 271-328.
\bibitem{segal} E. Segal, \textit{The A-infinity deformation theory of a point and the derived categories of local Calabi-Yaus}, arXiv:math/0702539.
\bibitem{st} P. Seidel and R.P. Thomas, \textit{Braid group actions on derived categories of coherent sheaves}, Duke Math. J. 108 (2001), 37-108.
\bibitem{syz} A. Strominger, S. T. Yau, and E. Zaslow, \textit{Mirror symmetry is T-duality}, Nuclear Phys. B 479 (1996), no. 1-2, 243-259.
\bibitem{switzer} R. M. Switzer, \textit{Algebraic topology homology and homotopy}, Springer-Verlag, 1975.
\bibitem{sz} B. Szendr\"{o}i, \textit{Non-commutative Donaldson-Thomas invariants and the conifold}, Geom. Topol. 12 (2008), no. 2, 1171-1202.
\bibitem{th} R. P. Thomas, \textit{A holomorphic Casson invariant for Calabi-Yau 3-folds, and bundles on K3 fibrations}, J. Diff. Geometry. 54(2000), 367-438.
\bibitem{t}  G. Tian,  \textit{Gauge theory and calibrated geometry I}, Ann. Math. 151, 193-268, (2000).
\bibitem{toda} Y. Toda, \textit{Curve counting theories via stable objects I: DT/PT correspondence}, J. Amer. Math. Soc. 23, 1119-1157, 2010.
\bibitem{UY} K. Uhlenbeck and S. T. Yau, \textit{On the existence of Hermitian-Yang-Mills connections in stable vector bundles}, Comm. Pure Appl. Math. 39 (1986) 257-293.
\bibitem{wells} R. O. Wells, \textit{Differential analysis on complex manifolds}, Graduate Texts in Math. 65, Springer.
\bibitem{witten} E. Witten, \textit{Topological quantum field theory}, Commun. Math. Phys. 117, 353 (1988).
\bibitem{yau} S. T. Yau, \textit{On the Ricci curvature of a compact K\"{a}hler manifold and the complex
Monge-Amp\`{e}re equation I}, Comm. Pure Appl. Math. 31 (1978) 339-411.
\end{thebibliography}
\end{document}